\theoremstyle{plain}
\newtheorem{thm}{Theorem}[section]
\newtheorem{lem}[thm]{Lemma}
\newtheorem{prop}[thm]{Proposition}
\def\@rst #1 #2other{#1}
\newcommand\MR[1]{\relax\ifhmode\unskip\spacefactor3000 \space\fi
  \MRhref{\expandafter\@rst #1 other}{#1}}
\newcommand{\MRhref}[2]{\href{http://www.ams.org/mathscinet-getitem?mr=#1}{MR#2}}
\theoremstyle{definition}
\newtheorem{defn}[thm]{Definition}
\newtheorem{remark}[thm]{Remark}
\numberwithin{equation}{section}
\newcommand{\dsb}{\begin{adjustwidth}{2.5em}{0pt}
\begin{footnotesize}}
\newcommand{\dse}{\end{footnotesize}
\end{adjustwidth}}
\newcommand{\ssb}{\begin{adjustwidth}{2.5em}{0pt}}
\newcommand{\sse}{\end{adjustwidth}}
\newcommand{\aryb}{\begin{eqnarray*}}
\newcommand{\arye}{\end{eqnarray*}}
\def\alb#1\ale{\begin{align*}#1\end{align*}}
\def\allb#1\alle{\begin{align}#1\end{align}}
\newcommand{\eqb}{\begin{equation}}
\newcommand{\eqe}{\end{equation}}
\newcommand{\eqbn}{\begin{equation*}}
\newcommand{\eqen}{\end{equation*}}
\newcommand{\BB}{\mathbbm}
\newcommand{\ol}{\overline}
\newcommand{\op}{\operatorname}
\newcommand{\bd}{\mathbf}
\newcommand{\frk}{\mathfrak}
\newcommand{\eqD}{\overset{d}{=}}
\newcommand{\ep}{\varepsilon}
\newcommand{\rta}{\rightarrow}
\newcommand{\wt}{\widetilde}
\newcommand{\wh}{\widehat}
\newcommand{\mcl}{\mathcal}
\newcommand{\bdy}{\partial}
\newcommand{\rng}{\mathring}
\newcommand{\ccL}{{\mathbf{c}_{\mathrm L}}}
\newcommand{\ccM}{{\mathbf{c}_{\mathrm M}}}
\let\originalleft\left
\let\originalright\right
\renewcommand{\left}{\mathopen{}\mathclose\bgroup\originalleft}
\renewcommand{\right}{\aftergroup\egroup\originalright}
\title{Tightness of supercritical Liouville first passage percolation}
 \date{ }
 \author{
\begin{tabular}{c} Jian Ding\\[-5pt]\small University of Pennsylvania \end{tabular}
\begin{tabular}{c} Ewain Gwynne\\[-5pt]\small University of Chicago \end{tabular}
}
\begin{document}

\maketitle

\begin{abstract} 
\emph{Liouville first passage percolation (LFPP)} with parameter $\xi  >0$ is the family of random distance functions $\{D_h^\epsilon\}_{\epsilon >0}$ on the plane obtained by integrating $e^{\xi h_\epsilon}$ along paths, where $h_\epsilon$ for $\epsilon >0$ is a smooth mollification of the planar Gaussian free field. 
Previous work by Ding-Dub\'edat-Dunlap-Falconet and Gwynne-Miller has shown that there is a critical value $\xi_{\mathrm{crit}} > 0$ such that for $\xi < \xi_{\mathrm{crit}}$, LFPP converges under appropriate re-scaling to a random metric on the plane which induces the same topology as the Euclidean metric (the so-called $\gamma$-\emph{Liouville quantum gravity metric} for $\gamma  = \gamma(\xi)\in (0,2)$). 

We show that for all $\xi > 0$, the LFPP metrics are tight with respect to the topology on lower semicontinuous functions. 
For $\xi > \xi_{\mathrm{crit}}$, every possible subsequential limit $D_h$ is a metric on the plane which does \emph{not} induce the Euclidean topology: rather, there is an uncountable, dense, Lebesgue measure-zero set of points $z\in\mathbb C $ such that $D_h(z,w) = \infty$ for every $w\in\mathbb C\setminus \{z\}$. 
We expect that these subsequential limiting metrics are related to Liouville quantum gravity with matter central charge in $(1,25)$. 
\end{abstract}

\tableofcontents

\section{Introduction}
\label{sec-intro}

\subsection{Definition of Liouville first passage percolation}
\label{sec-lfpp}

Let $h$ be the whole-plane Gaussian free field (see, e.g., the expository articles~\cite{shef-gff,berestycki-lqg-notes,pw-gff-notes} for more on the GFF). 
For $t  > 0$ and $z\in\BB C$, we define the heat kernel $p_t(z) := \frac{1}{2\pi t} e^{-|z|^2/2t}$ and we denote its convolution with $h$ by
\eqb \label{eqn-gff-convolve}
h_\ep^*(z) := (h*p_{\ep^2/2})(z) = \int_{\BB C} h(w) p_{\ep^2/2} (z  - w) \, dw^2 ,\quad \forall z\in \BB C  
\eqe
where the integral is interpreted in the sense of distributional pairing. 

For a parameter $\xi > 0$, we define the $\ep$-\emph{Liouville first passage percolation} (LFPP) metric associated with $h$ by
\eqb \label{eqn-gff-lfpp}
D_{h}^\ep(z,w) := \inf_P \int_0^1 e^{\xi h_\ep^*(P(t))} |P'(t)| \,dt ,\quad \forall z,w\in\BB C 
\eqe
where the infimum is over all piecewise continuously differentiable paths $P : [0,1]\rta \BB C$ from $z$ to $w$. We will be interested in (subsequential) limits of the re-normalized metrics $\frk a_\ep^{-1} D_h^\ep$, where the normalizing constant is defined by
\eqb \label{eqn-gff-constant}
\frk a_\ep := \text{median of} \: \inf\left\{ \int_0^1 e^{\xi h_\ep^*(P(t))} |P'(t)| \,dt  : \text{$P$ is a left-right crossing of $[0,1]^2$} \right\}      .
\eqe  
Here, by a left-right crossing of $[0,1]^2$ we mean a piecewise continuously differentiable path $P : [0,1]\rta [0,1]^2$ joining the left and right boundaries of $[0,1]^2$. 

The goal of this paper is to prove that the metrics $\frk a_\ep^{-1} D_h^\ep$ admit subsequential scaling limits when the parameter $\xi$ lies in the supercritical phase.  
The phase transition for LFPP is described in terms of its distance exponent, the existence of which is provided by the following proposition. 

\begin{prop} \label{prop-Q}
For each $\xi  >0$, there exists $Q = Q(\xi) > 0$ such that
\eqbn
\frk a_\ep = \ep^{1-\xi  Q + o_\ep(1)}  ,\quad \text{as $\ep\rta 0$}. 
\eqen
Furthermore, $\xi \mapsto Q(\xi)$ is continuous, strictly decreasing on $(0,0.7)$, non-increasing on $(0,\infty)$, and satisfies $\lim_{\xi \rta \infty} Q(\xi) =0$. 
\end{prop}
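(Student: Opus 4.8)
The plan is to establish the existence of the exponent $Q$ via a subadditivity (or rather, submultiplicativity) argument applied to the LFPP crossing distances, and then to extract the monotonicity and continuity properties from the scaling relations that $Q$ must satisfy together with a priori estimates on $\frk a_\ep$. First I would set up the key scaling property: using the exact scaling of the GFF and the heat kernel, for $\ep, \delta > 0$ one has that $h_{\ep\delta}^*$ restricted to a box is, up to an additive constant with explicit law and a spatial rescaling by $\delta$, distributed as $h_\ep^*$; more precisely, $h_{\delta}^*(\delta\cdot) \overset{d}{=} h_1^*(\cdot) + (\text{a Gaussian of variance }\log\delta^{-1} + O(1))$ in an appropriate sense on compact sets. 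Feeding this into the definition~\eqref{eqn-gff-lfpp} of $D_h^\ep$ and the definition~\eqref{eqn-gff-constant} of $\frk a_\ep$, one obtains a relation of the form $\frk a_{\ep\delta} \approx \frk a_\ep \cdot \delta \cdot \delta^{-\xi \cdot (\text{typical value of the field})}$, which after taking logarithms becomes an approximate additive relation $\log \frk a_{\ep\delta} \approx \log \frk a_\ep + \log\frk a_\delta$ up to errors that are $o(\log \ep^{-1})$. Combined with an a priori polynomial upper and lower bound on $\frk a_\ep$ (e.g.\ via crude comparison of $e^{\xi h_\ep^*}$ with its maximum and minimum over the unit square, together with Gaussian tail bounds on $\max_{z\in[0,1]^2} h_\ep^*(z)$, which is of order $\log\ep^{-1}$), a Fekete-type lemma for approximately-subadditive sequences yields the existence of the limit $\lim_{\ep\to 0} \frac{\log \frk a_\ep}{\log\ep} =: 1 - \xi Q$ for some $Q = Q(\xi) \in \mathbb R$, and the a priori bounds force $Q > 0$ (and in fact $Q$ bounded away from $0$ and $\infty$ in terms of $\xi$ alone is what one must check).

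Next I would address the qualitative properties of $\xi\mapsto Q(\xi)$. Continuity should follow from the fact that $\xi \mapsto \log\frk a_\ep(\xi)$ is convex (it is a log-moment-generating-type quantity, or at least convex by Hölder applied to the path integral) and the limits of convex functions are convex hence continuous on the interior of their domain; one has to be slightly careful since we are taking a limit of medians rather than means, but a standard concentration estimate for $\log D_h^\ep$ of crossing distances (Gaussian concentration / Borell--TIS applied to the field, as used in the earlier LFPP literature) lets one replace medians by means up to negligible error, after which convexity in $\xi$ is transparent. Monotonicity: $Q(\xi)$ non-increasing on $(0,\infty)$ should come from the observation that increasing $\xi$ makes the metric ``rougher'' in a way that can only decrease the exponent $1 - \xi Q$ at a rate at least $Q$; concretely, differentiating (or taking difference quotients of) the relation $\frk a_\ep = \ep^{1-\xi Q + o(1)}$ and using that the crossing distance is a supremum-type object in $e^{\xi h_\ep^*}$ shows $\xi \mapsto \xi Q(\xi)$ has a controlled derivative, giving $Q' \le 0$. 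For the strict decrease on $(0, 0.7)$ one has to do more: here I would invoke the known identification (from Ding--Dub\'edat--Dunlap--Falconet and Gwynne--Miller, cited in the abstract) that in the subcritical regime $Q = Q(\xi)$ corresponds via $\gamma\in(0,2)$ to the explicit formula $Q = \tfrac{2}{\gamma} + \tfrac{\gamma}{2}$ with $\xi = \gamma/ d_\gamma$ where $d_\gamma$ is the LQG dimension, together with the fact that $0.7$ is (an estimate for) the value $\xi_{\mathrm{crit}}$ where $\gamma \to 2$, $Q\to 2$; on this range $\gamma$, and hence $Q$, is a genuinely strictly monotone function of $\xi$, so strict monotonicity of $Q$ in $\xi$ is inherited. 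Finally, $\lim_{\xi\to\infty} Q(\xi) = 0$: since $Q$ is positive and non-increasing it has a limit $Q_\infty \ge 0$, and I would rule out $Q_\infty > 0$ by a direct estimate showing $\frk a_\ep \ge \ep^{1 - o_\xi(1)\cdot \log\ep^{-1}}$ is impossible for large $\xi$ --- more concretely, by exhibiting, for each fixed small $\ep$, a crossing path whose LFPP length is at most $\ep^{1 - \xi Q_\infty/2 + o(1)}$ with probability bounded below for all large $\xi$, contradicting $\frk a_\ep = \ep^{1 - \xi Q_\infty + o(1)}$ unless $Q_\infty = 0$; the point is that the maximum of $e^{\xi h_\ep^*}$ along a fixed good path grows only like $\ep^{-\xi\sqrt{2 C}}$ for an absolute constant $C$ (related to the $\log$-correlated structure), forcing $\xi Q(\xi) \le \xi\sqrt{2C} + 1$ so $Q(\xi) \le \sqrt{2C} + 1/\xi$, and a matching argument at the level of the limsup in fact pins $\xi Q(\xi)$ to grow sublinearly, i.e.\ $Q(\xi)\to 0$.

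The main obstacle, I expect, is not the existence of $Q$ (the approximate submultiplicativity plus Fekete is routine once the GFF scaling is written down carefully, modulo the bookkeeping of the additive Gaussian and the median-vs-mean issue) but rather obtaining the \emph{quantitative} statements --- in particular proving $Q(\xi) > 0$ for \emph{all} $\xi$ with a bound, and proving $Q(\xi)\to 0$ as $\xi\to\infty$ with the right rate. Both require genuine two-sided a priori estimates on crossing distances $\frk a_\ep$ that are uniform in $\xi$, which in turn require control of both the typical and the extreme values of the mollified field $h_\ep^*$ on the unit square; the upper bound on $\frk a_\ep$ needs a good path avoiding the high points of the field (a multi-scale/chaining construction), while the lower bound needs that \emph{every} crossing path must pass near enough high points to accumulate length, which is the more delicate direction and is where I anticipate the bulk of the technical work. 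The continuity and monotonicity are comparatively soft, following from convexity in $\xi$ and the subcritical identification, provided one has set up the concentration estimates needed to make the median-based definition~\eqref{eqn-gff-constant} behave like a mean.
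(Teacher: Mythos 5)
The central gap is the positivity $Q(\xi)>0$. You assert that ``the a priori bounds force $Q>0$'' and later defer the needed lower bound on crossing lengths to unspecified technical work, but no argument is given, and the crude comparison with the minimum of the mollified field only yields $\mathfrak a_\epsilon \gtrsim \epsilon^{2\xi+o_\epsilon(1)}$, i.e.\ $\xi Q(\xi)\ge 1-2\xi$, which is vacuous exactly in the regime $\xi\ge 1/2$ that this paper is about. Proving that every crossing must accumulate length uniformly in large $\xi$ is a substantial result which the paper does \emph{not} prove internally: it is imported from~\cite{lfpp-pos} (a lower bound $\exp(c_0e^{-c_1\xi}n)$ for discrete LFPP crossings) and transferred to the white-noise metric via~\cite{ang-discrete-lfpp} and~\cite{ding-goswami-watabiki} (Lemmas~\ref{lem-discrete-lfpp} and~\ref{lem-Q-pos}). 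Likewise, the continuity, the strict decrease on $(0,0.7)$, the monotonicity on $(0,\infty)$ and the limit $Q(\xi)\to 0$ are not proven in the paper either; they are quoted from~\cite{gp-lfpp-bounds} after a comparison between LFPP variants. Your proposal would therefore have to supply genuine proofs of all of these, and the sketches given do not.

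Of those sketches, several steps would fail as stated. Convexity of $\xi\mapsto\log(\text{crossing length})$ does not follow from H\"older: each fixed path gives a convex function of $\xi$, but the crossing distance is an infimum over paths and an infimum of convex functions need not be convex, so the continuity argument has a hole. The strict decrease on $(0,0.7)$ cannot be deduced from the subcritical identification $Q=2/\gamma+\gamma/2$, because that identification only covers $\xi\in(0,\xi_{\mathrm{crit}})$ and $\xi_{\mathrm{crit}}\le 0.4189$ by~\eqref{eqn-xi-bounds}; the interval $(0,0.7)$ strictly contains a supercritical window, and $0.7$ is not an estimate of $\xi_{\mathrm{crit}}$. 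Your argument for $\lim_{\xi\to\infty}Q(\xi)=0$ only yields $Q(\xi)\le \mathrm{const}+1/\xi$, i.e.\ boundedness, and the claimed ``matching argument'' is not supplied. By contrast, your submultiplicativity-plus-Fekete route for the \emph{existence} of $Q$ is essentially the paper's: the paper runs it for the white-noise field $\Phi_{0,n}$, using the expected length around a square annulus as the subadditive observable so that concatenation of around-annulus paths works cleanly (Lemma~\ref{lem-mu-exponent}), handles the median-vs-mean issue with Gaussian concentration and the percolation estimates of~\cite{dddf-lfpp} (Lemmas~\ref{lem-a-priori-conc} and~\ref{lem-Q-compare}), and only afterwards transfers to $\mathfrak a_\epsilon$ and non-dyadic scales in Section~\ref{sec-gff-compare}; that part of your outline is sound, but it is the easy part of the proposition.
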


We will prove Proposition~\ref{prop-Q} in Section~\ref{sec-Q-wn} (see also the end of Section~\ref{sec-gff-compare}). The existence of $Q(\xi)$ follows from a subadditivity argument, the fact that $Q(\xi) > 0$ follows from~\cite{lfpp-pos}, and the other asserted properties of $Q(\xi)$ follow from results in~\cite{gp-lfpp-bounds}. We remark that Proposition~\ref{prop-Q} in the subcritical phase (see definitions just below) follows from~\cite[Theorem 1.5]{dg-lqg-dim}, so the result is only new in the supercritical phase. 

The value of $Q(\xi)$ is not known explicitly except when $\xi = 1/\sqrt 6$, in which case $Q = 5/\sqrt 6$~\cite{dg-lqg-dim}.\footnote{As per the discussion in Section~\ref{sec-lqg} below, $\xi =1/\sqrt 6$ corresponds to Liouville quantum gravity with parameter $\gamma=\sqrt{8/3}$ (equivalently, matter central charge $\ccM = 0$) and the fact that $Q(1/\sqrt 6) = 5/\sqrt 6$ is a consequence of the fact that $\sqrt{8/3}$-LQG has Hausdorff dimension 4.}
See~\cite{dg-lqg-dim,gp-lfpp-bounds,ang-discrete-lfpp} for bounds\footnote{The bounds in~\cite{dg-lqg-dim,gp-lfpp-bounds,ang-discrete-lfpp} are stated for LFPP defined using slightly different approximations of the GFF from the one defined in~\eqref{eqn-gff-convolve}. However, it is not hard to show using basic comparison lemmas from~\cite{dg-lqg-dim,ang-discrete-lfpp,ding-goswami-watabiki} that the different variants of LFPP have the same distance exponents.}
for $Q(\xi)$. 

We define
\eqb \label{eqn-xi-crit}
\xi_{\op{crit}} := \inf\left\{ \xi > 0 : Q(\xi) = 2 \right\} .
\eqe
The best currently known bounds for $\xi_{\op{crit}}$ come from~\cite[Theorem 2.3]{gp-lfpp-bounds}, which gives
\eqb \label{eqn-xi-bounds}
0.4135 \leq \xi_{\op{crit}} \leq 0.4189 .
\eqe
We do not have a conjecture as to the value of $\xi_{\op{crit}}$ (but see~\cite[Section 1.3]{dg-lqg-dim} for some speculation).  

We call $(0,\xi_{\op{crit}})$ the \emph{subcritical phase} and $(  \xi_{\op{crit}},  \infty  )$ the \emph{supercritical phase}. It was shown in~\cite{dddf-lfpp} that in the subcritical phase $\xi \in (0,\xi_{\op{crit}})$, the re-scaled LFPP metrics $\frk a_\ep^{-1} D_h^\ep$ are tight w.r.t.\ the topology of uniform convergence on compact subsets of $\BB C\times \BB C$. Moreover, every possible subsequential limit is a metric on $\BB C$ which induces the same topology as the Euclidean metric. 
Subsequently, it was shown in~\cite{gm-uniqueness} (building on~\cite{local-metrics,lqg-metric-estimates,gm-confluence}) that the subsequential limiting metric is unique. This limiting metric can be thought of as the Riemannian distance function associated with a so-called \emph{Liouville quantum gravity surface} with matter central charge $\ccM  = 25-6Q^2 \in (-\infty,1)$, or equivalently with coupling constant $\gamma \in (0,2)$ satisfying $Q = 2/\gamma + \gamma/2$. See Section~\ref{sec-lqg} for further discussion. 

The main results of this paper, stated just below, give the tightness of $\frk a_\ep^{-1} D_h^\ep$ for all $\xi > 0$ and some basic properties of the subsequential limiting metrics. 
In the supercritical phase $\xi  > \xi_{\op{crit}}$, the subsequential limiting metric $D_h$ does \emph{not} induce the same topology as the Euclidean metric. Rather, there is an uncountable, dense, but zero-Lebesgue measure set of ``singular points" which lie at infinite $D_h$-distance from every other point. 
As we will explain in Section~\ref{sec-lqg}, we expect that in the supercritical phase $D_h$ is closely related to Liouville quantum gravity with matter central charge $\ccM  = 25-6Q^2 \in (1,25)$.  
For $\xi = \xi_{\op{crit}}$, which corresponds to $\gamma$-Liouville quantum gravity with $\gamma=2$, we expect (but do not prove) that the subsequential limiting metric induces the same topology as the Euclidean metric.

\bigskip

\noindent \textbf{Acknowledgments.} We thank an anonymous referee for helpful comments on an earlier version of this article. We thank Jason Miller and Josh Pfeffer for helpful discussions and Jason Miller for sharing his code for simulating LFPP with us. J.D.\ was partially supported by NSF grant DMS-1757479. E.G.\ was supported by a Clay research fellowship and a Trinity college, Cambridge junior research fellowship.

\subsection{Main results}
\label{sec-main-results}

Since we do not expect that the limit of $\frk a_\ep^{-1} D_h^\ep$ is a continuous function on $\BB C\times\BB C$ when $\xi  > \xi_{\op{crit}} $, we cannot expect tightness of these metrics with respect to the local uniform topology. 
Instead, we will show tightness with respect to a slight modification of the topology on lower semicontinuous functions on $\BB C \times \BB C$ introduced by Beer in~\cite{beer-usc} (actually, Beer treats the case of upper semicontinuous functions but everything works the same for lower semicontinuous functions by symmetry). Under this topology, a sequence of lower semicontinuous functions $f_n : \BB C\times \BB C \rta \BB R \cup \{\pm\infty\}$ converges to a lower semicontinuous function $f$ if and only if
\begin{enumerate}[(A)]
\item If $\{(z_n,w_n)\}_{n\in\BB N}$ is a sequence of points in $\BB C\times \BB C$ such that $(z_n,w_n)\rta (z,w)$, then $\liminf_{n\rta\infty} f_n(z_n,w_n) \geq f(z,w)$. \label{item-semicont-liminf} 
\item For each $(z,w)\in\BB  C\times \BB C$, there is a sequence $(z_n,w_n) \rta (z,w)$ such that $\lim_{n\rta\infty} f_n(z_n,w_n) = f(z,w)$.  \label{item-semicont-lim}
\end{enumerate}
It is easily verified that if $f_n \rta f$ in the above sense and each $f_n$ is lower semicontinuous, then $f$ is also lower semicontinuous.  

It follows from~\cite[Lemma 1.5]{beer-usc} that the above topology is the same as the one induced by the metric $\BB d_{\op{lsc}}$ defined as follows.   
Let $\phi : \BB R \rta (0,1)$ be an increasing homeomorphism. Set $\phi(-\infty) =0$ and $\phi(\infty) = 1$.  
We endow $\BB R \cup \{\pm \infty\}$ with the metric $d_\phi(s,t) = |\phi(s) -\phi(t)|$, so that $\BB R\cup \{\pm\infty\}$ is homeomorphic to $[0,1]$. 
Let $\mcl K$ be the space of compact subsets of $\BB C\times  \BB C \times ( \BB R \cup \{\pm \infty\} )$ equipped with the Hausdorff distance $\BB d_{\op{Haus}}$ induced by the product of the Euclidean metric on $\BB C\times \BB C$ and the metric $d_\phi$ on $\BB R \cup \{\pm \infty\}$. 
If $f :\BB C\times \BB C \rta \BB R \cup \{\pm\infty\}$ is lower semicontinuous, then the ``overgraph" 
\eqbn
U(f)  := \left\{ (z,w,t) \in \BB C\times  \BB C \times ( \BB R \cup \{\pm \infty\} ) : f(z,w) \leq t\right\} 
\eqen
is closed, so for each $r > 0$ the set $U_r(f) := U(f) \cap \ol{B_r(0) \times  B_r(0) \times  ( \BB R \cup \{\pm \infty\} )}$ is compact. 
We then set
\eqbn
\BB d_{\op{lsc}}(f,g) := \int_0^\infty e^{-r}\left( \BB d_{\op{Haus}}(U_r(f) , U_r(g)) \wedge 1 \right) \,dr .
\eqen

\begin{thm} \label{thm-lfpp-tight}
Let $\xi  >0$. For every sequence of positive $\ep$-values tending to zero, there is a subsequence $\mcl E$ for which the following is true. 
\begin{enumerate}
\item $\frk a_\ep^{-1} D_h^\ep$ converges in law to a lower semicontinuous function $D_h : \BB C\times \BB C\rta [0,\infty]$ w.r.t.\ the above topology as $\ep\rta 0$ along $\mcl E$.  \label{item-tight}
\item Each possible subsequential limit $D_h$ is a metric on $\BB C$, except that pairs of points are allowed to have infinite distance from each other. \label{item-metric}
\item $\{ \frk a_\ep^{-1} D_h^\ep(u,v)\}_{u,v\in\BB Q^2} \rta \{ D_h(u,v)\}_{u,v\in\BB Q^2}$ in law as $\ep\rta 0$ along $\mcl E$, jointly with the convergence $\frk a_\ep^{-1} D_h^\ep \rta D_h$. \label{item-rational}
\item For each rational $r>0$, the limit $\frk c_r := \lim_{\mcl E\ni\ep} r \frk a_{\ep/r} / \frk a_\ep$ exists and satisfies $\frk c_r = r^{\xi Q + o_r(1)}$ as $r\rta 0$ or $r\rta\infty$. \label{item-constant}
\end{enumerate} 
\end{thm}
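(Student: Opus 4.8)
The plan is to deduce tightness with respect to $\BB d_{\op{lsc}}$ from tightness of the "overgraphs" $U_r(\frk a_\ep^{-1} D_h^\ep)$ in the Hausdorff topology, together with a priori control on the rescaled distances. First I would establish the key quantitative inputs: upper and lower bounds on $\frk a_\ep^{-1} D_h^\ep(z,w)$ that hold with high probability, uniformly over $z,w$ in a fixed compact set, after averaging over scales. The lower bound — that $\frk a_\ep^{-1} D_h^\ep$ does not degenerate to zero — should follow from the arguments of~\cite{lfpp-pos} that give $Q(\xi) > 0$ in Proposition~\ref{prop-Q}, perhaps combined with a percolation-type argument controlling the number of "efficient" crossings of annuli. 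For the upper bound I would use the Gaussian structure of $h^*_\ep$: on the event that $h^*_\ep$ is not too large on a path following a deterministic route (e.g. a union of segments through a fine grid), one gets $D_h^\ep(z,w) \le \frk a_\ep \cdot (\text{polynomial in the grid size})$ with overwhelming probability, by a union bound over grid edges and Gaussian tail estimates. The scaling relation~\eqref{eqn-gff-constant} together with the subadditivity used to prove Proposition~\ref{prop-Q} furnishes the comparison of $\frk a_\ep$ at different scales needed to make these bounds uniform.

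Next I would set up the compactness argument. Since each $\frk a_\ep^{-1} D_h^\ep$ is a continuous (hence lower semicontinuous) function, its overgraph $U(\frk a_\ep^{-1} D_h^\ep)$ is a closed subset of $\BB C \times \BB C \times (\BB R \cup \{\pm\infty\})$, and $U_r$ of it is a compact subset of the compact space $\ol{B_r(0) \times B_r(0) \times (\BB R\cup\{\pm\infty\})}$. The space of compact subsets of a compact metric space is itself compact under the Hausdorff metric, so for each fixed rational $r$ the laws of $U_r(\frk a_\ep^{-1} D_h^\ep)$ are automatically tight; a diagonal argument over a countable dense set of $r$ plus the definition of $\BB d_{\op{lsc}}$ as an integral against $e^{-r}\,dr$ then gives tightness of the laws of $\frk a_\ep^{-1} D_h^\ep$ in $(\mcl F, \BB d_{\op{lsc}})$. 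Extract a subsequence $\mcl E$ along which $\frk a_\ep^{-1} D_h^\ep$, the countable family $\{\frk a_\ep^{-1} D_h^\ep(u,v)\}_{u,v\in\BB Q^2}$ (using the a priori bounds above to see these real-valued random variables are tight in $[0,\infty]$), and the constants $r\frk a_{\ep/r}/\frk a_\ep$ for rational $r$ all converge jointly in law; by the Skorokhod representation theorem pass to a coupling where the convergence is almost sure. This gives items~\ref{item-tight}, \ref{item-rational}, and — using Proposition~\ref{prop-Q} to identify the limit of $r\frk a_{\ep/r}/\frk a_\ep$ as $r^{\xi Q + o_r(1)}$ — item~\ref{item-constant}.

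It remains to prove item~\ref{item-metric}: that any subsequential limit $D_h$ is a.s.\ a metric (valued in $[0,\infty]$). Symmetry and the pointwise values being in $[0,\infty]$ are inherited directly from the approximations via the convergence criteria (A) and (B). The triangle inequality is the delicate point: along the a.s.-convergent coupling, for fixed $z,w,y$ one has $D_h^\ep(z,w) \le D_h^\ep(z,y) + D_h^\ep(y,w)$, but to pass to the limit I need to combine property~(B) (there exist $(z_n,w_n)\to(z,w)$ realizing $D_h(z,w)$ in the limit) with property~(A) applied at the approximating points, and a gluing argument to line up the intermediate point — here one uses that near the optimal approximating sequences for the pairs $(z,y)$ and $(y,w)$ one can concatenate paths. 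I would first prove the triangle inequality on $\BB Q^2$ using item~\ref{item-rational} and the fact that $D_h^\ep$ restricted to rationals satisfies it exactly, then extend to all of $\BB C$ by lower semicontinuity together with a density/continuity-from-above statement. For positivity ($D_h(z,w) > 0$ when $z\ne w$) I would invoke the a priori lower bound from the first paragraph: with high probability a macroscopic annulus around $z$ cannot be crossed by a path of $\frk a_\ep^{-1} D_h^\ep$-length less than some $\delta > 0$, and this passes to the limit via~(A). The main obstacle I expect is making the triangle inequality survive the limit despite $D_h$ being merely lower semicontinuous and possibly infinite — this requires the careful interplay of (A) and (B) with a path-concatenation argument at the discrete level, and is the technical heart of item~\ref{item-metric}.
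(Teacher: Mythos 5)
Your soft observations are correct as far as they go: since the truncated overgraphs live in the space of compact subsets of a compact metric space, tightness for item~\ref{item-tight} is essentially automatic, and extracting joint subsequential limits of countably many functionals is standard. The genuine gaps are in the quantitative backbone and in the passage from the extracted limits to items~\ref{item-metric}--\ref{item-constant}. The a priori bounds you propose (positivity of $Q$ from the discrete-LFPP result, plus an upper bound obtained by a union bound over a grid path, i.e.\ by controlling $h_\ep^*$ through its maximum) only control exponents, giving estimates of the form $\ep^{o_\ep(1)}\frk a_\ep$. What items~\ref{item-metric}--\ref{item-constant} actually require is up-to-constants control: the uniform bound on the quantile ratio of the crossing length (Proposition~\ref{prop-quantile-ratio}), proved in the paper by the inductive Efron--Stein argument of Section~\ref{sec-efron-stein}, and its consequences (Propositions~\ref{prop-perc-estimate'}, \ref{prop-gff-pt-tight}, \ref{prop-gff-scale-constant}). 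Bounding the field along deterministic paths by its maximum is precisely what fails for $\xi>\xi_{\op{crit}}$ --- this is the reason the paper conditions on the coarse field --- and without the quantile-ratio bound you cannot exclude that $\frk a_\ep^{-1}D_h^\ep(\text{across }A)\to 0$ in probability for a fixed annulus, which would destroy positive-definiteness in item~\ref{item-metric}. Your treatment of item~\ref{item-constant} is also flawed: the $o_\ep(1)$ in Proposition~\ref{prop-Q} does not cancel in the ratio $r\frk a_{\ep/r}/\frk a_\ep$, so neither boundedness of that ratio nor $\frk c_r=r^{\xi Q+o_r(1)}$ follows from the distance exponent; the paper needs Lemma~\ref{lem-exponent-relation} and Proposition~\ref{prop-gff-scale-constant}, which again rest on the concentration machinery, and subadditivity alone does not supply them.

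Two further gaps occur at the level of the limit. First, item~\ref{item-rational} is not just ``joint extraction'': its content is the identification $\lim_{\mcl E\ni\ep}\frk a_\ep^{-1}D_h^\ep(u,v)=D_h(u,v)$, where $D_h$ is the lsc limit, and lsc convergence only gives the inequality $D_h(u,v)\le\liminf$. The paper proves the equality in Proposition~\ref{prop-rational-agree}, using the decay of point-to-circle distances (Lemma~\ref{lem-gff-pt-conv}, which needs the multiscale estimates of Section~\ref{sec-pt-tight}) together with the ``good circles'' of Lemma~\ref{lem-good-circles}; your proposal does not address this step. Second, your route to the triangle inequality (prove it on $\BB Q^2$, then extend by ``density/continuity from above'' and path gluing) begs the question: for a merely lower semicontinuous $D_h$ with a dense set of singular points there is no a priori way to approximate $D_h(x,y)$ from above by rational pairs, and to glue two near-optimal crossings at an intermediate point one needs a short connector around that point, whose existence is exactly what must be proved. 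The paper manufactures it via Lemma~\ref{lem-good-circles} --- annuli whose around-distance is comparable to their across-distance, obtained from near-independence of the GFF across concentric annuli plus Borel--Cantelli --- combined with Lemma~\ref{lem-good-circles-conv}, and then runs the chaining argument of Proposition~\ref{prop-triangle-inequality} directly at the level of circle distances. So while your overall architecture (extract countably many functionals, build a limiting function, verify conditions (A) and (B)) matches the paper, the concentration input, the rational-point identification, the around-versus-across annulus lemma, and the scale-constant comparison are all missing, and each is essential.
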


We will also establish a number of properties of the subsequential limiting metric $D_h$ of Theorem~\ref{thm-lfpp-tight}. 
Let us first note that (by the Prokhorov theorem) after possibly passing to a further subsequence of the one in Theorem~\ref{thm-lfpp-tight} we can arrange that the joint law of $(h, \frk a_\ep^{-1} D_h^\ep)$ converges to a coupling $(h, D_h)$ (here the first coordinate is given the distributional topology). 
Following~\cite{hmp-thick-pts}, for $\alpha\geq 0$ we say that $z\in\BB C$ is an \emph{$\alpha$-thick point} of $h$ if
\eqb \label{eqn-thick-pt}
\liminf_{r \rta 0} \frac{h_r(z)}{\log r}  \geq \alpha ,
\eqe
where here $h_r(z)$ is the average over $h$ over the circle of radius $r$ centered at $z$. It is shown in~\cite{hmp-thick-pts} that for $\alpha \in (0,2)$, a.s.\ the set of $\alpha$-thick points has Hausdorff dimension $2-\alpha^2/2$ and for $\alpha > 2$, a.s.\ the set of $\alpha$-thick points is empty. 

\begin{thm} \label{thm-ssl-properties}
Let $(h,D_h)$ be a subsequential limiting coupling of $h$ with a random metric as in Theorem~\ref{thm-lfpp-tight}. Almost surely, the following is true.
\begin{enumerate}
\item $D_h(z,w) < \infty$ for Lebesgue-a.e.\ $(z,w) \in\BB C\times \BB C$. \label{item-finite-ae}
\item Every $D_h$-bounded subset of $\BB C$ is also Euclidean-bounded. \label{item-bounded}
\item The identity map from $\BB C$, equipped with the metric $D_h$ to $\BB C$, equipped with the Euclidean metric is locally H\"older continuous with any exponent less than $[\xi(Q+2)]^{-1}$. If $\xi  > \xi_{\op{crit}} $, the inverse of this map is not continuous. \label{item-holder} 
\item Say that $z\in\BB C$ is a \emph{singular point} for $D_h$ if $D_h(z,w) = \infty$ for every $w\in\BB C\setminus\{z\}$. 
Then $D_h$ is a complete, finite-valued metric on $\BB C\setminus \{ \text{singular points} \}$. \label{item-singularity}
\item Any two non-singular points $z,w\in\BB C$ can be joined by a $D_h$-geodesic (i.e., a  path of $D_h$-length exactly $D_h(z,w)$).   \label{item-geodesic}
\item If $\xi > \xi_{\op{crit}}  $ and $\alpha > Q$, then a.s.\ each $\alpha$-thick point $z$ of $h$ is a singular point, i.e., it satisfies $D_h(z,w) =\infty$ for every $w\in\BB C\setminus \{z\}$. \label{item-thick-pt}
\end{enumerate}
\end{thm}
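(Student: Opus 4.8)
### Proof proposal for Theorem~\ref{thm-ssl-properties}(\ref{item-thick-pt})

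The plan is to show that if $z$ is an $\alpha$-thick point with $\alpha > Q$, then the $D_h$-length of every path that starts at $z$ and travels Euclidean distance at least some fixed $\delta>0$ is infinite; taking a union over $\delta \in \{1/n\}$ and over a countable dense set of target points then shows $D_h(z,w)=\infty$ for all $w \neq z$. The natural quantity to control is $D_h\left(z, \partial B_r(z)\right)$ as $r\to 0$: if this diverges as $r \to 0$, then any path from $z$ to a point outside $\overline{B_{r_0}(z)}$ must cross all the annuli $B_{2^{-k}r_0}(z)\setminus B_{2^{-k-1}r_0}(z)$ for $k$ large, and I would like to say that the sum of the $D_h$-lengths of these crossings diverges. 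So the heart of the matter is a lower bound, for a thick point $z$, on the $D_h$-distance across a small annulus centered at $z$ in terms of the circle averages $h_r(z)$.

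The key input I would use is the a priori Hölder-type upper bound on the Euclidean metric in terms of $D_h$ from part~(\ref{item-holder}) of this same theorem (which in turn comes from the tightness machinery and the estimates of~\cite{dddf-lfpp,dg-lqg-dim}), together with a locality/Weyl-scaling argument: near $z$, at scale $r$, one can write $h|_{B_r(z)} = h_r(z) + (h - h_r(z))|_{B_r(z)}$, and $h-h_r(z)$ restricted to $B_r(z)$ has the law of a GFF plus a harmonic function which, on the slightly smaller ball $B_{r/2}(z)$, is typically $O(1)$ with high probability (by standard GFF estimates, e.g.\ as in~\cite{hmp-thick-pts}). Integrating $e^{\xi h_\epsilon^*}$ along a path crossing the annulus $B_r(z)\setminus B_{r/2}(z)$ therefore contributes, up to the relevant power of the scaling constant $\mathfrak a_\epsilon$ and the factor $\mathfrak c_r$ from Theorem~\ref{thm-lfpp-tight}(\ref{item-constant}), at least a constant times $r^{\xi Q + o_r(1)} e^{\xi h_r(z)}$ in the limit. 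Since $z$ is $\alpha$-thick, $h_r(z) \geq (\alpha - o(1))\log r$, so along the subsequence of scales $r = 2^{-k}$ this lower bound behaves like $r^{\xi Q - \xi \alpha + o_r(1)}$; because $\alpha > Q$, the exponent $\xi(Q-\alpha)$ is negative, so $\sum_k 2^{-k(\xi Q - \xi\alpha) + o(1)} = \infty$. Hence any path leaving a neighborhood of $z$ has infinite $D_h$-length, which is exactly the claim.

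More concretely, the steps are: (i) set up the decomposition of $h$ near $z$ into the circle average plus a field that is well-behaved on a dyadic sequence of annuli, using a Borel–Cantelli argument to get that, a.s., for every thick point $z$ simultaneously, the "remainder field" is not too negative on infinitely many (in fact, on all sufficiently small) dyadic annuli around $z$; (ii) transfer this to a lower bound on $\mathfrak a_\epsilon^{-1} D_h^\epsilon$ across each such annulus via Weyl scaling for the mollified field $h_\epsilon^*$ (being careful that mollification commutes appropriately with the decomposition, which is why we use $B_r \setminus B_{r/2}$ rather than working exactly at the boundary of $B_r$), and pass to the limit using the joint convergence in Theorem~\ref{thm-lfpp-tight}(\ref{item-rational}) and the lower-semicontinuity property~(\ref{item-semicont-liminf}); (iii) sum the annulus-crossing lower bounds and use $\alpha > Q$ to conclude divergence; (iv) upgrade from "$D_h\left(z,\partial B_{r_0}(z)\right)=\infty$" to "$D_h(z,w)=\infty$ for all $w\neq z$" by the elementary observation that a finite-length path from $z$ to $w\neq z$ would yield a finite-length crossing of all small annuli.

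The main obstacle I expect is step~(ii): making the Weyl-scaling argument rigorous at the level of the subsequential limit rather than the discrete metrics. One has good control of $D_h^\epsilon$ in terms of $h_\epsilon^*$, but the decomposition $h = h_r(z) + (\text{remainder})$ must be reconciled with the $\epsilon$-mollification, and the error from mollification near the annulus boundaries must be shown to be negligible uniformly; the factors $\mathfrak a_{\epsilon/r}/\mathfrak a_\epsilon$ must be tracked and matched to $\mathfrak c_r = r^{\xi Q + o_r(1)}$. A secondary subtlety is the uniformity over all thick points $z$: the "remainder is not too negative" event must hold almost surely \emph{simultaneously} for every $\alpha$-thick point, which requires a union bound over a net of candidate points combined with continuity estimates, in the spirit of the multifractal analysis in~\cite{hmp-thick-pts}. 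Once these are in place, the combinatorial summation and the conclusion are routine.
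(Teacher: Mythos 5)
Your proposal addresses only assertion~\ref{item-thick-pt} of the theorem; the other five assertions are proved separately in the paper (Lemma~\ref{lem-finite-ae}, Propositions~\ref{prop-infty-dist-ssl}, \ref{prop-holder}, \ref{prop-complete}, \ref{prop-length-metric} and Lemma~\ref{lem-finite-dist}), so as written it is a proof of one part, not of the statement. For that part, your strategy is essentially the one used in Proposition~\ref{prop-thick-pt}: lower-bound the distance across dyadic annuli centered near the thick point by $\frk c_r e^{\xi h_r(\cdot)} = r^{\xi(Q-\alpha)+o_r(1)}$, obtain this simultaneously for all thick points via a union bound over a fine lattice of centers, Borel--Cantelli, and the circle-average continuity estimate of~\cite{hmp-thick-pts}, then conclude from $\alpha>Q$. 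The main methodological difference is how the annulus estimate is produced. You propose decomposing $h$ near $z$ into $h_r(z)$ plus a remainder (zero-boundary GFF plus an $O(1)$ harmonic part) and doing a Weyl-scaling computation at the mollified level, which indeed requires the mollification/harmonic-remainder bookkeeping you identify as the main obstacle. The paper avoids this entirely by using the exact scaling relation~\eqref{eqn-gff-scale}, $D_h^{\ep/r}(\cdot,\cdot) \eqD r^{-1}e^{-\xi h_r(0)}D_h^\ep(r\cdot,r\cdot)$, together with translation invariance; combined with Proposition~\ref{prop-perc-estimate'} this gives Lemma~\ref{lem-perc-gff}, and passing to the limit along $\mcl E$ (using the joint convergence of circle averages and of $r\frk a_{\ep/r}/\frk a_\ep \to \frk c_r$) gives Lemma~\ref{lem-perc-ssl}, which is exactly the needed estimate. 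Two minor points: no summation over scales is needed, since each individual crossing distance already tends to $\infty$ when $\alpha>Q$; and the H\"older bound of assertion~\ref{item-holder} is neither needed nor really used in your argument (what you actually invoke are the same annulus estimates that underlie it), so citing it as a key input is misleading and, within the same theorem, risks circularity.

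There is one genuine gap, in your step (iv): you argue that a finite-$D_h$-length path from $z$ to $w$ would give finite-length crossings of the small annuli. But $z$ is precisely the point being shown to be singular, so no $D_h$-rectifiable path from $z$ exists, and the limit $D_h$ is a priori only a lower semicontinuous function, not a length metric through $z$; one cannot reason with paths of the limiting metric there. The correct implementation, which you only partially gesture at, is to prove the inequality $D_h(z,w) \geq \rng D_h(\text{across }A)$ for any annulus $A$ separating $z$ from $w$ at the level of the approximating metrics $\frk a_\ep^{-1}D_h^\ep$ (where paths do exist) and pass to the limit using lower semicontinuity and the definition of $D_h$ via nested rational circles; this is assertion~\eqref{item-lim-across} of Lemma~\ref{lem-lim-across-around} in the paper. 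With that substitution (and fixing the sign convention $h_r(z)\ge(\alpha-o(1))\log(1/r)$), your argument for assertion~\ref{item-thick-pt} closes.
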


Assertion~\ref{item-holder} should be compared to~\cite[Theorem 1.7]{dddf-lfpp}, which shows that in the subcritical phase the identity map from $(\BB C , D_h)$ to $(\BB C , |\cdot|)$ is locally H\"older continuous with any exponent less than $[\xi(Q+2)]^{-1}$ and the inverse of this map is H\"older continuous with any exponent less than $\xi(Q-2)$. The latter H\"older exponent goes to zero as $\xi \rta \xi_{\op{crit}}$, so it is natural that the inverse map is not continuous for $\xi > \xi_{\op{crit}}$. 

Assertion~\ref{item-thick-pt} implies that in the supercritical phase, the set of singular points for $D_h$ uncountable and dense. We can visualize these singular points as infinite ``spikes". However, $D_h$-distances between typical points are still finite by assertion~\ref{item-finite-ae}. This is because two typical points are joined by a $D_h$-geodesic which avoids the singular points.  Assertion~\ref{item-thick-pt} has several interesting consequences, for example the following. 
\begin{itemize}
\item $(\BB C,D_h)$ has infinitely many ``ends" in the sense that the complement of a $D_h$-metric ball centered at a typical point has infinitely many connected components of infinite $D_h$-diameter (a proof of this assertion appears in the subsequent paper~\cite{pfeffer-supercritical-lqg}). 
\item A $D_h$-metric ball cannot contain a Euclidean-open set (since every Euclidean-open set contains a singular point). 
\item The restriction of $D_h$ to $ \BB C\setminus \{\text{singular points}\}$ does not induce the Euclidean topology. This is because any $z\in\BB C$ can be expressed as a Euclidean limit of points $z_n$ which are not themselves singular points, but which are close enough to singular points so that $D_h(z_n,z) \rta \infty$.  
\item $(\BB C \setminus \{\text{singular points}\} , D_h)$ is not locally simply connected. This is because any Jordan loop in $\BB C$ surrounds a singular point, so cannot be $D_h$-continuously contracted to a point. In particular, $(\BB C \setminus \{\text{singular points}\}  , D_h)$ is not a topological manifold.
\end{itemize} 

As a partial converse to assertion~\ref{item-thick-pt}, one can show that points $z$ such that $\limsup_{r \rta 0} h_r(z) / \log r < Q$ are not singular points; see~\cite{pfeffer-supercritical-lqg}. We do not know if points of thickness exactly $Q$ are singular points, and we expect that determining this would require much more delicate estimates than the ones in the present paper. 
 
See Figure~\ref{fig-sim} for a simulation of supercritical LFPP metric balls.

\begin{figure}[ht!]
\begin{center}

\includegraphics[width=0.45\textwidth]{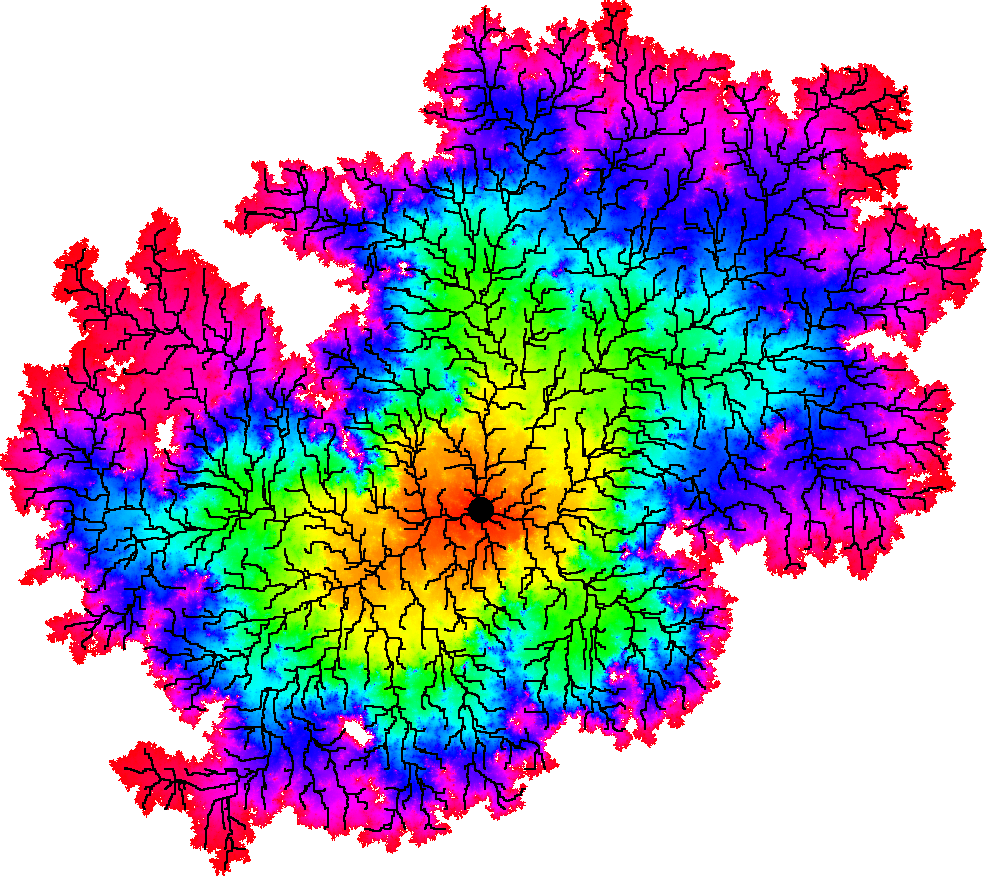} \hspace{0.05\textwidth}
\includegraphics[width=0.45\textwidth]{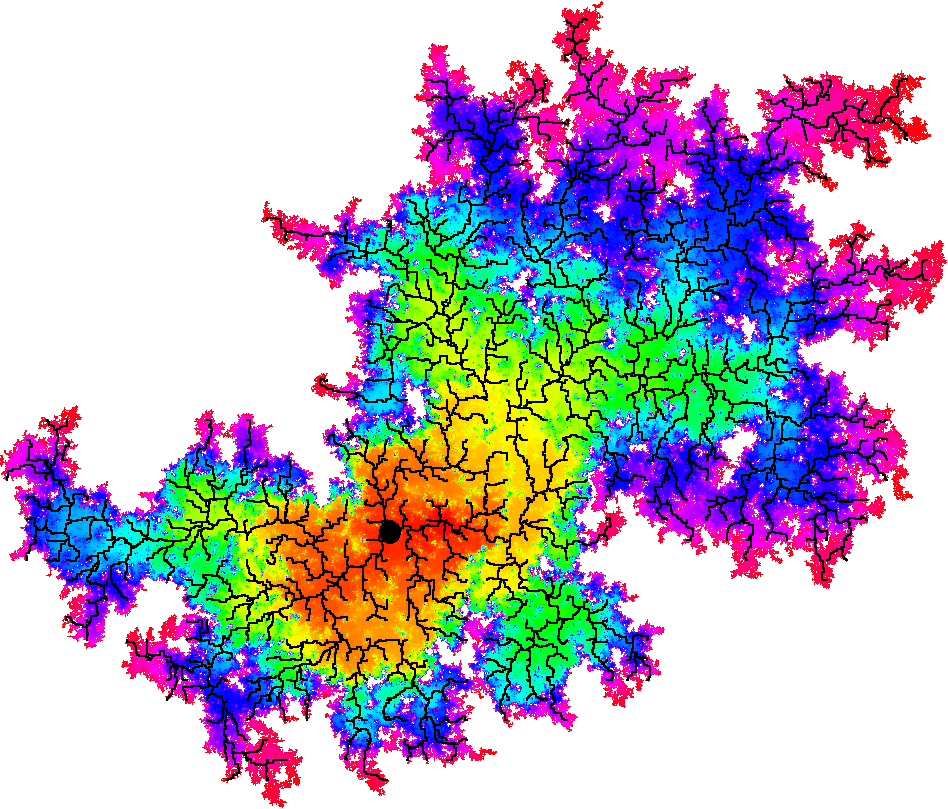}
\vspace{0.002\textheight}
\includegraphics[width=0.45\textwidth]{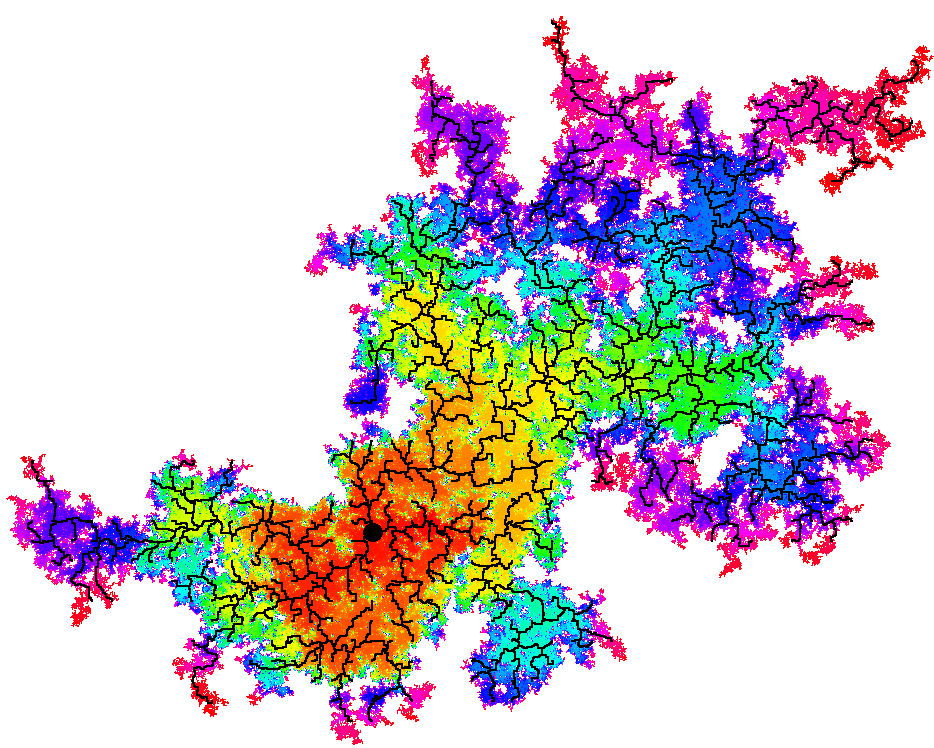} \hspace{0.05\textwidth}
\includegraphics[width=0.45\textwidth]{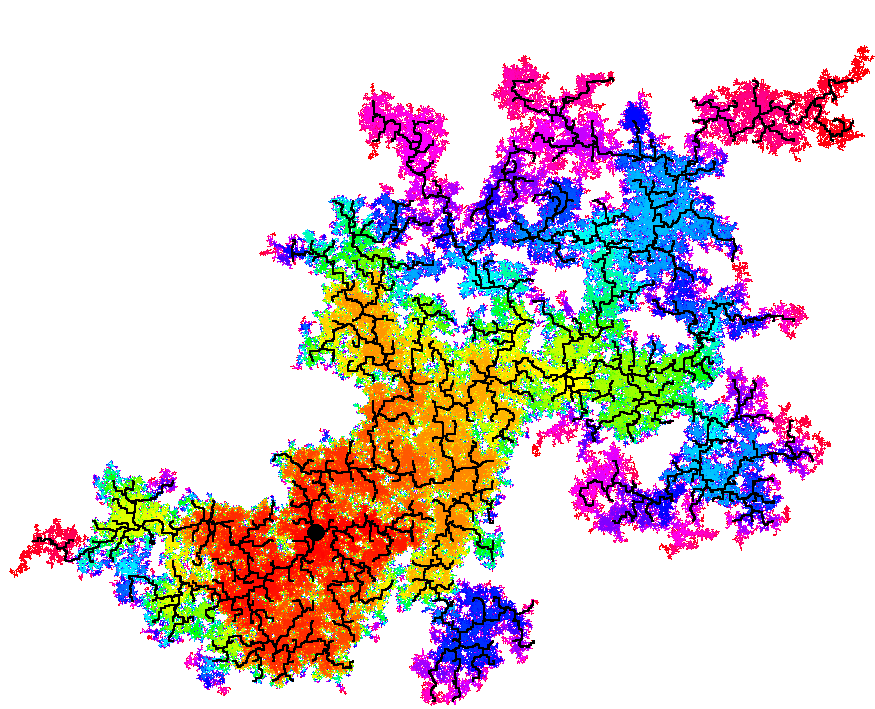} 

\end{center}
\vspace{-0.03\textheight}
\caption{\label{fig-sim} 
Simulations of LFPP metric balls for $\xi = 0.5$ (top left), $\xi = 0.9$ (top right), $\xi = 1.3$ (bottom left), and $\xi = 1.7$ (bottom right), produced from the same GFF instance. The colors indicate distance to the center point (marked with a black dot) and the black curves are geodesics from the center point to other points in the ball. 
Note that all of these values of $\xi$ are supercritical by~\eqref{eqn-xi-bounds}. In particular, the metric balls for the subsequential limiting metrics have empty Euclidean interior (despite the appearance of the figures). 
The simulations were produced using LFPP w.r.t.\ a discrete GFF on a $1024 \times 1024$ subset of $\BB Z^2$.  It is believed that this variant of LFPP falls into the same universality class as the variant in~\eqref{eqn-gff-convolve}. The geodesics go from the center of the metric ball to points in the intersection of the metric ball with the grid $20\BB Z^2$. The code for the simulations was provided by J.\ Miller.
}
\end{figure}

In the supercritical phase, $D_h$ satisfies many properties which are either similar to the properties of the limit of subcritical LFPP which were established in~\cite{dddf-lfpp,lqg-metric-estimates,gm-confluence,gm-uniqueness,gm-coord-change,gp-kpz} or are related to properties of LQG with $\ccM \in (1,25)$ which are discussed in~\cite{ghpr-central-charge}. Several such properties are established in the subsequent works~\cite{pfeffer-supercritical-lqg,dg-confluence}: 
\begin{itemize}
\item \textbf{Measurability:} $D_h$ is a.s.\ given by a measurable function of $h$ (c.f.~\cite[Lemma 2.20]{lqg-metric-estimates}). 
\item \textbf{Weyl scaling:} adding a continuous function $f$ to $h$ corresponds to scaling the $D_h$-length of each path by a factor of $e^{\xi f}$ (c.f.~\cite[Lemma 2.12]{lqg-metric-estimates}).  
\item \textbf{Moments:} for any fixed $z,w\in\BB C$, $D_h(z,w)$ has finite moments up to order $2Q/\xi$. More generally, if $\alpha,\beta\in (-\infty,Q)$ and $h^{\alpha,\beta} = h -\alpha\log|\cdot-z|  - \beta\log|\cdot-w|$, then $D_{h^{\alpha,\beta}}(z,w)$ has finite moments up to order $\frac{2 }{\xi}( Q - \max\{\alpha,\beta\})$ (c.f.~\cite[Theorem 1.11]{lqg-metric-estimates}). 
\item \textbf{Confluence of geodesics:} two $D_h$-geodesics with the same starting point and different target points typically coincide for a non-trivial initial time interval (c.f.~\cite{gm-confluence}).
\item \textbf{Hausdorff dimension:} a.s.\ the Hausdorff dimension of $(\BB C , D_h)$ is infinite (c.f.~\cite[Theorem 1.6]{ghpr-central-charge}). 
\item \textbf{KPZ formula:} if $X \subset\BB C$ is a random fractal sampled independently of $h$, then the Hausdorff dimensions of $X$ w.r.t.\ $D_h$ and w.r.t.\ the Euclidean metric are related by the variant of the KPZ formula from~\cite[Theorem 1.5]{ghpr-central-charge} (note that this formula gives that the Hausdorff dimension of $X$ w.r.t.\ $D_h$ is infinite if the dimension of $X$ w.r.t.\ the Euclidean metric is sufficiently close to 2). 
\end{itemize}
We also expect $D_h$ to satisfy the following further properties, which have not been proven yet.   
\begin{itemize} 
\item \textbf{Uniqueness:} the metric $D_h$ is uniquely characterized (up to multiplication by a deterministic positive constant) by a list of axioms similar to the list in~\cite{gm-uniqueness}. 
\item \textbf{Coordinate change:} if $a\in\BB C\setminus \{0\}$ and $b\in\BB C$, then a.s.\ $D_h(az+b,aw+b) = D_{h(a\cdot+b) + Q\log|a|}(z,w)$ for all $z,w\in\BB C$ (c.f.~\cite{gm-uniqueness}). More generally, if we extend the definition of $D_h$ to the case when $h$ is a field on a general open domain $U\subset\BB C$ (e.g., via local absolute continuity) then for a conformal map $\phi : \wt U \rta U$ a.s.\ $D_h(\phi(z) , \phi(w)) = D_{h\circ\phi + Q\log|\phi'|}(z,w)$ for all $z,w\in \wt U$ (c.f.~\cite{gm-coord-change}).  
\end{itemize}
It may be possible to prove these last two properties by adapting the arguments of~\cite{gm-uniqueness,gm-coord-change} (which prove analogous results in the subcritical case), but these papers use the fact that the metric induces the Euclidean topology, so non-trivial new ideas would be needed. Alternatively, it would also be of interest to find a completely different proof of these properties. 
 
\begin{remark} \label{remark-gh}
The reader might wonder whether one has the convergence $\frk a_\ep^{-1} D_h^\ep \rta D_h$ in law with respect to some local variant of the Gromov-Hausdorff distance.
We expect that no such convergence statement holds when $\xi  >\xi_{\op{crit}}$.  
The reason for this is that for $\xi > \xi_{\op{crit}}$, $D_h$-metric balls are \emph{not} $D_h$-compact (this is proven in~\cite{pfeffer-supercritical-lqg}). 
\end{remark}

\subsection{Connection to Liouville quantum gravity}
\label{sec-lqg}
 
Liouville quantum gravity (LQG) is a one-parameter family of random surfaces which describe two-dimensional quantum gravity coupled with conformal matter fields. LQG was first introduced by Polyakov~\cite{polyakov-qg1} in order to define a ``sum over Riemannian metrics" in two dimensions, which he was interested in for the purposes of bosonic string theory. 

One way to define LQG is in terms of the so-called \emph{matter central charge} $\ccM \in (-\infty,25)$.  
Let $\mcl D$ be a simply connected topological surface.\footnote{LQG can also be defined for non-simply connected surfaces, but we consider only the simply connected case for simplicity. See~\cite{drv-torus,grv-higher-genus} for works concerning on LQG on non-simply connected surfaces.}
For a Riemannian metric $g$ on $\mcl D$, let $\Delta_g$ be its Laplace-Beltrami operator.  
Heuristically speaking, an LQG surface with matter central charge $\ccM$ is the random two-dimensional Riemannian manifold $(\mcl D  ,g)$ sampled from the ``Lebesgue measure on the space of Riemannian metrics on $\mcl D$ weighted by $(\det \Delta_g)^{-\ccM/2}$". This definition is far from making literal sense, but see~\cite{apps-central-charge} for some progress on interpreting it rigorously. 
In physics, one thinks of an LQG surface as representing ``gravity coupled to matter fields". The parameter $\ccM$ is the central charge of the conformal field theory given by these matter fields, and $(\det\Delta_g)^{-\ccM/2}$ can be thought of as the associated partition function. 

We refer to the case when $\ccM \in (-\infty,1)$ (resp.\ $\ccM \in (1,25)$) as the \emph{subcritical} (resp.\ \emph{supercritical}) phase. As we will see below, these phases correspond to the subcritical and supercritical phases of LFPP. We refer to Figure~\ref{fig-c-phases-table} for a table of the relationship between the parameters in the subcritical and supercritical phases.

\begin{figure}[t!]
 \begin{center}
\includegraphics[scale=1]{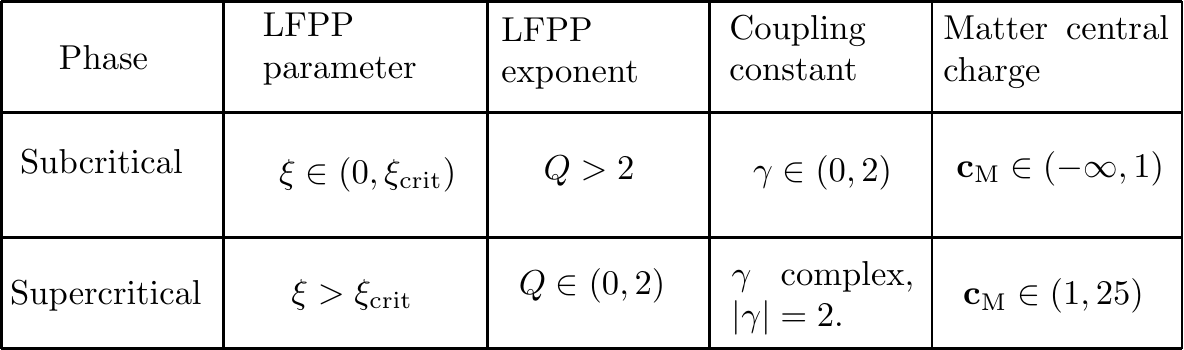}
\vspace{-0.01\textheight}
\caption{Table of the parameter ranges for LQG / LFPP in the subcritical and supercritical phases.
}\label{fig-c-phases-table}
\end{center}
\vspace{-1em}
\end{figure} 

\subsubsection*{LQG metric tensor in the subcritical phase}

The so-called \emph{DDK ansatz}~\cite{david-conformal-gauge,dk-qg} is a heuristic argument which allows us to describe the Riemannian metric tensor of an LQG surface directly in the subcritical (or critical) case, $\ccM \in (-\infty,1]$. The DDK ansatz implies that the Riemmanian metric tensor of an LQG surface with $\ccM \in (-\infty,1]$ should be given by
\eqb \label{eqn-lqg-tensor}
e^{\gamma h} \,(dx^2  + dy^2)
\eqe
where $h$ is a variant of the Gaussian free field, $dx^2+dy^2$ is a fixed smooth metric tensor (e.g., the Euclidean metric tensor if $\mcl D\subset \BB C$), and  the \emph{coupling constant} $\gamma \in (0,2]$ is related to $\ccM$ by
\eqb \label{eqn-c-gamma}
\ccM = 25  - 6\left(\frac{2}{\gamma} + \frac{\gamma}{2} \right)^2.
\eqe

\begin{remark} \label{remark-ccL}
Another way of thinking about the DDK ansatz (which is closer to the original physics phrasing) is that the partition function of LQG can be obtained by integrating $(\det \Delta_g)^{-\ccM/2}$ times the partition function of \emph{Liouville conformal field theory} (LCFT) over the moduli space of $\mcl D$. The central charge of the LCFT corresponding to LQG with matter central charge $\ccM$ is $\ccL = 26 - \ccM$. See~\cite{dkrv-lqg-sphere,drv-torus,hrv-disk,grv-higher-genus} for rigorous constructions of LCFT on various surfaces.
\end{remark}

The Riemannian metric tensor~\eqref{eqn-lqg-tensor} does not make literal sense since $h$ is a distribution (generalized function), not an actual function. 
However, one can define various objects associated with an LQG surface rigorously via regularization procedures. For example, one can construct the volume form, a.k.a.\ the LQG area measure ``$\mu_h = e^{\gamma h} \,dx\,dy$" (where $dx\,dy$ denotes Lebesgue measure)~\cite{kahane,shef-kpz,rhodes-vargas-log-kpz} (see~\cite{shef-deriv-mart,shef-renormalization} for the critical case $\ccM=1$, $\gamma=2$). Similarly, one can construct a natural diffusion on LQG, called Liouville Brownian motion~\cite{berestycki-lbm,grv-lbm} (see~\cite{rhodes-vargas-criticality} for the critical case). 

As explained in~\cite{dg-lqg-dim}, for $\gamma \in (0,2]$ a natural way to approximate the Riemannian distance function associated with~\eqref{eqn-lqg-tensor} is to consider LFPP with parameter $\xi = \gamma/d_\gamma$, where $d_\gamma > 2$ is the dimension exponent associated with $\gamma$-LQG, as defined in~\cite{dzz-heat-kernel,dg-lqg-dim}. It is shown in~\cite{gp-kpz} that $d_\gamma$ is the Hausdorff dimension of a $\gamma$-LQG surface, viewed as a metric space.  
For this choice of $\xi$, one has~\cite[Theorem 1.5]{dg-lqg-dim}
\eqb \label{eqn-Q-gamma}
Q = Q(\xi) = 2/\gamma + \gamma/2 .
\eqe 
It is shown in~\cite[Proposition 1.7]{dg-lqg-dim} that $\gamma/d_\gamma$ is strictly increasing in $\gamma$, which means that $\gamma \in (0,2)$ corresponds exactly to $\xi \in (0,\xi_{\op{crit}})$ and
\eqb
\xi_{\op{crit}} = \frac{2}{d_2} . 
\eqe
It is shown in~\cite{dddf-lfpp,gm-uniqueness} that subcritical LFPP converges to a random metric on $\BB C$ which can be interpreted as the Riemannian distance function associated with LQG for $\ccM  \in (-\infty,1)$. 

\subsubsection*{LQG in the supercritical phase}

LQG in the supercritcal phase $\ccM \in (1,25)$ is much less well-understood than in the subcritical and critical cases. 
Part of the reason for this is that when $\ccM \in (1,25)$, the coupling constant $\gamma$ in~\eqref{eqn-c-gamma} is complex with modulus 2. 
Consequently, analytic continuations of certain formulas from the case $\ccM < 1$ to the case $\ccM \in (1,25)$ (such as the KPZ formula~\cite{kpz-scaling,shef-kpz} or predictions for the Hausdorff dimension of LQG~\cite{watabiki-lqg,dg-lqg-dim}) yield nonsensical complex answers.
Moreover, it is not clear whether there is any natural notion of ``volume form" or ``diffusion" associated with supercritical LQG. 
The recent paper~\cite{jsw-decompositions} shows how to make sense of random distributions of the form ``$e^{\gamma h} \,dx\,dy$" for complex $\gamma$, but $|\gamma|=2$ falls outside the feasible region for the techniques of that paper. See~\cite{ghpr-central-charge,apps-central-charge} for further discussion and references concerning supercritical LQG. 

However, it is expected that there is a notion of Riemannian distance function (metric) associated with supercritical LQG. 
The paper~\cite{ghpr-central-charge} provides one possible approximation procedure for such a metric, based on a family of random tilings $\{\mcl S_h^\ep\}_{\ep > 0}$ of the plane by dyadic squares constructed from the GFF, depending on the central charge $\ccM$. The collection of squares $\{\mcl S_h^\ep\}_{\ep > 0}$ is a.s.\ locally finite for $\ccM \in (-\infty, 1)$. In contrast, for $\ccM \in (1,25)$ there is an uncountable, zero-Lebesgue measure set of ``singular points" $z\in \BB C$ such that every neighborhood of $z$ contains infinitely many small squares of $\mcl S_h^\ep$ (these singular points are analogous to the points which are at infinite $D_h$-distance from every other point in the setting of Theorem~\ref{thm-ssl-properties}). It is conjectured in~\cite{ghpr-central-charge} that the graph distance in the adjacency graph of squares of $\mcl S_h^\ep$, suitably re-scaled, converges to the metric associated with LQG for all $\ccM \in (-\infty,25)$. 

Another possible approximation procedure is supercritical LFPP. 
Indeed, if $\gamma$ and $\ccM$ are related by~\eqref{eqn-c-gamma} then for $\ccM \in (1,25)$ the parameter $Q$ from~\eqref{eqn-Q-gamma} lies in $(0,2)$. 
Hence, analytically continuing the relationship between LFPP and LQG to the supercritical phase shows that LQG for $\ccM \in (1,25)$ should correspond to LFPP with $\xi  > \xi_{\op{crit}} $. More precisely,~\eqref{eqn-c-gamma} and~\eqref{eqn-Q-gamma} suggest that $\xi$ and $\ccM$ should be related by
\eqb \label{eqn-xi-c}
\ccM = 25 - 6 Q(\xi)^2 .
\eqe
Further evidence of this relationship comes from the dyadic tiling model of~\cite{ghpr-central-charge}. As discussed in~\cite[Section 2.3]{ghpr-central-charge}, it is expected that if $\mcl S_h^\ep$ is the dyadic tiling above for a given value of $\ccM$, then the $\mcl S_h^\ep$-graph distance between the squares containing two typical points of $\BB C$ is of order $\ep^{-\xi + o (1)}$, where $\xi$ is as in~\eqref{eqn-xi-c}. 
This is analogous to the relationship between LFPP and Liouville graph distance in the subcritical phase, which was established in~\cite[Theorem 1.5]{dg-lqg-dim}.
Consequently, in the supercritical phase the subsequential limiting metrics $D_h$ of Theorem~\ref{thm-lfpp-tight} are candidates for the distance function associated with LQG with $\ccM \in (1,25)$.

\begin{remark} \label{remark-tree}
Many works~\cite{cates-branched-polymer,david-c>1-barrier,adjt-c-ge1,ckr-c-ge1,bh-c-ge1-matrix,dfj-critical-behavior,bj-potts-sim,adf-critical-dimensions} have suggested that LQG surfaces with $\ccM > 1$ should correspond to ``branched polymers", i.e., they should look like continuum random trees. 
At a first glance, this seems to be incompatible with the results of this paper since the metric $D_h$ of Theorem~\ref{thm-lfpp-tight} is not tree-like. 
However, as explained in~\cite[Section 2.2]{ghpr-central-charge}, the tree-like behavior should only arise when the surfaces are in some sense conditioned to have finite volume.
In our setting, we are not imposing any constraints which force the LQG surface to have finite volume so we get a non-trivial metric structure. We expect that similar statements hold if we condition the surfaces to have finite diameter instead of finite volume.
\end{remark}

\begin{remark} \label{remark-atomic} 
We do \emph{not} expect that LQG with $\ccM   \in (1,25)$ and supercritical LFPP are related to the purely atomic LQG measures for $\gamma  > 2$ considered in~\cite{dup-dual-lqg,bjrv-gmt-duality,rhodes-vargas-review,wedges}. 
Indeed, by~\eqref{eqn-c-gamma} the matter central charge corresponding to $\gamma  >2$ is the same as the matter central charge corresponding to the dual parameter $\gamma' = 4/\gamma \in (0,2)$, so lies in $(-\infty,1)$. Matter central charge in $(1,25)$ corresponds to a complex value of $\gamma$. 
\end{remark}

\subsection{Outline}
\label{sec-outline}

Here we will give a rough outline of the content of the rest of the paper. More detailed outlines of the more involved arguments in the paper can be found at the beginnings of their respective sections and subsections. We will also comment on the similarities and differences between the arguments in this paper and those in~\cite{ding-dunlap-lqg-fpp,df-lqg-metric,ding-dunlap-lgd,dddf-lfpp}, which prove tightness for various approximations of LQG distances in the subcritical phase.

In \textbf{Section~\ref{sec-prelim}}, we fix some notation, then introduce a variant of LFPP based on the white noise decomposition of the GFF which we will work with throughout much of the paper. In particular, we let $W$ be a space-time white noise on $\BB C\times \BB R$ and for $m,n\in\BB N$ with $m < n$ we let $\Phi_{m,n}(z) := \sqrt\pi \int_{2^{-2n}}^{2^{-2m}} \int_{\BB C} p_{t/2}(z-w) W(dw,dt)$ where $p_{t/2}$ is the heat kernel. 
We let $D_{m,n}$ be the LFPP metric associated with $\Phi_{m,n}$, i.e., it is defined as in~\eqref{eqn-gff-lfpp} but with $\Phi_{m,n}$ in place of $h_\ep^*$. 
As we will see in Section~\ref{sec-pt-tight}, $D_{0,n}$ is a good approximation for $D_h^\ep$. 
However, due to the exact spatial independence properties of the white noise, $D_{0,n}$ is sometimes easier to work with than $D_h^\ep$. 

We will also record several estimates for $D_{0,n}$ which were proven in~\cite{dddf-lfpp} (for general values of $\xi$). In particular, this includes a concentration bound for $D_{0,n}$-distances between sets (Proposition~\ref{prop-perc-estimate}) which will play a crucial role in our arguments. We note that, unlike in other works proving tightness results for approximations of LQG distances, we do not need to prove an RSW estimate or any a priori estimates for distances across rectangles. The reason is that we can re-use the relevant estimates from~\cite{dddf-lfpp}, which were proven for general $\xi  >0$. Finally, we will establish a variant of Proposition~\ref{prop-Q} for $D_{0,n}$ (Proposition~\ref{prop-Q-wn}) using a subadditivity argument. Proposition~\ref{prop-Q} will be deduced from this variant in Section~\ref{sec-gff-compare}. 

In \textbf{Section~\ref{sec-efron-stein}}, we will establish a concentration result for the log left-right crossing distance $\log L_{0,n}$, where
\eqbn
L_{0,n} := \inf\left\{ \int_0^1 e^{\xi \Phi_{0,n}(P(t))} |P'(t)| \,dt  : \text{$P$ is a left-right crossing of $[0,1]^2$} \right\}
\eqen
(c.f.~\eqref{eqn-gff-constant}). 
This is the most technically involved part of the and will be done using an inductive argument based on the Efron-Stein inequality. 
See Section~\ref{sec-ef-outline} for a detailed outline of the argument. 

The papers~\cite{ding-dunlap-lqg-fpp,df-lqg-metric,ding-dunlap-lgd,dddf-lfpp} also use the Efron-Stein inequality and induction to prove similar concentration statements (in the subcritical phase). But, the arguments in these papers are otherwise very different from the ones in the present paper.
Here, we will briefly explain the differences between our argument and the one in~\cite{dddf-lfpp}. 
 
We want to prove a variance bound for $\log L_{0,n}$ by induction on $n$. To do this, we fix a large integer $K$ (which is independent from $n$) and assume that we have proven a variance bound for $\log L_{0,n-K}$. Let $\mcl S_K$ be the set of $2^{-K} \times 2^{-K}$ dyadic squares $S \subset [0,1]^2$. 
In~\cite{dddf-lfpp}, the authors use the Efron-Stein inequality to reduce to bounding $\sum_{S\in\mcl S_K} \BB E[Z_S^2]$, where $Z_S$ is the change in $\log L_{0,n}$ when we re-sample the restriction of the field to the square $S$, leaving the other squares fixed. 
A necessary condition in order to bound this sum is as follows. If $P  :[0,L_{0,n}] \rta [0,1]^2$ is a $D_{0,n}$-geodesic between the left and right boundaries of $[0,1]^2$, then the maximum over all $S\in\mcl S_K$ of the time that $P$ spends in $S$ is negligible in comparison to $L_{0,n}$.
In~\cite{dddf-lfpp}, this condition is achieved by a crude upper bound on the maximum of the field $\Phi_{0,K}$ (see~\cite[Proposition 21]{dddf-lfpp}). 

In the supercritical case, the bound for the maximum of $\Phi_{0,K}$ is not sufficient: indeed, there will typically be squares $S\in\mcl S_K$ such that the $D_{0,n}$-distance between the inner and outer boundaries of the annular region $B_{2^{-K}}(S) \setminus S$ (here $B_{2^{-K}}(S)$ is the Euclidean $2^{-K}$-neighborhood of $S$) is larger than $L_{0,n}$. 
In order to get around this difficulty, instead of applying the Efron-Stein inequality to bound $\op{Var}\left[ \log L_{0,n} \right]$ directly, we will instead apply it to bound $\op{Var}\left[ \BB E\left[ \log L_{0,n} \,|\, \Phi_{K,n}\right] \right]$ (actually, for technical reasons we will work with a slightly modified version of $\Phi_{0,n}$).
We will bound $\BB E\left[ \op{Var}\left[ \log L_{0,n} \,|\, \Phi_{K,n} \right] \right]$ separately using a Gaussian concentration inequality, then combine the bounds to get our needed bound for $\op{Var}\left[ \log L_{0,n}\right]$. 

For our application of the Efron-Stein inequality, we will show that the contribution to $\BB E\left[ \log L_{0,n} \,|\, \Phi_{K,n}\right]$ from each square $S$ is negligible in comparison to $\BB E\left[ \log L_{0,n} \,|\, \Phi_{K,n}\right]$. This can be done because, even though there will typically be \emph{some} squares $S$ for which the $D_{0,n}$-distance across $B_{2^{-K}}(S) \setminus S$ is larger than $L_{0,n}$, for a fixed square $S$ the $D_{0,n}$-distance across $B_{2^{-K}}(S) \setminus S$ is typically much smaller than $L_{0,n}$. We can show (using the our inductive hypothesis) that this continues to be the case even if we condition on $\Phi_{K,n}$. 
This allows us to get much better bounds than we would get by just looking at the maximum of the coarse field, but requires us to argue in a quite different manner from~\cite{dddf-lfpp}. 

Thanks to the concentration result for $D_{0,n}$-distances between sets from~\cite{dddf-lfpp} (Proposition~\ref{prop-perc-estimate}), once the concentration of $\log L_{0,n}$ is established, we immediately obtain strong up-to-constants tail bounds for $D_{0,n}$-distances between general compact sets (see Proposition~\ref{prop-perc-estimate'}). The purpose of Sections~\ref{sec-pt-tight} and~\ref{sec-lfpp-tight} is to use these tail bounds together with various ``concatenating paths" arguments to establish our main results, Theorems~\ref{thm-lfpp-tight} and~\ref{thm-ssl-properties}. The arguments in these sections are more standard than the ones in Section~\ref{sec-efron-stein}, but some care is still needed due to the existence of singular points (which makes it impossible to get uniform convergence for our metrics).  

In \textbf{Section~\ref{sec-pt-tight}}, we will use the concentration of $\log L_{0,n}$ to establish the tightness of $D_{0,n}(z,w)$ (appropriately re-scaled) for fixed points $z,w\in\BB C$. This is done by summing estimates for the $D_{0,n}$-distances between non-trivial connected sets over dyadic scales surrounding each of $z$ and $w$. We will then transfer our estimates for $D_{0,n}$ to estimates for $D_h^\ep$ using comparison lemmas for different approximations of the GFF. We will work with $D_h^\ep$ for the rest of the paper.  

In \textbf{Section~\ref{sec-lfpp-tight}}, we will consider a sequence of $\ep$-values tending to zero along which a certain countable collection of functionals of the re-scaled LFPP metrics $\frk a_\ep^{-1} D_h^\ep$ converge jointly. This will include the $\frk a_\ep^{-1} D_h^\ep$-distances between rational points as well as $\frk a_\ep^{-1}$-distances across and around annuli whose boundaries are circles with rational radii centered at rational points. We will then use these functionals to construct a metric $D_h$ which satisfies the conditions of Theorems~\ref{thm-lfpp-tight} and Theorem~\ref{thm-ssl-properties}. The proofs in this section have essentially no similarities to the arguments in~\cite{ding-dunlap-lqg-fpp,df-lqg-metric,ding-dunlap-lgd,dddf-lfpp}. This is because we are showing convergence with respect to the topology on lower semi-continuous functions rather than the uniform topology; and because our limiting metric can take on infinite values. 

Appendix~\ref{sec-appendix} contains some basic estimates for Gaussian processes which are needed for our proofs.

\section{Preliminaries}
\label{sec-prelim}

\subsection{Basic notation}
\label{sec-basic}

\subsubsection*{Integers and asymptotics}

\noindent
We write $\BB N = \{1,2,3,\dots\}$ and $\BB N_0 = \BB N \cup \{0\}$.
\medskip

\noindent
For $a < b$, we define the discrete interval $[a,b]_{\BB Z}:= [a,b]\cap\BB Z$.
\medskip

\noindent
If $f  :(0,\infty) \rta \BB R$ and $g : (0,\infty) \rta (0,\infty)$, we say that $f(\ep) = O_\ep(g(\ep))$ (resp.\ $f(\ep) = o_\ep(g(\ep))$) as $\ep\rta 0$ if $f(\ep)/g(\ep)$ remains bounded (resp.\ tends to zero) as $\ep\rta 0$. We similarly define $O(\cdot)$ and $o(\cdot)$ errors as a parameter goes to infinity.
\medskip

\noindent
If $f,g : (0,\infty) \rta [0,\infty)$, we say that $f(\ep) \preceq g(\ep)$ if there is a constant $C>0$ (independent from $\ep$ and possibly from other parameters of interest) such that $f(\ep) \leq  C g(\ep)$. We write $f(\ep) \asymp g(\ep)$ if $f(\ep) \preceq g(\ep)$ and $g(\ep) \preceq f(\ep)$.
\medskip
 
\noindent
We will often specify any requirements on the dependencies on rates of convergence in $O(\cdot)$ and $o(\cdot)$ errors, implicit constants in $\preceq$, etc., in the statements of lemmas/propositions/theorems, in which case we implicitly require that errors, implicit constants, etc., appearing in the proof satisfy the same dependencies.

\subsubsection*{Metrics}

\noindent
Let $(X,D)$ be a metric space.
\medskip

\noindent
For a curve $P : [a,b] \rta X$, the \emph{$D$-length} of $P$ is defined by 
\eqbn
\op{len}\left( P ; D  \right) := \sup_{T} \sum_{i=1}^{\# T} D(P(t_i) , P(t_{i-1})) 
\eqen
where the supremum is over all partitions $T : a= t_0 < \dots < t_{\# T} = b$ of $[a,b]$. Note that the $D$-length of a curve may be infinite.
\medskip

\noindent
For $Y\subset X$, the \emph{internal metric of $D$ on $Y$} is defined by
\eqb \label{eqn-internal-def}
D(x,y ; Y)  := \inf_{P \subset Y} \op{len}\left(P ; D \right) ,\quad \forall x,y\in Y 
\eqe 
where the infimum is over all paths $P$ in $Y$ from $x$ to $y$. 
Then $D(\cdot,\cdot ; Y)$ is a metric on $Y$, except that it is allowed to take infinite values.  
\medskip
 
\noindent
We say that $D$ is a \emph{length metric} if for each $x,y\in X$ and each $\delta > 0$, there exists a curve of $D$-length at most $D(x,y) + \delta$ from $x$ to $y$. 
We say that $D$ is a \emph{geodesic metric} if for each $x,y\in X$, there exists a curve of $D$-length exactly $D(x,y)$ from $x$ to $y$.
By~\cite[Proposition 2.5.19]{bbi-metric-geometry}, if $(X,D)$ is compact and $D$ is a length metric then $D$ is a geodesic metric.
\medskip

\subsubsection*{Subsets of the plane}

\noindent
We write $\BB S = [0,1]^2$ for the unit square.
\medskip

\noindent
For a rectangle $R\subset\BB C$ with sides parallel to the coordinate axes, we write $\bdy_{\op L} R$ and $\bdy_{\op R} R$ for its left and right sides.
\medskip

\noindent
For a set $A\subset\BB C$ and $r>0$, we write
\eqbn
B_r(A) := \{z\in \BB C : \text{Euclidean distance from $z$ to $A$}  < r\} 
\eqen
For $z\in\BB C$ we write $B_r(z) = B_r(\{z\})$ for the Euclidean ball of radius $r$ centered at $z$. 
\medskip

\noindent
An \emph{annular region} is a bounded open set $A\subset\BB C$ such that $A$ is homeomorphic to an open or closed Euclidean annulus. If $A$ is an annular region, then $\bdy A$ has two connected components, one of which disconnects the other from $\infty$. We call these components the outer and inner boundaries of $A$, respectively.

\begin{defn}[Distance across and around annuli] \label{def-around-across} 
Let $D$ be a length metric on $\BB C$. 
For an annular region $A \subset\BB C$, we define $D\left(\text{across $A$}\right)$ to be the $D $-distance between the inner and outer boundaries of $A$.
We define $D \left(\text{around $A$}\right)$ to be the infimum of the $D $-lengths of a path in $A$ which disconnect the inner and outer boundaries of $A$. 
\end{defn}

\noindent
Note that both $D(\text{across $A$})$ and $D(\text{around $A$})$ are determined by the internal metric of $D$ on $A$.

\subsection{White-noise decomposition}
\label{sec-wn-decomp}

Here we will introduce a variant of LFPP defined using the white noise decomposition of the GFF, which we will use in place of $D_h^\ep$ for all of Section~\ref{sec-efron-stein} and part of Section~\ref{sec-pt-tight}.

Let $W$ be a space-time white noise on $ \BB C \times [0,1]$, so that $W$ is a distribution (generalized function) and for any $L^2$ functions $f , g : \BB C\times [0,1] \rta \BB R$, the random variable $\int_0^1 \int_{\BB C} f(u,t) g(u,t) W(du , dt)$ (with the integral interpreted in the sense of distributional pairing) is centered Gaussian with variance $\int_0^1 \int_{\BB C} f(u,t) g(u,t) \, du\, dt$. 
Also let
\eqb \label{eqn-heat-kernel}
p_t(z) := \frac{1}{2\pi t} e^{-\frac{|z|^2}{2t} } 
\eqe
be the heat kernel on $\BB C$. 
For $m  ,n \in \BB N_0$ with $m \leq n$, we define
\eqb \label{eqn-phi-def}
\Phi_{m,n}(z) := \int_{2^{-2n}}^{2^{-2m}} \int_{\BB C}  p_{\frac{t}{2}}(z-u) W(du,dt) .
\eqe
Then $\Phi_{m,n}$ is a smooth centered Gaussian process with variance
\eqb
\op{Var} \Phi_{m,n}(z) = (n-m) \log 2.
\eqe

It is clear from the definition that for any $z\in\BB C$, $\Phi_{m,n}(\cdot-z) \eqD \Phi_{m,n}(\cdot)$. The processes $\Phi_{m,n}$ also have the following scale invariance property (see~\cite[Section 2.1]{dddf-lfpp} for an explanation): 
\eqb \label{eqn-phi-scale}
\Phi_{m,n}(2^k\cdot) \eqD \Phi_{m+k , n+k}(\cdot) .
\eqe
We will also need the following basic modulus of continuity estimate for $\Phi_{0,n}$, which is proven in~\cite[Proposition 3]{dddf-lfpp}.

\begin{lem}[\cite{dddf-lfpp}] \label{lem-phi-cont}
For each bounded open set $U\subset\BB C$, there are constants $c_0,c_1> 0$ depending on $U$ such that for each $n\in\BB N$, 
\eqb
\BB P\left[ 2^{-n} \sup_{z\in U} |\nabla \Phi_{0,n}| > x \right] \leq c_0 e^{-c_1 x^2} ,\quad \forall x > 0 .
\eqe
\end{lem}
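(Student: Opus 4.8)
The plan is to treat the appropriately normalized gradient field as a Gaussian process on $\ol U$ and apply a Gaussian concentration inequality. Since $\Phi_{0,n}$ is smooth, $\nabla\Phi_{0,n}$ is a well-defined centered vector-valued Gaussian field, and since $|\nabla\Phi_{0,n}|\leq|\partial_x\Phi_{0,n}|+|\partial_y\Phi_{0,n}|$ it suffices to prove the estimate for $2^{-n}\sup_{z\in U}|\partial_j\Phi_{0,n}(z)|$ with $j\in\{x,y\}$. Fix a bounded open $U'\supset\ol U$ and set $X_n(z):=2^{-n}\partial_j\Phi_{0,n}(z)$ for $z\in\ol{U'}$.

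First I would record two covariance estimates for $X_n$, uniform in $n$. Differentiating~\eqref{eqn-phi-def} under the white-noise integral and using the semigroup identity $\int_{\BB C}p_s(z-u)p_{s'}(z'-u)\,du=p_{s+s'}(z-z')$, one gets $\op{Var}(\partial_j\Phi_{0,n}(z))\asymp\int_{2^{-2n}}^1 t^{-2}\,dt\asymp 2^{2n}$, uniformly in $z$, hence $\sup_{z\in\ol{U'}}\op{Var}(X_n(z))\leq C$ with $C$ independent of $n$; and, splitting the $t$-integral defining $\BB E[(\partial_j\Phi_{0,n}(z)-\partial_j\Phi_{0,n}(z'))^2]$ at $t=|z-z'|^2$ and bounding the two pieces separately (the small-$t$ part by the variance, the large-$t$ part by a Lipschitz bound on $\partial_j p_{t/2}$), one gets $\BB E[(X_n(z)-X_n(z'))^2]\leq C(2^{2n}|z-z'|^2\wedge 1)$ on $\ol{U'}$, i.e.\ the canonical pseudometric satisfies $\rho_n(z,z')\leq C\min(1,2^n|z-z'|)$. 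An equivalent route is the exact scale decomposition $\Phi_{0,n}=\sum_{k=0}^{n-1}\Phi_{k,k+1}$ into independent pieces with $\Phi_{k,k+1}(\cdot)\eqD\Phi_{0,1}(2^k\cdot)$ by~\eqref{eqn-phi-scale}, so that $X_n(z)=\sum_{k=0}^{n-1}2^{k-n}(\nabla\Phi_{0,1})(2^kz)$ is a geometrically weighted sum of rescaled copies of a single fixed smooth stationary field with rapidly decaying correlations; the two bounds then reduce to $n$-free estimates for $\Phi_{0,1}$.

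Next I would apply the Borell--TIS inequality: since $\sup_{z\in\ol{U'}}\op{Var}(X_n(z))\leq C$, for every $u>0$,
\[
\BB P\!\left[\sup_{z\in U}|X_n(z)|>\BB E\Bigl[\sup_{z\in U}|X_n(z)|\Bigr]+u\right]\leq 2e^{-u^2/(2C)} .
\]
This reduces everything to an upper bound on $\BB E[\sup_{z\in U}|X_n(z)|]$, which I would get from Dudley's entropy bound applied to $\rho_n$: the $\rho_n$-diameter of $U$ is $O(1)$ and its $\rho_n$-covering numbers obey $N(U,\rho_n,\varepsilon)\leq(C2^n/\varepsilon)^2$, so $\BB E[\sup_{z\in U}|X_n(z)|]\lesssim\int_0^C\sqrt{\log N(U,\rho_n,\varepsilon)}\,d\varepsilon\lesssim\int_0^C\sqrt{n+\log(C/\varepsilon)}\,d\varepsilon\lesssim\sqrt n$. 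Combining with the previous display and using $(x-C_1\sqrt n)^2\geq x^2/4$ for $x\geq 2C_1\sqrt n$ yields $\BB P[\sup_{z\in U}|X_n(z)|>x]\leq 2e^{-x^2/(8C)}$ once $x$ exceeds a fixed multiple of $\sqrt n$, which is the asserted Gaussian tail in the range of $x$ in which the estimate is applied (equivalently, after absorbing the additive $\sqrt n$ from Borell--TIS); summing the two coordinate contributions finishes the argument. Here one should make the suprema rigorous by restricting to a countable dense subset of $U$ together with an a priori crude bound on $\nabla X_n$, which is routine.

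I do not expect a serious obstacle here; the only point requiring care is getting the sharp $n$-dependence in the two covariance estimates. After normalizing by $2^{-n}$ the field has variance bounded uniformly in $n$ and its modulus of continuity saturates at scale $2^{-n}$, so the sole remaining trace of $n$ is the $\asymp 4^n$ correlation cells of $\nabla\Phi_{0,n}$ inside $U$, which is exactly what produces the $\sqrt n$ in the entropy integral (and this factor is genuine, by Sudakov minoration $\BB E[\sup_U|X_n|]\asymp\sqrt n$). Once the covariance bounds are in hand, Borell--TIS and Dudley are standard machinery.
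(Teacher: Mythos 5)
Your route is not the paper's: the paper does not reprove this estimate at all — its proof is a one-line citation of \cite{dddf-lfpp} (Proposition 3 there, for $\ol U = \BB S$) followed by covering a general bounded $U$ by translates of $[0,1]^2$ and using translation invariance of $\Phi_{0,n}$. What you propose is a from-scratch reconstruction of the cited estimate by the standard Gaussian machinery, and the analytic core is sound: the variance computation $\op{Var}(\partial_j\Phi_{0,n}(z)) \asymp \int_{2^{-2n}}^1 t^{-2}\,dt \asymp 2^{2n}$ and the increment bound $\BB E[(X_n(z)-X_n(z'))^2] \preceq \min(1, 2^{2n}|z-z'|^2)$ are correct, Borell--TIS applies since the normalized field has bounded pointwise variance, and Dudley's bound with covering numbers $(C2^n/\ep)^2$ gives $\BB E[\sup_U |X_n|] \preceq \sqrt n$. (Your argument also works directly on any bounded $U$, so the paper's covering step becomes unnecessary.)

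The gap is in the last step. Your chain of estimates yields the Gaussian tail only for $x \geq C\sqrt n$, and the parenthetical claim that the additive $C\sqrt n$ from Borell--TIS can be ``absorbed'' so as to obtain the stated bound for \emph{all} $x>0$ with constants independent of $n$ is not valid — and cannot be made valid. As you yourself observe via Sudakov, $\BB E\bigl[2^{-n}\sup_U|\nabla\Phi_{0,n}|\bigr] \asymp \sqrt n$ genuinely, and by Borell--TIS the supremum concentrates within $O(1)$ of its mean; hence for any fixed $c_0,c_1$ and any fixed $x$ with $c_0e^{-c_1x^2}<1/2$, the left-hand side of the displayed inequality tends to $1$ as $n\to\infty$ while the right-hand side stays below $1/2$. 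So what your argument actually proves is the lemma with the restriction $x \geq C\sqrt n$, equivalently with an extra prefactor of the form $e^{Cn}$ (e.g.\ $c_0 4^n$), and no argument can do better. This weaker form is exactly what the paper uses: in the proof of Lemma~\ref{lem-induct-full} the bound is invoked with a $4^K$ prefactor (see~\eqref{eqn-use-phi-grad}), and in every application ($x \asymp n^{2/3}$ in Lemma~\ref{lem-mu-exponent}, $x=\log T$ with $T>2^{\sqrt K}$ or $T>2^{K^{1/2+3\ep_0}}$, $x\asymp \log T+\zeta k$ in Lemma~\ref{lem-pt-dist}) one has $x \succeq \sqrt n$, so nothing downstream is affected. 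But as a proof of the statement as literally written, you should either restrict the range of $x$ (or insert the prefactor) and flag the discrepancy explicitly, rather than assert that the $\sqrt n$ shift can be absorbed into $n$-independent constants.
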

\begin{proof}
The special case when $\ol U = \BB S$ is the unit square is~\cite[Proposition 3]{dddf-lfpp}. The case of a general choice of $U$ follows by covering $U$ by translates of $[0,1]^2$ and using the translation invariance of the law of $\Phi_{0,n}$. 
\end{proof}

For any $z,w\in\BB C$, the correlation of $\Phi_{m,n}(z)$ and $\Phi_{m,n}(w)$ is positive. In some of our arguments this property is undesirable since we want to use percolation techniques and/or the Efron-Stein inequality, for which exact long-range independence is useful. 
We therefore define a truncated version $\Psi_{m,n}$ of $\Phi_{m,n}$ which lacks the scale invariance property~\eqref{eqn-phi-scale} but has a finite range of dependence. 
In particular, let $\chi : [0,1] \rta \BB C$ be a smooth, radially symmetric bump function which is identically equal to 1 on $B_1(0)$ and identically equal to 0 on $\BB C\setminus B_2(0)$. Also fix a positive constant $\ep_0 \in (0,1/100)$ which will be chosen later (in a universal manner). Using the notation~\eqref{eqn-heat-kernel}, we define
\eqb \label{eqn-heat-kernel-trunc}
\wt p_t(z) := p_t(z) \chi(z/\sigma_t) \quad\text{where} \quad \sigma_t := \frac{1}{100} \sqrt t \left| \log_2 t \right|^{\ep_0}. 
\eqe
The reason for the choice of $\sigma_t$ is that a Brownian motion is unlikely to travel distance more than $\sigma_t$ in $t$ units of time, so the mass of $p_t(z)$ is concentrated in the $\sigma_t$-neighborhood of the origin. This makes it so that $\wt p_t$ is a good approximation for $p_t$. 

For $m,n\in\BB N$ with $m < n$, we define
\eqb \label{eqn-phi-def-trunc}
\Psi_{m,n}(z) := \int_{2^{-2n}}^{2^{-2m}} \int_{\BB C}  \wt p_{\frac{t}{2}}(z-u) W(du,dt) .
\eqe
Since $\wt p_t$ is smooth, $\Psi_{m,n}$ is a smooth centered Gaussian process. 
Furthermore, since $\wt p_t$ is supported on the Euclidean ball of radius $2 \sigma_t$ centered at zero, it follows that $\Psi_{m,n}(z)$ and $\Psi_{m,n}(w)$ are independent if $|z-w| \geq 2 \sigma_{2^{-2m}} $, which is the case provided $|z-w| \geq 2^{-m} m^{\ep_0}$. 

The following lemma, which is~\cite[Proposition 5]{dddf-lfpp}, allows us to transfer results between $\Phi_{m,n}$ and $\Psi_{m,n}$. 

\begin{lem}[\cite{dddf-lfpp}] \label{lem-field-compare}
Let $U\subset\BB C$ be a bounded open set. There are constants $c_0,c_1 > 0$ depending only on $U$ such that for $x > 0$, 
\eqb
\BB P\left[ \sup_{n \in\BB N} \sup_{z\in \ol U} |\Phi_{0,n}(z) - \Psi_{0,n}(z)| > x \right] \leq c_0 e^{-c_1 x^2} .
\eqe
\end{lem}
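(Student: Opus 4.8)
The plan is to exploit the exact independence across scales built into the white-noise construction. For $k\in\BB N_0$ write $\delta_k := \Phi_{k,k+1} - \Psi_{k,k+1}$, so that $\Phi_{0,n} - \Psi_{0,n} = \sum_{k=0}^{n-1}\delta_k$ and the smooth centered Gaussian fields $\{\delta_k\}_{k\geq 0}$ are \emph{independent}, since $\delta_k$ is built from the restriction of $W$ to the disjoint time strip $\BB C\times[2^{-2(k+1)},2^{-2k}]$. Setting $M_k := \sup_{z\in\ol U}|\delta_k(z)|$, we get the pointwise domination
\[
\sup_{n\in\BB N}\sup_{z\in\ol U}|\Phi_{0,n}(z) - \Psi_{0,n}(z)| \;\leq\; \sum_{k=0}^\infty M_k \;=:\; S ,
\]
so it suffices to prove that $S$ has a Gaussian upper tail.

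First I would record the one-scale estimates. Since $\chi\equiv 1$ on $B_1(0)$, the kernel $p_{t/2} - \wt p_{t/2} = p_{t/2}\,(1-\chi(\cdot/\sigma_{t/2}))$ is supported on $\{|w|\geq\sigma_{t/2}\}$ and dominated there by $p_{t/2}$; an explicit Gaussian tail computation then gives $\int_{\BB C}(p_{t/2}-\wt p_{t/2})(w)^2\,dw \leq \int_{|w|\geq\sigma_{t/2}}p_{t/2}(w)^2\,dw \lesssim t^{-1}\exp(-c|\log_2 t|^{2\ep_0})$, using $\sigma_{t/2}^2/(t/2)\asymp|\log_2 t|^{2\ep_0}$. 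Integrating over the multiplicatively bounded interval $t\in[2^{-2(k+1)},2^{-2k}]$ yields $\op{Var}(\delta_k(z)) \leq v_k := C\exp(-c\,k^{2\ep_0})$ for all $z$, and differentiating the heat kernel (which costs a factor of order $t^{-1/2}\asymp 2^k$) gives in the same way $\op{Var}(2^{-k}\nabla\delta_k(z)) \lesssim v_k$. The essential point is that $\sum_k v_k < \infty$, because $2\ep_0 > 0$.

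Next I would apply standard Gaussian-process bounds on the compact set $\ol U$ at each scale. The variance bound and the gradient bound together control the canonical pseudometric of $\delta_k$, which decorrelates on spatial scale $2^{-k}$; Dudley's entropy bound then gives $\BB E[M_k] \lesssim \sqrt{k\,v_k}$. (Here it is important to use chaining rather than a crude net bound: covering $\ol U$ by a $2^{-k}$-net of $\asymp 2^{2k}$ points and controlling oscillation by the gradient would produce a spurious factor of order $2^k$, which is not summable — the correct fact is that the rapidly oscillating field $\delta_k$ has amplitude of order $\sqrt{k\,v_k}$, not $2^k\sqrt{v_k}$.) Meanwhile the Borell--TIS inequality shows $M_k - \BB E[M_k]$ is centered and sub-Gaussian with variance proxy $v_k$. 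Since $\sum_k\sqrt{k\,v_k}$ and $\sum_k v_k$ are both finite, $\mu := \sum_k\BB E[M_k] < \infty$ and $S-\mu = \sum_k(M_k - \BB E[M_k])$ is a sum of independent centered sub-Gaussian variables, hence itself sub-Gaussian with variance proxy $\sigma^2\asymp\sum_k v_k$; a Chernoff bound gives $\BB P[S > x] \leq \BB P[S-\mu > x-\mu] \leq c_0 e^{-c_1 x^2}$ after splitting into $x\geq 2\mu$ and $x<2\mu$. Finally, to pass from the unit square to a general bounded open $U$, cover $\ol U$ by $O(1)$ translates of $[0,1]^2$ and use the translation invariance of the joint law of $(\Phi_{0,n},\Psi_{0,n})$ inherited from $W$; this affects only $c_0,c_1$.

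The step I expect to require the most care is the one-scale variance decay $\op{Var}(\delta_k)\lesssim\exp(-c\,k^{2\ep_0})$ — i.e., checking that the truncation radius $\sigma_t = \tfrac{1}{100}\sqrt t\,|\log_2 t|^{\ep_0}$ is large enough that the $L^2$ mass of $p_{t/2}$ outside $B_{\sigma_{t/2}}(0)$ is stretched-exponentially small in $k$ — together with the observation that the correct per-scale supremum bound is $\sqrt{k\,v_k}$ rather than $2^k\sqrt{v_k}$, which forces the use of chaining. Everything after that is routine manipulation of sums of independent sub-Gaussian random variables.
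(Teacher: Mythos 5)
Your proof is correct, but it takes a different route from the paper: the paper's entire proof of this lemma is a citation of \cite[Proposition 5]{dddf-lfpp} for the case $\ol U = \BB S$, followed by the covering-and-translation-invariance step that you also perform at the end. What you have done is essentially reprove the cited input from scratch: the scale decomposition $\Phi_{0,n}-\Psi_{0,n}=\sum_{k}\delta_k$ with independence across the disjoint time strips, the per-scale variance decay $v_k \preceq e^{-c k^{2\ep_0}}$ coming from the Gaussian mass of $p_{t/2}$ outside radius $\sigma_{t/2}$ (this computation is right, and it is indeed the place where $\ep_0>0$ is used to get summability), Dudley plus Borell--TIS at each scale, and then a sum of independent sub-Gaussian fluctuations. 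All of these steps check out, including the domination of $\sup_n\sup_z|\Phi_{0,n}-\Psi_{0,n}|$ by $\sum_k M_k$ and the treatment of small $x$ by adjusting $c_0$. The trade-off is clear: the paper's argument is two lines because it defers the analytic content to \cite{dddf-lfpp}, while yours is self-contained and makes transparent exactly which feature of the truncation radius $\sigma_t$ drives the estimate. One small quibble: your parenthetical claim that a $2^{-k}$-net argument necessarily produces a spurious factor $2^k$ is not quite right --- controlling the cell oscillation by $2^{-k}\sup_{\ol U}|\nabla\delta_k|$ and bounding that supremum (itself of order $2^k\sqrt{k v_k}$) also yields $\sqrt{k v_k}$; chaining is convenient here but not forced. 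This does not affect the validity of your argument.
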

\begin{proof}
The special case when $\ol U = \BB S$ is the unit square is~\cite[Proposition 5]{dddf-lfpp}. The case of a general choice of $U$ follows by covering $U$ by translates of $[0,1]^2$ and using the translation invariance of the joint law of $\Phi_{0,n}$ and $\Psi_{0,n}$.
\end{proof}

\subsection{Basic definitions for white noise LFPP}
\label{sec-lfpp-def}

Fix $\xi > 0$.
For $m, n\in\BB N$ with $m\leq n$, we define the LFPP metric associated with the field $\Phi_{m,n}$ of~\eqref{eqn-phi-def} by
\eqb \label{eqn-lfpp-def}
D_{m,n}(z,w) := \inf_P \int_0^1 e^{\xi \Phi_{m,n}(P(t)) } |P'(t)| \,dt
\eqe
where the infimum is over all piecewise continuously differentiable paths $P : [0,1] \rta \BB C$ joining $z$ and $w$. 
We also define 
\eqb \label{eqn-bdy-dist-def}
L_{0,n} := D_{0,n}\left( \bdy_{\op L} \BB S , \bdy_{\op R} \BB S  ; \BB S\right) ,
\eqe
i.e., $L_{0,n}$ is the $D_{0,n}$-distance between the left and right boundaries of the unit square restrict to paths which are contained in the (closed) unit square.
The random variable $L_{0,n}$ will be the main observable considered in our proofs. 

For $p \in [0,1]$ and $n\in\BB N$, define the $p$th quantile
\eqb \label{eqn-quantile-def}
\ell_n(p) := \inf\left\{ x \in \BB R : \BB P\left[ L_{0,n} \leq x \right] \leq p \right\}. 
\eqe
It is easy to see from the fact that $\Phi_{0,n}$ is a Gaussian process with non-zero variances that in fact $\BB P[L_{0,n} \leq \ell_n(p)] = p$. 
We define the median 
\eqb \label{eqn-median-def}
\lambda_n := \ell_n(1/2) , 
\eqe
which will be the normalizing factor for distances in our scaling limit results (note that $\lambda_n$ is defined in the same way as $\frk a_\ep$ from~\eqref{eqn-gff-constant} but with $\Phi_{0,n}$ in place of $h_\ep^*$). 
We also define the maximal quantile ratio
\eqb \label{eqn-quantile-max}
\Lambda_n(p) := \max_{k \leq n} \frac{\ell_n(1-p)}{\ell_n(p)} ,\quad \forall p \in (0,1/2) .
\eqe

We define $\wt D_{m,n}$, $\wt L_{0,n}$, $\wt \ell_n(p)$, and $\wt\Lambda_n(p)$ in the same manner as about but with the field $\Psi_{m,n}$ of~\eqref{eqn-phi-def-trunc} in place of $\Phi_{m,n}$. 
The starting point of our proofs is some a priori estimates for $D_{0,n}$ and $\wt D_{0,n}$. These estimates were established (for general values of $\xi$) in~\cite[Section 4]{dddf-lfpp} using comparison to percolation. 

\begin{prop} \label{prop-perc-estimate}
For each $\xi > 0$, there is a constant $\frk p \in (0,1/2)$ such that the following is true. 
Let $U\subset \BB C$ be an open set and let $K_1,K_2\subset U$ be disjoint compact connected sets which are not singletons. 
Recall the definition of the internal metric $D_{0,n}(\cdot,\cdot ; U)$ on $U$ from Section~\ref{sec-basic}. 
There are constants $c_0,c_1 >0$ depending only on $U,K_1,K_2,\xi$ such that for $n\in\BB N$ and $T > 3$, 
\eqb \label{eqn-perc-lower}
\BB P\left[ D_{0,n}(K_1,K_2 ; U)  < T^{-1} \ell_n(\frk p) \right] \leq c_0 e^{-c_1 (\log T)^2} 
\eqe
and
\eqb \label{eqn-perc-upper}
\BB P\left[ D_{0,n}(K_1,K_2 ; U)  > T \Lambda_n(\frk p) \ell_n(\frk p) \right] \leq c_0 e^{-c_1 (\log T)^2 / \log\log T} .
\eqe
The same is true with $\wt D_{0,n}$ in place of $D_{0,n}$. 
\end{prop}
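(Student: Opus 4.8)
These estimates are exactly the ``a priori bounds'' obtained in~\cite[Section~4]{dddf-lfpp} by comparison to percolation, and that argument works for every $\xi>0$; I will only indicate its structure. The inputs are: (i) the RSW-type crossing estimates for $D_{0,n}$ from~\cite{dddf-lfpp} (also valid for all $\xi$), namely that there is a universal $\frk p\in(0,1/2)$ such that the $D_{0,n}$-distance across a fixed rectangle, or around a fixed annular region, is $\ge\ell_n(\frk p)$ with probability $\ge 1-\frk p$ and is $\le C\,\ell_n(1-\frk p)\le C\,\Lambda_n(\frk p)\,\ell_n(\frk p)$ with probability $\ge 1-\frk p$, for a universal $C$; (ii) the scale covariance~\eqref{eqn-phi-scale}, which lets us write the restriction of $\Phi_{0,n}$ to a dyadic square of side $2^{-k}$, rescaled to $\BB S$, as the sum of an independent ``coarse'' Gaussian field that is essentially constant on the square (with variance $\asymp k$) and a field with the law of $\Phi_{0,n-k}$ --- the $e^{\xi\cdot(\text{coarse})}$ factor here is precisely what forces the $\Lambda_n(\frk p)$ in~\eqref{eqn-perc-upper}; and (iii) Lemma~\ref{lem-field-compare} together with the finite range of dependence of $\Psi_{0,n}$, which we invoke whenever genuine spatial independence is needed (this also yields the statement for $\wt D_{0,n}$ at no extra cost). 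Finally, since $K_1,K_2$ are disjoint compact connected non-singletons in the open set $U$, we fix once and for all (depending only on $U,K_1,K_2$) a finite ``tube'' of overlapping mesoscopic dyadic squares contained in $U$ joining $K_1$ to $K_2$, and a fixed annular region $A$ with $\ol A\subset U$ separating $K_1$ from $K_2$; the non-singleton hypothesis guarantees that $K_1$ and $K_2$ each run across one of the squares of the tube, so a crossing of that square can be concatenated into the set.

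\emph{Upper bound~\eqref{eqn-perc-upper}.} Fix a mesoscopic scale $\delta=2^{-k_0}$ and call a $\delta$-square $S$ \emph{good} if the $D_{0,n}$-distances across $S$ and across the $2\delta\times\delta$ rectangles straddling its sides, as well as around the annulus $\ol{B_\delta(S)}\setminus S$, are all at most $C\Lambda_n(\frk p)\ell_n(\frk p)$. By (i)--(ii) each square is good with probability as close to $1$ as we like (for $C,k_0$ large), and by (iii) goodness of well-separated squares is independent, so good squares stochastically dominate a supercritical site percolation. One then shows that, off an event of probability $\le c_0 e^{-c_1(\log T)^2/\log\log T}$, there is a path from $K_1$ to $K_2$ in $U$ obtained by concatenating good crossings: one descends through $O(\log T)$ dyadic scales near $K_1$ and near $K_2$ until the relevant squares become good, and links these through a good cluster in the tube; the probability bound comes from summing the $e^{-c(\log\cdot)^2}$ upper tails of the (at most polynomially many in $T$) crossing contributions, which is exactly where the $\log\log T$ is lost. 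Since each good crossing has $D_{0,n}$-length $\le C\Lambda_n(\frk p)\ell_n(\frk p)$, the concatenated path has length $\le T\Lambda_n(\frk p)\ell_n(\frk p)$ on this event.

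\emph{Lower bound~\eqref{eqn-perc-lower}.} This we prove by induction on $n$. Any path from $K_1$ to $K_2$ staying in $U$ must cross the separating annulus $A$, so $D_{0,n}(K_1,K_2;U)\ge D_{0,n}(\text{across } A)$. Decompose $A$ into $\asymp\min(\log T,n)$ disjoint concentric sub-annuli $A_1,\dots,A_J$ whose radial widths decrease dyadically from a fixed positive value down to $\max(2^{-n},1/T)$; a crossing of $A$ crosses every $A_j$, so by superadditivity of crossing distances across disjoint concentric annuli, $D_{0,n}(\text{across } A)\ge\sum_{j=1}^J D_{0,n}(\text{across } A_j)$. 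On the event $\{D_{0,n}(K_1,K_2;U)<T^{-1}\ell_n(\frk p)\}$ every summand is $<T^{-1}\ell_n(\frk p)$. Rescaling $A_j$ (of width $\asymp 2^{-j}$) to unit size via~\eqref{eqn-phi-scale} and splitting off its coarse field as above, the event $\{D_{0,n}(\text{across } A_j)<T^{-1}\ell_n(\frk p)\}$ has probability controlled by the RSW estimates together with the inductive hypothesis applied at parameter $n-j<n$ (to the fine field) and a Gaussian lower-tail bound (to the coarse shift). After passing to $\Psi_{0,n}$ and processing the sub-annuli from the innermost outward through the associated filtration, these $\asymp\log T$ contributions become approximately independent, and a careful tracking of constants through the induction shows that their product is $\le c_0 e^{-c_1(\log T)^2}$ with $c_0,c_1$ independent of $n$. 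In the remaining bounded range of $n$ one may instead argue directly from $D_{0,n}(\text{across } A)\ge (\text{width of } A)\cdot e^{\xi\min_{\ol A}\Phi_{0,n}}$ and a Borell--TIS tail bound for $\min_{\ol A}\Phi_{0,n}$.

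\textbf{Main obstacle.} The upper bound is a fairly standard concatenation-through-a-supercritical-cluster estimate; the crux is the lower bound, where one must (a) extract enough (conditional) independence among the $D_{0,n}$-crossing distances of the $\asymp\log T$ concentric sub-annuli despite the long-range correlations of $\Phi_{0,n}$, and (b) run the induction on $n$ with bookkeeping tight enough that the per-scale and per-step losses do not accumulate, so the final constants are uniform in $n$. All of this is carried out for general $\xi>0$ in~\cite[Section~4]{dddf-lfpp}, which we therefore simply cite.
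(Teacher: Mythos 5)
Your proposal is correct in substance, but it takes a noticeably different (and heavier) route than the paper. The paper does not re-run any percolation or multi-scale machinery at this stage: it cites two ready-made estimates from~\cite{dddf-lfpp} (Corollary 17 and Proposition 18 there), which already give, for a fixed $1\times 3$ rectangle and for all $\xi>0$, the lower tail at threshold $T^{-1}\ell_n(\frk p)$ with bound $e^{-c(\log T)^2}$ and the upper tail at threshold $T\Lambda_n(\frk p)\ell_n(\frk p)$ with bound $e^{-c(\log T)^2/\log\log T}$, uniformly in $n$. Given these, the general statement follows from a purely deterministic gluing plus a union bound over a \emph{fixed, finite, $T$- and $n$-independent} family of aspect-ratio-$3$ rectangles in $U$: for~\eqref{eqn-perc-lower} one chooses the rectangles so that any path from $K_1$ to $K_2$ in $U$ crosses one of them in the easy direction, and for~\eqref{eqn-perc-upper} so that hard-direction crossings concatenate into a path from $K_1$ to $K_2$; the $\wt D_{0,n}$ case is then Lemma~\ref{lem-field-compare}. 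By contrast, you sketch reconstructing the interior of~\cite[Section~4]{dddf-lfpp} itself: a supercritical percolation of good squares with an $O(\log T)$-scale descent near $K_1,K_2$ for the upper bound, and an induction on $n$ over $\asymp\log T$ concentric sub-annuli with approximate independence for the lower bound. Since you ultimately cite that section wholesale for the hard steps, this is not a gap, but it is redundant once the rectangle estimates are taken as input, and it buys nothing here: the $T$-dependence in the proposition lives entirely inside the single-rectangle tails, so no $T$-dependent geometric construction is needed.

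Two imprecisions in your sketch are worth flagging. First, your claim that a mesoscopic square is good ``with probability as close to $1$ as we like'' does not follow from the quantile bounds in your input (i); those only give probability $\geq 1-\frk p$, which may be barely above $1/2$. The boost to near-$1$ probability requires the quantitative upper tail in $T$ of the cited rectangle estimate (i.e.\ exactly the content of Proposition 18 of~\cite{dddf-lfpp}), so as written your step is mildly circular. Second, your lower-bound induction on $n$ with conditional independence across sub-annuli is precisely the delicate part of~\cite{dddf-lfpp}; presenting it as part of the proof of this proposition, rather than as the content of the citation, obscures the fact that here one only needs the output of that argument for a fixed rectangle together with translation, rotation, and scaling invariance of $\Phi_{0,n}$ and a finite union bound.
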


It is easy to see from Proposition~\ref{prop-perc-estimate} that both estimates from the proposition also hold with $D_{0,n}(K_1,K_2;U)$ replaced by either $D_{0,n}(\text{around $A$})$ or $D_{0,n}(\text{across $A$})$ for an annular region $A$ (Definition~\ref{def-around-across}), with the constants $c_0,c_1$ depending only on $A$ and $\xi$. 

\begin{proof}[Proof of Proposition~\ref{prop-perc-estimate}]
It is shown in~\cite[Corollary 17 and Proposition 18]{dddf-lfpp} that there is a constant $\frk p \in (0,1/2)$ as in the proposition statement and constants $a_0 ,a_1 > 0$ depending only on $\xi$ such that if $R := [0,1] \times [0,3]$, then for $n\in\BB N$ and $T>3$, 
\eqb \label{eqn-perc-lower0}
\BB P\left[ D_{0,n}(\bdy_{\op L} R , \bdy_{\op R} R ; R )  < T^{-1} \ell_n(\frk p) \right] \leq c_0 e^{-c_1 (\log T)^2} 
\eqe
and
\eqb \label{eqn-perc-upper0}
\BB P\left[ D_{0,n}(\bdy_{\op U} R , \bdy_{\op T} R ; R )  > T \Lambda_n(\frk p) \ell_n(\frk p) \right] \leq c_0 e^{-c_1 (\log T)^2 / \log\log T} .
\eqe

To deduce~\eqref{eqn-perc-lower} from~\eqref{eqn-perc-lower0}, we choose a finite collection of rectangles $R_1,\dots,R_K \subset U$ (in a manner depending only on $U,K_1,K_2$) which each have aspect ratio 3 with the following property: any path from $K_1$ to $K_2$ in $U$ must cross one of the $R_k$'s in the ``easy" direction (i.e., it must cross between the two longer sides of $R_k$). We then apply~\eqref{eqn-perc-lower0} together with the scale, translation, and rotation invariance properties of $\Phi_{0,n}$ to simultaneously lower-bound the distance between the two longer sides of $R_k$ for each $k\in [1,K]_{\BB Z}$. This gives~\eqref{eqn-perc-lower}.  

To deduce~\eqref{eqn-perc-upper} from~\eqref{eqn-perc-upper0}, we apply a similar argument. We look at a collection of rectangles $R_1,\dots,R_K \subset U$ with aspect ratio 3 with the following property: if $\pi_k$ for $k\in [1,K]_{\BB Z}$ is a path in $R_k$ between the two shorter sides of $R_k$, then the union of the $\pi_k$'s contains a path from $K_1$ to $K_2$. We then apply~\eqref{eqn-perc-upper0} to upper bound the $D_{0,n}$-distance between the two shorter sides of each $R_k$ and thereby deduce~\eqref{eqn-perc-upper}. 

The bounds with $\wt D_{0,n}$ in place of $D_{0,n}$ follow from the bounds for $D_{0,n}$ combined with Lemma~\ref{lem-field-compare}. 
\end{proof}

\subsection{Existence of an exponent for white noise LFPP}
\label{sec-Q-wn}

We will need a variant of Proposition~\ref{prop-Q} for the white noise LFPP metrics $D_{0,n}$.  

\begin{prop} \label{prop-Q-wn}
For each $\xi  >0$, there exists $Q = Q(\xi) > 0$ such that
\eqb \label{eqn-Q-wn}
\lambda_n = 2^{-n(1-\xi Q ) + o_n(n) } ,\quad \text{as $n\rta\infty$}. 
\eqe 
Furthermore, $\xi \mapsto Q(\xi)$ is continuous, strictly decreasing on $(0,0.7)$, non-increasing on $(0,\infty)$, and satisfies $\lim_{\xi \rta \infty} Q(\xi) =0$. 
\end{prop}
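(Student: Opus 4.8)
\textbf{Proof proposal for Proposition~\ref{prop-Q-wn}.}

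The plan is to establish existence of the exponent via a subadditivity argument, then import the remaining qualitative properties from the literature. First I would note the scale-invariance property~\eqref{eqn-phi-scale}, which says $\Phi_{0,n}(2^k\cdot) \eqD \Phi_{k,n+k}(\cdot)$; combined with the fact that $\Phi_{0,n}$ and $\Phi_{k,n}$ differ only by the "coarse field" $\Phi_{0,k}$, which is a smooth Gaussian field whose maximum over a bounded set has Gaussian tails (Lemma~\ref{lem-phi-cont} plus the variance computation $\op{Var}\Phi_{0,k}(z) = k\log 2$), this lets one relate $L_{0,n}$ at scale $n$ to a near-product of independent copies of $L_{0,n-k}$-type quantities at smaller scales. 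Concretely, one partitions a left-right crossing of $\BB S$ into crossings of $2^{-k}$-sub-squares: a path crossing $\BB S$ left-to-right must cross each of $2^k$ horizontally-consecutive $2^{-k}\times 2^{-k}$ squares in a column, and there are $2^k$ such columns to choose from. Up to the multiplicative error $e^{\xi \sup|\Phi_{0,k}|}$ coming from replacing $\Phi_{0,n}$ by $\Phi_{k,n}$ on each sub-square, this yields a submultiplicative-type relation for the medians $\lambda_n$ after taking logarithms, i.e., $\log\lambda_n \leq \log \lambda_{n-k} - (\text{something linear in }k) + (\text{Gaussian-tail error})$, and a matching supermultiplicative bound from concatenating crossings of a single column. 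Fekete's subadditive lemma applied to an appropriately corrected sequence then gives that $-\frac{1}{n}\log_2\lambda_n$ converges to a limit; writing this limit as $1 - \xi Q$ defines $Q = Q(\xi)$, and one checks $Q > 0$ is exactly the positivity statement which, as the text notes, is the content of~\cite{lfpp-pos} (more precisely, that $\lambda_n$ does not decay faster than $2^{-n}$ up to subexponential corrections). The $o_n(n)$ error in~\eqref{eqn-Q-wn} absorbs the accumulated Gaussian-tail corrections from the coarse field, which are of order $O(\sqrt{n}\log n)$ hence $o(n)$, so the subadditivity is "approximate" but with a sublinear error term, and one must use a standard variant of Fekete's lemma for nearly-subadditive sequences (e.g., de Bruijn–Erdős).

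The main obstacle, as usual in these comparison arguments, is controlling the multiplicative distortion when passing between $\Phi_{0,n}$ and $\Phi_{k,n}$ on each sub-square and making sure the error is genuinely $o(n)$ after summing over the $\asymp 2^k$ squares: one needs the maximum of $|\Phi_{0,k}|$ over $\BB S$ to be $O(k)$ with overwhelming probability (which follows from Lemma~\ref{lem-phi-cont} together with a union bound / Borell-TIS, since the field has variance $k\log 2$ and good modulus of continuity at scale $2^{-k}$), and one needs to verify that the bad event on which this fails contributes negligibly to the median. Keeping $k$ fixed (independent of $n$) and sending $n\to\infty$ first, then optimizing over $k$, is the cleanest route; alternatively one can take $k = k(n)$ growing slowly. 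The lower-bound (supermultiplicative) direction is easier and uses only concatenation of crossings within a fixed column together with the a priori estimates of Proposition~\ref{prop-perc-estimate} to control the chance that any one crossing distance is atypically small.

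For the qualitative properties of $\xi\mapsto Q(\xi)$: continuity and monotonicity do not follow from the subadditivity argument alone, and the cleanest approach is to invoke the results already cited in the text. Specifically, $Q(\xi) > 0$ is~\cite{lfpp-pos}; the statements that $Q$ is continuous, strictly decreasing on $(0,0.7)$, non-increasing on $(0,\infty)$, and $\lim_{\xi\to\infty}Q(\xi) = 0$ are established in~\cite{gp-lfpp-bounds} (for a variant of LFPP, but the comparison lemmas of~\cite{dg-lqg-dim,ang-discrete-lfpp,ding-goswami-watabiki} referenced in the footnote after Proposition~\ref{prop-Q} show all variants have the same distance exponent, so the properties transfer). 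So the only genuinely new work here is the Fekete-type existence argument for $\lambda_n$ in the supercritical regime; everything else is a citation. Finally, I would remark that Proposition~\ref{prop-Q} itself (for $\frk a_\ep$ rather than $\lambda_n$) is deduced from this proposition in Section~\ref{sec-gff-compare} via the comparison between $D_h^\ep$ and $D_{0,n}$, so it suffices here to treat the white-noise version.
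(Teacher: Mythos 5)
There is a genuine gap in the core of your argument, namely in how you control the error from replacing $\Phi_{0,n}$ by $\Phi_{k,n}$ on the sub-squares. You propose to absorb this replacement into a multiplicative factor $e^{\xi\sup_{\BB S}|\Phi_{0,k}|}$ and you correctly note that this maximum is only $O(k)$ with high probability; but $\sup_{\BB S}|\Phi_{0,k}|$ is in fact of order $2k\log 2$ (it is the maximum of a log-correlated field of pointwise variance $k\log 2$ over $\asymp 4^k$ essentially independent locations), so the resulting error in $\log_2\lambda_n$ is \emph{linear} in $k$, not $o(k)$, and certainly not the $O(\sqrt n\log n)$ you claim (that figure conflates the pointwise standard deviation with the maximum). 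An approximate-subadditivity relation with a linear error term is useless here: de Bruijn--Erd\H{o}s/Fekete-with-error requires the error $f(k)$ to satisfy $\sum_k f(k)/k^2<\infty$ (or $f(k)=o(k)$), and even granting existence of a limit by other means, a linear error would shift the exponent by a constant, so $Q$ would not be identified. The fixed-row (or best-row) decomposition does not rescue this: the minimum over rows of $\sum_S 2^{-k}e^{\xi\Phi_{0,k}(v_S)}$ can exceed $L_{0,k}$ by a factor exponentially large in $k$, since the scale-$k$ geodesic is not confined to a straight row, so you cannot relate your upper bound back to $\lambda_k$.

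The paper's proof (Lemmas~\ref{lem-Q-compare}--\ref{lem-mu-exponent}) circumvents exactly this point by choosing the sub-squares \emph{adaptively}: it takes the path of minimal $D_{0,m}$-length around the square annulus $\mcl A_{\BB S}$, lets $S_0,S_1,\dots$ be the $2^{-m}$-squares it visits, and observes that on a gradient-regularity event the sum $\sum_j e^{\xi\Phi_{0,m}(v_{S_j})}$ is bounded by $2^{m+n^{2/3}}$ times $D_{0,m}(\text{around }\mcl A_{\BB S})$ itself (see~\eqref{eqn-m-lower}); combining this with the upper bound~\eqref{eqn-n-upper} and the independence of $\Phi_{0,m}$ and $D_{m,n}$ yields the genuine submultiplicative inequality $\mu_n\preceq 2^{2n^{2/3}}\mu_m\mu_{n-m}$ for the \emph{expected around-annulus} distances $\mu_n$, with only a $2^{o(n)}$ error coming from the gradient bound within small squares rather than from the field maximum. (Working with around-annulus loops also makes concatenation automatic, avoiding the connector paths your row decomposition would need, and $\mu_n$ is compared with $\lambda_n$ up to $e^{c\sqrt n}$ via the Gaussian concentration bound of Lemma~\ref{lem-a-priori-conc}.) Your treatment of the remaining points is fine and matches the paper: $Q>0$ is imported from~\cite{lfpp-pos} after a transfer to the white-noise variant (Section~\ref{sec-Q-pos}), and continuity, monotonicity and the limit at infinity are quoted from~\cite{gp-lfpp-bounds} via the comparison of variants. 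But the existence of the exponent is the one genuinely new step, and as written your argument for it does not go through.
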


From now on we define $Q(\xi)$ as in Proposition~\ref{prop-Q-wn}. 
We will show that Proposition~\ref{prop-Q} holds (with the same value of $Q$) in Section~\ref{sec-gff-compare}. 

In this subsection, we will establish all of Proposition~\ref{prop-Q-wn} except for the statement that $Q > 0$, which will be proven in Section~\ref{sec-Q-pos} using the results of~\cite{lfpp-pos}. 
To prove the existence of $Q$, we will use a subadditivity argument. 
We first need an a priori estimate for the maximal quantile ratio from~\eqref{eqn-quantile-max}. 

\begin{lem} \label{lem-a-priori-conc}
Let $\frk p$ be as in Proposition~\ref{prop-perc-estimate}. 
There is a constant $c > 0$ depending only on $\xi$ such that for each $n\in\BB N$,
\eqb \label{eqn-a-priori-conc}
\Lambda_n(\frk p) \leq e^{c\sqrt n}  .
\eqe
\end{lem}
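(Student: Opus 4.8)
The plan is to bound the log quantile ratio $\log \Lambda_n(\frk p) = \max_{k\le n}\log\big(\ell_k(1-\frk p)/\ell_k(\frk p)\big)$ via a Gaussian concentration inequality applied to $\log L_{0,k}$ for each $k\le n$. The key observation is that $\log L_{0,n}$ is a Lipschitz functional of the Gaussian field $\Phi_{0,n}$ with respect to an appropriate norm, with Lipschitz constant controlled by $\xi$ times a bound on the pointwise fluctuation scale of the field. Concretely, if we write $\Phi_{0,n} = \sum_{j=0}^{n-1}\Phi_{j,j+1}$, each summand $\Phi_{j,j+1}$ has variance $\log 2$ and is smooth; shifting the $j$-th increment by a function $f_j$ multiplies the $D_{0,n}$-length of every path by a factor between $e^{\xi\min f_j}$ and $e^{\xi\max f_j}$ (this is the Weyl-scaling observation used throughout), so $\log L_{0,n}$ changes by at most $\xi\|f_j\|_\infty$. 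Summing over the $n$ scales, $\log L_{0,n}$ is, as a function of the independent Gaussian increments, $\xi\sqrt n$-Lipschitz in an $\ell^2$-type sense after one checks that the relevant Cameron-Martin/covariance structure only costs a bounded multiplicative constant per scale (this uses that $\Phi_{j,j+1}$ has a spatially homogeneous, order-$1$ reproducing-kernel structure on the unit square).

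From this Lipschitz property, the Borell-TSIRELSON-Ibragimov-Sudakov Gaussian concentration inequality gives, for each $k\le n$,
\eqb
\BB P\left[ \left| \log L_{0,k} - \op{median}(\log L_{0,k}) \right| > t \right] \leq 2 e^{-c t^2 / (\xi^2 k)} ,\quad \forall t > 0 ,
\eqe
for a universal constant $c>0$. In particular the $\frk p$- and $(1-\frk p)$-quantiles of $\log L_{0,k}$ lie within distance $O(\xi\sqrt{k}\sqrt{\log(1/\frk p)}) = O_\xi(\sqrt k)$ of the median, so $\log(\ell_k(1-\frk p)/\ell_k(\frk p)) \le c'\sqrt k$ for a constant $c'$ depending only on $\xi$. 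Taking the maximum over $k\le n$ yields $\log\Lambda_n(\frk p) \le c'\sqrt n$, which is~\eqref{eqn-a-priori-conc}. (Alternatively, one can invoke the a priori concentration estimates of~\cite{dddf-lfpp}, which are proven for general $\xi>0$, to get the same bound; but the Gaussian-concentration route is self-contained.)

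I expect the main obstacle to be making the Lipschitz estimate for $\log L_{0,n}$ rigorous, i.e.\ identifying the correct Hilbert-space realization of $\Phi_{0,n}$ and verifying that the map from Cameron-Martin space to $\log L_{0,n}$ has operator norm $O(\xi\sqrt n)$ uniformly in $n$ and not something that grows faster. The subtlety is that $L_{0,n}$ can be extremely small (the field can be very negative along a crossing path), so one must bound the Lipschitz constant of $\log L_{0,n}$ rather than $L_{0,n}$ itself; the chain rule gives an extra factor of $1/L_{0,n}$ which must be cancelled by the fact that perturbing the field by $f$ multiplies $L_{0,n}$ by $e^{\xi\|f\|_\infty}$, a \emph{multiplicative} rather than additive change — this is exactly why we work with the logarithm, and it makes the argument go through cleanly. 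The homogeneity of the white-noise decomposition (translation invariance and the explicit form of $p_{t/2}$) is what keeps the per-scale constant bounded.
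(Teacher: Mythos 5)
Your proposal is correct and is essentially the paper's argument: both rest on the observation that $\log L_{0,n}$ is $\xi$-Lipschitz with respect to the $L^\infty$ norm of the field (Weyl scaling) together with Gaussian concentration at scale $\sqrt{\sup_z \op{Var}\Phi_{0,n}(z)} = \sqrt{n\log 2}$, which the paper packages as the variance bound $\op{Var}[\log L_{0,n}] \preceq n$ via Lemma~\ref{lem-gaussian-var} and then a quantile-ratio-from-variance bound from~\cite{dddf-lfpp}. Your route through the scale decomposition and Borell--TIS applied directly to the quantiles is just a cosmetic repackaging of the same mechanism, and the worry you flag about the Lipschitz constant is already resolved by noting that $\log L_{0,n}$ (not $L_{0,n}$) is $\xi$-Lipschitz in the sup norm, exactly as the paper uses.
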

\begin{proof}
The random variable $\log L_{0,n}$ is a $\xi$-Lipschitz function of the continuous centered Gaussian process $\Phi_{0,n}$. 
Since $\op{Var} \Phi_{0,n}(z) = n \log 2$ for each $z\in\BB C$, we can apply Lemma~\ref{lem-gaussian-var} to get
\eqb
\op{Var} \log L_{0,n} \preceq n 
\eqe
with the implicit constant depending only on $\xi$. 
We now obtain~\eqref{eqn-a-priori-conc} by a trivial bound for quantile rations in terms of variance (see, e.g.,~\cite[Lemma 3.2]{dddf-lfpp} applied with $Z = \log L_{0,n}$). 
\end{proof}

Instead of proving the subadditivity of $\log\lambda_n$ directly, we will instead prove subadditivity for a slightly different quantity which is easier to work with. 
For a square $S$, let $\mcl A_S$ be the closed square annulus between $\bdy S$ and the boundary of the square with the same center and three times the radius of $S$. 
For $n\in\BB N$, let
\eqb \label{eqn-mu-def}
\mu_n := \BB E\left[ D_{0,n}\left(\text{around $\mcl A_{\BB S} $}\right) \right] , 
\eqe
where here we recall that $\BB S$ is the closed unit square. The following lemma allows us to compare $\lambda_n$ and $\mu_n$.

\begin{lem} \label{lem-Q-compare}
For each $p > 0$, there are constants $c_0 , c_1 > 0$ depending only on $p$ and $\xi$ such that
\eqb \label{eqn-Q-compare}
c_0 e^{-c_1\sqrt n} \lambda_n^p
\leq \BB E\left[ D_{0,n}\left(\text{around $\mcl A_{\BB S}$}\right)^p \right] 
\leq c_0 e^{c_1 \sqrt n} \lambda_n^p .
\eqe
\end{lem}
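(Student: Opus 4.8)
The plan is to compare the quantity $D_{0,n}(\text{around }\mcl A_{\BB S})$ — a distance around a fixed annular region — with the left-right crossing distance $L_{0,n} = D_{0,n}(\bdy_{\op L}\BB S, \bdy_{\op R}\BB S; \BB S)$, whose median is $\lambda_n$, by invoking Proposition~\ref{prop-perc-estimate} twice (once as a ``distance across/around an annulus'' bound, as remarked immediately after that proposition, and once for the crossing distance of a rectangle), together with the a priori concentration bound $\Lambda_n(\frk p)\leq e^{c\sqrt n}$ from Lemma~\ref{lem-a-priori-conc}. The point is that all of $D_{0,n}(\text{around }\mcl A_{\BB S})$, $L_{0,n}$, and the $D_{0,n}$-distance across or around any fixed ``nice'' annular region are comparable up to a multiplicative factor of the form $e^{O(\sqrt n)}$, because each of them has $\frk p$-quantile and $(1-\frk p)$-quantile comparable to $\ell_n(\frk p)$ up to such factors, and $\ell_n(\frk p)$ itself is comparable to $\lambda_n = \ell_n(1/2)$ up to $\Lambda_n(\frk p) \leq e^{c\sqrt n}$.

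In more detail, I would proceed as follows. First, by Proposition~\ref{prop-perc-estimate} applied to the annular region $\mcl A_{\BB S}$ (using the remark after the proposition, which gives the same bounds with $D_{0,n}(\text{around }A)$ in place of $D_{0,n}(K_1,K_2;U)$), we get, for $T>3$,
\eqb
\BB P\left[ D_{0,n}(\text{around }\mcl A_{\BB S}) < T^{-1}\ell_n(\frk p) \right] \leq c_0 e^{-c_1(\log T)^2}
\eqe
and
\eqb
\BB P\left[ D_{0,n}(\text{around }\mcl A_{\BB S}) > T\Lambda_n(\frk p)\ell_n(\frk p) \right] \leq c_0 e^{-c_1(\log T)^2/\log\log T} .
\eqe
Integrating the $p$th power of $D_{0,n}(\text{around }\mcl A_{\BB S})/\ell_n(\frk p)$ against these tail bounds (the Gaussian-type tails make all moments finite, uniformly in $n$ up to the factor $\Lambda_n(\frk p)^p$), we obtain
\eqb
c_0' \ell_n(\frk p)^p \leq \BB E\left[ D_{0,n}(\text{around }\mcl A_{\BB S})^p \right] \leq c_0' \Lambda_n(\frk p)^p\, \ell_n(\frk p)^p ,
\eqe
with $c_0'$ depending only on $p$ and $\xi$ (here I use $\Lambda_n(\frk p)\geq 1$ to absorb constants). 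Second, applying the same reasoning to $L_{0,n}$ — which, being the crossing distance of the rectangle $\BB S$, is controlled in terms of $\ell_n(\frk p)$ and $\Lambda_n(\frk p)$ essentially by definition of these quantities — gives that $\lambda_n = \ell_n(1/2)$ lies between $\ell_n(\frk p)$ and $\ell_n(1-\frk p)\leq \Lambda_n(\frk p)\ell_n(\frk p)$, so
\eqb
\Lambda_n(\frk p)^{-1}\lambda_n \leq \ell_n(\frk p) \leq \lambda_n .
\eqe
Combining the last two displays and invoking Lemma~\ref{lem-a-priori-conc} to replace $\Lambda_n(\frk p)^{\pm p}$ by $e^{\pm c_1 \sqrt n}$ yields~\eqref{eqn-Q-compare}.

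The main obstacle — really the only nontrivial point — is making sure that the upper moment bound is genuinely finite and only costs a factor $e^{O(\sqrt n)}$ rather than blowing up: the upper tail in Proposition~\ref{prop-perc-estimate} decays like $e^{-c_1(\log T)^2/\log\log T}$, which is superpolynomial in $T$, so $\BB E[(D_{0,n}(\text{around }\mcl A_{\BB S})/(\Lambda_n(\frk p)\ell_n(\frk p)))^p]$ is bounded by a constant depending only on $p$ and $\xi$; the entire $n$-dependence is then carried by the explicit factor $\Lambda_n(\frk p)^p \leq e^{pc\sqrt n}$. A minor technical care is needed because the statement of Proposition~\ref{prop-perc-estimate} requires $T>3$, so the regime $T\in(0,3]$ (equivalently, the bulk of the distribution) must be handled by the trivial bound that the relevant quantiles are within a bounded factor of $\ell_n(\frk p)$ — but this contributes only an $n$-independent constant. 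Everything else is routine integration of tail bounds.
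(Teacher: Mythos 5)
Your proposal is correct and follows essentially the same route as the paper: the paper's proof is precisely to integrate the tail bounds of Proposition~\ref{prop-perc-estimate} (in its annulus form) to get $\ell_n(\frk p)^p \preceq \BB E[D_{0,n}(\text{around }\mcl A_{\BB S})^p] \preceq (\Lambda_n(\frk p)\ell_n(\frk p))^p$, and then to convert $\ell_n(\frk p)$ and $\Lambda_n(\frk p)\ell_n(\frk p)$ into $e^{\mp c_1\sqrt n}\lambda_n$ via Lemma~\ref{lem-a-priori-conc} and the quantile ordering $\ell_n(\frk p)\leq\lambda_n\leq\Lambda_n(\frk p)\ell_n(\frk p)$, exactly as you do.
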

\begin{proof}
By integrating the estimates of Proposition~\ref{prop-perc-estimate}, we obtain
\alb
\ell_n(\frk p)^p  
\preceq \BB E\left[ D_{0,n}\left(\text{around $\mcl A_{\BB S}$}\right)^p \right] 
\preceq \left(\Lambda_n(\frk p) \ell_n(\frk p)\right)^p .
\ale
Combining this with Lemma~\ref{lem-a-priori-conc} yields~\eqref{eqn-Q-compare}.
\end{proof}

For $m\in\BB N$, define the collection of squares
\eqb
\wh{\mcl S}_m := \left\{\text{$2^{-m} \times 2^{-m}$ dyadic squares contained in $\mcl A_{\BB S}$ }\right\} .
\eqe
The reason for the hat in the notation is to avoid confusion with the collection of dyadic squares $\mcl S_K$ used in Section~\ref{sec-efron-stein}. 
Due to the scaling property of LFPP, we have the following formula, which will be a key input in the proof of the sub-multiplicativity of $\mu_n$. 

\begin{lem} \label{lem-Q-scale}
Let $m,n\in \BB N$ with $m < n$. For each $S\in\wh{\mcl S}_m$, 
\eqb
\BB E\left[ D_{m,n}\left(\text{around $\mcl A_S$}\right) \right] = 2^{-m} \mu_{ n-m} .
\eqe
\end{lem}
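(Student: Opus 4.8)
The plan is to establish this as a direct consequence of two facts: the exact scale-invariance of the white noise field $\Phi_{m,n}$ recorded in~\eqref{eqn-phi-scale}, together with the elementary scaling behavior of LFPP under linear maps of the plane. First I would observe that $\mcl A_S$, for $S \in \wh{\mcl S}_m$, is a square annulus of the same shape as $\mcl A_{\BB S}$ but scaled by a factor $2^{-m}$ and translated; write $S = 2^{-m}(\BB S + v)$ for the appropriate $v\in\BB Z^2$, so that $\mcl A_S = 2^{-m}(\mcl A_{\BB S} + v)$.

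Next I would carry out the change of variables in the LFPP integral. For any path $P$ in $\mcl A_S$, set $\wt P(t) := 2^m P(t) - v$, which is a path in $\mcl A_{\BB S}$; then $|\wt P'(t)| = 2^m |P'(t)|$, and by definition
\eqbn
\int_0^1 e^{\xi \Phi_{m,n}(P(t))} |P'(t)|\,dt = 2^{-m} \int_0^1 e^{\xi \Phi_{m,n}(2^{-m}(\wt P(t)+v))} |\wt P'(t)|\,dt .
\eqen
Taking the infimum over all $P$ in $\mcl A_S$ which disconnect the two boundary components of $\mcl A_S$ (equivalently, over all $\wt P$ in $\mcl A_{\BB S}$ disconnecting the two boundary components of $\mcl A_{\BB S}$), this shows $D_{m,n}(\text{around }\mcl A_S) = 2^{-m} D_{m,n}^{v}(\text{around }\mcl A_{\BB S})$, where $D_{m,n}^{v}$ denotes the LFPP metric built from the field $z\mapsto \Phi_{m,n}(2^{-m}(z+v))$. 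By the translation invariance $\Phi_{m,n}(\cdot - z)\eqD\Phi_{m,n}(\cdot)$ and the scale invariance~\eqref{eqn-phi-scale} (applied with $k=-m$, so $\Phi_{m,n}(2^{-m}\cdot)\eqD\Phi_{0,n-m}(\cdot)$), the field $z\mapsto\Phi_{m,n}(2^{-m}(z+v))$ has the same law as $\Phi_{0,n-m}$. Hence $D_{m,n}^{v}(\text{around }\mcl A_{\BB S}) \eqD D_{0,n-m}(\text{around }\mcl A_{\BB S})$, and taking expectations gives $\BB E[D_{m,n}(\text{around }\mcl A_S)] = 2^{-m}\BB E[D_{0,n-m}(\text{around }\mcl A_{\BB S})] = 2^{-m}\mu_{n-m}$, using the definition~\eqref{eqn-mu-def}.

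There is essentially no obstacle here — this is a routine scaling computation — but the one point to be careful about is the interplay of the two invariances: the scaling~\eqref{eqn-phi-scale} is a distributional identity for the whole field, so one must apply it to the composed field rather than attempting to track individual evaluations, and one should note that "around $\mcl A_S$" is an affine-invariant notion (the infimum of lengths of disconnecting paths) so it transforms correctly under the map $z\mapsto 2^m z - v$. One should also remark that the distributional identity passes to the expectation of the (nonnegative) random variable $D_{m,n}(\text{around }\mcl A_S)$ without integrability concerns, since the argument only requires equality in law; finiteness of $\mu_{n-m}$ itself follows from Proposition~\ref{prop-perc-estimate} (via the remark following it), but is not needed for the identity as stated.
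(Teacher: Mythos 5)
Your argument is correct and is essentially the paper's proof: both reduce the claim to the distributional identity $D_{m,n}(\text{around } \mcl A_S) \eqD 2^{-m} D_{0,n-m}(\text{around } \mcl A_{\BB S})$ via the scaling relation~\eqref{eqn-phi-scale} and translation invariance of $\Phi_{m,n}$, then take expectations. Your version merely spells out the change of variables in the path-length integral, which the paper leaves implicit.
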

\begin{proof}
By the scaling property~\eqref{eqn-phi-scale} of $\Phi_{m,n}$ and the translation invariance of the law of $\Phi_{m,n}$, 
\eqbn
D_{m,n}\left(\text{around $\mcl A_S$} \right) \eqD 2^{-m} D_{0,n-m}\left(\text{around $\mcl A_{\BB S}$}\right) .
\eqen
Taking expectations now gives the lemma statement. 
\end{proof}

We can now prove the existence of an exponent for $\mu_n$. 

\begin{lem} \label{lem-mu-exponent}
Let $\mu_n$ for $n\in\BB N$ be as in~\eqref{eqn-mu-def}. 
For each $\xi > 0$, there exists $\alpha = \alpha(\xi) \in \BB R$ such that 
\eqb \label{eqn-mu-exponent}
\lim_{n\rta\infty} \frac{\log\mu_n}{\log n} = - \alpha .
\eqe
\end{lem}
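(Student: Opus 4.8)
The plan is to deduce \eqref{eqn-mu-exponent} from an approximate sub-additivity property of the sequence $n\mapsto\log\mu_n$, followed by a standard extension of Fekete's subadditivity lemma that permits a sub-linear additive error term. The mechanism is the decomposition $\Phi_{0,N} = \Phi_{0,K} + \Phi_{K,N}$ for $K<N$: the two summands are independent (white noise on disjoint time intervals), and by the scaling relation \eqref{eqn-phi-scale} together with translation invariance, for every $z\in\BB C$ the field $w\mapsto \Phi_{K,N}(2^{-K}w+z)$ has the law of $\Phi_{0,n}$, where $n:=N-K$. Concretely, I want to prove that there are constants $C,c>0$ depending only on $\xi$ such that for all $1\le K<N$,
\eqbn
\log\mu_N \;\le\; \log\mu_K + \log\mu_{N-K} + C\sqrt N + C .
\eqen
This, combined with the finiteness of the limit, gives $\lim_{n\to\infty}\log\mu_n/n = -\alpha$ with $\alpha\in\BB R$, which is \eqref{eqn-mu-exponent}.

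To prove the displayed inequality I would construct a short loop around $\mcl A_{\BB S}$ as follows. Tile $\mcl A_{\BB S}$ by dyadic squares of side $2^{-K}$ and, among all ``rings'' of such squares (cyclic chains of consecutive adjacent squares disconnecting the two boundary components of $\mcl A_{\BB S}$; each has $\asymp 2^K$ squares), let $\mcl R$ be one minimizing $\sum_{S\in\mathrm{ring}} 2^{-K}e^{\xi\sup_{B_{2^{-K}}(S)}\Phi_{0,K}}$; crucially $\mcl R$ depends only on the coarse field $\Phi_{0,K}$. For each $S\in\mcl R$ concatenate a near-optimal path for $D_{K,N}(\text{across }\mcl A_S)$ with short connectors inside the overlapping triple squares, obtaining a loop around $\mcl A_{\BB S}$ whose $D_{0,N}$-length is at most $C\sum_{S\in\mcl R}e^{\xi\sup_{B_{2^{-K}}(S)}\Phi_{0,K}}\,X_S$, where $X_S$ is a combination of $D_{K,N}$-distances across and around $\mcl A_S$ and $D_{K,N}$-diameters of triple squares; hence $X_S$ is $\Phi_{K,N}$-measurable, the $X_S$ are identically distributed, and by the scaling above together with the extension of Proposition~\ref{prop-perc-estimate} to distances across/around annuli, Lemma~\ref{lem-a-priori-conc}, and Lemma~\ref{lem-Q-compare}, one has $\BB E[X_S]\le 2^{-K}C e^{c\sqrt n}\mu_n$. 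To handle the coarse-field weights, introduce the event $\mcl G_x:=\{2^{-K}\sup_U|\nabla\Phi_{0,K}|\le x\}$ for a fixed bounded $U\supset\mcl A_{\BB S}$; by Lemma~\ref{lem-phi-cont}, $\BB P[\mcl G_x^c]\le c_0 e^{-c_1 x^2}$, and on $\mcl G_x$ the oscillation of $\Phi_{0,K}$ over any $2^{-K}$-ball is $\le\sqrt2\,x$, which by a routine comparison of discrete and continuum distances gives $\sum_{S\in\mcl R}2^{-K}e^{\xi\sup_{B_{2^{-K}}(S)}\Phi_{0,K}} \le e^{O(\xi x)}D_{0,K}(\text{around }\mcl A_{\BB S})$. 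Taking conditional expectation given $\Phi_{0,K}$ and then expectation, the contribution of $\mcl G_x$ is at most $C e^{c\sqrt n + O(\xi x)}\mu_K\mu_n$, while the contribution of $\mcl G_x^c$ is controlled by Cauchy--Schwarz using $\BB E[D_{0,N}(\text{around }\mcl A_{\BB S})^2]\le C e^{c\sqrt N}\mu_N^2$ (Lemma~\ref{lem-Q-compare} with $p=2$ and $p=1$), giving at most $C e^{c\sqrt N - c_1 x^2/2}\mu_N$. Choosing $x$ of order $N^{1/4}$ makes this last term $\le\tfrac12\mu_N$ for $N$ large and leaves a sub-linear error $e^{O(\xi N^{1/4})}$; rearranging and absorbing finitely many small $N$ into the constants yields the displayed inequality for all $1\le K<N$.

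Given the displayed inequality with $\psi(N):=C\sqrt N+C$, which satisfies $\sum_N\psi(N)/N^2<\infty$, the quoted extension of Fekete's lemma shows that $\lim_{n\to\infty}\log\mu_n/n$ exists in $[-\infty,\infty)$. To see the limit is finite I would argue: $\mu_n\ge c_0 e^{-c_1\sqrt n}\lambda_n$ by Lemma~\ref{lem-Q-compare}, and $\lambda_n=\ell_n(1/2)\ge e^{-Cn}$ because every left--right crossing of $\BB S$ has Euclidean length $\ge1$, so $L_{0,n}\ge e^{\xi\inf_{\BB S}\Phi_{0,n}}$, while $\BB P[\inf_{\BB S}\Phi_{0,n}<-Cn]\le e^{-cn}$ by a union bound over a $2^{-n}$-net of $\BB S$ combined with the modulus estimate of Lemma~\ref{lem-phi-cont} (or the Gaussian estimates of Appendix~\ref{sec-appendix}). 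Hence $\liminf_n\log\mu_n/n\ge -C>-\infty$, so the limit equals $-\alpha$ for some $\alpha\in\BB R$.

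The main obstacle is keeping the additive error in the sub-additivity inequality sub-linear. Routing the loop through a \emph{fixed} ring and bounding the coarse-field weights by $\BB E[e^{\xi\sup_S\Phi_{0,K}}]$ would produce an error of order $K$ (linear in the coarse scale), which only bounds rather than identifies the exponent; this is a mild instance of the same obstruction that forces the more elaborate argument in Section~\ref{sec-efron-stein}, and it is why the ring must be chosen to pass through small values of $\Phi_{0,K}$ rather than near its maximum. A secondary technical point is that, since $D_{0,N}(\text{around }\mcl A_{\BB S})$ has multiplicative fluctuations of order $e^{c\sqrt N}$, the exceptional event $\mcl G_x^c$ must have probability $\lesssim e^{-c\sqrt N}$ rather than merely $e^{-cK}$; this dictates the choice $x\asymp N^{1/4}$, which still contributes only a sub-linear error.
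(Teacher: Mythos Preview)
Your approach is essentially the paper's: prove $\mu_N \preceq 2^{o(N)}\mu_K\mu_{N-K}$ by routing the $D_{0,N}$-loop through a $\Phi_{0,K}$-selected chain of $2^{-K}$-squares, controlling oscillation via Lemma~\ref{lem-phi-cont}, factoring the expectation via independence and Lemma~\ref{lem-Q-scale}, and handling the bad event by Cauchy--Schwarz with Lemma~\ref{lem-Q-compare}; the paper tracks the actual $D_{0,m}$-geodesic to select squares and uses threshold $n^{2/3}$ rather than your discrete-ring minimization and $N^{1/4}$, but these differences are cosmetic. One caution: your connectors via ``$D_{K,N}$-diameters of triple squares'' involve point-to-point distances, which are not yet controlled at this stage (Proposition~\ref{prop-perc-estimate} only handles distances between non-trivial connected sets); replace them by $D_{K,N}(\text{around }\mcl A_S)$, which suffices since loops around squares sharing a corner or side automatically intersect --- this is exactly what the paper does.
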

\begin{proof}
We will show that for $n,m\in\BB N$ with $n > m$, 
\eqb \label{eqn-mu-subadd} 
\mu_n \preceq 2^{ 2n^{2/3}} \mu_m \mu_{n-m}  
\eqe
with the implicit constant depending only on $\xi$. By a version of Fekete's subadditivity lemma with an error term (see, e.g.,~\cite[Lemma 6.4.10]{dembo-ld}) applied to $\log \mu_n$, this implies~\eqref{eqn-mu-exponent}. Throughout the proof, $c_0$ and $c_1$ denote positive constants depending only on $\xi$ which may change from line to line. 
\medskip

\noindent\textit{Step 1: regularity event for $\Phi_{0,m}$.}
Let $U$ be a bounded open subset of $\BB C$ which contains $\mcl A_S$. For $T > 0$, let 
\eqb
E_{m,n}  := \left\{ 2^{-m} \sup_{z \in U} |\nabla \Phi_{0,n}(z)| \leq \frac{\log 2}{\xi} n^{2/3}  \right\}  
\eqe
so that by Lemma~\ref{lem-phi-cont},
\eqb \label{eqn-Q-reg-prob}
\BB P\left[ E_{m,n} \right] \geq 1 - c_0 \exp\left( - c_1  n^{4/3} \right).
\eqe 
\medskip

\noindent\textit{Step 2: lower bound for $D_{0,m}$ in terms of a sum over $2^{-m}\times 2^{-m}$ squares. }
Let $P_{0,m}$ be a path around $\mcl A_{\BB S}$ with $D_{0,m}$-length $D_{0,m}\left(\text{around $\mcl A_{\BB S}$}\right)$, parametrized by its $D_{0,m}$-length. 
Let $t_0 = 0$ and let $S_0 \in \wh{\mcl S}_m$ be chosen so that $P_{0,m}(0)  \in S_0$.
Inductively, suppose that $j\in\BB N$ and $t_{j-1}$ and $S_{j-1}$ have been defined. Let $t_j$ be the first time $t \geq t_{j-1}$ for which $P_{0,m}(t) \notin B_{2^{-m-1}}(S_{j-1})$, or $t_j = D_{0,m}\left(\text{around $\mcl A_{\BB S}$}\right)$ if no such time exists.
Also let $S_j\in\wh{\mcl S}_m$ be chosen so that $P_{0,m}(t_j) \in S_j$. Note that $S_j$ necessarily shares a corner or a side with $S_{j-1}$. 
Let
\eqb
J := \min\left\{j\in \BB N : t_j = D_{0,m}\left(\text{around $\mcl A_{\BB S}$}\right) \right\} .
\eqe

By the definition of $D_{0,m}$, on $E_{m,n}$ it holds for $j\in [0,J-1]_{\BB Z}$ that
\allb 
t_{j+1} - t_j 
&\geq D_{0,m}\left( \text{across $B_{2^{-m-1}}(S_j) \setminus S_j$}\right) \notag\\
&\geq 2^{-m} \exp\left(\xi \min_{z \in B_{2^{-m-1}}(S_j)} \Phi_{0,m}(z) \right) \notag\\
&\succeq  2^{-m - n^{2/3}} e^{\xi \Phi_{0,m}(v_{S_j}) }  , 
\alle
where $v_{S_j}$ is the center of $S_j$ and the implicit constant in $\succeq$ is universal. 
Therefore, on $E_{m,n}$,
\eqb \label{eqn-m-lower}
D_{0,m}\left(\text{around $\mcl A_{\BB S}$}\right) \succeq T^{-1} 2^{-m - n^{2/3} } \sum_{j=0}^{J-1}   e^{\xi \Phi_{0,m}(v_{S_{j}}) } .
\eqe
\medskip

\noindent\textit{Step 3: upper bound for $D_{0,n}$ in terms of a sum over $2^{-m}\times 2^{-m}$ squares. }
Since the squares $S_j$ and $S_{j+1}$ share a corner or a side for each $j \in [0,J-1]_{\BB Z}$, if $\pi_j$ is a path around $\mcl A_{S_{j}}$ for $j \in [0,J-1]_{\BB Z}$, then the union of the paths $\pi_j$ contains a path around $\mcl A_{\BB S}$. Consequently,
\eqb
D_{0,n}\left(\text{around $\mcl A_{\BB S}$} \right) 
\leq \sum_{j=0}^{J-1} D_{0,n}\left(\text{around $\mcl A_{S_{j}}$}\right) .
\eqe
Since $\Phi_{0,n} = \Phi_{0,m} + \Phi_{m,n}$, it follows that on $E_{m,n}$,
\allb
D_{0,n}\left(\text{around $\mcl A_{S_{j}}$}\right)
\leq \exp\left(\xi \max_{z\in\mcl A_{S_{j}}} \Phi_{0,m}(z) \right) D_{m,n}\left(\text{around $\mcl A_{S_{j}}$}\right)
\preceq 2^{n^{2/3}} e^{\xi \Phi_{0,m}(v_{S_{j}})} D_{m,n}\left(\text{around $\mcl A_{S_{j}}$}\right) .
\alle
Hence, on $E_{m,n}$,
\eqb \label{eqn-n-upper} 
D_{0,n}\left(\text{around $\mcl A_{\BB S}$} \right)  \preceq  T \sum_{j=0}^{J-1} e^{\xi \Phi_{0,m}(v_{S_{j}})} D_{m,n}\left(\text{around $\mcl A_{S_{j}}$}\right)  .
\eqe
\medskip

\noindent\textit{Step 4: comparison of $\mu_n$ and $\mu_m\mu_{n-m}$.}
The event $E_{m,n}$ and the squares $S_j$ for $j\in[0,J]_{\BB Z}$ are a.s.\ determined by $\Phi_{0,m}$, which is independent from $D_{m,n}$. Therefore, 
\allb \label{eqn-mu-upper}
\BB E\left[ D_{0,n}\left(\text{around $\mcl A_{\BB S}$}\right) \BB 1_{E_{m,n}} \right]
&\preceq 2^{n^{2/3}} \BB E\left[  \sum_{j=0}^{J-1} e^{\xi \Phi_{0,m}(v_{S_{j}})} \BB 1_{E_{m,n}} \right] \BB E\left[  D_{m,n}\left(\text{around $\mcl A_{S_{j}}$}\right) \right] \quad \text{(by~\eqref{eqn-n-upper})} \notag\\
&\preceq   2^{m + 2n^{2/3}} \BB E\left[ D_{0,m}\left(\text{around $\mcl A_{\BB S}$}\right)  \right]  \times 2^{-m} \mu_{n-m} \quad \text{(by~\eqref{eqn-m-lower} and Lemma~\ref{lem-Q-scale})} \notag\\
&= 2^{ 2n^{2/3}} \mu_m \mu_{n-m} .
\alle

We need to show that the left side of~\eqref{eqn-mu-upper} is not too much smaller than $\mu_n$. 
We have 
\eqb \label{eqn-mu-split}
\mu_n 
 \leq \BB E\left[ D_{0,n}\left(\text{around $\mcl A_{\BB S}$}\right) \BB 1_{E_{m,n}} \right] + \BB E\left[ D_{0,n}\left(\text{around $\mcl A_{\BB S}$}\right) \BB 1_{E_{m,n}^c} \right]  .
\eqe
By the Cauchy-Schwarz inequality followed by Lemma~\ref{lem-Q-compare} and~\eqref{eqn-Q-reg-prob},
\allb \label{eqn-mu-error}
&\BB E\left[ D_{0,n}\left(\text{around $\mcl A_{\BB S}$}\right) \BB 1_{E_{m,n}^c} \right] \notag\\
&\qquad \leq \BB E\left[ D_{0,n}\left(\text{around $\mcl A_{\BB S}$}\right)^2 \right]^{1/2} \BB P\left[ E_{m,n}^c\right]^{1/2} \quad \text{(by Cauchy-Schwarz)} \notag\\
&\qquad \leq c_0 e^{c_1\sqrt n} \lambda_n \times c_0 e^{-c_1 n^{4/3} } \quad \text{(by Lemma~\ref{lem-Q-compare} and~\eqref{eqn-Q-reg-prob})} \notag\\
&\qquad \leq c_0 e^{-c_1 n^{4/3}} \mu_n \quad \text{(by Lemma~\ref{lem-Q-compare})} .
\alle
By plugging~\eqref{eqn-mu-error} into~\eqref{eqn-mu-split}, we obtain
\eqbn
\mu_n \leq    \BB E\left[ D_{0,n}\left(\text{around $\mcl A_{\BB S}$}\right) \BB 1_{E_{m,n}} \right] +    c_0 e^{-c_1 n^{4/3}} \mu_n    .
\eqen
Since $c_0 e^{-c_1 n^{4/3}} \leq 1$ for large enough $n$, we can re-arrange this last inequality to get that for large enough $n$, 
\eqb \label{eqn-mu-compare}
\mu_n \leq 2 \BB E\left[ D_{0,n}\left(\text{around $\mcl A_{\BB S}$}\right) \BB 1_{E_{m,n}} \right] .
\eqe

Plugging~\eqref{eqn-mu-compare} into~\eqref{eqn-mu-upper} gives~\eqref{eqn-mu-subadd} when $n$ is sufficiently large. By possibly increasing the implicit constant to deal with finitely many small values of $n$, we obtain~\eqref{eqn-mu-subadd} in general.
\end{proof}

\begin{proof}[Proof of Proposition~\ref{prop-Q-wn}]
Let $\alpha=\alpha(\xi)$ be as in Lemma~\ref{lem-mu-exponent}. By Lemmas~\ref{lem-mu-exponent} and~\ref{lem-Q-compare}, we have $\lambda_n = 2^{o_n(n)} \mu_n = 2^{-\alpha n + o_n(n)}$. Therefore,~\eqref{eqn-Q-wn} holds with $Q := (1-\alpha)/\xi$.  

We prove that $Q > 0$ for all $\xi > 0$ in Lemma~\ref{lem-Q-pos} below. 
The remaining properties of $Q$ asserted in the proposition statement are essentially proven in~\cite{gp-lfpp-bounds}, but the results there are for LFPP defined using the circle average process of a GFF instead of the white noise approximation. The median left-right crossing distance of $\BB S$ for the two variants of LFPP can be compared up to multiplicative errors of the form $2^{o_n(n)}$ due to~\cite[Proposition 3.3]{ding-goswami-watabiki}. 
Hence, we can apply~\cite[Lemma 1.1]{gp-lfpp-bounds} to get that $\xi\mapsto Q(\xi)$ is and~\cite[Lemma 4.1]{gp-lfpp-bounds} to get that $\xi \mapsto Q(\xi)$ is strictly decreasing on $(0,0.7)$, non-increasing on $(0,\infty)$, and satisfies $\lim_{\xi \rta \infty} Q(\xi) =0$. 
\end{proof}

\subsection{Positivity of $Q$}
\label{sec-Q-pos}

To conclude the proof of Proposition~\ref{prop-Q-wn} it remains to check the following. 

\begin{lem} \label{lem-Q-pos}
There are universal constants $c_0,c_1 > 0$ such that with $Q = Q(\xi)$ as in Proposition~\ref{prop-Q-wn},  
\eqb
Q(\xi) \geq c_0 \xi^{-1} e^{-c_1 \xi} ,\quad\forall \xi > 0 .
\eqe
In particular, $Q  > 0$ for all $\xi > 0$.
\end{lem}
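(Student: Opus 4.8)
The plan is to prove a lower bound on $Q(\xi)$ by showing that the median left-right crossing distance $\lambda_n$ cannot decay too fast in $n$, i.e., $\lambda_n \geq 2^{-n(1-\xi Q_0) + o_n(n)}$ for an explicit $Q_0 \asymp \xi^{-1}e^{-c_1\xi}$; by Proposition~\ref{prop-Q-wn} this forces $Q(\xi) \geq Q_0$. The key mechanism is that a left-right crossing of $\BB S$ by a path $P$ of small $D_{0,n}$-length forces $e^{\xi\Phi_{0,n}}$ to be small along $P$, hence $\Phi_{0,n}$ to be very negative along $P$; but a crossing path must be macroscopically long, so it must pass through many essentially independent regions, and the probability that $\Phi_{0,n}$ is simultaneously very negative in all of them is superpolynomially small in a way that can be made quantitative. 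This is exactly the kind of estimate that should be available from~\cite{lfpp-pos}, which is cited in the excerpt precisely for the positivity of $Q$.

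First I would reduce to the truncated field $\Psi_{0,n}$ using Lemma~\ref{lem-field-compare}, so that I have a genuine finite range of dependence: $\Psi_{0,n}(z)$ and $\Psi_{0,n}(w)$ are independent once $|z-w| \geq 2^{-0}\cdot 0^{\ep_0}$ — more usefully, after rescaling, I would work with $\Psi_{k,n}$ at an intermediate scale $2^{-k}$ where the field restricted to well-separated $2^{-k}$-boxes is independent. Then I would tile $\BB S$ into $\asymp 2^{2k}$ boxes of side $2^{-k}$ and observe that any left-right crossing of $\BB S$ must cross at least $\asymp 2^k$ of these boxes in the horizontal direction. On each such box, $D_{0,n}$-distance across the box is at least $2^{-k}\exp(\xi \min_{\text{box}} \Phi_{0,n})$, and the coarse field $\Phi_{0,k}$ on the box is, with probability bounded below, not too negative; conditionally on the coarse field, the fine field $\Phi_{k,n}$ governs the in-box crossing and by scale invariance~\eqref{eqn-phi-scale} its across-distance is $2^{-k}$ times an independent copy of $D_{0,n-k}(\text{across }\BB S)$. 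Summing over the $\asymp 2^k$ crossed boxes and using independence of the fine fields in non-adjacent boxes (via the truncation), I would get that $L_{0,n} \geq 2^{-k} \cdot (\text{sum of }\gtrsim 2^k \text{ nearly-i.i.d.\ terms each}\gtrsim 2^{-(n-k)}\ell_{n-k}(\text{const}))$ except on an event of probability $e^{-c (\text{number of boxes})}$. Optimizing $k$ as a function of $n$ (taking $k = \delta n$ for a small $\delta$ depending on $\xi$) and iterating, one deduces a recursive inequality forcing $\liminf_n \frac{1}{n}\log_2 \lambda_n \geq -(1-\xi Q_0)$ with $Q_0$ of the claimed form — the $e^{-c_1\xi}$ factor entering because the probability that a fixed box has a non-pathologically-negative minimum of the coarse field is $\asymp$ (something like) $e^{-c\xi^2}$ type quantity, and one needs $\gtrsim$ that many boxes to be good, which caps how much one can gain per scale.

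Alternatively, and more in the spirit of how the excerpt phrases things ("the fact that $Q(\xi) > 0$ follows from~\cite{lfpp-pos}"), I would simply cite the main result of~\cite{lfpp-pos}: that paper presumably proves directly that the LFPP normalizing constant satisfies $\frk a_\ep \geq \ep^{1-\xi Q_0 + o(1)}$ with $Q_0 \geq c_0\xi^{-1}e^{-c_1\xi}$, or equivalently a lower bound on the distance exponent, and then the content here is just the bookkeeping to translate that statement — stated for the circle-average or mollified-GFF version of LFPP — into the white-noise version via the comparison~\cite[Proposition 3.3]{ding-goswami-watabiki} (exactly as done at the end of the proof of Proposition~\ref{prop-Q-wn}) and then invoke the already-established formula $\lambda_n = 2^{-n(1-\xi Q) + o_n(n)}$ to read off $Q(\xi) = (1-\alpha)/\xi \geq Q_0$. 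Under this reading the proof is short: quote~\cite{lfpp-pos} for the bound in one normalization, quote~\cite{ding-goswami-watabiki} for the $2^{o_n(n)}$ comparison between normalizations, and conclude.

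The main obstacle, in the self-contained approach, is controlling the conditional law of the fine-field in-box crossing distances given the coarse field: one needs that conditioning on $\Phi_{0,k}$ not being too negative on a box does not destroy the lower-tail estimate for the within-box distance, and one needs enough independence across boxes to turn "many boxes each with a lower-bounded independent contribution" into a genuinely superpolynomially-small failure probability. The truncation to $\Psi$ handles the independence cleanly but introduces error terms that must be absorbed; and the quantitative dependence of the per-scale gain on $\xi$ (the $e^{-c_1\xi}$) requires being careful with how the Gaussian lower-tail probability for $\min_{\text{box}}\Phi_{0,k}$ scales — it is here that all the $\xi$-dependence is concentrated, so this estimate must be done with the right constants rather than merely "with positive probability". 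If instead one is content to cite~\cite{lfpp-pos}, then there is essentially no obstacle beyond checking that the normalization conventions match up, which is routine given the comparison lemmas already invoked in the excerpt.
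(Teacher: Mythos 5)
Your second reading is the one the paper takes, and it is correct in outline: the proof of Lemma~\ref{lem-Q-pos} is pure bookkeeping on top of \cite{lfpp-pos}. The one detail you guessed wrong is the normalization in which \cite{lfpp-pos} is stated: its Theorem~1.1 concerns LFPP driven by the \emph{discrete} GFF (a lower bound of the form $\exp(c_0 e^{-c_1\xi} n)$ for a discrete annulus-crossing distance, with probability at least $1/2$ asymptotically), so in addition to \cite[Proposition~3.3]{ding-goswami-watabiki} (white noise vs.\ circle average) one also needs \cite[Theorem~1.4]{ang-discrete-lfpp} to pass between continuum circle-average LFPP and discrete-GFF LFPP. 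The paper packages this translation as Lemma~\ref{lem-discrete-lfpp}, which uses Proposition~\ref{prop-Q-wn} together with Proposition~\ref{prop-perc-estimate} and Lemma~\ref{lem-a-priori-conc} to show that the discrete annulus-crossing distance is $2^{n(\xi Q\pm\delta)}$ with high probability; comparing this with the $\exp(c_0 e^{-c_1\xi} n)$ lower bound from \cite{lfpp-pos} forces $\xi Q \geq c_0' e^{-c_1\xi}$, which is the claimed bound. So your citation route is essentially the paper's proof, modulo identifying the correct intermediate comparison.

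Your first, self-contained route has a genuine gap, and it is worth seeing why: the difficulty is not that the field might be uniformly negative on a fixed chain of boxes, but that the crossing path gets to \emph{choose} its route, seeking out the exponentially many box-chains along which $\Phi_{0,n}$ happens to be atypically negative. A union bound over the $\gtrsim e^{c 2^k}$ possible chains of $\asymp 2^k$ boxes destroys the "superpolynomially small" failure probability once $\xi$ is large, and conditioning the in-box fine-field crossings on the coarse field being "not too negative" along the chosen chain is exactly where the argument breaks: the geodesic is positively correlated with low values of the field at every scale. This is precisely the obstruction that makes positivity of the distance exponent a nontrivial theorem requiring the separate paper \cite{lfpp-pos} (whose argument is considerably more involved than a per-scale independence count), so the multiscale sketch cannot be repaired by simply "being careful with the Gaussian lower-tail constants"; the claimed source of the $e^{-c_1\xi}$ factor in your sketch is not substantiated.
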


Lemma~\ref{lem-Q-pos} will be extracted from~\cite[Theorem 1.1]{lfpp-pos}, which gives similar result for LFPP defined using the discrete GFF.
To explain this result, for $n\in\BB N$, let $\mcl B_n := [-2^n , 2^n]_{\BB Z}^2$ and let $h_n^{\BB Z}$ be the discrete Gaussian free field on $\mcl B_n$, with zero boundary conditions, normalized so that $\BB E[h_n^{\BB Z}(z) h_n^{\BB Z}(w)] = \frac{\pi}{2} \op{Gr}_{\mcl B_n}(z,w)$ for each $z,w\in \mcl B_n$, where $\op{Gr}_{\mcl B_n}$ is the discrete Green's function.  
For $\xi > 0$, we define the \emph{discrete LFPP} metric with parameter $\xi$ associated with $h_n^{\BB Z}$ by
\eqbn
D_n^{\BB Z}(z,w) = \inf_{P : z\rta w} \sum_{j=1}^{|P|} e^{\xi h_n^{\BB Z}(P(j))}  ,\quad\forall z,w\in \mcl B_n
\eqen
where the infimum is over all nearest-neighbor paths $P: [0,|P|]_{\BB Z} \rta \mcl B_n$ with $P(0) = z$ and $P(|P|) = w$.  
We define the discrete square annulus
\eqbn
\mcl A_n  :=   [2^{n-1/2} , 2^{n-1/2}]_{\BB Z}^2  \setminus  [2^{n-1} , 2^{n-1}]_{\BB Z}^2    \subset \mcl B_n
\eqen
and we define $D_n^{\BB Z}\left(\text{across $\mcl A_n $} \right)$ to be the $D_n^{\BB Z}$-distance between the inner and outer boundaries of $\mcl A_n $. 

It is shown in~\cite[Theorem 1.1]{lfpp-pos} that there are universal constants $c_0,c_1 >0$ such that
\eqb \label{eqn-lfpp-pos}
\lim_{n\rta\infty} \BB P\left[ D_n\left(\text{across $\mcl A_n $} \right) \geq  \exp\left( c_0 e^{-c_1 \xi} n \right) \right] \geq \frac12 .
\eqe
To deduce Lemma~\ref{lem-Q-pos} from~\eqref{eqn-lfpp-pos} we need to establish that distances in discrete LFPP can be described in terms of $Q$.

\begin{lem} \label{lem-discrete-lfpp}
With $Q$ as in Proposition~\ref{prop-Q-wn}, it holds for each $\delta > 0$ that
\eqb \label{eqn-discrete-lfpp}
\lim_{n\rta\infty} \BB P\left[ 2^{n(\xi Q - \delta)} \leq  D_n^{\BB Z}\left(\text{across $\mcl A_n $} \right) \leq 2^{n(\xi Q + \delta)} \right] = 1 .
\eqe
\end{lem}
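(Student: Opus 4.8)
The plan is to relate the discrete LFPP metric $D_n^{\BB Z}$ to the white noise metric $D_{0,n}$ via the intermediate step of LFPP defined using the continuum GFF (or its circle-average process) on a fixed domain, and then invoke Proposition~\ref{prop-Q-wn}. Concretely, I would first record the standard coupling between the discrete GFF $h_n^{\BB Z}$ on $\mcl B_n = [-2^n,2^n]_{\BB Z}^2$ and the continuum zero-boundary GFF on $[-1,1]^2$ rescaled by $2^{-n}$: after this rescaling, the discrete field approximates the continuum field up to errors that are $o_n(n)$ uniformly on compact subsets of the interior (away from the boundary), so that $\log D_n^{\BB Z}(\text{across }\mcl A_n)$ differs from $\log$ of the corresponding continuum-GFF LFPP distance across the rescaled annulus by $\xi \cdot o_n(n)$ with high probability. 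This kind of comparison is exactly what~\cite[Proposition 3.3]{ding-goswami-watabiki} and~\cite[Section 2.1]{dg-lqg-dim} provide, and it is used in the proof of Proposition~\ref{prop-Q-wn} itself; I would cite it rather than reprove it.

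Next I would pass from the continuum-GFF LFPP metric on a fixed bounded domain to the white noise metric $D_{0,m}$ for $m \approx n$. By the white-noise decomposition, the continuum GFF restricted to a fixed bounded domain agrees in law (up to a field with a locally Hölder-continuous, $o_n(n)$-in-sup modification, coming from the coarse-scale part of the decomposition and the boundary effects) with $\Phi_{0,m}$ for the appropriate $m$; again this is a comparison already invoked in the excerpt via~\cite[Proposition 3.3]{ding-goswami-watabiki} and the basic comparison lemmas of~\cite{dg-lqg-dim,ding-goswami-watabiki}. The upshot of these two comparisons is that, for any fixed $\delta>0$, with probability tending to $1$ as $n\to\infty$,
\[
2^{-\xi\, o_n(n)}\, D_m\big(\text{around } \mcl A_{\BB S}\big) \;\leq\; D_n^{\BB Z}\big(\text{across } \mcl A_n\big) \;\leq\; 2^{\xi\, o_n(n)}\, D_m\big(\text{around } \mcl A_{\BB S}\big),
\]
where $m = n + O(1)$ and $D_m(\text{around }\mcl A_{\BB S})$ is the white-noise quantity from~\eqref{eqn-mu-def}; the switch between ``across'' and ``around'' and between a square annulus and $\mcl A_{\BB S}$ is harmless and handled by the remark following Proposition~\ref{prop-perc-estimate} together with the obvious topological fact that crossing and surrounding distances in a fixed annular region are comparable up to applying Proposition~\ref{prop-perc-estimate} in both directions.

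Finally I would combine this with the quantitative concentration already available: by Proposition~\ref{prop-perc-estimate} and Lemma~\ref{lem-a-priori-conc}, $D_m(\text{around }\mcl A_{\BB S})$ is concentrated around $\lambda_m$ up to multiplicative factors $2^{o_m(m)}$ (integrate~\eqref{eqn-perc-lower} and~\eqref{eqn-perc-upper}, using $\Lambda_m(\frk p) \leq e^{c\sqrt m}$, to see that $D_m(\text{around }\mcl A_{\BB S}) \in [2^{-o_m(m)}\lambda_m,\, 2^{o_m(m)}\lambda_m]$ with probability $\geq 1 - o_m(1)$), and by Proposition~\ref{prop-Q-wn} we have $\lambda_m = 2^{-m(1-\xi Q) + o_m(m)}$. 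Since $D_n^{\BB Z}(\text{across }\mcl A_n)$ is, up to the factor $2^{\pm\xi\, o_n(n)}$, sandwiched between the lower and upper quantiles of $D_m(\text{around }\mcl A_{\BB S})$, it follows that $D_n^{\BB Z}(\text{across }\mcl A_n) = 2^{\xi Q n + o_n(n)}$ with probability tending to $1$ — wait, we must be careful: the diameter of the rescaled annulus is $O(2^{-n})$ but the discrete distance has no $2^{-n}$ normalization, so tracking the normalization shows the discrete crossing distance is $2^n$ times a continuum crossing distance of order $\lambda_n \asymp 2^{-n(1-\xi Q)}$, giving $2^{n \xi Q + o_n(n)}$, which is precisely~\eqref{eqn-discrete-lfpp}. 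The main obstacle is bookkeeping in the first two comparison steps: one must check that the GFF-to-discrete-GFF and GFF-to-white-noise approximations are valid \emph{uniformly on the annulus} $\mcl A_n$ (which sits well inside $\mcl B_n$, so boundary effects are controlled) and that the exponential of the error field contributes only a $2^{o_n(n)}$ multiplicative distortion to distances; this is routine given the cited comparison lemmas but requires care to state the high-probability event correctly. Once the comparison is in place, the conclusion is immediate from Proposition~\ref{prop-Q-wn} and the concentration estimate.
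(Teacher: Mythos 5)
Your overall architecture is the same as the paper's (discrete $\to$ continuum-GFF LFPP $\to$ white-noise LFPP, then conclude from Proposition~\ref{prop-Q-wn} together with Proposition~\ref{prop-perc-estimate} and Lemma~\ref{lem-a-priori-conc}), and your normalization bookkeeping (the factor $2^n$ between discrete and continuum crossing distances) is correct. But there is a genuine gap in your first step. You claim that a coupling in which the discrete GFF is close to the (rescaled) continuum GFF, with sup-norm errors $o_n(n)$ on the annulus, already implies that $\log D_n^{\BB Z}(\text{across }\mcl A_n)$ and the log of the continuum LFPP crossing distance differ by $\xi\, o_n(n)$. A field comparison does not give this: the two metrics are structurally different objects --- one is a sum of vertex weights $e^{\xi h_n^{\BB Z}(P(j))}$ over nearest-neighbor lattice paths, the other an integral $\int e^{\xi h_\ep(P(t))}|P'(t)|\,dt$ over continuum paths with mollification at scale comparable to the lattice spacing. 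Converting one into the other requires discretizing continuum geodesics (and interpolating lattice paths) while controlling both the number of lattice steps and the discrepancy between the mollified field along the path and the discrete field at nearby vertices. The references you cite for this step ([ding-goswami-watabiki, Proposition 3.3] and the comparison lemmas of~\cite{dg-lqg-dim}) compare different \emph{continuum} approximations of the GFF to one another; they do not compare the lattice graph metric to a continuum LFPP metric. The missing ingredient is precisely~\cite[Theorem 1.4]{ang-discrete-lfpp}, which is the nontrivial discrete-versus-continuum LFPP comparison, and it is stated for LFPP defined via the circle-average process --- which is why the paper's proof inserts the intermediate metric $D_n^{\rng h}$ built from a zero-boundary GFF's circle averages: first the white-noise annulus estimate is transferred to $D_n^{\rng h}$ using [ding-goswami-watabiki, Proposition 3.3], and only then is Ang's theorem applied to reach $D_n^{\BB Z}$ (with the $2^{-n}$ spatial rescaling and the GFF normalization accounting for the missing factor and the absent $\sqrt{\pi/2}$).

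So as written your step one "would fail" in the sense that it is not a proof but a restatement of the result you need; to repair it you should either cite~\cite[Theorem 1.4]{ang-discrete-lfpp} (routing through circle-average LFPP as the paper does, since that is the version the theorem covers) or supply the path-discretization argument yourself, which is substantially more work than "routine bookkeeping." The remaining steps of your proposal --- the white-noise annulus concentration via Propositions~\ref{prop-Q-wn} and~\ref{prop-perc-estimate} and Lemma~\ref{lem-a-priori-conc}, the across/around interchange, and the final exponent computation --- are fine and match the paper.
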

\begin{proof}
Let $\mcl A := [2^{-1/2} , 2^{-1/2} ] \setminus [2^{-1} , 2^{-1}]$ be the continuum analog of $\mcl A_n $. 
By Proposition~\ref{prop-Q-wn}, we have $\lambda_n = 2^{-n(1-\xi Q)  + o_n(1)}$. By combining this with Proposition~\ref{prop-perc-estimate} and Lemma~\ref{lem-a-priori-conc} (to say that $\ell_n(\frk p) \asymp \lambda_n$ and $\Lambda_n(\frk p) \asymp 1$), it follows that for each $\delta >0$, 
\eqb \label{eqn-wn-annulus}
\lim_{n\rta\infty} \BB P\left[ 2^{-n(1-\xi Q + \delta)} \leq  D_{0,n}\left(\text{across $\mcl A$} \right) \leq 2^{-n(1-\xi Q - \delta)} \right] = 1 .
\eqe

We now want to apply the main result of~\cite{ang-discrete-lfpp} to transfer from~\eqref{eqn-wn-annulus} to~\eqref{eqn-discrete-lfpp}. However,~\cite{ang-discrete-lfpp} considers LFPP defined using the circle average process of the GFF rather than the white noise approximation, so we need an intermediate step. 
Let $\rng h$ be a zero-boundary GFF on $[-1,1]^2$ and define $ D_n^{\rng h}$ in the same manner as $D_{0,n}$ but with the radius $2^{-n}$ circle average process $\rng h_{2^{-n}}$ for $\rng h$ in place of $\Phi_{0,n}$. By applying~\cite[Proposition 3.3]{ding-goswami-watabiki}, to compare $\rng h_{2^{-n}}$ and $\Phi_{0,n}$, we see that~\eqref{eqn-wn-annulus} remains true with $D_n^{\rng h}$ in place of $D_{0,n}$. 

We now apply~\cite[Theorem 1.4]{ang-discrete-lfpp} to deduce~\eqref{eqn-discrete-lfpp} from the version of~\eqref{eqn-wn-annulus} with $D_n^{\rng h}$ in place of $D_{0,n}$.
Note that in our setting space is re-scaled by a factor of $2^{-n}$ as compared to the setting of~\cite[Theorem 1.4]{ang-discrete-lfpp} (which considers a GFF on $[0,n]^2$ and averages over circles of radius 1). This is the reason why the estimates in~\eqref{eqn-discrete-lfpp} and~\eqref{eqn-wn-annulus} differ by a factor of $2^{-n}$. We also note that the factor of $\sqrt{\pi/2}$ in~\cite{ang-discrete-lfpp} does not appear in our setting due to our normalization of the discrete GFF.
\end{proof}

\begin{proof}[Proof of Lemma~\ref{lem-Q-pos}]
This is immediate from Lemma~\ref{lem-discrete-lfpp} and~\eqref{eqn-lfpp-pos}. 
\end{proof}

\section{Concentration of left/right crossing distance}
\label{sec-efron-stein}
\subsection{Statement and setup}
\label{sec-ef-setup}

The goal of this section is to prove the following proposition. 

\begin{prop} \label{prop-quantile-ratio}
Let $\xi > 0$, let $\frk p$ be the constant from Proposition~\ref{prop-perc-estimate}, and define the maximal quantile ratio $\Lambda_n(\frk p)$ as in~\eqref{eqn-quantile-max}. 
We have
\eqb \label{eqn-quantile-ratio}
\sup_{n\in\BB N} \Lambda_n(\frk p) < \infty .
\eqe
\end{prop}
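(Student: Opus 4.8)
The strategy is an induction on $n$ in blocks of size $K$, where $K$ is a large constant to be chosen depending only on $\xi$. Concretely, I would prove that there is a constant $A = A(\xi) > 0$ such that $\op{Var}[\log L_{0,n}] \leq A n$ fails to be good enough by itself (that only gives $\Lambda_n(\frk p) \leq e^{c\sqrt n}$, which is Lemma~\ref{lem-a-priori-conc}); instead one wants $\op{Var}[\log L_{0,n}] \leq A$, uniformly in $n$. Given such a uniform variance bound, the proposition follows immediately from the trivial comparison between quantile ratios and variance (as in~\cite[Lemma 3.2]{dddf-lfpp} applied to $Z = \log L_{0,n}$), since $\Lambda_n(\frk p) = \max_{k\leq n}\ell_k(1-\frk p)/\ell_k(\frk p)$ and each ratio is controlled by $\op{Var}[\log L_{0,k}]^{1/2}$ up to a constant depending on $\frk p$. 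So the entire content is the uniform variance bound.

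\textbf{The inductive step via Efron--Stein.} As indicated in the outline (Section~\ref{sec-outline}), the key idea is that one should \emph{not} apply Efron--Stein directly to $\log L_{0,n}$, because a single $2^{-K}\times 2^{-K}$ square can carry an atypically large share of the geodesic's length in the supercritical regime. Instead I would fix $K$, condition on the coarse field $\Phi_{K,n}$ (or rather a truncated variant, to get finite range of dependence), and decompose
\eqb
\op{Var}[\log L_{0,n}] = \BB E\big[\op{Var}[\log L_{0,n} \mid \Phi_{K,n}]\big] + \op{Var}\big[\BB E[\log L_{0,n} \mid \Phi_{K,n}]\big] .
\eqe
The first term is handled by a Gaussian concentration inequality (Lemma~\ref{lem-gaussian-var}–type bound): conditionally on $\Phi_{K,n}$, $\log L_{0,n}$ is a $\xi$-Lipschitz functional of the independent Gaussian field $\Phi_{0,K}$, whose total variance is $K\log 2$, so this term is $\preceq K$, a \emph{constant}. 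For the second term, apply Efron--Stein in the variables indexing the $2^{-K}$-dyadic squares $S\in\mcl S_K$: if $Z_S$ denotes the change in $\BB E[\log L_{0,n}\mid\Phi_{K,n}]$ when the portion of $W$ above (i.e., at scales coarser than $2^{-K}$ in) the square $S$ is resampled, then $\op{Var}[\BB E[\log L_{0,n}\mid\Phi_{K,n}]] \preceq \sum_{S\in\mcl S_K}\BB E[Z_S^2]$. Here one uses the inductive hypothesis: the conditional-on-$\Phi_{K,n}$ distance across the annular collar $B_{2^{-K}}(S)\setminus S$ is controlled by (a rescaled copy of) $D_{0,n-K}$, and by the inductive variance bound applied at scale $n-K$ together with Proposition~\ref{prop-perc-estimate}, for a \emph{fixed} $S$ this distance is typically much smaller than $L_{0,n}$ even after conditioning. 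Resampling inside $S$ can only change $\log L_{0,n}$ by something comparable to $\log$ of the ratio (collar distance)/$L_{0,n}$, which one shows has small enough second moment that $\sum_{S\in\mcl S_K}\BB E[Z_S^2] \leq \frac12 A$ provided $K$ is chosen large (there are $\asymp 4^K$ squares, so the per-square bound must beat $4^{-K}$, which is where the gain from the inductive hypothesis and the exponential tail in Proposition~\ref{prop-perc-estimate} is spent). Combining, $\op{Var}[\log L_{0,n}] \leq C K + \frac12 A \leq A$ for suitable $A$, closing the induction.

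\textbf{Main obstacle.} The delicate point — and the reason Section~\ref{sec-efron-stein} is the technical heart of the paper — is the per-square estimate $\BB E[Z_S^2] \lesssim 4^{-K}\cdot(\text{small})$, i.e., controlling the effect on $\BB E[\log L_{0,n}\mid\Phi_{K,n}]$ of resampling one square. One must show that, conditionally on $\Phi_{K,n}$ and for a \emph{typical} realization, the geodesic realizing $L_{0,n}$ crosses $S$ in a way whose contribution is uniformly negligible; this requires an a priori upper bound on the $D_{0,n}$-distance across the collar $B_{2^{-K}}(S)\setminus S$ \emph{conditionally on the coarse field}, which is not available from~\cite{dddf-lfpp} directly and must be bootstrapped from the inductive hypothesis at scale $n-K$. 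One also has to be careful that conditioning on $\Phi_{K,n}$ interacts correctly with the Efron--Stein resampling (which is why one works with the truncated field $\Psi$ with finite range of dependence, and possibly with a slightly modified version of $\Phi_{0,n}$ as the outline warns), and that the finitely many base cases $n\leq K$ are absorbed into the constant $A$ via Lemma~\ref{lem-a-priori-conc}. I would structure the write-up as: (i) reduce the proposition to the uniform variance bound; (ii) state the conditional Gaussian-concentration bound for the first term; (iii) set up the Efron--Stein inequality for the second term and reduce to $\sum_S\BB E[Z_S^2]$; (iv) prove the per-square bound using the inductive hypothesis and Proposition~\ref{prop-perc-estimate}; (v) assemble and choose constants $K$ then $A$.
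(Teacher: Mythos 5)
Your architecture is exactly the paper's: reduce the proposition to a uniform bound on $\op{Var}[\log \wt L_{0,n}]$ via the quantile/variance comparison (Lemma~\ref{lem-induct-quantile}), split the variance by conditioning on the fine field, bound $\BB E[\op{Var}[\log \wt L_{0,n}\,|\,\Psi_{K,n}]] \preceq K$ by Gaussian concentration (Lemma~\ref{lem-cond-var}), control $\op{Var}[\BB E[\log \wt L_{0,n}\,|\,\Psi_{K,n}]]$ by Efron--Stein over $2^{-K}$-squares with the truncated field, and close an induction in blocks of size $K$ whose base case comes from Lemma~\ref{lem-a-priori-conc}. One statement, though, is internally inconsistent: you condition on $\Phi_{K,n}$ but say the resampled coordinate for a square $S$ is the portion of $W$ ``at scales coarser than $2^{-K}$''. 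The field $\Phi_{K,n}$ is built from $W$ on $\BB C\times[2^{-2n},2^{-2K}]$, i.e.\ from scales \emph{finer} than $2^{-K}$ (the field of pointwise variance $K\log 2$ left after conditioning is the coarse field $\Phi_{0,K}$), so the independent Efron--Stein coordinates must be the contributions $\psi_{K,n}^S$ of $W$ on $S\times[2^{-2n},2^{-2K}]$; resampling coarse-scale noise would leave $\BB E[\log\wt L_{0,n}\,|\,\Psi_{K,n}]$ unchanged and make the step vacuous. This is presumably a slip, but it needs fixing.

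The genuine gap is the per-square estimate, which you defer (``one shows\dots'') and whose suggested accounting --- a uniform bound $\BB E[Z_S^2]\lesssim 4^{-K}\cdot(\text{small})$ beating the entropy of $\asymp 4^K$ squares --- is not how the paper proceeds and is not clearly attainable, since it would require uniform control of the probability that the crossing geodesic visits a given square, which the paper deliberately avoids. Instead, the paper bounds $Z_S \le \BB E[\wt L_{0,n}^{-1}\BB 1_{F_S}\wt D_{0,n}(\text{around }A_S)\,|\,\Psi_{K,n}]$ (Lemma~\ref{lem-cond-exp-around}), writes $\sum_S Z_S^2\le(\max_S Z_S)(\sum_S Z_S)$, controls the sum deterministically by the geodesic's length budget, $\sum_S\BB 1_{F_S}\wt D_{0,n}(\text{across }A_S)\le K\wt L_{0,n}$ (Lemma~\ref{lem-across-sum}), and --- this is the key supercritical idea --- controls the max by observing that on $F_S$ the constraint $\wt D_{0,n}(\text{across }A_S)\le\wt L_{0,n}$, combined with the inductive annulus estimates (Lemma~\ref{lem-induct-full}) and the upper bound $\wt L_{0,n}\le 2^{-(1-\xi Q-\zeta)K+o_K(K)}\lambda_{n-K}$ (Lemma~\ref{lem-subadd}), forces $\Phi_{0,K}(v_S)\le(Q+o_K(1))K\log 2$; the resulting truncated Gaussian moment yields $\max_S Z_S\le 2^{-\alpha(\xi)K+o_K(K)}$ (Proposition~\ref{prop-ef-max}). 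Without this truncation by $\BB 1_{F_S}$, the available per-square bound is only $2^{(\xi^2/2-\xi Q)K}$ (Lemma~\ref{lem-ef-max-no-F}), which fails for $\xi\ge 2Q$; so your plan, as written, does not yet cover the full supercritical range, and supplying the $F_S$-truncation mechanism (or an equivalent) is precisely the missing step. Your route of bootstrapping conditional annulus estimates from the inductive hypothesis~\eqref{eqn-ind-hyp} is the right input, but it must be combined with these two ingredients rather than a per-square second-moment count.
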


As an immediate corollary of Propositions~\ref{prop-perc-estimate} and~\ref{prop-quantile-ratio}, we get the following improvement on Proposition~\ref{prop-perc-estimate}.

\begin{prop}  \label{prop-perc-estimate'}
Let $\xi  > 0$ and for $n\in\BB N$ let $\lambda_n$ be the median $D_{0,n}$-distance across $\BB S$, as in~\eqref{eqn-median-def}. 
Let $U\subset \BB C$ be an open set and let $K_1,K_2\subset U$ be disjoint compact connected sets which are not singletons. 
There are constants $c_0,c_1 >0$ depending only on $U,K_1,K_2,\xi$ such that for $n\in\BB N$ and $T > 3$, 
\eqb \label{eqn-perc-lower'}
\BB P\left[ D_{0,n}(K_1,K_2 ; U)  < T^{-1} \lambda_n \right] \leq c_0 e^{-c_1 (\log T)^2} 
\eqe
and
\eqb \label{eqn-perc-upper'}
\BB P\left[ D_{0,n}(K_1,K_2 ; U)  > T \lambda_n \right] \leq c_0 e^{-c_1 (\log T)^2 / \log\log T} .
\eqe
The same is true with $\wt D_{0,n}$ in place of $D_{0,n}$. 
In particular, the random variables $\lambda_n^{-1} D_{0,n}(K_1,K_2;U)$ and their reciporicals as $n$ varies are tight. 
\end{prop}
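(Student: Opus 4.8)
The result is stated as an immediate corollary of Propositions~\ref{prop-perc-estimate} and~\ref{prop-quantile-ratio}, and the plan is to treat it as exactly that: the only content beyond those two inputs is a bookkeeping argument that replaces the two scale factors $\ell_n(\frk p)$ and $\Lambda_n(\frk p)\ell_n(\frk p)$ appearing in Proposition~\ref{prop-perc-estimate} by the single factor $\lambda_n$, at the cost of adjusting the constants $c_0,c_1$. First I would record, using Proposition~\ref{prop-quantile-ratio}, that there is a constant $C = C(\xi) \geq 1$ with $\Lambda_n(\frk p) \leq C$ for all $n\in\BB N$. Since $\frk p \in (0,1/2)$ and $p\mapsto \ell_n(p)$ is non-decreasing, we have $\ell_n(\frk p) \leq \ell_n(1/2) = \lambda_n \leq \ell_n(1-\frk p) \leq \Lambda_n(\frk p)\ell_n(\frk p)$, where the last inequality is the definition~\eqref{eqn-quantile-max} of $\Lambda_n(\frk p)$. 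Hence $C^{-1}\lambda_n \leq \ell_n(\frk p) \leq \lambda_n$ and $\lambda_n \leq \Lambda_n(\frk p)\ell_n(\frk p) \leq C\lambda_n$; in words, all three quantities agree with $\lambda_n$ up to a multiplicative factor in $[C^{-1},C]$, uniformly in $n$.

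For the lower bound~\eqref{eqn-perc-lower'} I would use $\lambda_n \leq C\ell_n(\frk p)$ to note the inclusion $\{D_{0,n}(K_1,K_2;U) < T^{-1}\lambda_n\} \subseteq \{D_{0,n}(K_1,K_2;U) < (T/C)^{-1}\ell_n(\frk p)\}$ and then apply~\eqref{eqn-perc-lower} with $T/C$ in place of $T$, which is legitimate once $T/C > 3$. Since $\log(T/C) = \log T - \log C$, for large $T$ this loses only a constant factor in the exponent $(\log T)^2$, and for $3 < T \leq 3C$ the probability is at most $1$ while $(\log T)^2$ is bounded, so both cases are absorbed by enlarging $c_0$ and shrinking $c_1$ in a way depending only on $C$, hence only on $\xi$. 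The upper bound~\eqref{eqn-perc-upper'} is symmetric: from $\Lambda_n(\frk p)\ell_n(\frk p) \leq C\lambda_n$ one gets $\{D_{0,n}(K_1,K_2;U) > T\lambda_n\} \subseteq \{D_{0,n}(K_1,K_2;U) > (T/C)\Lambda_n(\frk p)\ell_n(\frk p)\}$, and~\eqref{eqn-perc-upper} applied with $T/C$ yields the claim after the same adjustment of constants (now for the slower rate $(\log T)^2/\log\log T$, for which the identical reasoning applies).

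The $\wt D_{0,n}$ versions require no extra work: Proposition~\ref{prop-perc-estimate} already asserts~\eqref{eqn-perc-lower} and~\eqref{eqn-perc-upper} for $\wt D_{0,n}$ with the \emph{same} $\ell_n(\frk p)$ and $\Lambda_n(\frk p)$, so the two preceding steps go through verbatim with $\wt D_{0,n}$ in place of $D_{0,n}$. Finally, for the tightness statement I would fix $\eta>0$ and choose $T = T(\eta,U,K_1,K_2,\xi) > 3$ making the right-hand sides of~\eqref{eqn-perc-lower'} and~\eqref{eqn-perc-upper'} each smaller than $\eta$; then $\BB P\left[\lambda_n^{-1} D_{0,n}(K_1,K_2;U) \notin (T^{-1},T)\right] < 2\eta$ for all $n$, which is tightness of $\{\lambda_n^{-1}D_{0,n}(K_1,K_2;U)\}_n$ together with tightness of the reciprocals (the event being symmetric under $x\mapsto x^{-1}$), and likewise for $\wt D_{0,n}$.

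I do not expect a genuine obstacle in this argument: all of the probabilistic difficulty is packaged into Proposition~\ref{prop-quantile-ratio}, which here is an input. The only mild care required is the constant-chasing in the two displayed tail bounds — keeping track that replacing $T$ by $T/C$ changes $(\log T)^2$ and $(\log T)^2/\log\log T$ by at most a constant factor for large $T$ — and the routine treatment of bounded $T$ by the trivial estimate $\BB P[\cdot]\leq 1$.
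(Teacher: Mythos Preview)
Your proposal is correct and matches the paper's approach exactly: the paper's proof is a single sentence noting that Proposition~\ref{prop-quantile-ratio} forces $\ell_n(\frk p)$ and $\Lambda_n(\frk p)\ell_n(\frk p)$ to be bounded above and below by $\xi$-dependent constants times $\lambda_n$, which is precisely the comparison you spell out in detail before substituting into Proposition~\ref{prop-perc-estimate}.
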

\begin{proof}
This follows since Proposition~\ref{prop-quantile-ratio} implies that the quantities $  \ell_n(\frk p)$ and $  \Lambda_n(\frk p) \ell_n(\frk p)$ appearing in Proposition~\ref{prop-perc-estimate} are bounded above and below by $ \xi$-dependent constants times $\lambda_n$. 
\end{proof}

Due to Lemma~\ref{lem-field-compare}, Proposition~\ref{prop-quantile-ratio} is equivalent to the analogous statement with distances defined using the truncated field $\Psi_{0,n}$ of~\eqref{eqn-phi-def-trunc} instead of the original field $\Phi_{0,n}$. Recall that objects defined with $\Psi_{0,n}$ instead of $\Phi_{0,n}$ are denoted by a tilde.
For the proof of Proposition~\ref{prop-quantile-ratio}, we will mostly work with $\Psi_{0,n}$ since the long-range independence of $\Psi_{0,n}$ is useful for our application of the Efron-Stein inequality. 
We will bound the quantile ratio $ \Lambda_n(\frk p)$ in terms of the variance of the log of the left-right crossing distance $\wt L_{0,n}$. This will be accomplished using the following elementary lemma.

\begin{lem} \label{lem-induct-quantile}
There is a constant $c > 0$ depending only on $\xi$ such that for each $n\in\BB N$, 
\eqb \label{eqn-induct-quantile}
\frac{ \ell_n(1-\frk p)}{  \ell_n(\frk p)} \leq \exp\left( c \sqrt{\op{Var} \left[ \log \wt L_{0,n}  \right]} \right) .
\eqe
\end{lem}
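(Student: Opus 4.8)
The plan is to get~\eqref{eqn-induct-quantile} by combining two elementary facts: the field comparison of Lemma~\ref{lem-field-compare}, which transfers information between $L_{0,n}$ and the truncated crossing distance $\wt L_{0,n}$, and Chebyshev's inequality, which bounds the quantile spread of a random variable by its standard deviation (this is the ``trivial bound for quantile ratios'' already used in the proof of Lemma~\ref{lem-a-priori-conc}).

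First I would record the deterministic comparison. Fix a bounded open set $U$ with $\ol{\BB S}\subset U$ and let $M_n:=\sup_{z\in\ol U}|\Phi_{0,n}(z)-\Psi_{0,n}(z)|$. For any piecewise $C^1$ path $P$ contained in $\BB S$ we have $e^{-\xi M_n}\int_0^1 e^{\xi\Psi_{0,n}(P(t))}|P'(t)|\,dt\le\int_0^1 e^{\xi\Phi_{0,n}(P(t))}|P'(t)|\,dt\le e^{\xi M_n}\int_0^1 e^{\xi\Psi_{0,n}(P(t))}|P'(t)|\,dt$, so taking the infimum over left--right crossings of $\BB S$ gives $|\log L_{0,n}-\log\wt L_{0,n}|\le\xi M_n$. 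By Lemma~\ref{lem-field-compare}, $\BB P[M_n>x]\le c_0 e^{-c_1 x^2}$ with $c_0,c_1$ not depending on $n$, so $\|M_n\|_{L^2}$ is bounded uniformly in $n$ and the triangle inequality in $L^2$ yields
\eqbn
\sqrt{\op{Var}[\log L_{0,n}]}\ \le\ \sqrt{\op{Var}[\log\wt L_{0,n}]}+\xi\|M_n\|_{L^2}\ \le\ \sqrt{\op{Var}[\log\wt L_{0,n}]}+C
\eqen
for a universal constant $C$.

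Next I would bound the quantile ratio of $L_{0,n}$ itself. Write $a_n:=\BB E[\log L_{0,n}]$ and $s_n:=\sqrt{\op{Var}[\log L_{0,n}]}$. Chebyshev's inequality with threshold $\frk p^{-1/2}$ gives $\BB P[|\log L_{0,n}-a_n|\ge\frk p^{-1/2}s_n]\le\frk p$; since $\Phi_{0,n}$ is a nondegenerate Gaussian process, $L_{0,n}$ has a continuous, strictly increasing distribution function, so this yields $\ell_n(\frk p)\ge e^{a_n-\frk p^{-1/2}s_n}$ and $\ell_n(1-\frk p)\le e^{a_n+\frk p^{-1/2}s_n}$, hence $\ell_n(1-\frk p)/\ell_n(\frk p)\le e^{2\frk p^{-1/2}s_n}$. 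Combining with the previous display gives $\ell_n(1-\frk p)/\ell_n(\frk p)\le\exp\big(c\sqrt{\op{Var}[\log\wt L_{0,n}]}+c\big)$ with $c$ depending only on $\xi$ (through $\frk p$).

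The only point that needs a real argument is removing the leftover additive constant, i.e.\ upgrading the last bound to $\exp(c'\sqrt{\op{Var}[\log\wt L_{0,n}]})$ as stated; I expect this to be the main (and essentially the only) obstacle. It suffices to show $\op{Var}[\log\wt L_{0,n}]\ge c_2>0$ for a $\xi$-dependent constant $c_2$ and all $n$. I would obtain this by conditioning on the fine-scale field $\Psi_{1,n}$, which is independent of $\Psi_{0,1}$ because $\Psi_{0,n}=\Psi_{0,1}+\Psi_{1,n}$ splits the white-noise integral over disjoint ranges of scales. The events $\{\Psi_{0,1}\ge 1\text{ on }\ol{\BB S}\}$ and $\{\Psi_{0,1}\le-1\text{ on }\ol{\BB S}\}$ each have the same positive probability $q_0$, which does not depend on $n$ (as $\Psi_{0,1}$ does not); on the first, $\log\wt L_{0,n}\ge\xi+\log\wt L_{1,n}^{\BB S}$, and on the second, $\log\wt L_{0,n}\le-\xi+\log\wt L_{1,n}^{\BB S}$, where $\wt L_{1,n}^{\BB S}$ is the left--right crossing distance of $\BB S$ built from $\Psi_{1,n}$ alone and hence is measurable with respect to the conditioning. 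Thus, conditionally on $\Psi_{1,n}$, the variable $\log\wt L_{0,n}-\log\wt L_{1,n}^{\BB S}$ is $\ge\xi$ with probability $q_0$ and $\le-\xi$ with probability $q_0$, which forces its conditional variance to be at least $2q_0\xi^2$; averaging over $\Psi_{1,n}$ gives $\op{Var}[\log\wt L_{0,n}]\ge 2q_0\xi^2$. (Note this lower bound is not needed for the application in Proposition~\ref{prop-quantile-ratio}, which only uses an upper bound on $\sup_n\op{Var}[\log\wt L_{0,n}]$, for which the additive constant is harmless.)
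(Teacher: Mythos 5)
Your argument is correct, but it is organized differently from the paper's. The paper transfers \emph{quantiles}: by Lemma~\ref{lem-field-compare} it bounds $\ell_n(\frk p)$ and $\ell_n(1-\frk p)$ by quantiles $\wt\ell_n(\wt{\frk p})$, $\wt\ell_n(1-\wt{\frk p})$ of $\wt L_{0,n}$ (with $\wt{\frk p}$ depending only on $\xi$), and then applies the Chebyshev-type quantile-ratio bound (the cited~\cite[Lemma 22]{dddf-lfpp}) directly to $\log\wt L_{0,n}$; this produces the bound $\exp(c\sqrt{\op{Var}[\log\wt L_{0,n}]})$ with no leftover additive constant. You instead transfer \emph{variances}: the pathwise bound $|\log L_{0,n}-\log\wt L_{0,n}|\le\xi M_n$ plus the $L^2$ triangle inequality gives $\sqrt{\op{Var}[\log L_{0,n}]}\le\sqrt{\op{Var}[\log\wt L_{0,n}]}+O(1)$, and Chebyshev applied to $\log L_{0,n}$ then yields $\exp(c\sqrt{\op{Var}[\log\wt L_{0,n}]}+c)$. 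As you correctly identify, matching the stated inequality then requires absorbing the additive constant, which you do by proving $\inf_n\op{Var}[\log\wt L_{0,n}]>0$ via the decomposition $\Psi_{0,n}=\Psi_{0,1}+\Psi_{1,n}$, conditioning on $\Psi_{1,n}$, and using the two events $\{\Psi_{0,1}\ge 1\text{ on }\ol{\BB S}\}$, $\{\Psi_{0,1}\le-1\text{ on }\ol{\BB S}\}$; your lower bound $\op{Var}[Y\,|\,\Psi_{1,n}]\ge 2q_0\xi^2$ and the use of the law of total variance are both valid. The trade-off: the paper's route is shorter and never needs a variance lower bound, while yours is self-contained in this respect but must supply the extra (application-irrelevant, as you note) ingredient.

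Two small points you assert without proof, both standard but worth a line: (i) $q_0=\BB P[\Psi_{0,1}\ge 1\text{ on }\ol{\BB S}]>0$, which follows e.g.\ by writing $\Psi_{0,1}=X\phi+R$ with $X$ a standard Gaussian, $\phi$ a Cameron--Martin function bounded below by $2$ on $\ol{\BB S}$ (obtained by integrating $\wt p_{t/2}$ against a large constant on a big box), and $R$ an independent continuous field, and intersecting $\{X\text{ large}\}$ with $\{\sup_{\ol{\BB S}}|R|\le M\}$; and (ii) the finiteness of $\op{Var}[\log L_{0,n}]$ and $\op{Var}[\log\wt L_{0,n}]$ (needed for the $L^2$ manipulations and Chebyshev), which follows from Gaussian concentration since both are $\xi$-Lipschitz functionals of continuous Gaussian fields, as the paper itself uses in Lemmas~\ref{lem-a-priori-conc} and~\ref{lem-cond-var}.
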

\begin{proof}
By exponentiating both sides of the estimate from~\cite[Lemma 22]{dddf-lfpp}, applied with $Z = \log \wt L_{0,n}$, we get
\eqb \label{eqn-induct-quantile-tilde}
\frac{ \wt\ell_n(1 - p)}{  \wt\ell_n(p)} \leq  \exp\left( \frac{\sqrt 2}{p} \sqrt{ \op{Var} \left[ \log \wt L_{0,n}  \right] } \right) ,\quad\forall p \in (0,1/2)
\eqe 
where $\wt\ell_n(\cdot)$ is the quantile function for $\wt L_{0,n}$. By Lemma~\ref{lem-field-compare}, $\ell_n(\frk p) \geq \wt\ell_n(\wt{\frk p})$ and $\ell_n(1-\frk p) \leq \wt\ell_n(1-\wt{\frk p})$ for $\wt{\frk p} \in (0,1/2)$ depending only on $\frk p , \xi$ (hence only on $\xi$). Therefore~\eqref{eqn-induct-quantile} follows from~\eqref{eqn-induct-quantile-tilde} applied with $p = \wt{\frk p}$. 
\end{proof}

In light of Lemma~\ref{lem-induct-quantile}, to prove Proposition~\ref{prop-quantile-ratio} we need a uniform upper bound for $\op{Var}\left[ \log \wt L_{0,n} \right]$. To prove such a bound we fix a large constant $K$ (to be chosen later, independently from $n$) and we use the decomposition
\eqb \label{eqn-var-decomp}
\op{Var}\left[ \log \wt L_{0,n} \right]
= \BB E\left[ \op{Var}\left[ \log \wt L_{0,n} \,|\, \Psi_{K,n} \right] \right] + 
\op{Var}\left[ \BB E\left[ \log \wt L_{0,n} \,|\, \Psi_{K,n} \right] \right] .
\eqe
The expectation of the conditional variance is easy to control using a Gaussian concentration bound, as explained in the following lemma.

\begin{lem} \label{lem-cond-var}
Almost surely, 
\eqb \label{eqn-cond-var}
\op{Var}\left[ \log \wt L_{0,n} \,|\, \Psi_{K,n} \right] \preceq K 
\eqe
with a deterministic implicit constant depending only on $\xi$. 
\end{lem}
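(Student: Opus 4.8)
The plan is to condition on the coarse field $\Psi_{K,n}$ and observe that, given this conditioning, $\log\wt L_{0,n}$ is a Lipschitz functional of the \emph{remaining} Gaussian randomness, namely the fine field $\Psi_{0,K}$ (more precisely, the white noise $W$ restricted to the time interval $[2^{-2K},1]$, which generates $\Psi_{0,K}$ and is independent of $\Psi_{K,n}$). Indeed, $\Psi_{0,n} = \Psi_{0,K} + \Psi_{K,n}$, so if we freeze $\Psi_{K,n}$ and perturb the fine field, then $\wt L_{0,n}$ changes by the corresponding multiplicative factor $e^{\xi \cdot (\text{perturbation})}$. The first step is therefore to write $\log\wt L_{0,n}$, viewed as a function of the white noise $W|_{[2^{-2K},1]}$ with $W|_{[2^{-2n},2^{-2K}]}$ held fixed, and check that it is $\xi$-Lipschitz with respect to the relevant $L^2$ norm. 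This is exactly the same Lipschitz computation that already appeared in the proof of Lemma~\ref{lem-a-priori-conc}: adding a function $g$ to $\Psi_{0,K}$ scales every path length by a factor between $e^{\xi \inf g}$ and $e^{\xi \sup g}$, and the Cameron--Martin/Gaussian-integration-by-parts setup gives a Lipschitz constant proportional to $\xi$.

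The second step is to compute the variance of $\Psi_{0,K}(z)$, which by the variance formula stated right after~\eqref{eqn-phi-def-trunc}-analog (the computation $\op{Var}\Phi_{m,n}(z)=(n-m)\log 2$, and the truncated field has comparable or smaller variance) is at most $K\log 2 + O(1)$, uniformly in $z$ and in $n$. Then I would invoke the standard Gaussian concentration inequality for Lipschitz functions of a Gaussian vector — the same Lemma~\ref{lem-gaussian-var} cited in the proof of Lemma~\ref{lem-a-priori-conc} — applied conditionally on $\Psi_{K,n}$. Since the conditional law of $\Psi_{0,K}$ given $\Psi_{K,n}$ is just the (unconditional) law of $\Psi_{0,K}$ — they are built from disjoint time-intervals of the white noise, hence independent — the conditional variance bound is in fact deterministic and reads $\op{Var}[\log\wt L_{0,n}\mid\Psi_{K,n}] \preceq \xi^2 \cdot (K\log 2 + O(1)) \preceq K$, with the implicit constant depending only on $\xi$. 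This gives~\eqref{eqn-cond-var}.

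The only mild subtlety — and the step I expect to require the most care — is the independence/conditioning point: one must make sure that the "fine" and "coarse" fields $\Psi_{0,K}$ and $\Psi_{K,n}$ are genuinely independent, so that conditioning on $\Psi_{K,n}$ leaves the law of $\Psi_{0,K}$ unchanged and the conditional concentration bound is uniform (in particular deterministic). This is immediate from the white-noise representation~\eqref{eqn-phi-def-trunc}: $\Psi_{0,K}$ is an integral of $W$ over $[2^{-2K},1]$ and $\Psi_{K,n}$ is an integral over $[2^{-2n},2^{-2K}]$, and a white noise assigns independent values to integrals over disjoint sets. One should also note that $\wt L_{0,n}$ depends on the field only through $\Psi_{0,n}$ on the unit square, a bounded region, so the relevant variance is indeed the pointwise variance $\le K\log 2 + O(1)$ rather than anything growing with $n$; the supremum over the square is handled exactly as in Lemma~\ref{lem-phi-cont} and the Borell--TIS-type input of Lemma~\ref{lem-gaussian-var}. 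Modulo these bookkeeping points, the lemma follows directly.
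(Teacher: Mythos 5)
Your proposal is correct and follows essentially the same route as the paper: condition on $\Psi_{K,n}$, use the independence of $\Psi_{0,K}$ and $\Psi_{K,n}$ (disjoint white-noise time intervals) so the conditional law of $\Psi_{0,K}$ is its unconditional law, observe that $\log \wt L_{0,n}$ is a $\xi$-Lipschitz function of $\Psi_{0,K}|_{\BB S}$ with respect to the \emph{sup} norm (since adding $f$ changes $\wt L_{0,n}$ by at most $e^{\xi\|f\|_\infty}$), and apply Lemma~\ref{lem-gaussian-var} with pointwise variance $\leq K\log 2$. The only small slip is your reference to Lipschitz continuity in ``the relevant $L^2$ norm'' and Cameron--Martin; the mechanism actually needed (and the one Lemma~\ref{lem-gaussian-var} is stated for) is $L^\infty$-Lipschitzness together with the maximal pointwise variance bound, which you in fact verify.
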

\begin{proof}
If we condition on $\Psi_{K,n}$, then under the conditional law the random variable $\log \wt L_{0,n}$ is a measurable functional of the continuous centered Gaussian process $\Psi_{0,K}|_{\BB S}$, which satisfies $\op{Var} \Psi_{0,K}(z) \leq K \log 2$ for each $z\in \BB S$. 
Furthermore, if $f$ is a continuous function on $\BB S$ then adding $f$ to $\Psi_{0,K}$ can increase or decrease the value of $\wt L_{0,n}$ by a factor of at most $e^{\xi \|f\|_\infty}$, where $\|f\|_\infty$ is the $L^\infty$ norm. From this, we infer that $\log \wt L_{0,n}$ is a $\xi$-Lipschitz continuous function of $\Psi_{0,k}|_{\BB S}$ w.r.t.\ the $L^\infty$ norm under the conditional law given $\Psi_{K,n}$. 
By Lemma~\ref{lem-gaussian-var} (and a simple approximation argument to reduce to the case of a finite-dimensional Gaussian vector), we now obtain~\eqref{eqn-cond-var}. 
\end{proof}

The main difficulty in the proof is bounding the variance of the conditional expectation in~\eqref{eqn-var-decomp}. 
This will be done using the Efron-Stein inequality and induction. 
Let us first describe the setup for the Efron-Stein inequality.

\begin{defn} \label{def-dyadic-square}
Let $\ep_0$ be the positive constant from~\eqref{eqn-heat-kernel-trunc}. 
We define $\mcl S_K$ to be the set of $2^{-K} \times 2^{-K}$ dyadic squares $S$ which are contained in the $2^{-K+1} K^{ \ep_0}$-neighborhood of the Euclidean unit square $\BB S$. 
\end{defn}

As in~\cite[Equation (2.18)]{dddf-lfpp}, for a $2^{-K} \times 2^{-K}$ dyadic square $S \in \mcl S_K$ and $z\in\BB C$, let
\eqb \label{eqn-block-fn-def}
\psi_{K,n}^S(z) := \int_{2^{-2n}}^{2^{-2K}} \int_S \wt p_{t/2}(z-u) W(du,dt) ,
\eqe
where $\wt p_t$ is the truncated heat kernel as in~\eqref{eqn-heat-kernel-trunc}. 
By the spatial independence property of the white noise $W$, the random functions $\psi_{K,n}^S$ for different choices of $S$ are independent.
Furthermore, since $\wt p_{t/2}$ is supported on the $2^{-K} K^{\ep_0}$-neighborhood of 0 for each $t\leq 2^{-2K}$, it follows that $\psi_{K,n}^S$ is supported on the $2^{-K} K^{\ep_0}$-neighborhood of $S$ and that
\eqbn
\Psi_{K,n}  = \sum_{S\in\mcl S_K} \psi_{K,n}^S  \quad\text{on $\BB S$}.
\eqen

Let $\Psi_{K,n}^S := \Psi_{K,n} -  \psi_{K,n}^S + \wh\psi_{K,n}^S$, where $\wh\psi_{K,n}^S$ is a copy of $\psi_{K,n}^S$ sampled independently from everything else. 
Also let
\eqb \label{eqn-square-field-def}
\Psi_{0,n}^S := \Psi_{K,n}^S + \Psi_{0,K}. 
\eqe
Then $\Psi_{0,n}^S \eqD \Psi_{0,n}$. 

Write $\wt D_{0,n}^S$ for the LFPP distance defined with $\Psi_{0,n}^S$ in place of $\Psi_{0,n}$. Define $\wt L_{0,n}^S$ in the same manner as $\wt L_{0,n}$ but with $\Psi_{0,n}^S$ in place of $\Psi_{0,n}$, i.e., 
\eqb \label{eqn-square-field-crossing-def}
\wt L_{0,n}^S := \wt D_{0,n}^S\left( \bdy_{\op L} \BB S , \bdy_{\op R} \BB S ; \BB S \right). 
\eqe
Let us now record what we get from the Efron-Stein inequality. 
 
\begin{lem} \label{lem-efron-stein}
For each $n\in\BB N$,  
\allb \label{eqn-efron-stein}
\op{Var}\left[    \BB E\left[ \log \wt L_{0,n} \,|\, \Psi_{K,n}  \right]     \right] 
\leq   \sum_{S\in\mcl S_K} \BB E\left[ \sum_{S\in\mcl S_K} \left( \BB E\left[ \log \wt L_{0,n}^S \,|\, \Psi_{K,n}^S \right]   - \BB E\left[ \log \wt L_{0,n}  \,|\, \Psi_{K,n} \right]  \right)_+^2 \right] .
\alle
\end{lem}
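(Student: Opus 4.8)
The plan is a direct application of the Efron--Stein inequality to the independent blocks $\{\psi_{K,n}^S\}_{S\in\mcl S_K}$ of the truncated field; the only real content is to recognize the two conditional expectations appearing in~\eqref{eqn-efron-stein} as a deterministic function of these blocks together with its one-coordinate resamplings. First I would record the structural facts already set up above: by Definition~\ref{def-dyadic-square} the index set $\mcl S_K$ is \emph{finite}; the fields $\{\psi_{K,n}^S\}_{S\in\mcl S_K}$ are mutually independent, by spatial independence of the white noise over disjoint squares; $\Psi_{K,n} = \sum_{S\in\mcl S_K}\psi_{K,n}^S$ on $\BB S$; and $\Psi_{0,K}$, being built from white noise in the time interval $[2^{-2K},1]$, is independent of every $\psi_{K,n}^S$, which is built from white noise in $[2^{-2n},2^{-2K}]$. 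Since $\wt L_{0,n}$ is a crossing distance computed only over paths contained in $\BB S$, it is a deterministic measurable function $g(\,\cdot\,,\,\cdot\,)$ of $\big(\Psi_{0,K}|_{\BB S},\ \sum_{S\in\mcl S_K}\psi_{K,n}^S|_{\BB S}\big)$.

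Integrating out the independent factor $\Psi_{0,K}|_{\BB S}$ then produces a deterministic measurable function $F$ of $(\psi_{K,n}^S)_{S\in\mcl S_K}$ with
\[
F\big((\psi_{K,n}^S)_{S\in\mcl S_K}\big)
= \BB E\big[\log\wt L_{0,n}\,\big|\,(\psi_{K,n}^S)_{S\in\mcl S_K}\big]
= \BB E\big[\log\wt L_{0,n}\,\big|\,\Psi_{K,n}\big] ,
\]
where the last equality holds because the randomness in $\Psi_{K,n}$ not recorded by $(\psi_{K,n}^S|_{\BB S})_{S\in\mcl S_K}$ comes from white noise supported away from $\BB S$ and hence does not enter $\wt L_{0,n}$, while $\Psi_{0,K}$ is independent of all of it. Applying the identical reasoning to $\Psi_{0,n}^S = \Psi_{K,n}^S + \Psi_{0,K}$, with $\Psi_{K,n}^S = \Psi_{K,n}-\psi_{K,n}^S+\wh\psi_{K,n}^S$ and $\wh\psi_{K,n}^S$ an independent copy of $\psi_{K,n}^S$, identifies $\BB E[\log\wt L_{0,n}^S\,|\,\Psi_{K,n}^S]$ with the value of $F$ at the tuple $(\psi_{K,n}^S)_{S\in\mcl S_K}$ having its $S$-th entry replaced by $\wh\psi_{K,n}^S$; write this as $F^{(S)}$. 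I would also note that $\log\wt L_{0,n}\in L^2$ --- indeed $\op{Var}[\log\wt L_{0,n}]<\infty$ follows from the Gaussian concentration argument already used in the proof of Lemma~\ref{lem-a-priori-conc}, applied to $\Psi_{0,n}$ in place of $\Phi_{0,n}$ (or via Lemma~\ref{lem-field-compare}) --- so $F\in L^2$ and all the variances below are finite.

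It then remains to invoke the Efron--Stein inequality for $F$ regarded as a function of the finitely many independent random elements $(\psi_{K,n}^S)_{S\in\mcl S_K}$: in its symmetric form, $\op{Var}[F]\le\frac12\sum_{S\in\mcl S_K}\BB E[(F-F^{(S)})^2]$. Since swapping $\psi_{K,n}^S$ with its i.i.d.\ copy $\wh\psi_{K,n}^S$ is measure-preserving and interchanges $F$ with $F^{(S)}$, the pair $(F,F^{(S)})$ is exchangeable, so $\BB E[(F-F^{(S)})^2] = \BB E[(F-F^{(S)})_+^2]+\BB E[(F^{(S)}-F)_+^2] = 2\,\BB E[(F^{(S)}-F)_+^2]$, giving
\[
\op{Var}[F] \le \sum_{S\in\mcl S_K}\BB E\big[(F^{(S)}-F)_+^2\big] .
\]
Substituting the identifications $F=\BB E[\log\wt L_{0,n}\,|\,\Psi_{K,n}]$ and $F^{(S)}=\BB E[\log\wt L_{0,n}^S\,|\,\Psi_{K,n}^S]$ yields~\eqref{eqn-efron-stein} (the additional $\sum_{S\in\mcl S_K}$ on its right-hand side only weakens the inequality, since each summand is nonnegative). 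I expect the only obstacle to be the bookkeeping in the first two paragraphs: verifying that conditioning on $\Psi_{K,n}$ is the same as conditioning on $(\psi_{K,n}^S)_{S\in\mcl S_K}$ as far as $\log\wt L_{0,n}$ is concerned, and that resampling the block $\psi_{K,n}^S$ amounts exactly to resampling the $S$-th argument of $F$. Both facts reduce to the independence of the white noise over disjoint regions of space--time, and no quantitative estimate is used in this lemma.
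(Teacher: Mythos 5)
Your proof is correct and follows essentially the same route as the paper: identify both conditional expectations as evaluations of a single functional of the independent blocks $(\psi_{K,n}^S)_{S\in\mcl S_K}$ (with $\Psi_{0,K}$ integrated out, using its independence from the blocks), then apply the Efron--Stein inequality. The extra details you supply --- the $L^2$ check and the derivation of the one-sided positive-part bound from the symmetric form via exchangeability of $(F,F^{(S)})$ --- are sound but are just standard forms of Efron--Stein that the paper invokes directly.
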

\begin{proof}
Consider the measurable functional $F$ from continuous functions on $\BB S$ to $\BB R$ defined by
\eqb
F(\psi) := \BB E\left[ \log \wt L_{0,n} \,|\, \Psi_{K,n} =\psi \right] .
\eqe
Since $\Psi_{0,K}$ and $\Psi_{K,n}$ are independent, to define $F(\psi)$, we can first sample $\Psi_{0,K}$ from its marginal law then consider the LFPP metric on $\BB S$ with $\Psi_{0,K} + \psi$ in place of $\Psi_{0,n}$. 
From this description, we see that  
\eqb
\BB E\left[ \log \wt L_{0,n} \,|\, \Psi_{K,n} \right] = F(\Psi_{K,n}) \quad\text{and} \quad
\BB E\left[ \log \wt L_{0,n}^S \,|\, \Psi_{K,n}^S \right] = F(\Psi_{K,n}^S) , \quad\forall S \in \mcl S_K .
\eqe 
The random function $\Psi_{K,n}$ is the sum of the independent random functions $\psi_{K,n}^S$ of~\eqref{eqn-block-fn-def}. 
Consequently, we can apply the Efron-Stein inequality~\cite{efron-stein} to get~\eqref{eqn-efron-stein}. 
\end{proof}

\subsection{Outline of the proof}
\label{sec-ef-outline}

The rest of this section is devoted to bounding the right side of~\eqref{eqn-efron-stein}. 
To do this, we will fix $C > 1$ and $K \in\BB N$ and assume the inductive hypothesis
\eqb \label{eqn-ind-hyp}
\Lambda_{n-K}(\frk p) \leq e^{C  \sqrt K}   .
\eqe
We will show that~\eqref{eqn-ind-hyp} implies that the right side of~\eqref{eqn-efron-stein} is bounded above by $2^{-\alpha K + o_K(K)}$ for an exponent $\alpha >0$ depending on $\xi$ (see Proposition~\ref{prop-ef-sum}). By combining this estimate with Lemma~\ref{lem-cond-var} and~\eqref{eqn-var-decomp}, we will get that~\eqref{eqn-ind-hyp} implies that $\op{Var}\left[ \log \wt L_{0,n} \right] \preceq K$, with the implicit constant depending only on $\xi$. By Lemma~\ref{lem-induct-quantile}, this will show that~\eqref{eqn-ind-hyp} implies $\Lambda_n(\frk p) \leq e^{C\sqrt K}$ provided $C$ and $K$ are chosen to be sufficiently large, depending only on $\xi$. 

Before getting into the details, in the rest of this subsection we give an outline of how~\eqref{eqn-ind-hyp} leads to an upper bound for the right side of~\eqref{eqn-efron-stein}. 
To lighten notation, let
\eqbn
Z_S := \left( \BB E\left[ \log \wt L_{0,n}^S \,|\, \Psi_{K,n}^S \right]   - \BB E\left[ \log \wt L_{0,n}  \,|\, \Psi_{K,n} \right]\right)_+  
\eqen
be the quantity appearing inside the expectation on the right side of~\eqref{eqn-efron-stein}. 
By the Cauchy-Schwarz inequality,
\eqb \label{eqn-ef-split0}
\BB E\left[ \sum_{S\in\mcl S_K} Z_S^2 \right] 
\leq \BB E\left[\left(\sum_{S\in\mcl S_K} Z_S \right)^2 \right]^{1/2}   \BB E\left[  \left(\max_{S\in\mcl S_K} Z_S \right)^2 \right]^{1/2} .
\eqe
We will show that~\eqref{eqn-ind-hyp} implies upper bounds for each of the two factors on the right side of~\eqref{eqn-ef-split0}.
\medskip

\noindent\textbf{Section~\ref{sec-cond-exp-around}: bound for $Z_S$.}
In Section~\ref{sec-cond-exp-around}, we upper-bound $Z_S$ in terms of a quantity which is easier to work with. 
For each $2^{-K} \times 2^{-K}$ square $S\in\mcl S_K$, we let $A_S$ be an annulus surrounding $S$ with aspect ratio 2 and radius approximately $ 2^{-K} K^{\ep_0}$ (which we recall is an upper bound for the range of dependence for the function $\Psi_{K,n}$; see the discussion just after~\eqref{eqn-phi-def-trunc}).
See~\eqref{eqn-S-annuli} for a precise definition.

If $P$ is a path between the left and right sides of $\BB S$ which gets within Euclidean distance $2^{-K} K^{\ep_0}$ of $S$, then $P$ must cross $A_S$.
By replacing a segment of $P$ by a segment of a path around $A_S$, we get a new path between the left and right sides of $\BB S$ which stays away from $S$ and whose $\wt D_{0,n}$-length is at most the $\wt D_{0,n}$-length of $P$ plus $\wt D_{0,n}(\text{around $A_S$})$.
This leads to the a.s.\ bound
\eqb \label{eqn-cond-exp-around0} 
Z_S \leq \BB E\left[ \frac{1}{\wt L_{0,n}} \BB 1_{F_S} \wt D_{0,n}(\text{around $A_S$}) \,\big|\, \Psi_{K,n} \right] , 
\eqe
where $F_S$ is the event that the $\wt D_{0,n}$-geodesic between the left and right sides of $\BB S$ gets within distance $2^{-K} K^{\ep_0}$ of $ S$. See Lemma~\ref{lem-cond-exp-around} for a precise statement. 
\medskip

\noindent\textbf{Section~\ref{sec-ef-sum}: proof conditional on estimates for the right side of~\eqref{eqn-ef-split0}.}
We explain how to conclude the proof of Proposition~\ref{prop-quantile-ratio} assuming upper bounds for each of the two factors on the right side of~\eqref{eqn-ef-split0} (given in Propositions~\ref{prop-ef-length} and~\ref{prop-ef-max}). 
The rest of the section is devoted to the proofs of these two propositions.
\medskip
 
\noindent\textbf{Section~\ref{sec-across-around}: bounds for distances around and across annuli.}
With a view toward bounding the right side of~\eqref{eqn-cond-exp-around0}, we use our inductive hypothesis~\eqref{eqn-ind-hyp} and the a priori estimates from Proposition~\ref{prop-perc-estimate} to show that with high probability the following is true. For each $S \in\mcl S_K$, the $\wt D_{0,n}$-distance across $A_S$ and the $\wt D_{0,n}$-distance around $A_S$ are each comparable to $2^{-K} \lambda_{n-K} e^{\xi \Phi_{0,K}(v_S)}$, where $v_S$ is the center of $S$. See Lemma~\ref{lem-induct-full} for a precise statement. Combined with~\eqref{eqn-cond-exp-around0}, this shows that with high probability,
\eqb \label{eqn-induct-full0}
Z_S \leq 2^{-K + o_K(K)} \lambda_{n-K} \BB E\left[ \frac{1}{\wt L_{0,n}} \BB 1_{F_S}   e^{\xi \Phi_{0,K}(v_S)}  \,\big|\, \Psi_{K,n} \right] ,\quad\forall S\in\mcl S_K .
\eqe 
\medskip

\noindent\textbf{Section~\ref{sec-ef-length}: bound for $\sum_{S\in\mcl S_K} Z_S$}.
Our bound for the first factor on the right side of~\eqref{eqn-ef-split0} is stated in Proposition~\ref{prop-ef-length} and proven in Section~\ref{sec-ef-length}.
We first note that a $\wt D_{0,n}$-geodesic between the left and right boundaries of $\BB S$ must cross each annulus $A_S$ for $S\in\mcl S_K$ for which $F_S$ occurs.
After accounting for the overlap of the $S_K$'s, this implies that with high probability
\eqb  \label{eqn-around-sum0}
\sum_{S\in\mcl S_K} \BB 1_{F_S} \wt D_{0,n}(\text{across $A_S$}) \leq 2^{o_K(K)} \wt L_{0,n}  
\eqe 
(Lemma~\ref{lem-across-sum}). 
By the annulus estimates described just above,~\eqref{eqn-around-sum0} also holds with ``around" in place of ``across" (Lemma~\ref{lem-around-sum}). Using~\eqref{eqn-cond-exp-around0}, we therefore get that with high probability,
\eqb \label{eqn-ef-sum0}
  \sum_{S\in\mcl S_K} Z_S  \leq 2^{o_K(K)} .
\eqe 
\medskip

\noindent\textbf{Sections~\ref{sec-subadd} through \ref{sec-ef-max}: bound for $\max_{S\in\mcl S_K} Z_S$.} 
Our upper bound for the second factor on the right side of~\eqref{eqn-ef-split0} is more involved than the upper bound for the first factor, and is carried out in Sections~\ref{sec-subadd} through~\ref{sec-ef-max}.
This upper bound is based on the combination of two estimates. The first estimate (Lemma~\ref{lem-subadd}) says that with high probability,
\eqb \label{eqn-subadd0}
\wt L_{0,n} =  2^{-(1-\xi Q) K + o_K(K)}  \lambda_{n-K} 
\eqe
and is proven by looking at the times when a $D_{0,K}$ or $D_{0,n}$-geodesic between the left and right boundaries of $\BB S$ crosses the annuli $A_S$ (this is similar to the subadditivity argument of Lemma~\ref{lem-mu-exponent}). 

The second estimate is based on the following observation. 
The random variable $\Phi_{0,K}(v_S)$ is a centered Gaussian random variable of variance $K \log 2$ and is independent from $\Psi_{K,n}$.
On the event $F_S$ that the $\wt D_{0,n}$-geodesic between the left and right sides of $\BB S$ gets close to $ S$, the contribution to the $\wt D_{0,n}$-length of this geodesic coming from its crossing of $A_S$ must be at most $\wt L_{0,n}$. By~\eqref{eqn-induct-full0} and the lower bound for $\wt D_{0,n}(\text{across $A_S$})$ discussed above, this implies that on $F_S$, it holds with high probability that 
\eqb
2^{ - K + o_K(K)} \lambda_{n-K}    e^{\xi \Phi_{0,K}(v_S)}
\leq \wt D_{0,n}\left(\text{across $A_S$}\right) 
 \leq  \wt L_{0,n} 
 \leq 2^{-(1-\xi Q) K + o_K(K)}  \lambda_{n-K} .
\eqe
Re-arranging this bound shows that on $F_S$, typically $\Phi_{0,K}(v_S) \leq (Q + o_K(1)) K \log 2$.  

Therefore, with high probability $e^{\xi \Phi_{0,K}(v_S)} \BB 1_{F_S}$ is bounded above by $e^{\xi X} \BB 1_{X \leq (Q\log 2  +o_K(1)) K}$ where $X$ is a centered Gaussian random variable of variance $K \log 2$ which is independent from $\Psi_{K,n}$. 
By a straightforward calculation for the standard Gaussian distribution (Lemma~\ref{lem-gaussian-trunc}) this shows that $\BB E\left[ e^{\xi \Phi_{0,K}(v_S)} \BB 1_{F_S} \,|\, \Psi_{K,n} \right] \leq 2^{-[\xi (\xi \wedge Q) - (\xi \wedge Q)^2/2] K + o_K(K)}$. Combined with~\eqref{eqn-induct-full0} and~\eqref{eqn-subadd0}, this shows that with high probability
\allb \label{eqn-ef-max0}
\max_{S\in\mcl S_K} Z_S 
&\leq 2^{-K + o_K(K)} \lambda_{n-K} \max_{S\in\mcl S_K} \BB E\left[ \frac{1}{\wt L_{0,n}} \BB 1_{F_S}   e^{\xi \Phi_{0,K}(v_S)}  \,\big|\, \Psi_{K,n} \right] \notag\\
&\leq 2^{-K + o_K(K)} \lambda_{n-K} \times 2^{ (1-\xi Q) K + o_K(K)}  \lambda_{n-K}^{-1} \times 2^{-[\xi (\xi \wedge Q) - (\xi \wedge Q)^2/2] K + o_K(K)} \notag\\
& = 2^{-\alpha K + o_K(K)} 
\alle
where $\alpha $ is an explicit, positive, $\xi$-dependent constant. 
\medskip

\noindent\textbf{Conclusion.} Plugging~\eqref{eqn-ef-sum0} and~\eqref{eqn-ef-max0} into~\eqref{eqn-ef-split0} and then into~\eqref{eqn-efron-stein} shows that $\op{Var}\left[ \BB E\left[ \log \wt L_{0,n} \,|\, \Psi_{K,n} \right] \right] \leq 2^{-\alpha K + o_K(K)}$. 
Combined with~\eqref{eqn-var-decomp} and Lemma~\ref{lem-cond-var}, this gives $\op{Var}[\log\wt L_{0,n}] \preceq K$, which by Lemma~\ref{lem-induct-quantile} implies that $\Lambda_n(\frk p) \leq e^{C\sqrt K}$ provided $C$ is chosen to be large enough (depending only on $\xi$).  
This completes the induction, hence the proof of Proposition~\ref{prop-quantile-ratio}.

\begin{remark} \label{remark-annulus}
The estimates in Sections~\ref{sec-ef-length} through~\ref{sec-ef-max} are all consequences of the estimates for distances around and across annuli from Section~\ref{sec-across-around}, H\"older's inequality, and the fact that the ``scale $K$" field $\Phi_{0,K}$ is a centered Gaussian process independent from $\Psi_{K,n}$ with $\op{Var}\Phi_{0,K}(z) = K \log 2$ for each $z \in \BB C$. The use of this description of the conditional law of $\Phi_{0,K}$, rather than just estimates for the maximum of $\Phi_{0,K}$, is the key idea which allows us to establish the tightness of LFPP for all $\xi > 0$, not just for $\xi < \xi_{\op{crit}}$. 
More precisely, the description of the conditional law of $\Phi_{0,K}$ given $\Psi_{K,n}$ is used in the proofs of~\eqref{eqn-cond-exp-moment-D} of Lemma~\ref{lem-cond-exp-moment} and Step 2 in Section~\ref{sec-ef-max}, which are perhaps the most interesting parts of the argument in Sections~\ref{sec-ef-length} through~\ref{sec-ef-max}. 
\end{remark}

\subsection{Upper bound for Efron-Stein differences in terms of annulus crossings}
\label{sec-cond-exp-around}

Instead of estimating the differences of conditional expectations appearing in~\eqref{eqn-efron-stein} directly, we will instead estimate certain functionals of $\wt D_{0,n}$. 
To describe these functionals, let $m_K \in \BB N$ be chosen so that
\eqbn
2^{-m_K} \leq K^{\ep_0} \leq 2^{-m_K+1} . 
\eqen
For $S\in \mcl S_K$, let
\eqb \label{eqn-S-center}
v_S := \left(\text{center of $S$} \right) 
\eqe
and define the annulus 
\eqb \label{eqn-S-annuli}
A_S := \ol{ B_{2^{-(K-m_K) + 1}}(v_S)  \setminus B_{2^{- (K-m_K) }}(v_S) }   .
\eqe 
The reason for the definition of $A_S$ is that its aspect ratio is of constant order and the field $\Psi_{K,n} - \Psi_{K,n}^S$ vanishes outside of $B_{2^{-(K-m_K)}}(v_S)$ (see the discussion just after~\eqref{eqn-block-fn-def}). 

\begin{lem} \label{lem-cond-exp-around} 
Let $\wt P_{0,n} : [0, \wt L_{0,n}] \rta \BB S $ be a path in $\BB S$ between the left and right boundaries of $\BB S$ of minimal $\wt D_{0,n}$-length. For $S\in\mcl S_K$, let
\eqb \label{eqn-square-hit-event}
F_S := \left\{ \wt P_{0,n} \cap B_{2^{-m_K}}(v_S) \not=\emptyset \right\} .
\eqe
Almost surely,
\eqb \label{eqn-cond-exp-around}
\BB E\left[\log  \wt L_{0,n}^S \,|\, \Psi_{K,n}^S \right]   - \BB E\left[ \log \wt L_{0,n}  \,|\, \Psi_{K,n} \right]  
\leq  \BB E\left[ \frac{1}{\wt L_{0,n}} \BB 1_{F_S} \wt D_{0,n}\left( \text{around $A_S$} \right)  \,|\,   \Psi_{K,n}  \right] ,\quad \forall S\in\mcl S_K .
\eqe
\end{lem}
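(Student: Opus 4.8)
The plan is to compare the LFPP distance defined with $\Psi_{0,n}^S$ (where the block $\psi_{K,n}^S$ has been resampled) to the one defined with $\Psi_{0,n}$, using a swapping argument on geodesics. First I would recall that, by the description of the functional $F$ in the proof of Lemma~\ref{lem-efron-stein},
\alb
\BB E\left[\log \wt L_{0,n}^S \,|\, \Psi_{K,n}^S\right] - \BB E\left[\log \wt L_{0,n} \,|\, \Psi_{K,n}\right] = F(\Psi_{K,n}^S) - F(\Psi_{K,n}),
\ale
and that both quantities can be realized on a common probability space where we first sample $\Psi_{0,K}$ and the ``old'' field $\Psi_{K,n}$ (equivalently, all blocks $\psi_{K,n}^{S'}$), then sample an independent copy $\wh\psi_{K,n}^S$ of $\psi_{K,n}^S$, and set $\Psi_{0,n}^S := \Psi_{0,n} - \psi_{K,n}^S + \wh\psi_{K,n}^S$. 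So it suffices to prove the pointwise (in the realization of $\Psi_{0,n}$) a.s.\ bound $\log \wt L_{0,n}^S - \log \wt L_{0,n} \leq \frac{1}{\wt L_{0,n}}\BB 1_{F_S}\wt D_{0,n}(\text{around }A_S)$ after taking conditional expectation given $\Psi_{K,n}$; since $\wt L_{0,n}$ and $F_S$ are $\Psi_{K,n}$-measurable (they depend only on $\Psi_{0,n}$, hence ultimately we condition appropriately), the right-hand side of the claimed inequality is the conditional expectation of $\frac{1}{\wt L_{0,n}}\BB 1_{F_S}\wt D_{0,n}(\text{around }A_S)$, while the left-hand side is the conditional expectation of the difference of logs. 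Actually the cleaner route: bound $\log\wt L_{0,n}^S - \log\wt L_{0,n}$ by $\frac{1}{\wt L_{0,n}}\BB 1_{F_S}\wt D_{0,n}(\text{around }A_S)$ \emph{almost surely}, then take $\BB E[\cdot \,|\, \Psi_{K,n}]$ of both sides (noting $F_S$, $\wt L_{0,n}$, $\wt D_{0,n}(\text{around }A_S)$ are all determined by $\Psi_{0,n}$, hence by $\Psi_{0,K}$ and $\Psi_{K,n}$, and that $\wt L_{0,n}^S$ depends on $\wh\psi_{K,n}^S$ too).

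The heart of the argument is the geometric swapping step. Consider the $\wt D_{0,n}$-geodesic $\wt P_{0,n}$ between $\bdy_{\op L}\BB S$ and $\bdy_{\op R}\BB S$. If $F_S$ fails, then $\wt P_{0,n}$ stays outside $B_{2^{-m_K}}(v_S)$, which contains the $2^{-K}K^{\ep_0}$-neighborhood of $S$ (using $2^{-m_K}\geq K^{\ep_0}/2$ and adjusting constants); since $\psi_{K,n}^S$ and $\wh\psi_{K,n}^S$ are both supported in that neighborhood, the fields $\Psi_{0,n}$ and $\Psi_{0,n}^S$ agree along $\wt P_{0,n}$, so $\wt L_{0,n}^S \leq \wt D_{0,n}^S\text{-length}(\wt P_{0,n}) = \wt L_{0,n}$ and the left side is $\leq 0$; this is the trivial case. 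If $F_S$ occurs, then because $\wt P_{0,n}$ enters $B_{2^{-m_K}}(v_S)$ but also touches $\bdy_{\op L}\BB S$ and $\bdy_{\op R}\BB S$ which lie outside $B_{2^{-(K-m_K)+1}}(v_S)$ (here one checks $2^{-(K-m_K)+1} = 2^{1+m_K-K} \leq 2^{1+m_K-K}$ is small compared to the side length $1$ of $\BB S$ for $K$ large, since $m_K = O(\log\log K)$), the path $\wt P_{0,n}$ must cross the annulus $A_S$. I would then excise the portion of $\wt P_{0,n}$ between its first and last visits to the inner boundary $\bdy B_{2^{-(K-m_K)}}(v_S)$ of $A_S$ and replace it by a loop around $A_S$ of near-minimal $\wt D_{0,n}$-length, concatenated with short arcs along $\bdy B_{2^{-(K-m_K)}}(v_S)$; the resulting path $P'$ still joins the left and right sides of $\BB S$, stays outside $B_{2^{-(K-m_K)}}(v_S)\supseteq$ (support of $\psi_{K,n}^S$ and $\wh\psi_{K,n}^S$), hence has the same $\wt D_{0,n}^S$- and $\wt D_{0,n}$-length, and that length is at most $\wt L_{0,n} + \wt D_{0,n}(\text{around }A_S)$ (the arcs along the inner circle can be absorbed into the ``around'' term by a constant, or one takes the loop slightly inside $A_S$). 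Therefore $\wt L_{0,n}^S \leq \wt L_{0,n} + \wt D_{0,n}(\text{around }A_S)$ on $F_S$.

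Finally I would convert the bound on $\wt L_{0,n}^S$ into a bound on $\log\wt L_{0,n}^S - \log\wt L_{0,n}$ by concavity of $\log$: $\log(a+b) - \log a \leq b/a$ for $a > 0$, $b \geq 0$, giving $\log\wt L_{0,n}^S - \log\wt L_{0,n} \leq \frac{1}{\wt L_{0,n}}\BB 1_{F_S}\wt D_{0,n}(\text{around }A_S)$ a.s. Taking $\BB E[\cdot\,|\,\Psi_{K,n}]$ and using that $\wt L_{0,n}$, $\BB 1_{F_S}$, $\wt D_{0,n}(\text{around }A_S)$ are $\sigma(\Psi_{0,K},\Psi_{K,n})$-measurable (so that the conditional expectation only integrates out $\wh\psi_{K,n}^S$ inside $\wt L_{0,n}^S$) yields~\eqref{eqn-cond-exp-around}.

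\textbf{Main obstacle.} The delicate point is the geometric bookkeeping in the swap: one must verify that $B_{2^{-m_K}}(v_S)$ genuinely contains the support of the modification $\psi_{K,n}^S - \wh\psi_{K,n}^S$ (which lives in the $2^{-K}K^{\ep_0}$-neighborhood of $S$), that $A_S$ is genuinely crossed by any left-right crossing of $\BB S$ that enters $B_{2^{-m_K}}(v_S)$ — requiring that the outer radius of $A_S$ be smaller than roughly the distance from $v_S$ to $\bdy\BB S$ for $S\in\mcl S_K$, which forces a careful choice of the scale $m_K$ relative to $K$ and $\ep_0$ — and that the reconnection arcs along the inner circle of $A_S$ contribute no more than the ``around'' distance up to a harmless constant (handled by taking the replacement loop to lie strictly inside the annulus, or by noting $A_S$ has constant aspect ratio so arc length along the inner circle is comparable to $\wt D_{0,n}(\text{around }A_S)$ after a crude field bound). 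None of these steps is deep, but they must be arranged consistently; this is exactly the kind of routine-but-fiddly verification the lemma packages.
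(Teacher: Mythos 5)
Your geometric surgery is the same as the paper's: case analysis on whether the geodesic enters the ball where the resampled block lives, replacement of the excised portion by a near-minimal loop around $A_S$, the bound $\wt L_{0,n}^S \leq \wt L_{0,n} + \BB 1_{F_S}\wt D_{0,n}(\text{around }A_S)$, and $\log(a+b)-\log a\le b/a$. However, your final conditioning step has a genuine gap. You assert that $\wt L_{0,n}$ and $F_S$ are $\Psi_{K,n}$-measurable; they are not — they depend on $\Psi_{0,K}$ as well. More importantly, taking $\BB E[\,\cdot\,|\,\Psi_{K,n}]$ of the almost-sure inequality produces $\BB E[\log \wt L_{0,n}^S\,|\,\Psi_{K,n}]$ on the left, which is \emph{not} the quantity $\BB E[\log \wt L_{0,n}^S\,|\,\Psi_{K,n}^S]$ appearing in \eqref{eqn-cond-exp-around}; by the tower property your route only bounds the conditional average of the Efron--Stein difference given $\Psi_{K,n}$, which is strictly weaker and does not feed into Lemma~\ref{lem-efron-stein} (one needs the random variable itself, before squaring its positive part). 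The paper closes this by conditioning on the pair $(\Psi_{K,n},\Psi_{K,n}^S)$ and using two conditional-independence facts: $\log\wt L_{0,n}^S$ is a function of $\Psi_{0,K}+\Psi_{K,n}^S$ with $\Psi_{0,K}$ independent of everything else, so $\BB E[\log\wt L_{0,n}^S\,|\,\Psi_{K,n}^S] = \BB E[\log\wt L_{0,n}^S\,|\,\Psi_{K,n}^S,\Psi_{K,n}]$; and the majorant $\log\wt L_{0,n} + \wt L_{0,n}^{-1}\BB 1_{F_S}\wt D_{0,n}(\text{around }A_S)$ is $\sigma(\Psi_{0,n})$-measurable, hence its conditional expectation given the pair equals its conditional expectation given $\Psi_{K,n}$ alone. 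Subtracting $\BB E[\log\wt L_{0,n}\,|\,\Psi_{K,n}]$ then gives exactly \eqref{eqn-cond-exp-around}. This is the one non-geometric ingredient of the proof, and your write-up skips it.

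Two smaller points on the surgery. The reconnection arcs along the inner circle, and the suggestion to absorb them "by a constant," are both unnecessary and the latter would actually break the lemma as stated (there is no constant in \eqref{eqn-cond-exp-around}). No arcs are needed: the initial segment of $\wt P_{0,n}$ up to its first entrance into $B_{2^{-(K-m_K)}}(v_S)$ and the final segment after its last exit each cross the annulus $A_S$, hence each intersects any loop in $A_S$ separating its boundaries; so the union of the two segments with the loop is already connected and contains a left--right crossing of $\BB S$ avoiding the ball, of $\wt D_{0,n}$-length at most $\tau_0 + (\wt L_{0,n}-\tau_1) + \wt D_{0,n}(\text{around }A_S) \le \wt L_{0,n} + \wt D_{0,n}(\text{around }A_S)$. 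Also, your claim that the endpoints of $\wt P_{0,n}$ lie outside $B_{2^{-(K-m_K)+1}}(v_S)$ fails for squares $S$ near $\bdy \BB S$; the argument survives because the union above still contains \emph{some} left--right crossing (possibly with a different starting point on $\bdy_{\op L}\BB S$), which is how the paper treats the boundary case.
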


Once Lemma~\ref{lem-cond-exp-around} is established, we will never work with the left side of~\eqref{eqn-cond-exp-around} directly. Instead, we will prove bounds for the right side of~\eqref{eqn-cond-exp-around}.

\begin{figure}[t!]
 \begin{center}
\includegraphics[scale=1]{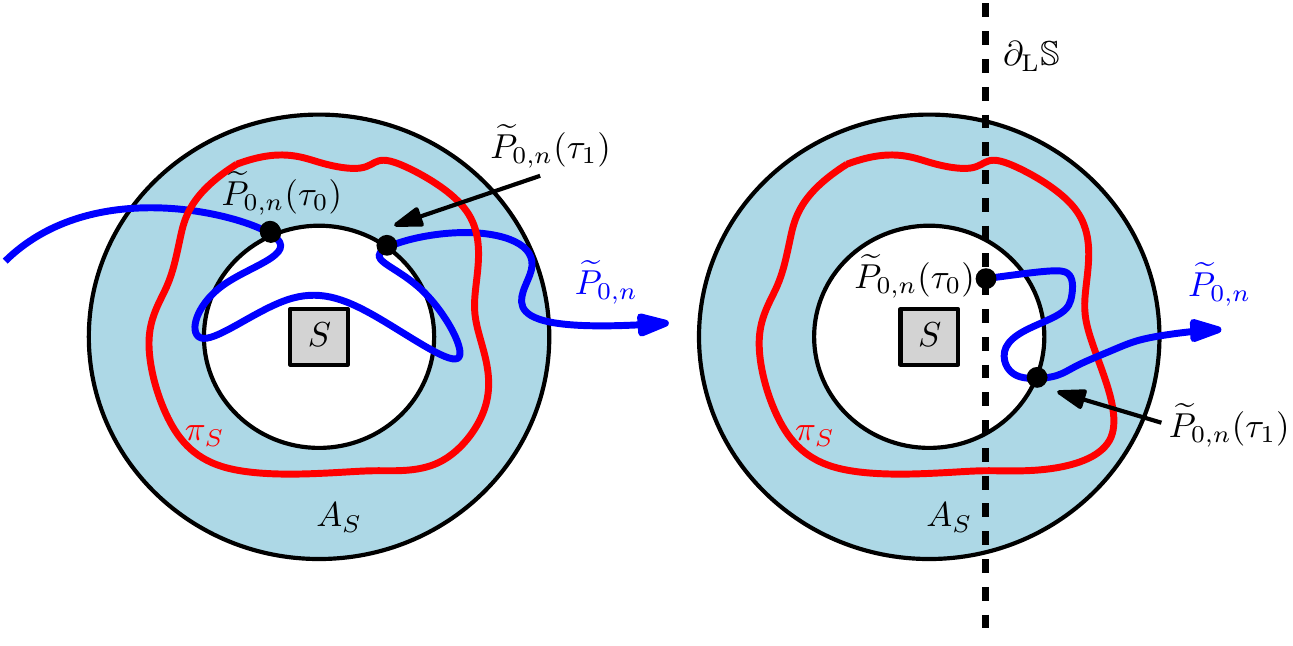}
\vspace{-0.01\textheight}
\caption{Illustration of the proof of Lemma~\ref{lem-cond-exp-around} in the case when $B_{2^{-(K-m_K)}}(v_S)$ does not intersect $\bdy \BB S$ (\textbf{left}) and the case when $B_{2^{-(K-m_K)}}(v_S)$ intersects $\bdy_{\op L} \BB S$ (\textbf{right}). In each case, the union of $\wt P_{0,n} |_{[0,\tau_0]}$, $\wt P_{0,n}|_{[\tau_1,\wt L_{0,n}]}$, and the path $\pi_S$ around $A_S$ contains a path in $\BB S$ between the left and right boundaries of $\BB S$ which does not intersect $B_{2^{-(K-m_K)}}(v_S)$. Note that in the figure on the right, $\tau_0 = 0$ and the starting point of the new path is not the same as the starting point of $\wt P_{0,n}$. 
}\label{fig-cond-exp-around}
\end{center}
\vspace{-1em}
\end{figure}

\begin{proof}[Proof of Lemma~\ref{lem-cond-exp-around}]
We first note that by the definition of $\Psi_{K,n}$, the fields $\Psi_{K,n}^S$ and $\Psi_{K,n}$ agree on $\BB C\setminus B_{2^{-(K-m_K)}}(v_S)$. 
Since $\Psi_{0,n}^S - \Psi_{K,n}^S = \Psi_{0,n}  - \Psi_{K,n}  =  \Psi_{0,K}$, it follows that $\Psi_{0,n}^S$ and $\Psi_{0,n}$ agree on $\BB C\setminus B_{2^{-(K-m_K)}}(v_S)$. 
We will now use this fact to compare $\wt L_{0,n}^S$ and $\wt L_{0,n}$.  

If $\wt P_{0,n} $ does not enter $B_{2^{-(K-m_K)}}(v_S)$ (i.e., $F_S$ does not occur), then the $\wt D_{0,n}^S$-length of $\wt P_{0,n}$ is the same as its $\wt D_{0,n}$-length, so $\wt L_{0,n}^S \leq \wt L_{0,n}$. 

If $\wt P_{0,n} $ enters $B_{2^{-(K-m_K)}}(v_S)$ (i.e., $F_S$ occurs), let $\tau_0$ be the first entrance time and let $\tau_1$ be the last exit time of $\wt P_{0,n} $ from $B_{2^{-(K-m_K)}}(v_S)$. 
Note that it is possible that $\tau_0 = 1$ or that $\tau_1 = \wt L_{0,n}$ if $v_S$ is within Euclidean distance $2^{-(K-m_K)}$ of the left or right boundary of $\BB S$. 
Also let $\pi_S$ be a path in $A_S$ of minimal $\wt D_{0,n} $-length, i.e., the $\wt D_{0,n}$-length of $\pi_S$ is $\wt D_{0,n}\left( \text{around $A_S$} \right)$. 
Then the union of $\wt P_{0,n} |_{[0,\tau_0]}$, $\pi_S$, and $\wt P_{0,n}  |_{[\tau_1, \wt L_{0,n} ]}$ contains a simple path in $\BB S$ between the left and right boundaries of $\BB S$ (see Figure~\ref{fig-cond-exp-around}). 
This path does not enter $B_{2^{-(K-m_K)}}(v_S)$, so its $\wt D_{0,n}$-length is the same as its $\wt D_{0,n}^S$-length. 
Therefore, on $F_S$, a.s.\ 
\eqbn
\wt L_{0,n}^S  
\leq \tau_0 + (\wt L_{0,n} - \tau_1) + \wt D_{0,n}\left( \text{around $A_S$} \right) 
\leq \wt L_{0,n} + \wt D_{0,n}\left( \text{around $A_S$} \right) .
\eqen

Combining the preceding two paragraphs gives that, a.s., 
\eqbn
\wt L_{0,n}^S  \leq \wt L_{0,n} + \BB 1_{F_S} \wt D_{0,n}\left( \text{around $A_S$} \right) . 
\eqen
By the mean value theorem, a.s.\ 
\allb \label{eqn-log-inequality}
\log \wt L_{0,n}^S - \log \wt L_{0,n}  
&\leq \log\left( \wt L_{0,n} + \BB 1_{F_S} \wt D_{0,n}\left( \text{around $A_S$} \right) \right) - \log \wt L_{0,n}  \notag\\
&\leq \wt L_{0,n}^{-1} \BB 1_{F_S} \wt D_{0,n}\left( \text{around $A_S$} \right)  .
\alle
The random variable $\log \wt L_{0,n} + \wt L_{0,n}^{-1} \BB 1_{F_S} \wt D_{0,n}\left( \text{around $A_S$} \right)$ is $\sigma( \Psi_{0,n})$-measurable, hence it is conditionally independent from $\Psi_{K,n}^S$ 
given $\Psi_{K,n}$. On the other hand, $\log \wt L_{0,n}^S$ is a measurable function of $\Psi_{0,n}^S$ so is conditionally independent from $\Psi_{K,n}$ given $\Psi_{K,n}^S$. Therefore,
\allb
\BB E\left[ \log \wt L_{0,n}^S \,|\, \Psi_{K,n}^S  \right]  
&= \BB E\left[ \log \wt L_{0,n}^S \,|\, \Psi_{K,n}^S , \Psi_{K,n}  \right]   \notag\\
&\leq \BB E\left[ \log  \wt L_{0,n} + \wt L_{0,n}^{-1} \BB 1_{F_S} \wt D_{0,n}\left( \text{around $A_S$} \right)  \,|\, \Psi_{K,n}^S , \Psi_{K,n}  \right] \quad \text{(by~\eqref{eqn-log-inequality})} \notag\\
&= \BB E\left[ \log  \wt L_{0,n} + \wt L_{0,n}^{-1} \BB 1_{F_S} \wt D_{0,n}\left( \text{around $A_S$} \right)  \,|\,   \Psi_{K,n}  \right]  .
\alle
Subtracting $ \BB E\left[ \log \wt L_{0,n}  \,|\, \Psi_{K,n} \right]$ now gives~\eqref{eqn-cond-exp-around}.
\end{proof}

\subsection{Proof of Proposition~\ref{prop-quantile-ratio} assuming moment estimates}
\label{sec-ef-sum}

As explained in Section~\ref{sec-ef-outline}, to bound $\BB E\left[ \op{Var}\left[ \log\wt L_{0,n} \,|\, \Psi_{K,n} \right] \right]$, and thereby prove Proposition~\ref{prop-quantile-ratio}, we will induct on $n$, taking~\eqref{eqn-ind-hyp} as our inductive hypothesis.  
The main estimate which we prove using~\eqref{eqn-ind-hyp} is the following proposition. For the statement, we introduce the $\xi$-dependent exponent 
\eqb \label{eqn-hit-exponent} 
\alpha(\xi) 
:= \begin{cases}
\xi Q - \frac{\xi^2}{2} ,\quad &\xi \leq Q \\
\frac{Q^2}{2}  ,\quad &\xi  > Q .
\end{cases}
\eqe
Note that $\alpha(\xi)  > 0$ since $Q > 0$ (Proposition~\ref{prop-Q-wn}).

\begin{prop} \label{prop-ef-sum}
Assume the inductive hypothesis~\eqref{eqn-ind-hyp}. Let $\alpha(\xi) $ for $\xi > 0$ be as in~\eqref{eqn-hit-exponent}. For each fixed $\zeta >0$, 
\eqb \label{eqn-ef-sum}
\BB E\left[ \sum_{S\in\mcl S_K} \left(  \BB E\left[ \frac{1}{\wt L_{0,n}} \BB 1_{F_S} \wt D_{0,n}\left( \text{around $A_S$} \right)  \,|\,   \Psi_{K,n}  \right]  \right)^2 \right] 
\preceq  2^{ - (\alpha(\xi) - \zeta)  K} ,
\eqe
with the implicit constant depending only on $\zeta , \xi , C$. 
\end{prop}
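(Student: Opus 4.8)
The plan is to reduce the claimed bound to the two factors appearing in the Cauchy--Schwarz split~\eqref{eqn-ef-split0}, as sketched in Section~\ref{sec-ef-outline}, and to prove each factor bound under the inductive hypothesis~\eqref{eqn-ind-hyp}. By Lemma~\ref{lem-cond-exp-around}, the summand on the left of~\eqref{eqn-ef-sum} is at most $Z_S^2$ where $Z_S := \BB E\bigl[ \wt L_{0,n}^{-1} \BB 1_{F_S} \wt D_{0,n}(\text{around $A_S$}) \,|\, \Psi_{K,n}\bigr]$, so it suffices to bound $\BB E\bigl[\sum_{S\in\mcl S_K} Z_S^2\bigr]$. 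Applying~\eqref{eqn-ef-split0}, I will invoke Proposition~\ref{prop-ef-length} to bound $\BB E\bigl[(\sum_S Z_S)^2\bigr]^{1/2} \leq 2^{o_K(K)}$ and Proposition~\ref{prop-ef-max} to bound $\BB E\bigl[(\max_S Z_S)^2\bigr]^{1/2} \leq 2^{-\alpha(\xi) K + o_K(K)}$; since $\#\mcl S_K = 2^{o_K(K)}$ there is no issue with polynomial-in-$K$ prefactors. Multiplying the two factors gives $\BB E\bigl[\sum_S Z_S^2\bigr] \leq 2^{-\alpha(\xi) K + o_K(K)}$, and since $2^{o_K(K)} \leq 2^{\zeta K}$ for $K$ large enough depending on $\zeta$, this yields~\eqref{eqn-ef-sum} for all large $K$; the finitely many remaining small values of $K$ are absorbed into the implicit constant (which is allowed to depend on $\zeta,\xi,C$).

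The substantive content is of course in Propositions~\ref{prop-ef-length} and~\ref{prop-ef-max}, which are proven in the later subsections; at this level the proof of Proposition~\ref{prop-ef-sum} is just the assembly step. But to make the assembly honest I would spell out the bookkeeping: the ``high probability'' events underlying~\eqref{eqn-around-sum0}, \eqref{eqn-subadd0}, and the annulus estimates of Lemma~\ref{lem-induct-full} fail only with probability $\leq 2^{-c K^{4/3}}$ or similar (from Proposition~\ref{prop-perc-estimate} and Lemma~\ref{lem-phi-cont}-type inputs), so on the complementary events one uses the deterministic/crude a priori bounds on $\wt L_{0,n}$ and $\wt D_{0,n}(\text{around }A_S)$ together with Proposition~\ref{prop-perc-estimate'} moment bounds to check that the contribution to $\BB E[(\sum_S Z_S)^2]$ and $\BB E[(\max_S Z_S)^2]$ from the bad events is superpolynomially small in $2^{-K}$, hence negligible. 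This is the same pattern already used in the proof of Lemma~\ref{lem-mu-exponent} (cf.\ the Cauchy--Schwarz bound~\eqref{eqn-mu-error}), so it is routine once the two main propositions are in hand.

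The main obstacle is not in this proposition per se but in establishing the second factor bound, Proposition~\ref{prop-ef-max}: controlling $\max_{S\in\mcl S_K} Z_S$ requires the two-sided squeeze on the geodesic's crossing of $A_S$ — the lower bound $2^{-K+o_K(K)}\lambda_{n-K} e^{\xi\Phi_{0,K}(v_S)} \leq \wt D_{0,n}(\text{across }A_S)$ from Lemma~\ref{lem-induct-full} (which uses~\eqref{eqn-ind-hyp}) combined with the upper bound $\wt L_{0,n} = 2^{-(1-\xi Q)K + o_K(K)}\lambda_{n-K}$ from Lemma~\ref{lem-subadd} — to deduce that on $F_S$ one typically has $\Phi_{0,K}(v_S) \lesssim (Q+o_K(1))K\log 2$, and then a Gaussian truncation computation (Lemma~\ref{lem-gaussian-trunc}) to extract the exponent $\alpha(\xi)$ from~\eqref{eqn-hit-exponent}. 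Within the proof of Proposition~\ref{prop-ef-sum} itself, the only thing one must be careful about is that the $o_K(K)$ error terms in~\eqref{eqn-ef-sum0} and~\eqref{eqn-ef-max0} are uniform (they depend only on $\xi$ and $C$, not on $n$), so that the final bound is genuinely uniform in $n$ — this is exactly what makes the induction on $n$ close.
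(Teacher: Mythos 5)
Your proposal is correct and follows essentially the same route as the paper: the paper's proof is exactly the assembly step via the Cauchy--Schwarz split $\sum_S Z_S^2 \leq \bigl(\max_S Z_S\bigr)\bigl(\sum_S Z_S\bigr)$ followed by Cauchy--Schwarz in expectation, invoking Propositions~\ref{prop-ef-max} and~\ref{prop-ef-length} for the two factors and absorbing the subexponential factor $K\exp(K^{2/3})$ by slightly shrinking $\zeta$. (Your appeal to Lemma~\ref{lem-cond-exp-around} is unnecessary here --- the summand in~\eqref{eqn-ef-sum} is by definition the quantity $Z_S^2$ --- but this is harmless.)
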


Note that due to Lemma~\ref{lem-cond-exp-around}, Proposition~\ref{prop-ef-sum} implies an upper bound for the right side of~\eqref{eqn-efron-stein} from Lemma~\ref{lem-efron-stein}. 
As we will explain just below, Proposition~\ref{prop-ef-sum} is an easy consequence of the following two propositions, whose proofs will occupy most of the rest of this section.

\begin{prop} \label{prop-ef-length}
Assume the inductive hypothesis~\eqref{eqn-ind-hyp}. We have
\eqb \label{eqn-ef-length}
\BB E\left[ \left( \sum_{S\in\mcl S_K} \BB E\left[ \BB 1_{F_S}  \frac{1}{\wt L_{0,n}} \wt D_{0,n}\left( \text{across $A_S$} \right) \,|\, \Psi_{n,K} \right] \right)^2 \right]^{1/2}  
\preceq  K \exp(K^{2/3})  ,
\eqe
with the implicit constant depending only on $ \xi , C$. 
\end{prop}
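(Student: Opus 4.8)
The plan is to bound the first factor of~\eqref{eqn-ef-split0}, namely $\BB E[(\sum_{S\in\mcl S_K} Z_S)^2]^{1/2}$ with $Z_S$ replaced by its bound $\BB E[\BB 1_{F_S} \wt L_{0,n}^{-1} \wt D_{0,n}(\text{around } A_S) \,|\, \Psi_{K,n}]$ from Lemma~\ref{lem-cond-exp-around}. The key geometric observation is that the $\wt D_{0,n}$-geodesic $\wt P_{0,n}$ between the left and right sides of $\BB S$ must cross the annulus $A_S$ whenever $F_S$ occurs (since it enters $B_{2^{-m_K}}(v_S)$, which is inside the inner boundary of $A_S$, and it has endpoints outside $A_S$ — or at worst terminates on $\bdy\BB S$ in which case a slight variant of the argument applies). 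Therefore the portion of $\wt P_{0,n}$ inside $A_S$ has $\wt D_{0,n}$-length at least $\wt D_{0,n}(\text{across } A_S)$, so
\[
\sum_{S\in\mcl S_K} \BB 1_{F_S} \wt D_{0,n}(\text{across } A_S) \leq (\text{bounded-overlap constant}) \cdot \wt L_{0,n},
\]
since each point of $\wt P_{0,n}$ lies in only boundedly many of the annuli $A_S$ (the $A_S$ have radii $\asymp 2^{-(K-m_K)} = 2^{-K} K^{\ep_0}$ and the centers $v_S$ range over a $2^{-K}$-grid, so a given point belongs to $O(K^{2\ep_0})$ of them). This gives Lemma~\ref{lem-across-sum}, i.e. $\sum_{S} \BB 1_{F_S} \wt D_{0,n}(\text{across } A_S) \leq 2^{o_K(K)} \wt L_{0,n}$, and dividing by $\wt L_{0,n}$ and taking conditional expectation gives $\sum_{S\in\mcl S_K} \BB E[\BB 1_{F_S} \wt L_{0,n}^{-1} \wt D_{0,n}(\text{across } A_S) \,|\, \Psi_{K,n}] \leq 2^{o_K(K)}$ pointwise a.s. Squaring and taking expectations would immediately give a $2^{o_K(K)}$ bound — which is stronger than the claimed $K \exp(K^{2/3})$; so in fact it suffices to make the $o_K(K)$ error explicit, and $K^{2/3}$ (coming from a field-regularity event as in Lemma~\ref{lem-mu-exponent}, Step 1) together with a $K$ prefactor from the overlap count is comfortably enough room.

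More carefully, I would run the argument on a high-probability regularity event $E$ — the analog of $E_{m,n}$ in the proof of Lemma~\ref{lem-mu-exponent}, say $\{2^{-K}\sup_{z\in U}|\nabla\Psi_{0,n}| \leq K\}$ or similar, which by Lemma~\ref{lem-phi-cont} and Lemma~\ref{lem-field-compare} has probability $\geq 1 - c_0 e^{-c_1 K^2}$. On $E$ the bounded-overlap bound gives the $\leq 2^{O(\log K)}$ (hence $\leq K\exp(K^{2/3})$) control of the pointwise-a.s. quantity $\sum_S \BB E[\cdots|\Psi_{K,n}]$. Off $E$, I would use Cauchy--Schwarz: the quantity $\sum_S \BB E[\BB 1_{F_S}\wt L_{0,n}^{-1}\wt D_{0,n}(\text{across }A_S)|\Psi_{K,n}]$ has an a priori deterministic-in-$n$ $L^2$ bound, because $\wt L_{0,n}^{-1}$ has good upper-tail bounds (lower-tail bounds for $\wt L_{0,n}$) by Proposition~\ref{prop-perc-estimate}, $\wt D_{0,n}(\text{across } A_S)$ is controlled in terms of $\Lambda_{n}(\frk p)\ell_n(\frk p)$ times a scaling factor, and there are only $O(K^{2}2^{2K})$ squares — so the tail contribution off $E$ is at most $(\text{polynomial in everything})\times e^{-c_1 K^2/2}$, which is negligible. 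Combining, $\BB E[(\sum_S \BB E[\cdots|\Psi_{K,n}])^2]^{1/2} \preceq K\exp(K^{2/3})$. Note that one must carry out the same reasoning with ``around'' in place of ``across'' (this is Lemma~\ref{lem-around-sum}); that step uses the annulus estimates of Section~\ref{sec-across-around} — specifically that on a high-probability event $\wt D_{0,n}(\text{around } A_S) \asymp \wt D_{0,n}(\text{across } A_S)$ up to $2^{o_K(K)}$ factors, since both are comparable to $2^{-K}\lambda_{n-K}e^{\xi\Phi_{0,K}(v_S)}$ by Lemma~\ref{lem-induct-full} — and it is here, not in the ``across'' estimate, that the inductive hypothesis~\eqref{eqn-ind-hyp} is actually needed.

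The main obstacle I anticipate is the handling of squares $S$ near $\bdy\BB S$, where the annulus $A_S$ may stick out of $\BB S$ or be cut by the left/right boundary, so that ``$\wt P_{0,n}$ crosses $A_S$'' is not literally true; Lemma~\ref{lem-cond-exp-around} already deals with this (the geodesic may terminate inside $B_{2^{-(K-m_K)}}(v_S)$ with $\tau_0 = 0$ or $\tau_1 = \wt L_{0,n}$), and the same care must be taken here — in those boundary cases one replaces ``distance across $A_S$'' by distance across a truncated annulus or a rectangle, which Proposition~\ref{prop-perc-estimate} still controls, and the bounded-overlap count is unaffected. A secondary technical point is that the conditional expectation $\BB E[\,\cdot\,|\Psi_{K,n}]$ and the event $E$ (which is not $\Psi_{K,n}$-measurable) do not commute, so one should either choose $E$ to be $\Psi_{0,n}$-measurable and move it inside the conditional expectation, or absorb the discrepancy into the off-$E$ error term; either way this is routine given the Gaussian tail bounds already available.
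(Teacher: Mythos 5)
Your route is essentially the paper's: the almost-sure bound $\sum_{S\in\mcl S_K}\BB 1_{F_S}\wt D_{0,n}(\text{across $A_S$})\leq K\,\wt L_{0,n}$ coming from the fact that the geodesic crosses $A_S$ on $F_S$ plus the bounded-overlap count of order $2^{2m_K}\asymp K^{2\ep_0}$ (this is Lemma~\ref{lem-across-sum}), followed by the upgrade from ``across'' to ``around'' via the inductive-hypothesis estimate Lemma~\ref{lem-induct-full} (this is Lemma~\ref{lem-around-sum}). You also correctly observe that for the ``across'' quantity the bound is deterministic, so the sum of conditional expectations is a.s.\ at most $K$ and no regularity event is needed, and that the inductive hypothesis is only needed for the ``around'' version --- which is indeed the version required downstream through Lemma~\ref{lem-cond-exp-around}, and the one the paper's proof of the proposition actually bounds.

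The one step that does not close as written is your off-event term. You propose Cauchy--Schwarz against ``an a priori deterministic-in-$n$ $L^2$ bound'' built from Proposition~\ref{prop-perc-estimate} and $\Lambda_n(\frk p)\ell_n(\frk p)$. But the conclusion of Proposition~\ref{prop-quantile-ratio} is exactly what the induction is proving; a priori one only has Lemma~\ref{lem-a-priori-conc}, i.e.\ $\Lambda_n(\frk p)\leq e^{c\sqrt n}$, so any such $L^2$ bound grows with $n$, while your bad-event probability depends only on $K$. The product is therefore not $\preceq K\exp(K^{2/3})$ uniformly in $n$, and uniformity in $n$ (with $K$ fixed) is essential for the induction. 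The paper avoids any $n$-dependent a priori input: it first applies Jensen, $\left(\BB E[Y\mid\Psi_{K,n}]\right)^2\leq\BB E[Y^2\mid\Psi_{K,n}]$ with $Y=\wt L_{0,n}^{-1}\sum_S\BB 1_{F_S}\wt D_{0,n}(\text{around $A_S$})$ --- which also disposes of your worry about the event not being $\Psi_{K,n}$-measurable --- and then bounds the unconditional moment $\BB E[Y^2]=2\int_0^\infty T\,\BB P[Y>T]\,dT$ by using the trivial bound for $T\leq\exp(K^{2/3})$ and the $n$-uniform tail bound of Lemma~\ref{lem-around-sum} (itself a consequence of Lemma~\ref{lem-induct-full}) above that threshold; this is Lemma~\ref{lem-around-sum-moment}, and it yields $K^2\exp(2K^{2/3})$, hence the proposition. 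Your argument is repaired by replacing the single-event Cauchy--Schwarz with this tail integration, i.e.\ by applying your high-probability around/across ratio bound at every threshold $T$ rather than at one fixed threshold; all the ingredients you invoke for that tail bound are already uniform in $n$.
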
 

\begin{prop} \label{prop-ef-max}
Assume the inductive hypothesis~\eqref{eqn-ind-hyp}.  
Let $\alpha(\xi) $ for $\xi > 0$ be as in~\eqref{eqn-hit-exponent}. For each fixed $\zeta >0$, 
\eqb \label{eqn-ef-max}
\BB E\left[ \left( \max_{S\in\mcl S_K} \BB E\left[ \BB 1_{F_S}  \frac{1}{\wt L_{0,n}} \wt D_{0,n}\left( \text{across $A_S$} \right) \,|\, \Psi_{n,K} \right] \right)^2 \right]^{1/2}
\preceq 2^{-(\alpha(\xi) - \zeta) K}
\eqe
with the implicit constant depending only on $\zeta , \xi , C$. 
\end{prop}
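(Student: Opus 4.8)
The plan is to reduce the bound on $\max_{S\in\mcl S_K}$ to a high‐moment bound for a single square, and then to estimate that moment by combining the annulus estimates of Section~\ref{sec-across-around}, the scale‐comparison estimate~\eqref{eqn-subadd0} of Section~\ref{sec-subadd}, and an exact computation of a truncated Gaussian exponential moment. Write $a_S := \BB E\big[\BB 1_{F_S}\,\wt L_{0,n}^{-1}\,\wt D_{0,n}(\text{across $A_S$})\,\big|\,\Psi_{K,n}\big]$, so that the left side of~\eqref{eqn-ef-max} is $\BB E[\max_S a_S^2]^{1/2}$. Since $\#\mcl S_K\le 2^{2K+o_K(K)}$, for any integer $q\ge 1$ we have $\max_S a_S^{2q}\le\sum_S a_S^{2q}$, so by Jensen
\[
\BB E\Big[\max_{S\in\mcl S_K}a_S^2\Big]\le\Big(\sum_{S\in\mcl S_K}\BB E[a_S^{2q}]\Big)^{1/q}\le 2^{(2K+o_K(K))/q}\max_{S\in\mcl S_K}\BB E[a_S^{2q}]^{1/q}.
\]
Hence it is enough to show that for each fixed $q$ and each $S$ one has $\BB E[a_S^{2q}]\le 2^{-2q(\alpha(\xi)-\zeta/2)K+o_K(K)}$, with all errors depending only on $q,\xi,C$; choosing $q=q(\zeta)$ with $1/q<\zeta/2$ and taking $K$ large then yields~\eqref{eqn-ef-max} (the finitely many small $K$ being absorbed into the implicit constant).

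Fix $S\in\mcl S_K$ with centre $v_S$. I would work on a good event $\mcl G$ whose complement has probability decaying rapidly in $K$, uniformly in $n$ — it is obtained by applying Proposition~\ref{prop-perc-estimate'} with $T$ a small power of $2^K$ and taking a union bound over $\mcl S_K$, so the $e^{-c(\log T)^2}$‑type tails beat $2^{2K}$ — on which, with deterministic $2^{o_K(K)}$ factors, (i) the two‑sided annulus estimate $2^{-K-o_K(K)}\lambda_{n-K}e^{\xi\Phi_{0,K}(v_S)}\le\wt D_{0,n}(\text{across $A_S$})\le 2^{-K+o_K(K)}\lambda_{n-K}e^{\xi\Phi_{0,K}(v_S)}$ holds for all $S$ (Lemma~\ref{lem-induct-full}), and (ii) $\wt L_{0,n}=2^{-(1-\xi Q)K+o_K(K)}\lambda_{n-K}$ (Lemma~\ref{lem-subadd}). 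On $F_S$ the minimal left–right crossing $\wt P_{0,n}$ must cross $A_S$, so the part of $\wt P_{0,n}$ inside $A_S$ already costs at least $\wt D_{0,n}(\text{across $A_S$})$; combined with (i) and (ii) this forces, on $F_S\cap\mcl G$,
\[
2^{-K-o_K(K)}\lambda_{n-K}e^{\xi\Phi_{0,K}(v_S)}\le\wt L_{0,n}\le 2^{-(1-\xi Q)K+o_K(K)}\lambda_{n-K},
\]
and hence $\Phi_{0,K}(v_S)\le b_K$, where $b_K:=QK\log 2+o_K(K)$ is deterministic. Feeding all of this back into $a_S$ — the $\lambda_{n-K}$'s cancel — gives the pathwise bound
\[
\BB 1_{F_S}\,\wt L_{0,n}^{-1}\,\wt D_{0,n}(\text{across $A_S$})\le 2^{-\xi QK+o_K(K)}\,e^{\xi\Phi_{0,K}(v_S)}\BB 1_{\{\Phi_{0,K}(v_S)\le b_K\}}+R_S\BB 1_{\mcl G^c},
\]
with $R_S:=\wt D_{0,n}(\text{across $A_S$})/\wt L_{0,n}$.

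Now take $\BB E[\,\cdot\mid\Psi_{K,n}]$. The fine field $\Phi_{0,K}$ (equivalently, up to the negligible error of Lemma~\ref{lem-field-compare}, the scales‑$\le 2^{-K}$ part of $\Psi_{0,n}$) is a centred Gaussian process with $\op{Var}\Phi_{0,K}(v_S)=K\log 2$ that is independent of $\Psi_{K,n}$, so the conditional expectation of the first term is the deterministic quantity $2^{-\xi QK+o_K(K)}\,\BB E[e^{\xi X}\BB 1_{\{X\le b_K\}}]$ with $X\sim N(0,K\log 2)$; by the elementary computation of Lemma~\ref{lem-gaussian-trunc} this equals $2^{[\xi(\xi\wedge Q)-(\xi\wedge Q)^2/2]K+o_K(K)}$, and since $-\xi Q+\xi(\xi\wedge Q)-(\xi\wedge Q)^2/2=-\alpha(\xi)$ by~\eqref{eqn-hit-exponent} we get, almost surely,
\[
a_S\le 2^{-\alpha(\xi)K+o_K(K)}+\mcl E_S,\qquad \mcl E_S:=\BB E[R_S\BB 1_{\mcl G^c}\mid\Psi_{K,n}].
\]
Finally, Proposition~\ref{prop-perc-estimate'}, the inductive hypothesis (which gives $\ell_{n-K}(\frk p)\asymp\lambda_{n-K}$ with $n$‑uniform constants) and~\eqref{eqn-subadd0} bound every moment of $R_S$ by $2^{O_m(K)}$ uniformly in $n$; together with the rapid decay of $\BB P[\mcl G^c]$ and H\"older's inequality this gives $\BB E[\mcl E_S^{2q}]\le\BB E[R_S^{4q}]^{1/2}\BB P[\mcl G^c]^{1/2}\le 2^{-2q\alpha(\xi)K}$ for $K$ large, whence $\BB E[a_S^{2q}]\le 2^{2q}\big(2^{-2q\alpha(\xi)K+o_K(K)}+\BB E[\mcl E_S^{2q}]\big)\le 2^{-2q(\alpha(\xi)-\zeta/2)K}$, which is the bound required in the first step.

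The genuinely substantial inputs are Lemmas~\ref{lem-induct-full} and~\ref{lem-subadd} — in particular the uniform‑in‑$S$ \emph{lower} bound on $\wt D_{0,n}(\text{across $A_S$})$, which needs a union bound over $\asymp 2^{2K}$ squares and therefore the Gaussian‑type $e^{-c(\log T)^2}$ tails of Proposition~\ref{prop-perc-estimate'} rather than a power law — but these belong to the preceding sections. Within the proof of Proposition~\ref{prop-ef-max} itself, the decisive point (and, as stressed in Remark~\ref{remark-annulus}, the conceptual heart of the paper) is the truncation $F_S\subset\{\Phi_{0,K}(v_S)\le(Q+o_K(1))K\log 2\}$ together with the exact evaluation of the truncated Gaussian moment: a geodesic cannot afford to pass near a square across which a single crossing already exceeds the whole left–right distance, and conditionally on $\Psi_{K,n}$ the coarse value $\Phi_{0,K}(v_S)$ is still an honest centred Gaussian of variance $K\log 2$, so this moment can be computed exactly and reproduces precisely the exponent $\alpha(\xi)$ of~\eqref{eqn-hit-exponent}. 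The remaining difficulty is essentially bookkeeping: keeping every $o_K(K)$ term and every error estimate uniform in $n$.
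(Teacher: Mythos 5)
Your core mechanism is exactly the paper's: on a good event built from Lemmas~\ref{lem-induct-full} and~\ref{lem-subadd}, the observation that on $F_S$ the geodesic must cross $A_S$ forces $F_S\cap\{\text{good}\}\subset\{\Phi_{0,K}(v_S)\le (Q+o_K(1))K\log 2\}$ and $\wt L_{0,n}^{-1}\wt D_{0,n}(\text{across $A_S$})\le 2^{-\xi Q K+o_K(K)}e^{\xi\Phi_{0,K}(v_S)}$, and then the exact independence of $\Phi_{0,K}(v_S)$ from $\Psi_{K,n}$ plus Lemma~\ref{lem-gaussian-trunc} reproduce the exponent $\alpha(\xi)$ of~\eqref{eqn-hit-exponent}; this is Steps 1--2 of the paper's proof. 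Where you genuinely differ is the treatment of the maximum and of the bad event: the paper keeps $\max_{S}$ inside and controls the $E^c$ contribution by Cauchy--Schwarz against the conditional second-moment bound of Lemma~\ref{lem-ef-max-no-F} (i.e.\ all of Section~\ref{sec-ef-max-no-F}), whereas you reduce $\max_S$ to a single square by an $L^{2q}$/union-bound argument (the $2^{2K/q}$ loss being absorbed into $\zeta$) and then kill the bad event by a crude unconditional moment bound on $R_S=\wt D_{0,n}(\text{across $A_S$})/\wt L_{0,n}$ times the superexponentially small $\BB P[\mcl G^c]^{1/2}$. That route is legitimate and, if written out, bypasses Lemma~\ref{lem-ef-max-no-F} for this proposition; what the paper's organization buys is sharper, fully conditional control that meshes with the Efron--Stein setup, while yours buys brevity. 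Two caveats on your sketch. First, the moment bound on $R_S$ must be proved by normalizing with $\lambda_{n-K}$ (negative moments of $\wt L_{0,n}$ from the lower tail of Lemma~\ref{lem-subadd}, positive moments of the annulus distance from the upper tail of Lemma~\ref{lem-induct-full} together with Gaussian moments of $e^{\xi\Phi_{0,K}(v_S)}$, and Lemma~\ref{lem-exponent-relation} to cancel the $\lambda$'s); you cannot invoke Proposition~\ref{prop-perc-estimate'} at scale $n$ with $n$-uniform constants, since uniform quantile control at level $n$ is exactly what the induction is proving --- your parenthetical about $\ell_{n-K}(\frk p)\asymp\lambda_{n-K}$ suggests you know this, but it should be made explicit. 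In fact you can avoid the moment bound altogether: since $\BB 1_{F_S}\wt D_{0,n}(\text{across $A_S$})\le \wt L_{0,n}$ pathwise (this is~\eqref{eqn-cond-exp-total}), keeping the indicator $\BB 1_{F_S}$ on the bad event gives $\BB 1_{F_S}R_S\le 1$, so the bad-event term is at most $\BB P[\mcl G^c\,|\,\Psi_{K,n}]$ and its moments are bounded by $\BB P[\mcl G^c]$. Second, Lemma~\ref{lem-induct-full} literally provides only the lower bound for ``across'' and the upper bound for ``around'', so the upper bound for ``across'' that you cite must be obtained by the same proof (the paper's Step 2 has the same wrinkle, using condition 4 of its event $E$ for this purpose); also, the independence of $\Phi_{0,K}$ from $\Psi_{K,n}$ is exact (disjoint time ranges of $W$), so no appeal to Lemma~\ref{lem-field-compare} is needed at that point.
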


\begin{proof}[Proof of Proposition~\ref{prop-ef-sum}, assuming Propositions~\ref{prop-ef-length} and~\ref{prop-ef-max}]
To lighten notation, let
\eqb
X_S := \BB E\left[ \BB 1_{F_S}  \frac{1}{\wt L_{0,n}} \wt D_{0,n}\left( \text{across $A_S$} \right) \,\big|\, \Psi_{n,K} \right] .
\eqe
By the Cauchy-Schwarz inequality,
\eqb \label{eqn-ef-sum-split}
\BB E\left[ \sum_{S\in\mcl S_K} X_S^2 \right] 
\leq \BB E\left[   \left( \max_{S\in\mcl S_K} X_S \right) \left(   \sum_{S\in\mcl S_K}  X_S \right) \right] 
\leq \BB E\left[ \left( \max_{S\in\mcl S_K} X_S \right)^2 \right]^{1/2} \BB E\left[  \left(   \sum_{S\in\mcl S_K}  X_S \right)^2 \right]^{1/2} .
\eqe
We use Proposition~\ref{prop-ef-max} to bound the first factor on the right in~\eqref{eqn-ef-sum-split} and Proposition~\ref{prop-ef-length} to bound the second factor on the right in~\eqref{eqn-ef-sum}. This gives
\eqbn
\BB E\left[ \sum_{S\in\mcl S_K} X_S^2 \right] 
\preceq 2^{ - (\alpha(\xi) - \zeta)  K}  K \exp(K^{2/3})  .
\eqen
Slightly shrinking $\zeta$ to absorb the factor of $K\exp(K^{2/3})$ now gives~\eqref{eqn-ef-sum}. 
\end{proof}

Before proving Propositions~\ref{prop-ef-length} and~\ref{prop-ef-max}, we explain how to deduce Proposition~\ref{prop-quantile-ratio} from Proposition~\ref{prop-ef-sum}.

\begin{lem} \label{lem-var-cond}
Assume the inductive hypothesis~\eqref{eqn-ind-hyp}. 
Also let $\alpha(\xi)$ be as in~\eqref{eqn-hit-exponent} and fix $\zeta>0$. 
We have
\eqb \label{eqn-var-cond}
\op{Var}\left[    \BB E\left[ \log \wt L_{0,n} \,|\, \Psi_{K,n}  \right]     \right] 
\preceq  2^{ -(\alpha(\xi) - \zeta)  K} ,
\eqe
with the implicit constant depending only on $\zeta , \xi , C$. 
\end{lem}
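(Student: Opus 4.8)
The plan is to combine the Efron--Stein bound from Lemma~\ref{lem-efron-stein}, the pointwise upper bound for the Efron--Stein differences from Lemma~\ref{lem-cond-exp-around}, and the key moment estimate of Proposition~\ref{prop-ef-sum}. Concretely, Lemma~\ref{lem-efron-stein} gives
\eqbn
\op{Var}\left[    \BB E\left[ \log \wt L_{0,n} \,|\, \Psi_{K,n}  \right]     \right]
\leq   \sum_{S\in\mcl S_K} \BB E\left[ \left( \BB E\left[ \log \wt L_{0,n}^S \,|\, \Psi_{K,n}^S \right]   - \BB E\left[ \log \wt L_{0,n}  \,|\, \Psi_{K,n} \right]  \right)_+^2 \right] ,
\eqen
and Lemma~\ref{lem-cond-exp-around} says that each summand is bounded above by $\BB E\left[ \left( \BB E\left[ \frac{1}{\wt L_{0,n}} \BB 1_{F_S} \wt D_{0,n}(\text{around }A_S) \,|\, \Psi_{K,n} \right] \right)^2 \right]$ (using that the inside of the expectation on the right of~\eqref{eqn-cond-exp-around} is nonnegative, so dropping the positive part only increases things). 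Summing over $S\in\mcl S_K$ and invoking Proposition~\ref{prop-ef-sum} with the given $\zeta$ yields exactly the claimed bound $\op{Var}\left[    \BB E\left[ \log \wt L_{0,n} \,|\, \Psi_{K,n}  \right]     \right] \preceq 2^{-(\alpha(\xi)-\zeta)K}$, with implicit constant depending only on $\zeta,\xi,C$.

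The only technical point worth spelling out is the passage from $\wt D_{0,n}(\text{around }A_S)$ to $\wt D_{0,n}(\text{across }A_S)$, since Proposition~\ref{prop-ef-sum} is phrased with ``around'' whereas Propositions~\ref{prop-ef-length} and~\ref{prop-ef-max} (which feed into it) are phrased with ``across''. Here one uses the annulus estimates alluded to in Section~\ref{sec-ef-outline} (Lemma~\ref{lem-induct-full}): conditionally on $\Psi_{K,n}$, with high probability the $\wt D_{0,n}$-distances across and around $A_S$ are both comparable to $2^{-K} \lambda_{n-K} e^{\xi \Phi_{0,K}(v_S)}$ up to $2^{o_K(K)}$ factors, so the ``around'' version of the sum in~\eqref{eqn-ef-sum} is controlled by the ``across'' version up to a harmless $2^{o_K(K)}$ factor, which is absorbed by slightly shrinking $\zeta$. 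This is precisely the bookkeeping already carried out inside the proof of Proposition~\ref{prop-ef-sum}.

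Strictly speaking this lemma is a pure corollary: all the work is in Lemmas~\ref{lem-efron-stein} and~\ref{lem-cond-exp-around} and Proposition~\ref{prop-ef-sum}, which are established (or, for Proposition~\ref{prop-ef-sum}, reduced to Propositions~\ref{prop-ef-length} and~\ref{prop-ef-max}) earlier. So there is no real obstacle; the ``hard part'' lives downstream, in proving Propositions~\ref{prop-ef-length} and~\ref{prop-ef-max}, particularly the maximum bound of Proposition~\ref{prop-ef-max}, whose proof relies on the Gaussian-tail truncation argument (Lemma~\ref{lem-gaussian-trunc}) applied to $\Phi_{0,K}(v_S)$ conditionally on $\Psi_{K,n}$. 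For the present lemma I would simply write out the two-line chain of inequalities above and cite the relevant statements.
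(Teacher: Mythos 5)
Your proof is correct and is essentially identical to the paper's own argument: plug the bound of Lemma~\ref{lem-cond-exp-around} (whose right-hand side is nonnegative, so it also dominates the positive part squared) into the Efron--Stein estimate of Lemma~\ref{lem-efron-stein}, and then bound the resulting sum by Proposition~\ref{prop-ef-sum}. The additional remarks about the ``across''/``around'' bookkeeping concern the proof of Proposition~\ref{prop-ef-sum} rather than this lemma, and do not change the argument.
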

\begin{proof}[Proof of Lemma~\ref{lem-var-cond}, assuming Proposition~\ref{prop-ef-sum}]
By plugging the estimate of Lemma~\ref{lem-cond-exp-around} into the estimate of Lemma~\ref{lem-efron-stein}, we get 
\allb  
\op{Var}\left[    \BB E\left[ \log \wt L_{0,n} \,|\, \Psi_{K,n}  \right]     \right] 
\leq   \BB E\left[ \sum_{S\in\mcl S_K} \left(  \BB E\left[ \frac{1}{\wt L_{0,n}} \BB 1_{F_S} \wt D_{0,n}\left( \text{around $A_S$} \right)  \,|\,   \Psi_{K,n}  \right]  \right)^2 \right] 
\alle
which we can then bound by means of Proposition~\ref{prop-ef-sum}. 
\end{proof}

\begin{proof}[Proof of Proposition~\ref{prop-quantile-ratio}, assuming Proposition~\ref{prop-ef-sum}]
Fix $\zeta> 0$, $K\in\BB N$,  and $C>0$ to be chosen later in a manner depending only on $\xi$. We proceed by induction on $n\in\BB N$ to show that
\eqb \label{eqn-ind-hyp-final} 
\Lambda_n(\frk p) \leq e^{C\sqrt K}  
\eqe
for all $n\in\BB N$. By Lemma~\ref{lem-a-priori-conc}, if $C > 0$ is chosen to be large enough (depending on $\xi$) then~\eqref{eqn-ind-hyp-final} holds for $n \in [1,K]_{\BB Z}$. This gives the base case.

For the inductive step, assume that $n\geq K+1$ and $\Lambda_{n-K}(\frk p) \leq e^{C\sqrt K}$. 
Let $M$ be the implicit constant in Lemma~\ref{lem-cond-var} (which depends only on $\xi$) and let $N_C$ be the implicit constant of Lemma~\ref{lem-var-cond} (which depends on $\zeta,\xi,C$). 
By plugging the estimates of Lemmas~\ref{lem-cond-var} and~\ref{lem-var-cond} into~\eqref{eqn-var-decomp}, we see that our inductive hypothesis implies that
\allb \label{eqn-use-var-decomp}
\op{Var}\left[ \log \wt L_{0,n} \right] 
\leq M K + N_C  2^{ - (\alpha(\xi) - \zeta) K} .
\alle
Since $\alpha(\xi) > 0$, we can choose $\zeta \in (0,\alpha(\xi))$. 
Henceforth fix such a $\zeta$.

There exists $K_C \in \BB N$ (depending on $\zeta,\xi,C$) such that if $K\geq K_C$, then $N_C 2^{ -(\alpha(\xi) - \zeta) K} \leq K$. 
For $K\geq K_C$, \eqref{eqn-use-var-decomp} implies that $\op{Var}\left[ \log \wt L_{0,n} \right]  \leq (M+1) K$. By Lemma~\ref{lem-induct-quantile}, this implies that
\eqb \label{eqn-induct-quantile'}
\frac{ \ell_n(1-\frk p)}{ \ell_n(\frk p)} \leq   \exp\left( c \sqrt{ (M+1) K}\right) 
\eqe
for a constant $c>0$ depending only on $\xi$. 
Since $\Lambda_{n'-K}(\frk p) \leq \Lambda_{n-K}(\frk p)$ for any $n'\leq n$, we infer that if $\Lambda_{n-K}(\frk p) \leq e^{C\sqrt K}$ then~\eqref{eqn-induct-quantile'} holds with $n$ replaced by any $n'\leq n$ and hence $\Lambda_n(\frk p) \leq  \exp\left( c \sqrt{ (M+1) K}\right)$. 

Therefore, if the constant $C$ from~\eqref{eqn-ind-hyp} is chosen to be at least $c \sqrt{M+1}$ (note that this last quantity depends only on $\xi$) then so long as $K\geq K_C$ we have that $\Lambda_{n-K}(\frk p) \leq e^{C\sqrt K}$ implies $\Lambda_n(\frk p) \leq e^{C\sqrt K} $.  
This completes the induction, so we obtain~\eqref{eqn-ind-hyp-final} for every $n\in\BB N$. 

Since $C$ and $\zeta$ have each been chosen in a manner depending only on $\xi$ and $K$ has been chosen in a manner depending only on $\zeta,\xi,C$ (hence only on $\xi$), the constant $e^{C\sqrt K}$ depends only on $\xi$. Thus~\eqref{eqn-quantile-ratio} holds. 
\end{proof}

\subsection{Bounds for distances around and across annuli}
\label{sec-across-around}

A key tool in our proofs of Propositions~\ref{prop-ef-length} and~\ref{prop-ef-max} are bounds for the $D_{0,n}$ and $\wt D_{0,n}$ distances across and around the annuli $A_S$ from~\eqref{eqn-S-annuli}, as given by the following lemma. 
For the statement, we recall that $v_S$ is the center of $S$.

\begin{lem} \label{lem-induct-full}
Assume the inductive hypothesis~\eqref{eqn-ind-hyp}.
For $T > 2^{K^{1/2 + 3\ep_0} }$, 
\eqb \label{eqn-induct-full-across}
\BB P\left[ \min_{S\in\mcl S_K} e^{-\xi \Phi_{0,K}(v_S)} D_{0,n} \left( \text{across $A_S$} \right)  < T^{-1} 2^{- K}   \lambda_{n-K} \right] 
\leq c_0  5^K \exp\left( - c_1 \frac{(\log T)^2 }{ K^{3\ep_0} } \right)   
\eqe 
and
\eqb \label{eqn-induct-full-around}
\BB P\left[ \max_{S\in\mcl S_K}  e^{-\xi \Phi_{0,K}(v_S)}  D_{0,n} \left( \text{around $A_S$} \right)  > T 2^{-K} \lambda_{n-K}  \right] 
\leq c_0 5^K  \exp\left( - c_1 \frac{(\log T)^2 }{  K^{3\ep_0}  \log\log T    } \right) ,
\eqe
where $c_0,c_1 >0$ are constants depending only on $\xi$ and $C$. Moreover, the same estimates also hold with $  D_{0,n}$ replaced by $\wt D_{0,n}$ and/or with  $A_S$ replaced by the smaller annulus
\eqb \label{eqn-S-annuli'}
A_S' := \ol{ B_{2^{-K}}(S) \setminus S } . 
\eqe
\end{lem}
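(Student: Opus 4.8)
The plan is to reduce the estimates for $A_S$ to the a priori concentration bounds of Proposition~\ref{prop-perc-estimate} at scale $n-K$, using the decomposition $\Phi_{0,n} = \Phi_{0,K} + \Phi_{K,n}$ (with $\Phi_{0,K}$ and $\Phi_{K,n}$ independent), the scale invariance~\eqref{eqn-phi-scale}, and the inductive hypothesis~\eqref{eqn-ind-hyp}. Fix a bounded open $U \supset \BB S$ containing every $A_S$ with $S\in\mcl S_K$, and for a parameter $\beta \asymp (\log T)/K^{\ep_0}$ let $G := \{2^{-K}\sup_U |\nabla \Phi_{0,K}| \le \beta\}$; by Lemma~\ref{lem-phi-cont}, $\BB P[G^c] \le c_0 e^{-c_1 (\log T)^2 / K^{2\ep_0}}$ (constants depending on $\xi$). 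Since $A_S$ has diameter $\asymp 2^{-K} K^{\ep_0}$, on $G$ the field $\Phi_{0,K}$ deviates from $\Phi_{0,K}(v_S)$ by at most $(2\xi)^{-1}\log T$ on $A_S$, so on $G$
\[ T^{-1/2} e^{\xi \Phi_{0,K}(v_S)}\, D_{K,n}(\cdot) \;\le\; D_{0,n}(\cdot) \;\le\; T^{1/2} e^{\xi \Phi_{0,K}(v_S)}\, D_{K,n}(\cdot) , \]
where $(\cdot)$ is either $\text{across }A_S$ or $\text{around }A_S$. By~\eqref{eqn-phi-scale} and translation invariance, $D_{K,n}(\text{across }A_S) \eqD 2^{-K} D_{0,n-K}(\text{across }\wh A_S)$ (and similarly for around), where $\wh A_S$ is an annulus of aspect ratio $2$ and radius $\asymp K^{\ep_0}$; crucially $G$ and $\Phi_{0,K}(v_S)$ are independent of $\Phi_{K,n}$, hence of $D_{K,n}(\text{across/around }A_S)$.

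It then remains to show that $D_{0,n-K}(\text{across }\wh A_S)$ and $D_{0,n-K}(\text{around }\wh A_S)$ concentrate around $\lambda_{n-K}$. Although $\wh A_S$ has macroscopic radius $\asymp K^{\ep_0}$, it can be covered by $\asymp K^{\ep_0}$ rectangles of aspect ratio $3$ so that any path crossing $\wh A_S$ crosses one of them the easy way, while a union of crossings of them the hard way contains a loop around $\wh A_S$ — exactly the reduction used to pass from~\eqref{eqn-perc-lower0}--\eqref{eqn-perc-upper0} to~\eqref{eqn-perc-lower}--\eqref{eqn-perc-upper} in the proof of Proposition~\ref{prop-perc-estimate}. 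Applying~\eqref{eqn-perc-lower0}--\eqref{eqn-perc-upper0} together with translation/rotation invariance of $\Phi_{0,n-K}$ to each rectangle and union-bounding over the $\asymp K^{\ep_0}$ of them yields a lower tail $\le c_0 K^{\ep_0} e^{-c_1(\log T)^2}$ for $D_{0,n-K}(\text{across }\wh A_S)$ below $T^{-1}\ell_{n-K}(\frk p)$ and an upper tail $\le c_0 K^{\ep_0} e^{-c_1(\log T)^2/\log\log T}$ for $D_{0,n-K}(\text{around }\wh A_S)$ above $T \Lambda_{n-K}(\frk p)\ell_{n-K}(\frk p)$ (absorbing the prefactor $K^{\ep_0}$ into $T$ where it multiplies the threshold, which is harmless for $T > 2^{K^{1/2+3\ep_0}}$). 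The inductive hypothesis~\eqref{eqn-ind-hyp} now enters through $\ell_{n-K}(\frk p) \le \lambda_{n-K} \le \Lambda_{n-K}(\frk p)\ell_{n-K}(\frk p) \le e^{C\sqrt K}\ell_{n-K}(\frk p)$: it lets me replace $\ell_{n-K}(\frk p)$ and $\Lambda_{n-K}(\frk p)\ell_{n-K}(\frk p)$ by $\lambda_{n-K}$ at the cost of a factor $e^{\pm C\sqrt K}$, again absorbed into $T$ since $T > 2^{K^{1/2+3\ep_0}}$ forces $\log T \gg C\sqrt K$.

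Combining the two steps for a fixed $S$ (writing $\BB P[E] \le \BB P[G^c] + \BB P[E \cap G]$ and using independence of $G$ from $\Phi_{K,n}$ to drop $G$ from the second term) bounds $\BB P\!\left[ e^{-\xi\Phi_{0,K}(v_S)} D_{0,n}(\text{across }A_S) < T^{-1} 2^{-K}\lambda_{n-K}\right]$ by $c_0 e^{-c_1 (\log T)^2/K^{3\ep_0}}$, and similarly for around with an extra $\log\log T$ in the denominator; a union bound over the $|\mcl S_K| \le 5^K$ squares yields~\eqref{eqn-induct-full-across} and~\eqref{eqn-induct-full-around}. For the $\wt D_{0,n}$ variant I would \emph{not} rescale $\Psi$ directly, since it lacks~\eqref{eqn-phi-scale}; instead, Lemma~\ref{lem-field-compare} gives $\wt D_{0,n}(\text{across/around }A_S) = e^{O(\xi \sup_{\ol U}|\Phi_{0,n} - \Psi_{0,n}|)} D_{0,n}(\text{across/around }A_S)$ with a Gaussian-controlled exponent that is again absorbed into $T$. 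For the $A_S'$ variant the same argument applies and is in fact simpler: $A_S'$ has radius $\asymp 2^{-K}$, so after rescaling by $2^K$ it is a fixed-shape, unit-scale annular region, and Proposition~\ref{prop-perc-estimate} applies to it directly (with $\xi$-dependent constants, no $K^{\ep_0}$-covering).

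The main obstacle is bookkeeping rather than a new idea: one must track all the multiplicative error factors — $e^{\xi K^{\ep_0}\beta}$ from the modulus of continuity of $\Phi_{0,K}$ on $A_S$, the $\asymp K^{\ep_0}$ factor from decomposing the macroscopic annulus $\wh A_S$, the $e^{C\sqrt K}$ from converting $\ell_{n-K}(\frk p)$ and $\Lambda_{n-K}(\frk p)\ell_{n-K}(\frk p)$ into $\lambda_{n-K}$ via the induction, the $e^{O(\xi\|\Phi_{0,n}-\Psi_{0,n}\|_\infty)}$ from passing to $\wt D_{0,n}$, and the $5^K$ from the union bound over $\mcl S_K$ — and verify that each is swallowed by the super-polynomial tail $e^{-c(\log T)^2/K^{O(\ep_0)}}$ precisely in the regime $T > 2^{K^{1/2+3\ep_0}}$ (this is why that threshold, and why a power of $K$ rather than a constant in the denominator, appear). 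The two structural facts that make everything go through are that $\Phi_{0,K}$ and $\Phi_{K,n}$ are independent with $\Phi_{K,n}$ having the exact scaling~\eqref{eqn-phi-scale}, and that the a priori crossing estimates of Proposition~\ref{prop-perc-estimate} are already available for all $\xi > 0$, so no new percolation input is needed here.
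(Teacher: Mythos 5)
Your proposal is correct and follows essentially the same route as the paper: decompose $\Phi_{0,n}=\Phi_{0,K}+\Phi_{K,n}$, control the oscillation of $\Phi_{0,K}$ on $A_S$ via the gradient bound of Lemma~\ref{lem-phi-cont}, rescale $\Phi_{K,n}$ to scale $n-K$ using~\eqref{eqn-phi-scale}, invoke the a priori crossing estimates together with the inductive hypothesis to replace quantiles by $\lambda_{n-K}$, union bound over $\mcl S_K$, and treat $\wt D_{0,n}$ via Lemma~\ref{lem-field-compare} and $A_S'$ by the simpler unit-scale version. The only differences are cosmetic bookkeeping: you inline the covering argument that the paper packages as Lemma~\ref{lem-perc-estimate-scale} (applied at radius $R\asymp 2^{m_K}$), and you take the gradient threshold $\asymp(\log T)/K^{\ep_0}$ up front rather than absorbing a factor $T^{O(K^{\ep_0})}$ afterwards, which yields the same $(\log T)^2/K^{O(\ep_0)}$ tails.
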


To prove Lemma~\ref{lem-induct-full}, we first need the following trivial extension of Proposition~\ref{prop-perc-estimate} where we do not require that the sets under consideration are of constant-order size.

\begin{lem} \label{lem-perc-estimate-scale}
Let $\xi > 0$ and let $\frk p$ be as in Proposition~\ref{prop-perc-estimate}. 
Let $U\subset \BB C$ be an open set and let $K_1,K_2\subset U$ be disjoint compact connected sets which are not singletons. 
There are constants $c_0,c_1 >0$ depending only on $U,K_1,K_2,\xi$ such that for $n\in\BB N$, each $R > 1$, and each $T > 3$, 
\eqb \label{eqn-perc-lower-scale}
\BB P\left[ D_{0,n}(R K_1, R K_2 ; R U)  < T^{-1} \ell_n(\frk p) \right] \leq c_0 R e^{-c_1 (\log T)^2} 
\eqe
and
\eqb \label{eqn-perc-upper-scale}
\BB P\left[ D_{0,n}(R K_1, R K_2 ; R U)  > T R \Lambda_n(\frk p) \ell_n(\frk p) \right] \leq c_0 R e^{-c_1 (\log T  )^2 / \log\log T} .
\eqe  
\end{lem}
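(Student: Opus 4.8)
The plan is to deduce both bounds from Proposition~\ref{prop-perc-estimate} by a purely deterministic covering argument followed by a union bound over $O(R)$ fixed-scale sub-configurations; no new probabilistic input is required, and in particular the scale invariance~\eqref{eqn-phi-scale} plays no role (it is available only for powers of $2$ anyway). We may assume $K_1$ and $K_2$ lie in a common connected component of $U$, since otherwise $D_{0,n}(K_1,K_2;U)=\infty$ and there is nothing to prove. Fix once and for all, depending only on $U,K_1,K_2$: a rectifiable Jordan arc or loop $\Gamma\subset U$ that separates $K_1$ from $K_2$ inside $U$ and lies at positive distance from $K_1$, $K_2$ and $\bdy U$; a rectifiable path $\gamma\subset U$ from $K_1$ to $K_2$ at positive distance from $\bdy U$; and a scale $r\in(0,1]$ small enough that an $r$-tube around $\Gamma$ lies in $U\setminus(K_1\cup K_2)$ and an $r$-tube around $\gamma$ lies in $U$. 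For $R>1$ the curves $R\Gamma$ and $R\gamma$ have length $O(R)$, and an $r$-tube around either of them lies inside $RU$ (resp.\ inside $RU\setminus(RK_1\cup RK_2)$), uniformly in $R$.

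For the lower bound~\eqref{eqn-perc-lower-scale}, cover $R\Gamma$ by $N\le c_0R$ rectangles $\rho_1,\dots,\rho_N$, each a translate and rotation of $[0,r]\times[0,3r]$, arranged with overlaps so that $R\Gamma$ threads lengthwise through each $\rho_i$ (cutting it between its two short sides) and so that any path in $RU$ from $RK_1$ to $RK_2$, which must cross $R\Gamma$, is forced to cross some $\rho_i$ between its two long sides. Applying the lower bound of Proposition~\ref{prop-perc-estimate} to the fixed configuration given by the two long sides of $[0,r]\times[0,3r]$ inside a neighborhood of that rectangle, using the translation and rotation invariance of $\Phi_{0,n}$, and taking a union bound over the $N\le c_0R$ rectangles, shows that except on an event of probability at most $c_0Re^{-c_1(\log T)^2}$ every $\rho_i$ has long-side crossing distance at least $T^{-1}\ell_n(\frk p)$; on the complementary event every path from $RK_1$ to $RK_2$ in $RU$ has $D_{0,n}$-length at least $T^{-1}\ell_n(\frk p)$, which is~\eqref{eqn-perc-lower-scale}.

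For the upper bound~\eqref{eqn-perc-upper-scale} one instead chains $N'\le c_0R$ rectangles $\rho_1',\dots,\rho_{N'}'$, again translates and rotations of $[0,r]\times[0,3r]$, along $R\gamma$, with consecutive rectangles overlapping; concatenating a short-side crossing of each $\rho_i'$ together with an easy crossing of each overlap region---the latter inserted to force consecutive short-side crossings to meet, exactly as in the proof of~\eqref{eqn-perc-upper}---produces a path in $RU$ from $RK_1$ to $RK_2$ whose $D_{0,n}$-length is the sum of the lengths of these $O(R)$ crossings. Bounding each crossing above via Proposition~\ref{prop-perc-estimate} (translation and rotation invariance once more) and taking a union bound shows that except on an event of probability at most $c_0Re^{-c_1(\log T)^2/\log\log T}$ this path has length at most $c_0R\cdot T\Lambda_n(\frk p)\ell_n(\frk p)$; replacing $T$ by $T/c_0$ and adjusting $c_0,c_1$ yields~\eqref{eqn-perc-upper-scale}. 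The one point needing care---the main ``obstacle''---is the deterministic covering statement: that $R\Gamma$ (resp.\ $R\gamma$) admits a cover by $O(R)$ congruent $r\times 3r$ rectangles with the asserted barrier (resp.\ chaining-and-gluing) structure, uniformly in $R>1$. This is routine, since $\Gamma$ and $\gamma$ are fixed rectifiable curves with a fixed-width channel around them, so scaling by $R$ merely rescales the bookkeeping; the regime of bounded $R$ is handled identically and is if anything easier.
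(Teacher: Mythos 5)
Your proposal is correct and follows essentially the same route as the paper: cover a fixed rescaled separating curve (for the lower bound) or connecting path (for the upper bound) by $O(R)$ congruent fixed-size pieces, apply Proposition~\ref{prop-perc-estimate} to a single fixed configuration via translation and rotation invariance of $\Phi_{0,n}$, and conclude with a union bound, absorbing the extra constant into $T$ for the upper bound. The only difference is cosmetic: the paper covers by balls and uses distances across/around the annuli $B_{2c}(z_i)\setminus B_c(z_i)$ (which makes the ``crossing segment stays in the piece'' and gluing steps automatic), whereas you use $r\times 3r$ rectangles with easy/hard crossings, exactly as in the paper's own proof of Proposition~\ref{prop-perc-estimate}.
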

\begin{proof}
To prove~\eqref{eqn-perc-lower-scale}, choose (in a manner depending only on $U,K_1,K_2$) a smooth, bounded path $\Pi$ in $U$ which disconnects $K_1$ from $K_2$. 
There are constants $ 0  < c < C$ depending only on $U,K_1,K_2$ such that for each $R>1$, we can cover $\Pi$ by $m \leq C R$ Euclidean balls $B_{c}(z_1),\dots,B_{c}(z_m)$ such that the balls $B_{2c}(z_i)$ for $i \in [1,m]_{\BB Z}$ are disjoint from $K_1$ and $K_2$. 
By Proposition~\ref{prop-perc-estimate}, the translation invariance of the law of $\Phi_{0,n}$, and a union bound over $i\in [1,m]_{\BB Z}$, there are constants $c_0,c_1$ as in the lemma statement such that 
\eqbn \label{eqn-perc-lower-scale0}
\BB P\left[ D_{0,n}\left( \text{across $B_{2c}(z_i) \setminus B_{c}(z_i)$}\right) \geq   T^{-1} \ell_n(\frk p) ,\: \forall  i \in [1,m]_{\BB Z} \right] 
\geq 1 - c_0 R e^{-c_1 (\log T)^2} .
\eqen
Every path in $U$ from $K_1$ to $K_2$ must cross between the inner and outer boundaries of one of the annuli $B_{2c}(z_i) \setminus B_c(z_i)$. This gives~\eqref{eqn-perc-lower-scale}.  
 
To prove~\eqref{eqn-perc-upper-scale}, choose (in a manner depending only on $U,K_1,K_2$) a smooth path $\Pi$ in $U$ from $K_1$ to $K_2$. 
Since $K_1$ and $K_2$ are connected and not singletons, there are constants $ 0  < c < C$ depending only on $U,K_1,K_2$ such that for each $R>1$, we can cover $\Pi$ by $m \leq C R$ Euclidean balls $B_{c}(z_1),\dots,B_{c}(z_m)$ such that the balls $B_{2c}(z_i)$ for $i \in [1,m]_{\BB Z}$ are contained in $U$; any path separating the inner and outer boundaries of $B_{2c}(z_1) \setminus B_c(z_1)$ must intersect $K_1$; and any path separating the inner and outer boundaries of $B_{2c}(z_m) \setminus B_c(z_m)$ must intersect $K_2$. By Proposition~\ref{prop-perc-estimate} (applied with $C^{-1} T$ in place of $T$), the translation invariance of the law of $\Phi_{0,n}$, and a union bound over $i\in [1,m]_{\BB Z}$, there are constants $c_0,c_1$ as in the lemma statement such that 
\eqb \label{eqn-perc-upper-scale0}
\BB P\left[ D_{0,n}\left( \text{around $B_{2c}(z_i) \setminus B_{c}(z_i)$}\right) \leq C^{-1} T \Lambda_n(\frk p) \ell_n(\frk p) ,\: \forall  i \in [1,m]_{\BB Z} \right] 
\geq 1 - c_0 R e^{-c_1 (\log T )^2 / \log\log T} .
\eqe
If $\pi_i$ is a path around the annulus $B_{2c}(z_i) \setminus B_{c}(z_i)$ for each $i \in [1,m]_{\BB Z}$, then the union of the $\pi_i$'s contains a path in $U$ from $K_1$ to $K_2$. Since $m\leq C R$, if the event in~\eqref{eqn-perc-lower-scale0} holds then we can find such a path with $D_{0,n}$-length at most $ T  R \Lambda_n(\frk p) \ell_n(\frk p)$.    This gives~\eqref{eqn-perc-upper-scale}.   
\end{proof}

We can now establish bounds for the $D_{K,n}$-distances across and around $A_S$ and $A_S'$. 

\begin{lem} \label{lem-induct-across-around}
Assume the inductive hypothesis~\eqref{eqn-ind-hyp}.
For each $T > 2^{\sqrt K}$, 
\eqb \label{eqn-induct-across}
\BB P\left[ \min_{S\in\mcl S_K}  D_{K,n} \left( \text{across $A_S$} \right)  < T^{-1} 2^{-K } \lambda_{n-K} \right] 
\leq c_0  5^K \exp\left( - c_1 (\log T)^2  \right)   
\eqe 
and
\eqb \label{eqn-induct-around}
\BB P\left[ \max_{S\in\mcl S_K} D_{K,n} \left( \text{around $A_S$} \right)  > T 2^{-K  } \lambda_{n-K} \right] 
\leq c_0   5^K \exp\left( - c_1 \frac{(\log T)^2}{ \log \log T } \right) ,
\eqe
where $c_0,c_1 >0$ are constants depending only on $\xi$ and $C$. Moreover, the bounds~\eqref{eqn-induct-across} and~\eqref{eqn-induct-around} also hold with $A_S'$ from~\eqref{eqn-S-annuli'} in place of $A_S$. 
\end{lem}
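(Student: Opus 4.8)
The plan is to reduce the statement to the scale-$(n-K)$ estimate of Lemma~\ref{lem-perc-estimate-scale} via the scaling property~\eqref{eqn-phi-scale} of $\Phi_{K,n}$, handling the min/max over $S \in \mcl S_K$ by a union bound over the (at most) $5^K$ squares. First I would observe that, by the definition~\eqref{eqn-phi-def} of $\Phi_{K,n}$ and the scaling relation $\Phi_{K,n}(2^{-K}\cdot) \eqD \Phi_{0,n-K}(\cdot)$ (which is~\eqref{eqn-phi-scale} with the roles shifted), for each fixed square $S \in \mcl S_K$ we have the distributional identity
\[
D_{K,n}\left(\text{across } A_S\right) \eqD 2^{-K} D_{0,n-K}\left(\text{across } 2^{K} A_S\right),
\]
and similarly for ``around''. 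Here $2^{K} A_S$ is a Euclidean annulus of inner radius $2^{m_K}$, outer radius $2^{m_K+1}$ (recall $2^{-m_K} \asymp K^{\ep_0}$), so its inner and outer boundary circles are disjoint compact connected non-singleton sets contained in a fixed bounded open set whose geometry depends only on $m_K$, hence only on $K$. The same holds for $A_S'$ after rescaling: $2^K A_S'$ is the annulus $\ol{B_1(2^K S)\setminus 2^K S}$ around a unit square, again a fixed-shape configuration.

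Next I would apply Lemma~\ref{lem-perc-estimate-scale} to these rescaled annuli (with $R$ of constant order, absorbed into the constants since the shape depends only on $K$ through $m_K = O(\log K)$) to get, for a single $S$,
\[
\BB P\left[ D_{K,n}\left(\text{across } A_S\right) < T^{-1} 2^{-K} \ell_{n-K}(\frk p) \right] \leq c_0 e^{-c_1 (\log T)^2}
\]
and the analogous upper bound for the ``around'' distance with the extra $\log\log T$ in the exponent. Then I invoke the inductive hypothesis~\eqref{eqn-ind-hyp}, $\Lambda_{n-K}(\frk p) \leq e^{C\sqrt K}$, together with Lemma~\ref{lem-a-priori-conc}-type reasoning to replace $\ell_{n-K}(\frk p)$ and $\Lambda_{n-K}(\frk p)\ell_{n-K}(\frk p)$ by $\lambda_{n-K}$ up to multiplicative factors $e^{\pm C\sqrt K}$; since we assume $T > 2^{\sqrt K}$, these factors are absorbed into $T^{\pm 1}$ at the cost of shrinking $c_1$ (using $\sqrt K \leq \log T / \log 2$ so $e^{C\sqrt K} = 2^{O(\log T)}$, which only changes the threshold and the constant, and $(\log(2T))^2 \geq \frac12 (\log T)^2$ for $T$ large). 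Finally, a union bound over the at most $5^K$ squares $S \in \mcl S_K$ (the count $5^K$ is a crude bound on the number of $2^{-K}\times 2^{-K}$ dyadic squares meeting the $2^{-K+1}K^{\ep_0}$-neighborhood of $\BB S$) gives the factor $5^K$ in~\eqref{eqn-induct-across} and~\eqref{eqn-induct-around}. The argument for $A_S'$ is identical since $2^K A_S'$ is also a fixed-shape annular configuration.

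The main obstacle is purely bookkeeping: one must check that the geometry of $2^K A_S$ (and $2^K A_S'$), while not literally constant, varies only through the parameter $m_K$, and that $m_K = O(\log K)$ so that the implicit constants in Lemma~\ref{lem-perc-estimate-scale}, which depend on the shape of the annulus, can be taken uniform in $K$ (or at worst grow polynomially in $K$, which is harmless against the $e^{-c_1(\log T)^2}$ decay once $T > 2^{\sqrt K}$). One also has to be slightly careful that the truncated-field statements follow by the same argument applied to $\Psi_{K,n}$, or alternatively from the $D_{K,n}$ bounds via Lemma~\ref{lem-field-compare} exactly as in the proof of Proposition~\ref{prop-perc-estimate}; I would simply note that Lemma~\ref{lem-perc-estimate-scale} and Proposition~\ref{prop-perc-estimate} both hold verbatim with $\wt D$ in place of $D$, so no new input is needed. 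Passing from $D_{K,n}$-distances to the bounds~\eqref{eqn-induct-full-across}--\eqref{eqn-induct-full-around} for $D_{0,n}$ in Lemma~\ref{lem-induct-full} is then a separate step (multiplying by $e^{\xi \Phi_{0,K}}$ and controlling $\sup$ and $\inf$ of $\Phi_{0,K}$ over $A_S$ via Lemma~\ref{lem-phi-cont}), which I would not carry out here.
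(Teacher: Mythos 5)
Your plan is essentially the paper's proof: rescale by $2^{K}$ using~\eqref{eqn-phi-scale} so that $D_{K,n}$ across/around $A_S$ becomes $2^{-K}$ times a $D_{0,n-K}$-distance across/around the annulus $B_{2^{m_K+1}}(0)\setminus B_{2^{m_K}}(0)$, apply Lemma~\ref{lem-perc-estimate-scale}, union bound over the at most $5^K$ squares, and use the inductive hypothesis~\eqref{eqn-ind-hyp} to trade $\ell_{n-K}(\frk p)$ and $\Lambda_{n-K}(\frk p)\ell_{n-K}(\frk p)$ for $\lambda_{n-K}$ at the cost of factors $e^{\pm C\sqrt K}$, which are absorbed into $T$ because $T>2^{\sqrt K}$. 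One correction to your bookkeeping: the scale factor is $R=2^{m_K}\asymp K^{\ep_0}$, which is \emph{not} of constant order, and you should not assert that the shape-dependent constants of Lemma~\ref{lem-perc-estimate-scale} stay uniform (or grow polynomially) when the sets themselves are taken $K$-dependent, since that lemma gives no such control; the clean route (and the paper's) is to apply Lemma~\ref{lem-perc-estimate-scale} with the \emph{fixed} unit-scale annulus as $K_1,K_2,U$ and $R=2^{m_K}$, so the constants are genuinely $K$-independent and the only $K$-dependence is the explicit factor $R$, which is absorbed into the $5^K$ in the probability and, for the ``around'' bound where the threshold also picks up a factor $R$, into $T$ via $2^{m_K}\le K\le 2^{\sqrt K}<T$.
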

\begin{proof}
Throughout the proof $c_0,c_1$ denote constants which depend only on $\xi$ and $C$ and which are allowed to change from line to line. 
Basically, the idea of the proof is to re-scale space by a factor of $2^K$, then apply Lemma~\ref{lem-perc-estimate-scale} to each of the annuli $2^K A_S'$, which have size of order $2^{m_K}$. We will then take a union bound over all $S\in\mcl S_K$ to conclude.
\medskip

\noindent
\textit{Step 1: re-scaling space.}
By the scale and translation invariance properties of $\Phi_{K,n}$ (see~\eqref{eqn-phi-scale}), 
\eqb \label{eqn-induct-field-scale}
\Phi_{K,n}( \cdot)  \eqD   \Phi_{0,n-K}  \left( 2^K \cdot  + v_S\right). 
\eqe 
Therefore,
\eqb \label{eqn-induct-dist-scale}
D_{K,n}\left( \text{across $A_S$} \right)  \eqD 2^{-K} D_{0 , n-K}\left( \text{across $B_{2^{m_K+1}}(0) \setminus B_{2^{m_K}}(0)$} \right)   
\eqe 
and the same holds with ``around" instead of ``across". 
\medskip

\noindent
\textit{Step 2: proof of~\eqref{eqn-induct-across}.} 
By~\eqref{eqn-perc-lower-scale} of Lemma~\ref{lem-perc-estimate-scale} (applied with $R = 2^{m_K}$), 
\eqb \label{eqn-induct-across0}
\BB P\left[ D_{0 , n-K}\left( \text{across $B_{2^{m_K+1}}(0) \setminus B_{2^{m_K}}(0)$} \right) < T^{-1} \ell_{n-K}(\frk p) \right] 
\leq c_0 2^{m_K} \exp\left( -c_1 (\log T)^2 \right)  .
\eqe
We now apply~\eqref{eqn-induct-across0} to estimate the right side of~\eqref{eqn-induct-dist-scale} for each $S\in \mcl S_K$, then take a union bound over all $S\in \mcl S_K$. This gives
\eqb \label{eqn-induct-across1}
\BB P\left[ \min_{S\in\mcl S_K} D_{K,n} \left( \text{across $A_S$} \right)  <  T^{-1-\xi} 2^{-K}   \ell_{n-K}(\frk p) \right] 
\leq c_0 2^{2K + m_K} \exp\left( - c_1  (\log T)^2  \right)  .
\eqe
To simplify the estimate~\eqref{eqn-induct-across1}, we first use~\eqref{eqn-ind-hyp} to get that $\ell_{n-K}(\frk p) \geq e^{-C\sqrt K} \ell_{n-K}(1/2)  = e^{-C\sqrt K} \lambda_{n-K}$. We also note that since $m_K \leq \log_2 K$, we have $2^{2K + m_K} \leq 5^K$. Therefore,~\eqref{eqn-induct-across1} implies that
\eqb \label{eqn-induct-across2}
\BB P\left[ \min_{S\in\mcl S_K} D_{K,n} \left( \text{across $A_S$} \right)  <  T^{-1-\xi} 2^{-K - C\sqrt K} \lambda_{n-K} \right] 
\leq c_0 5^K \exp\left( - c_1  (\log T)^2  \right)  .
\eqe  
We now apply~\eqref{eqn-induct-across2} with $T$ replaced by $(T 2^{-C\sqrt K})^{1/(1+\xi)}$, which is bounded above and below by $\xi,C$-dependent constants times $\xi,C$-dependent powers of $T$ since $T > 2^{\sqrt K}$. This gives~\eqref{eqn-induct-across} after possibly adjusting $c_0$ and $c_1$. 
\medskip

\noindent
\textit{Step 3: proof of~\eqref{eqn-induct-around}.} 
The proof of~\eqref{eqn-induct-around} is similar to the proof of~\eqref{eqn-induct-across}. By~\eqref{eqn-perc-upper-scale} of Lemma~\ref{lem-perc-estimate-scale},  
\allb \label{eqn-induct-around0}
&\BB P\left[ D_{0 , n-K}\left( \text{across $B_{2^{m_K+1}}(0) \setminus B_{2^{m_K}}(0)$} \right) > T 2^{m_K} \Lambda_{n-K}(\frk p) \ell_{n-K}(\frk p) \right] \notag\\ 
&\qquad\qquad\qquad \leq c_0 2^{m_K} \exp\left( - c_1 \frac{(\log T)^2 }{\log \log T} \right) .
\alle
By applying~\eqref{eqn-induct-around0} to estimate the right side of \eqref{eqn-induct-dist-scale}, then taking a union bound over all $S\in\mcl S_K$, we obtain
\allb \label{eqn-induct-around1}
&\BB P\left[ \max_{S\in\mcl S_K} D_{K,n} \left( \text{around $A_S$} \right)  >  T^{1+\xi} 2^{-(K-m_K)} \Lambda_{n-K}(\frk p) \ell_{n-K}(\frk p)  \right] \notag\\
&\qquad \qquad \qquad \leq  c_0   2^{2K + m_K} \exp\left( - c_1 \frac{(\log T)^2}{ \log\log T } \right)  .
\alle
To simplify this estimate, we apply~\eqref{eqn-ind-hyp} to get $\Lambda_{n-K}(\frk p) \ell_{n-K}(\frk p) \leq e^{  C\sqrt K} \lambda_{n-K}$, use that $2^{m_K} \leq K \leq e^{\sqrt  K}$ to absorb the factor of $2^{m_K}$ into a factor of $e^{\sqrt K}$, and apply the bound $2^{2K + m_K} \leq 5^K$ as above.
This gives
\allb \label{eqn-induct-around2}
&\BB P\left[ \max_{S\in\mcl S_K} D_{K,n} \left( \text{around $A_S$} \right)  >  T^{1+\xi} 2^{-K + ( C+1)\sqrt K} \Lambda_{n-K}(\frk p) \ell_{n-K}(\frk p)  \right] \notag\\
&\qquad\qquad\qquad \leq c_0   5^K \exp\left( - c_1 \frac{(\log T)^2}{ \log \log T } \right)  .
\alle
We apply this last estimate with $T$ replaced by by $(T 2^{-( C+1)\sqrt K})^{1/(1+\xi)}$ to get~\eqref{eqn-induct-around}.  

The proof of~\eqref{eqn-induct-across} and~\eqref{eqn-induct-around} with $A_S'$ in place of $A_S$ is essentially identical, except that we are working with an annulus of size of order $2^{-K}$ instead of $2^{-(K-m_K)}$ so we do not have to worry about extraneous factors of $2^{m_K}$.  
\end{proof}

\begin{proof}[Proof of Lemma~\ref{lem-induct-full}] 
Basically, the idea of the proof is that adding the coarse field $\Phi_{0,K}$ to $\Phi_{K,n}$ in order to get $\Phi_{0,n}$ scales distances in each annulus $A_S$ by approximately $e^{\xi \Phi_{0,K}(v_S)}$. Since $\Phi_{0,K}$ is not constant on $A_S$, we need some basic modulus of continuity estimates to compare the maximal and minimal values of $\Phi_{0,K}$ on $A_S$ to $\Phi_{0,K}(v_S)$, which we now explain.

By Lemma~\ref{lem-phi-cont},
\eqb \label{eqn-use-phi-grad}
\BB P\left[ 2^{-K} \sup_{z\in \BB S} |\nabla \Phi_{0,K}(z)| > \log T \right] \leq c_0 4^K e^{-c_1 (\log T)^2 } .
\eqe
For each $S\in\mcl S_K$, each point of $A_S \cap S$ is joined to $v_S$ by a line segment in $\BB S$ of length at most $2^{-(K-m_K)+1}$. By the mean value theorem and~\eqref{eqn-use-phi-grad}, 
\eqb \label{eqn-annulus-osc0}
\BB P\left[ \max_{S\in \mcl S_K} \sup_{z\in A_S} |\Phi_{0,K}(z) - \Phi_{0,K}(v_S)| > 2^{m_K+1} \log T \right] \leq c_0 4^K e^{-c_1 (\log T)^2} .
\eqe
Since $2^{m_K} \leq 2 K^{\ep_0}$, we obtain from~\eqref{eqn-annulus-osc0} that
\eqb \label{eqn-annulus-osc}
\BB P\left[ \max_{S\in \mcl S_K} \sup_{z\in A_S} |\Phi_{0,K}(z) - \Phi_{0,K}(v_S)| \leq  4 K^{\ep_0} \log T \right] \leq c_0 4^K e^{-c_1 (\log T)^2 } .
\eqe

By combining~\eqref{eqn-induct-across} and~\eqref{eqn-annulus-osc}, we therefore get that for $T>2^{\sqrt K}$, it holds with probability at least $1 - c_0 5^K \exp\left( - c_1 (\log T)^2   \right)$ that for each $S\in\mcl S_K$, 
\allb \label{eqn-induct-full-across0}
  D_{0,n} \left( \text{across $A_S$} \right) 
&\geq \exp\left( \xi \min_{z\in A_S} \Phi_{0,K} \right) D_{K,n}\left(\text{across $A_S$} \right) \quad \text{(since $\Phi_{0,n} = \Phi_{0,K} + \Phi_{K,n}$)} \notag\\
&\geq \exp\left( \xi \Phi_{0,K}(v_S) -   4 \xi K^{\ep_0} \log T \right) D_{K,n}\left(\text{across $A_S$} \right) \quad \text{(by~\eqref{eqn-annulus-osc})} \notag\\
&\geq T^{-1-4 \xi K^{\ep_0}} 2^{- K }  e^{\xi \Phi_{0,K}(v_S)} \lambda_{n-K} \quad \text{(by~\eqref{eqn-induct-across})} . 
\alle
If $T > 2^{K^{1/2 + 3\ep_0}}$, we can apply this last bound with $T^{1/(1+4\xi K^{\ep_0})}$ in place of $T$ to get
\eqbn 
\BB P\left[ \min_{S\in\mcl S_K} e^{-\xi \Phi_{0,K}(v_S)} D_{0,n} \left( \text{across $A_S$} \right)  < T^{-1} 2^{- K}   \lambda_{n-K} \right] 
\leq c_0 5^K \exp\left( - c_1 \frac{ (\log T)^2 }{ (1 + 4 \xi K^{\ep_0})^2 \log K} \right)   .
\eqen 
Since $(1+4\xi K^{\ep_0})^2 \log K$ is bounded above by a constant times $K^{3\ep_0}$, this implies~\eqref{eqn-induct-full-across}.

We similarly obtain~\eqref{eqn-induct-full-around} using~\eqref{eqn-induct-around} instead of~\eqref{eqn-induct-across}. The estimates with $A_S'$ in place of $A_S$ follow from the same argument, using the analogs of~\eqref{eqn-induct-across} and~\eqref{eqn-induct-around} with $A_S'$ in place of $A_S$. 
The estimates with $\wt D_{0,n}$ in place of $D_{0,n}$ follow from the estimates for $D_{0,n}$ together with Lemma~\ref{lem-field-compare}. 
\end{proof}

\subsection{Proof of Proposition~\ref{prop-ef-length}}
\label{sec-ef-length}

In this section we will prove Proposition~\ref{prop-ef-length}, which is the easier of the two unproven propositions in Section~\ref{sec-ef-sum}.
The idea of the proof is to show that  $\sum_{S\in\mcl S_K} \BB 1_{F_S}  \wt D_{0,n}\left( \text{around $A_S$} \right)  $ is very unlikely to be much larger than $\wt L_{0,n}$. This will be done in two steps. First, we show the analogous statement with ``across $A_S$" in place of ``around $A_S$" using the fact that the $\wt D_{0,n}$-geodesic $\wt P_{0,n}$ must cross each annulus $A_S$ for which $F_S$ occurs (Lemma~\ref{lem-across-sum}). 
Then, we show that $ \wt D_{0,n}\left( \text{around $A_S$} \right)  /   \wt D_{0,n}\left( \text{across $A_S$} \right) $ is small with high probability using Lemma~\ref{lem-induct-full} (Lemma~\ref{lem-around-sum}).

\begin{lem} \label{lem-across-sum}
Let $\wt P_{0,n}$ be the path as in Lemma~\ref{lem-cond-exp-around} and define the event $F_S$ for $S\in\mcl S_K$ as in~\eqref{eqn-square-hit-event}. 
Almost surely,
\eqb \label{eqn-across-sum}
\sum_{S\in\mcl S_K} \BB 1_{F_S} \wt D_{0,n}\left(\text{across $A_S$} \right)  \leq  K   \wt L_{0,n} .
\eqe
\end{lem}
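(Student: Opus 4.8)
The plan is to parametrize the minimal‑length path $\wt P_{0,n}:[0,\wt L_{0,n}]\rta\BB S$ from $\bdy_{\op L}\BB S$ to $\bdy_{\op R}\BB S$ (the path from Lemma~\ref{lem-cond-exp-around}) by its $\wt D_{0,n}$‑length, and then, for each $S\in\mcl S_K$ on which $F_S$ occurs, to extract a time interval $I_S\subseteq[0,\wt L_{0,n}]$ during which $\wt P_{0,n}$ crosses the annulus $A_S$ of~\eqref{eqn-S-annuli}. Summing $|I_S|$ and controlling the overlap of the intervals will yield~\eqref{eqn-across-sum}.

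First I would record the geometric mechanism behind $F_S$. The annuli $A_S$ have Euclidean diameter of order $2^{-K}K^{\ep_0}$, which for $K$ large (the only regime that is used, since $K$ is eventually fixed large depending on $\xi$) is far smaller than $\op{diam}\BB S=\sqrt2$; hence $A_S$ cannot meet both $\bdy_{\op L}\BB S$ and $\bdy_{\op R}\BB S$, say it misses $\bdy_{\op R}\BB S$. Then the endpoint $\wt P_{0,n}(\wt L_{0,n})\in\bdy_{\op R}\BB S$ lies strictly outside the outer boundary circle of $A_S$, while on $F_S$ the path enters the disk enclosed by the inner boundary of $A_S$. Let $t_2$ be the last time $\wt P_{0,n}$ touches the inner boundary circle of $A_S$ and $t_1>t_2$ the first time after $t_2$ that $\wt P_{0,n}$ touches the outer boundary circle; a short continuity argument (if $\wt P_{0,n}$ re‑entered the inner disk or exited the outer disk on $(t_2,t_1)$ it would have to touch one of the two circles again, contradicting minimality/maximality of $t_1,t_2$) shows that $\wt P_{0,n}|_{[t_2,t_1]}$ is a path contained in $A_S$ joining its two boundary components. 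Consequently its $\wt D_{0,n}$‑length, which equals $t_1-t_2$ since $\wt P_{0,n}$ is parametrized by $\wt D_{0,n}$‑length, is at least $\wt D_{0,n}(\text{across $A_S$})$. Setting $I_S:=[t_2,t_1]$ (and $I_S:=\emptyset$ when $F_S$ fails) we obtain $\BB 1_{F_S}\,\wt D_{0,n}(\text{across $A_S$})\le|I_S|$ pointwise.

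Summing over $S$ and applying Fubini,
\[
\sum_{S\in\mcl S_K}\BB 1_{F_S}\,\wt D_{0,n}(\text{across $A_S$})\;\le\;\sum_{S\in\mcl S_K}|I_S|\;=\;\int_0^{\wt L_{0,n}}\#\bigl\{S\in\mcl S_K:\, t\in I_S\bigr\}\,dt.
\]
Since $t\in I_S$ forces $\wt P_{0,n}(t)\in A_S$, and $\wt P_{0,n}(t)\in A_S$ forces the center $v_S$ to lie within Euclidean distance $O(2^{-K}K^{\ep_0})$ of $\wt P_{0,n}(t)$, and since the centers $v_S$ of the $2^{-K}\times2^{-K}$ dyadic squares form a $2^{-K}$‑spaced grid, the integrand is bounded by a universal constant times $K^{2\ep_0}$, uniformly in $t$. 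Because $\ep_0<1/100$, this is at most $K$ for $K$ large, giving $\sum_{S\in\mcl S_K}\BB 1_{F_S}\,\wt D_{0,n}(\text{across $A_S$})\le K\,\wt L_{0,n}$.

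This lemma is genuinely easy; the only points needing care are bookkeeping rather than substantive. First, one must pin down $I_S$ so that it truly lies in $A_S$ and connects the two boundary circles (handled above by taking the last inner‑boundary visit followed by the next outer‑boundary visit, and using that $A_S$ is too small to touch both vertical sides of $\BB S$). Second, the overlap count must be made precise --- and this is exactly where the factor $K$ appears rather than an $O(1)$ constant, since neighbouring annuli $A_S$ overlap substantially, their radii $\asymp 2^{-K}K^{\ep_0}$ dwarfing the grid spacing $2^{-K}$. Neither step is a real obstacle.
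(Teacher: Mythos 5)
Your proof is correct and follows essentially the same route as the paper's: extract, for each $S$ with $F_S$ occurring, a crossing interval of $\wt P_{0,n}$ through $A_S$ (so its length is at least $\wt D_{0,n}(\text{across }A_S)$), then bound the overlap multiplicity by $O(K^{2\ep_0})\le K$ using that the annuli have radii $\asymp 2^{-K}K^{\ep_0}$ while the centers $v_S$ are $2^{-K}$-spaced. The only cosmetic differences are that you bound the overlap via a Fubini/multiplicity integral and construct the crossing interval explicitly (last inner-circle visit, next outer-circle visit), whereas the paper counts pairwise-intersecting intervals and simply asserts the existence of such an interval.
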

\begin{proof}
If $F_S$ occurs, then $\wt P_{0,n}$ enters the region surrounded by the annulus $A_S$, so $\wt P_{0,n}$ must cross between the inner and outer boundaries of $A_S$ at least once (it must cross between the inner and outer boundaries twice if $A_S$ does not surround the starting or ending point of $\wt P_{0,n}$). 
For each $S\in\mcl S_K$ for which $F_S$ occurs, let $  [a_S, b_S] \subset [0,\wt L_{0,n}]$ be a time interval such that $\wt P_{0,n}( [a_S, b_S]) \subset \ol{A_S}$ and $\wt P_{0,n}(a_S)$ and $\wt P_{0,n}(b_S)$ lie on opposite boundary circles of $A_S$. 
If $F_S$ does not occur, instead set $a_S = b_S  =0$. 
Then
\eqb \label{eqn-interval-across}
b_S - a_S \geq \BB 1_{F_S} \wt D_{0,n}\left(\text{across $A_S$} \right) .
\eqe
We want to prove~\eqref{eqn-across-sum} by summing~\eqref{eqn-interval-across} over all $S\in\mcl S_K$. However, we need to deal with the potential for overlap between the intervals $ [a_S, b_S]$. 

For $S,S' \in \mcl S_K$, the intervals $ [a_S, b_S]$ and $ [a_{S'} , b_{S'}]$ can intersect only if $A_S\cap A_{S'}  \not=\emptyset $, which can happen only if the Euclidean distance between $S$ and $S'$ is at most $2^{-(K-m_K)+1}$. Since $\mcl S_K$ consists of dyadic squares of since length $2^{-K}$, it follows that for each $S\in\mcl S_K$ there are at most a constant times $2^{2m_K}$ squares $S'\in \mcl S_K$ for which $ [a_S, b_S] \cap  [a_{S'}, b_{S'}]  \not=\emptyset$. 
Therefore, 
\eqbn
\wt L_{0,n} \geq \sum_{S\in\mcl S_K} 2^{-2m_K} (b_S - a_S) \geq \sum_{S\in\mcl S_K} 2^{-2m_K} \BB 1_{F_S} \wt D_{0,n}\left(\text{across $A_S$} \right) .
\eqen
Recalling that $m_K \leq \log_2 K^{\ep_0} \leq \frac12 \log_2 K$, we now obtain~\eqref{eqn-across-sum}. 
\end{proof}

Now we upgrade the statement of Lemma~\ref{lem-across-sum} from a bound for distances across annuli to a bound for distances around annuli using Lemma~\ref{lem-induct-full}.

\begin{lem} \label{lem-around-sum}
Assume the inductive hypothesis~\eqref{eqn-ind-hyp}. 
Let $\wt P_{0,n}$ be the path as in Lemma~\ref{lem-cond-exp-around} and define the event $F_S$ for $S\in\mcl S_K$ as in~\eqref{eqn-square-hit-event}. 
For each $T> 2^{K^{1/2 + 3\ep_0}}$, 
\eqb \label{eqn-around-sum}
\BB P\left[ \sum_{S\in\mcl S_K} \BB 1_{F_S} \wt D_{0,n}\left(\text{around $A_S$} \right)   >    T K  \wt L_{0,n}  \right] 
\leq c_0 5^K  \exp\left( - c_1 \frac{(\log T)^2 }{K^{3\ep_0}  \log\log T   } \right) 
\eqe
where $c_0,c_1 > 0$ are constants depending only on $\xi$ and $C$. 
\end{lem}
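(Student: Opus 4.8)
The plan is to upgrade the ``across'' bound of Lemma~\ref{lem-across-sum} to an ``around'' bound by showing that, with overwhelming probability, $\wt D_{0,n}(\text{around $A_S$}) \le T\,\wt D_{0,n}(\text{across $A_S$})$ holds simultaneously for every $S\in\mcl S_K$. This ratio comparison is exactly what the annulus estimates of Lemma~\ref{lem-induct-full} supply: both $\wt D_{0,n}(\text{across $A_S$})$ and $\wt D_{0,n}(\text{around $A_S$})$ are, up to sub-polynomial factors, comparable to $2^{-K}\lambda_{n-K}e^{\xi\Phi_{0,K}(v_S)}$, so their ratio is controlled without reference to the value of $\Phi_{0,K}(v_S)$.

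Concretely, I would set $T_1 := T^{1/2}$ and work on the event
\[
G := \Bigl\{ \max_{S\in\mcl S_K} e^{-\xi \Phi_{0,K}(v_S)}\, \wt D_{0,n}(\text{around $A_S$}) \le T_1\, 2^{-K}\lambda_{n-K} \Bigr\}
\cap \Bigl\{ \min_{S\in\mcl S_K} e^{-\xi \Phi_{0,K}(v_S)}\, \wt D_{0,n}(\text{across $A_S$}) \ge T_1^{-1}\, 2^{-K}\lambda_{n-K} \Bigr\}.
\]
By the two estimates of Lemma~\ref{lem-induct-full} (with $\wt D_{0,n}$ in place of $D_{0,n}$ and $T_1$ in place of $T$) together with a union bound over the two complementary events, $\BB P[G^c] \le c_0\, 5^K \exp\bigl( -c_1 (\log T_1)^2 / (K^{3\ep_0}\log\log T_1)\bigr)$, which after using $\log T_1 \asymp \log T$, $\log\log T_1 \le \log\log T$, and relabelling the constants is at most the right-hand side of~\eqref{eqn-around-sum}.

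On the event $G$, the two inequalities multiply to give, for every $S\in\mcl S_K$,
\[
\wt D_{0,n}(\text{around $A_S$}) \le T_1\, 2^{-K}\lambda_{n-K}\, e^{\xi\Phi_{0,K}(v_S)} \le T_1^2\, \wt D_{0,n}(\text{across $A_S$}) = T\, \wt D_{0,n}(\text{across $A_S$}).
\]
Multiplying through by $\BB 1_{F_S}$, summing over $S\in\mcl S_K$, and invoking Lemma~\ref{lem-across-sum} then yields, on $G$,
\[
\sum_{S\in\mcl S_K}\BB 1_{F_S}\, \wt D_{0,n}(\text{around $A_S$}) \le T\sum_{S\in\mcl S_K}\BB 1_{F_S}\, \wt D_{0,n}(\text{across $A_S$}) \le T K\,\wt L_{0,n}.
\]
Hence the event in~\eqref{eqn-around-sum} is contained in $G^c$, and the bound on $\BB P[G^c]$ completes the proof.

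The only genuinely delicate point is bookkeeping: matching the threshold $T > 2^{K^{1/2+3\ep_0}}$ and the exact power of $T$ and the factor $\log\log T$ in~\eqref{eqn-around-sum} when passing through Lemma~\ref{lem-induct-full} with $T_1 = \sqrt T$ rather than $T$. This can be handled by instead applying Lemma~\ref{lem-induct-full} with parameter $T$ itself to both bounds (which only requires $T > 2^{K^{1/2+3\ep_0}}$ and loses only a constant factor in the exponent of $K^{3\ep_0}$), proving the inequality with $T^2 K$ in place of $TK$ on the right, and then reparametrizing. There is no real analytic obstacle: the substantive inputs---the comparison of around/across distances with $2^{-K}\lambda_{n-K}e^{\xi\Phi_{0,K}(v_S)}$ from Lemma~\ref{lem-induct-full}, and the deterministic geometric argument controlling the overlap of the annuli $A_S$ along the geodesic $\wt P_{0,n}$ from Lemma~\ref{lem-across-sum}---are already established.
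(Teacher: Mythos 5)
Your proposal is correct and follows essentially the same route as the paper: the paper likewise applies both estimates of Lemma~\ref{lem-induct-full} with $T^{1/2}$ in place of $T$ to bound $\max_{S\in\mcl S_K} \wt D_{0,n}(\text{around }A_S)/\wt D_{0,n}(\text{across }A_S)$ by $T$ off an event of the stated probability, and then combines this with Lemma~\ref{lem-across-sum}. Your remark on the threshold/constant bookkeeping is the same minor adjustment implicit in the paper's argument, so nothing is missing.
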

\begin{proof}
By combining the two estimates from Lemma~\ref{lem-induct-full} (each applied with $T^{1/2}$ in place of $T$), we get
\eqb \label{eqn-induct-full-ratio}
\BB P\left[ \max_{S\in\mcl S_K}  \frac{ \wt D_{0,n} \left( \text{around $A_S$} \right) }{ \wt D_{0,n} \left(\text{across $A_S$} \right)}  > T  \right] 
\leq c_0 5^K  \exp\left( - c_1 \frac{(\log T)^2 }{K^{3\ep_0}  \log\log T   } \right)   .
\eqe
Combining~\eqref{eqn-induct-full-ratio} with Lemma~\ref{lem-across-sum} gives~\eqref{eqn-around-sum}.
\end{proof}

By integrating the bound from Lemma~\ref{lem-around-sum}, we can convert from a probability estimate to a moment estimate.

\begin{lem} \label{lem-around-sum-moment}
Assume the inductive hypothesis~\eqref{eqn-ind-hyp}. 
Let $\wt P_{0,n}$ be the path as in Lemma~\ref{lem-cond-exp-around} and define the event $F_S$ for $S\in\mcl S_K$ as in~\eqref{eqn-square-hit-event}. 
For each $T>1$, 
\eqb \label{eqn-around-moment}
\BB E\left[ \left(  \frac{1}{\wt L_{0,n}}   \sum_{S\in\mcl S_K}  \BB 1_{F_S} \wt D_{0,n}\left(\text{around $A_S$} \right)   \right)^2  \right] 
\preceq K^2 \exp\left( 2 K^{2/3} \right)  
\eqe
with the implicit constant depending only on $\xi$ and $C$. 
\end{lem}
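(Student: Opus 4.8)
Write $Y := \wt L_{0,n}^{-1}\sum_{S\in\mcl S_K} \BB 1_{F_S}\wt D_{0,n}\left(\text{around $A_S$}\right)\ge 0$ and $T_0 := 2^{K^{1/2+3\ep_0}}$, so the goal is to bound $\BB E[Y^2] = \int_0^\infty 2t\,\BB P[Y>t]\,dt$. The plan is simply to integrate the tail bound of Lemma~\ref{lem-around-sum}, splitting the layer-cake integral at the threshold $t = KT_0$ below which that lemma provides no information. On $[0,KT_0]$ I would use the trivial bound $\BB P[Y>t]\le 1$, which contributes at most $(KT_0)^2 = K^2\, 2^{2K^{1/2+3\ep_0}}$. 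Since $\ep_0<1/100$ we have $\tfrac12+3\ep_0\le 0.53<\tfrac23$, so $2^{K^{1/2+3\ep_0}} = \exp(K^{1/2+3\ep_0}\log 2)\le \exp(K^{2/3})$ for every $K\ge 1$; hence this contribution is at most $K^2\exp(2K^{2/3})$, which is exactly the claimed bound. So everything reduces to showing that the tail contribution from $t>KT_0$ is $O(K^2)$.

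For $t>KT_0$, substitute $t = TK$ with $T>T_0$ and apply Lemma~\ref{lem-around-sum} to get
\eqbn
\int_{KT_0}^\infty 2t\,\BB P[Y>t]\,dt = 2K^2\int_{T_0}^\infty T\,\BB P[Y>TK]\,dT \le 2c_0 K^2 5^K \int_{T_0}^\infty T\exp\!\left(-c_1\frac{(\log T)^2}{K^{3\ep_0}\log\log T}\right)dT .
\eqen
Changing variables $T=e^s$ turns the integral into $\int_{s_0}^\infty \exp(g(s))\,ds$ with $s_0 := K^{1/2+3\ep_0}\log 2$ and $g(s):= 2s - c_1 s^2/(K^{3\ep_0}\log s)$. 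The one computation requiring a little care is to check that $g$ is decreasing and very negative on $[s_0,\infty)$: a direct differentiation gives $g'(s_0)\asymp -K^{1/2}/\log K$, so $g'\le -1$ on $[s_0,\infty)$ once $K$ is large, whence $g$ is decreasing with $g(s_0)\le -\tfrac{c_1}{2}s_0^2/(K^{3\ep_0}\log s_0)\asymp -K^{1+3\ep_0}/\log K$. Consequently $\int_{s_0}^\infty e^{g(s)}\,ds\le e^{g(s_0)}\le \exp(-cK^{1+3\ep_0}/\log K)$ for a constant $c>0$ depending only on $\xi,C$. Since $5^K = \exp(K\log 5)$ is dominated by $\exp(\tfrac{c}{2}K^{1+3\ep_0}/\log K)$ for large $K$, the tail contribution is $\preceq K^2 \exp(-\tfrac{c}{2}K^{1+3\ep_0}/\log K)\preceq K^2$. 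Adding the two pieces and enlarging the implicit constant to absorb the finitely many small values of $K$ yields~\eqref{eqn-around-moment}.

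I do not expect a genuine obstacle here: this is a routine "integrate a Gaussian-type tail bound" argument. The only two points that need attention are matching the cutoff $T_0 = 2^{K^{1/2+3\ep_0}}$ against $\exp(K^{2/3})$ — which is precisely where the standing assumption $\ep_0<1/100$ is used — and absorbing the combinatorial prefactor $5^K$ into the super-exponentially small tail, both of which are elementary.
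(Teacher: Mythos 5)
Your argument is correct and is essentially the paper's own proof: both integrate the tail bound of Lemma~\ref{lem-around-sum} via the layer-cake formula, use the trivial bound below a cutoff, and absorb the $5^K$ prefactor into the superpolynomially decaying tail. The only (immaterial) difference is your choice of cutoff $K\,2^{K^{1/2+3\ep_0}}$ instead of the paper's $K\exp(K^{2/3})$, which still lands under the stated bound $K^2\exp(2K^{2/3})$.
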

\begin{proof}
By Lemma~\ref{lem-around-sum} (for $T \geq \exp(K^{2/3})$) and a trivial bound (for $K \in [0 , \exp(K^{2/3}) )$) we have
\eqbn
\BB P\left[  \frac{1}{K \wt L_{0,n}} \sum_{S\in\mcl S_K}  \BB 1_{F_S} \wt D_{0,n}\left(\text{around $A_S$} \right)   > T \right] 
\leq 
\begin{cases}
c_0 5^K  \exp\left( - c_1 \frac{(\log T)^2 }{ K^{3\ep_0}  \log\log T     } \right) \quad & T \geq \exp(K^{2/3} ) \notag\\
1 \quad &T < \exp(K^{2/3}) .
\end{cases}
\eqen
Integrating this estimate over $T\in [0,\infty)$ gives
\allb
&\BB E\left[ \left(   \frac{1}{K \wt L_{0,n} } \sum_{S\in\mcl S_K}  \BB 1_{F_S} \wt D_{0,n}\left(\text{around $A_S$} \right)   \right)^2  \right] \notag \\ 
&\qquad = 2 \int_0^\infty T \BB P\left[  \frac{1}{K \wt L_{0,n}} \sum_{S\in\mcl S_K}  \BB 1_{F_S} \wt D_{0,n}\left(\text{around $A_S$} \right)   > T \right] \,dT \notag \\
&\qquad \preceq \exp\left( 2 K^{2/3} \right)  + 5^K \int_{\exp(K^{2/3})}^\infty   \exp\left( - c_1 \frac{(\log T)^2 }{ K^{3\ep_0}  \log\log T     } \right) \,dT \notag \\
&\qquad \preceq \exp\left( 2 K^{2/3} \right) + 5^K \exp\left( - c_1 K^{4/3 -3\ep_0} / \log\log K \right) \notag\\
&\qquad \preceq \exp\left( 2 K^{2/3} \right) .
\alle
\end{proof}

\begin{proof}[Proof of Proposition~\ref{prop-ef-length}] 
By Jensen's inequality (to move the power of 2 inside the conditional expectation) followed by Lemma~\ref{lem-around-sum-moment},
\allb \label{eqn-ef-term2}
&\BB E\left[  \left(   \sum_{S\in\mcl S_K}  \BB E\left[ \frac{1}{\wt L_{0,n}} \BB 1_{F_S} \wt D_{0,n}\left( \text{around $A_S$} \right)  \,\big|\,   \Psi_{K,n}  \right]  \right)^2 \right]  \notag\\
&\qquad \leq \BB E\left[ \BB E\left[ \left(  \sum_{S\in\mcl S_K} \frac{1}{\wt L_{0,n}} \BB 1_{F_S} \wt D_{0,n}\left( \text{around $A_S$} \right) \right)^2   \,\big|\,   \Psi_{K,n}  \right] \right] \notag\\
&\qquad \leq \BB E\left[ \left(  \frac{1}{\wt L_{0,n}}   \sum_{S\in\mcl S_K}  \BB 1_{F_S} \wt D_{0,n}\left(\text{around $A_S$} \right)   \right)^2  \right] \notag\\ 
&\qquad \preceq K^2 \exp\left( 2 K^{2/3} \right) .
\alle
Taking the $1/2$ power of both sides of~\eqref{eqn-ef-term2} now gives~\eqref{eqn-ef-length}. 
\end{proof}

\subsection{Bound for left-right crossing distance}
\label{sec-subadd}

In this section we use Lemma~\ref{lem-induct-full} to prove a bound for the left-right crossing distance $\wt L_{0,n}$, assuming the inductive hypothesis.

\begin{lem} \label{lem-subadd}
Assume the inductive hypothesis~\eqref{eqn-ind-hyp}. 
Also fix $\zeta>0$. 
For each $T > 2^{K^{1/2 + 3\ep_0}}$, 
\eqb \label{eqn-subadd-lower}
\BB P\left[  \wt L_{0,n} <   T^{-1}  2^{-(1-\xi Q + \zeta) K} \lambda_{n-K}   \right] 
\leq c_0 5^K  \exp\left( - c_1 \frac{(\log T)^2 }{ K^{3\ep_0}  \log\log T     } \right)
\eqe
and
\eqb \label{eqn-subadd-upper}
\BB P\left[  \wt L_{0,n}  > T 2^{-(1-\xi Q - \zeta) K} \lambda_{n-K}   \right] 
\leq c_0 5^K  \exp\left( - c_1 \frac{(\log T)^2 }{ K^{3\ep_0}  \log\log T     } \right)
\eqe
for constants $c_0,c_1 >0$ depending only on $\zeta,\xi,C$.  
\end{lem}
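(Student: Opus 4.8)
The plan is to prove Lemma~\ref{lem-subadd} by a two-sided version of the subadditivity/concatenation argument already used in the proof of Lemma~\ref{lem-mu-exponent}, but now with all estimates made quantitative via Lemma~\ref{lem-induct-full}. The key observation is that a $\wt D_{0,n}$-geodesic between the left and right sides of $\BB S$ must cross each annulus $A_S'$ (or $A_S$) that it comes close to, and conversely one can build a left-right crossing of $\BB S$ by concatenating paths around the annuli $A_S'$ along a chain of dyadic squares. Combining this with the fact that $\Phi_{0,n} = \Phi_{0,K} + \Phi_{K,n}$ lets one relate $\wt L_{0,n}$ to a sum of terms of the form $2^{-K}\lambda_{n-K} e^{\xi\Phi_{0,K}(v_S)}$, and then the number of squares along the crossing chain is controlled by the subadditive estimate for $\wt L_{0,K}$ (i.e.\ for the coarse field at scale $K$) together with the regularity event for $\Phi_{0,K}$ from Lemma~\ref{lem-phi-cont}.

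Here is the order of steps I would carry out. \textbf{Step 1 (lower bound, \eqref{eqn-subadd-lower}).} Let $\wt P_{0,n}$ be the $\wt D_{0,n}$-geodesic between $\bdy_{\op L}\BB S$ and $\bdy_{\op R}\BB S$, parametrized by $\wt D_{0,n}$-length. Chain it through $2^{-K}$-dyadic squares $S_0, S_1, \dots, S_J$ exactly as in Step 2 of the proof of Lemma~\ref{lem-mu-exponent}: $S_{j+1}$ is the square containing $\wt P_{0,n}$ the first time it exits $B_{2^{-K}}(S_j)$. Then for each $j$, $\wt P_{0,n}$ crosses the annulus $A_{S_j}'$, so the $\wt D_{0,n}$-length it spends there is at least $\wt D_{0,n}(\text{across }A_{S_j}')$; summing and using the disjointness of these crossing time-intervals (after accounting for bounded overlap, as in Lemma~\ref{lem-across-sum}) gives $\wt L_{0,n} \succeq \sum_{j=0}^{J-1} \wt D_{0,n}(\text{across }A_{S_j}')$. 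By the ``across $A_S'$" version of Lemma~\ref{lem-induct-full} (applied with a union bound to control all $S\in\mcl S_K$ at once, which is what the $5^K$ prefactor there is for) and the regularity event \eqref{eqn-annulus-osc}-type bound, with the stated probability each such term is at least $T^{-1} 2^{-K + o_K(K)} \lambda_{n-K} e^{\xi \Phi_{0,K}(v_{S_j})}$. So $\wt L_{0,n} \succeq T^{-1} 2^{-K+o_K(K)}\lambda_{n-K} \sum_{j=0}^{J-1} e^{\xi\Phi_{0,K}(v_{S_j})}$. Now the sum on the right is, up to the good-event multiplicative errors coming from the modulus of continuity of $\Phi_{0,K}$, comparable to the $D_{0,K}$-length of a left-right crossing of $\BB S$ — more precisely, it bounds below (a constant times) $2^{K} D_{0,K}(\text{across chain}) \succeq 2^K \wt L_{0,K}$ up to $2^{o_K(K)}$ and up to comparing $\Phi_{0,K}$ and $\Psi_{0,K}$ via Lemma~\ref{lem-field-compare}. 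Finally invoke Proposition~\ref{prop-Q-wn} (in the form $\lambda_K = 2^{-(1-\xi Q)K + o_K(K)}$) together with Proposition~\ref{prop-perc-estimate} and Lemma~\ref{lem-a-priori-conc} to upgrade from the median $\lambda_K$ to $\wt L_{0,K}$ with high probability, absorbing the $2^{o_K(K)}$ slack into the $2^{\pm\zeta K}$ factor. This yields \eqref{eqn-subadd-lower}. \textbf{Step 2 (upper bound, \eqref{eqn-subadd-upper}).} This is the reverse concatenation: take a $D_{0,K}$-geodesic left-right crossing $P_{0,K}$ of $\BB S$, chain it through $2^{-K}$ squares $S_0,\dots,S_J$ the same way, and note that the union of paths \emph{around} the annuli $A_{S_j}'$ contains a left-right crossing of $\BB S$; hence $\wt L_{0,n} \leq \sum_{j=0}^{J-1} \wt D_{0,n}(\text{around }A_{S_j}')$. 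Apply the ``around $A_S'$" version of Lemma~\ref{lem-induct-full}, the regularity estimate for $\Phi_{0,K}$, and the fact that on a good event $J \leq 2^{o_K(K)} \cdot (\text{number of squares a $D_{0,K}$-geodesic can pass through})$, which is controlled because $L_{0,K} \leq 2^{o_K(K)}\lambda_K$ with high probability and each square crossing costs at least $2^{-K} e^{\xi\min\Phi_{0,K}}$. Conclude $\wt L_{0,n} \leq T\, 2^{-K+o_K(K)}\lambda_{n-K} \sum_j e^{\xi\Phi_{0,K}(v_{S_j})} \leq T\, 2^{o_K(K)}\lambda_{n-K} L_{0,K} \leq T\, 2^{-(1-\xi Q)K + o_K(K)}\lambda_{n-K}$, absorbing slack into $2^{\zeta K}$.

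The main obstacle, I expect, is making the comparison ``$\sum_j e^{\xi\Phi_{0,K}(v_{S_j})} \asymp 2^{o_K(K)} \cdot 2^K \wt L_{0,K}$" fully rigorous in both directions simultaneously while keeping the error probabilities of the required $2^{-c(\log T)^2/(K^{3\ep_0}\log\log T)}$ form. Several sources of slack must be tracked at once: (i) the number $J$ of squares in the chain, which needs a high-probability upper bound via an a priori estimate for $L_{0,K}$ (lower bound on square crossing distance); (ii) the modulus-of-continuity errors replacing $\min/\max_{A_S'}\Phi_{0,K}$ by $\Phi_{0,K}(v_S)$, handled by Lemma~\ref{lem-phi-cont} as in \eqref{eqn-annulus-osc0}; (iii) the union bound over all $S\in\mcl S_K$ in Lemma~\ref{lem-induct-full}, which already carries the $5^K$ prefactor and hence forces $T$ to be at least $2^{K^{1/2+3\ep_0}}$ for the bound to be nontrivial — consistent with the hypothesis in the statement; (iv) passing between $\Phi$ and $\Psi$ via Lemma~\ref{lem-field-compare}; and (v) invoking Proposition~\ref{prop-Q-wn} at scale $K$, which only gives $\lambda_K = 2^{-(1-\xi Q)K + o_K(K)}$ — it is precisely to absorb this $o_K(K)$ that the lemma is stated with an arbitrary fixed $\zeta > 0$ rather than a clean exponent. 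Once one is careful that every ``bad event" contributes an error of the stated shape and that every $2^{o_K(K)}$ factor is smaller than $2^{\zeta K/2}$ for $K$ large (and, for small $K$, adjust $c_0$), the two bounds follow.
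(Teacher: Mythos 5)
Your proposal follows essentially the same route as the paper's proof: chain the $\wt D_{0,n}$-geodesic (resp.\ a scale-$K$ crossing) through $2^{-K}$-dyadic squares, lower-bound one crossing length by the sum of across-distances of annuli along the chain, upper-bound the other scale's crossing length by the sum of around-distances along the same chain, use Lemma~\ref{lem-induct-full} (whose union bound over $\mcl S_K$ is exactly what produces the $5^K$ and is what makes the random identity of the squares $S_j$ harmless) to replace both sums by $\sum_j e^{\xi\Phi_{0,K}(v_{S_j})}$ times the appropriate scale factor, and finish with the a priori scale-$K$ estimate --- your combination of Proposition~\ref{prop-Q-wn}, Proposition~\ref{prop-perc-estimate} and Lemma~\ref{lem-a-priori-conc} is precisely Lemma~\ref{lem-left-right-conc}, which is how the paper concludes as well.

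The one step that would fail as literally written is geometric. With your chaining rule ($S_{j+1}$ is the square containing the geodesic at its first exit from $B_{2^{-K}}(S_j)$), the square $S_{j+1}$ may lie in the second ring around $S_j$, i.e.\ at Euclidean distance $2^{-K}$ from $S_j$. Loops around the thin annuli $A_{S_j}' = \ol{B_{2^{-K}}(S_j)\setminus S_j}$ may hug the inner boundaries $\bdy S_j$ and $\bdy S_{j+1}$, and for such a configuration consecutive loops are disjoint and non-nested, so their union need not be connected and need not contain a left-right crossing of $\BB S$; the same issue arises in your Step 1, where you build a scale-$K$ crossing from around-loops along the chain. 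This is exactly why the paper performs the concatenation step with the fatter annuli $A_S''$ (the second ring of $2^{-K}$-squares around $S$): a loop separating the boundary components of $A_{S_j}''$ surrounds the entire $3\times 3$ block of $S_j$ and is then forced to intersect the next loop, and the around-bound~\eqref{eqn-induct-full-around} holds for $A_S''$ with the same proof, so the repair costs nothing. A second, smaller imprecision: bounding the number $J$ of squares in the chain is neither sufficient (one would also need to control $\max_j e^{\xi\Phi_{0,K}(v_{S_j})}$) nor necessary; what your displayed chain of inequalities actually uses, correctly, is the weighted comparison $\sum_j e^{\xi\Phi_{0,K}(v_{S_j})} \preceq 2^{K+o_K(K)} L_{0,K}$, which follows from the disjoint crossing intervals of the scale-$K$ geodesic exactly as in Step 2 of the proof of Lemma~\ref{lem-mu-exponent} (and, in the other direction, there is no overlap issue at all, since the crossings occur in the disjoint intervals $[t_j,t_{j+1}]$ by construction, so the appeal to Lemma~\ref{lem-across-sum}-type overlap counting is unnecessary).
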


For the proof of Lemma~\ref{lem-subadd} we will need an a priori estimate for $L_{0,n}$ which hold without assuming the inductive hypothesis~\eqref{eqn-ind-hyp}.

\begin{lem} \label{lem-left-right-conc}
Fix a small parameter $\zeta > 0$. For each $n\in\BB N$ and each $T>1$, 
\eqb \label{eqn-left-right-conc}
\BB P\left[ T^{-1} 2^{-(1-\xi Q+ \zeta) n} \leq L_{0,n} \leq T 2^{-(1-\xi Q - \zeta) n} \right] 
\geq 1 - c_0 \exp\left( - c_1  \frac{(\log T)^2}{\log\log T} \right) 
\eqe 
for constants $c_0,c_1 > 0$ depending only on $\zeta,\xi$. 
\end{lem}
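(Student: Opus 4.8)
The plan is to reduce the statement to the concentration estimates for distances across and around annuli from Proposition~\ref{prop-perc-estimate}, combined with the exponent identity $\lambda_n = 2^{-n(1-\xi Q)+o_n(n)}$ from Proposition~\ref{prop-Q-wn}. The crucial point is that this is an \emph{a priori} estimate: it must hold for all $n$ without assuming the inductive hypothesis~\eqref{eqn-ind-hyp}, so I cannot use Lemma~\ref{lem-induct-full} and must instead use only the unconditional Proposition~\ref{prop-perc-estimate}, the a priori concentration bound $\Lambda_n(\frk p)\leq e^{c\sqrt n}$ (Lemma~\ref{lem-a-priori-conc}), and the asymptotic $\lambda_n = 2^{-n(1-\xi Q)+o_n(n)}$.

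First I would observe that, by Proposition~\ref{prop-perc-estimate} applied to $K_1 = \bdy_{\op L}\BB S$, $K_2 = \bdy_{\op R}\BB S$ and $U = \BB S$ (using that these are disjoint compact connected non-singleton sets, so the proposition applies — and $L_{0,n}$ is exactly $D_{0,n}(\bdy_{\op L}\BB S, \bdy_{\op R}\BB S; \BB S)$), we have for $n\in\BB N$ and $T'>3$
\begin{align*}
\BB P\left[ L_{0,n} < (T')^{-1}\ell_n(\frk p)\right] &\leq c_0 e^{-c_1(\log T')^2},\\
\BB P\left[ L_{0,n} > T' \Lambda_n(\frk p)\ell_n(\frk p)\right] &\leq c_0 e^{-c_1(\log T')^2/\log\log T'}.
\end{align*}
Now by Lemma~\ref{lem-a-priori-conc}, $\Lambda_n(\frk p)\leq e^{c\sqrt n}$, and since $\ell_n(\frk p)\leq \lambda_n\leq \Lambda_n(\frk p)\ell_n(\frk p)$ we get $e^{-c\sqrt n}\lambda_n \leq \ell_n(\frk p)\leq\lambda_n$ and $\lambda_n\leq\Lambda_n(\frk p)\ell_n(\frk p)\leq e^{c\sqrt n}\lambda_n$. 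Hence for $T'>3$,
\begin{align*}
\BB P\left[ L_{0,n} < (T')^{-1}e^{-c\sqrt n}\lambda_n\right] &\leq c_0 e^{-c_1(\log T')^2},\\
\BB P\left[ L_{0,n} > T' e^{c\sqrt n}\lambda_n\right] &\leq c_0 e^{-c_1(\log T')^2/\log\log T'}.
\end{align*}
By Proposition~\ref{prop-Q-wn}, given $\zeta>0$ there is $n_0 = n_0(\zeta,\xi)$ such that $2^{-(1-\xi Q+\zeta/2)n}\leq\lambda_n\leq 2^{-(1-\xi Q-\zeta/2)n}$ for $n\geq n_0$. Also $e^{\pm c\sqrt n}\leq 2^{\pm(\zeta/4) n}$ for $n$ large, say $n\geq n_1(\zeta,\xi)$. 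So for $n\geq\max\{n_0,n_1\}$ and $T'>3$ we obtain
\begin{align*}
\BB P\left[ L_{0,n} < (T')^{-1}2^{-(1-\xi Q+\zeta)n}\right] &\leq c_0 e^{-c_1(\log T')^2},\\
\BB P\left[ L_{0,n} > T' 2^{-(1-\xi Q-\zeta)n}\right] &\leq c_0 e^{-c_1(\log T')^2/\log\log T'},
\end{align*}
which is exactly~\eqref{eqn-left-right-conc} with $T = T'$, after noting that for $1 < T \leq 3$ the bound is trivial (the right side is bounded below by a positive constant, which we can absorb by adjusting $c_0$). Finally, for the finitely many $n < \max\{n_0,n_1\}$, the random variable $L_{0,n}$ has a fixed law with full support on $(0,\infty)$ and finite positive $\lambda_n$; using $\ell_n(\frk p)\asymp\lambda_n$ (with $n$-dependent but now \emph{bounded} constants since there are finitely many such $n$) together with the same two displays from Proposition~\ref{prop-perc-estimate}, and shrinking $c_0,c_1$ to absorb the constant factors $2^{\pm(1-\xi Q\mp\zeta)n}/\lambda_n$, we get~\eqref{eqn-left-right-conc} for all $n\in\BB N$.

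The main obstacle — really the only subtlety — is making sure all constants in~\eqref{eqn-left-right-conc} depend only on $\zeta$ and $\xi$ and not on $n$. This is handled by using the \emph{uniform-in-$n$} bound $\Lambda_n(\frk p)\leq e^{c\sqrt n}$ from Lemma~\ref{lem-a-priori-conc} (so that $\ell_n(\frk p)$ and $\Lambda_n(\frk p)\ell_n(\frk p)$ are both $\lambda_n$ up to factors $2^{o_n(n)}$, which get absorbed into the $\zeta n$ in the exponent for large $n$) and by treating the finitely many small $n$ separately with a crude comparison, adjusting the universal constants $c_0,c_1$ at the end. No RSW-type or a priori rectangle-crossing estimate beyond Proposition~\ref{prop-perc-estimate} is needed, since that proposition was already proven in~\cite{dddf-lfpp} for all $\xi>0$.
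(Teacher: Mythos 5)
Your proposal is correct and follows essentially the same route as the paper: apply Proposition~\ref{prop-perc-estimate} to the left-right crossing of $\BB S$, use Lemma~\ref{lem-a-priori-conc} to replace $\ell_n(\frk p)$ and $\Lambda_n(\frk p)\ell_n(\frk p)$ by $\lambda_n$ up to factors $e^{\pm c\sqrt n}$, and use Proposition~\ref{prop-Q-wn} to replace $\lambda_n$ by $2^{-(1-\xi Q\pm\zeta)n}$, absorbing the $e^{c\sqrt n}$ into the $\zeta n$ in the exponent. Your explicit handling of $1<T\leq 3$ and of the finitely many small $n$ is a mild elaboration of what the paper leaves to the implicit constants, but it is the same argument.
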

\begin{proof}
Lemma~\ref{lem-a-priori-conc} implies that there is a constant $c >0$ depending only on $\xi$ such that for each $n\in\BB N$, we have $\ell_n(\frk p) \geq e^{-c\sqrt n} \lambda_n$ and $\Lambda_n(\frk p) \ell_n(\frk p) \leq e^{ c \sqrt n} \lambda_n$. Furthermore, Proposition~\ref{prop-Q-wn} implies that $ 2^{-(1-\xi Q + \zeta) n} \preceq \lambda_n \preceq 2^{-(1-\xi Q - \zeta ) n} $ with implicit constants depending only on $\zeta,\xi$. 
Plugging these estimates into Proposition~\ref{prop-perc-estimate} and absorbing the factor of $e^{c\sqrt n}$ into a small power of $2^n$ gives~\eqref{eqn-left-right-conc}.
\end{proof}

\begin{proof}[Proof of Lemma~\ref{lem-subadd}]
We first prove~\eqref{eqn-subadd-lower}. The proof is similar to the proof of Lemma~\ref{lem-mu-exponent}. 
Let $\wt P_{0,n} : [0, \wt L_{0,n} ] \rta \BB S$ be a path in $\BB S$ between the left and right boundaries of $\BB S$ with minimal $\wt D_{0,n}$-length. 
We will approximate $\wt P_{0,n}$ by a path of squares in $\mcl S_K$, then use this path of squares together with Lemma~\ref{lem-induct-full} to build a path between the left and right boundaries of $\BB S$ whose $\wt D_{0,K}$-length is bounded above. We will then use our a priori estimates for $\wt D_{0,K}$ to deduce~\eqref{eqn-subadd-lower}. 

Let $t_0 = 0$ and let $S_0 \in \mcl S_K$ be chosen so that $\wt P_{0,n}(0) \in S_0$. 
Inductively, if $j\in\BB N$ and $S_{j-1}$ and $t_{j-1}$ have been defined, let $t_j$ be the first time after $t_{j-1}$ at which $\wt P_{0,n}$ hits a square $S\in \mcl S_K$ which does not share a corner or side with $S_{j-1}$; and let $S_j$ be this square. If no such time $t_j$ exists, we instead set $t_j = \wt L_{0,n}$ and $S_j =  S_{j-1}$. Let $J$ be the smallest integer for which $t_j =  \wt L_{0,n}$. 

For each $j\in [0,J-1]_{\BB Z}$, the path $\wt P_{0,n}$ crosses between the inner and outer boundaries of the annulus $A_{S_j}' = B_{2^{-K}}(S_j) \setminus S_j$ during the time interval $[t_j , t_{j+1}]$. Consequently,
\eqbn
\wt L_{0,n}
= t_J 
\geq \sum_{j=0}^{J-1} \wt D_{0,n}\left( \text{across $A_{S_{j }}'$} \right) .
\eqen
By combining this bound with the variant of~\eqref{eqn-induct-full-across} of Lemma~\ref{lem-induct-full} for $A_{S_j}'$, we get that for each $T> 2^{K^{1/2 + 3\ep_0}}$, 
\eqb \label{eqn-subadd-across-lower}
\BB P\left[ \wt L_{0,n} \geq    T^{-1/2} 2^{-K} \lambda_{n-K} \sum_{j=0}^{J-1} e^{\xi \Phi_{0,K}(v_{S_j}) } \right] 
\geq 1 - c_0 5^K  \exp\left( - c_1 \frac{(\log T)^2 }{ K^{3\ep_0}      } \right) .
\eqe

For $S \in \mcl S_K$, let $A_S''$ be the annular region consisting of the union of the 16 squares $S' \in \mcl S_K$ which do not share a corner or a side with $S$, but which share a corner or a side with a square which shares a corner or a side with $S$. Then for each $j\in [1,J-1]_{\BB Z}$, we have $S_j \subset A_{S_{j-1}}''$.
Consequently, if $\pi_j$ is a path in $A_{S_j}'$ which disconnects its inner and outer boundaries of $A_{S_j}'$ for each $j=0 ,\dots,J-1$, then the union of the paths $\pi_j$ contains a path in $\BB S$ between the left and right boundaries of $\BB S$. 
Therefore,
\eqb \label{eqn-subadd-sum-lower}
\wt L_{0,K} 
\leq \sum_{j=0}^{J-1} \wt D_{0,K} \left( \text{around $A_{S_{j}}''$} \right)  .
\eqe

The bound~\eqref{eqn-induct-full-around} of Lemma~\ref{lem-induct-full} holds with $A_S''$ in place of $A_S$, with the same proof. 
By combining~\eqref{eqn-subadd-sum-lower} with this bound in the case when $n = K$ (note that the inductive hypothesis~\eqref{eqn-ind-hyp} holds vacuously in this case), we get that for $T>  2^{K^{1/2 + 3\ep_0}}$, 
\eqb \label{eqn-subadd-around-lower}
\BB P\left[  \wt L_{0,K} \leq   T^{1/2} 2^{-K} \sum_{j=0}^{J-1} e^{\xi \Phi_{0,K} (v_{S_j})} \right] 
\geq 1 -  c_0 5^K  \exp\left( - c_1 \frac{(\log T)^2 }{ K^{3\ep_0}  \log\log T     } \right)  .
\eqe 
 
By~\eqref{eqn-subadd-across-lower} and~\eqref{eqn-subadd-around-lower},   
\eqb \label{eqn-subadd-ratio-lower}
\BB P\left[ \wt L_{0,n} \geq   T   \lambda_{n-K} \wt L_{0,K} \right] 
\geq 1 -  c_0 5^K  \exp\left( - c_1 \frac{(\log T)^2 }{ K^{3\ep_0}  \log\log T     } \right) . 
\eqe
By Lemma~\ref{lem-left-right-conc} with $K$ in place of $n$ along with Lemma~\ref{lem-field-compare} (to transfer from $L_{0,K}$ to $\wt L_{0,K}$), 
\eqb \label{eqn-subadd-a-priori}
\BB P\left[ \wt L_{0,K} \geq T^{-1} 2^{-(1-\xi Q + \zeta) K} \right]
\geq 1  - c_0 \exp\left( - c_1  \frac{(\log T)^2}{\log\log T} \right) .
\eqe
Combining~\eqref{eqn-subadd-a-priori} with~\eqref{eqn-subadd-ratio-lower} and replacing $T$ by $T^{1/2}$ gives~\eqref{eqn-subadd-lower}.

The proof of the bound~\eqref{eqn-subadd-upper} is essentially identical, with the roles of $\wt D_{0,n}$ and $\wt D_{0,K}$ interchanged.
\end{proof}

As an immediate consequence of Lemma~\ref{lem-subadd}, we get the following bounds for the median of $L_{0,n}$ which improve on Proposition~\ref{prop-Q}.  

\begin{lem} \label{lem-exponent-relation}
Assume the inductive hypothesis~\eqref{eqn-ind-hyp}. For $\zeta >0$, 
\eqbn
2^{-(1-\xi Q + \zeta) K} \lambda_{n-K} 
\preceq \lambda_n 
\preceq 2^{-(1-\xi Q - \zeta) K} \lambda_{n-K} 
\eqen
with the implicit constants depending only on $\zeta,\xi,C$. 
\end{lem}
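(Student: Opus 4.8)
\textbf{Proof proposal for Lemma~\ref{lem-exponent-relation}.}

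The plan is to deduce the two-sided bound directly from Lemma~\ref{lem-subadd}, using the definition of $\lambda_n$ as the median of $L_{0,n}$ (equivalently, up to comparison constants, of $\wt L_{0,n}$ by Lemma~\ref{lem-field-compare}). First I would fix $\zeta>0$ and apply Lemma~\ref{lem-subadd} with, say, $T = 2^{K^{1/2+3\ep_0}}$ (or any fixed constant slightly above this threshold, chosen uniformly in $n$). For this choice, the right-hand sides of~\eqref{eqn-subadd-lower} and~\eqref{eqn-subadd-upper} become $c_0 5^K \exp(-c_1 K^{1/2+3\ep_0}/\log\log(\cdot))$, which for $K$ large enough (depending only on $\xi,C$; recall $\ep_0$ is universal) is at most $1/4$. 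Hence for such $K$ we get
\eqbn
\BB P\left[ \wt L_{0,n} < 2^{-K^{1/2+3\ep_0}} 2^{-(1-\xi Q + \zeta)K}\lambda_{n-K} \right] \leq \tfrac14
\quad\text{and}\quad
\BB P\left[ \wt L_{0,n} > 2^{K^{1/2+3\ep_0}} 2^{-(1-\xi Q - \zeta)K}\lambda_{n-K} \right] \leq \tfrac14 .
\eqen
Since the median of $\wt L_{0,n}$ cannot lie in an event of probability $\leq 1/4$ on either tail, these two inequalities force the median of $\wt L_{0,n}$ to lie between $2^{-K^{1/2+3\ep_0}} 2^{-(1-\xi Q+\zeta)K}\lambda_{n-K}$ and $2^{K^{1/2+3\ep_0}}2^{-(1-\xi Q-\zeta)K}\lambda_{n-K}$.

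Next I would transfer this from $\wt L_{0,n}$ to $L_{0,n}$ using Lemma~\ref{lem-field-compare}: the medians of $\wt L_{0,n}$ and $L_{0,n} $ differ by at most a $\xi$-dependent multiplicative constant (this is the standard consequence of the Gaussian tail bound comparing $\Phi_{0,n}$ and $\Psi_{0,n}$, exactly as used in the proof of Lemma~\ref{lem-induct-quantile}). Thus the median $\lambda_n$ of $L_{0,n}$ satisfies the same two-sided bound up to a further constant factor. Finally, the factor $2^{\pm K^{1/2+3\ep_0}}$ can be absorbed: since $\zeta>0$ is fixed and $\ep_0 < 1/100$, we have $K^{1/2+3\ep_0} = o(K)$, so $2^{K^{1/2+3\ep_0}} \leq 2^{(\zeta/2)K}$ for $K$ large, and replacing $\zeta$ by $\zeta/2$ at the outset (then renaming) gives precisely $2^{-(1-\xi Q+\zeta)K}\lambda_{n-K} \preceq \lambda_n \preceq 2^{-(1-\xi Q-\zeta)K}\lambda_{n-K}$ with implicit constants depending only on $\zeta,\xi,C$. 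For the finitely many small values of $K$ not covered by the "$K$ large enough" conditions, one simply adjusts the implicit constants (these depend on $\xi,C$ and on the fixed choice of $\zeta$, which is allowed).

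I do not expect any serious obstacle here: Lemma~\ref{lem-subadd} already does all the real work, and this lemma is just the routine extraction of a median comparison from concentration bounds plus absorption of subpolynomial error terms. The only points requiring a line of care are (i) choosing $T$ as a fixed quantity rather than letting it grow, so that the probability bounds are uniform in $n$, and (ii) checking that $K^{1/2+3\ep_0}$ is genuinely $o(K)$ so it can be swallowed into the $\zeta$-error — both of which are immediate. One could alternatively phrase the argument via quantiles $\ell_n(\frk p)$, $\ell_n(1-\frk p)$ and Lemma~\ref{lem-a-priori-conc}, but working directly with the median is cleaner.
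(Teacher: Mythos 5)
Your proposal is correct and takes essentially the same route as the paper, whose proof is exactly this: apply Lemma~\ref{lem-subadd}, transfer from $\wt L_{0,n}$ to $L_{0,n}$ via Lemma~\ref{lem-field-compare}, and absorb the subpolynomial factors by shrinking $\zeta$ (with small $K$ handled by a larger, $K$-dependent but bounded, choice of $T$). One arithmetic slip worth fixing: for $T \asymp 2^{K^{1/2+3\ep_0}}$ the right sides in Lemma~\ref{lem-subadd} are of order $c_0 5^K \exp\left(-c_1' K^{1+3\ep_0}/\log K\right)$ (you forgot to square $\log T$); with the expression as you wrote it the factor $5^K$ would dominate, whereas the correct exponent does beat $5^K$, so your ``$\leq 1/4$ for $K$ large'' claim is valid after this correction.
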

\begin{proof}
This follows from Lemma~\ref{lem-subadd} combined with Lemma~\ref{lem-field-compare} (to transfer from $\wt L_{0,n}$ to $L_{0,n}$). 
\end{proof}

\subsection{A variant of Proposition~\ref{prop-ef-max} without the indicator function}
\label{sec-ef-max-no-F}

In this subsection we prove a variant of Proposition~\ref{prop-ef-max} in which we do not include the factor of $\BB 1_{F_S}$ inside the conditional expectation. 
This factor will be added in Section~\ref{sec-ef-max}, which will lead to a better exponent in the upper bound. 

\begin{lem} \label{lem-ef-max-no-F}
Assume the inductive hypothesis~\eqref{eqn-ind-hyp}.
Also let $\zeta>0$ and $p > 0$. 
We have
\eqb \label{eqn-ef-max-no-F}
\BB E\left[ \left(  \max_{S\in\mcl S_K}  \BB E\left[ \frac{1}{\wt L_{0,n}^p} \wt D_{0,n}\left( \text{around $A_S $} \right)^p \,\big|\,\Psi_{K,n} \right] \right)^2 \right]^{1/2}
\preceq   2^{(p^2 \xi^2/2 - p \xi Q + \zeta )K }  
\eqe
with the implicit constant depending only on $\zeta,p,\xi,C$. 
\end{lem}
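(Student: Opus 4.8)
\textbf{Proof plan for Lemma~\ref{lem-ef-max-no-F}.}

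The plan is to control the conditional expectation $\BB E\left[ \wt L_{0,n}^{-p} \wt D_{0,n}(\text{around }A_S)^p \,\big|\,\Psi_{K,n} \right]$ for each fixed $S$, then to take a union bound over the $\preceq 2^{2K}$ squares $S\in\mcl S_K$. The key point is that $\Phi_{0,K}(v_S)$ is a centered Gaussian of variance $K\log 2$ independent of $\Psi_{K,n}$, so conditioning on $\Psi_{K,n}$ leaves the fluctuations of $\Phi_{0,K}(v_S)$ intact. First I would use Lemma~\ref{lem-induct-full} (with $\wt D_{0,n}$ in place of $D_{0,n}$) to say that with overwhelming conditional probability $\wt D_{0,n}(\text{around }A_S) \preceq T\, 2^{-K}\lambda_{n-K} e^{\xi\Phi_{0,K}(v_S)}$, and Lemma~\ref{lem-subadd} (together with Lemma~\ref{lem-field-compare} if needed) to say that with overwhelming conditional probability $\wt L_{0,n} \succeq T^{-1} 2^{-(1-\xi Q+\zeta)K}\lambda_{n-K}$. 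Note that $\wt L_{0,n}$ is $\sigma(\Psi_{0,n})$-measurable but its conditional law given $\Psi_{K,n}$ still has the tail bound from Lemma~\ref{lem-subadd} after integrating out the relevant probability; to be careful here I would instead bound $\wt D_{0,n}(\text{around }A_S) / \wt L_{0,n}$ directly by combining the two estimates of Lemma~\ref{lem-induct-full}-type and Lemma~\ref{lem-subadd}, so that the ratio is $\preceq T\, 2^{\xi Q K - \zeta K} e^{\xi\Phi_{0,K}(v_S)}$ except on an event of conditional probability $\preceq 5^K\exp(-c_1(\log T)^2/(K^{3\ep_0}\log\log T))$.

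Given this, the $p$-th moment splits as: on the good event the integrand is $\preceq (T\,2^{\xi Q K -\zeta K})^p e^{p\xi\Phi_{0,K}(v_S)}$, and on the bad event we use a crude deterministic-type upper bound for $\wt D_{0,n}(\text{around }A_S)^p/\wt L_{0,n}^p$ — e.g. an exponential-in-$(\log T)^{2}$ or polynomial-in-$T$ bound obtained by iterating Proposition~\ref{prop-perc-estimate'} and Lemma~\ref{lem-phi-cont} — whose contribution is killed by the super-Gaussian decay of the bad-event probability once we integrate over $T$. Taking the conditional expectation, the dominant term is $\preceq 2^{(p\xi Q - p\zeta')K}\,\BB E\left[ e^{p\xi\Phi_{0,K}(v_S)} \,\big|\,\Psi_{K,n}\right] = 2^{(p\xi Q - p\zeta')K}\, 2^{p^2\xi^2 K/2}$, using $\BB E[e^{p\xi N}] = e^{p^2\xi^2\op{Var}(N)/2}$ with $\op{Var}(N) = K\log 2$ and the independence of $\Phi_{0,K}(v_S)$ from $\Psi_{K,n}$. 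This gives the pointwise (in $S$) bound $\preceq 2^{(p^2\xi^2/2 - p\xi Q + \zeta)K}$ for the conditional expectation, \emph{on} a further high-probability event where $\Phi_{0,K}(v_S)$ is not atypically large; handling the rare large values of $\Phi_{0,K}(v_S)$ requires integrating the Gaussian tail against the same conditional moment bound, and the polynomial-in-$2^K$ union bound over $S\in\mcl S_K$ is absorbed into the arbitrarily small loss $\zeta$.

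The main obstacle, I expect, is the interplay between the conditioning on $\Psi_{K,n}$ and the "bad events" in Lemma~\ref{lem-induct-full} and Lemma~\ref{lem-subadd}: those lemmas are stated as unconditional probability bounds, but here we need to control a \emph{conditional} expectation given $\Psi_{K,n}$, so one must either (i) show the relevant bad events have small conditional probability given $\Psi_{K,n}$ (which follows because the estimates in Lemma~\ref{lem-induct-full} come from Proposition~\ref{prop-perc-estimate'} applied to $D_{K,n}$, which is independent of $\Psi_{0,K}$ but \emph{not} of $\Psi_{K,n}$ — so some care with exactly which $\sigma$-algebra is being conditioned on is needed), or (ii) bound the unconditional second moment of the max and argue that the conditional-expectation max is dominated by it via Jensen. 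Option (ii) is cleaner: since $\max_S \BB E[\,\cdot\,|\Psi_{K,n}]^2 \leq \sum_S \BB E[(\,\cdot\,)^2 \mid \Psi_{K,n}]$ and then $\BB E$ of this is $\sum_S \BB E[(\wt L_{0,n}^{-p}\wt D_{0,n}(\text{around }A_S)^p)^2]$, one reduces to an unconditional $2p$-th moment bound for a single $S$, at the cost of replacing $p$ by $2p$ in the exponent bookkeeping — which is fine since $\zeta$ is arbitrary and $p$ ranges over all positive reals, but one must recheck that the resulting exponent still matches $p^2\xi^2/2 - p\xi Q$ after the $\max \le (\sum)^{1/2}$ step, i.e. that the $\sqrt{2^{2K}}$ union-bound factor is harmless. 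I would carry out option (ii), using Lemma~\ref{lem-induct-full}, Lemma~\ref{lem-subadd}, and the Gaussian moment identity for $\Phi_{0,K}(v_S)$ as the three inputs.
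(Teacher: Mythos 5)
Your final decision to run option (ii) is where the proposal breaks. The step $\max_S \BB E[X_S\,|\,\Psi_{K,n}]^2 \leq \sum_S \BB E[X_S^2\,|\,\Psi_{K,n}]$, followed by taking expectations, throws away exactly the gain that the conditioning on $\Psi_{K,n}$ is designed to capture. Since $\Phi_{0,K}(v_S)$ is independent of $\Psi_{K,n}$, the conditional expectation $\BB E[e^{p\xi \Phi_{0,K}(v_S)}\,|\,\Psi_{K,n}] = 2^{p^2\xi^2 K/2}$ is a deterministic constant, so the true size of $\BB E[X_S\,|\,\Psi_{K,n}]$ is $2^{(p^2\xi^2/2 - p\xi Q)K}$ up to subexponential corrections — the stated bound is essentially sharp. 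By contrast, your reduction replaces this by the unconditional moment $\BB E[X_S^2] \approx 2^{-2p\xi Q K}\,\BB E[e^{2p\xi\Phi_{0,K}(v_S)}] = 2^{(2p^2\xi^2 - 2p\xi Q)K}$, losing a factor $2^{p^2\xi^2 K}$ relative to $(\BB E[X_S\,|\,\Psi_{K,n}])^2$; on top of that the sum over the $\asymp 4^K$ squares of $\mcl S_K$ costs another $4^K$. After taking the square root you land at exponent $1 + p^2\xi^2 - p\xi Q$ instead of $p^2\xi^2/2 - p\xi Q + \zeta$, an excess of $(1 + p^2\xi^2/2)K$ in the exponent that cannot be absorbed into $\zeta$ — so the "recheck that the union-bound factor is harmless" you flagged in fact fails, and the quadratic dependence on $p$ means the $p \mapsto 2p$ substitution is not free either. (The weaker bound would still suffice for the one place it is multiplied by $\BB P[E^c]^{1/2} \lesssim e^{-cK^{4/3-3\ep_0}}$, but it does not prove the lemma as stated, whose sharp exponent is what makes it usable in place of Proposition~\ref{prop-ef-max} when $\xi < 2Q$.)

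The viable route is your option (i), and the subtlety you correctly identified (the bad events of Lemmas~\ref{lem-induct-full} and~\ref{lem-subadd} are not independent of $\Psi_{K,n}$) is resolved by a Markov-inequality device rather than by avoiding conditioning. Concretely, the paper defines dyadic bad events $E_j$ (the normalized around-distance exceeds $2^j$ times $2^{-K}\lambda_{n-K}e^{\xi\Phi_{0,K}(v_S)}$) and $E_j'$ ($\wt L_{0,n}$ falls below $2^{-j}$ times $2^{-(1-\xi Q+\zeta)K}\lambda_{n-K}$); since $\BB E[\BB P[E_j\,|\,\Psi_{K,n}]] = \BB P[E_j]$ is superexponentially small in $j$, Markov's inequality yields a $\Psi_{K,n}$-measurable good event $G_T$ on which all these conditional probabilities decay superexponentially for $j \geq \log_2 T$. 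On $G_T$ one expands the conditional moments in layers over $j$, and — because $\BB 1_{E_j}$ is not independent of $\Phi_{0,K}(v_S)$ — one applies H\"older with exponents $(1+\zeta)/\zeta$ and $1+\zeta$ to decouple the indicator from $e^{p\xi\Phi_{0,K}(v_S)}$ before invoking the exact identity $\BB E[e^{s\Phi_{0,K}(v_S)}\,|\,\Psi_{K,n}] = 2^{s^2K/2}$; this gives separate conditional bounds for $\BB E[\wt L_{0,n}^{-pq}\,|\,\Psi_{K,n}]$ and $\max_S\BB E[\wt D_{0,n}(\text{around }A_S)^{(1+\zeta)p}\,|\,\Psi_{K,n}]$, which are combined by one more application of H\"older into a tail bound for $\max_S \BB E[X_S\,|\,\Psi_{K,n}]$ holding off an event of probability $\preceq 5^K\exp(-c_1(\log T)^2/(K^{3\ep_0}\log\log T))$; integrating this tail in $T$ (with the trivial bound below $T = \exp(K^{2/3})$) produces the second moment of the max with the claimed exponent. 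Your first paragraph gestures at these ingredients, but without the Markov step and the H\"older decoupling the argument is not yet a proof, and the route you actually committed to cannot reach the stated bound.
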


Note that for $p = 1$ (which is the relevant case in the setting of Proposition~\ref{prop-ef-max}), the exponent on the right in~\eqref{eqn-ef-max-no-F} is $  \xi^2/2 -   \xi Q$, which is negative if and only if $\xi < 2Q$. As a consequence of this, Lemma~\ref{lem-ef-max-no-F} can be used in place of Proposition~\ref{prop-ef-max} to prove Proposition~\ref{prop-quantile-ratio} in the case when $\xi < 2Q$. However, in the case when $\xi  > 2Q > 0$, Lemma~\ref{lem-ef-max-no-F} is not sufficient for our purposes and we instead need the stronger estimate of Proposition~\ref{prop-ef-max} (which is proven using Lemma~\ref{lem-ef-max-no-F}). 

To prove Lemma~\ref{lem-ef-max-no-F}, we will separately prove a lower bound for $\wt L_{0,n}$ via Lemma~\ref{lem-subadd} and an upper bound for $\max_{S\in\mcl S_K} \wt D_{0,n}\left( \text{around $A_S$}\right)$ via Lemma~\ref{lem-induct-full}, then combine these bounds via H\"older's inequality. 
We start with tail estimates, which are provided by the following lemma.

\begin{lem} \label{lem-cond-exp-moment}
Assume the inductive hypothesis~\eqref{eqn-ind-hyp}. 
Also fix $\zeta > 0$ and $p > 0$. 
For $T \geq 2^{K^{1/2 + 3\ep_0}}$,  
\allb \label{eqn-cond-exp-moment-L}
\BB P\left[    \BB E\left[  \wt L_{0,n}^{-p} \,|\,\Psi_{K,n} \right]  > T 2^{ p (1-\xi Q + \zeta)  K} \lambda_{n-K}^{-p}    \right]
\leq c_0 5^K  \exp\left( - c_1 \frac{(\log T)^2 }{ K^{3\ep_0}  \log\log T     } \right)  
\alle
and
\allb \label{eqn-cond-exp-moment-D}
&\BB P\left[  \max_{S\in\mcl S_K}  \BB E\left[  \wt D_{0,n}\left( \text{around $A_S$} \right)^p \,|\,\Psi_{K,n} \right]  > T 2^{-(p  - p^2 \xi^2 /2  -\zeta )K } \lambda_{n-K}^p \right] \notag\\
&\qquad\qquad\qquad\leq c_0 5^K  \exp\left( - c_1 \frac{(\log T)^2 }{ K^{3\ep_0}  \log\log T     } \right)   
\alle 
where the constants $c_0,c_1 > 0$ depend only on $\zeta$, $p$, $\xi$, and $C$. 
\end{lem}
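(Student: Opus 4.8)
Both bounds are instances of the same mechanism, which exploits the decomposition $\Psi_{0,n}=\Psi_{K,n}+\Psi_{0,K}$ into the two independent fields $\Psi_{K,n}$ (on which we condition) and $\Psi_{0,K}$. For each of the two quantities we bound it above, respectively below, by a product of a $\sigma(\Psi_{K,n})$-measurable factor — always a $\wt D_{K,n}$-distance, controlled by the scaled a priori estimates of Lemma~\ref{lem-perc-estimate-scale} together with the inductive hypothesis~\eqref{eqn-ind-hyp}, exactly as in Lemmas~\ref{lem-induct-across-around} and~\ref{lem-induct-full} — and a factor depending only on $\Psi_{0,K}$. We then take $\BB E[\,\cdot\,|\,\Psi_{K,n}]$, pull the $\sigma(\Psi_{K,n})$-measurable factor out, and compute the remaining unconditional expectation of the $\Psi_{0,K}$-factor. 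Here $\Psi_{0,K}$ is a smooth centered Gaussian process with $\op{Var}\Psi_{0,K}(z)\le K\log 2$, and its oscillations on sets of diameter $O(2^{-K}K^{\ep_0})$ are controlled via Lemma~\ref{lem-phi-cont} and the $\Phi$--$\Psi$ comparison of Lemma~\ref{lem-field-compare}; the Gaussian exponential moment of $\Psi_{0,K}(v_S)$ is what produces the exponent $p^2\xi^2/2$, while the exponent $1-\xi Q$ enters through the a priori bound on $\wt L_{0,K}$ from Lemma~\ref{lem-left-right-conc}.

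\textbf{Proof of~\eqref{eqn-cond-exp-moment-D}.} Every path $P$ around $A_S$ lies in $A_S$, so $\int_P e^{\xi\Psi_{0,n}}\,|dz|\le e^{\xi\sup_{A_S}\Psi_{0,K}}\int_P e^{\xi\Psi_{K,n}}\,|dz|$, and taking the infimum over such $P$ gives $\wt D_{0,n}(\text{around }A_S)\le e^{\xi\sup_{A_S}\Psi_{0,K}}\,\wt D_{K,n}(\text{around }A_S)$, where $\wt D_{K,n}(\text{around }A_S)$ is $\sigma(\Psi_{K,n})$-measurable. Hence $\BB E[\wt D_{0,n}(\text{around }A_S)^p\,|\,\Psi_{K,n}]\le \wt D_{K,n}(\text{around }A_S)^p\,\BB E[e^{p\xi\sup_{A_S}\Psi_{0,K}}]$. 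The key point is $\BB E[e^{p\xi\sup_{A_S}\Psi_{0,K}}]\le 2^{(p^2\xi^2/2+o_K(1))K}$: writing $\sup_{A_S}\Psi_{0,K}\le \Psi_{0,K}(v_S)+\sup_{A_S}|\Psi_{0,K}-\Psi_{0,K}(v_S)|$ and applying H\"older with an exponent slightly larger than $1$ on $e^{p\xi\Psi_{0,K}(v_S)}$, the first term contributes a factor $2^{(1+\delta)p^2\xi^2K/2}$ (as $\Psi_{0,K}(v_S)$ is Gaussian of variance at most $K\log 2$) while the oscillation term, supported on a set of diameter $O(2^{-K}K^{\ep_0})$, has Gaussian tails by Lemma~\ref{lem-phi-cont} and Lemma~\ref{lem-field-compare} and so contributes only $2^{o_K(K)}$; letting $\delta\downarrow 0$ gives the claim. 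Finally, by the $\wt D_{K,n}$-analog of Lemma~\ref{lem-induct-across-around} (which follows from the $D_{K,n}$ statement since $\sup_z|\Phi_{K,n}(z)-\Psi_{K,n}(z)|$ has Gaussian tails uniformly in $K,n$), we have $\max_{S\in\mcl S_K}\wt D_{K,n}(\text{around }A_S)\le T^{1/p}2^{-K}\lambda_{n-K}$ off an event of probability $\le c_0 5^K\exp(-c_1(\log T)^2/\log\log T)$; on the complement of that event, $\max_{S\in\mcl S_K}\BB E[\wt D_{0,n}(\text{around }A_S)^p\,|\,\Psi_{K,n}]\le 2^{(p^2\xi^2/2+o_K(1))K}\,T\,2^{-pK}\lambda_{n-K}^p$, which is~\eqref{eqn-cond-exp-moment-D} after shrinking $\zeta$ to absorb $o_K(1)$.

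\textbf{Proof of~\eqref{eqn-cond-exp-moment-L}.} This is the subadditive argument of Lemma~\ref{lem-subadd}, relativized to the conditioning on $\Psi_{K,n}$. Let $\wt P_{0,n}$ be the $\wt D_{0,n}$-geodesic across $\BB S$ and let $S_0,\dots,S_J\in\mcl S_K$ be the path of squares it traces out, as in the proof of Lemma~\ref{lem-subadd}. The geodesic crosses each annulus $A_{S_j}'=B_{2^{-K}}(S_j)\setminus S_j$, and across $A_{S_j}'$ one has $\wt D_{0,n}(\text{across }A_{S_j}')\ge e^{\xi\min_{A_{S_j}'}\Psi_{0,K}}\,\wt D_{K,n}(\text{across }A_{S_j}')$. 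Hence on the $\sigma(\Psi_{K,n})$-measurable event $G:=\{\min_{S\in\mcl S_K}\wt D_{K,n}(\text{across }A_S')\ge T^{-1/p}2^{-K}\lambda_{n-K}\}$, whose complement has probability $\le c_0 5^K\exp(-c_1(\log T)^2)$ by the $\wt D_{K,n}$-analog of Lemma~\ref{lem-induct-across-around} for $A_S'$, we get $\wt L_{0,n}\ge\sum_j\wt D_{0,n}(\text{across }A_{S_j}')\ge T^{-1/p}2^{-K}\lambda_{n-K}\,\mathcal L$, where $\mathcal L:=\inf_\gamma\sum_{S\in\gamma}e^{\xi\min_{A_S'}\Psi_{0,K}}$, the infimum being over all left--right crossing paths $\gamma$ of $\BB S$ in the adjacency graph of $\mcl S_K$, is $\sigma(\Psi_{0,K})$-measurable, hence independent of $\Psi_{K,n}$. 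Since $G\in\sigma(\Psi_{K,n})$ and $\mathcal L\perp\Psi_{K,n}$, this yields, on $G$, $\BB E[\wt L_{0,n}^{-p}\,|\,\Psi_{K,n}]\le T\,2^{pK}\lambda_{n-K}^{-p}\,\BB E[\mathcal L^{-p}]$. It remains to show $\BB E[\mathcal L^{-p}]\preceq 2^{-(p\xi Q-\zeta p)K}$; this follows from the comparison $\mathcal L\ge 2^{K-o_K(K)}\wt L_{0,K}$ (obtained by building a left--right crossing of $\BB S$ out of loops around small annuli $A_S''$ surrounding the $S_j$'s and invoking the around-annulus estimate of Lemma~\ref{lem-induct-full} with $n=K$, where~\eqref{eqn-ind-hyp} is vacuous) together with the negative-moment bound $\BB E[\wt L_{0,K}^{-p}]\preceq 2^{p(1-\xi Q+\zeta)K}$ obtained by integrating the a priori estimate of Lemma~\ref{lem-left-right-conc}. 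Consequently $\BB E[\wt L_{0,n}^{-p}\,|\,\Psi_{K,n}]\le T\,2^{p(1-\xi Q+\zeta)K}\lambda_{n-K}^{-p}$ holds deterministically on $G$, so the event in~\eqref{eqn-cond-exp-moment-L} is contained in $G^c$ and has the asserted probability.

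\textbf{Main obstacle.} The delicate part is the measurability bookkeeping in~\eqref{eqn-cond-exp-moment-L}: the square-path $\{S_j\}$ traced out by the geodesic depends a priori on the whole field $\Psi_{0,n}$, so one must separate the $\sigma(\Psi_{K,n})$-measurable ingredients (the $\wt D_{K,n}$ across-distances) from the $\sigma(\Psi_{0,K})$-measurable ones before conditioning; the random variable $\mathcal L$ is introduced precisely so that the $\Psi_{0,K}$-dependent part of the lower bound is independent of the conditioning and can be integrated out. A secondary technical point is that, in proving $\BB E[\mathcal L^{-p}]\preceq 2^{-(p\xi Q-\zeta p)K}$ (equivalently, $\mathcal L\asymp 2^K\wt L_{0,K}$ up to $2^{o_K(K)}$), one must handle the event where the scale-$K$ around-annulus estimates fail: there one uses the crude deterministic bound $\mathcal L\ge e^{\xi\min_{\BB S}\Psi_{0,K}}$ (whence $\BB E[\mathcal L^{-2p}]\le 2^{O(K)}$ by Borell--TIS, using $\op{Var}\Psi_{0,K}(z)\le K\log 2$) together with Cauchy--Schwarz, and the $2^{O(K)}$ loss is absorbed because for $T\ge 2^{K^{1/2+3\ep_0}}$ the failure probability decays like $\exp(-cK^{1+3\ep_0}/\log K)$, which beats any $2^{O(K)}$ once $K$ is large (small $K$ being handled by enlarging $c_0$). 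Throughout, one repeatedly invokes Lemma~\ref{lem-field-compare} to pass between the truncated fields $\Psi$ and the untruncated fields $\Phi$ appearing in Lemmas~\ref{lem-induct-full} and~\ref{lem-induct-across-around}.
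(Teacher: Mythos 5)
Your proposal is correct in substance but follows a genuinely different route from the paper, most notably for~\eqref{eqn-cond-exp-moment-L}. The paper never factors the metric quantities: it takes the already-proven unconditional tail bounds of Lemmas~\ref{lem-induct-full} and~\ref{lem-subadd}, defines dyadic shell events $E_j$ (for $\max_{S} e^{-\xi\Phi_{0,K}(v_S)}\wt D_{0,n}(\text{around }A_S)$) and $E_j'$ (for $\wt L_{0,n}$), and uses Markov's inequality to construct a good event $G_T$ on which the \emph{conditional} probabilities $\BB P[E_j\,|\,\Psi_{K,n}]$ and $\BB P[E_j'\,|\,\Psi_{K,n}]$ are small for all $j\ge \log_2 T$; the conditional moments are then obtained by summing over shells, with a H\"older step (exponents $(1+\zeta)/\zeta$ and $1+\zeta$) to decouple $\BB 1_{E_j}$ from $e^{p\xi\Phi_{0,K}(v_S)}$, whose exact conditional moment $2^{s^2K/2}$ supplies the exponent $p^2\xi^2/2$, just as in your computation. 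You instead factor $\wt D_{0,n}(\text{around }A_S)\le e^{\xi\sup_{A_S}\Psi_{0,K}}\,\wt D_{K,n}(\text{around }A_S)$ and, for $\wt L_{0,n}$, re-run the subadditivity argument of Lemma~\ref{lem-subadd} inside the conditioning, introducing the coarse-field crossing functional $\mathcal L$, which is independent of $\Psi_{K,n}$ and can be integrated out. Your route makes the source of the exponents more transparent and gives conditional bounds that hold deterministically on explicit $\sigma(\Psi_{K,n})$-measurable events; its price is that you must (i) supply $\wt D_{K,n}$-versions of the annulus estimates of Lemma~\ref{lem-induct-across-around}, which indeed follow from $|\Phi_{K,n}-\Psi_{K,n}|\le|\Phi_{0,n}-\Psi_{0,n}|+|\Phi_{0,K}-\Psi_{0,K}|$ and Lemma~\ref{lem-field-compare} as you indicate, and (ii) re-derive the scale-$K$ comparison $\mathcal L\succeq 2^{K-o_K(K)}\wt L_{0,K}$ together with the bad-event Cauchy--Schwarz step, both of which the paper avoids by reusing Lemma~\ref{lem-subadd} as a black box. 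Two details to tighten, neither of which affects validity: the infimum defining $\mathcal L$ should run over the ``coarse'' square-sequences arising in Lemma~\ref{lem-subadd} (consecutive squares satisfying $S_j\subset A_{S_{j-1}}''$, first and last squares touching the left and right sides of $\BB S$) rather than literal adjacency-graph paths, since the geodesic's square sequence is only a coarse path; and when invoking the $\wt D_{K,n}$ annulus estimates with $T^{1/p}$ you need $T^{1/p}\ge 2^{\sqrt K}$, which for fixed $p$ and $T\ge 2^{K^{1/2+3\ep_0}}$ holds once $K^{3\ep_0}\ge p$, the remaining small values of $K$ being absorbed into $c_0$.
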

\begin{proof}
\noindent\textit{Step 1: defining a good event.}
We will first build a ``good" event $G_T$ on which $ \wt D_{0,n}\left( \text{around $A_S$} \right)$ can be bounded above and $\wt L_{0,n}$ can be bounded below with high conditional probability given $\Psi_{K,n}$. 
For $j \in \BB N$, let
\allb \label{eqn-cond-exp-event}
E_j &:= \left\{ \max_{S\in\mcl S_K}  e^{-\xi \Phi_{0,K}(v_S)}  \wt D_{0,n} \left( \text{around $A_S$} \right)   \in    2^{-K} \lambda_{n-K} [2^{ j-1} , 2^{j }] \right\} \notag\\
E_j' &:= \left\{ \wt L_{0,n} \in   2^{-(1-\xi Q + \zeta) K} \lambda_{n-K} [2^{-j} , 2^{-j+1}] \right\}. 
\alle 
By Lemmas~\ref{lem-induct-full} and~\ref{lem-subadd}, each applied with $2^{j-1}$ in place of $T$, we get that for $j >  K^{1/2+3\ep_0}$,  
\eqb \label{eqn-cond-exp-prob}
\BB E\left[ \BB P\left[ E_j  | \Psi_{K,n} \right] \right]
 = \BB P\left[ E_j \right] 
 \leq c_0 5^K  \exp\left( - c_1 \frac{j^2 }{ K^{3\ep_0}   \log j     } \right)  .
\eqe
By Lemma~\ref{lem-subadd}, the bound~\eqref{eqn-cond-exp-prob} also holds with $E_j'$ in place of $E_j$. 
By Markov's inequality, 
\eqb \label{eqn-cond-exp-markov}
\BB P\left[  \BB P\left[ E_j  | \Psi_{K,n} \right] >     \exp\left( - \frac{c_1}{2} \frac{j^2 }{  K^{3\ep_0}  \log\log j     } \right)   \right] 
\leq  c_0 5^K  \exp\left( - \frac{c_1}{2} \frac{j^2 }{  K^{3\ep_0}  \log j     } \right)  ;
\eqe
and the same is true with $E_j'$ in place of $E_j$. 

For $T > 2^{K^{1/2 + 3\ep_0}}$, define
\eqb \label{eqn-cond-exp-good-def}
G_T := \left\{ \max\left\{ \BB P\left[ E_j  | \Psi_{K,n} \right] , \BB P\left[E_j' | \Psi_{K,n} \right] \right\} \leq    \exp\left( - \frac{c_1}{2} \frac{j^2 }{ K^{3\ep_0}  \log j  } \right) ,\:\forall j \geq \log_2 T \right\} .
\eqe
Since $\log_2 T \geq  K^{1/2 + 3\ep_0}   $, the probabilities on the right side of~\eqref{eqn-cond-exp-markov} for $j \geq \log_2 T$ are each bounded above by a constant depending only on $\xi,C$ and these probabilities decay superexponentially fast in $j$. By a union bound, we therefore have
\allb \label{eqn-cond-exp-good-prob}
\BB P[G_T] 
&\geq 1 - \sum_{j= \lceil \log_2 T\rceil }^\infty 5^K  \exp\left( - \frac{c_1}{2} \frac{j^2 }{  K^{3\ep_0} \log j     } \right) \notag \\ 
&\geq 1 - c_0 5^K  \exp\left( - c_1 \frac{(\log T)^2 }{  K^{3\ep_0}  \log\log T     } \right) ,
\alle
where here we have replaced $c_0$ and $c_1$ by possibly different constants in the last line. 

Henceforth assume that $G_T$ occurs. We will bound the conditional expectations of $\wt L_{0,n}^{-p}$ and $\wt D_{0,n}(\text{around $A_S$})^p$ given $\Psi_{K,n}$. 
\medskip

\noindent\textit{Step 2: bound for $ \wt L_{0,n}^{-p} $.} 
We use the definition~\eqref{eqn-cond-exp-event} of $E_j'$ and the definition~\eqref{eqn-cond-exp-good-def} of $G_T$ to get that 
\allb \label{eqn-cond-exp-split'}
 \BB E\left[ \frac{1}{\wt L_{0,n}^p} \,\big|\,   \Psi_{K,n} \right]
&=   \BB E\left[ \frac{1}{\wt L_{0,n}^p} \BB 1\left\{ \bigcap_{j=\lceil \log_2 T \rceil}^\infty (E_j')^c \right\} \,\big|\,   \Psi_{K,n} \right] + \sum_{j=\lceil \log_2 T \rceil}^\infty \BB E\left[ \frac{1}{\wt L_{0,n}^p} \BB 1_{E_j'} \,\big|\,   \Psi_{K,n} \right] \notag\\
&\preceq 2^{ p (1-\xi Q + \zeta) K} \lambda_{n-K}^{-p} \left( T^p  + \sum_{j=\lceil \log_2 T \rceil}^\infty 2^{j p} \BB P\left[ E_j' \,|\, \Psi_{K,n} \right] \right) \notag\\
&\preceq 2^{ p (1-\xi Q + \zeta) K} \lambda_{n-K}^{-p} \left( T^p  + \sum_{j=\lceil \log_2 T \rceil}^\infty 2^{j p } \exp\left( - \frac{c_1}{2} \frac{j^2 }{ K^{3\ep_0}  \log j } \right) \right) ,
\alle
with the implicit constants depending only on $\zeta,\xi,C$. 
Since $\log_2 T \geq K^{1/2 + 3\ep_0}$, the sum on the last line of~\eqref{eqn-cond-exp-split'} is of order $o(T) < O(T^p)$. 
We therefore get that on $G_T$, 
\eqb \label{eqn-cond-exp-moment-L0}
 \BB E\left[ \frac{1}{\wt L_{0,n}^p} \,\big|\,   \Psi_{K,n} \right]
  \preceq  T^p   2^{ p (1-\xi Q + \zeta) K} \lambda_{n-K}^{-p}  .
\eqe  
By replacing $T$ by a constant times $T^{1/p}$ (and reducing the value of $c_1$ to compensate) we now obtain~\eqref{eqn-cond-exp-moment-L} from~\eqref{eqn-cond-exp-moment-L0} together with~\eqref{eqn-cond-exp-good-prob}. 
\medskip

\noindent\textit{Step 3: bound for $\wt D_{0,n}\left( \text{around $A_S$} \right)^p$.}
We now prove~\eqref{eqn-cond-exp-moment-D} via a similar, but slightly more complicated, argument to the one leading to~\eqref{eqn-cond-exp-moment-L}. 
The extra complication comes from the need to estimate $\BB E\left[ e^{p\xi \Phi_{0,K}(v_S)} \BB 1_{E_j} \,|\, \Psi_{K,n}\right]$ for each $S\in\mcl S_K$ instead of just $\BB P[E_j \,|\, \Psi_{K,n}]$. 

By the definition~\eqref{eqn-cond-exp-event} of $E_j$ it holds for each $S\in\mcl S_K$ that 
\allb \label{eqn-cond-exp-split}
&\BB E\left[   \wt D_{0,n}\left( \text{around $A_S$} \right)^p \,|\,   \Psi_{K,n}  \right] \notag\\
&\qquad \leq 2^{- p K} \lambda_{n-K}^{p}  \left(   T^p \BB E\left[ e^{ p \xi \Phi_{0,K}(v_S)} \,|\, \Psi_{K,n} \right] 
 + \sum_{j=\lceil \log_2 T \rceil }^\infty 2^{p j} \BB E\left[ \BB 1_{E_j}  e^{ p \xi \Phi_{0,K}(v_S)} \,|\, \Psi_{K,n} \right] \right)  .
\alle
The random variable $\Phi_{0,K}(v_S)$ is independent from $\Psi_{K,n}$ and is centered Gaussian with variance $K \log 2$, so for each $s \in \BB R$ we have
\eqb \label{eqn-exponential-moment}
\BB E\left[ e^{ s \Phi_{0,K}(v_S)} \,|\, \Psi_{K,n} \right] = 2^{s^2 K/2} . 
\eqe
This already gives us a bound for the first term in the parentheses on the right in~\eqref{eqn-cond-exp-split}. 

To bound the sum in~\eqref{eqn-cond-exp-split}, we apply H\"older's inequality with exponents $q := (1+\zeta)/\zeta$ and $1+\zeta$, to get that for each $j\geq \log_2 T$, 
\eqbn
\BB E\left[ \BB 1_{E_j}  e^{  p \xi  \Phi_{0,K}(v_S)} \,|\, \Psi_{K,n} \right] 
\leq \BB P\left[ E_j \,|\, \Psi_{K,n}\right]^{1/q} \BB E\left[ e^{ (1+\zeta) p \xi   \Phi_{0,K}(v_S)} \,|\, \Psi_{K,n} \right]^{1/(1+\zeta)} .
\eqen
By combining this estimate with~\eqref{eqn-exponential-moment} and the definition~\eqref{eqn-cond-exp-good-def} of $G_T$, we get
\eqb \label{eqn-cond-exp-term} 
\BB E\left[ \BB 1_{E_j}  e^{ \xi (1+\zeta) \Phi_{0,K}(v_S)} \,|\, \Psi_{K,n} \right] 
\leq   \exp\left( - \frac{c_1}{2q} \frac{j^2 }{K^{3\ep_0} \log j } \right)  2^{(1+\zeta)^2 p^2 \xi^2   K / 2}   .
\eqe
We now use~\eqref{eqn-cond-exp-term} to bound each term in the sum on the right side of~\eqref{eqn-cond-exp-split} and~\eqref{eqn-exponential-moment} to bound the first term on in the parentheses on the right side of~\eqref{eqn-cond-exp-split}. We obtain that on $G_T$, 
\allb \label{eqn-cond-exp-sum}
&2^{ p K  } \lambda_{n-K}^{-p } \BB E\left[   \wt D_{0,n}\left( \text{around $A_S$} \right)^p \,|\,   \Psi_{K,n}  \right] \notag\\
&\qquad\qquad \leq T^p 2^{p^2 \xi^2  K/2} + 2^{(1+\zeta)^2 p^2 \xi^2 K/2} \sum_{j=\lceil \log_2 T \rceil}^\infty \exp\left( - \frac{c_1}{ 2q} \frac{j^2 }{K^{3\ep_0} \log j} \right) \notag\\ 
&\qquad\qquad\preceq T^p 2^{p^2 \xi^2  K/2} + 2^{(1+\zeta)^2 p^2 \xi^2 K/2} \notag \\
&\qquad\qquad\preceq T^p 2^{(1+\zeta)^2 p^2 \xi^2 K/2}  .
\alle 
Note that in the second inequality, we use the fact that $\log_2  T \geq K^{1/2 + 3\ep_0}$ to ensure that the sum is of at most constant order. 
By replacing $T$ with $T^{1/p}$ (and reducing the value of $c_1$ to compensate) we obtain from~\eqref{eqn-cond-exp-sum} and~\eqref{eqn-cond-exp-good-prob} that~\eqref{eqn-cond-exp-moment-D} holds with $2^{-(p - (1+\zeta)^2 p^2 \xi^2/2)K}$ instead of $2^{-(p -   p^2 \xi^2/2 - \zeta)K}$. We can now apply this variant of~\eqref{eqn-cond-exp-moment-D} with an appropriate choice of $\wt\zeta  = \wt\zeta(p,\xi) < \zeta$ in place of $\zeta$ to obtain~\eqref{eqn-cond-exp-moment-D}. 
\end{proof}

We now use H\"older's inequality to combine the bounds from Lemma~\ref{lem-cond-exp-moment}. 

\begin{lem} \label{lem-cond-exp-diff}
Assume the inductive hypothesis~\eqref{eqn-ind-hyp}. 
Also fix $p > 0$ and $\zeta > 0$. 
For $T \geq 2^{K^{1/2 + 3\ep_0}}$,  
\allb \label{eqn-cond-exp-diff} 
&\BB P\left[ \max_{S\in\mcl S_K} \BB E\left[ \frac{1}{\wt L_{0,n}^p}  \wt D_{0,n}\left( \text{around $A_S$} \right)^p \,\big|\,   \Psi_{K,n}  \right]> T 2^{(p^2 \xi^2/2 - p \xi Q  + \zeta ) K}  \right] \notag \\ 
&\qquad \leq  c_0 5^K  \exp\left( - c_1 \frac{(\log T)^2}{K^{3\ep_0} \log \log T} \right)       
\alle
where the constants $c_0,c_1 > 0$ depend only on $\zeta, p $, $\xi$, and $C$. 
\end{lem}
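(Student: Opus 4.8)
The plan is to combine the two tail estimates of Lemma~\ref{lem-cond-exp-moment} via H\"older's inequality applied inside the conditional expectation. Write $q := (1+\zeta_0)$ and $q' := (1+\zeta_0)/\zeta_0$ for the conjugate exponents, where $\zeta_0 > 0$ is a small auxiliary parameter to be chosen at the end in terms of $\zeta, p, \xi$. By H\"older's inequality applied to the conditional law given $\Psi_{K,n}$,
\eqbn
\BB E\left[ \frac{1}{\wt L_{0,n}^p}  \wt D_{0,n}\left( \text{around $A_S$} \right)^p \,\big|\,   \Psi_{K,n}  \right]
\leq \BB E\left[ \frac{1}{\wt L_{0,n}^{p q'}} \,\big|\, \Psi_{K,n} \right]^{1/q'} \BB E\left[ \wt D_{0,n}\left(\text{around $A_S$}\right)^{p q} \,\big|\, \Psi_{K,n} \right]^{1/q} .
\eqen
First I would apply~\eqref{eqn-cond-exp-moment-L} with $p q'$ in place of $p$ and a small auxiliary value $\wt\zeta$ in place of $\zeta$, and~\eqref{eqn-cond-exp-moment-D} with $p q$ in place of $p$ and $\wt\zeta$ in place of $\zeta$, each with $T^{1/2}$ in place of $T$ (so that the product of the two ``bad event'' probabilities is controlled by a union bound). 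On the intersection of the two good events, whose complement has probability at most $c_0 5^K \exp(-c_1 (\log T)^2 / (K^{3\ep_0} \log\log T))$, the displayed product is bounded above by
\eqbn
\left( T^{1/2} 2^{p q' (1 - \xi Q + \wt\zeta) K} \lambda_{n-K}^{-p q'} \right)^{1/q'} \left( T^{1/2} 2^{-(p q - (pq)^2 \xi^2 / 2 - \wt\zeta) K} \lambda_{n-K}^{pq} \right)^{1/q} .
\eqen

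Now I would collect exponents. The powers of $\lambda_{n-K}$ cancel: $-p q' \cdot (1/q') + pq \cdot (1/q) = 0$. The power of $T$ is $T^{1/(2q') + 1/(2q)} \leq T$ since $1/q + 1/q' = 1$. The power of $2^K$ in the exponent is
\eqbn
p(1 - \xi Q + \wt\zeta) - \frac{p q}{1} \cdot \frac{\xi^2}{2} \cdot \frac{1}{q} + O(\wt\zeta)
= p - p\xi Q - \frac{p^2 q \xi^2}{2} + O(\wt\zeta) \cdot K^{-1}\cdot\text{(errors)},
\eqen
wait --- more carefully: the first factor contributes $p(1 - \xi Q + \wt\zeta)$ to the exponent (after dividing the exponent $pq'(1-\xi Q + \wt\zeta)$ by $q'$), and the second factor contributes $-(pq - (pq)^2\xi^2/2 - \wt\zeta)/q = -p + p^2 q \xi^2 / 2 + \wt\zeta/q$. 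Adding these gives exponent $- p\xi Q + p^2 q \xi^2/2 + p\wt\zeta + \wt\zeta/q = p^2 \xi^2/2 - p \xi Q + \zeta_0 p^2 \xi^2 / 2 + O(\wt\zeta)$. Choosing first $\zeta_0 = \zeta_0(\zeta,p,\xi)$ small enough that $\zeta_0 p^2 \xi^2/2 < \zeta/2$, and then $\wt\zeta = \wt\zeta(\zeta,p,\xi,\zeta_0)$ small enough that the remaining $O(\wt\zeta)$ error is $< \zeta/2$, the exponent is at most $p^2\xi^2/2 - p\xi Q + \zeta$, which is exactly what~\eqref{eqn-cond-exp-diff} requires. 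Finally I would take a union bound over the (at most $5^K$) squares $S \in \mcl S_K$ to pass from a fixed-$S$ statement to the maximum over $S$; since $5^K$ is absorbed into the prefactor already present in~\eqref{eqn-cond-exp-moment-L} and~\eqref{eqn-cond-exp-moment-D}, and since $\log_2 T \geq K^{1/2+3\ep_0}$ ensures the Gaussian-type decay in $\log T$ dominates, the probability bound in~\eqref{eqn-cond-exp-diff} follows after adjusting $c_0, c_1$.

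The only genuinely delicate point is the bookkeeping of the two auxiliary parameters $\zeta_0$ (the H\"older exponent offset) and $\wt\zeta$ (the slack fed into Lemma~\ref{lem-cond-exp-moment}): one must choose them in the right order so that the total perturbation of the target exponent $p^2\xi^2/2 - p\xi Q$ stays below $\zeta$, while simultaneously keeping all the implicit constants dependent only on $\zeta, p, \xi, C$. Everything else is a routine combination of H\"older's inequality, a union bound over $\mcl S_K$, and the already-established estimates~\eqref{eqn-cond-exp-moment-L} and~\eqref{eqn-cond-exp-moment-D}; in particular no new probabilistic input beyond Lemma~\ref{lem-cond-exp-moment} is needed. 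I would also note in passing that the hypothesis $T \geq 2^{K^{1/2+3\ep_0}}$ is exactly what is needed to invoke both parts of Lemma~\ref{lem-cond-exp-moment} with $T^{1/2}$ in place of $T$, since $\tfrac12 K^{1/2+3\ep_0} \geq K^{1/2+3\ep_0} - \tfrac12 K^{1/2+3\ep_0}$ still exceeds the required threshold up to a constant, which can be absorbed.
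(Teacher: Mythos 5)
Your proposal is correct and follows essentially the same route as the paper: H\"older's inequality under the conditional law with one exponent near $1$ (correctly attached to the $\wt D_{0,n}(\text{around }A_S)$ factor, whose moment bound degrades quadratically in the exponent) and one large exponent (attached to $\wt L_{0,n}^{-p}$, whose bound is only linear), followed by the two parts of Lemma~\ref{lem-cond-exp-moment} with modified moments, cancellation of the $\lambda_{n-K}$ powers, and a final shrinking of the auxiliary parameters. The only blemish is the substitution of $T^{1/2}$ into Lemma~\ref{lem-cond-exp-moment}, which is unnecessary (since $1/q+1/q'=1$ the $T$-powers already multiply to $T$) and strictly speaking violates the threshold $T\geq 2^{K^{1/2+3\ep_0}}$ for $T$ close to that threshold; applying the lemma with $T$ itself, as the paper does, removes the issue.
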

\begin{proof}   
Let $q  = (1+\zeta)/\zeta$, so that $1/q + 1/(1+\zeta ) = 1$. By H\"older's inequality with exponents $q$ and $1+\zeta$, for each $S\in\mcl S_K$, 
\eqb \label{eqn-cond-exp-holder}
\BB E\left[ \frac{1}{\wt L_{0,n}^p} \wt D_{0,n}\left( \text{around $A_S$} \right)^p \,|\,   \Psi_{K,n}  \right]
\leq \BB E\left[ \frac{1}{\wt L_{0,n}^{p q} } \,|\,   \Psi_{K,n} \right]^{1/q} 
\BB E\left[ \wt D_{0,n}\left( \text{around $A_S$} \right)^{   (1+\zeta) q } \,|\,   \Psi_{K,n} \right]^{1/(1+\zeta )} .
\eqe
By Lemma~\ref{lem-cond-exp-moment}, it holds with probability at least $1-c_0 5^K  \exp\left( - c_1 \frac{(\log T)^2}{K^{3\ep_0} \log \log T} \right)$ that
\eqbn
 \BB E\left[  \wt L_{0,n}^{-p q} \,|\,\Psi_{K,n} \right]^{1/q}  \leq T^{1/q} 2^{ p (1-\xi Q + \zeta)  K} \lambda_{n-K}^{-p}  
\eqen
and
\eqbn
 \max_{S\in\mcl S_K}  \BB E\left[  \wt D_{0,n}\left( \text{around $A_S$} \right)^{(1+\zeta) p} \,|\,\Psi_{K,n} \right]^{1/(1+\zeta)} \leq T^{1/(1+\zeta)} 2^{-(p  -    \xi^2 p^2 /2  -o_\zeta(1) )K } \lambda_{n-K}^p  ,
\eqen 
where $o_\zeta(1)$ denotes a deterministic quantity which tends to zero as $\zeta\rta 0$ and depends only on $p , \xi$. 
Plugging these last two estimates into~\eqref{eqn-cond-exp-holder} and canceling the factors of $\lambda_{n-K}^{-p}$ and $\lambda_{n-K}^p$ shows that with probability at least $1-c_0 5^K  \exp\left( - c_1 \frac{(\log T)^2}{K^{3\ep_0} \log \log T} \right)$, 
\eqbn
\max_{S\in\mcl S_K} \BB E\left[ \frac{1}{\wt L_{0,n}^p} \wt D_{0,n}\left( \text{around $A_S$} \right)^p  \,|\,   \Psi_{K,n}  \right]
\leq T 2^{(p^2 \xi^2/2 - p \xi Q + o_\zeta(1)) K} .
\eqen
This gives~\eqref{eqn-cond-exp-diff} with $o_\zeta(1)$ instead of $\zeta$. We then apply this variant of~\eqref{eqn-cond-exp-diff} with an appropriate choice of $\wt\zeta  = \wt\zeta(p,\xi) < \zeta$ in place of $\zeta$ to obtain~\eqref{eqn-cond-exp-diff}. 
\end{proof}

To conclude the proof, it remains to transfer from a probability estimate to a moment estimate. 

\begin{proof}[Proof of Lemma~\ref{lem-ef-max-no-F}]
To lighten notation let
\eqbn
Y_S:=  \BB E\left[ \frac{1}{\wt L_{0,n}^p}  \wt D_{0,n}\left( \text{around $A_S$} \right)^p \,|\,   \Psi_{K,n}  \right] .
\eqen
By Lemma~\ref{lem-cond-exp-diff}, for each $T > \exp(K^{2/3})$, 
\eqb \label{eqn-use-cond-exp-diff}
\BB P\left[ \max_{S\in\mcl S_K} Y_S > T 2^{(p^2 \xi^2/2 - p \xi Q + \zeta) K}   \right] \leq  c_0 5^K  \exp\left( - c_1 \frac{(\log T)^2}{K^{3\ep_0} \log \log T} \right) .
\eqe
We use the bound~\eqref{eqn-use-cond-exp-diff} for $T \geq \exp(K^{2/3})$ and the trivial bound $\BB P[\cdots] \leq 1$ for $T\leq \exp(K^{2/3})$ to get 
\allb \label{eqn-ef-term1}
\BB E\left[ \left(   2^{-(p^2 \xi^2/2 - p \xi Q + \zeta) K}     \max_{S\in\mcl S_K} Y_S \right)^2 \right] 
&= 2 \int_0^\infty T \BB P\left[    2^{-(p \xi^2/2 - p\xi Q + \zeta) K}  \max_{S\in\mcl S_K} Y_S   > T   \right]  \, dT \notag\\
&\preceq \exp\left( 2 K^{2/3} \right) + 5^K \int_{\exp(K^{2/3})}^\infty  \exp\left( - c_1 \frac{(\log T)^2}{K^{3\ep_0} \log \log T} \right)  \,dT \notag\\
&\preceq \exp\left( 2 K^{2/3} \right) + 5^K \exp\left( - c_1 K^{4/3 - 3\ep_0} / \log\log K\right)  \notag \\ 
&\preceq \exp\left( 2 K^{2/3} \right)   .
\alle 
Hence
\eqbn
\BB E\left[ \left( \max_{S\in\mcl S_K} Y_S \right)^2 \right]^{1/2}  
\preceq \exp\left( K^{2/3} \right)  2^{ (p^2 \xi^2/2 -  p \xi Q + \zeta) K} .
\eqen
We now slightly increase $\zeta$ to absorb the factor of $ \exp(K^{2/3})$. This gives~\eqref{eqn-ef-max-no-F}.
\end{proof}

\subsection{Proof of Proposition~\ref{prop-ef-max}}
\label{sec-ef-max}

The idea of the proof is as follows. Just below, we will define a high-probability regularity event $E$. We will then use the bound
\allb \label{eqn-hit-split}
&\BB E\left[ \left( \max_{S\in\mcl S_K} \BB E\left[ \BB 1_{F_S}  \frac{1}{\wt L_{0,n}} \wt D_{0,n}\left( \text{across $A_S$} \right) \,|\, \Psi_{n,K} \right] \right)^2 \right] \notag\\
&\qquad \leq 2 \BB E\left[ \left( \max_{S\in\mcl S_K} \BB E\left[ \BB 1_{F_S \cap E}  \frac{1}{\wt L_{0,n}} \wt D_{0,n}\left( \text{across $A_S$} \right) \,|\, \Psi_{n,K} \right] \right)^2 \right] \notag\\
&\qquad\qquad + 2 \BB E\left[ \left( \max_{S\in\mcl S_K} \BB E\left[ \BB 1_{E^c}  \frac{1}{\wt L_{0,n}} \wt D_{0,n}\left( \text{across $A_S$} \right) \,|\, \Psi_{n,K} \right] \right)^2 \right] .
\alle
We will show (using a Gaussian estimate) that the first term on the right in~\eqref{eqn-hit-split} is bounded above by a constant times $2^{-(\alpha(\xi) - \zeta) K}$. 
As for the second term, we will use the Cauchy-Schwarz inequality together with the fact that $\BB P[E^c]$ is small and Lemma~\ref{lem-ef-max-no-F} to show that this term is smaller than any negative power of $2^K$. 
\medskip

\noindent\textit{Step 1: high-probability regularity event.}
Let $E$ be the event that the following is true.
\begin{enumerate}
\item $\wt L_{0,n}  \leq   2^{-(1-\xi Q - \zeta) K + K^{2/3} } \lambda_{n-K}$. \label{item-hit-L-upper}
\item $\wt L_{0,n} \geq 2^{-(1-\xi Q + \zeta) K - K^{2/3}} \lambda_{n-K}$. \label{item-hit-L-lower}
\item $\min_{S\in\mcl S_K}  e^{-\xi \Phi_{0,K}(v_S)}  \wt D_{0,n} \left( \text{across $A_S$} \right)  \geq   2^{-K - K^{2/3} } \lambda_{n-K}$.  \label{item-hit-across}
\item $\max_{S\in\mcl S_K}  e^{-\xi \Phi_{0,K}(v_S)}    \wt D_{0,n}\left( \text{around $A_S$} \right)     \leq   2^{-K + K^{2/3} } \lambda_{n-K}$. \label{item-hit-around}
\end{enumerate}
Lemmas~\ref{lem-subadd} and \ref{lem-induct-full} (applied with $T = 2^{K^{2/3}}$) give us bounds for the probabilities of each the four conditions in the definition of $E$. Combined, these bounds yield
\eqb \label{eqn-hit-event-prob}
\BB P\left[ E^c \right] 
=\BB E\left[ \BB P\left[ E_T^c \,|\, \Psi_{K,n} \right] \right] 
\leq  c_0 5^K  \exp\left( - c_1 \frac{K^{4/3} }{K^{3\ep_0} \log K} \right)
\leq c_0 \exp\left( - c_1   K^{4/3 - 3\ep_0} / \log K  \right) 
\eqe
where in the last inequality we decreased $c_1$ to absorb the factor of $5^K$. 
\medskip

\noindent\textit{Step 2: bound for the conditional expectation on $E$.}
The key observation for this step is that if $S\in\mcl S_K$ such that $F_S$ occurs, then the $\wt D_{0,n}$-geodesic $\wt P_{0,n}$ must cross between the inner and outer boundaries of the annulus $A_S$ at least once. Consequently, 
\eqb \label{eqn-cond-exp-total}
F_S \quad \Rightarrow \quad \left\{\wt D_{0,n}\left( \text{across $A_S$} \right) \leq \wt L_{0,n} \right\} .
\eqe 
We will now investigate what this bound gives us on the event $E$. 
By~\eqref{eqn-cond-exp-total} together with conditions~\ref{item-hit-L-upper} and~\ref{item-hit-across} in the definition of $E$, for each $S\in\mcl S_K$, 
\eqbn \label{eqn-hit-indicator}
F_S \cap E \subset H_S:= \left\{ e^{ \xi \Phi_{0,K}(v_S)} \leq 2^{(\xi Q + \zeta) K + 2 K^{2/3}}  \right\} .
\eqen
By conditions~\ref{item-hit-L-lower} and~\ref{item-hit-around} in the definition of $E$, we also have
\eqbn \label{eqn-hit-variable}
\BB 1_E \frac{1}{\wt L_{0,n}} \wt D_{0,n}\left( \text{across $A_S$} \right)
\leq 2^{  - (\xi Q - \zeta) K  + 2 K^{2/3}} e^{\xi \Phi_{0,K}(v_S)} .
\eqen
Combining the above two inequalities gives
\eqb \label{eqn-hit-expectation}
\BB E\left[ \BB 1_{F_S \cap E} \frac{1}{\wt L_{0,n}} \wt D_{0,n}\left( \text{across $A_S$} \right) \,\big|\, \Psi_{n,K} \right] 
\leq 2^{  - (\xi Q - \zeta) K  + 2 K^{2/3}}  \BB E\left[  e^{\xi \Phi_{0,K}(v_S)} \BB 1_{H_S} \,\big|\, \Psi_{n,K} \right] .
\eqe

The random variable $\Phi_{0,K}(v_S)$ is centered Gaussian with variance $ K \log 2$ and is independent from $\Psi_{n,K}$. 
By the definition of $H_S$ in~\eqref{eqn-hit-indicator} and a basic estimate for exponential moments of a Gaussian random variable (Lemma~\ref{lem-gaussian-trunc} applied with $R = K\log 2$ and with $\beta$ slightly larger than $Q$), we get
\eqbn
\BB E\left[  e^{\xi \Phi_{0,K}(v_S)} \BB 1_{H_S} \,\big|\, \Psi_{n,K} \right]
=\BB E\left[  e^{\xi \Phi_{0,K}(v_S)} \BB 1_{H_S}  \right]
= 2^{\left[\xi (\xi \wedge Q) - (\xi \wedge Q)^2/2 + o_\zeta(1) \right] K + O_K(K^{2/3}) }  
\eqen
where the $o_\zeta(1)$ tends to zero as $\zeta\rta 0$ at a deterministic rate which depends only on $\xi$; and the implicit constant in the $O_K(K^{2/3})$ is deterministic and depends only on $\xi,\zeta$. 
By combining this last estimate with~\eqref{eqn-hit-expectation}, we arrive at
\eqbn \label{eqn-hit-good0}
\BB E\left[ \BB 1_{F_S \cap E} \frac{1}{\wt L_{0,n}} \wt D_{0,n}\left( \text{across $A_S$} \right) \,\big|\, \Psi_{n,K} \right] 
\leq 2^{ - (\alpha(\xi)  + o_\zeta(1)) K  + 2 K^{2/3}} ,\quad\forall S\in\mcl S_K 
\eqen
where $\alpha(\xi)$ is as in~\eqref{eqn-hit-exponent}. Consequently,
\eqb \label{eqn-hit-good}
\BB E\left[ \left( \max_{S\in\mcl S_K} \BB E\left[ \BB 1_{F_S \cap E} \frac{1}{\wt L_{0,n}} \wt D_{0,n}\left( \text{across $A_S$} \right) \,\big|\, \Psi_{n,K} \right] \right)^2 \right] 
\leq 2^{ -2 (\alpha(\xi)  + o_\zeta(1)) K  + 4 K^{2/3}} .
\eqe
\medskip

\noindent\textit{Step 3: bound for the conditional expectation on $E^c$.}
By the Cauchy-Schwarz inequality,
\eqbn
\BB E\left[ \BB 1_{E^c} \frac{1}{\wt L_{0,n}} \wt D_{0,n}\left( \text{across $A_S$} \right) \,\big|\, \Psi_{n,K} \right]
\leq \BB P\left[ E^c \,\big|\, \Psi_{K,n}\right]^{1/2} \BB E\left[  \frac{1}{\wt L_{0,n}^2} \wt D_{0,n}\left( \text{across $A_S$} \right)^2 \,\big|\, \Psi_{n,K} \right]^{1/2} .
\eqen
Therefore,
\allb \label{eqn-hit-bad-split}
&\BB E\left[ \left(\max_{S\in\mcl S_K} \BB E\left[ \BB 1_{E^c} \frac{1}{\wt L_{0,n}} \wt D_{0,n}\left( \text{across $A_S$} \right) \,\big|\, \Psi_{n,K} \right] \right)^2 \right] \notag\\
&\qquad \leq \BB E\left[   \BB P\left[ E^c \,\big|\, \Psi_{K,n}\right]    \max_{S\in \mcl S_K} \BB E\left[  \frac{1}{\wt L_{0,n}^2} \wt D_{0,n}\left( \text{across $A_S$} \right)^2 \,\big|\, \Psi_{n,K} \right] \right]   \notag \\
&\qquad \leq \BB E\left[   \BB P\left[ E^c \,\big|\, \Psi_{K,n}\right]^2 \right]^{1/2} \BB E\left[ \left(     \max_{S\in \mcl S_K} \BB E\left[  \frac{1}{\wt L_{0,n}^2} \wt D_{0,n}\left( \text{across $A_S$} \right)^2 \,\big|\, \Psi_{n,K} \right] \right)^2  \right]^{1/2} \notag\\
&\qquad\qquad\qquad\qquad\qquad\qquad\qquad \text{(by Cauchy-Schwarz)}  .
\alle
Since $\BB P[E^c \,\big|\, \Psi_{K,n} ] \leq 1$ and by~\eqref{eqn-hit-event-prob}, 
\allb \label{eqn-hit-bad-term1}
\BB E\left[   \BB P\left[ E^c \,\big|\, \Psi_{K,n}\right]^2 \right]^{1/2}
 \leq  \BB E\left[   \BB P\left[ E^c \,\big|\, \Psi_{K,n}\right]  \right]^{1/2}
 = \BB P\left[ E^c \right]^{1/2}  
 \leq c_0 \exp\left( - c_1 K^{4/3 - 3\ep_0} \right)    .
\alle 
By Lemma~\ref{lem-ef-max-no-F}, 
\eqb \label{eqn-hit-bad-term2} 
 \BB E\left[ \left(     \max_{S\in \mcl S_K} \BB E\left[  \frac{1}{\wt L_{0,n}^2} \wt D_{0,n}\left( \text{across $A_S$} \right)^2 \,\big|\, \Psi_{n,K} \right] \right)^2  \right]^{1/2}
\preceq  2^{ (2 \xi Q   - 2 \xi^2   + \zeta )K } .
\eqe
By plugging~\eqref{eqn-hit-bad-term1} and~\eqref{eqn-hit-bad-term2} into~\eqref{eqn-hit-bad-split}, then absorbing a power of $2^K$ into a power of $e^{K^{4/3 - 3\ep_0}}$, we arrive at
\eqb \label{eqn-hit-bad}
\BB E\left[ \left(\max_{S\in\mcl S_K} \BB E\left[ \BB 1_{E^c} \frac{1}{\wt L_{0,n}} \wt D_{0,n}\left( \text{across $A_S$} \right) \,\big|\, \Psi_{n,K} \right] \right)^2 \right]
\leq c_0 \exp\left( - c_1 K^{4/3 - 3\ep_0} \right) .
\eqe
\medskip

\noindent\textit{Step 4: Conclusion.}
By plugging~\eqref{eqn-hit-bad} and~\eqref{eqn-hit-good} into~\eqref{eqn-hit-split} and adjusting $\zeta$, we obtain~\eqref{eqn-ef-max}. \qed

\section{Tightness of point-to-point distances}
\label{sec-pt-tight}

This section has two main purposes.
\begin{itemize}
\item We establish tightness for the $\lambda_n^{-1} D_{0,n}$-distances between points (not just between non-trivial connected sets, which is the setting covered by Proposition~\ref{prop-perc-estimate'}).
\item We transfer our tightness results for $\lambda_n^{-1} D_{0,n}$ to tightness results for the variant of LFPP used in Theorem~\ref{thm-lfpp-tight}, namely $\frk a_\ep^{-1} D_h^\ep$, which is defined using convolutions of the GFF with the heat kernel rather than the white noise decomposition (see~\eqref{eqn-gff-lfpp}). 
\end{itemize}
The following proposition is the main result of this section, and will be proven in Section~\ref{sec-gff-compare}.

\begin{prop} \label{prop-gff-pt-tight} 
Let $\xi > 0$. 
Let $U\subset\BB C$ be a connected open set and let $K_1,K_2\subset U$ be disjoint compact connected sets (allowed to be singletons). 
The random variables $\frk a_\ep^{-1} D_h^\ep(K_1,K_2 ; U)$ and $\left( \frk a_\ep^{-1} D_h^\ep(K_1,K_2;U) \right)^{-1}$ for $\ep \in (0,1)$ are tight. 
Moreover, there are constants $c_0,c_1 > 0$ depending only on $\xi$ such that if $n = \log_2\ep^{-1}$, then\footnote{Note that $n$ is not required to be an integer, but the definition of $\lambda_n$ in Section~\ref{sec-wn-decomp} still makes sense for non-integer values of $n$.}
\eqb \label{eqn-scale-compare}
c_0 \leq \frk a_\ep^{-1} \lambda_n \leq c_1 .
\eqe  
\end{prop}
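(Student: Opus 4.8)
# Proof proposal for Proposition~\ref{prop-gff-pt-tight}

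\textbf{Overall strategy.} The proof has two independent pieces. The first piece is to handle the case where $K_1, K_2$ may be singletons, by ``thickening'' them: around any point $z$, use a sequence of concentric annuli at dyadic scales, and bound the $D_{0,n}$-distance from $z$ to a fixed non-trivial connected set surrounding $z$ by a sum of distances across these annuli, each of which is controlled by Proposition~\ref{prop-perc-estimate'}. The second piece is to transfer from the white-noise metric $D_{0,n}$ (with normalization $\lambda_n$) to the heat-kernel metric $D_h^\ep$ (with normalization $\frk a_\ep$), using a comparison lemma between the two Gaussian fields $\Phi_{0,n}$ (for $n = \log_2\ep^{-1}$) and $h_\ep^*$, plus Weyl scaling.

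\textbf{Step 1: point-to-set and point-to-point tightness for $D_{0,n}$.} Fix $z\in\BB C$ and a small $r>0$. For $k\in\BB N$, let $\mcl A_k$ be the Euclidean annulus $B_{2^{-k}r}(z)\setminus B_{2^{-k-1}r}(z)$. Any path from $z$ to $\partial B_r(z)$ must cross every $\mcl A_k$ with $k\geq 0$, so $D_{0,n}(z,\partial B_r(z)) \geq \sum_{k} D_{0,n}(\text{across }\mcl A_k)$; and conversely, concatenating paths around and across finitely many such annuli (down to scale $2^{-n}$, below which the field is essentially constant by Lemma~\ref{lem-phi-cont}) gives a matching upper bound up to a factor polynomial in $n$. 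By the scale invariance~\eqref{eqn-phi-scale} of $\Phi$ and Proposition~\ref{prop-perc-estimate'}, each $D_{0,n}(\text{across }\mcl A_k)$ is comparable to $\lambda_{n-k}/\lambda_n \cdot \lambda_n \asymp 2^{-k(1-\xi Q)+o(k)}\lambda_n$ times a tight random factor; since $Q>0$ by Proposition~\ref{prop-Q-wn}, we need $1-\xi Q$ to have a definite sign to sum the geometric-type series. When $1-\xi Q>0$ (subcritical-ish regime for this exponent) the sum is dominated by the first term and convergent; when $1-\xi Q \le 0$ one instead reads the bound off the last (finest) scale and notes that $D_{0,n}(z,w)$ between two \emph{fixed} points can be bounded below by the very first annulus crossing and above by a fixed path from $z$ to a point near $w$ (passing through a region of bounded size), again using Proposition~\ref{prop-perc-estimate'} and a union bound over $O(n)$ scales with the stretched-exponential tails. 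This yields tightness of $\lambda_n^{-1}D_{0,n}(K_1,K_2;U)$ and its reciprocal for arbitrary compact connected $K_1,K_2$ (singletons allowed). I expect this step to be the main obstacle, precisely because the sum over scales behaves differently according to the sign of $1-\xi Q$, and one must be careful that the $o(k)$ errors in $\lambda_{n-k}/\lambda_n$ (coming from Proposition~\ref{prop-Q-wn}) do not destroy summability; the stretched-exponential tails in Proposition~\ref{prop-perc-estimate'} are strong enough to absorb the $O(n)$-fold union bounds, which is what makes it work.

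\textbf{Step 2: comparison of fields and transfer to $D_h^\ep$.} Set $n := \log_2 \ep^{-1}$. The field $h_\ep^* = h * p_{\ep^2/2}$ and the white-noise field $\Phi_{0,n}$ have the same ``scale range'' and differ by a field with good modulus-of-continuity and Gaussian tail bounds on any bounded set --- this is a standard comparison (cf.\ the role of~\cite{ding-goswami-watabiki,dg-lqg-dim} cited in the excerpt for exactly this purpose). Concretely, one writes $h_\ep^* = \Phi_{0,n} + (\text{error field})$ where the error field has bounded variance uniformly in $\ep$ on compacts, so by Weyl scaling (multiplying path lengths by $e^{\xi(\text{error})}$) the metrics $D_h^\ep(K_1,K_2;U)$ and $D_{0,n}(K_1,K_2;U')$ are comparable up to a tight multiplicative random constant, for slightly enlarged domains $U'\supset U$. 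Applying this both to the given sets and to a left-right crossing of $[0,1]^2$ shows that $\frk a_\ep$ (the median crossing distance for $h_\ep^*$) and $\lambda_n$ (the median crossing distance for $\Phi_{0,n}$) differ by a bounded multiplicative factor, which is exactly~\eqref{eqn-scale-compare}; I would prove~\eqref{eqn-scale-compare} first, by comparing the two crossing-distance random variables in distribution and using that each has a tight ratio to its own median (Proposition~\ref{prop-perc-estimate'} again). Combining~\eqref{eqn-scale-compare} with Step~1 and the field comparison then gives tightness of $\frk a_\ep^{-1}D_h^\ep(K_1,K_2;U)$ and of its reciprocal, completing the proof.
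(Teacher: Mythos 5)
Your Step 1 contains a genuine gap: you have identified the wrong exponent governing the sum over scales, and the dichotomy you build around the sign of $1-\xi Q$ is spurious. The correctly normalized size of the crossing of the $k$-th annulus around $z$ is not $\lambda_{n-k}$ (your ``$\lambda_{n-k}/\lambda_n\cdot\lambda_n$''): decomposing $\Phi_{0,n}=\Phi_{0,k}+\Phi_{k,n}$ and using the exact scaling~\eqref{eqn-phi-scale}, the fine field contributes $2^{-k}\lambda_{n-k}$ (you dropped the spatial factor $2^{-k}$) and the coarse field contributes a lognormal factor $e^{\xi\Phi_{0,k}(z)}$ with $\op{Var}\Phi_{0,k}(z)=k\log 2$, which you cannot absorb into ``a tight random factor'' since its law spreads out as $k$ grows. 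After dividing by $\lambda_n$ and using $\lambda_{n-k}/\lambda_n=2^{(1-\xi Q+o(1))k}$ (this is the paper's Lemma~\ref{lem-exponent-relation}; note also your displayed relation $\lambda_{n-k}\asymp 2^{-k(1-\xi Q)}\lambda_n$ has the sign reversed), the $k$-th term is $2^{-(\xi Q+o(1))k}e^{\xi\Phi_{0,k}(z)}$, exactly as in Lemma~\ref{lem-pt-dist} of the paper. Summability over $k$ therefore hinges on $\xi Q>0$ — which holds for every $\xi>0$ since $Q>0$ (Lemma~\ref{lem-Q-pos}) — together with the a.s.\ bound $e^{\xi\Phi_{0,k}(z)}=2^{o(k)}$ from Gaussian tails and Borel--Cantelli; the sign of $1-\xi Q$ plays no role, and no case analysis is needed. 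Moreover, your fallback in the case $1-\xi Q\le 0$ (``a fixed path from $z$ to a point near $w$ through a region of bounded size'') does not work: the LFPP length of a deterministic path has expectation of order $2^{\xi^2 n/2}$, which is not $O(\lambda_n)=O(2^{-(1-\xi Q)n+o(n)})$ in general, and in the supercritical phase a fixed path will typically pass near points where the field is atypically high. The whole point of the multi-scale around/across construction, with the Gaussian weights tracked explicitly at every scale, is that it produces a path that avoids such points; this is the content of Lemmas~\ref{lem-perc-field} and~\ref{lem-pt-dist} and cannot be replaced by a single-scale or fixed-path bound.

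Your Step 2 is essentially the paper's route and is fine in outline: couple $h_\ep^*$ with $\Phi_{0,n}$ for $n=\log_2\ep^{-1}$ so that the two fields differ on a bounded neighborhood of $U$ (and of $\BB S$) by a field with uniform Gaussian tails (the paper's Lemma~\ref{lem-gff-compare}, proved via the white-noise comparison of~\cite{dddf-lfpp} plus the Markov property of the GFF and a localized convolution), deduce by Weyl scaling that the internal metrics are bi-Lipschitz with tight constants, compare the quantiles of the crossing length to get~\eqref{eqn-scale-compare}, and then import the $D_{0,n}$ tightness. Two small points you gloss over, both handled in the paper: $n=\log_2\ep^{-1}$ need not be an integer, so the $D_{0,n}$ estimates must be extended to non-dyadic scales (Proposition~\ref{prop-non-dyadic}), and unbounded $U$ requires a separate (easy) reduction to a bounded subdomain for the upper bound and to a separating Jordan curve for the lower bound.
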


Our proof of Proposition~\ref{prop-gff-pt-tight} combined with Lemma~\ref{lem-exponent-relation} will also yield bounds for the scaling constants $\frk a_\ep$ appearing in Theorem~\ref{thm-lfpp-tight}, which will eventually be used to obtain Assertion~\ref{item-constant} of Theorem~\ref{thm-lfpp-tight}.

\begin{prop} \label{prop-gff-scale-constant}
Let $\xi > 0$. 
For each $\zeta > 0$, we have
\eqb \label{eqn-gff-scale-constant1}
  r^{\xi Q + \zeta} \preceq  \frac{r \frk a_{\ep/r}}{\frk a_\ep}  \preceq r^{\xi Q - \zeta} ,\quad\forall r  \in (0,1) ,\quad\forall \ep \in (0,r)
\eqe
and
\eqb \label{eqn-gff-scale-constant2}
 r^{\xi Q - \zeta}  \preceq   \frac{r \frk a_{\ep/r}}{\frk a_\ep}   \preceq  r^{\xi Q + \zeta} ,\quad\forall r  >1 , \quad \forall \ep \in (0,1)   
\eqe
with the implicit constants depending only on $\zeta,\xi$. 
\end{prop}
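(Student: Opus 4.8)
\textbf{Proof proposal for Proposition~\ref{prop-gff-scale-constant}.}

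The plan is to deduce the asymptotics for the ratio $r\frk a_{\ep/r}/\frk a_\ep$ from two inputs: first, the bound~\eqref{eqn-scale-compare} of Proposition~\ref{prop-gff-pt-tight}, which says that $\frk a_\ep \asymp \lambda_n$ for $n = \log_2 \ep^{-1}$ with constants depending only on $\xi$; and second, the multiplicative relation for $\lambda_n$ across scales coming from Lemma~\ref{lem-exponent-relation}, namely that $\lambda_n \asymp 2^{-(1-\xi Q \pm \zeta)K}\lambda_{n-K}$ up to constants depending on $\zeta,\xi,C$. Combining these, I would first observe that for $n = \log_2\ep^{-1}$ and $m = \log_2(\ep/r)^{-1} = n + \log_2 r$, the quantity $r\frk a_{\ep/r}/\frk a_\ep$ is comparable (up to $\xi$-dependent constants) to $r \lambda_m / \lambda_n = 2^{(\log_2 r)(1)} \lambda_{n+\log_2 r}/\lambda_n$.

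So the core of the argument reduces to showing that $\lambda_{n+k}/\lambda_n = 2^{-(1-\xi Q)k + o(k)}$ as $k\to\pm\infty$, uniformly in $n$ — equivalently, that $2^k \lambda_{n+k}/\lambda_n = 2^{\xi Q k + o(k)}$. For $k > 0$ this is exactly what iterating Lemma~\ref{lem-exponent-relation} gives: applying it with $K$ replaced by $k$ (or by iterating in steps and summing the $\pm\zeta k$ error terms) shows $\lambda_{n+k} \asymp 2^{-(1-\xi Q \pm \zeta)k}\lambda_n$ with $\zeta$-and-$\xi$-dependent constants, provided one first checks that the inductive hypothesis~\eqref{eqn-ind-hyp} holds — but by Proposition~\ref{prop-quantile-ratio} we know $\sup_n \Lambda_n(\frk p) < \infty$, so~\eqref{eqn-ind-hyp} is available for all $n,K$ with $C$ depending only on $\xi$. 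This gives~\eqref{eqn-gff-scale-constant2} after translating $k = \log_2 r$ and absorbing the passage from $\lambda$ to $\frk a$ (and from possibly non-integer $n$ to a nearby integer, using that $\lambda_n$ changes by at most a bounded factor when $n$ changes by $1$, e.g.\ from Lemma~\ref{lem-exponent-relation} or Proposition~\ref{prop-perc-estimate'}) into the implicit constants. For $k < 0$, i.e.\ $r \in (0,1)$, one simply rewrites $\lambda_n/\lambda_{n+k} = \lambda_{(n+k)+|k|}/\lambda_{n+k}$ and applies the same estimate with $n+k$ in place of $n$ and $|k|$ in place of $k$; this yields~\eqref{eqn-gff-scale-constant1}.

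The main obstacle I anticipate is bookkeeping of the error exponents and the restriction on scales, rather than any conceptual difficulty. Lemma~\ref{lem-exponent-relation} as stated has $\zeta$-dependent implicit constants that do not shrink with $K$, so naively iterating it $\lfloor k/K\rfloor$ times produces an error constant raised to that power, which is too lossy; the clean fix is to apply Lemma~\ref{lem-subadd} (the tail bound behind Lemma~\ref{lem-exponent-relation}) directly with $K$ taken to be of order $k$ itself, so that a single application already gives $\lambda_{n+k} = 2^{-(1-\xi Q\pm\zeta)k}\lambda_n$ up to one $\zeta,\xi,C$-dependent constant — valid once $k$ exceeds a $\zeta$-dependent threshold, with the finitely many small values of $k$ (equivalently $r$ bounded away from $0$ and $\infty$) handled trivially by the two-sided boundedness of $\frk a_\ep^{-1}\lambda_n$ and of consecutive ratios $\lambda_{n+1}/\lambda_n$. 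One must also be slightly careful that $n = \log_2\ep^{-1}$ and $m = \log_2(\ep/r)^{-1}$ need not be integers, but since all the cited estimates hold for non-integer indices (as remarked in the footnote to Proposition~\ref{prop-gff-pt-tight}), or alternatively after rounding at the cost of bounded multiplicative errors, this is harmless.
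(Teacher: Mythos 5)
Your proposal is correct and follows essentially the same route as the paper: compare $\frk a_\ep$ with $\lambda_n$ via~\eqref{eqn-scale-compare}, apply Lemma~\ref{lem-exponent-relation} with $K=|\log_2 r|$ (legitimate for all $n>K$ since Proposition~\ref{prop-quantile-ratio} makes the inductive hypothesis hold with $C=C(\xi)$), and handle non-integer scales via the non-dyadic comparison (Lemma~\ref{lem-non-dyadic-median}), exactly as in the paper's proof. Your worry about iterating is moot, since Lemma~\ref{lem-exponent-relation} is already stated for arbitrary $K$ with constants independent of $K$ and $n$, so a single application suffices — which is also what your ``clean fix'' via Lemma~\ref{lem-subadd} amounts to.
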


The rest of this section is structured as follows. In Section~\ref{sec-wn-pt-tight}, we prove the tightness of $D_{0,n}$-distances between points by applying the estimate of Proposition~\ref{prop-perc-estimate'} at dyadic scales, then summing over scales. In Section~\ref{sec-non-dyadic}, we explain why our estimates for $D_{0,n}$ continue to hold when $n$ is not required to be an integer (this is important since we do not restrict to dyadic values of $\ep$ in Proposition~\ref{prop-gff-pt-tight}). 
In Section~\ref{sec-gff-compare}, we prove a comparison lemma for $D_{0,n}$ and $D_h^\ep$ (Lemma~\ref{lem-gff-compare}) and use it to extract Propositions~\ref{prop-Q}, \ref{prop-gff-pt-tight}, and~\ref{prop-gff-scale-constant} from the analogous results for $D_{0,n}$. In Section~\ref{sec-gff-estimates}, we record some basic estimates for $D_h^\ep$ which are consequences of our previously known estimates for $D_{0,n}$. 

\subsection{Pointwise tightness for white-noise LFPP}
\label{sec-wn-pt-tight}

Proposition~\ref{prop-perc-estimate'} shows that $\lambda_n^{-1} D_{0,n}$-distances between infinite connected sets are tight.
In this subsection, we will show that also $\lambda_n^{-1} D_{0,n}$-distances between points are tight.

\begin{prop} \label{prop-pt-tight} 
Let $\xi > 0$. 
Let $z,w\in\BB C$ be distinct and let $U\subset\BB C$ be a connected open set containing $z$ and $w$. The random variables $\lambda_n^{-1} D_{0,n}(z,w ; U)$ and $\left( \lambda_n^{-1} D_{0,n}(z,w ; U) \right)^{-1}$ for $n\in\BB N$ are tight. 
\end{prop}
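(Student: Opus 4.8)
\textbf{Proof plan for Proposition~\ref{prop-pt-tight}.} The plan is to deduce pointwise tightness from the already-established tightness of distances between non-trivial connected sets (Proposition~\ref{prop-perc-estimate'}) by a chaining argument over dyadic annuli. Fix distinct $z,w\in\BB C$ and a connected open $U\ni z,w$. Without loss of generality (using the scale and translation invariance of $\Phi_{0,n}$, which changes $\lambda_n$ only by a multiplicative constant of the form $2^{o_n(n)}$ or, more carefully, by a bounded factor once we also quote the comparison of $\lambda_n$ at nearby scales) we may assume $z$ and $w$ are at Euclidean distance of order $1$ and that $U$ contains a fixed Euclidean ball around the segment $[z,w]$. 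For the upper bound I would simply note that $D_{0,n}(z,w;U) \le D_{0,n}(K_1,K_2;U')$ is the wrong direction; instead, pick two disjoint non-trivial connected compact sets $K_1 \ni z$, $K_2 \ni w$ inside $U$ (e.g.\ short line segments emanating from $z$ and $w$), together with a connected ``tube'' $U' \subset U$ joining them; then $D_{0,n}(z,w;U) \le \op{diam}(K_1;D_{0,n}) + D_{0,n}(K_1,K_2;U') + \op{diam}(K_2;D_{0,n})$, and each term on the right is $\preceq \lambda_n$ times a tight random variable by Proposition~\ref{prop-perc-estimate'} (the diameters are handled by covering $K_i$ with finitely many overlapping small connected sets and applying the upper bound in~\eqref{eqn-perc-upper'} to each pair).

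For the lower bound — which is the main point — I would use a chaining argument around one of the two points, say $z$. Any path from $z$ to $w$ in $U$ that realizes $D_{0,n}(z,w;U)$ (up to $o(1)$) must, for each dyadic scale $2^{-k}$ with $2^{-k}$ smaller than the Euclidean distance $|z-w|$ and smaller than the Euclidean distance from $z$ to $\bdy U$, cross the Euclidean annulus $A_k(z) := B_{2^{-k+1}}(z) \setminus B_{2^{-k}}(z)$ (between its inner and outer boundaries). Hence $D_{0,n}(z,w;U) \ge \sum_k D_{0,n}(\text{across } A_k(z))$, where the sum runs over $k$ from some fixed $k_0$ (depending on $z,w,U$) up to, say, $n$ (for $k > n$ the field $\Phi_{0,n}$ is essentially constant at scale $2^{-k}$ and those annuli contribute a positive fraction of the geometric series of their Euclidean widths, but we will not need them). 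By the scale invariance~\eqref{eqn-phi-scale} and translation invariance, $D_{0,n}(\text{across } A_k(z)) \eqD 2^{-k} D_{0,n-k}(\text{across } A_0(0))$, which by the remark following Proposition~\ref{prop-perc-estimate'} (the version of~\eqref{eqn-perc-lower'} for annuli) is at least $2^{-k} \cdot T^{-1} \lambda_{n-k}$ except on an event of probability $\le c_0 e^{-c_1(\log T)^2}$. Using Lemma~\ref{lem-exponent-relation} (or directly Lemma~\ref{lem-subadd}/Proposition~\ref{prop-Q-wn}) we have $2^{-k}\lambda_{n-k} \asymp 2^{(\xi Q-1+o(1))k}\lambda_n$, and since $\xi Q - 1 < 0$ in the relevant range — wait, this needs care: if $\xi Q > 1$ this is growing, but in any case we only need \emph{one} scale $k$ to give a lower bound of order $\lambda_n$. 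So it suffices to fix a single scale $k = k_0$ (small, depending only on $z,w,U$) and apply the annulus lower bound~\eqref{eqn-perc-lower'}: $D_{0,n}(z,w;U) \ge D_{0,n}(\text{across } A_{k_0}(z)) \ge T^{-1}\lambda_{n}$ with probability $\ge 1 - c_0 e^{-c_1(\log T)^2}$, after absorbing the bounded factor $2^{-k_0}\lambda_{n-k_0}/\lambda_n$ into constants via Proposition~\ref{prop-Q-wn} and Lemma~\ref{lem-a-priori-conc}. This directly gives tightness of $(\lambda_n^{-1}D_{0,n}(z,w;U))^{-1}$.

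Thus both $\lambda_n^{-1} D_{0,n}(z,w;U)$ and its reciprocal are stochastically dominated, uniformly in $n$, by tight families, which is the assertion. The one technical subtlety to be careful about is the passage from ``there exists a near-geodesic path'' to ``every path crosses the annulus'': since $D_{0,n}$ is a length metric built from a continuous field, $D_{0,n}(z,w;U)$ is realized as an infimum of lengths of paths in $U$ from $z$ to $w$, and any such path with finite length, being connected and joining the center $z$ of $A_{k_0}(z)$ to a point $w$ outside $B_{2^{-k_0+1}}(z)$, must contain a sub-path crossing $A_{k_0}(z)$ between its two boundary circles; so $D_{0,n}(z,w;U) \ge D_{0,n}(\text{across }A_{k_0}(z) ; A_{k_0}(z))$ where the internal-metric restriction is automatic. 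The main obstacle is really just bookkeeping the $\xi$-dependent scale factors $2^{-k}\lambda_{n-k}$ versus $\lambda_n$, which is why I would organize the argument to use only a single bounded-scale annulus rather than summing over all scales.
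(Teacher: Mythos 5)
Your treatment of the tightness of the reciprocal is fine and is essentially the paper's argument: a single fixed Euclidean annulus separating $z$ from $w$, together with the across-annulus lower bound from Proposition~\ref{prop-perc-estimate'}, already gives $D_{0,n}(z,w;U) \geq T^{-1}\lambda_n$ off an event of probability $c_0 e^{-c_1(\log T)^2}$; no rescaling bookkeeping is even needed, since the constants in Proposition~\ref{prop-perc-estimate'} may depend on the (fixed) annulus.

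The upper bound, however, has a genuine gap. You reduce $D_{0,n}(z,w;U)$ to $\op{diam}(K_1;D_{0,n}) + D_{0,n}(K_1,K_2;U') + \op{diam}(K_2;D_{0,n})$ with $K_1,K_2$ short segments through $z$ and $w$, and assert that the diameter terms are $\preceq \lambda_n$ times tight variables ``by Proposition~\ref{prop-perc-estimate'}''. That proposition only controls $D_{0,n}(K_1,K_2;U)$ for \emph{disjoint} non-singleton connected sets, i.e.\ an infimum over pairs of points; a diameter is a supremum over pairs of points, and in particular includes the distance from the specific point $z$ to the rest of $K_1$, which is exactly the point-to-set quantity one is trying to bound. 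The proposed patch (cover $K_i$ by overlapping connected sets and apply~\eqref{eqn-perc-upper'} pairwise) does not close this: distances between overlapping sets vanish and give no information, and to get from $z$ into the first covering set you again face a point-to-set distance. Worse, the intermediate claim is actually false in part of the supercritical phase: a fixed segment a.s.\ contains $\alpha$-thick points with $\alpha$ up to $\sqrt2$, and once $Q(\xi)<\sqrt 2$ these are singular points for the limit metric (Proposition~\ref{prop-thick-pt}), so $\lambda_n^{-1}\op{diam}(K_1;D_{0,n})$ is \emph{not} tight; any route through ``the normalized diameter of a nontrivial connected set is tight'' cannot work for all $\xi>0$. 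The missing idea is the multi-scale construction the paper uses (Lemmas~\ref{lem-perc-field} and~\ref{lem-pt-dist}): concatenate paths around and across dyadic annuli centered at $z$ and at $w$ at every scale $2^{-j}$, $\lceil\log_2|z-w|\rceil\le j\le n$, each weighted by the coarse field value, to obtain
\begin{equation*}
\lambda_n^{-1} D_{0,n}(z,w;U) \leq T \sum_{j=\lceil \log_2|z-w|\rceil}^{n} 2^{-(\xi Q-\zeta)j}\left( e^{\xi\Phi_{0,j}(z)} + e^{\xi\Phi_{0,j}(w)} \right)
\end{equation*}
with high probability. The crucial point is that this bound is only asserted for the \emph{fixed} points $z,w$: then $\Phi_{0,j}(z)$ is a centered Gaussian of variance $j\log 2$ (a Brownian motion in $j$), so the series converges a.s.\ once $\zeta<\xi Q$, giving tightness. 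This is precisely the mechanism that distinguishes typical points from thick points, and it is what your reduction to diameters discards.
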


The tightness of $\left( \lambda_n^{-1} D_{0,n}(z,w ; U) \right)^{-1}$ follows directly from Proposition~\ref{prop-perc-estimate'}, so the main difficulty in the proof of Proposition~\ref{prop-pt-tight} is showing the tightness of $\lambda_n^{-1} D_{0,n}(z,w ; U)$. 
The idea of the proof is to first apply Proposition~\ref{prop-perc-estimate'} at dyadic scales to build paths around and across dyadic annuli surrounding each point whose $\lambda_n^{-1} D_{0,n}$-lengths are bounded above. We will then string together such paths to build paths between points whose $D_{0,n}$-lengths are bounded above. 
We first need a variant of Proposition~\ref{prop-perc-estimate'} which can be applied at multiple scales.

\begin{lem} \label{lem-perc-field}
Fix $\zeta >0$. 
Let $U\subset \BB C$ be open and let $K_1,K_2\subset U$ be disjoint compact connected sets which are not singletons.
There are constants $c_0,c_1> 0$ depending only on $\zeta, U,K_1,K_2,\xi$ such that the following is true.  
For each $n , k \in\BB N_0$ with $k \leq n$ and each $T > 3$,  
\eqb \label{eqn-perc-field-lower} 
\BB P\left[ \lambda_n^{-1} D_{0,n}\left( 2^{-k} K_1 , 2^{-k}  K_2 ;  2^{-k} U \right) < T^{-1} 2^{-(\xi Q + \zeta) k} e^{\xi \Phi_{0,k}(0)} \right] 
\leq c_0 e^{-c_1 (\log T)^2} 
\eqe
and
\eqb \label{eqn-perc-field-upper} 
\BB P\left[ \lambda_n^{-1} D_{0,n}\left(2^{-k}  K_1 , 2^{-k} K_2 ; 2^{-k} U \right) > T 2^{-(\xi Q - \zeta) k} e^{\xi \Phi_{0,k}(0)} \right] 
\leq c_0 e^{-c_1 (\log T)^2 / \log\log T}  .
\eqe
\end{lem}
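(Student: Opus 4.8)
\textbf{Proof proposal for Lemma~\ref{lem-perc-field}.}

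The plan is to deduce the lemma from Proposition~\ref{prop-perc-estimate'} by exploiting the scale invariance of $\Phi_{0,n}$ together with the decomposition $\Phi_{0,n} = \Phi_{0,k} + \Phi_{k,n}$. First I would use the scaling relation~\eqref{eqn-phi-scale}, which gives $\Phi_{k,n}(2^{-k}\cdot) \eqD \Phi_{0,n-k}(\cdot)$, to write
\begin{equation*}
D_{0,n}\bigl(2^{-k} K_1 , 2^{-k} K_2 ; 2^{-k} U\bigr)
\eqD 2^{-k} \exp\bigl(\text{something involving } \Phi_{0,k}\bigr) \cdot D_{0,n-k}(K_1,K_2;U),
\end{equation*}
more precisely: since $\Phi_{0,n} = \Phi_{0,k} + \Phi_{k,n}$, and $\Phi_{0,k}$ varies slowly on $2^{-k} U$ (it is constant order oscillation by Lemma~\ref{lem-phi-cont} at scale $2^{-k}$), the $D_{0,n}$-length of a path in $2^{-k} U$ is comparable to $e^{\xi \Phi_{0,k}(0)}$ times its $D_{k,n}$-length, up to a multiplicative error controlled by the modulus of continuity of $\Phi_{0,k}$. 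Then $D_{k,n}$ on $2^{-k} U$ rescales (by translation invariance and~\eqref{eqn-phi-scale}) to $2^{-k} D_{0,n-k}$ on $U$. Combining, $D_{0,n}(2^{-k}K_1, 2^{-k}K_2 ; 2^{-k}U)$ is, up to the oscillation error, equal in law to $2^{-k} e^{\xi \Phi_{0,k}(0)} D_{0,n-k}(K_1,K_2;U)$ — except that the coupling between $\Phi_{0,k}$ and $\Phi_{k,n}$ must be handled, since they are independent this factorization is clean.

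Next I would invoke Proposition~\ref{prop-perc-estimate'} (which gives, for fixed sets $K_1,K_2 \subset U$, that $\lambda_{n-k}^{-1} D_{0,n-k}(K_1,K_2;U)$ and its reciprocal are tight with Gaussian-type tails in $\log T$) to control $D_{0,n-k}(K_1,K_2;U)$ by $\lambda_{n-k}$. Then I would use Lemma~\ref{lem-exponent-relation} (or directly Proposition~\ref{prop-Q-wn} together with Lemma~\ref{lem-a-priori-conc}) to relate $\lambda_{n-k}$ and $\lambda_n$: we have $\lambda_n \asymp 2^{-(1-\xi Q)k + o(k)} \lambda_{n-k}$, so that $2^{-k} \lambda_{n-k} \asymp 2^{-(2-\xi Q)k} \lambda_n \cdot 2^{o(k)}$. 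Wait — I should be careful with exponents. The factor $2^{-k}$ from the spatial rescaling combined with $\lambda_{n-k} \asymp \lambda_n 2^{(1-\xi Q)k + o(k)}$ yields $2^{-k}\lambda_{n-k} \asymp \lambda_n 2^{-\xi Q k + o(k)}$, and the $o(k)$ in the exponent is absorbed into the $2^{\pm \zeta k}$ slack in the statement. The oscillation of $\Phi_{0,k}$ over $2^{-k}U$ contributes a further $e^{\xi \cdot O(\sup|\Phi_{0,k} - \Phi_{0,k}(0)|)}$ factor; by Lemma~\ref{lem-phi-cont} applied at scale $2^{-k}$ (i.e., $2^{-k}\sup_{2^{-k}U}|\nabla \Phi_{0,k}| $ has Gaussian tails) this oscillation is $O(\log T)$ except on an event of probability $\leq c_0 e^{-c_1(\log T)^2}$, which is absorbed into the stated error bounds after replacing $T$ by a power of $T$.

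Assembling: on the good event, $\lambda_n^{-1} D_{0,n}(2^{-k}K_1, 2^{-k}K_2; 2^{-k}U)$ lies between $T^{-1} 2^{-(\xi Q + \zeta)k} e^{\xi \Phi_{0,k}(0)}$ and $T \, 2^{-(\xi Q - \zeta)k} e^{\xi \Phi_{0,k}(0)}$, and the complementary probabilities are as claimed (the $\log\log T$ loss in~\eqref{eqn-perc-field-upper} comes from the corresponding loss in~\eqref{eqn-perc-upper'} of Proposition~\ref{prop-perc-estimate'}). The main obstacle I anticipate is bookkeeping the several sources of multiplicative error — the $o(k)$ from the exponent asymptotics in Proposition~\ref{prop-Q-wn}/Lemma~\ref{lem-exponent-relation}, the oscillation of the coarse field $\Phi_{0,k}$, and the tail in Proposition~\ref{prop-perc-estimate'} — and verifying they all fit inside the single $2^{\pm\zeta k}$ slack and the single error term with $(\log T)^2/\log\log T$ decay, uniformly in $k \leq n$ and $n$. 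A subtle point worth care: $\Phi_{0,k}(0)$ is centered Gaussian of variance $k\log 2$, so $e^{\xi\Phi_{0,k}(0)}$ itself is large with non-negligible probability; this is fine because the lemma states the bound \emph{with} $e^{\xi\Phi_{0,k}(0)}$ appearing on the right-hand side, so no separate control of that factor is needed — one only needs the conditional estimate given $\Phi_{0,k}$, which follows from the independence of $\Phi_{0,k}$ and $\Phi_{k,n}$.
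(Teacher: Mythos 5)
Your proposal follows essentially the same route as the paper: rescale via the scaling relation~\eqref{eqn-phi-scale} so that $D_{k,n}$ on $2^{-k}U$ becomes $2^{-k}D_{0,n-k}$ on $U$, apply Proposition~\ref{prop-perc-estimate'} at scale $n-k$, control the oscillation of the coarse field $\Phi_{0,k}$ over $2^{-k}U$ by Lemma~\ref{lem-phi-cont} so that the multiplicative factor is $e^{\xi\Phi_{0,k}(0)}$ up to a power of $T$, and convert $2^{-k}\lambda_{n-k}$ into $2^{-\xi Q k\pm\zeta k}\lambda_n$ via Lemma~\ref{lem-exponent-relation}, absorbing all errors into the $2^{\pm\zeta k}$ slack and the stated tails. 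Two small caveats: the oscillation step requires $U$ bounded (Lemma~\ref{lem-phi-cont} is stated for bounded sets, and the oscillation of $\Phi_{0,k}$ over an unbounded $2^{-k}U$ is not controlled), so as in the paper you should first prove both bounds for bounded $U$ and then remove the assumption — the upper bound is monotone in $U$, and for the lower bound one replaces $K_2$ by a compact separating set inside a bounded subdomain. Also, your parenthetical alternative of using Proposition~\ref{prop-Q-wn} directly in place of Lemma~\ref{lem-exponent-relation} would not suffice: its $2^{o_n(n)}$ error is in $n$ rather than $k$, so it does not give the ratio bound $\lambda_n/\lambda_{n-k}=2^{-(1-\xi Q)k+O(\zeta k)}$ uniformly over all $k\leq n$; Lemma~\ref{lem-exponent-relation} (valid once Proposition~\ref{prop-quantile-ratio} is in hand) is the tool that provides this uniformity.
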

\begin{proof}
The idea of the proof is to re-scale space then apply Proposition~\ref{prop-perc-estimate'}. 
By the scale invariance property~\eqref{eqn-phi-scale} of $\Phi_{0,n}$, 
\eqbn
 \Phi_{k,n}\left( \cdot \right) \eqD \Phi_{0,n-k}\left(2^k \cdot\right) .
\eqen
Therefore,
\eqbn
D_{k,n}\left( 2^{-k} K_1 , 2^{-k} K_2 ; 2^{-k} U\right) 
\eqD 2^{-k} D_{0,n-k}\left( K_1 , K_2 ; U\right) .
\eqen
By Proposition~\ref{prop-perc-estimate'} applied with $n-k$ in place of $n$, we thus obtain
\eqb \label{eqn-perc-field-lower0}
\BB P\left[ D_{k,n}\left( 2^{-k} K_1 , 2^{-k} K_2 ; 2^{-k} U\right)  < T^{-1/2} 2^{-k} \lambda_{n-k} \right] 
\leq c_0 e^{-c_1 (\log T)^2}
\eqe
and
\eqb \label{eqn-perc-field-upper0}
\BB P\left[ D_{k,n}\left( 2^{-k} K_1 , 2^{-k} K_2 ; 2^{-k} U\right)   > T^{1/2}  2^{-k} \lambda_{n-k} \right] 
\leq c_0 e^{-c_1 (\log T)^2  /(\log\log T) } .
\eqe

We now temporarily impose the assumption that $U$ is bounded (we will remove this assumption at the end of the proof). 
We want to transfer from estimates for $D_{k,n}$ to estimates for $D_{0,n}$ using the fact that $\Phi_{0,n} = \Phi_{k,n} + \Phi_{0,k}$. To this end, we first use Lemma~\ref{lem-phi-cont} (with $k$ in place of $n$) and the mean value theorem to get
\eqb \label{eqn-perc-field-cont}
\BB P\left[ \sup_{z \in 2^{-k} U} |\Phi_{0,k}(z) - \Phi_{0,k}(0)| > \frac{1}{2\xi} \log T\right] \leq c_0 e^{-c_1(\log T)^2} .
\eqe
Since $\Phi_{0,n} = \Phi_{k,n} + \Phi_{0,k}$, 
\allb \label{eqn-perc-field-compare}
&\exp\left(\xi \min_{z\in 2^{-k} U} \Phi_{0,k}(z) \right)  D_{k,n}\left( 2^{-k} K_1 , 2^{-k} K_2 ; 2^{-k} U\right) \notag\\
&\qquad \leq D_{0,n} \left( 2^{-k} K_1 , 2^{-k} K_2 ; 2^{-k} U\right)\notag\\
&\qquad\qquad \leq \exp\left(\xi \max_{z\in 2^{-k} U} \Phi_{0,k}(z) \right)  D_{k,n}\left( 2^{-k} K_1 , 2^{-k} K_2 ; 2^{-k} U\right) .
\alle
By~\eqref{eqn-perc-field-cont} and~\eqref{eqn-perc-field-compare}, it holds with probability at least $1-c_0 e^{-c_1(\log T)^2}$ that
\allb \label{eqn-perc-field-compare'}
&T^{-1/2} e^{\xi \Phi_{0,k}(v_S)} D_{k,n}\left( 2^{-k} K_1 , 2^{-k} K_2 ; 2^{-k} U\right) \notag\\
&\qquad\leq  D_{0,n} \left( 2^{-k} K_1 , 2^{-k} K_2 ; 2^{-k} U\right)\notag\\
&\qquad\qquad \leq  T^{ 1/2} e^{\xi \Phi_{0,k}(v_S)} D_{k,n}\left( 2^{-k} K_1 , 2^{-k} K_2 ; 2^{-k} U\right) .
\alle

Combining~\eqref{eqn-perc-field-compare'} with~\eqref{eqn-perc-field-lower0} and~\eqref{eqn-perc-field-upper0} shows that
\allb \label{eqn-perc-field-before}
\BB P\left[  D_{0,n}\left( 2^{-k} K_1 , 2^{-k}  K_2 ;  2^{-k} U \right)  \geq  T^{-1} 2^{-k} \lambda_{n-k}      \right]
&\geq 1 - c_0 e^{-c_1 (\log T)^2 } \quad \text{and} \notag\\
\BB P\left[  D_{0,n}\left( 2^{-k} K_1 , 2^{-k}  K_2 ;  2^{-k} U \right)  \leq  T 2^{-k} \lambda_{n-k}      \right]
&\geq 1 - c_0 e^{-c_1 (\log T)^2/\log\log T}  
\alle
By Lemma~\ref{lem-exponent-relation}, $2^{(1-\xi Q - \zeta) k} \lambda_n \preceq   \lambda_{n-k} \preceq 2^{(1-\xi Q + \zeta) k} \lambda_n$ with the implicit constants depending only on $\xi,\zeta$. Combining this with~\eqref{eqn-perc-field-before} gives~\eqref{eqn-perc-field-lower} and~\eqref{eqn-perc-field-upper} in the case when $U$ is bounded.

To deduce~\eqref{eqn-perc-field-lower} in the case when $U$ is unbounded, choose a compact set $K'$ which disconnects $K_1$ from $K_2$ and a bounded open set $U'\subset \BB C$ which contains $K_1,K_2,K'$. Then apply~\eqref{eqn-perc-field-lower} with $U'$ in place of $U$ and $K'$ in place of $K_2$.
The estimate~\eqref{eqn-perc-field-upper} in the case when $U$ is unbounded is immediate from the bounded case since increasing $U$ causes $ D_{0,n}\left(2^{-k}  K_1 , 2^{-k} K_2 ; 2^{-k} U \right) $ to decrease.
\end{proof}

The following lemma is the main input in the proof of Proposition~\ref{prop-pt-tight}.

\begin{figure}[t!]
 \begin{center}
\includegraphics[scale=1]{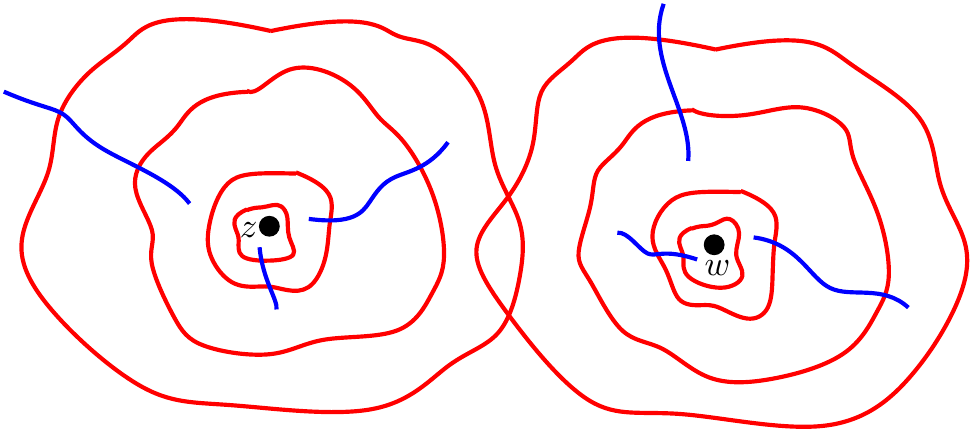}
\vspace{-0.01\textheight}
\caption{Illustration of the proof of Lemma~\ref{lem-pt-dist}. The red (resp.\ blue) curves are $D_{0,n}$-geodesics around (resp.\ across) Euclidean annuli surrounding $z$ (resp.\ $w$). We upper-bound the $D_{0,n}$-lengths of these geodesics using Proposition~\ref{prop-perc-estimate'}, then use that the union of these geodesics is connected to upper-bound $D_{0,n}(z,w)$. 
}\label{fig-pt-dist}
\end{center}
\vspace{-1em}
\end{figure}

\begin{lem} \label{lem-pt-dist}
Fix $\zeta>0$, a bounded open set $V\subset\BB C$, and an open set $U\subset \BB C$ with $\ol V\subset U$. There are constants $c_0,c_1> 0$ depending only on $\zeta, V,U , \xi$ such that for each $n\in\BB N$ and each $T>3$, it holds with probability at least $1- c_0 e^{-c_1 (\log T)^2  / \log\log T}$ that the following is true. 
For each $z,w \in V$ with $|z-w| \leq 2 \op{dist}(\ol V , \bdy U)$ (here $\op{dist}$ denotes Euclidean distance), 
\eqb \label{eqn-pt-dist}
%T^{-1}   \sum_{j=\lceil \log_2 |z-w| \rceil  + 1}^n 2^{-(\xi Q + \zeta) j} \left( e^{\xi \Phi_{0,j}(z)}  + e^{\xi \Phi_{0,j}(w)} \right)\leq 
\lambda_n^{-1} D_{0,n}(z,w ; U) 
\leq  T   \sum_{j=\lceil \log_2 |z-w| \rceil}^n 2^{-(\xi Q - \zeta) j} \left( e^{\xi \Phi_{0,j}(z)}  + e^{\xi \Phi_{0,j}(w)} \right).
\eqe 
\end{lem}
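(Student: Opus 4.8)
\textbf{Proof plan for Lemma~\ref{lem-pt-dist}.}
The plan is to cover the annular regions surrounding $z$ and $w$ by dyadic-scale annuli and to chain together paths around and across these annuli, bounding each piece with Lemma~\ref{lem-perc-field}. First I would fix $z,w\in V$ with $|z-w|\leq 2\op{dist}(\ol V,\bdy U)$ and set $j_0 := \lceil\log_2|z-w|\rceil$. For each integer $j$ with $j_0 \leq j \leq n$, consider the Euclidean annulus $A_j(z) := B_{2^{-j+1}}(z)\setminus B_{2^{-j}}(z)$ centered at $z$, and similarly $A_j(w)$; the hypothesis $|z-w|\leq 2\op{dist}(\ol V,\bdy U)$ together with $z,w\in V$ guarantees that for $j$ in the relevant range these annuli are contained in $U$ (after possibly restricting to $j\geq$ some $j_0' \geq j_0$ controlled by $\op{dist}(\ol V,\bdy U)$; the finitely many scales below that can be absorbed into the constant, or handled by a single coarse annulus). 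The key geometric observation is the same one used repeatedly in the paper: a path around $A_{j_0}(z)$, together with paths around $A_j(z)$ for $j_0 \leq j \leq n$ and a path \emph{across} the innermost region near $z$ at scale roughly $2^{-n}$ (and symmetrically for $w$), has connected union containing a path in $U$ from $z$ to $w$, provided the scales $2^{-j}$ for $z$ overlap appropriately with those for $w$ near scale $|z-w|$. Hence $D_{0,n}(z,w;U)$ is at most the sum of the $D_{0,n}$-lengths of these finitely many (at most $2(n-j_0+1)+O(1)$) paths.

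Next I would bound the length of each annulus-crossing path. Applying Lemma~\ref{lem-perc-field} (with $K_1,K_2$ a fixed pair of non-singleton connected sets realizing ``around $A$'' or ``across $A$'' for a unit-scale annulus, and with the scaling parameter $k=j$, using translation invariance of $\Phi_{0,n}$ to recenter at $z$ or $w$) gives that for a single $j$ and a single center,
\eqbn
\lambda_n^{-1} D_{0,n}\left(\text{around } A_j(z) ; U\right) \leq T_j \, 2^{-(\xi Q - \zeta) j} e^{\xi \Phi_{0,j}(z)}
\eqen
off an event of probability $\leq c_0 e^{-c_1(\log T_j)^2/\log\log T_j}$. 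I would then take a union bound over all $j\in[j_0,n]_{\BB Z}$ and both centers $z,w$, with the choice $T_j := T 2^{\delta(j-j_0)}$ for a small $\delta>0$ (or $T_j := T j$), so that the geometric/polynomial growth of $T_j$ makes $\sum_j c_0 e^{-c_1(\log T_j)^2/\log\log T_j}$ summable and bounded by $c_0 e^{-c_1(\log T)^2/\log\log T}$ after adjusting constants. On the complement of this event, summing the per-scale bounds yields
\eqbn
\lambda_n^{-1} D_{0,n}(z,w;U) \leq \sum_{j=j_0}^n T_j\, 2^{-(\xi Q - \zeta)j}\left(e^{\xi\Phi_{0,j}(z)} + e^{\xi\Phi_{0,j}(w)}\right),
\eqen
and since $T_j \leq T 2^{\delta(j-j_0)}$ one absorbs the factor $2^{\delta(j-j_0)}$ into the exponent $\zeta$ (replacing $\zeta$ by $\zeta+\delta$, which is harmless as $\zeta$ was arbitrary) to obtain~\eqref{eqn-pt-dist}. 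A final point: the bound must hold \emph{simultaneously} for all pairs $z,w\in V$, not just a fixed pair; this I would handle by first establishing the estimate for all pairs of dyadic-rational points in $V$ at a fine enough scale via a union bound (the number of such points at scale $2^{-n}$ is polynomial in $2^n$, which is dwarfed by the superpolynomial decay $e^{-c_1(\log T)^2/\log\log T}$ once $T$ is, say, at least a small power of $2^n$ — and for smaller $T$ the statement can be made vacuous or handled by monotonicity), and then passing to general $z,w$ by continuity of $\Phi_{0,j}$ and of $D_{0,n}$.

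\textbf{Main obstacle.} The technically delicate step is making the simultaneous-over-all-pairs statement work while keeping the error probability of the clean form $c_0 e^{-c_1(\log T)^2/\log\log T}$: a naive union bound over a $2^{-n}$-net of $V$ costs a factor $\asymp 2^{2n}$, which is fine only when $\log T$ is comparable to $\sqrt{n}$ or larger, whereas the lemma is asserted for all $T>3$. I expect this is resolved exactly as in analogous arguments in~\cite{dddf-lfpp}: one proves the estimate for a fixed pair with the stated probability bound, and the ``for each $z,w\in V$'' quantifier is in fact meant with the net/continuity argument absorbing the lattice cardinality into the constants $c_0,c_1$ (which are allowed to depend on $V,U$), using that $(\log T)^2/\log\log T$ grows faster than any constant multiple of $\log T$ so that replacing $T$ by $T^{1/2}$ or by $cT$ and multiplying the bound by a polynomial factor in $T$ changes nothing. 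The other, more routine, obstacle is the careful bookkeeping of which scales $j$ have $A_j(z)\subset U$ and how the $z$-annuli and $w$-annuli interlock near scale $|z-w|$ to guarantee the union of geodesics is connected; this is a purely deterministic planar-topology verification and I would dispatch it with a figure (cf.\ Figure~\ref{fig-pt-dist}) rather than a long argument.
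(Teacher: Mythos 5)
Your per-pair chaining argument (dyadic annuli around $z$ and $w$, Lemma~\ref{lem-perc-field} at each scale, thresholds $T_j = T2^{\delta(j-j_0)}$ whose excess is absorbed into the $\zeta$-slack of the exponent) is essentially the mechanism the paper uses, and the geometric interlocking of the $z$- and $w$-annuli near scale $|z-w|$ is handled the same way. The genuine gap is in how you promote the estimate from a fixed pair to \emph{all} $z,w\in V$ simultaneously. Your proposal uses a single net at scale $2^{-n}$ ($\asymp 2^{2n}$ points) and then claims the lattice cardinality can be "absorbed into the constants $c_0,c_1$" or that the statement becomes vacuous for small $T$. Neither is available: the cardinality grows with $n$ while $c_0,c_1$ must be uniform in $n$, and for a fixed small $T$ (say $T=4$) the bound \eqref{eqn-pt-dist} has real content for every $n$, so there is no vacuity or monotonicity escape. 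As you yourself note, the naive union bound only works when $\log T \gtrsim \sqrt n$, which is not the regime of the lemma.

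The correct resolution — and the one in the paper — is a \emph{multi-scale} net: for each $k\in[0,n]_{\BB Z}$ take $O(4^k)$ centers $x$ whose balls $B_{2^{-k-2}}(x)$ cover $V$, and apply Lemma~\ref{lem-perc-field} at scale $k$ with the threshold boosted to $2^{(\zeta/2)k}T$ (again absorbed into the $\pm\zeta$ slack in $2^{-(\xi Q\pm\zeta)k}$). The per-center failure probability then decays like $\exp\left(-c_1\frac{(k+\log T)^2}{\log(k+\log T)}\right)$, which beats the $4^k$ entropy at scale $k$, and the resulting sum over $k\in[0,n]_{\BB Z}$ is bounded by $c_0\exp\left(-c_1(\log T)^2/\log\log T\right)$ uniformly in $n$ for all $T>3$. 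Two further ingredients your sketch leaves implicit are then needed: a quantitative gradient estimate for $\Phi_{0,k}$ (Lemma~\ref{lem-phi-cont}, with threshold of order $\frac1\xi(\log T+\zeta k)$, again boosted per scale) to replace $\Phi_{0,k}(x_k^z)$ at the net point by $\Phi_{0,k}(z)$ at the arbitrary point — mere qualitative continuity does not suffice since the right side of \eqref{eqn-pt-dist} involves $e^{\xi\Phi_{0,j}(z)}$ exactly — and a treatment of the innermost scale, where one integrates $e^{\xi\Phi_{0,n}}$ along a straight segment of length $O(2^{-n})$ and uses $\lambda_n\succeq 2^{-(1-\xi Q+\zeta)n}$ to absorb it into the $j=n$ term. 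With those modifications your outline matches the paper's proof; without them the all-pairs statement does not follow.
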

\begin{proof}%[Proof of Lemma~\ref{lem-pt-dist}] 
See Figure~\ref{fig-pt-dist} for an illustration of the proof. 
Throughout the proof, $c_0,c_1$ denote deterministic constants which depend only on $\zeta,V,U,\xi$ and which are allowed to change from line to line. We also require all implicit constants in $\preceq$ to be deterministic and depend only on $\zeta,V,U,\xi$. 
\medskip

\noindent\textit{Step 1: defining a high-probability regularity event.}
For $k \in [0,n]_{\BB Z}$, let $\mcl X_k$ be a collection of $O_k(4^k)$ points $x\in \BB C$ so that the balls $B_{2^{-k-2}}(x)$ for $x\in\mcl X_k$ cover $V$. 
For $x\in\mcl X_k$ and $T>3$, let $E_k(x,T)$ be the event that
\eqb \label{eqn-1pt-dist-across}
\lambda_n^{-1} D_{0,n}\left( \text{across $B_{2^{-k+4}}(x) \setminus B_{2^{-k-2}}(x)$} \right) \leq T 2^{-(\xi Q + \zeta) k} e^{\xi \Phi_{0,k}(x)} 
\eqe
and
\eqb \label{eqn-1pt-dist-around}
\lambda_n^{-1} D_{0,n}\left( \text{around $B_{2^{-k+1}}(x) \setminus B_{2^{-k}}(x)$} \right) \leq T 2^{-(\xi Q - \zeta) k} e^{\xi \Phi_{0,k}(x)}  .
\eqe
By Lemma~\ref{lem-perc-field} (applied with $\zeta/2$ in place of $\zeta$ and $2^{(\zeta/2) k} T$ in place of $T$) and the translation invariance of the law of $\Phi_{0,n}$,
\eqb
\BB P\left[ E_k(x,T)^c \right] \leq c_0 \exp\left( - c_1 \frac{(k + \log T)^2 }{\log(k + \log T)} \right) ,\quad\forall k\in\BB N,\quad\forall x \in \mcl X_k .
\eqe
Therefore,
\eqb \label{eqn-1pt-dist-event}
\BB P\left[ \left( \bigcap_{x\in \mcl X_k} E_k(x,T) \right)^c \right] 
\leq c_0  4^k \exp\left( - c_1 \frac{(k + \log T)^2 }{\log(k + \log T)} \right) .
\eqe

The events $E_k(x,T)$ are not quite sufficient for our purposes since they only allow us to bound distances in terms of $\Phi_{0,k}(x)$ for $x\in\mcl X_k$, but we want to bound distances in terms of $\Phi_{0,k}(z)$ for an \emph{arbitrary} $z\in U$. For this purpose we also need a continuity condition for $\Phi_{0,k}$. 
By Lemma~\ref{lem-phi-cont} (applied with $x = \frac{1}{\xi}( \log T + \zeta k)$), for each $k\in \BB N$ it holds with probability at least $1-c_0  \exp\left( - c_1 (\log T + \log k)^2 \right)$ that
\eqb \label{eqn-1pt-cont}
2^{-k} \sup_{z\in B_2(U)} |\nabla \Phi_{0,k}(z)| \leq \frac{1}{\xi}( \log T + \zeta k) .
\eqe

For $k\in\BB N$, let $E_k(T)$ be the union of $\bigcup_{x\in \mcl X_k} E_k(x,T)$ and the event in~\eqref{eqn-1pt-cont}. Then $\BB P\left[ E_k(T)^c \right] \leq c_0  4^k \exp\left( - c_1 \frac{(k + \log T)^2 }{\log(k + \log T)} \right)$. 
Therefore, if we set 
\eqb
E(T) := \bigcap_{k=0}^n E_k(T) 
\eqe
then by a union bound over $k \in [0,n]_{\BB Z}$, 
\allb \label{eqn-pt-dist-prob}
\BB P\left[ E(T)^c \right] 
&\leq c_0 \sum_{k=0}^n  4^k \exp\left( - c_1 \frac{(k + \log T)^2 }{\log(k + \log T)} \right) \notag\\
&\leq  c_0 \sum_{j=\lfloor \log T \rfloor}^{n + \lfloor \log T \rfloor} 4^{j - \log T} \exp\left( - c_1 \frac{j^2}{\log j} \right) \notag\\
&\leq  c_0  \exp\left( - c_1 \frac{(\log T)^2}{\log \log T}  \right) .
\alle
\medskip

\noindent\textit{Step 2: building a path using~\eqref{eqn-1pt-dist-across} and~\eqref{eqn-1pt-dist-around}.}
Henceforth assume that $E(T)$ occurs, so that~\eqref{eqn-1pt-dist-across} and~\eqref{eqn-1pt-dist-around} hold for every $k\in[0,n]_{\BB Z}$ and $x\in\mcl X_k$ and~\eqref{eqn-1pt-cont} holds for every $k\in [0,n]_{\BB Z}$. We will show that~\eqref{eqn-pt-dist} holds.

For $z\in U$ and $k\in [0,n]_{\BB Z}$, let $x_k^z \in \mcl X_k$ be chosen so that $z\in B_{2^{-k-2}}(x)$. 
Also let $P_k^z$ (resp.\ $\wt P_k^z$) be a path across (resp.\ around) $B_{2^{-k+4}}(x_k^z) \setminus B_{2^{-k-2}}(x_k^z)$ (resp.\  $B_{2^{-k+1}}(x_k^z ) \setminus B_{2^{-k }}(x_k^z) $) of minimal $D_{0,n}$-length. Note that the $D_{0,n}$-lengths of these paths are bounded by~\eqref{eqn-1pt-dist-across} and~\eqref{eqn-1pt-dist-around}.
For $k \in [1,n]_{\BB Z}$, the annuli 
\eqb
B_{2^{-k+1}}(x_k^z ) \setminus B_{2^{-k }}(x_k^z) \quad \text{and} \quad  B_{2^{-k+2}}(x_{k-1}^z ) \setminus B_{2^{-k+1 }}(x_{k-1}^z) 
\eqe
are each contained in $B_{2^{-k+4}}(x_k^z) \setminus B_{2^{-k-2}}(x_k^z)$. Consequently, the union of the paths $P_k^z$, $\wt P_k^z$, and $\wt P_{k-1}^z$ is connected. By iterating this, we get that for each $k \in [0,n]_{\BB Z}$, the union of the paths $P_j^z$ and $\wt P_j^z$ for $j \in [k,n]_{\BB Z}$ is connected. In particular, since $P_n^z \subset B_{2^{-n+2}}(z)$, we can use~\eqref{eqn-1pt-dist-across} and~\eqref{eqn-1pt-dist-around} to get
\eqb \label{eqn-1pt-dist-sum0}
D_{0,n}\left( B_{2^{-n+2}}(z)  , P_k^z \right) + \left( \text{$D_{0,n}$-length of $P_k^z$} \right)
\leq 2 T \sum_{j=k}^n   2^{-(\xi Q - \zeta) j} e^{\xi \Phi_{0,j}(x_j^z)} .
\eqe
\medskip

\noindent\textit{Step 3: comparing $\Phi_{0,k}(x_k^z)$ and $\Phi_{0,k}(z)$.}
Since $|x_k^z - z| \leq 2^{-j-2}$ for each $j\in [0,n]_{\BB Z}$, it follows from~\eqref{eqn-1pt-cont} that
\eqb \label{eqn-1pt-dist-diff}
|\Phi_{0,k}(x_k^z) - \Phi_{0,k}(z)| \leq \frac{1}{\xi}( \log T + \zeta k) . 
\eqe
Furthermore, by integrating $ e^{\xi \Phi_{0,n}(\cdot)}$ along a straight-line path from $z$ to $u$ and using~\eqref{eqn-1pt-cont}, we obtain
\eqb \label{eqn-1pt-dist-near}
\lambda_n^{-1} \sup_{u \in B_{2^{-n+2}}(z)} D_{0,n}(u,z) 
\leq T \lambda_n^{-1} 2^{-n+2} e^{\xi \Phi_{0,n}(z)} 
\preceq T 2^{-(\xi Q - \zeta) n} e^{\xi \Phi_{0,n}(z)}
\eqe
where in the last inequality we use that $\lambda_n \succeq 2^{-(1-\xi Q  + \zeta ) n}$ (Proposition~\ref{prop-Q-wn}). 
Plugging~\eqref{eqn-1pt-dist-diff} and~\eqref{eqn-1pt-dist-near} into~\eqref{eqn-1pt-dist-sum0} gives
\allb \label{eqn-1pt-dist-sum}
&\lambda_n^{-1} D_{0,n}\left( z , P_k^z \right) + \lambda_n^{-1} \left( \text{$D_{0,n}$-length of $P_k^z$} \right) \notag\\
&\qquad \leq \lambda_n^{-1}D_{0,n}\left( B_{2^{-n+2}}(z)  , P_k^z \right)  + \lambda_n^{-1} \left( \text{$D_{0,n}$-length of $P_k^z$} \right)  + \lambda_n^{-1} \sup_{u \in B_{2^{-n+2}}(z)} D_{0,n}(u,z)  \notag\\
&\qquad \preceq   T^2 \sum_{j=k}^n   2^{-(\xi Q - 2 \zeta) j} e^{\xi \Phi_{0,j}(z)} .
\alle
\medskip

\noindent\textit{Step 4: distance between two points.}
If $z,w\in U$ with $|z-w| \in [2^{-k} , 2^{-k+1}]$, then the paths $P_k^z$ and $P_k^w$ necessarily intersect. Moreover, if $\op{dist}(z,w) \leq 2\op{dist}(\ol V , \bdy U)$, then the paths $P_k^z$ and $P_k^w$ are contained in $U$. Therefore, by~\eqref{eqn-1pt-dist-sum} (applied to each of $z$ and $w$) together with the triangle inequality, we obtain that on $E(T)$, 
\eqb
\lambda_n^{-1} D_{0,n}\left( z,w ; U \right)
\preceq T^2 \sum_{j=k}^n   2^{-(\xi Q - 2 \zeta) j} \left(  e^{\xi \Phi_{0,j}(z)} + e^{\xi \Phi_{0,j}(w)} \right) .
\eqe
Replacing $T$ by $T^{1/2}$ and $\zeta$ by $\zeta/2$ (which results in an adjustment to $c_0,c_1$ in~\eqref{eqn-pt-dist-prob}) now gives~\eqref{eqn-pt-dist}. 
\end{proof}

\begin{proof}[Proof of Proposition~\ref{prop-pt-tight}]
To establish the tightness of $\lambda_n^{-1} D_{0,n}(z,w ; U)$, we note that the random variables $\Phi_{0,k}(z)$ and $\Phi_{0,k}(w)$ for $k\in\BB N$ are centered Gaussian with variance $k \log 2$. From this, it is easily seen that if $\zeta$ is chosen to be small enough that $\xi Q - \zeta > 0$, then the sum on the right side of~\eqref{eqn-pt-dist} converges a.s.\ as $n\rta\infty$ (with $T$, $z$, $w$ fixed).

If $|z-w| < 2 \op{dist}(\{z\} \cup \{w\} , \bdy U)$, then we can find a bounded connected open set $V$ with $\ol V\subset U$ such that $z,w \in V$ and $|z-w| \leq 2 \op{dist}(\ol V , \bdy U)$. Therefore, in this case the tightness of $\lambda_n^{-1} D_{0,n}(z,w ; U)$ follows from Lemma~\ref{lem-pt-dist}. 
If $|z-w| \geq 2 \op{dist}(\{z\} \cup \{w\} , \bdy U)$, then we can find finitely many points $z = z_0 , \dots, z_N = w$ in $U$ such that for each $j \in [1,N]_{\BB Z}$, we have $|z_j - z_{j-1} | \leq 2  \op{dist}(\{z_{j-1} \} \cup \{w_j\} , \bdy U)$. The tightness of $\lambda_n^{-1} D_{0,n}(z,w ; U)$ then follows from the triangle inequality and the tightness of $\lambda_n^{-1} D_{0,n}(z_{j-1} , z_j ; U)$ for each $j\in [1,N]_{\BB Z}$. 

To check tightness for $\left( \lambda_n^{-1} D_{0,n}(z,w) \right)^{-1}$, let $A$ be a Euclidean annulus which lies at positive distance from each of $z$ and $w$ and which disconnects $z$ from $w$. Then $D_{0,n}(z,w) \geq D_{0,n}\left(\text{across $A$}\right) $. 
Proposition~\ref{prop-perc-estimate'} implies that the random variables $\left(\lambda_n^{-1} D_{0,n}\left(\text{across $A$}\right) \right)^{-1}$ are tight, so also the random variables $\left( \lambda_n^{-1} D_{0,n}(z,w) \right)^{-1}$ are tight. 
\end{proof}

\subsection{Tightness for non-dyadic scales}
\label{sec-non-dyadic}

So far, all of our results have only been proven for dyadic scales, i.e., for $D_{0,n}$ with an integer value of $n$. However, our main theorems are stated for general values of $\ep$ which are not necessarily dyadic. So, we need to extend our results to the case of non-dyadic scales. 

We extend the definition~\eqref{eqn-phi-def} of $\Phi_{m,n}$ to the case when $m$ and $n$ are not necessarily integers. 
We similarly extend the definitions of $D_{m,n}$ and $\lambda_n$ from~\eqref{eqn-lfpp-def} and~\eqref{eqn-median-def}, respectively, to non-integer values of $m,n$. 
The purpose of this brief subsection is to show that our tightness results for $D_{0,n}$ continue to hold if $n$ is not required to be an integer.

\begin{prop} \label{prop-non-dyadic}
Propositions~\ref{prop-perc-estimate'} and~\ref{prop-pt-tight} continue to hold when $n$ is allowed to be any positive real number, not just an integer. 
\end{prop}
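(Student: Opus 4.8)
The plan is to reduce the non-dyadic case to the dyadic case via a comparison of $\Phi_{0,n}$ with $\Phi_{0,\lfloor n\rfloor}$ and $\Phi_{0,\lceil n\rceil}$, losing only a bounded multiplicative factor. First I would write $n = N + s$ with $N = \lfloor n \rfloor \in \BB N_0$ and $s \in [0,1)$, and observe from the definition~\eqref{eqn-phi-def} and the independence properties of the white noise that $\Phi_{0,n} = \Phi_{0,N} + \Phi_{N,n}$ (using the extended definition of $\Phi_{m,n}$ for non-integer $n$), where $\Phi_{N,n}$ is independent of $\Phi_{0,N}$ and is a smooth centered Gaussian process with variance $s\log 2 \le \log 2$. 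The key point is that on any fixed bounded open set $U$, the supremum $\sup_{z\in U}|\Phi_{N,n}(z)|$ has Gaussian tails that are uniform in $N$ and $s$: this follows from a Borell--TIS / Fernique-type bound together with a modulus-of-continuity estimate analogous to Lemma~\ref{lem-phi-cont} (one can in fact bound $\Phi_{N,n}$ by comparison to $\Phi_{0,1}$-type processes after rescaling, or simply cite the Gaussian process estimates in Appendix~\ref{sec-appendix}). Consequently, for any $x>0$,
\eqb
\BB P\left[ \sup_{z\in \ol U} |\Phi_{N,n}(z)| > x \right] \le c_0 e^{-c_1 x^2},
\eqe
with $c_0,c_1$ depending only on $U$, uniformly over all real $n$.

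Next I would use Weyl scaling: since $\Phi_{0,n} = \Phi_{0,N} + \Phi_{N,n}$, adding $\Phi_{N,n}$ to $\Phi_{0,N}$ scales the $D_{0,N}$-length of every path by a factor in $[e^{-\xi \sup_U|\Phi_{N,n}|}, e^{\xi \sup_U|\Phi_{N,n}|}]$, and hence for any disjoint compact connected non-singleton sets $K_1,K_2 \subset U$,
\eqb
e^{-\xi \sup_{\ol U}|\Phi_{N,n}|} D_{0,N}(K_1,K_2;U) \le D_{0,n}(K_1,K_2;U) \le e^{\xi \sup_{\ol U}|\Phi_{N,n}|} D_{0,N}(K_1,K_2;U).
\eqe
Taking $U = \BB S$, $K_1 = \bdy_{\op L}\BB S$, $K_2 = \bdy_{\op R}\BB S$ and comparing medians, this gives $\lambda_n \asymp \lambda_N$ up to a random-but-tight factor; more precisely, since $\sup_{\ol{\BB S}}|\Phi_{N,n}|$ has uniform Gaussian tails, its median and all its quantiles are bounded by a universal constant, so $\lambda_n / \lambda_N \in [c_0, c_1]$ for universal constants $c_0,c_1 > 0$ (using the standard fact that if $X$ and $YX$ have comparable quantiles whenever $Y$ is tight and independent of $X$, then their medians are comparable). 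Then Proposition~\ref{prop-perc-estimate'} for $D_{0,n}$ follows by combining the displayed inequality (applied with the general $U,K_1,K_2$ of that proposition) with the integer-$N$ statement of Proposition~\ref{prop-perc-estimate'}, a union bound splitting on whether $\sup_{\ol U}|\Phi_{N,n}| \le \tfrac12 \log T$, and the comparison $\lambda_n \asymp \lambda_N$; the truncated field $\wt D_{0,n}$ is handled identically using Lemma~\ref{lem-field-compare}. Proposition~\ref{prop-pt-tight} for non-integer $n$ follows the same way: the displayed Weyl-scaling inequality with $U$ the given open set reduces tightness of $\lambda_n^{-1} D_{0,n}(z,w;U)$ and its reciprocal to tightness of $\lambda_N^{-1} D_{0,N}(z,w;U)$, its reciprocal, and $e^{\pm \xi \sup_{\ol U}|\Phi_{N,n}|}$, all of which are tight (the last uniformly in $n$).

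The main obstacle I anticipate is establishing the uniform-in-$n$ Gaussian tail bound for $\sup_{\ol U}|\Phi_{N,n}(z)|$ with constants independent of $N$ and $s$. The variance is bounded ($\le \log 2$) uniformly, but one also needs a uniform modulus-of-continuity control; the cleanest route is to note that the kernel $p_{t/2}$ for $t\in [2^{-2n}, 2^{-2N}]$ is, after the spatial rescaling $z \mapsto 2^N z$, comparable to a fixed family of kernels at scales in $[2^{-2}, 1]$, so $\Phi_{N,n}(2^{-N}\cdot)$ is a centered Gaussian field whose law is dominated (in the sense needed for Borell--TIS and entropy bounds) uniformly over $N$ and $s$; then rescaling back only shrinks $U$, which does not hurt. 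Alternatively one can simply invoke the Gaussian-process lemmas collected in Appendix~\ref{sec-appendix} together with the explicit covariance formula for $\Phi_{N,n}$. Once this uniform tail bound is in hand, everything else is a routine repackaging of the integer-scale results via Weyl scaling and union bounds, so I would keep that part brief.
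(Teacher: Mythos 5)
There is a genuine gap, and it is exactly the point you flagged as the ``main obstacle'' and then dismissed: the claimed uniform-in-$n$ Gaussian tail bound for $\sup_{z\in \ol U}|\Phi_{N,n}(z)|$ is false. By the scaling relation~\eqref{eqn-phi-scale}, $\Phi_{N,n}(\cdot) \eqD \Phi_{0,s}(2^N\cdot)$, so $\sup_{z\in \ol U}|\Phi_{N,n}(z)|$ has the law of the supremum of the unit-scale field $\Phi_{0,s}$ over the \emph{dilated} set $2^N U$ — your remark that ``rescaling back only shrinks $U$'' has the direction of the dilation backwards. Since $\Phi_{N,n}$ has correlation length of order $2^{-N}$ and pointwise variance $s\log 2$, its supremum over a fixed macroscopic set behaves like the maximum of order $4^N$ nearly independent Gaussians, so its median grows like $\sqrt{s\,n}$; no bound of the form $\BB P[\sup_{\ol U}|\Phi_{N,n}|>x]\le c_0e^{-c_1x^2}$ with constants independent of $N$ can hold (Borell--TIS only gives concentration around this growing mean). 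Consequently your Weyl-scaling sandwich compares $D_{0,n}$ and $D_{0,N}$ only up to factors of size $e^{O(\sqrt n)}$, which suffices for exponent statements such as Proposition~\ref{prop-Q-wn} but not for the up-to-constants tightness claims of Propositions~\ref{prop-perc-estimate'} and~\ref{prop-pt-tight}, which is what Proposition~\ref{prop-non-dyadic} asserts.

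The paper avoids this by matching the \emph{fine} scales instead of the coarse ones: in Lemma~\ref{lem-non-dyadic-compare} one couples the two white noises so that $\Phi_{0,n}(2^s\cdot)=\Phi_{s,n+s}'(\cdot)$, whence $\Phi_{0,n+s}'-\Phi_{0,n}(2^s\cdot)=\Phi_{0,s}'$ is a \emph{coarse} field with $O(1)$ correlation length and variance at most $\log 2$, whose supremum over a bounded set has genuinely uniform Gaussian tails by Fernique plus Borell--TIS. The price is a bounded spatial dilation $z\mapsto 2^sz$, which is absorbed in Lemma~\ref{lem-non-dyadic-median} by comparing the crossing quantiles of $2^s\BB S$ with $\lambda_n$ via the integer-$n$ case of Proposition~\ref{prop-perc-estimate'} (the left and right sides of $2^s\BB S$ are non-singleton compact connected sets, so that proposition applies directly). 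If you want to keep your outline, you must replace your decomposition $\Phi_{0,n}=\Phi_{0,N}+\Phi_{N,n}$ by this dilation coupling, so that the error field is coarse rather than fine; the remaining repackaging via union bounds is then as routine as you describe.
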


Throughout the rest of the paper, we will use Propositions~\ref{prop-perc-estimate'} and~\ref{prop-pt-tight} when $n$ is not necessarily an integer, without comment. 
The key input in the proof of Proposition~\ref{prop-non-dyadic} is the following lemma.

\begin{lem} \label{lem-non-dyadic-compare}
For each $s \in [-1,1]$ and each $n\in\BB N$, there is a coupling of two white noises $W , W'$ on $\BB C\times \BB R$ such that the following is true. Suppose that $\Phi_{0,n}$ is defined using the white noise $W$ and $\Phi_{0,n+s}'$ is defined in the same manner as $\Phi_{0,n+s}$ but with $W'$ in place of $W $. For each bounded open set $U\subset\BB C$, there are constants $c_0,c_1 >0$ depending only on $U$ such that for each $T>1$, 
\eqb \label{eqn-non-dyadic-compare}
\BB P\left[ \sup_{z\in U} |\Phi_{0,n+s}'(z) - \Phi_{0,n}(2^s z)|  > T \right] \leq c_0 e^{-c_1 T^2} .
\eqe
\end{lem}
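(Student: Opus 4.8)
The plan is to realize $\Phi_{0,n}(2^s\,\cdot)$ as a white-noise field of the form~\eqref{eqn-phi-def} with indices shifted by $s$, built from a suitably rescaled white noise, and then to absorb the leftover into a small smooth Gaussian field. First I would extend the scale-invariance computation behind~\eqref{eqn-phi-scale} to the non-integer shift $s$. Substituting $u = 2^s u'$ and $t = 2^{2s}\tau$ in~\eqref{eqn-phi-def} and using the heat-kernel self-similarity $p_{(2^{-2s}t)/2}(w) = 2^{2s}\,p_{t/2}(2^s w)$, one checks that $\Phi_{0,n}(2^s z)$ coincides with the field obtained from~\eqref{eqn-phi-def} with the pair of indices $(s,\,n+s)$, evaluated at $z$, but built from the rescaled white noise
\eqbn
W''(du,d\tau) := 2^{-2s}\, W\big(2^s\,du,\, 2^{2s}\,d\tau\big),
\eqen
i.e.\ the pushforward of $W$ by $(u,t)\mapsto(2^{-s}u,\,2^{-2s}t)$; the prefactor $2^{-2s}$ is precisely what makes the pairing $\langle W'',h\rangle$ centered Gaussian with variance $\|h\|_{L^2}^2$, so that $W''$ is again a space-time white noise on $\BB C\times\BB R$.

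Next I would fix the coupling: set $W':=W''$, which is a deterministic measurable function of $W$, so that $(W,W')$ is a coupling of two white noises, and let $\Phi_{0,n+s}'$ be the field defined from $W'$ by~\eqref{eqn-phi-def} with indices $(0,\,n+s)$. The two fields $\Phi_{0,n+s}'$ and $\Phi_{0,n}(2^s\,\cdot)$ are then built from the same white noise $W''$ but with time-integration ranges $\big[2^{-2(n+s)},\,1\big]$ and $\big[2^{-2(n+s)},\,2^{-2s}\big]$; the contributions over the common range cancel in the difference, leaving
\eqbn
G(z) := \Phi_{0,n+s}'(z) - \Phi_{0,n}(2^s z),
\eqen
which is (up to a sign depending on whether $s\ge 0$) the field obtained from $W''$ by integrating only over the short time-interval between $1$ and $2^{-2s}$. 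Since $s\in[-1,1]$, this interval is contained in $[1/4,4]$ and hence bounded away from $0$ uniformly in $s$, so $p_{\tau/2}$ and its spatial derivatives are uniformly controlled there; consequently $G$ and $\nabla G$ are centered Gaussian with pointwise variances bounded by a constant depending only on $U$, uniformly in $n$ and $s$ (indeed $\op{Var}G(z)=|s|\log 2\le\log 2$). The tail estimate~\eqref{eqn-non-dyadic-compare} then follows from a standard Gaussian supremum bound: cover $U$ by finitely many translates of the unit square and argue as in the proof of Lemma~\ref{lem-phi-cont}, controlling $G$ at lattice points and $\nabla G$ on each square, or alternatively invoke the Borell--TIS inequality together with a uniform bound on $\BB E\big[\sup_{z\in U}|G(z)|\big]$.

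I expect the only real obstacle to be the scaling identity of the first step, and even there the difficulty is purely bookkeeping: one must track the three powers of $2^s$---from the heat-kernel self-similarity, from the two-dimensional spatial Jacobian, and from the time Jacobian---and verify that they combine so that $W''$ has unit intensity with no stray prefactor left in front of the field. The remaining two steps are routine, and the argument nowhere uses that $n$ is an integer, which is exactly what is needed for the application to non-dyadic scales in Proposition~\ref{prop-non-dyadic}.
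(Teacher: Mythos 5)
Your proposal is correct and follows essentially the same route as the paper: the non-integer version of the scaling relation~\eqref{eqn-phi-scale} is used to couple the two white noises so that $\Phi_{0,n}(2^s\cdot)$ coincides with $\Phi_{s,n+s}'$ built from $W'$, the difference is then the bounded-variance Gaussian field $\Phi_{0,s}'$ (with variance $|s|\log 2\le\log 2$), and the tail bound follows from a Gaussian supremum estimate (the paper uses Borell--TIS plus Fernique, which is one of the two options you offer). Your explicit pushforward construction of $W'$ and the bookkeeping of the $2^s$ factors is just a concrete verification of the distributional identity the paper invokes, so there is no substantive difference.
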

\begin{proof}
We will treat the case when $s \geq 0$; the case when $s < 0$ is treated similarly but with the roles of $\Phi_{0,n}$ and $\Phi_{0,n+s}$ interchanged.
The relation~\eqref{eqn-phi-scale} continues to hold with non-integer values of $n,m,k$. In particular,
\eqbn
\Phi_{0,n}(2^s \cdot) \eqD \Phi_{s,n+s}(  \cdot) .
\eqen
Hence, we can find a coupling of two white noises $W \eqD W'$ such that if $\Phi_{0,n+s}'$, $\Phi_{0,s}'$, and $\Phi_{s,n+s}'$ are defined with $W'$ in place of $W$, then a.s.\ $\Phi_{0,n}(2^s \cdot) = \Phi_{s,n+s}'(\cdot)$. Recalling that $\Phi_{0,n+s}' = \Phi_{s,n+s}' + \Phi_{0,s}'$,  
\eqb  \label{eqn-non-dyadic-phi}
\sup_{z\in U} |\Phi_{0,n+s}'(z) - \Phi_{0,n}(2^s z)| = \sup_{z\in U} |\Phi_{0,s}'(z)| .
\eqe 
The function $\Phi_{0,s}'$ is continuous and centered Gaussian with pointwise variance $s\log 2$. 
By the Borell-TIS inequality~\cite{borell-tis1,borell-tis2} (see, e.g.,~\cite[Theorem 2.1.1]{adler-taylor-fields}), there are constants $c_0,c_1> 0$ depending only on $U$ such that for each $T>1$, 
\eqb \label{eqn-non-dyadic-btis}
\BB P\left[   \sup_{z\in U} |\Phi_{0,s}'(z)|  - \BB E\left[  \sup_{z\in U} |\Phi_{0,s}'(z)|  \right]  > T\right] \leq c_0 e^{-c_1 T^2} .
\eqe
By Fernique's criterion~\cite{fernique-criterion} (see~\cite[Theorem 4.1]{adler-gaussian} or~\cite[Lemma 2.3]{dzz-heat-kernel} for the version we use here) together with~\cite[Lemma 4]{dddf-lfpp}, $\BB E\left[  \sup_{z\in U} |\Phi_{0,s}'(z)|  \right] $ is bounded above by a constant depending only on $U$. 
By combining this with~\eqref{eqn-non-dyadic-btis} and recalling~\eqref{eqn-non-dyadic-phi}, we get~\eqref{eqn-non-dyadic-compare} (with possibly modified values of $c_0,c_1$).
\end{proof}

\begin{lem} \label{lem-non-dyadic-median}
For each $n\in\BB N$ and each $s\in[-1,1]$, we have $    \lambda_{n+s} \asymp \lambda_n$ with the implicit constant depending only on $\xi$. 
\end{lem}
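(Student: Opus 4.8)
The plan is to show $\lambda_{n+s} \asymp \lambda_n$ by transferring the median left-right crossing distance between scales $n$ and $n+s$ using the coupling from Lemma~\ref{lem-non-dyadic-compare}, which controls $\sup_{z\in U} |\Phi_{0,n+s}'(z) - \Phi_{0,n}(2^s z)|$ with Gaussian tails. The starting observation is the scaling property: the field $\Phi_{0,n}(2^s\cdot)$ restricted to (a neighborhood of) $2^{-s}\BB S$ generates, via LFPP, a crossing distance across $2^{-s}\BB S$ that is exactly $2^{-s}$ times $L_{0,n}$ (by the change of variables $P \mapsto 2^s P$ in the path integral, since the $2^{-s}$ from $|P'|$ is the only Jacobian factor). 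Since $s \in [-1,1]$, the square $2^{-s}\BB S$ and the unit square $\BB S$ are comparable up to a bounded factor, so crossing distances of the two are comparable by Proposition~\ref{prop-perc-estimate'} (distances between non-trivial connected sets are tight relative to $\lambda_n$, and the left/right boundaries of either square are such sets contained in a common bounded open set).

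First I would fix a bounded open set $U$ containing $2\BB S$, say, and work on the high-probability event $\mcl G_T := \{\sup_{z\in U} |\Phi_{0,n+s}'(z) - \Phi_{0,n}(2^s z)| \leq T\}$ from Lemma~\ref{lem-non-dyadic-compare}, which has probability at least $1 - c_0 e^{-c_1 T^2}$. On $\mcl G_T$, adding a function bounded by $T$ to the field changes any LFPP length by a factor in $[e^{-\xi T}, e^{\xi T}]$, so the $\wt D_{0,n+s}'$-crossing distance of $\BB S$ (equivalently $L_{0,n+s}'$) is comparable, up to the factor $e^{\xi T}$, to the crossing distance of $\BB S$ computed with the field $\Phi_{0,n}(2^s\cdot)$. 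By the scaling change of variables, the latter equals $2^{-s}$ times the $D_{0,n}$-distance between the left and right sides of $2^{-s}\BB S$ (with paths constrained to $2^{-s}\BB S$). Since $2^{-s} \in [1/2, 2]$, Proposition~\ref{prop-perc-estimate'} applied with $U = $ (a fixed bounded open set containing all the $2^{-s}\BB S$ for $s \in [-1,1]$), $K_1 = \bdy_{\op L} \BB S$, $K_2 = \bdy_{\op R} \BB S$ — or rather the corresponding boundary segments of the relevant squares — shows that this distance is, with probability $\geq 1 - c_0 e^{-c_1(\log T)^2/\log\log T}$, between $T^{-1}\lambda_n$ and $T\lambda_n$. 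Combining the two comparisons: on an event of probability $\geq 1/2$ (taking $T$ a suitable $\xi$-dependent constant), $L_{0,n+s}'$ lies in $[c\lambda_n, C\lambda_n]$ for $\xi$-dependent constants $c, C$. Since $L_{0,n+s}' \eqD L_{0,n+s}$, its median $\lambda_{n+s}$ also lies in $[c\lambda_n, C\lambda_n]$, as the event has probability exceeding $1/2$ (for the upper bound) and its complement has probability exceeding $1/2$ would be needed for the lower — more carefully, I would choose $T$ so that each of the two relevant bad events has probability $< 1/4$, forcing $\BB P[L_{0,n+s} \in [c\lambda_n, C\lambda_n]] > 1/2$, hence the median lies in this range.

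The main obstacle — really just a bookkeeping point rather than a deep difficulty — is correctly matching up the boundary sets under the rescaling: the left-right crossing distance of $\BB S$ with the rescaled field corresponds to the crossing distance of the rescaled square $2^{-s}\BB S$ with $\Phi_{0,n}$, and one must ensure all the squares $2^{-s}\BB S$ for $s \in [-1,1]$, together with their left/right boundary segments, sit inside one fixed bounded open set so that Proposition~\ref{prop-perc-estimate'} applies with $s$-independent constants. One also needs Proposition~\ref{prop-perc-estimate'} to compare the crossing distance of $2^{-s}\BB S$ to $\lambda_n$ (the median crossing distance of $\BB S$), which is immediate since $\bdy_{\op L}(2^{-s}\BB S)$ and $\bdy_{\op R}(2^{-s}\BB S)$ are disjoint compact connected non-singleton sets contained in that fixed open set, so both their mutual $D_{0,n}$-distance and (comparing in the other direction, using that the crossing of $\BB S$ is also bounded by a crossing of a slightly fattened square) the quantity is $\asymp \lambda_n$ with tight ratio. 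Since everything is symmetric in $n \leftrightarrow n+s$ (Lemma~\ref{lem-non-dyadic-compare} is stated for both signs of $s$), we get the two-sided bound $\lambda_{n+s} \asymp \lambda_n$ with implicit constant depending only on $\xi$, completing the proof.
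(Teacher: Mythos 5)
Your argument is correct and is essentially the paper's proof: both use the coupling of Lemma~\ref{lem-non-dyadic-compare} to compare $L_{0,n+s}$ with the left-right crossing distance of a rescaled square under $D_{0,n}$, and then Proposition~\ref{prop-perc-estimate'} (at the integer scale $n$) to show that the quantiles of that crossing distance are within $\xi$-dependent constant factors of $\lambda_n$, which pins down the median $\lambda_{n+s}$. One minor slip: the change of variables turns the crossing of $\BB S$ under the field $\Phi_{0,n}(2^s\cdot)$ into $2^{-s}$ times the $D_{0,n}$-crossing of $2^{s}\BB S$ (not $2^{-s}\BB S$), but since for $s\in[-1,1]$ either square is uniformly comparable to $\BB S$ and fits in a fixed bounded open set, this does not affect the argument.
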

\begin{proof}
For $p \in (0,1)$, let $\ell_n^s(p)$ be the $p$th quantile of $D_{0,n}(2^s \bdy_{\op L} \BB S , 2^s \bdy_{\op R} \BB S ; 2^s \BB S)$, i.e., $\ell_n^s(p)$ is defined in the same manner as $\ell_n(p)$ from~\eqref{eqn-quantile-def} but with $2^s\BB S$ instead of $\BB S$. 
Lemma~\ref{lem-non-dyadic-compare} implies that there is a $p\in (0,1)$ depending only on $\xi$ such that $\ell_n^s(p) \leq \lambda_{n+s} \leq \ell_n^s(1-p)$. 
By Proposition~\ref{prop-perc-estimate'}, $\ell_n^s(p)$ and $\ell_n^s(1-p)$ each differ from $\lambda_n$ by at most a $\xi$-dependent positive constant factor. 
Thus the lemma statement holds. 
\end{proof}

\begin{proof}[Proof of Proposition~\ref{prop-non-dyadic}]
This follows by combining Propositions~\ref{prop-perc-estimate'} and~\ref{prop-pt-tight}, respectively, with Lemmas~\ref{lem-non-dyadic-compare} and~\ref{lem-non-dyadic-median}. 
\end{proof}

\subsection{Comparison of $D_{0,n}$ and $D_h^\ep$} 
\label{sec-gff-compare}

Recall the definitions of $h_\ep^*$ from~\eqref{eqn-gff-convolve}, $D_h^\ep$ from~\eqref{eqn-gff-lfpp}, and $\frk a_\ep$ from~\eqref{eqn-gff-constant}. 
Before we transfer our above results for $D_{0,n}$ to results for $D_h^\ep$, we need some further preliminary facts and definitions concerning $D_h^\ep$. 

The metrics $D_h^\ep$ possess an important scale invariance property which is easy to check from the definition (see~\cite[Lemma 2.6]{lqg-metric-estimates}). 
To state this property, we let $h_r(z)$ for $r>0$ and $z\in\BB C$ be the average of $h$ over the circle $\bdy B_r(z)$ (see~\cite[Section 3.1]{shef-kpz} for more on the circle average process). Then our scale invariance property reads
\eqb \label{eqn-gff-scale}
\left( D_h^{ \ep/r} (z,w)  \right)_{z,w\in\BB C} \eqD \left( r^{-1} e^{-\xi h_r(0)} D_h^{ \ep} ( r  z , r  w) \right)_{z,w\in\BB C} ,\quad\forall  r , \ep  > 0.
\eqe

The random variable $h_\ep^*(z)$ does not depend locally on $h$ since the heat kernel $p_{\ep^2/2}$ is non-zero on all of $\BB C$. For this reason we will also need to work with a localized version of $h_\ep^*$, which we will denote by $\wh h_\ep^*$. This same function $\wh h_\ep^*$ is also used in~\cite[Section 2.1]{lqg-metric-estimates}. 
 
For $\ep > 0$, let $\chi_\ep  : \BB C\rta [0,1]$ be a deterministic, smooth, radially symmetric bump function which is identically equal to 1 on $B_{\ep^{1/2}/2}(0)$ and vanishes outside of $B_{\ep^{1/2}}(0)$ (in fact, the power $1/2$ could be replaced by any $p\in (0,1)$). 
We can choose $\chi_\ep$ in such a way that $\ep\mapsto \chi_\ep$ is a continuous mapping from $(0,\infty)$ to the space of continuous functions on $\BB C$, equipped with the uniform topology. 
We define
\eqb \label{eqn-localized-def}
\wh h_\ep^*(z) :=  \int_{\BB C}  h(w) \chi_\ep(z-w) p_{\ep^2/2} (z,w) \, dw ,
\eqe
with the integral interpreted in the sense of distributional pairing.
Since $\psi_\ep$ vanishes outside of $B_{\ep^{1/2}}(0)$, we have that $\wh h_\ep^*(z)$ is a.s.\ determined by $h|_{B_{\ep^{1/2}}(z)}$. 
It is easy to see that $\wh h_\ep^*$ a.s.\ admits a continuous modification (see Lemma~\ref{lem-localized-approx} below).
We henceforth assume that $\wh h_\ep^*$ is replaced by such a modification. 

As in~\eqref{eqn-gff-lfpp}, we define the localized LFPP metric
\eqb \label{eqn-localized-lfpp}
\wh D_h^\ep(z,w) := \inf_{P : z\rta w} \int_0^1 e^{\xi \wh h_\ep^*(P(t))} |P'(t)| \,dt ,
\eqe
where the infimum is over all piecewise continuously differentiable paths from $z$ to $w$. 
By the definition of $\wh h_\ep^*$, 
\eqb \label{eqn-localized-property}
\text{for any open $U\subset \BB C$, the internal metric $\wh D_h^\ep(\cdot,\cdot; U)$ is a.s.\ determined by $h|_{B_{\ep^{1/2}}(U)}$} .
\eqe 
The reason why we make~\eqref{eqn-localized-property} an indented equation is for ease of reference later.

The following lemma is essentially a restatement of~\cite[Lemma 2.1]{lqg-metric-estimates}. 
  
\begin{lem}[\cite{lqg-metric-estimates}] \label{lem-localized-approx} 
Almost surely, $(z,\ep)\mapsto \wh h_\ep^*(z)$ is continuous. 
Furthermore, for each bounded open set $U\subset \BB C$, a.s.\ 
\eqb \label{eqn-localized-approx}
\lim_{\ep\rta 0} \sup_{z\in \ol U} |h_\ep^*(z) - \wh h_\ep^*(z)|  = 0.
\eqe
In particular, a.s.\
\eqb \label{eqn-localized-lfpp-approx}
\lim_{\ep\rta 0}  \frac{\wh D_h^\ep(z,w;U)}{D_h(z,w;U)} = 1 , \quad \text{uniformly over all $z,w\in U$ with $z\not=w$}.
\eqe
The same is true if we replace $h$ by a zero-boundary GFF on an open subset $V$ of $\BB C$ and we require that $\ol U\subset V$.
\end{lem}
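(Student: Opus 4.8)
\textbf{Proof proposal for Lemma~\ref{lem-localized-approx}.}

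The plan is to follow the blueprint of~\cite[Lemma 2.1]{lqg-metric-estimates} essentially verbatim, so the main task is to verify that each ingredient carries over to the present normalization and to the particular choice of cutoff $\chi_\ep$ made in~\eqref{eqn-localized-def}. Write $\delta_\ep(z) := h_\ep^*(z) - \wh h_\ep^*(z) = \int_{\BB C} h(w)\, \bigl(1 - \chi_\ep(z-w)\bigr) p_{\ep^2/2}(z-w)\, dw$, so that $\delta_\ep(z)$ is obtained by pairing $h$ against the smooth, compactly-supported-away-from-$0$ test function $g_{\ep,z}(w) := (1-\chi_\ep(z-w)) p_{\ep^2/2}(z-w)$, which is supported on $\BB C\setminus B_{\ep^{1/2}/2}(z)$. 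The first step is to compute the covariance structure of the centered Gaussian field $(z,\ep)\mapsto \delta_\ep(z)$: using the Green's-function representation of the GFF on $\BB C$ (or working on a large bounded domain and invoking local absolute continuity, as in~\cite{lqg-metric-estimates}), one checks that $\operatorname{Var}\delta_\ep(z)$ is bounded by a constant times $e^{-c/\ep}$ for $z$ in a fixed compact set, because the mass of $p_{\ep^2/2}$ outside $B_{\ep^{1/2}/2}(0)$ is superpolynomially small in $\ep$; the key elementary fact is that a centered Gaussian on $\BB C$ with variance $\sigma^2$ spreads a negligible fraction of its mass beyond distance $\sigma^{1/2}$ of its center. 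Similarly one estimates the increments $\operatorname{Var}(\delta_\ep(z)-\delta_{\ep'}(z'))$, using that $\ep\mapsto\chi_\ep$ and $\ep\mapsto p_{\ep^2/2}$ are continuous in the appropriate sense.

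Second, I would apply a standard Gaussian chaining / Kolmogorov-type continuity argument (e.g.\ the Borell-TIS inequality~\cite{borell-tis1,borell-tis2} together with Fernique's criterion~\cite{fernique-criterion}, exactly the tools already cited in the proof of Lemma~\ref{lem-non-dyadic-compare}) to upgrade the variance and increment bounds to: $(z,\ep)\mapsto\delta_\ep(z)$ has a continuous modification, and $\sup_{z\in\ol U}|\delta_\ep(z)|\to 0$ a.s.\ as $\ep\to 0$. Since $(z,\ep)\mapsto h_\ep^*(z)$ is already known to be continuous (this is standard for GFF mollifications and is used implicitly throughout), continuity of $\wh h_\ep^*$ follows by subtraction, giving the first assertion. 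The convergence~\eqref{eqn-localized-approx} is then the almost-sure statement just obtained.

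Third, I would deduce~\eqref{eqn-localized-lfpp-approx} from~\eqref{eqn-localized-approx} by the usual Weyl-scaling comparison: for any path $P$, its $\wh D_h^\ep$-length and its $D_h^\ep$-length differ by at most a multiplicative factor $\exp\bigl(\xi\sup_{\ol U}|\delta_\ep|\bigr)$, hence
\[
e^{-\xi\sup_{\ol U}|\delta_\ep|}\, D_h^\ep(z,w;U) \;\le\; \wh D_h^\ep(z,w;U)\;\le\; e^{\xi\sup_{\ol U}|\delta_\ep|}\, D_h^\ep(z,w;U),
\]
and since $\sup_{\ol U}|\delta_\ep|\to 0$ a.s., the ratio converges to $1$ uniformly over $z\neq w$ in $U$. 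Finally, the version for a zero-boundary GFF on $V\supset\ol U$ follows by the same argument, noting that the Green's function on $V$ differs from that on $\BB C$ by a harmonic (hence locally bounded) function, which does not affect the superpolynomial decay of $\operatorname{Var}\delta_\ep(z)$ for $z\in\ol U$ at positive distance from $\bdy V$; alternatively one invokes local absolute continuity of the two fields on $U$.

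The main obstacle is purely the bookkeeping in the first step: one must be careful that the specific cutoff $\chi_\ep$ (equal to $1$ on $B_{\ep^{1/2}/2}(0)$, supported in $B_{\ep^{1/2}}(0)$) and the specific heat-kernel normalization $p_{\ep^2/2}$ used in this paper produce the same superpolynomial variance decay as in~\cite{lqg-metric-estimates}, and that the joint continuity in $(z,\ep)$ genuinely holds down to $\ep=0$ after the continuous modification. No genuinely new idea is required beyond what is already in~\cite[Lemma 2.1]{lqg-metric-estimates}; the statement is explicitly flagged as "essentially a restatement" of that result.
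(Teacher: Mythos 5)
Your proposal is correct and follows essentially the same route as the paper, which does not give an independent argument but simply invokes \cite[Lemma 2.1]{lqg-metric-estimates} (whose proof is exactly the variance-decay-plus-Gaussian-concentration scheme you sketch) and notes that the zero-boundary case follows from the same argument via local comparison of the fields. The Weyl-scaling deduction of~\eqref{eqn-localized-lfpp-approx} from~\eqref{eqn-localized-approx} and the treatment of the zero-boundary GFF are as in the paper's intended reading of the citation, so no further comparison is needed.
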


The last part of Lemma~\ref{lem-localized-approx} (concerning the zero-boundary GFF) is not explicitly stated in~\cite{lqg-metric-estimates}, but it follows from the same proof as in the case of a whole-plane GFF.

The following lemma is our main tool for transferring results between $D_{0,n}$ and $D_h^\ep$. 

\begin{lem} \label{lem-gff-compare} 
Let $U\subset\BB C$ be a bounded open set. 
Let $\ep \in (0,1)$ and let $n := \log_2 \ep^{-1}$ (note that $n$ is not required to be an integer). There is a coupling of $h$ with the white noise $W$ from~\eqref{eqn-phi-def} and constants $c_0,c_1 >0$ depending only on $U,\xi$ such that 
\eqb \label{eqn-gff-compare}
\BB P\left[ \sup_{z\in U} |\Phi_{0,n}(z) - h_\ep^*(z)| > x\right] \leq c_0 e^{-c_1 x^2}  + o_\ep(1) ,\quad\forall x > 0 
\eqe
where the $o_\ep(1)$ is deterministic and tends to zero as $\ep\rta 0$. 
\end{lem}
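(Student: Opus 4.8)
The plan is to couple $h$ and $W$ so that $\Phi_{0,n}$ is, up to a negligible error, exactly a truncated convolution of $h$ with the heat kernel, and then compare that truncated convolution with $h_\ep^*$. Recall from~\eqref{eqn-phi-def} that $\Phi_{0,n}(z) = \int_{2^{-2n}}^{1}\int_{\BB C} p_{t/2}(z-u)\,W(du,dt)$, which is a Gaussian field whose covariance is $\int_{2^{-2n}}^1 \int_{\BB C} p_{t/2}(z-u)p_{t/2}(w-u)\,du\,dt = \int_{2^{-2n}}^1 p_t(z-w)\,dt$, by the semigroup property of the heat kernel. On the other hand, the whole-plane GFF $h$ (normalized appropriately) has the property that $h_\delta^*(z) = (h*p_{\delta^2/2})(z)$ has covariance $\op{Cov}(h_{\ep}^*(z), h_{\ep}^*(w))$ equal to (a constant times) $\int_{\ep^2}^\infty p_t(z-w)\,dt$ plus a harmonic correction coming from the fact that the GFF is only defined modulo additive constant / is pinned somewhere. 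So the first step is to write down precisely the white-noise representation of the whole-plane GFF: there is a space-time white noise $W$ on $\BB C\times(0,\infty)$ such that $h$ (modulo additive constant, or with a suitable normalization) equals $\sqrt{\pi}\int_0^\infty\int_{\BB C} p_{t/2}(z-u)\,W(du,dt)$ in the appropriate sense, so that $h_\ep^*(z) = \sqrt\pi\int_{\ep^2}^\infty\int_{\BB C} p_{t/2}(z-u)\,W(du,dt)$ plus a lower-order term. This is standard — it is exactly the decomposition used in~\cite{dddf-lfpp} and~\cite{dg-lqg-dim}; I would cite it rather than reprove it.

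With that coupling fixed, $\Phi_{0,n}(z) - h_\ep^*(z)$ decomposes into two pieces (ignoring the $\sqrt\pi$ normalization, which one simply carries along, and noting $\ep = 2^{-n}$ so $\ep^2 = 2^{-2n}$): the ``long-time tail'' $\sqrt\pi\int_1^\infty\int_{\BB C} p_{t/2}(z-u)\,W(du,dt)$, which is a smooth Gaussian field on $\BB C$ with pointwise variance bounded by a universal constant and with a good modulus of continuity on the bounded set $U$; plus whatever error term arises from the precise form of the whole-plane GFF's white-noise decomposition versus the pure integral (this is a harmonic, hence smooth, field, controlled the same way). For the first of these, I would invoke the Borell--TIS inequality~\cite{borell-tis1,borell-tis2} together with Fernique's criterion for the boundedness of $\BB E[\sup_{z\in U}|\cdot|]$ — exactly as in the proof of Lemma~\ref{lem-non-dyadic-compare} above — to get a bound of the form $\BB P[\sup_{z\in U}|\cdots| > x/2]\leq c_0 e^{-c_1 x^2}$. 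The additive-constant / normalization ambiguity in the whole-plane GFF is the source of the $o_\ep(1)$ term: the difference between $h_\ep^*$ as defined in~\eqref{eqn-gff-convolve} and the truncated white-noise integral involves the behavior of the field at large scales, which does not shrink as $\ep\rta 0$ but which only shifts $h_\ep^*$ by a quantity that, on the bounded set $U$, converges in probability (to a finite random constant) as $\ep\to0$; so for fixed $x$ the probability that this discrepancy exceeds $x/2$ tends to a limit, and one absorbs the non-vanishing part into $c_0 e^{-c_1 x^2}$ and the vanishing fluctuation into $o_\ep(1)$. Alternatively, and perhaps more cleanly, one works with $h_\ep^*$ only up to a global additive constant throughout and notes this constant does not affect any of the downstream LFPP comparisons (since LFPP distances scale by $e^{\xi\cdot\text{const}}$ uniformly).

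The main obstacle I anticipate is bookkeeping around the precise normalization and the additive-constant ambiguity of the whole-plane GFF: getting the covariances to match up to a genuinely harmonic (and hence, on $U$, uniformly-controllable) remainder, and correctly identifying which part of the remainder is responsible for the $o_\ep(1)$ rather than for the Gaussian-tail term. None of the individual estimates are hard — they are all instances of Borell--TIS plus Fernique, already used in Lemma~\ref{lem-non-dyadic-compare} — but one must be careful that the remainder fields really are smooth with locally bounded expected sup-norm, uniformly in $\ep$ (equivalently in $n$). I would handle this by choosing the coupling so that the only $\ep$-dependent part of $\Phi_{0,n} - h_\ep^*$ is the truncation at scale $t\in[2^{-2n},1]$ versus $t\in[\ep^2,\infty)$, i.e. the piece $\int_1^\infty$, which has no $\ep$-dependence at all once $\ep<1$, so uniformity in $n$ is automatic; the remaining $\ep$-dependent discrepancy between the idealized integral and the actual $h_\ep^*$ from~\eqref{eqn-gff-convolve} is then a single field converging as $\ep\to0$, giving the $o_\ep(1)$. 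Finally I would note, as in the proofs of Lemmas~\ref{lem-phi-cont} and~\ref{lem-field-compare}, that the case of general bounded $U$ reduces to the unit square by covering $U$ with finitely many translates and using translation invariance.
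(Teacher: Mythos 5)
Your overall strategy---realizing $h$ from the white noise $W$ via its heat-kernel decomposition and comparing the truncated time integral with $h_\ep^*$ directly---is a plausible alternative route, but as written it has two concrete gaps. First, the ``long-time tail'' $\sqrt\pi\int_1^\infty\int_{\BB C}p_{t/2}(z-u)\,W(du,dt)$ does \emph{not} have pointwise variance bounded by a universal constant: its variance is $\pi\int_1^\infty p_t(0)\,dt=\int_1^\infty\tfrac{dt}{2t}=\infty$. This divergence is not a side issue; it is precisely why the whole-plane GFF exists only modulo additive constant, so the Borell--TIS/Fernique step cannot be run on the tail field as you describe. What is uniformly bounded is the variance of differences $\mathrm{tail}(z)-\mathrm{tail}(z_0)$ for $z,z_0$ in a bounded set, so the argument can be repaired by pinning (subtracting a circle average chosen to match the normalization that $h$ actually carries), but then the additive-constant bookkeeping you defer to the end is not a separate small discrepancy producing the $o_\ep(1)$: it is what makes the main Gaussian-tail estimate well posed, and it must be folded into that estimate. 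Second, the identity ``$h_\ep^*(z)=\sqrt\pi\int_{\ep^2}^\infty\int p_{t/2}(z-u)\,W(du,dt)$ plus a lower-order term'' conceals a real step: under the white-noise coupling the semigroup property gives $h*p_{\ep^2/2}(z)=\sqrt\pi\int_0^\infty\int p_{(t+\ep^2)/2}(z-u)\,W(du,dt)$ (modulo the additive constant), which agrees with the truncated integral in law but not almost surely. The difference is a Gaussian field whose pointwise variance is of constant order uniformly in $\ep$ (it does not vanish as $\ep\to 0$), so it contributes to the $c_0e^{-c_1x^2}$ term and needs its own uniform-in-$\ep$ sup estimate on $U$ (an increment bound plus Fernique/Borell--TIS); that estimate is true but is essentially the content of~\cite[Proposition 29]{dddf-lfpp}, so ``citing the decomposition'' does not dispense with proving it.

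For comparison, the paper never constructs $h$ from $W$: it cites~\cite[Proposition 29]{dddf-lfpp} to compare $\Phi_{0,n}$ with the heat-kernel mollification of a zero-boundary GFF on a slightly larger domain, writes $h = \rng h + \frk h$ with $\frk h$ harmonic via the Markov property (Borell--TIS controls $\sup|\frk h|$ on a compact subset), and passes between $h_\ep^*$ and its localized version $\wh h_\ep^*$ using Lemma~\ref{lem-localized-approx} together with the mean value property of $\frk h$; there the $o_\ep(1)$ comes from the a.s.\ convergence $\sup_U|h_\ep^*-\wh h_\ep^*|\to 0$, not from any additive-constant issue. If you want to push your route through, the cleanest fix is to prove the estimate first for $h$ viewed modulo additive constant (i.e.\ for differences), then restore the actual normalization by a single pinned Gaussian variable, giving each of the two error fields above its own uniform sup bound on $U$.
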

\begin{proof}
Throughout the proof, $c_0,c_1$ denote deterministic positive constants depending only on $U,\xi$ which may change from line to line.
Let $U'' \supset U' \supset U$ be bounded open sets with $\ol U  \subset U'$ and $\ol U '\subset U''$. Let $\rng h$ be a zero-boundary GFF on $U''$. Define $\rng h_\ep^*(z)$ for $z\in U''$ as in~\eqref{eqn-gff-convolve} but with $\rng h$ in place of $h$ (we take $\rng h = 0$ outside of $U''$). 
By~\cite[Proposition 29]{dddf-lfpp}, there is a coupling of $\rng h$ with $\Phi_{0,n}$ such that
\eqb \label{eqn-gff-compare0}
\BB P\left[ \sup_{z\in \ol U'} |\Phi_{0,n}(z) - \rng h_\ep^*(z)| > x\right] \leq c_0 e^{-c_1 x^2}  .
\eqe

To transfer from $\rng h$ to $h$, we use the Markov property of $h$ (see, e.g.,~\cite[Lemma 2.1]{gms-harmonic}) to couple $h$ and $\rng h$ in such a way that $h|_{U''} = \rng h + \frk h$, where $\frk h$ is a random centered Gaussian harmonic function on $U''$. Since $\frk h$ is continuous on $\ol U'$, the Borell-TIS inequality implies that
\eqb \label{eqn-gff-compare-harmonic}
\BB P\left[ \sup_{z \in \ol U'} |\frk h(z)| > x \right]  \leq c_0 e^{-c_1 x^2} .
\eqe

We want to use~\eqref{eqn-gff-compare-harmonic} to compare $h_\ep^*(z)$ and $\rng h_\ep^*(z)$, but we cannot do this directly since $h_\ep^*(z)$ is not determined by $h|_{\ol U'}$. So, we instead need to use localized versions of $h_\ep^*(z)$ and $\rng h_\ep^*(z)$.
Define $\wh h_\ep^*$ as in~\eqref{eqn-localized-def} and define $\wh{\rng h^*}_\ep$ as in~\eqref{eqn-localized-def} with $\rng h$ in place of $h$. Also note that by the mean value property of $\frk h$, if we define $\wh{\frk h}_\ep^*(z)$ as in~\eqref{eqn-localized-def} with $\frk h$ in place of $h$, then $\wh{\frk h}_\ep^*(z) = \frk h(z)$ whenever $B_{\ep^{1/2}}(z)\subset U''$. Since $\wh h_\ep^*(z)$ (resp.\ $\wh{\rng h}_\ep^*(z)$) is determined by the restriction of $h$ (resp.\ $\rng h$) to $B_{\ep^{1/2}}(z)$, it follows from~\eqref{eqn-gff-compare-harmonic} that whenever $\ep^{1/2}$ is smaller than the Euclidean distance from $\ol U$ to $\bdy U'$,  
\eqb \label{eqn-gff-compare-local}
\BB P\left[ \sup_{z \in \ol U } |   \wh h_\ep^*(z) - \wh{\rng h^*}_\ep(z)     | > x \right]  \leq c_0 e^{-c_1 x^2} .
\eqe
On the other hand, by Lemma~\ref{lem-localized-approx}, a.s.\ 
\eqb \label{eqn-gff-compare-approx}
\lim_{\ep\rta 0} \sup_{z \in \ol U } \max\{ | h_\ep^*(z) - \wh h_\ep^*(z) | , |\rng h_\ep^*(z) -   \wh{\rng h^*}_\ep(z)     |  = 0. 
\eqe
By combining~\eqref{eqn-gff-compare0}, \eqref{eqn-gff-compare-local}, and~\eqref{eqn-gff-compare-approx} we obtain~\eqref{eqn-gff-compare}.
\end{proof}

\begin{proof}[Proof of Proposition~\ref{prop-gff-pt-tight}]
First assume that $U$ is bounded. 
By Lemma~\ref{lem-gff-compare}, if we set $n = \log_2\ep^{-1}$ then the metrics $D_{0,n}$ and $D_h^\ep$ can be coupled together so that the corresponding internal metrics on $U$ are bi-Lipschitz equivalent, and moreover the laws of the Lipschitz constants in each direction can be bounded independently of $\ep$. 
From this bi-Lipschitz equivalence applied with $U$ equal to a neighborhood of the unit square $\BB S$, it follows that there exists $p\in (0,1/2)$ depending only on $\xi$ such that in the notation~\eqref{eqn-quantile-def}, 
\eqbn
\ell_n(p)  \leq \frk a_\ep \leq \ell_n(1-p) .
\eqen
Due to the tightness of the law of $\lambda_n^{-1} L_{0,n}$, we have $\ell_n(p) \succeq \lambda_n$ and $\ell_n(1-p) \preceq \lambda_n$, with the implicit constants depending only on $\xi$. Thus~\eqref{eqn-scale-compare} holds.  

The tightness of $\frk a_\ep^{-1} D_h^\ep(K_1,K_2 ; U)$ and its reciporical for bounded $U$ now follows from the analogous tightness statement for $\lambda_n^{-1} D_{0,n} (K_1,K_2 ; U)$ (Propositions~\ref{prop-perc-estimate'} and~\ref{prop-pt-tight}) together with Proposition~\ref{prop-non-dyadic} (to allow for non-integer values of $n$).  

We now treat the case when $U$ is unbounded. 
Let $V\subset U$ be a bounded connected open set which contains $K_1$ and $K_2$. 
Then $D_h^\ep(K_1,K_2 ; U) \leq D_h^\ep(K_1,K_2 ; V)$ so the case when $U$ is bounded already implies that $\frk a_\ep^{-1} D_h^\ep(K_1,K_2 ; U)$ is tight. 
To get the tightness of $\left(\frk a_\ep^{-1} D_h^\ep(K_1,K_2 ; U) \right)^{-1}$, let $\Pi$ be Jordan curve in $V$ which disconnects $K_1$ from $K_2 \cup \bdy V$. 
Then
\eqb
D_h^\ep\left(K_1,K_2 ; U \right) \geq D_h^\ep\left( K_1 , \Pi ; U\right) = D_h^\ep\left( K_1 , \Pi ; V\right) 
\eqe
so the tightness of $\left(\frk a_\ep^{-1} D_h^\ep(K_1,K_2 ; U) \right)^{-1}$ follows from the tightness of $\left(\frk a_\ep^{-1} D_h^\ep(K_1,\Pi ; V) \right)^{-1}$. 
\end{proof}

\begin{proof}[Proof of Proposition~\ref{prop-Q}]
This is immediate from Proposition~\ref{prop-Q-wn} and~\eqref{eqn-scale-compare} (plus Lemma~\ref{lem-non-dyadic-median} to deal with non-integer values of $n$). 
\end{proof}

\begin{proof}[Proof of Proposition~\ref{prop-gff-scale-constant}]
By Lemma~\ref{lem-exponent-relation} combined with Lemma~\ref{lem-non-dyadic-median} (to allow for non-integer $n$ and $K$), it holds for each $n > K > 0$ that, 
\eqb \label{eqn-use-exponent-relation}
2^{-(1-\xi Q + \zeta) K} \preceq \frac{\lambda_n}{\lambda_{n-K}} \preceq 2^{-(1-\xi Q -\zeta) K} ,
\eqe
with the implicit constants depending only on $\zeta,\xi$. 
To prove~\eqref{eqn-gff-scale-constant1}, we apply Lemma~\ref{lem-exponent-relation} with $n  > 0$ such that $\ep  = 2^{-n}$ and $K > 0$ such that $2^{-K} = r$, then use~\eqref{eqn-scale-compare} to compare $\frk a_\ep$ with $\lambda_n$ and $\frk a_{\ep/r}$ with $\lambda_{n-K}$. 
To prove~\eqref{eqn-gff-scale-constant2}, we apply Lemma~\ref{lem-exponent-relation} with $n > 0$ such that $\ep / r = 2^{-n}$ and $K > 0$ such that $2^{-K} =r$, then use~\eqref{eqn-scale-compare} to compare $\frk a_{\ep/r}$ with $\lambda_n$ and $\frk a_{\ep}$ with $\lambda_{n-K}$.
\end{proof}

\subsection{Additional estimates for $D_h^\ep$}
\label{sec-gff-estimates}

With Proposition~\ref{prop-gff-pt-tight} established, we can now prove some useful estimates for the LFPP metric $D_h^\ep$. 
We first have a variant of Lemma~\ref{lem-perc-field} for $D_h^\ep$. For the statement, we recall that $h_r(0)$ is the average of $h$ over the circle of radius $r$ centered at 0. 

\begin{lem} \label{lem-perc-gff}
Let $U\subset \BB C$ be a bounded open set and let $K_1,K_2\subset U$ be disjoint compact connected sets which are not singletons. 
There are constants $c_0,c_1 >0$ depending only on $U,K_1,K_2,\xi$ such that for each $r > 0$, each $\ep \in (0,r)$, and each $T > 3$, 
\eqb \label{eqn-perc-gff-lower}
\BB P\left[ D_h^\ep (r K_1, r K_2 ; r U)  < T^{-1} r  \frk a_{\ep / r} e^{\xi h_r(0)}  \right] \leq c_0 e^{-c_1 (\log T)^2}  + o_\ep(1) 
\eqe
and
\eqb \label{eqn-perc-gff-upper}
\BB P\left[ D_h^\ep(r K_1, r K_2 ;r U)  > T  r \frk a_{\ep / r} e^{\xi h_r(0)} \right] \leq c_0 e^{-c_1 (\log T)^2 / \log\log T} + o_\ep(1) ,
\eqe
where the rate of convergence of the $o_\ep(1)$ depends on $U,K_1,K_2,\xi , T , r$.
\end{lem}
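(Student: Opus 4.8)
The statement to prove is Lemma~\ref{lem-perc-gff}, which transfers the white-noise percolation estimate (Proposition~\ref{prop-perc-estimate'}, extended to non-dyadic scales by Proposition~\ref{prop-non-dyadic}) to the GFF-based LFPP metric $D_h^\ep$, including the scaling factor $r\frk a_{\ep/r} e^{\xi h_r(0)}$.

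My proof plan:

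The plan is to first use the scale invariance property~\eqref{eqn-gff-scale} of $D_h^\ep$ to reduce to the case $r=1$. Indeed, \eqref{eqn-gff-scale} gives $D_h^{\ep/r}(z,w) \eqD r^{-1} e^{-\xi h_r(0)} D_h^\ep(rz,rw)$ jointly over $z,w$, and the same holds with internal metrics on $rU$ versus $U$ (the scaling map takes paths in $rU$ to paths in $U$). Hence the event in~\eqref{eqn-perc-gff-lower} has the same probability as $\BB P[D_h^{\ep/r}(K_1,K_2;U) < T^{-1}\frk a_{\ep/r}]$, and similarly for~\eqref{eqn-perc-gff-upper}. So it suffices to prove the two bounds with $r=1$, i.e.\ to show $\BB P[D_h^\ep(K_1,K_2;U) < T^{-1}\frk a_\ep] \leq c_0 e^{-c_1(\log T)^2} + o_\ep(1)$ and the analogous upper bound (with the $\log\log T$ in the exponent). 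Relabeling $\ep/r$ as $\ep$, note that as $r$ is fixed and $\ep\to 0$ we also have $\ep/r\to 0$, so $o_{\ep/r}(1)$ terms are still $o_\ep(1)$.

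Next I would invoke Lemma~\ref{lem-gff-compare}: with $n := \log_2\ep^{-1}$, there is a coupling of $h$ with the white noise $W$ such that $\BB P[\sup_{z\in U'}|\Phi_{0,n}(z) - h_\ep^*(z)| > x] \leq c_0 e^{-c_1 x^2} + o_\ep(1)$ for a slightly enlarged bounded open set $U'\supset \ol U$. On the complement of this (with $x$ a large constant times $\log T$, say $x = \frac{1}{2\xi}\log T$), the metrics $D_{0,n}(\cdot,\cdot;U)$ and $D_h^\ep(\cdot,\cdot;U)$ are bi-Lipschitz equivalent with constant $e^{\xi x} = T^{1/2}$: adding a function bounded by $x$ in sup norm to the field multiplies path lengths by at most $e^{\xi x}$. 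So on this good event, $D_h^\ep(K_1,K_2;U) \geq T^{-1/2} D_{0,n}(K_1,K_2;U)$ and $\leq T^{1/2} D_{0,n}(K_1,K_2;U)$. Then I apply Proposition~\ref{prop-perc-estimate'} together with Proposition~\ref{prop-non-dyadic} (to handle the non-integer $n$) to $D_{0,n}(K_1,K_2;U)$ at parameter $T^{1/2}$, obtaining $\BB P[D_{0,n}(K_1,K_2;U) < T^{-1/2}\lambda_n] \leq c_0 e^{-c_1(\log T)^2}$ and the matching upper bound. Finally I use~\eqref{eqn-scale-compare} of Proposition~\ref{prop-gff-pt-tight}, which says $c_0 \leq \frk a_\ep^{-1}\lambda_n \leq c_1$, to replace $\lambda_n$ by $\frk a_\ep$ in these bounds at the cost of adjusting constants (and absorbing the constant factor into a power of $T$ since $T>3$). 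Combining via a union bound over the Lemma~\ref{lem-gff-compare} good event and the Proposition~\ref{prop-perc-estimate'} events gives~\eqref{eqn-perc-gff-lower} and~\eqref{eqn-perc-gff-upper} for $r=1$, hence in general.

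I expect no serious obstacle here — the lemma is essentially a bookkeeping exercise combining three already-established facts (scale invariance~\eqref{eqn-gff-scale}, the field comparison Lemma~\ref{lem-gff-compare}, and the white-noise estimate Proposition~\ref{prop-perc-estimate'} plus~\eqref{eqn-scale-compare}). The one point requiring mild care is the $o_\ep(1)$ term: it enters through Lemma~\ref{lem-gff-compare} and is deterministic, tending to $0$ as $\ep\to 0$, and after the reduction to $r=1$ it becomes $o_{\ep/r}(1)$ which is still $o_\ep(1)$ for fixed $r$; I should also note that since $T,r$ are allowed to be fixed, the rate of the $o_\ep(1)$ may depend on them, consistent with the statement. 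A second minor point is keeping track of which $c_0,c_1$ depend on what — all the invoked estimates have constants depending only on $U,K_1,K_2,\xi$ (the bi-Lipschitz step needs only $U$), so the final constants have the claimed dependence.
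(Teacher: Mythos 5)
Your proposal is correct and follows essentially the same route as the paper: reduce to $r=1$ via the scaling relation~\eqref{eqn-gff-scale} (with $\ep/r$ in place of $\ep$), then combine Proposition~\ref{prop-perc-estimate'} (via Proposition~\ref{prop-non-dyadic} for non-integer $n$), the field comparison of Lemma~\ref{lem-gff-compare}, and~\eqref{eqn-scale-compare} to pass from $\lambda_n$ to $\frk a_\ep$. The paper states this combination in one sentence; your write-up just fills in the bi-Lipschitz bookkeeping, which is the intended argument.
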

\begin{proof}
By Proposition~\ref{prop-perc-estimate'} combined with Lemma~\ref{lem-gff-compare} (to compare $D_h^\ep$ and $D_{0,n}$) and~\eqref{eqn-scale-compare} of Proposition~\ref{prop-gff-pt-tight} (to compare $\frk a_\ep$ and $\lambda_n$), we obtain the lemma statement in the case when $r =1$. 
For a general choice of $r$, we use~\eqref{eqn-gff-scale} which gives
\eqbn
D_h^{\ep/r}(K_1,K_2 ;U) \eqD  r^{-1} e^{-\xi h_r(0)} D_h^{ \ep} ( r K_1 , rK_2 ; r U ) .
\eqen
So, the lemma for a general choice of $r$ follows from the case when $r=1$ (with $\ep/r$ in place of $r$).  
\end{proof}

Our next lemma gives us tells us that, in a certain precise sense, $D_h^\ep$-distance from a compact set to $\infty$ is infinite. 

\begin{lem} \label{lem-infty-dist}
For each fixed $r > 0$ and each $T>1$, 
\eqb \label{eqn-infty-dist}
 \lim_{R\rta\infty} \lim_{\ep\rta 0} \BB P\left[ \frk a_\ep^{-1} D_h^\ep\left( \bdy B_r(0) , \bdy B_R(0) \right) \geq T\right] = 1 .
\eqe
\end{lem}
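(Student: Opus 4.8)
\textbf{Proof plan for Lemma~\ref{lem-infty-dist}.}
The plan is to decompose the region between $\bdy B_r(0)$ and $\bdy B_R(0)$ into $\asymp \log_2(R/r)$ concentric dyadic annuli and to show that any path from $\bdy B_r(0)$ to $\bdy B_R(0)$ must cross each of them, so that $D_h^\ep(\bdy B_r(0),\bdy B_R(0))$ is at least the sum of the $D_h^\ep$-distances across these annuli. For each fixed annulus, Lemma~\ref{lem-perc-gff} (applied with the annulus playing the role of $U$ and its two boundary circles as $K_1,K_2$, together with the scaling~\eqref{eqn-gff-scale} and Proposition~\ref{prop-gff-scale-constant} to control the constants $\frk a_{\ep/\rho}$ at the various scales $\rho = 2^{-k}r$) gives that $\frk a_\ep^{-1}$ times the distance across that annulus is, with probability bounded below uniformly in $\ep$, at least a constant times $e^{\xi h_{\rho}(0)} \rho^{1-\xi Q + o(1)}$ — the point being that this lower bound is bounded below by a positive random variable not tending to zero. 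The key mechanism is then that the sum over $k$ of these independent-ish contributions diverges as $R\to\infty$.

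Concretely, first I would fix $r$ and, for $k\in\BB N$, set $\rho_k := 2^k r$ and let $A_k$ be the Euclidean annulus $B_{2\rho_k}(0)\setminus B_{\rho_k}(0)$ (or a fixed constant-aspect-ratio annulus contained in it), so that $A_0,A_1,\dots,A_{N-1}$ with $N \asymp \log_2(R/r)$ are disjoint and every path from $\bdy B_r(0)$ to $\bdy B_R(0)$ crosses each $A_k$. Hence $D_h^\ep(\bdy B_r(0),\bdy B_R(0)) \geq \sum_{k=0}^{N-1} D_h^\ep(\text{across }A_k)$. Next, for each $k$ I would apply Lemma~\ref{lem-perc-gff} with $r$ replaced by $\rho_k$: using~\eqref{eqn-perc-gff-lower} with $T$ a fixed constant, there is $\delta > 0$ (depending on $\xi$ only) and, for each $k$, an event $G_k$ of probability at least $\delta$, for all sufficiently small $\ep$, on which $\frk a_\ep^{-1} D_h^\ep(\text{across }A_k) \geq \delta\, \frk a_\ep^{-1}\rho_k \frk a_{\ep/\rho_k} e^{\xi h_{\rho_k}(0)}$. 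By Proposition~\ref{prop-gff-scale-constant}, $\rho_k \frk a_{\ep/\rho_k}/\frk a_\ep \asymp \rho_k^{\xi Q + o_{\rho_k}(1)}$, which for $\xi > \xi_{\mathrm{crit}}$ (i.e. $Q < 2$, so $\xi Q$ could be small) is at least $\rho_k^{-1}$ say — in any case it is a deterministic quantity bounded below, and combined with $e^{\xi h_{\rho_k}(0)} \geq e^{-M}$ on the further event that $h_{\rho_k}(0) \geq -M$, each surviving annulus contributes at least a fixed positive amount $\delta'(M) > 0$. The circle averages $h_{\rho_k}(0)$, $k=0,1,\dots$, form a two-sided-increment process: $h_{\rho_{k}}(0) - h_{\rho_{k-1}}(0)$ are i.i.d.\ centered Gaussians of fixed variance (this is the standard fact that $t\mapsto h_{e^{-t}}(0)$ is a Brownian motion), so by the law of large numbers $h_{\rho_k}(0)/k \to 0$ a.s., and in particular $h_{\rho_k}(0) \geq -\eta k$ for all $k$ outside a finite set, with high probability; thus a positive fraction of the annuli $A_k$, $k \leq N$, have $e^{\xi h_{\rho_k}(0)}$ bounded below by $e^{-\xi \eta k}$, still making $\sum_k \rho_k^{\xi Q} e^{\xi h_{\rho_k}(0)} \to \infty$ for $\eta$ small.

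The remaining issue is that the events $G_k$ across different $k$ are not independent (the metric $D_h^\ep$ is not a local function of $h$), so I cannot simply say a positive fraction of the $G_k$ occur. I would handle this either by using the localized metric $\wh D_h^\ep$ from~\eqref{eqn-localized-lfpp}, whose internal metric on $A_k$ is determined by $h|_{B_{\ep^{1/2}}(A_k)}$ (see~\eqref{eqn-localized-property}), so that for $\ep$ small the events ``$\wh D_h^\ep(\text{across }A_k)$ large'' for the even-indexed (resp.\ odd-indexed) $k$ become mutually independent after conditioning on the circle-average sequence $(h_{\rho_k}(0))_k$ — which is all that is needed to run a second-moment or Borel--Cantelli argument along a subsequence — and then transfer back to $D_h^\ep$ via~\eqref{eqn-localized-lfpp-approx}; or, more simply, avoid independence entirely by noting it suffices to lower-bound the \emph{single} best annulus: $D_h^\ep(\bdy B_r(0),\bdy B_R(0)) \geq \max_{k<N} D_h^\ep(\text{across }A_k)$, and a single application of~\eqref{eqn-perc-gff-lower} at scale $\rho_k$ with $T = T(k)$ growing shows $\BB P[\frk a_\ep^{-1}D_h^\ep(\text{across }A_k) < T] \leq \BB P[e^{\xi h_{\rho_k}(0)}\rho_k^{\xi Q + o(1)} < c T] + (\text{small}) + o_\ep(1)$, and since $h_{\rho_k}(0)$ is Gaussian of variance $\to\infty$ while $\rho_k^{\xi Q}\to\infty$, this probability $\to 0$ as $k\to\infty$. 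This last route is cleanest: taking $\ep\to 0$ first and then $R\to\infty$ (hence $N\to\infty$, hence a scale $\rho_k$ with $k$ as large as we like), we get $\BB P[\frk a_\ep^{-1}D_h^\ep(\bdy B_r(0),\bdy B_R(0)) \geq T] \to 1$, which is~\eqref{eqn-infty-dist}. The main obstacle is thus bookkeeping the constants $\frk a_{\ep/\rho_k}$ across the growing range of scales via Proposition~\ref{prop-gff-scale-constant} while keeping the error terms $o_\ep(1)$ (whose rate depends on $T,\rho_k$) under control when the two limits are taken in the correct order $\ep\to 0$ then $R\to\infty$.
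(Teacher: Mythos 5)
Your final ``single best annulus'' route is exactly the paper's proof: apply Lemma~\ref{lem-perc-gff} to one dyadic annulus at scale $2^k \in [2r,R]$ with $T = 2^{\zeta k}$ growing, use Proposition~\ref{prop-gff-scale-constant} to get $2^k \frk a_{2^{-k}\ep}/\frk a_\ep = 2^{\xi Q k + o_k(k)}$ uniformly in $\ep$, and note that $h_{2^k}(0)$ has standard deviation $O(\sqrt k) = o(k)$ while $\xi Q > 0$, so the crossing distance of that single large annulus diverges in probability as $k\to\infty$, with the limits taken in the order $\ep\to 0$ then $R\to\infty$. The earlier sum-over-annuli discussion (and its independence issues) is unnecessary, since the single-annulus argument alone yields~\eqref{eqn-infty-dist}.
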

\begin{proof}
Fix a small constant $\zeta > 0$. By Lemma~\ref{lem-perc-gff} (applied with $T=2^{\zeta k}$), for each $k\in\BB N$,
\eqb \label{eqn-infty-dist-perc}
\lim_{\ep\rta 0} \BB P\left[ \frk a_\ep^{-1} D_h^\ep\left( \text{across $B_{2^k}(0) \setminus B_{2^{ k-1}}(0)$} \right) \geq  2^{-\zeta  k}  \frac{2^k \frk a_{2^{-k} \ep } }{  \frk a_\ep } e^{\xi h_{2^k}(0)}  \right]
= 1 .
\eqe
By Proposition~\ref{prop-gff-scale-constant}, $2^k \frk a_{2^{-k}\ep} / \frk a_\ep = 2^{\xi Q k  + o_k(k)} $ as $k\rta\infty$, uniformly over all $\ep \in (0,1)$. 
Since $h_{2^k}(0)$ is centered Gaussian with variance $k \log 2$, it holds with probability tending to 1 as $k\rta\infty$ that $h_{2^k}(0) \leq \frac{\zeta}{\xi} k \log 2$. 
By combining these last two estimates with~\eqref{eqn-infty-dist-perc} and shrinking the value of $\zeta$, we get
\eqb \label{eqn-infty-dist0}
\lim_{k\rta\infty} \lim_{\ep\rta 0} \BB P\left[\frk a_\ep^{-1} D_h^\ep\left( \text{across $B_{2^k}(0) \setminus B_{2^{ k-1}}(0)$} \right) \geq  2^{(\xi Q - \zeta) k} \right] = 1 .  
\eqe
If $\zeta \in (0,\xi Q)$, then $\lim_{k\rta\infty} 2^{(\xi Q - \zeta) k} = \infty$. Furthermore, if $2^k \in [2r , R]$, then $D_h^\ep\left( \bdy B_r(0) , \bdy B_R(0) \right) \geq  D_h^\ep\left( \text{across $B_{2^k}(0) \setminus B_{2^{ k-1}}(0)$} \right)$. Hence~\eqref{eqn-infty-dist} follows from~\eqref{eqn-infty-dist0}.
\end{proof}

\section{Tightness of LFPP}
\label{sec-lfpp-tight}

\subsection{Subsequential limits}
\label{sec-ssl}

In this subsection we will extract a subsequence of $\ep$-values tending to zero along which a number of functionals of $\frk a_\ep^{-1} D_h^\ep$ converge jointly in law. 
In Section~\ref{sec-dist-def}, we will use these convergence statements to produce a subsequential limiting metric $D_h$. 
The rest of the section is devoted to showing that $D_h$ satisfies the conditions of Theorems~\ref{thm-lfpp-tight} and Theorem~\ref{thm-ssl-properties}. 
 
We start with some definitions which will be convenient since we can only show the subsequential convergence of the joint law of countably many functionals of $\frk a_\ep^{-1} D_h^\ep$. 

\begin{defn} \label{def-rational}
A \emph{rational circle} is a circle of the form $O = \bdy B_r(u)$ for $r\in \BB Q\cap [0,\infty)$ and $u\in\BB Q^2$. 
In order to make statements more succinct, we view a point in $\BB Q^2$ as a rational circle with radius 0. 
A \emph{rational annulus} is the closure of the region $A$ between two non-intersecting rational circles with positive radii. 
We say that a rational circle with positive radius or a rational annulus \emph{surrounds} $z\in\BB C$ if it does not contain $z$ and it disconnects $z$ from $\infty$. 
\end{defn}

By countably many applications of Proposition~\ref{prop-gff-pt-tight}, for any sequence of $\ep$-values tending to zero we can find a subsequence $\mcl E$ and a coupling of the GFF $h$ with random variables $\rng D_h(O_1,O_2)$ for rational circles $O_1,O_2$, random variables $D_h^\ep(\text{around $A$})$ for rational annuli $A$ for which the following is true.
Let $h^\ep$ for $\ep\in\mcl E$ be a random variable with the same law as $h$, and assume that $D_h^\ep$ has been defined with $h^\ep$ in place of $h$. 
Then we have the following joint convergence in law as $\mcl E\ni \ep\rta 0$:
\eqb \label{eqn-ssl-circle}
\frk a_\ep^{-1} D_h^\ep\left( O_1 , O_2 \right) \rta \rng D_h \left( O_1 , O_2 \right) ,\quad \text{$\forall$ rational circles $O_1,O_2$} 
\eqe 
\eqb \label{eqn-ssl-around}
\frk a_\ep^{-1} D_h^\ep\left( \text{around $A$} \right) \rta \rng D_h \left( \text{around $A$} \right) ,\quad \text{$\forall$ rational annuli $A$}   
\eqe  
\eqb \label{eqn-ssl-h}
h_r^\ep \rta h_r \quad \text{w.r.t.\ the local uniform topology} \quad\text{and} \quad h^\ep \rta h \quad \text{w.r.t.\ the distributional topology} .
\eqe 
The reason why we write $\rng D_h$ instead of $D_h$ for the limiting random variables is that we do \emph{not} know a priori that these quantities are distances with respect to the metric $D_h$ which we define below.  
Nevertheless, by a slight abuse of notation, for a rational annulus $A$, we define
\eqb \label{eqn-ssl-across}
\rng D_h\left( \text{across $A$} \right) := \rng D_h\left( O_1 , O_2 \right) 
\eqe
where $O_1,O_2$ are the boundary circles of $A$. 

In order to use various scaling arguments and also to check assertion~\ref{item-constant} of Theorem~\ref{thm-lfpp-tight}, we will also need to extract subsequential limits of the ratios of scaling constants $\frk a_\ep$. 
By Proposition~\ref{prop-gff-scale-constant}, after possibly replacing $\mcl E$ by a further subsequence we can arrange that there are numbers $\frk c_r > 0$ for each $r\in \BB Q\cap (0,\infty)$ such that
\eqb \label{eqn-ssl-constant}
\lim_{\mcl E\ni \ep} \frac{r \frk a_{\ep/r}}{\frk a_\ep} = \frk c_r .
\eqe
Note that Proposition~\ref{prop-gff-scale-constant} implies that
\eqb \label{eqn-ssl-constant-lim}
\frk c_r = r^{\xi Q + o_r(1)} ,\quad\text{as $\BB Q\ni r \rta 0$ or $\BB Q\ni r \rta\infty$}. 
\eqe
Throughout the rest of this section we fix a sequence $\mcl E$ for which the convergence of joint laws~\eqref{eqn-ssl-circle}, \eqref{eqn-ssl-around}, and~\eqref{eqn-ssl-h} hold and also~\eqref{eqn-ssl-constant} holds. 
 
By the Skorokhod embedding theorem, we can find a coupling of $\{(h^\ep , D_h^\ep)\}_{\ep \in\mcl E}$ with the random variables $\rng D_h(O_1,O_2)$ and $\rng D_h(\text{around $A$})$ such that the convergence~\eqref{eqn-ssl-circle}, \eqref{eqn-ssl-around}, and~\eqref{eqn-ssl-h} occurs a.s. Note that with this choice of coupling, $\{D_h^\ep\}_{\ep\in\mcl E}$ are defined using $h^\ep \eqD h$ instead of $h$. 

We have the following elementary relations between the random variables defined above, which are the starting point of the proofs in this section. 

\begin{lem} \label{lem-ssl-across-around}
Almost surely, the following is true. 
\begin{enumerate}[(i)]
\item Let $O_0,\dots,O_k$ be disjoint rational circles such that $O_j$ disconnects $O_{j-1}$ from $O_{j+1}$ for each $j\in[1,k-1]_{\BB Z}$. Then $\rng D_h(O_0,O_k) \geq \sum_{j=1}^k \rng D_h(O_j , O_{j-1})$.  \label{item-ssl-across}
\item Let $O_1,O_1' , O_2 , O_2'  $ be rational circles and suppose that $A$ is a rational annulus whose interior is disjoint from each of $O_1,O_1',O_2,O_2'$. Suppose also that $O_1,O_1'$ lie in different connected components of $\ol{\BB C\setminus A}$ and $O_2,O_2'$ lie in different connected components of $\ol{\BB C\setminus A}$. Then $\rng D_h(O_1,O_2) \leq \rng D_h(O_1,O_1') + \rng D_h(O_2,O_2') + \rng D_h(\text{around $A$})$. \label{item-ssl-around}
\item If $O_1,O_2$ are disjoint rational annuli then $\rng D_h(O_1,O_2) >0$. \label{item-ssl-pos}
\end{enumerate}
\end{lem}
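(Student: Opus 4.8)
\textbf{Proof strategy for Lemma~\ref{lem-ssl-across-around}.}

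The plan is to prove each of the three assertions by taking limits along $\mcl E$ in a corresponding deterministic inequality (or strict inequality) for the LFPP metrics $D_h^\ep$, using the a.s.\ joint convergence~\eqref{eqn-ssl-circle}, \eqref{eqn-ssl-around}, and the tail estimates of Lemma~\ref{lem-perc-gff}. Since each assertion concerns only countably many collections of rational circles/annuli, it suffices to fix one such collection and show the claim holds a.s.; then intersect over all (countably many) configurations.

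For~\ref{item-ssl-across}: I would fix the chain $O_0,\dots,O_k$ of disjoint rational circles with $O_j$ separating $O_{j-1}$ from $O_{j+1}$. For each $\ep\in\mcl E$, any path in $\BB C$ from $O_0$ to $O_k$ must cross each ``gap'' and in particular must contain subpaths realizing crossings that concatenate to connect consecutive circles, so the deterministic triangle-type inequality $D_h^\ep(O_0,O_k)\geq \sum_{j=1}^k D_h^\ep(O_j,O_{j-1})$ holds (a geodesic from $O_0$ to $O_k$, on the first time it hits $O_j$, has already travelled $D_h^\ep$-distance at least $D_h^\ep(O_{j-1},O_j)$ from the previous circle — this is the standard ``distance is superadditive along nested separating sets'' fact). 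Multiply by $\frk a_\ep^{-1}$ and pass to the limit along $\mcl E$ using~\eqref{eqn-ssl-circle}; since the convergence is a.s.\ and the sum is finite, the inequality is preserved.

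For~\ref{item-ssl-around}: Fix $O_1,O_1',O_2,O_2'$ and the rational annulus $A$ as in the statement. For each $\ep$, let $\pi$ be a path in $A$ separating its two boundary components with $D_h^\ep$-length close to $D_h^\ep(\text{around }A)$. Because $O_1,O_1'$ lie in different components of $\ol{\BB C\setminus A}$ and likewise $O_2,O_2'$, one can concatenate a near-geodesic from $O_1$ to $O_1'$, the loop $\pi$, and a near-geodesic from $O_2$ to $O_2'$ to produce a path from $O_1$ to $O_2$; hence $D_h^\ep(O_1,O_2)\leq D_h^\ep(O_1,O_1')+D_h^\ep(\text{around }A)+D_h^\ep(O_2,O_2')$. (One should check the elementary topological fact that such a concatenation really connects $O_1$ to $O_2$: the near-geodesic from $O_1$ to $O_1'$ must cross $\partial A$, hence meets $\pi$ up to a harmless modification; similarly on the other side. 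I would spell this out using that $\pi$ disconnects the two boundary circles of $A$.) Dividing by $\frk a_\ep$ and using~\eqref{eqn-ssl-circle} and~\eqref{eqn-ssl-around} gives the claim in the limit.

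For~\ref{item-ssl-pos}: Fix disjoint rational circles $O_1,O_2$. Choose a rational annulus $A$ whose closure is disjoint from $O_1\cup O_2$ and which disconnects $O_1$ from $O_2$ (possible since $O_1,O_2$ are disjoint rational circles). Then for every $\ep$, $D_h^\ep(O_1,O_2)\geq D_h^\ep(\text{across }A)$, and by Lemma~\ref{lem-perc-gff} (the lower bound~\eqref{eqn-perc-gff-lower}, applied to the two boundary circles of $A$ as $K_1,K_2$, together with~\eqref{eqn-ssl-constant} and the local uniform convergence of circle averages in~\eqref{eqn-ssl-h}) the random variables $\frk a_\ep^{-1} D_h^\ep(\text{across }A)$ are tight away from $0$, i.e.\ for every $\delta>0$ there is $\eta>0$ with $\BB P[\frk a_\ep^{-1} D_h^\ep(\text{across }A)<\eta]<\delta$ uniformly for small $\ep\in\mcl E$. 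Passing to the a.s.\ limit (using that $\frk a_\ep^{-1}D_h^\ep(O_1,O_2)\to\rng D_h(O_1,O_2)$ and the limit dominates $\liminf \frk a_\ep^{-1}D_h^\ep(\text{across }A)\geq \eta$ with probability $\geq 1-\delta$) forces $\rng D_h(O_1,O_2)\geq\eta$ with probability $\geq 1-\delta$; letting $\delta\to0$ gives $\rng D_h(O_1,O_2)>0$ a.s.

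The main obstacle I anticipate is not any single estimate — each follows from a deterministic path-concatenation fact plus the a.s.\ convergence — but rather being careful about the bookkeeping: verifying the deterministic topological statements about concatenating crossing/loop paths (especially in~\ref{item-ssl-around}, where the ``$O_i,O_i'$ in different components'' hypothesis must be used precisely to guarantee the concatenated path actually joins $O_1$ to $O_2$), and making sure that the exceptional null sets (one per countable configuration, plus the null set off which~\eqref{eqn-ssl-circle}–\eqref{eqn-ssl-h} fail) are all discarded simultaneously. For~\ref{item-ssl-pos} the subtlety is that strict positivity of a limit of tight random variables is not automatic, so one genuinely needs the quantitative lower tail bound from Lemma~\ref{lem-perc-gff} rather than mere tightness.
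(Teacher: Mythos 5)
Your proposal is correct and follows essentially the same route as the paper: the paper proves assertions (i) and (ii) by observing that the corresponding inequalities hold for $\frk a_\ep^{-1} D_h^\ep$ because it is a length metric (exactly your path-concatenation argument, illustrated in Figure~\ref{fig-ssl}) and then passes to the a.s.\ limit along $\mcl E$, and it obtains (iii) from the tightness of $\left(\frk a_\ep^{-1} D_h^\ep(O_1,O_2)\right)^{-1}$ given by Proposition~\ref{prop-gff-pt-tight}. The only cosmetic difference is in (iii): the paper cites reciprocal tightness directly, which already forces the a.s.\ limit to be positive (your quantitative appeal to Lemma~\ref{lem-perc-gff} is a valid but not strictly necessary strengthening of the same fact).
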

\begin{proof} 
To prove assertions~\eqref{item-ssl-across} and~\eqref{item-ssl-around}, note that the analogous statements with $\frk a_\ep^{-1} D_h^\ep$ in place of $\rng D_h^\ep$ are obvious from the fact that that $\frk a_\ep^{-1} D_h^\ep$ is a length metric (see Figure~\ref{fig-ssl}). Passing through to the limit gives these two assertions. 
Assertion~\eqref{item-ssl-pos} follows since the random variables $(\frk a_\ep^{-1} D_h^\ep(O_1,O_2) )^{-1}$ are tight (Proposition~\ref{prop-gff-pt-tight}).
\end{proof}

Roughly speaking, Lemma~\ref{lem-ssl-across-around} says that the random variables $\rng D_h(O_0,O_k)$ and $\rng D_h(\text{across $A$})$ behave like distances w.r.t.\ a length metric, even though we do not know that these random variables are actual distances w.r.t.\ a length metric.

\begin{figure}[t!]
 \begin{center}
\includegraphics[scale=1]{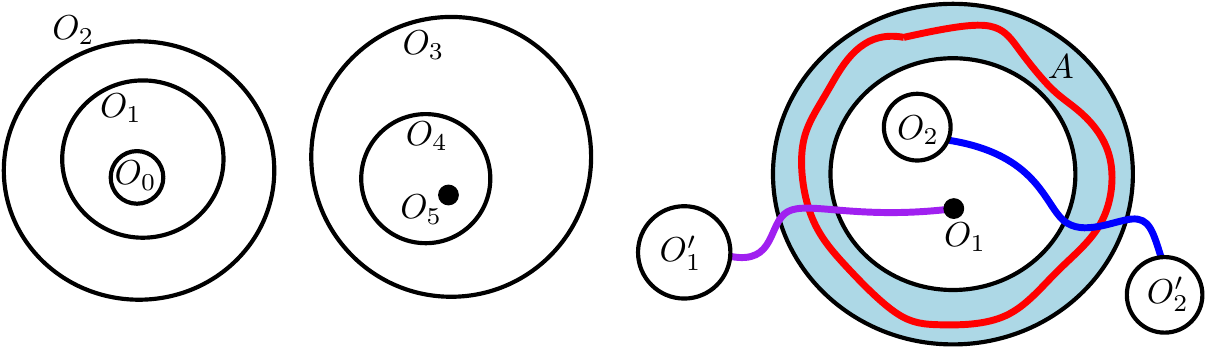}
\vspace{-0.01\textheight}
\caption{\textbf{Left:} Illustration of the proof of assertion~\eqref{item-ssl-across} of Lemma~\ref{lem-ssl-across-around} in the case when $k=5$. Note that here $O_5$ is a rational circle of radius 0 (i.e., a point). The lemma follows since any path from $O_0$ to $O_5$ must pass through $O_1,O_2,O_3,O_4$. 
\textbf{Right:} Illustration of the proof of assertion~\eqref{item-ssl-around} of Lemma~\ref{lem-ssl-across-around}. The paths of minimal $\frk a_\ep^{-1} D_h^\ep$-length from $O_1$ to $O_1'$ and from $O_2$ to $O_2'$ must each cross $A$. So, the union of these two paths with a path of minimal $\frk a_\ep^{-1} D_h^\ep$-length around $A$ is connected. This gives us an upper bound for $\frk a_\ep^{-1} D_h^\ep(O_1,O_2)$. Note that it does not matter whether $O_1$ and $O_2$ lie in the same connected component of $\ol{\BB C\setminus A}$ or not. Also note that one or more of $O_1,O_1',O_2,O_2'$ is allowed to be equal to the inner or outer boundary of $A$. 
}\label{fig-ssl}
\end{center}
\vspace{-1em}
\end{figure}

\subsection{Definition of the limiting metric}
\label{sec-dist-def}

We now define the subsequential limiting metric appearing in our main theorem statements. 
For $z,w\in\BB C$, let
\allb \label{eqn-dist-def}
D_h(z,w) :=
\begin{cases}
 \lim_{O_z \downarrow z , O_w \downarrow w} \rng D_h(O_z,O_w) ,\quad & z\not=w   \\
 0,\quad &z=w 
\end{cases}
\alle
where the limit is over any sequence of rational circles with positive radii $O_z$ surrounding $z$ and $O_w$ surrounding $w$ whose radii shrink to zero.
The following lemma tells us that~\eqref{eqn-dist-def} is well-defined.

\begin{lem} \label{lem-dist-lim}
Almost surely, the limit in~\eqref{eqn-dist-def} exists for all pairs of distinct points $z,w\in\BB C$ (it is allowed to be equal to $\infty$) and does not depend on the sequence of approximating circles. 
\end{lem}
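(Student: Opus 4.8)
\textbf{Proof plan for Lemma~\ref{lem-dist-lim}.}

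The plan is to show that along \emph{any} sequence of approximating rational circles, $\rng D_h(O_z, O_w)$ is Cauchy, and that any two such sequences give the same limit. Fix distinct $z,w \in \BB C$ and work on the a.s.\ event that all the conclusions of Lemma~\ref{lem-ssl-across-around} hold simultaneously for all (countably many) choices of rational circles and annuli. First I would set up the monotonicity-type comparison: given two rational circles $O_z, O_z'$ surrounding $z$ with the radius of $O_z'$ smaller, and a rational circle $O_w$ surrounding $w$ disjoint from both (possible once the radii are small enough that neither circle around $z$ encloses $w$), I want to compare $\rng D_h(O_z, O_w)$ and $\rng D_h(O_z', O_w)$. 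Using assertion~\eqref{item-ssl-across} of Lemma~\ref{lem-ssl-across-around} applied to the nested circles $O_z, O_z', O_w$ (note $O_z'$ disconnects $O_z$ from $O_w$ for small enough radii), we get $\rng D_h(O_z, O_w) \geq \rng D_h(O_z, O_z') + \rng D_h(O_z', O_w) \geq \rng D_h(O_z', O_w)$, so the quantity is monotone (nonincreasing) as the circle around $z$ shrinks, hence has a limit in $[0,\infty]$; the same for the circle around $w$.

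The remaining work is to upgrade this one-variable monotonicity to existence and uniqueness of the two-variable limit, independent of the sequences. For this I would use assertion~\eqref{item-ssl-around}: given two rational circles $O_z, O_z'$ surrounding $z$, both of radius at most $\delta$, pick a \emph{fixed} small rational annulus $A$ surrounding $z$ whose closure is contained in $B_\delta(z)\setminus\{z\}$ but which separates $z$ from $O_w$ — more precisely, choose $A$ so that both $O_z$ and $O_z'$ lie in the bounded complementary component of $A$ (together with $z$), and $O_w$ lies in the unbounded one. Then by~\eqref{item-ssl-around}, $\rng D_h(O_z, O_w) \leq \rng D_h(O_z, O_z') + \rng D_h(O_w, O_w) + \rng D_h(\text{around }A) = \rng D_h(\text{around }A)$ (the middle term is $0$), and symmetrically for $O_z'$. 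Combined with the monotone limit $L_z := \lim_{O_z\downarrow z}\rng D_h(O_z,O_w)$ from the previous paragraph, this shows $|\rng D_h(O_z,O_w) - L_z| \leq \rng D_h(\text{around }A)$ for all sufficiently small circles $O_z$, and the key point is that $\rng D_h(\text{around }A) \to 0$ as the annuli $A$ shrink toward $z$. I would establish this by choosing a nested sequence of rational annuli $A_k$ surrounding $z$ with radii $\to 0$: by~\eqref{item-ssl-across}-type reasoning (a path around $A_k$ that is also, up to choosing $A_k$ appropriately, constrained), or more simply by noting $\sum_k \rng D_h(\text{around }A_k)$ is dominated by a single $\rng D_h(\text{across }A)$ for a fixed larger annulus $A$ containing all the $A_k$ in disjoint sub-annuli (again assertion~\eqref{item-ssl-across} applied to the nested circles bounding the $A_k$'s), which is a.s.\ finite by Lemma~\ref{lem-ssl-across-around}; hence $\rng D_h(\text{around }A_k)\to 0$.

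Once we know both that $\rng D_h(O_z,O_w)$ converges as the circle around $z$ shrinks (for fixed $O_w$) and that the limit is approached uniformly in the choice of circle (with error controlled by $\rng D_h(\text{around }A)\to 0$), a routine diagonal/interleaving argument handles shrinking both circles simultaneously along arbitrary sequences and shows the double limit exists and is sequence-independent: given two sequences $(O_z^{(n)}, O_w^{(n)})$ and $(\widetilde O_z^{(n)}, \widetilde O_w^{(n)})$, interleave them and use monotonicity in each variable plus the annulus bound to sandwich the two limits together. The main obstacle is the step showing $\rng D_h(\text{around }A_k) \to 0$ as the annuli degenerate to the point $z$ — this is where one genuinely needs the a.s.\ finiteness of some $\rng D_h$ across a fixed annulus (Lemma~\ref{lem-ssl-across-around}) together with a summability argument over disjoint sub-annuli, since we cannot appeal to continuity of $D_h$ (which is exactly what we do not have in the supercritical regime) and must argue purely at the level of the limiting functionals $\rng D_h$. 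The countability of rational circles and annuli is used throughout so that a single a.s.\ event suffices for all pairs $z,w$; to get the statement simultaneously for all (uncountably many) pairs of distinct points, note that any distinct $z,w$ can be surrounded by disjoint rational circles and the bounds above only ever reference countably many $\rng D_h$-values.
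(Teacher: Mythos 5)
Your first step already goes wrong in the direction of the monotonicity, and this is not cosmetic. If $O_z'$ is the smaller circle surrounding $z$ (so $O_z$ surrounds $O_z'$), then a path from $O_z$ to $O_w$ need not cross $O_z'$ at all; it is $O_z$ that disconnects $O_z'$ from $O_w$. Assertion~\eqref{item-ssl-across} of Lemma~\ref{lem-ssl-across-around} therefore gives $\rng D_h(O_z',O_w)\geq \rng D_h(O_z',O_z)+\rng D_h(O_z,O_w)$, i.e.\ the quantity is \emph{non-decreasing} as the circles shrink down to $z$ and $w$, not nonincreasing as you claim. With your direction the limit would automatically be finite, which is incompatible with the existence of singular points in the supercritical phase; the correct monotone limit lives in $[0,\infty]$, and its possible infiniteness is precisely the point.

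The genuine gap is in your uniqueness step. It hinges on the claim that $\rng D_h(\text{around }A_k)\to 0$ for \emph{any} sequence of rational annuli shrinking to an \emph{arbitrary} point $z$, and this is false: at a singular point (e.g.\ an $\alpha$-thick point with $\alpha>Q$, which a.s.\ exist and are dense for $\xi>\xi_{\op{crit}}$) the across-distances of shrinking annuli blow up (cf.\ Proposition~\ref{prop-thick-pt}), and nothing makes the around-distances vanish. The paper only obtains such a decay along the specially constructed circles of Lemma~\ref{lem-good-circles}, and even then only under the hypothesis that $D_h(z,w)<\infty$ for some $w$ (Lemma~\ref{lem-good-circles-conv}) --- a hypothesis you cannot invoke here, since finiteness is exactly what is unknown (and fails at singular points). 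Your proposed justifications also do not go through mechanically: assertion~\eqref{item-ssl-across} controls sums of \emph{across}-distances of disjoint annuli, so it does not dominate $\sum_k\rng D_h(\text{around }A_k)$ by a single across-distance; and your application of assertion~\eqref{item-ssl-around} violates its hypotheses (you place $O_z$ and $O_z'$ in the \emph{same} component of $\ol{\BB C\setminus A}$ and take $O_2=O_2'=O_w$), while its purported conclusion $\rng D_h(O_z,O_w)\leq \rng D_h(\text{around }A)$ would force $D_h(z,w)=0$ for all $z,w$. None of this machinery is needed for this lemma: with the correctly oriented monotonicity, uniqueness follows by interleaving --- for two sequences of circles nesting down to $z$ and $w$, for each $m_0$ there is $m_1$ such that the $m_0$-th circles of one sequence surround the $m$-th circles of the other for all $m\geq m_1$, which sandwiches the two limits in both directions, and arbitrary non-nested sequences are then handled by extracting nested subsequences. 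The annulus-around estimates should be saved for the triangle inequality and the consistency at rational points, where they are genuinely required.
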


Once Lemma~\ref{lem-dist-lim} is established, it is immediate from~\eqref{eqn-dist-def} that $D_h$ is symmetric. 
Furthermore, assertion~\eqref{item-ssl-pos} of Lemma~\ref{lem-ssl-across-around} (together with the monotonicity considerations described in the proof of Lemma~\ref{lem-dist-lim} just below) implies that $D_h$ is positive definite.
We will check that $D_h$ satisfies the triangle inequality in Lemma~\ref{prop-triangle-inequality} below, so that $D_h$ is a metric on $\BB C$, although it is allowed to take on infinite values. 
For the proof of Lemma~\ref{lem-dist-lim} we need the following definition.

\begin{defn} \label{def-nest}
We say that a sequence of rational circles $\{O^m\}_{m\in\BB N}$ with positive radii \emph{nests down to $z$} if $O^m$ surrounds $O^{m+1}$ for each $m\in\BB N$, $O^m$ surrounds $z$ for each $m\in\BB N$, and the radii of the $O^m$'s tends to zero as $m\rta\infty$. 
\end{defn}

\begin{proof}[Proof of Lemma~\ref{lem-dist-lim}]
The key observation is as follows. If $O_z,O_z'$ are rational circles surrounding $z$ and $O_w,O_w'$ are rational circles surrounding $w$ such that $O_z\cap O_w = \emptyset$, $O_z$ surrounds $O_z'$, and $O_w$ surrounds $O_w'$, then by assertion~\eqref{item-ssl-across} of Lemma~\ref{lem-ssl-across-around}, 
\eqb \label{eqn-dist-lim-mono}
\rng D_h(O_z,O_w) \leq \rng D_h(O_z' , O_w'). 
\eqe 
This gives us a monotonicity property for the limit in~\eqref{eqn-dist-def} which will allow us to check convergence. We remark that this monotonicity property is the main reason why we define $D_h(z,w)$ in terms of $\rng D_h$-``distances" between rational circles which shrink to $z$ and $w$ rather than in terms of $\rng D_h$-``distances" between rational points which converge to $z$ and $w$. 

We first check convergence for sequences of rational circles $\{O_z^m\}_{m\in\BB N}$ and $\{O_w^m\}_{m\in\BB N}$ which nest down to $z$ and $w$, respectively.
By~\eqref{eqn-dist-lim-mono}, $\rng D_h(O_z^m , O_w^m)$ is non-decreasing in $m$ provided $m$ is large enough that the radii of the circles is smaller than $|z-w|/2$. Therefore, the limit
\eqb
\beta := \lim_{m\rta\infty} \rng D_h(O_z^m , O_w^m) 
\eqe
exists (it is allowed to be equal to $\infty$). 

We will now check that this limit does not depend on the choice of $\{O_z^m\}_{m\in\BB N}$ and $\{O_w^m\}_{m\in\BB N}$. 
Let $\{\wt O_z^m\}_{m\in\BB N}$ and $\{\wt O_w^m\}_{m\in\BB N}$ be another pair of sequences of rational circles which nest down to $z$ and $w$, respectively and let $\wt\beta :=  \lim_{m\rta\infty} \rng D_h(O_z^m , O_w^m) $. We will show that $\beta \leq \wt \beta $ (the inequality in the other direction follows by symmetry). Indeed, since the radii of the $O_z^m$'s and $O_w^m$'s tends to zero, for each $m_0 \in\BB N$ there exists $m_1 \geq m_0$ such that $\wt O_z^{m_0}$ surrounds $O_z^m$ and $\wt O_w^{m_0}$ surrounds $O_w^m$ for each $m\geq m_1$. 
By~\eqref{eqn-dist-lim-mono}, it follows that
\eqb
\rng D_h(\wt O_z^{m_0} , \wt O_w^{m_0} ) \leq \rng D_h(O_z^m , O_w^m ), \quad\forall m \geq m_1 .
\eqe
Sending $m\rta \infty$ and then $m_0 \rta\infty$ shows that $\wt\beta \leq \beta$, as required.

If $\{O_z^m\}_{m\in\BB N}$ and $\{O_w^m\}_{m\in\BB N}$ are arbitrary sequences of rational circles surrounding $z$ and $w$, respectively, whose radii shrink to zero (which are not necessarily nested) then there is a subsequence $m_j \rta\infty$ along which $O_z^{m_j}$ and $O_w^{m_j}$ nest down to $z$ and $w$, respectively. By the preceding paragraph it follows that every subsequence of the numbers $\rng D_h(O_z^m , O_w^m)$ has a further subsequence which converges to $\beta$. This implies that $\lim_{m\rta\infty} \rng D_h(O_z^m , O_w^m) = \beta$. Hence the limit in~\eqref{eqn-dist-def} exists and equals $\beta$.
\end{proof}

We have the following trivial consequence of Lemma~\ref{lem-ssl-across-around} which allows us to bound $D_h$-distances instead of just $\rng D_h$-``distances". 

\begin{lem} \label{lem-lim-across-around}
Almost surely, the following is true. 
\begin{enumerate}[(i)] 
\item Let $z,w\in\BB C$ be distinct and let $O_0,\dots,O_k$ be disjoint rational circles such that $O_j$ disconnects $O_{j-1}$ from $O_{j+1}$ for each $j\in[1,k-1]_{\BB Z}$, $O_0$ disconnects $z$ from $O_1$, and $O_k$ disconnects $w$ from $O_{k-1}$. Then $ D_h(z,w) \geq \sum_{j=1}^k \rng D_h(O_j , O_{j-1})$.  \label{item-lim-across}
\item Let $z,w,z',w' \in\BB C $ and suppose that $A$ is a rational annulus which does not contain any of $z,w,z',w'$. Suppose also that $z,z'$ lie in different connected components of $ \BB C\setminus A $ and $w,w'$ lie in different connected components of $ \BB C\setminus A $. Then $ D_h(z,w) \leq   D_h(z,z') +   D_h(w,w') + \rng D_h(\text{around $A$})$. \label{item-lim-around}
\end{enumerate}
\end{lem}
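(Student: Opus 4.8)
The plan is to deduce both assertions from Lemma~\ref{lem-ssl-across-around} together with the definition~\eqref{eqn-dist-def} of $D_h$, by approximating $z$, $w$ (and $z'$, $w'$) with sequences of rational circles that nest down to them. The key point is that $D_h(z,w)$ is a monotone limit of quantities of the form $\rng D_h(O_z, O_w)$, and the circles in the approximating sequences can be chosen compatibly with the given circles $O_0,\dots,O_k$ or the given annulus $A$.

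For assertion~\eqref{item-lim-across}, first I would fix sequences of rational circles $\{O_z^m\}_{m\in\BB N}$ nesting down to $z$ and $\{O_w^m\}_{m\in\BB N}$ nesting down to $w$. Since $O_0$ disconnects $z$ from $O_1$ and the radii of the $O_z^m$ shrink to zero, for $m$ large enough $O_0$ disconnects $O_z^m$ from $O_1$; similarly $O_k$ disconnects $O_w^m$ from $O_{k-1}$ for $m$ large. Then $O_z^m, O_0, O_1, \dots, O_k, O_w^m$ is a chain of disjoint rational circles each disconnecting its neighbors, so assertion~\eqref{item-ssl-across} of Lemma~\ref{lem-ssl-across-around} (applied to this chain, and noting $\rng D_h \geq 0$) gives
\[
\rng D_h(O_z^m, O_w^m) \geq \sum_{j=1}^k \rng D_h(O_j, O_{j-1}).
\]
Sending $m\to\infty$ and using~\eqref{eqn-dist-def} (Lemma~\ref{lem-dist-lim} guarantees the limit exists and equals $D_h(z,w)$) yields the claim. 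One must be slightly careful that the displayed chain is genuinely a valid input to Lemma~\ref{lem-ssl-across-around}\eqref{item-ssl-across}, i.e.\ that consecutive circles are disjoint and nested in the required sense; this follows because the $O_z^m$ eventually lie strictly inside the region cut off by $O_0$, and likewise for $O_w^m$.

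For assertion~\eqref{item-lim-around}, I would again introduce nesting sequences $\{O_z^m\}$, $\{O_w^m\}$, $\{O_{z'}^m\}$, $\{O_{w'}^m\}$ down to $z,w,z',w'$ respectively. Since $A$ does not contain any of these four points and $z,z'$ (resp.\ $w,w'$) lie in different components of $\BB C\setminus A$, for $m$ large the circles $O_z^m$ and $O_{z'}^m$ lie in different components of $\ol{\BB C\setminus A}$ with interiors disjoint from $A$, and similarly for $O_w^m, O_{w'}^m$. Then assertion~\eqref{item-ssl-around} of Lemma~\ref{lem-ssl-across-around} (applied with $O_1 = O_z^m$, $O_1' = O_{z'}^m$, $O_2 = O_w^m$, $O_2' = O_{w'}^m$) gives
\[
\rng D_h(O_z^m, O_w^m) \leq \rng D_h(O_z^m, O_{z'}^m) + \rng D_h(O_w^m, O_{w'}^m) + \rng D_h(\text{around }A).
\]
Sending $m\to\infty$ and applying~\eqref{eqn-dist-def} to each of the three $\rng D_h$-distance terms converts this into $D_h(z,w) \leq D_h(z,z') + D_h(w,w') + \rng D_h(\text{around }A)$, as desired. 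The main obstacle — really the only subtlety — is the bookkeeping to confirm that "for $m$ large enough" the approximating circles satisfy the precise disjointness/separation hypotheses of Lemma~\ref{lem-ssl-across-around}; since everything is stated up to finitely many exceptional $m$ and the limits in~\eqref{eqn-dist-def} are insensitive to discarding finitely many terms, this causes no real difficulty. Both arguments are "a.s." statements because they are carried out on the almost sure event where Lemmas~\ref{lem-ssl-across-around} and~\ref{lem-dist-lim} hold.
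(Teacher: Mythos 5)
Your proposal is correct and matches the paper's argument: the paper's proof is exactly to apply Lemma~\ref{lem-ssl-across-around} with some of the circles taken to be the nesting circles $O_z^m$, $O_w^m$ (and, for the second assertion, $O_{z'}^m$, $O_{w'}^m$) from the definition~\eqref{eqn-dist-def} of $D_h$, and then let $m\rta\infty$. Your extra bookkeeping about the separation hypotheses holding for all large $m$ is the same (routine) verification the paper leaves implicit.
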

\begin{proof}
This follows by applying Lemma~\ref{lem-ssl-across-around} with some of the rational circles equal to the circles $O_z^m$ and $O_w^m$ in the definition~\eqref{eqn-dist-def} of $D_h$, then taking a limit as $m\rta\infty$. 
\end{proof}

\subsection{Lower semicontinuity}
\label{sec-lower-semicont}

We now check assertion~\ref{item-tight} of Theorem~\ref{thm-lfpp-tight}. 

\begin{prop} \label{prop-lower-semicont}
In the coupling defined in Section~\ref{sec-ssl}, we have $\frk a_\ep^{-1} D_h^\ep \rta D_h$ a.s.\ as $\mcl E\ni \ep \rta 0$ w.r.t.\ the topology on lower semicontinuous functions defined in Section~\ref{sec-main-results}. In particular, $D_h$ is lower semicontinuous.
\end{prop}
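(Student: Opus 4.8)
The goal is to verify the two criteria (A) and (B) characterizing convergence in the lower-semicontinuous topology of Section~\ref{sec-main-results}. Since we are working in the Skorokhod coupling of Section~\ref{sec-ssl}, where \eqref{eqn-ssl-circle}, \eqref{eqn-ssl-around} and \eqref{eqn-ssl-h} hold almost surely, the plan is to prove both criteria pointwise on the probability-one event where all of these convergences hold together with the conclusions of Lemmas~\ref{lem-dist-lim}, \ref{lem-ssl-across-around} and~\ref{lem-lim-across-around} (and, later, the triangle inequality for $D_h$). Criterion (B) — existence of a recovery sequence — is the easier half: given $(z,w)$ with $z\neq w$, I would fix a rational annulus $A_z$ surrounding $z$ and a rational annulus $A_w$ surrounding $w$ with small radii, both disjoint from each other, and use the fact (from Lemma~\ref{lem-lim-around}-type reasoning and the a.s.\ convergence \eqref{eqn-ssl-around}) that $\frk a_\ep^{-1}D_h^\ep(\text{around }A_z)\to \rng D_h(\text{around }A_z)$ is finite; taking $z_\ep$ to be a point on the inner boundary circle of $A_z$ and similarly $w_\ep$, the $\frk a_\ep^{-1}D_h^\ep$-distance between the boundary circles converges (by \eqref{eqn-ssl-circle}) to $\rng D_h$ of those circles, and shrinking the annuli recovers $D_h(z,w)$ by the definition \eqref{eqn-dist-def}. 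The diagonal case $z=w$ is handled by taking $z_n=w_n\to z$, giving limit $0=D_h(z,z)$.

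The substantive half is criterion (A): if $(z_n,w_n)\to(z,w)$ then $\liminf_{n}\frk a_{\ep_n}^{-1}D_h^{\ep_n}(z_n,w_n)\ge D_h(z,w)$ for any sequence $\ep_n\to 0$ along $\mcl E$. Here I would argue by contradiction / by a direct lower bound. Fix $z\neq w$ and fix a large integer $k$; choose disjoint rational circles $O_0,\dots,O_k$ with $O_0$ surrounding $z$ at small radius, $O_k$ surrounding $w$ at small radius, and the intermediate ones nested, exactly as in Lemma~\ref{lem-lim-across}\eqref{item-lim-across}, so that $\sum_{j=1}^k \rng D_h(O_j,O_{j-1})$ is within $\delta$ of $D_h(z,w)$ (or exceeds any given threshold $T$ if $D_h(z,w)=\infty$). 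For $n$ large, $z_n$ lies inside $O_0$ and $w_n$ lies inside $O_k$ (since $z_n\to z$, $w_n\to w$ and the circles are at fixed positive distance from $z,w$), so any path from $z_n$ to $w_n$ must cross each annulus between consecutive circles; hence $\frk a_{\ep_n}^{-1}D_h^{\ep_n}(z_n,w_n)\ge \sum_{j=1}^k \frk a_{\ep_n}^{-1}D_h^{\ep_n}(\text{across the annulus between }O_{j-1},O_j)\ge \sum_{j=1}^k \frk a_{\ep_n}^{-1}D_h^{\ep_n}(O_{j-1},O_j)$. Taking $n\to\infty$ and using the a.s.\ convergence \eqref{eqn-ssl-circle} of each term gives $\liminf_n \frk a_{\ep_n}^{-1}D_h^{\ep_n}(z_n,w_n)\ge \sum_{j=1}^k \rng D_h(O_{j-1},O_j)\ge D_h(z,w)-\delta$, and letting $\delta\to 0$ finishes this case. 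The case $z=w$ is trivial since $D_h(z,z)=0$ is the smallest possible value. Finally, once (A) and (B) are established, lower semicontinuity of $D_h$ follows from the general remark in Section~\ref{sec-main-results} that a limit in this topology of lower semicontinuous functions is lower semicontinuous (and each $\frk a_\ep^{-1}D_h^\ep$ is continuous, hence lower semicontinuous).

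One technical point I would need to handle carefully: in criterion (A), the inequality "any path from $z_n$ to $w_n$ crosses the annulus between $O_{j-1}$ and $O_j$" requires that $z_n$ and $w_n$ end up on opposite sides of that annulus, which I can guarantee for all $j$ simultaneously once $n$ is large enough that $z_n$ is strictly inside $O_0$ and $w_n$ is strictly inside $O_k$, because the $O_j$'s are nested and disjoint. A second point: I should make sure the quantities $\frk a_{\ep_n}^{-1}D_h^{\ep_n}(\text{across annulus}) \ge \frk a_{\ep_n}^{-1}D_h^{\ep_n}(O_{j-1},O_j)$ — this is immediate since "distance across the annulus" is the distance between its two boundary circles. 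The main obstacle I anticipate is purely bookkeeping rather than conceptual: organizing the choice of rational circles so that the lower bound in (A) and the upper bound in (B) both pass to the limit cleanly, and in particular handling the possibility $D_h(z,w)=\infty$ (where in (A) one shows the $\liminf$ exceeds every $T$, and in (B) one must produce a recovery sequence whose distances tend to $\infty$ — which again follows from \eqref{eqn-dist-def} and the monotonicity \eqref{eqn-dist-lim-mono}, since $\rng D_h(O_z,O_w)$ is the increasing limit defining $D_h(z,w)$ and the circle distances converge by \eqref{eqn-ssl-circle}). No RSW or regularity input beyond what is already in Section~\ref{sec-ssl} should be needed.
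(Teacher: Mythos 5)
Your criterion (A) argument is, in substance, the paper's own (Lemma~\ref{lem-dist-seq}): enclose $z$ and $w$ in small rational circles, note that for large $n$ the points $z_n,w_n$ are enclosed so the point-to-point distance dominates the circle-to-circle distance, pass to the limit using \eqref{eqn-ssl-circle}, and then shrink the circles and invoke \eqref{eqn-dist-def}. Your chain $O_0,\dots,O_k$ is superfluous, and for $k\geq 2$ the claim that $\sum_{j=1}^k \rng D_h(O_j,O_{j-1})$ can be made within $\delta$ of $D_h(z,w)$ is not justified as stated (by Lemma~\ref{lem-ssl-across-around}\eqref{item-ssl-across} the sum is only a \emph{lower} bound for $\rng D_h(O_0,O_k)$, which could be much larger than the sum); but the case $k=1$, which your setup contains, is exactly what is needed, so this is a harmless overstatement rather than an error.

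The genuine gap is in criterion (B) when $D_h(z,w)<\infty$. You take $z_\ep$ to be ``a point on the inner boundary circle'' of a small annulus around $z$ and appeal to the convergence of the circle-to-circle distances together with finiteness of $\rng D_h(\text{around }A_z)$. But the quantity you must control is $\frk a_\ep^{-1} D_h^\ep(z_\ep,w_\ep)$, and for an arbitrary point on the circle this is only bounded \emph{below} by $\frk a_\ep^{-1}D_h^\ep(O_z,O_w)$; nothing in your argument bounds it above. The loop-around-annulus device (assertion~\eqref{item-ssl-around} of Lemma~\ref{lem-ssl-across-around}) cannot close this gap: that argument concatenates a loop around an annulus with two \emph{paths that cross the annulus}, whereas here $z_\ep$ is a bare point on the inner boundary of $A_z$ and you have no controlled path joining $z_\ep$ to the loop. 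Upper-bounding point-to-circle distances is precisely the delicate issue the paper treats separately (Lemmas~\ref{lem-good-circles}, \ref{lem-rational-conv}, Proposition~\ref{prop-rational-agree}), and only for special circles or rational points. The fix is the paper's Lemma~\ref{lem-dist-sup}: for each $m$ choose $z^{\ep,m}\in O_z^m$, $w^{\ep,m}\in O_w^m$ to be \emph{minimizers}, so that $D_h^\ep(z^{\ep,m},w^{\ep,m})=D_h^\ep(O_z^m,O_w^m)$ exactly; then $\frk a_\ep^{-1}D_h^\ep(z^{\ep,m},w^{\ep,m})\to \rng D_h(O_z^m,O_w^m)\to D_h(z,w)$, and a diagonal extraction in $(\ep,m)$ --- which your phrase ``shrinking the annuli recovers $D_h(z,w)$'' glosses over --- produces a single sequence $(z_\ep,w_\ep)\to(z,w)$ with the correct limit. (In the case $D_h(z,w)=\infty$ your version does work, since there only the lower bound is needed.)
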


To prove Proposition~\ref{prop-lower-semicont}, we  will check the two conditions for convergence of lower semicontinuous functions in terms of sequences of points. 
The following lemma corresponds to condition~\eqref{item-semicont-liminf}. 

\begin{lem} \label{lem-dist-seq}
Almost surely, the following is true. Let $z,w\in \BB C$ and let $\{z^\ep\}_{\ep \in \mcl E}$ and $\{w^\ep\}_{\ep\in\mcl E}$ be such that $z^\ep\rta z$ and $w^\ep\rta w$. Then
\eqb \label{eqn-dist-seq}
D_h(z,w) \leq \liminf_{\mcl E \ni \ep\rta 0} \frk a_\ep^{-1} D_h^\ep(z^\ep,w^\ep) . 
\eqe
\end{lem}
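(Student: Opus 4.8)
\textbf{Proof plan for Lemma~\ref{lem-dist-seq}.}
The plan is to reduce the lower bound on $\liminf_\ep \frk a_\ep^{-1} D_h^\ep(z^\ep,w^\ep)$ to a finite sum of ``across annulus'' distances $\rng D_h(O_j,O_{j-1})$ of the type controlled by assertion~\eqref{item-lim-across} of Lemma~\ref{lem-lim-across-around}, and then let the annuli shrink. First I would dispose of the trivial cases: if $z=w$ there is nothing to prove, and if $D_h(z,w)=0$ the inequality is automatic. So assume $z\neq w$ and fix an arbitrary $M < D_h(z,w)$ (which may be taken arbitrarily large if $D_h(z,w)=\infty$); it suffices to show $\liminf_\ep \frk a_\ep^{-1}D_h^\ep(z^\ep,w^\ep)\geq M$ minus an error which vanishes as the annuli shrink.

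The key step is to sandwich $z$ and $w$ between finitely many disjoint rational circles. Since $z\neq w$, I can choose, by the definition~\eqref{eqn-dist-def} of $D_h$ together with the monotonicity~\eqref{eqn-dist-lim-mono} used in the proof of Lemma~\ref{lem-dist-lim}, disjoint rational circles $O_z$ surrounding $z$ and $O_w$ surrounding $w$, with radii small enough that they do not surround the other point, such that $\rng D_h(O_z,O_w)$ is as close to $D_h(z,w)$ as desired — in particular $\rng D_h(O_z,O_w)>M$ when $D_h(z,w)>M$. (If $D_h(z,w)=\infty$ I instead use that $\rng D_h(O_z^m,O_w^m)\to\infty$ along a nested sequence and pick one term exceeding $M$.) Next, inside the disk bounded by $O_z$ I pick a slightly smaller rational circle $O_z'$ still surrounding $z$, and likewise $O_w'$ inside $O_w$ surrounding $w$. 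Because $z^\ep\to z$ and $w^\ep\to w$, for all sufficiently small $\ep\in\mcl E$ the point $z^\ep$ lies inside $O_z'$ and $w^\ep$ lies inside $O_w'$, hence any path from $z^\ep$ to $w^\ep$ must cross the annulus between $O_z'$ and $O_z$ and the annulus between $O_w'$ and $O_w$; so
\[
\frk a_\ep^{-1}D_h^\ep(z^\ep,w^\ep)\;\geq\;\frk a_\ep^{-1}D_h^\ep(O_z,O_w)\;\geq\;\frk a_\ep^{-1}D_h^\ep(O_z,O_w)
\]
— more precisely, $\frk a_\ep^{-1}D_h^\ep(z^\ep,w^\ep)\geq \frk a_\ep^{-1}D_h^\ep(O_z,O_w)$ for $\ep$ small, since $O_z$ and $O_w$ separate $z^\ep$ from $w^\ep$ and $\frk a_\ep^{-1}D_h^\ep$ is a length metric. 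Taking $\liminf_{\mcl E\ni\ep\to0}$ and using the a.s.\ convergence $\frk a_\ep^{-1}D_h^\ep(O_z,O_w)\to\rng D_h(O_z,O_w)$ from~\eqref{eqn-ssl-circle}, I conclude $\liminf_\ep \frk a_\ep^{-1}D_h^\ep(z^\ep,w^\ep)\geq \rng D_h(O_z,O_w)\geq M$. Since $M<D_h(z,w)$ was arbitrary, \eqref{eqn-dist-seq} follows.

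The last point to be careful about is uniformity of the null set: the statement asserts that a.s.\ \eqref{eqn-dist-seq} holds for \emph{all} choices of $z,w$ and all approximating sequences simultaneously. This is handled by noting that the construction above only ever uses the countable family of random variables $\{\rng D_h(O_1,O_2)\}$ and the a.s.\ convergence statements~\eqref{eqn-ssl-circle}, which hold off a single null set; the circles $O_z,O_z',O_w,O_w'$ can be taken from the countable set of rational circles, and one can run the argument along a countable dense set of thresholds $M\in\BB Q$ and over all rational circles, then pass to general $z,w$ by the monotonicity~\eqref{eqn-dist-lim-mono}. The main obstacle is purely bookkeeping — making sure that ``for $\ep$ small enough $z^\ep$ is inside $O_z'$'' is used correctly (it depends on the sequence, not uniformly, but that is fine since we take $\liminf$ for each fixed sequence) and that the separation argument giving $D_h^\ep(z^\ep,w^\ep)\geq D_h^\ep(O_z,O_w)$ is valid because $\frk a_\ep^{-1}D_h^\ep$ is a genuine length metric on $\BB C$ for each $\ep$; no new probabilistic input beyond Section~\ref{sec-ssl} is needed.
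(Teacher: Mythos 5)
Your argument is correct and essentially identical to the paper's: both surround $z$ and $w$ by shrinking rational circles, use that any path from $z^\ep$ to $w^\ep$ must eventually cross both circles to get $\frk a_\ep^{-1}D_h^\ep(z^\ep,w^\ep)\geq \frk a_\ep^{-1}D_h^\ep(O_z,O_w)$ for small $\ep$, pass to the limit via~\eqref{eqn-ssl-circle}, and then invoke the definition~\eqref{eqn-dist-def} of $D_h$ (your threshold-$M$ phrasing is just a reformulation of letting the circles shrink). The auxiliary inner circles $O_z',O_w'$ are unnecessary, and the null-set bookkeeping is automatic since~\eqref{eqn-ssl-circle} holds simultaneously for all rational circles off a single null set, exactly as you note.
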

\begin{proof}
For $m\in\BB N$, let $O^m_z$ be a rational circle surrounding $ B_{2^{-m-1}}(z)$ with radius $2^{-m}$. Similarly define $O^m_w$. 
For small enough $\ep \in\mcl E $ we have $z^\ep \in B_{2^{-m-1}}(z)$ and $w^\ep \in B_{2^{-m-1}}(w)$, in which case $O^m_z$ (resp.\ $O^m_w$) surrounds $z^\ep$ (resp.\ $w^\ep$). Therefore, 
\eqb
\frk a_\ep^{-1} D_h^\ep(z^\ep,w^\ep) \geq \frk a_\ep^{-1} D_h^\ep\left( O_z^m , O_w^m\right) .
\eqe
Taking the liminf of both sides gives
\eqb \label{eqn-dist-seq0}
\liminf_{\mcl E \ni \ep\rta 0} \frk a_\ep^{-1} D_h^\ep(z^\ep,w^\ep) \geq \rng D_h\left( O_z^m , O_w^m\right) , \quad\forall m\in\BB N .
\eqe
By~\eqref{eqn-dist-def}, the right side of~\eqref{eqn-dist-seq} converges to $D_h(z,w)$ as $m\rta\infty$. Hence~\eqref{eqn-dist-seq0} implies~\eqref{eqn-dist-seq}.
\end{proof}

The next lemma corresponds to the other condition~\eqref{item-semicont-lim} needed for convergence.

\begin{lem} \label{lem-dist-sup}
Almost surely, the following is true. For each $z,w\in\BB C$, there exists sequences $\{z^\ep\}_{\ep \in \mcl E}$ and $\{w^\ep\}_{\ep\in\mcl E}$ such that $z^\ep\rta z$, $w^\ep\rta w$, and
\eqb \label{eqn-dist-sup}
D_h(z,w) =  \lim_{\mcl E \ni \ep\rta 0} \frk a_\ep^{-1} D_h^\ep(z^\ep,w^\ep) . 
\eqe
\end{lem}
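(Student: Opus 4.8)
\textbf{Proof proposal for Lemma~\ref{lem-dist-sup}.}

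The plan is to construct $z^\ep$ and $w^\ep$ as suitable ``nearby'' points that realize the $\rng D_h$-``distances'' between shrinking rational circles around $z$ and $w$, and then use a diagonal argument. First, if $z = w$ there is nothing to do (take $z^\ep = w^\ep = z$), so assume $z \neq w$. Fix $m \in \BB N$ large. By definition~\eqref{eqn-dist-def}, $D_h(z,w) = \lim_{m\rta\infty} \rng D_h(O_z^m , O_w^m)$ for circles $O_z^m, O_w^m$ nesting down to $z$ and $w$ respectively. The key point is to turn ``$\rng D_h(O_z^m,O_w^m)$ is close to $\frk a_\ep^{-1} D_h^\ep(O_z^m, O_w^m)$ for small $\ep$'' (which holds a.s.\ along $\mcl E$ by the coupling of Section~\ref{sec-ssl}) into a statement about distances between \emph{points}. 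For this, note that for small enough $\ep$ the circles $O_z^m$ and $O_w^m$ each have nonempty intersection with any neighborhood — more precisely, I would let $z^{\ep,m}$ be a point on (or just inside) $O_z^m$ and $w^{\ep,m}$ a point on (or just inside) $O_w^m$ chosen so that $\frk a_\ep^{-1} D_h^\ep(z^{\ep,m}, w^{\ep,m}) \leq \frk a_\ep^{-1} D_h^\ep(O_z^m, O_w^m) + 2^{-m}$; such points exist since $D_h^\ep(O_z^m, O_w^m) = \inf D_h^\ep(a,b)$ over $a \in O_z^m$, $b \in O_w^m$. Then, on one hand, $\limsup_{\mcl E \ni \ep \rta 0} \frk a_\ep^{-1} D_h^\ep(z^{\ep,m}, w^{\ep,m}) \leq \rng D_h(O_z^m, O_w^m) + 2^{-m}$ by the a.s.\ convergence~\eqref{eqn-ssl-circle}; on the other hand, $z^{\ep,m}$ lies within Euclidean distance (radius of $O_z^m) = 2^{-m}$ of $z$, and similarly for $w^{\ep,m}$.

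Next I would run a diagonalization to pass from the double-indexed family to single sequences. For each $m$, the above gives an $\ep_m > 0$ (which we may take decreasing in $m$, and in $\mcl E$) such that for all $\ep \in \mcl E$ with $\ep < \ep_m$ we have both $\frk a_\ep^{-1} D_h^\ep(z^{\ep,m}, w^{\ep,m}) \leq \rng D_h(O_z^m, O_w^m) + 2^{-m+1}$ and $|z^{\ep,m} - z| \leq 2^{-m}$, $|w^{\ep,m} - w| \leq 2^{-m}$. For $\ep \in \mcl E$ with $\ep \in [\ep_{m+1}, \ep_m)$, set $z^\ep := z^{\ep,m}$ and $w^\ep := w^{\ep,m}$. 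Then $z^\ep \rta z$ and $w^\ep \rta w$ as $\mcl E \ni \ep \rta 0$, and $\limsup_{\mcl E\ni\ep\rta 0} \frk a_\ep^{-1} D_h^\ep(z^\ep, w^\ep) \leq \lim_{m\rta\infty} \big( \rng D_h(O_z^m, O_w^m) + 2^{-m+1} \big) = D_h(z,w)$. Combining this with the lower bound $\liminf_{\mcl E\ni\ep\rta 0} \frk a_\ep^{-1} D_h^\ep(z^\ep, w^\ep) \geq D_h(z,w)$ from Lemma~\ref{lem-dist-seq} (applied to the sequences $z^\ep \rta z$, $w^\ep \rta w$), we get that the limit exists and equals $D_h(z,w)$, which is~\eqref{eqn-dist-sup}.

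One technical wrinkle to handle: the definition~\eqref{eqn-dist-def} uses circles with \emph{positive} radii, so the points $z^{\ep,m}$ and $w^{\ep,m}$ are genuinely distinct from $z$ and $w$ and from each other for small $\ep$ (since $O_z^m$ and $O_w^m$ are disjoint once $2^{-m} < |z-w|/2$), so $D_h^\ep(z^{\ep,m}, w^{\ep,m})$ is an honest positive quantity and there is no degeneracy. Also, when $D_h(z,w) = \infty$ the argument still works: the bound $\limsup \frk a_\ep^{-1} D_h^\ep(z^\ep, w^\ep) \leq D_h(z,w) = \infty$ is vacuous on the upper side, and Lemma~\ref{lem-dist-seq} forces $\liminf \frk a_\ep^{-1} D_h^\ep(z^\ep,w^\ep) \geq \infty$, so the limit is $+\infty = D_h(z,w)$.

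\textbf{Main obstacle.} I expect the only real subtlety is the bookkeeping in the diagonal argument — making sure the chosen $\ep_m$ can be taken along the fixed subsequence $\mcl E$ and that the resulting $z^\ep, w^\ep$ are measurable and converge — together with checking that the infimum defining $D_h^\ep(O_z^m, O_w^m)$ as a distance between the compact sets $O_z^m, O_w^m$ is attained up to $2^{-m}$ by points, which is immediate since $D_h^\ep$ is a continuous (indeed length) metric on $\BB C$ and the circles are compact. Everything else is a direct consequence of the a.s.\ joint convergence set up in Section~\ref{sec-ssl} and Lemma~\ref{lem-dist-seq}.
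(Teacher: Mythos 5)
Your proposal is correct and follows essentially the same route as the paper: pick points on nesting rational circles that (nearly) realize the circle-to-circle $D_h^\ep$-distance, then diagonalize over $m$ and $\ep\in\mcl E$. The only cosmetic difference is that the paper takes points exactly attaining $D_h^\ep(O_z^m,O_w^m)$ and gets a two-sided error bound $2^{-m}$ directly from~\eqref{eqn-ssl-circle}, whereas you obtain only the upper bound from the construction and invoke Lemma~\ref{lem-dist-seq} for the matching lower bound, which is equally valid since that lemma precedes this one.
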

\begin{proof}
Let $\{O_z^m\}_{m\in\BB N}$ and $\{O_w^m\}_{m\in\BB N}$ be sequences of rational circles nesting down to $z$ and $w$, respectively. 
For $\ep \in\mcl E$ and $m\in\BB N$, let $z^{\ep,m} \in O_z^m$ and $w^{\ep,m} \in O_w^m$ be such that 
\eqbn
\frk a_\ep^{-1}D_h^\ep(z^{\ep,m} , w^{\ep,m}) = \frk a_\ep^{-1}D_h^\ep(O_z^m , O_w^m ).
\eqen
Then for each fixed $m\in\BB N$, we have $\frk a_\ep^{-1}D_h^\ep(z^{\ep,m} , w^{\ep,m})  \rta \rng D_h(O_z^m , O_w^m)$ along $\mcl E$, so by~\eqref{eqn-dist-def} we have $\frk a_\ep^{-1}D_h^\ep(z^{\ep,m} , w^{\ep,m}) \rta D_h(z,w)$ as $\ep\rta 0$ and then $m\rta\infty$. 

We will now pass to a suitable ``diagonal subsequence" of $z^{\ep,m}$ in order to get a sequence satisfying~\eqref{eqn-dist-sup}. 
For $m\in\BB N$, choose $\ep_m \in\mcl E$ such that 
\eqb  \label{eqn-dist-seq-compare}
| \frk a_\ep^{-1}D_h^\ep(z^{\ep,m} , w^{\ep,m})  -  \rng D_h(O_z^m , O_w^m) | \leq 2^{-m} ,\quad \forall \ep \in\mcl E \quad \text{with} \quad \ep \leq \ep_m .
\eqe  
We can take $\ep_m$ to be strictly decreasing in $m$, so that $\ep_m \rta 0$ as $m\rta\infty$. 
For each $\ep \in [\ep_m , \ep_{m+1}) \cap\mcl E$, we define $z^\ep := z^{\ep,m}$, $w^\ep := w^{\ep,m}$, and $m(\ep) := m$. Since $m(\ep) \rta \infty$ as $\ep_m \rta 0$, it follows that the radii of $O_z^{m(\ep)}$ and $O_w^{m(\ep)}$ tend to zero as $\ep\rta 0$. Therefore, $z^\ep \rta z$, $w^\ep \rta w$, and the definition~\eqref{eqn-dist-def} of $D_h(z,w)$ implies that $\lim_{\ep\rta 0} \rng D_h(O_z^{m(\ep)} , O_w^{m(\ep)} ) = D_h(z,w)$. By~\eqref{eqn-dist-seq-compare}, 
\eqbn
| \frk a_\ep^{-1}D_h^\ep(z^\ep , w^\ep)  -  \rng D_h(O_z^{m(\ep)} , O_w^{m(\ep)} ) | \leq 2^{-m(\ep)} \rta 0 \quad \text{as $\mcl E\ni\ep\rta 0$}. 
\eqen
Therefore,~\eqref{eqn-dist-sup} holds.
\end{proof}

\begin{proof}[Proof of Proposition~\ref{prop-lower-semicont}]
Combine Lemmas~\ref{lem-dist-seq} and~\ref{lem-dist-sup}. 
\end{proof}

We note that Lemma~\ref{lem-dist-seq} immediately implies the following. 

\begin{lem} \label{lem-finite-ae}
For any fixed $z,w\in\BB C$, a.s.\ $D_h(z,w) < \infty$.
\end{lem}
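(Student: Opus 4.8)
\textbf{Proof proposal for Lemma~\ref{lem-finite-ae}.}

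The plan is to apply Lemma~\ref{lem-dist-seq} to the \emph{constant} sequences $z^\ep \equiv z$ and $w^\ep \equiv w$, which gives the a.s.\ bound $D_h(z,w) \leq \liminf_{\mcl E\ni\ep\rta 0} \frk a_\ep^{-1} D_h^\ep(z,w)$. It therefore suffices to show that the right-hand side is a.s.\ finite, and for this it is enough to prove that the random variables $\{\frk a_\ep^{-1} D_h^\ep(z,w)\}_{\ep\in(0,1)}$ are tight, since then the liminf of an a.s.\ convergent subsequence along which the law converges is a.s.\ finite (more concretely, tightness gives $\lim_{M\to\infty}\sup_\ep \BB P[\frk a_\ep^{-1}D_h^\ep(z,w) > M] = 0$, and Fatou/monotone convergence applied to the a.s.\ liminf yields $\BB P[\liminf_\ep \frk a_\ep^{-1}D_h^\ep(z,w) = \infty] = 0$).

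The tightness of $\frk a_\ep^{-1} D_h^\ep(z,w)$ is exactly the content of Proposition~\ref{prop-gff-pt-tight}, applied with $K_1 = \{z\}$, $K_2 = \{w\}$ (singletons are allowed in that proposition), and $U = \BB C$ (or any connected open set containing $z$ and $w$). So the lemma follows by chaining Lemma~\ref{lem-dist-seq} and Proposition~\ref{prop-gff-pt-tight}. I would write this out as follows: fix $z,w\in\BB C$ (the case $z = w$ being trivial since $D_h(z,z) = 0$ by definition); apply Lemma~\ref{lem-dist-seq} with $z^\ep \equiv z$, $w^\ep \equiv w$ to get $D_h(z,w) \leq \liminf_{\mcl E\ni\ep\rta 0}\frk a_\ep^{-1} D_h^\ep(z,w)$ a.s.; invoke Proposition~\ref{prop-gff-pt-tight} (with $U = \BB C$, $K_1 = \{z\}$, $K_2 = \{w\}$) to conclude that $\frk a_\ep^{-1} D_h^\ep(z,w)$ is tight; and deduce that the liminf is a.s.\ finite, whence $D_h(z,w) < \infty$ a.s.

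There is essentially no obstacle here — the lemma is an immediate corollary of two results already established in the excerpt, and the only mild subtlety is the measure-theoretic step that a tight family whose subsequential law converges has an a.s.\ finite subsequential liminf, which is routine. (One should remember that in the coupling of Section~\ref{sec-ssl} the metrics $D_h^\ep$ are built from $h^\ep \eqD h$ rather than $h$ itself, but this is harmless since $h^\ep$ has the same law as $h$ and the tightness in Proposition~\ref{prop-gff-pt-tight} only depends on the law.) The reason this is stated as a separate lemma is presumably that it is the first, easiest ingredient toward Assertion~\ref{item-finite-ae} of Theorem~\ref{thm-ssl-properties}, the full statement of which (finiteness for Lebesgue-a.e.\ pair) will require an additional Fubini argument.
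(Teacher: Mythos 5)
Your proposal is correct and follows essentially the same route as the paper: the paper's proof likewise combines the tightness of $\frk a_\ep^{-1} D_h^\ep(z,w)$ from Proposition~\ref{prop-gff-pt-tight} (which indeed allows singletons) with Lemma~\ref{lem-dist-seq} applied to the constant sequences, concluding that the a.s.\ liminf is finite. Your Fatou-type justification of the measure-theoretic step is a fine, slightly more explicit packaging of what the paper phrases as the a.s.\ existence of a random bounded subsequence.
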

\begin{proof}
By Proposition~\ref{prop-gff-pt-tight}, the random variables $\frk a_\ep^{-1} D_h^\ep(z,w)$ are tight. 
Consequently, it is a.s.\ the case that there is a random $C >0$ and a random subsequence $\mcl E' \subset \mcl E$ such that $\frk a_\ep^{-1} D_h^\ep(z,w) \leq C$ for each $\ep\in\mcl E'$. Hence a.s.\ $\liminf_{\ep\rta0 } \frk a_\ep^{-1} D_h^\ep(z,w) < \infty$. 
The lemma statement now follows from Lemma~\ref{lem-dist-seq}. 
\end{proof}

\subsection{Triangle inequality}
\label{sec-triangle}

Our next goal is to check the triangle inequality for $D_h$, and thereby establish that $D_h$ is a metric. 

\begin{prop} \label{prop-triangle-inequality}
Almost surely, the function $D_h$ satisfies the triangle inequality, i.e., for each $x,y,z\in \BB C$ we have
\eqb \label{eqn-tri}
D_h(x,z) \leq D_h(x,y) + D_h(y,z) .
\eqe
Hence $D_h$ is a metric on $\BB C$. 
\end{prop}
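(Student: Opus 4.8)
The plan is to derive the triangle inequality from assertion~(ii) of Lemma~\ref{lem-lim-across-around} together with a statement to the effect that one can cheaply ``go around'' a non-singular point at some small scale. I would first dispose of the trivial cases: if $D_h(x,y) = \infty$ or $D_h(y,z) = \infty$ then the right side of~\eqref{eqn-tri} is infinite, and if two of $x,y,z$ coincide then~\eqref{eqn-tri} is immediate from $D_h(u,u)=0$. So assume $x,y,z$ are distinct and $D_h(x,y), D_h(y,z) < \infty$; in particular $y$ is not a singular point, i.e.\ $D_h(y,w) < \infty$ for some $w \neq y$.

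\emph{Reduction.} Choose a sequence $\{\mcl A^m\}_{m\in\BB N}$ of rational annuli, each surrounding $y$, which are pairwise disjoint with $\mcl A^{m+1}$ contained in the bounded complementary component of $\mcl A^m$ and with the outer radius of $\mcl A^m$ of order $2^{-2^m}$ (so that $\mcl A^m$ contains $\asymp 2^m$ dyadic scales); the inner boundary circles of the $\mcl A^m$ then nest down to $y$. For $m$ large enough that the outer radius of $\mcl A^m$ is less than $\min\{|x-y|,|z-y|\}$, both $x$ and $z$ lie in the unbounded component of $\BB C\setminus\mcl A^m$ while $y$ lies in its bounded component, so Lemma~\ref{lem-lim-across-around}(ii), applied with the four points $x,z,y,y$ and the annulus $A=\mcl A^m$, yields $D_h(x,z) \le D_h(x,y) + D_h(y,z) + \rng D_h(\text{around }\mcl A^m)$. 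Taking the infimum over $m$, it suffices to show that a.s., for our $y$, one has $\rng D_h(\text{around }\mcl A^m) \to 0$.

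\emph{Summability of across-distances, and comparison with around.} Fix a rational circle $O_w$ around $w$, disjoint from $\mcl A^1$. Applying Lemma~\ref{lem-lim-across-around}(i) to the nested family consisting of $O_w$ and the boundary circles of $\mcl A^1,\dots,\mcl A^M$ gives $\sum_{m=1}^M \rng D_h(\text{across }\mcl A^m) \le D_h(y,w)$ for every $M$, where $\rng D_h(\text{across }\mcl A^m)$ is as in~\eqref{eqn-ssl-across}; hence $\sum_m \rng D_h(\text{across }\mcl A^m) < \infty$ and so $\rng D_h(\text{across }\mcl A^m) \to 0$. It therefore remains to prove that a.s., for all large $m$ and \emph{every} $y\in\BB C$, $\rng D_h(\text{around }\mcl A^m) \le C\,\rng D_h(\text{across }\mcl A^m)$ for a constant $C$ depending only on $\xi$; granting this, $\rng D_h(\text{around }\mcl A^m)\to 0$ and the proof is complete.

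\emph{Proof of the around/across comparison and the main obstacle.} A loop separating the two boundary circles of the fat annulus $\mcl A^m$ may be placed near any one of the $\asymp 2^m$ dyadic scales $2^{-k}$ it contains, while a path crossing $\mcl A^m$ must cross the scale-$2^{-k}$ dyadic annulus near $y$ for each such $k$; hence it is enough that for \emph{some} such $k$ the $D_h$-distance around the scale-$2^{-k}$ dyadic annulus near $y$ be at most $C$ times the $D_h$-distance across it. By Lemma~\ref{lem-perc-gff} and its version for distances across and around annuli (combined with~\eqref{eqn-ssl-around} and~\eqref{eqn-ssl-constant} to pass to the limit along $\mcl E$), for each fixed dyadic annulus this comparison holds on an event of probability $\ge 1-\delta_0$, with $\delta_0$ as small as we please once $C$ is large. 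Using the approximate locality of the field -- cleanly via the finite range of dependence of the truncated white noise $\Psi$, or via the Markov property of the GFF on well-separated annular regions -- these events at a suitably spaced subset of the $\asymp 2^m$ scales are approximately independent, so the comparison fails at all of them with probability at most $\delta_0^{\,c\,2^m}$ for a universal $c>0$; since a $2^{-2^{m+1}}$-net of a fixed compact set has only $\asymp 2^{c'2^m}$ points, a union bound over the net followed by Borel--Cantelli in $m$ -- valid once $\delta_0$ is below a universal threshold, i.e.\ $C$ is large enough -- gives the claim for all large $m$ and all $y$. The main obstacle is precisely this last step: the naive comparison with a scale-dependent constant is useless, because $\sum_m\rng D_h(\text{across }\mcl A^m)<\infty$ does not control $(\mathrm{const})^m\,\rng D_h(\text{across }\mcl A^m)$, so one genuinely needs a \emph{fixed}-constant comparison whose failure probability is exponentially small in the number of independent sub-scales of a fat annulus -- which only just beats the size of a net ranging over all $y$ -- and the delicate point is making ``approximately independent'' precise while controlling the harmonic (long-range) part of the field.
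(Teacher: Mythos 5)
Your proposal is correct and follows essentially the same route as the paper: the chaining step via assertion~(ii), the summability of across-distances coming from $D_h(y,w)<\infty$, and the fixed-constant around/across comparison at some scale near every point (per-scale probability close to $1$, approximate independence across well-separated scales, a union bound over a net, and Borel--Cantelli) are precisely Lemmas~\ref{lem-good-circles} and~\ref{lem-good-circles-conv} and the paper's proof of Proposition~\ref{prop-triangle-inequality}. The step you single out as the main obstacle is exactly the paper's Lemma~\ref{lem-good-annulus-exist}, and it is handled just as you suggest, with the caveat that the independence argument must be run at the prelimit level, using the localized metric $\wh D_h^\ep$ so that the comparison events are local functions of $h$ viewed modulo additive constant, together with the near-independence of the GFF on disjoint concentric annuli, before passing to the limit along $\mcl E$.
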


The proof of Proposition~\ref{prop-triangle-inequality} is more involved than one might initially expect, for the following reason. Suppose $\{O_x^m\}_{m\in\BB N}$, $\{O_y^m\}_{m\in\BB N}$, and $\{O_z^m\}_{m\in\BB N}$ be sequences of rational circles nesting down to $x,y,z$, respectively  (Definition~\ref{def-nest}), so that $\rng D_h(O_x^m , O_y^m) \rta D_h(x,y)$ and similarly for $(y,z)$ and $(x,z)$. It is \emph{not} necessarily the case that
\eqb
\rng D_h(O_x^m , O_z^m) \leq \rng D_h(O_x^m , O_y^m) + \rng D_h(O_y^m , O_z^m ) .
\eqe
The heuristic reason for this is that the points on $O_y^m$ at ``minimal $\rng D_h$-distance" from each of $O_x^m$ and $O_z^m$ are not necessarily the same. To deal with this difficulty, we need a way to ``join up" a ``path" from $O_x^m$ to $O_y^m$ and a ``path" from $O_y^m$ to $O_z^m$ into a ``path" from $O_x^m$ to $O_z^m$ (the reason for all of the quotations is that we do not know that ``$\rng D_h$-distances" come from an actual length metric). We will do this by showing that we can choose $O_y^m$ in such a way that there is an annulus $A_y^m$ with $O_y^m$ as its inner boundary such that $\rng D_h(\text{around $A_y^m$})$ is small, then using assertion~\eqref{item-ssl-around} of Lemma~\ref{lem-ssl-across-around}. See Figure~\ref{fig-triangle}. 

In order to ensure the existence of the annulus $A_y^m$, we will work with a certain special sequence of rational circles nesting down to $y$, which we construct in the following lemma. 

\begin{lem} \label{lem-good-circles}
There is a deterministic constant $C>0$ depending only on $\xi$ such that the following is true almost surely. For each $z\in\BB C$, there exists a sequence $\{O^m\}_{m\in\BB N}$ of rational circles nesting down to $z$ (Definition~\ref{def-nest}) with the following properties. For $m\in\BB N$, let $\wh O^m$ be the circle with the same center as $O^m$ and twice the radius and let $A^m$ be the rational annulus between $O^m$ and $\wh O^m$.
Then for each $m\in\BB N$, $O^m$ surrounds $\wh O^{m+1}$ and 
\eqb \label{eqn-good-circles}
\rng D_h\left(\text{around $A^m$}\right)  \leq C  \rng D_h\left( \text{across $A^m$}   \right) .
\eqe
\end{lem}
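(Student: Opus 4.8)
\textbf{Proof proposal for Lemma~\ref{lem-good-circles}.}

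The plan is to use a pigeonhole / counting argument over dyadic scales, exploiting the a priori up-to-constants estimates of Proposition~\ref{prop-perc-estimate'} (via their subsequential-limit versions for $\rng D_h$) to show that at a positive fraction of scales the ``around'' distance is comparable to the ``across'' distance; then we select a nested subsequence of circles through these good scales. Fix $z\in\BB C$; by a scaling/translation argument it suffices to treat $z$ in a fixed compact set, and since~\eqref{eqn-good-circles} is a countable collection of events (indexed by rational circles) it is enough to prove it for a fixed $z$ and then intersect over a countable dense set, upgrading to all $z$ by a monotonicity argument. For $k\in\BB N$ consider the rational circle $O_z^{(k)}$ of radius $2^{-k}$ centered at the point of $\BB Q^2$ nearest $z$ (perturbed slightly so it genuinely surrounds $z$), its double $\wh O_z^{(k)}$, and the rational annulus $A_z^{(k)}$ between them. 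The key point is that, for a fixed $z$, the random variables $\frk a_\ep^{-1} D_h^\ep(\text{around }A_z^{(k)})$ and $\frk a_\ep^{-1} D_h^\ep(\text{across }A_z^{(k)})$ are comparable up to constants with high probability uniformly in $k$: both are controlled above and below by constant multiples of $2^{-k}\frk a_{2^{-k}\ep}/\frk a_\ep \cdot e^{\xi h_{2^{-k}}(z)}$ by Lemma~\ref{lem-perc-gff} (applied with $r=2^{-k}$), so their ratio is tight uniformly in $k$. Passing to the subsequential limit, the ratio $\rng D_h(\text{around }A_z^{(k)})/\rng D_h(\text{across }A_z^{(k)})$ is a tight family of positive random variables indexed by $k$.

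From tightness, for any $\delta>0$ there is a constant $C_\delta$ such that $\BB P[\rng D_h(\text{around }A_z^{(k)}) > C_\delta \rng D_h(\text{across }A_z^{(k)})] \le \delta$ for every $k$. Taking expectations of indicators and using Fatou / Borel–Cantelli-type reasoning: the expected number of ``bad'' scales among $\{1,\dots,N\}$ is at most $\delta N$, so a.s.\ the set of good scales $\mcl G := \{k : \rng D_h(\text{around }A_z^{(k)}) \le C_\delta \rng D_h(\text{across }A_z^{(k)})\}$ has lower density at least $1-\delta>0$; in particular $\mcl G$ is a.s.\ infinite. (Here $C := C_\delta$ for, say, $\delta = 1/2$ is the deterministic constant in the statement, depending only on $\xi$ since the tightness bounds from Proposition~\ref{prop-perc-estimate'} and Lemma~\ref{lem-perc-gff} depend only on $\xi$.) Now enumerate $\mcl G = \{k_1 < k_2 < \cdots\}$; since the radii $2^{-k}$ shrink, we may thin further to a subsequence along which $O_z^{(k_{m+1})}$ together with its double $\wh O_z^{(k_{m+1})}$ is surrounded by $O_z^{(k_m)}$ (this only requires $k_{m+1} \ge k_m + 2$, which costs at most a bounded factor in the density and still leaves infinitely many choices a.s.). Setting $O^m := O_z^{(k_m)}$, $\wh O^m := \wh O_z^{(k_m)}$, $A^m := A_z^{(k_m)}$ gives a sequence nesting down to $z$ with $O^m$ surrounding $\wh O^{m+1}$ and satisfying~\eqref{eqn-good-circles}.

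The main obstacle is making the ``ratio is tight uniformly in $k$'' step fully rigorous in the subsequential limit: Lemma~\ref{lem-perc-gff} and Proposition~\ref{prop-gff-scale-constant} give, for each fixed $k$, two-sided bounds on $\frk a_\ep^{-1} D_h^\ep(\text{around }A_z^{(k)})$ and $\frk a_\ep^{-1} D_h^\ep(\text{across }A_z^{(k)})$ in terms of the \emph{same} quantity $2^{-k}\frk a_{2^{-k}\ep}/\frk a_\ep\, e^{\xi h_{2^{-k}}(z)}$, with multiplicative errors whose tails decay (super)polynomially in $\log T$ uniformly in $k$ (absorbing the polynomial-in-$k$ prefactors into the Gaussian tail of $h_{2^{-k}}(z)$, exactly as in the proof of Lemma~\ref{lem-infty-dist}). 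Taking $\ep\to0$ along $\mcl E$ and using the a.s.\ convergence~\eqref{eqn-ssl-around} and~\eqref{eqn-ssl-circle}, together with $h_{2^{-k}}^\ep(z)\to h_{2^{-k}}(z)$ from~\eqref{eqn-ssl-h}, transfers these bounds to $\rng D_h(\text{around }A_z^{(k)})$ and $\rng D_h(\text{across }A_z^{(k)})$, and dividing cancels the common factor $2^{-k}\frk c_{2^{-k}}\,e^{\xi h_{2^{-k}}(z)}$ (which is a.s.\ finite and positive) to yield a $k$-uniform tail bound on the ratio. The remaining bookkeeping — choosing the perturbations so the circles are genuinely rational and genuinely surround $z$, and verifying the density estimate survives the thinning — is routine.
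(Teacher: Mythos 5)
There is a genuine gap at the heart of your argument: the step ``the expected number of bad scales among $\{1,\dots,N\}$ is at most $\delta N$, so a.s.\ the set of good scales has lower density at least $1-\delta$, in particular is infinite.'' A per-scale bound $\BB P[\text{bad at }k]\leq \delta$, uniform in $k$, does \emph{not} yield an almost sure statement without some independence or quantitative decorrelation across scales: if the events were perfectly correlated, then with probability $\delta$ every scale would be bad and no good annulus would exist. What Fatou/Markov actually gives from $\BB E[\#\{\text{bad }k\leq N\}]\leq\delta N$ is only $\BB P[\text{finitely many good scales}]\leq\delta$, and since your constant is $C=C_\delta$ (you take $\delta=1/2$), you cannot send $\delta\to0$ with $C$ fixed. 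So your argument produces good scales only with probability $\geq 1/2$, not almost surely. This is precisely why the paper's proof does not stop at the tightness statement (its Lemma~\ref{lem-good-annulus-prob}, which is essentially your first paragraph): it introduces the localized metric $\wh D_h^\ep$, so that the ``good annulus'' event at scale $k$ depends only on $h$ restricted to a fixed concentric annulus (modulo additive constant), and then invokes the near-independence of the GFF across disjoint concentric annuli (\cite[Lemma 3.1]{local-metrics}) to conclude in Lemma~\ref{lem-good-annulus-exist} that the probability of having \emph{no} good scale $k\in[K/2,K]_{\BB Z}$ is $O_K(2^{-3K})$. That superexponentially-in-$K$ small failure probability is the missing ingredient in your proposal.

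The second, related problem is your reduction ``fix $z$, then intersect over a countable dense set and upgrade by monotonicity.'' The lemma must hold simultaneously for \emph{all} $z\in\BB C$, including exceptional (e.g.\ thick) points, and there is no monotonicity in the center of the annulus that transfers the conclusion from rational centers to arbitrary $z$: for each $z$ you need annuli surrounding $z$ at arbitrarily small scales, and which grid centers are relevant depends on $z$ and on the scale. The paper handles this by a union bound over the $O_K(2^{2K})$ grid points of $(2^{-K-4}\BB Z^2)\cap U$, which works only because the per-point failure probability $O_K(2^{-3K})$ from the multi-scale independence beats the number of grid points; a fixed per-scale failure probability $\delta$ (which is all your tightness argument provides) cannot survive such a union bound. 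So both gaps trace back to the same omission: you never use any decoupling of the field across scales, and without it neither the ``infinitely many good scales a.s.'' claim nor the uniformity over all $z$ can be established.
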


As discussed above, the reason why the control on $\rng D_h(\text{around $A^m$})$ from condition~\eqref{eqn-good-circles} is useful is that it allows us to ``link up paths from $O^m$ to points outside of $O^m$" via assertion~\eqref{item-ssl-around} of Lemma~\ref{lem-ssl-across-around}. 
Before we prove Lemma~\ref{lem-good-circles}, we record the following supplementary lemma which will often be useful when we apply Lemma~\ref{lem-good-circles}.

\begin{lem} \label{lem-good-circles-conv}
Almost surely, the following is true. 
Let $z\in\BB C$ and let $\{O^m\}_{m\in\BB N}$ be a sequence of rational circles nesting down to $z$ satisfying the conditions of Lemma~\ref{lem-good-circles}. Also let $A^m$ be the rational annuli as in Lemma~\ref{lem-good-circles}. If there exists $w\in\BB C\setminus\{z\}$ such that $D_h(z,w) < \infty$, then 
\eqb
\lim_{m\rta\infty} \rng D_h\left(\text{around $A^m$}\right) = \lim_{m\rta\infty} \rng D_h\left( \text{across $A^m$}   \right) = 0.
\eqe
\end{lem}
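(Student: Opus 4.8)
\textbf{Proof proposal for Lemma~\ref{lem-good-circles-conv}.}

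The plan is to use the annulus ``across'' distances $\rng D_h(\text{across }A^m)$ as a summable sequence bounded by $D_h(z,w)$, then transfer to the ``around'' distances via~\eqref{eqn-good-circles}. First I would fix $z$ and $w\not=z$ with $D_h(z,w)<\infty$, and (discarding finitely many terms) assume that all the circles $\wh O^m$ have radius smaller than $|z-w|/2$, so that each $\wh O^m$ separates $z$ from $w$. Since $\{O^m\}_{m\in\BB N}$ nests down to $z$, with $O^m$ surrounding $\wh O^{m+1}$, the circles $\wh O^{m}$ are pairwise disjoint, and for each $m$ the circle $\wh O^{m+1}$ disconnects $\wh O^m$ from $\wh O^{m+2}$ (this is exactly the nesting hypothesis of Lemma~\ref{lem-good-circles}). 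Also $O^1$ disconnects $z$ from $\wh O^2$ after relabeling appropriately, and $w$ lies outside every $\wh O^m$.

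Next I would apply assertion~\eqref{item-lim-across} of Lemma~\ref{lem-lim-across-around}, taking the chain of disjoint rational circles to be $\wh O^1, \wh O^2, \dots, \wh O^N$ for an arbitrary $N$: since each consecutive pair of these circles bounds the annulus $A^m$ and $\rng D_h(\wh O^{m+1},\wh O^m)\geq \rng D_h(\text{across }A^m)$ by monotonicity (assertion~\eqref{item-ssl-across} of Lemma~\ref{lem-ssl-across-around}, as $A^m$ lies between $O^m$ and $\wh O^m$ and $\wh O^{m+1}$ is surrounded by $O^m$), we obtain
\eqb \label{eqn-good-circles-conv-sum}
\sum_{m=1}^{N} \rng D_h\left(\text{across }A^m\right) \leq D_h(z,w) < \infty .
\eqe
Letting $N\rta\infty$ shows that $\sum_{m=1}^\infty \rng D_h(\text{across }A^m) < \infty$, hence $\rng D_h(\text{across }A^m) \rta 0$. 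Then~\eqref{eqn-good-circles} gives $\rng D_h(\text{around }A^m) \leq C \rng D_h(\text{across }A^m) \rta 0$ as well, which is the claim.

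The main point requiring care is the bookkeeping of which circles disconnect which — ensuring that the $\wh O^m$ (not just the $O^m$) form a valid nested chain of disjoint circles so that Lemma~\ref{lem-lim-across-around}\eqref{item-lim-across} applies with $z$ inside $\wh O^1$ (equivalently $O^1$ disconnecting $z$ from $\wh O^2$) and $w$ outside $\wh O^N$. This is guaranteed by the hypotheses of Lemma~\ref{lem-good-circles} (that $O^m$ surrounds $\wh O^{m+1}$ and $\wh O^m$ has twice the radius of $O^m$ with the same center), together with the assumption that the radii are eventually below $|z-w|/2$; I do not anticipate any serious obstacle beyond stating these containments precisely. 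Note also that the inequality $\rng D_h(\wh O^{m+1},\wh O^m) \geq \rng D_h(\text{across }A^m)$ used above is itself an instance of the monotonicity~\eqref{eqn-dist-lim-mono}/Lemma~\ref{lem-ssl-across-around}\eqref{item-ssl-across}, since $\wh O^{m+1}$ is surrounded by $O^m$ = inner boundary of $A^m$ and $\wh O^m$ = outer boundary of $A^m$.
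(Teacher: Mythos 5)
Your proof is correct and follows essentially the same route as the paper: bound $\sum_m \rng D_h(\text{across }A^m)$ by $D_h(z,w)<\infty$ using the nested disjoint circles, conclude the across-distances tend to $0$, and transfer to the around-distances via~\eqref{eqn-good-circles}. The only cosmetic difference is that you invoke Lemma~\ref{lem-lim-across-around}\eqref{item-lim-across} with the chain of outer circles $\wh O^m$ plus an extra monotonicity step, whereas the paper applies Lemma~\ref{lem-ssl-across-around}\eqref{item-ssl-across} directly to the boundary circles of the disjoint annuli $A^m$ together with the definition~\eqref{eqn-dist-def} of $D_h(z,w)$ — these amount to the same argument.
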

\begin{proof}
By~\eqref{eqn-good-circles} we only need to show that $\lim_{m\rta\infty} \rng D_h\left( \text{across $A^m$}   \right) = 0$.
Fix a point $w\in\BB C \setminus \{z\}$ such that $D_h(z,w)$ is finite. 
By possibly ignoring finitely many of the $A^m$'s we can assume without loss of generality that $w$ is not contained in or disconnected from $\infty$ by $A^1$. 
By Lemma~\ref{lem-good-circles} the annuli $A^m$ for $m\in\BB N$ are disjoint. 
Therefore, assertion~\eqref{item-ssl-across} together with the definition~\eqref{eqn-dist-def} of $D_h(z,w)$ implies that
\eqb \label{eqn-good-circle-sum}
D_h(z,w) \geq \sum_{m=1}^\infty \rng D_h(\text{across $A^m$}) .
\eqe
Since $D_h(z,w)$ is finite by hypothesis and $\rng D_h(\text{across $A^m$}) \geq 0$ for every $m\in\BB N$, the sum on the right side of~\eqref{eqn-good-circle-sum} can have at most finitely many terms larger than any $\ep > 0$. It follows that $\lim_{m\rta\infty} \rng D_h\left( \text{across $A^m$}   \right) = 0$, as required.
\end{proof}

We now turn our attention to the proof of Lemma~\ref{lem-good-circles}. We first prove an analogous statement for the LFPP metrics $D_h^\ep$ using the tightness of distances around and across Euclidean annuli (Proposition~\ref{prop-gff-pt-tight}) together with the near-independence of the restriction of the GFF to disjoint concentric annuli (Lemma 3.1 of~\cite{local-metrics}). 

For $C>1$, $u \in\BB C$, $k\in\BB N_0$, and $\ep > 0$, let
\eqb
E_k^\ep(u;C) := \left\{   D_h^\ep\left(\text{around $B_{2^{-k+1}}(u) \setminus B_{2^{-k}}(u)$}\right)  
\leq C  D_h^\ep\left( \text{across $B_{2^{-k+1}}(u) \setminus B_{2^{-k}}(u)$} \right)  \right\} .
\eqe
Define $\wh E_k^\ep(u;C)$ similarly but with the localized LFPP metric $\wh D_h^\ep$ of~\eqref{eqn-localized-lfpp} used in place of $D_h^\ep$. 
We now check that $E_k^\ep(u;C)$ occurs with high probability when $C$ is large. 

\begin{lem} \label{lem-good-annulus-prob}
For each $p\in (0,1)$, there exists $C>1$ (depending only on $\xi$) such that for each $u\in\BB C$ and each $k\in\BB N_0$, 
\eqb \label{eqn-good-annulus-prob}
\BB P\left[  E_k^\ep(u;C) \right] \geq p - o_\ep(1),\quad \forall \ep\in (0,1)
\eqe
where the $o_\ep(1)$ tends to zero as $\ep\rta 0$ at a rate depending only on $\xi , k , p$ (not on $u$).
The same holds with $\wh E_k^\ep(u;C)$ in place of $E_k^\ep(u;C)$. 
\end{lem}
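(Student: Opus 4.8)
\textbf{Proof proposal for Lemma~\ref{lem-good-annulus-prob}.}

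The plan is to reduce the statement to the tightness bounds of Proposition~\ref{prop-gff-pt-tight} via the scale invariance~\eqref{eqn-gff-scale} of $D_h^\ep$, and then to handle the localized metric $\wh D_h^\ep$ separately using the approximation Lemma~\ref{lem-localized-approx}. First I would observe that, by~\eqref{eqn-gff-scale} with $r = 2^{-k}$, the pair of random variables
\eqbn
\left( D_h^\ep\left(\text{around $B_{2^{-k+1}}(u) \setminus B_{2^{-k}}(u)$}\right) , \; D_h^\ep\left(\text{across $B_{2^{-k+1}}(u) \setminus B_{2^{-k}}(u)$}\right)  \right)
\eqen
has the same law as $2^{-k} e^{\xi h_{2^{-k}}(u)}$ times the analogous pair for the annulus $B_2(0) \setminus B_1(0)$ and the metric $D_h^{2^k \ep}$. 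Since the prefactor $2^{-k} e^{\xi h_{2^{-k}}(u)}$ cancels in the ratio defining $E_k^\ep(u;C)$, and since the law of the GFF is translation invariant so we may take $u = 0$, it suffices to show: for each $p\in(0,1)$ there is $C>1$ depending only on $\xi$ such that $\BB P[ D_h^{\delta}(\text{around }A) \leq C\, D_h^\delta(\text{across }A) ] \geq p - o_\delta(1)$ as $\delta \to 0$, where $A := B_2(0)\setminus B_1(0)$. This is immediate from Proposition~\ref{prop-gff-pt-tight}: applying that proposition with $(U,K_1,K_2)$ chosen so that $D_h^\delta(\text{around }A)$ and $D_h^\delta(\text{across }A)$ are both sandwiched between constant multiples of $\frk a_\delta^{-1}$-normalized distances between non-trivial connected sets (e.g.\ $K_1,K_2$ equal to the two boundary circles of $A$ for ``across'', and for ``around'' using the standard fact recorded just after Proposition~\ref{prop-perc-estimate} that distances around annuli obey the same two-sided bounds), we get that both $\frk a_\delta^{-1} D_h^\delta(\text{around }A)$ and $\left(\frk a_\delta^{-1} D_h^\delta(\text{across }A)\right)^{-1}$ are tight families in $\delta$, so there is a deterministic $C = C(\xi,p)$ with $\BB P[D_h^\delta(\text{around }A) \leq C D_h^\delta(\text{across }A)] \geq p - o_\delta(1)$; the $o_\delta(1)$ error comes from the ``$+\,o_\ep(1)$'' terms in the comparison of $D_h^\ep$ with white-noise LFPP inside Proposition~\ref{prop-gff-pt-tight}. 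Tracking constants through the rescaling, the rate of the resulting $o_\ep(1)$ depends only on $\xi$, $k$, $p$ and not on $u$, as claimed. (One could instead take $C$ independent of $p$ by sending $p\to 1$, but the weaker statement is all that is needed.)

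For the version with the localized metric $\wh D_h^\ep$, I would use that by~\eqref{eqn-localized-lfpp-approx} of Lemma~\ref{lem-localized-approx}, a.s.\ $\wh D_h^\ep(z,w;V)/D_h^\ep(z,w;V) \to 1$ uniformly over $z\neq w$ in any fixed bounded domain $V$ as $\ep\to 0$, and the same for the internal metric on the annular region $B_{2^{-k+1}}(u)\setminus B_{2^{-k}}(u)$ (after translating so $u=0$, using the whole-plane statement; the translation-invariant rate is what gives $u$-independence). Hence $\wh D_h^\ep(\text{around }A')$ and $\wh D_h^\ep(\text{across }A')$ differ from their un-hatted counterparts by multiplicative factors tending to $1$ uniformly, so the event $\wh E_k^\ep(u;C')$ with a slightly enlarged constant $C'$ contains $E_k^\ep(u;C)$ up to an event of probability $o_\ep(1)$. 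This transfers~\eqref{eqn-good-annulus-prob} to $\wh E_k^\ep(u;C)$ after relabeling the constant.

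The main obstacle, such as it is, is purely bookkeeping: one must make sure the $o_\ep(1)$ error rates really are uniform in $u$. This follows because after the translation $u\mapsto 0$ we are always dealing with the \emph{same} random variables (those for the annulus $B_2(0)\setminus B_1(0)$ or $B_{2^{-k+1}}(0)\setminus B_{2^{-k}}(0)$), whose laws do not depend on $u$; the translation invariance of the law of $h$ (equivalently of $D_h^\ep$ up to the additive harmonic part, which is irrelevant here since we only use scale/translation covariance of distances) is what makes this work. Apart from that, the argument is a direct application of Proposition~\ref{prop-gff-pt-tight}, the scaling relation~\eqref{eqn-gff-scale}, and Lemma~\ref{lem-localized-approx}.
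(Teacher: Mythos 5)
Your proposal is correct and follows essentially the same route as the paper: tightness of the around/across distances for the unit-scale annulus from Proposition~\ref{prop-gff-pt-tight} gives the constant $C$, the exact scaling relation~\eqref{eqn-gff-scale} together with translation invariance (modulo additive constant, which cancels in the ratio) transfers this to $E_k^\ep(u;C)$, and Lemma~\ref{lem-localized-approx} with a slightly enlarged constant handles $\wh E_k^\ep(u;C)$. The only nitpick is the source of the $o_\ep(1)$: at the base scale the tightness bound holds for all $\ep\in(0,1)$ with no error, and the error term is really there because the rescaled parameter $2^k\ep$ must lie in $(0,1)$ (i.e.\ $\ep<2^{-k}$) and because the hatted comparison is only asymptotic — not because of any white-noise comparison inside Proposition~\ref{prop-gff-pt-tight} — but this does not affect the argument.
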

\begin{proof}
By Proposition~\ref{prop-gff-pt-tight}, we know that the random variables $\frk a_\ep^{-1} D_h^\ep\left(\text{around $B_2(0) \setminus B_1(0)$}\right)$ and $ \left(\frk a_\ep^{-1} D_h^\ep\left(\text{across $B_2(0)\setminus B_1(0)$}\right) \right)^{-1}$ are tight. Consequently, we can find $C   >1$ such that
\eqb \label{eqn-good-annulus-prob0}
\BB P\left[ E_0^\ep(0;C) \right] \geq p ,\quad\forall \ep \in (0,1) .
\eqe

By~\eqref{eqn-gff-scale}, we have the scaling relation
\eqb \label{eqn-good-annulus-scale}
\left( D_h^{2^{ k} \ep} (z,w)  \right)_{z,w\in\BB C} \eqD \left( 2^k e^{-\xi h_{2^{-k}}(0)} D_h^\ep(2^{-k} z , 2^{-k} w) \right)_{z,w\in\BB C},
\eqe
where $h_{2^{-k}}(0)$ is the average of $h$ over $\bdy B_{2^{-k}}(0)$. Since scaling $D_h^\ep$ by a constant factor does not affect the occurrence of $E_k^\ep(u;C)$, we infer from~\eqref{eqn-good-annulus-prob0}, \eqref{eqn-good-annulus-scale}, and the translation invariance of the law of $h$ modulo additive constant that  
\eqb
\BB P\left[ E_k^\ep(u;C ) \right] \geq p  ,\quad\forall \ep \in (0,2^{-k}) .
\eqe
This gives~\eqref{eqn-good-annulus-prob}. 

We obtain~\eqref{eqn-good-annulus-prob} with $\wh E_k^\ep(u; C)$ in place of $E_k^\ep(u;C)$ by replacing $C$ by $2C$ and applying Lemma~\ref{lem-localized-approx} together with the translation invariance of the law of $\wh h_\ep^*$, modulo additive constant.  
\end{proof}

The following lemma is the main technical estimated needed for the proof of Lemma~\ref{lem-good-circles}.

\begin{lem} \label{lem-good-annulus-exist}
Fix a bounded open set $U\subset\BB C$. There exists $C>1$ depending only on $\xi$ such that for each $K \in \BB N$ and $\ep\in (0,1)$, it holds with probability $1 - O_K(2^{-  K}) - o_\ep(1)$ that the following is true (here the implicit constant in the $O_K(\cdot)$ depends only on $U,\xi$ and the rate of convergence of the $o_\ep(1)$ depends only on $K,U,\xi$). 
For each $u \in (2^{-K-4} \BB Z^2) \cap U$, there exists $k\in [K/2 , K]_{\BB Z}$ for which $E_k^\ep(u;C)$ occurs.
\end{lem}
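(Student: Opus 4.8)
The plan is to bound the probability of the complementary event by a union bound over the $O_K(2^{2K})$ points $u \in (2^{-K-4}\BB Z^2) \cap U$, showing that for each such $u$ the probability that $E_k^\ep(u;C)$ fails for \emph{every} $k \in [K/2,K]_{\BB Z}$ is at most $2^{-3K}$ (say), up to an $o_\ep(1)$ error. Summing over the $O_K(2^{2K})$ choices of $u$ then gives the claimed $O_K(2^{-K}) + o_\ep(1)$ bound. So the real work is: fix $u$, and estimate $\BB P\left[ \bigcap_{k=K/2}^{K} E_k^\ep(u;C)^c \right]$.

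To handle a single $u$, the idea is to exploit the near-independence of the restriction of the GFF to the disjoint concentric annuli $B_{2^{-k+1}}(u) \setminus B_{2^{-k}}(u)$ for $k$ ranging over a suitable sparse subset of $[K/2,K]_{\BB Z}$. First I would invoke Lemma~\ref{lem-good-annulus-prob}: choosing $p$ close to $1$ (say $p = 1/2$) and the corresponding constant $C$, we get $\BB P[E_k^\ep(u;C)] \geq 1/2 - o_\ep(1)$ for each individual $k$. To upgrade this to a statement about the intersection of the $E_k^\ep(u;C)^c$, I would pass to the localized LFPP metric $\wh D_h^\ep$ and the localized events $\wh E_k^\ep(u;C)$: by~\eqref{eqn-localized-property}, $\wh E_k^\ep(u;C)$ is determined by $h$ restricted to the $\ep^{1/2}$-neighborhood of the annulus $B_{2^{-k+1}}(u)\setminus B_{2^{-k}}(u)$, which for $\ep$ small is contained in $B_{2^{-k+2}}(u)\setminus B_{2^{-k-1}}(u)$. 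Then I would use Lemma 3.1 of~\cite{local-metrics} (the near-independence of the GFF across disjoint concentric annuli) to say that, up to a multiplicative error bounded by an absolute constant, the events $\wh E_{k}^\ep(u;C)$ for $k$ running over, say, every fourth integer in $[K/2,K]$ behave like independent events. Since there are $\asymp K$ such well-separated scales, and each fails with conditional probability at most $1/2 + o_\ep(1) < 3/4$, we get $\BB P\left[\bigcap \wh E_k^\ep(u;C)^c\right] \leq (3/4)^{cK} + o_\ep(1) \leq 2^{-3K} + o_\ep(1)$ for $K$ large, after adjusting constants. Finally I would transfer back from $\wh E_k^\ep$ to $E_k^\ep$ using Lemma~\ref{lem-localized-approx} (the uniform convergence of $\wh h_\ep^*$ to $h_\ep^*$), at the cost of another $o_\ep(1)$ and possibly replacing $C$ by $2C$, as in the proof of Lemma~\ref{lem-good-annulus-prob}.

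I expect the main obstacle to be the careful bookkeeping of the near-independence step: one must make precise the statement that conditioning on the fields near the other annuli changes the probability of $\wh E_k^\ep(u;C)$ by only a bounded multiplicative factor, and then iterate this over the $\asymp K$ scales without the accumulated errors blowing up. The relevant tool is the decomposition of the GFF on an annulus into a sum of a field depending only on that annulus and a harmonic part, with the harmonic part having small (Gaussian) oscillation; applying this requires that the scales be separated by a fixed multiplicative factor (here a factor of $2^4$ between consecutive chosen $k$'s suffices), so that the harmonic corrections are small and can be absorbed into a slight enlargement of $C$ with probability $1 - o_\ep(1)$ at each scale. A secondary, more routine, point is to keep track of the dependence of all the $o_\ep(1)$ rates: each depends on $K$, $U$, $\xi$ but not on $u$ (using translation invariance of the law of $h$ modulo additive constant, exactly as in Lemma~\ref{lem-good-annulus-prob}), so that after the union bound over the $O_K(2^{2K})$ points $u$ the total error is still $o_\ep(1)$ for each fixed $K$. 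Once this is in place, the count $O_K(2^{2K}) \cdot 2^{-3K} = O_K(2^{-K})$ closes the argument.
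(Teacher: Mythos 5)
Your overall route is the same as the paper's (union bound over the $O_K(2^{2K})$ lattice points, passage to the localized events $\wh E_k^\ep(u;C)$, near-independence of the GFF on disjoint concentric annuli via~\cite[Lemma 3.1]{local-metrics}, then transfer back to $E_k^\ep(u;C)$ by Lemma~\ref{lem-localized-approx}), but there is a genuine quantitative gap in the middle step. Taking $p=1/2$ in Lemma~\ref{lem-good-annulus-prob} and arguing that each scale ``fails with conditional probability at most $3/4$'' cannot yield the per-point bound $2^{-3K}$: you only retain $\asymp K$ (in fact about $K/8$) well-separated scales, so even under \emph{exact} independence you would get $(3/4)^{cK} = 2^{-c'K}$ with $c'$ a small absolute constant, which is enormously larger than $2^{-3K}$ and does not survive the union bound over $\asymp 2^{2K}$ points $u$; ``adjusting constants'' cannot repair this, since the entropy $2^{2K}$ is forced by the lattice spacing $2^{-K-4}$ in the statement. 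Moreover,~\cite[Lemma 3.1]{local-metrics} is not a statement that the annulus events are ``independent up to a bounded multiplicative factor'' for an arbitrary fixed success probability; its hypothesis is that the individual probabilities exceed some threshold $p$ which must be taken close to $1$ depending on the exponential rate you want, precisely because conditioning on the field in the other annuli can degrade a probability of $1/2$ to something close to $0$. The correct order of quantifiers, and the one the paper uses, is: first fix the target rate (enough to beat $2^{2K}$, e.g.\ $2^{-3K}$ per point), let the near-independence lemma dictate how close to $1$ the individual probabilities must be, and only then invoke Lemma~\ref{lem-good-annulus-prob} with that $p$, which fixes $C = C(\xi)$. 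Your own sketch of re-deriving the decoupling by a ``field plus harmonic part'' decomposition runs into the same issue: the harmonic correction is absorbed by requiring the unconditional probability to be close to $1$, not by a bounded multiplicative error.

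Two smaller points. First, to apply~\cite[Lemma 3.1]{local-metrics} you need the events to be determined by the field on the annuli \emph{viewed modulo additive constant}; this holds here because adding a constant to $h$ scales $\wh D_h^\ep$ by a constant and hence does not affect $\wh E_k^\ep(u;C)$, and this observation should be made explicit. Second, your bookkeeping of the $o_\ep(1)$ rates (uniform in $u$ by translation invariance, depending on $K,U,\xi$) and the final count $O_K(2^{2K})\cdot 2^{-3K} = O_K(2^{-K})$ are fine once the per-point bound is established with a sufficiently large $p$.
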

\begin{proof}
Fix $p  \in (0,1)$, close to 1, which we will choose later in a universal manner. Let $C$ be as in Lemma~\ref{lem-good-annulus-prob} for this choice of $p$, so that  
\eqb
\BB P\left[  E_k^\ep(u;C) \right] \geq p - o_\ep(1),\quad\forall \ep \in (0,1) ,\quad\forall u\in\BB C,\quad\forall k \in \BB N_0 .
\eqe 
By the locality property~\eqref{eqn-localized-property} of $\wh h_\ep^*(u)$, the event $\wh E_k^\ep(u;C)$ for $k\in [0,K]_{\BB Z}$ is determined by the restriction of $h$ to $B_{2^{-k+1}+\ep^{1/2}}(u) \setminus B_{2^{-k}-\ep^{1/2}}(u)$. In fact, $\wh E_k^\ep(u;C)$ is determined by this restriction of $h$ viewed modulo additive constant since adding a constant to $h$ results in scaling $\wh D_h^\ep$ by a constant. In particular, if $\ep$ is smaller than $2^{-2K-2}$, then $\wh E_k^\ep(u;C)$ is determined by the restriction of $h$ to $B_{2^{-k+2}}(u) \setminus B_{2^{-k-1}}(u)$, viewed modulo additive constant.

By a basic near-independence estimate for the restrictions of the GFF to disjoint concentric annuli (see~\cite[Lemma 3.1, assertion 1]{local-metrics}), it follows that if $p$ is chosen to be sufficiently close to 1 (in a universal manner) and $\ep$ is sufficiently small (depending on $K$), then the following is true. For each $u\in\BB C$, 
\eqb
\BB P\left[ \text{$E_k^\ep(u;C)$ occurs for at least one $k\in [K/2,K]_{\BB Z}$} \right] \geq 1 -  O_K(2^{-3 K}) .
\eqe
By a union bound over the $O_K(2^{ 2K})$ points $u\in (2^{- K - 4} \BB Z^2) \cap U$, we now obtain the lemma statement with $\wh E_k^\ep(u;C)$ in place of $E_k^\ep(u;C)$. 
The statement for $E_k^\ep(u;C)$ (with a slightly larger value of $C$) follows from the statement for $\wh E_k^\ep(u;C)$ together with Lemma~\ref{lem-localized-approx}.
\end{proof}

\begin{lem} \label{lem-ssl-good-annulus}
Fix a bounded open set $U\subset\BB C$. There exists $C>1$ depending only on $\xi$ such that for each $K \in \BB N$, it holds with probability $1 - O_K(2^{-  K})  $ that the following is true. 
For each $u\in (2^{-2K} \BB Z^2) \cap U$, there exists $k\in [K/2 , K]_{\BB Z}$ for which
\eqb \label{eqn-ssl-good-annulus}
\rng D_h\left(\text{around $B_{2^{-k+1}}(u) \setminus B_{2^{-k}}(u)$}\right)  \leq C  \rng D_h\left( \bdy B_{2^{-k+1}}(u)  , \bdy B_{2^{-k}}(u)   \right) .
\eqe
\end{lem}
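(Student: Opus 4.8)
\textbf{Proof proposal for Lemma~\ref{lem-ssl-good-annulus}.}

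The plan is to transfer the finite-$\ep$ statement of Lemma~\ref{lem-good-annulus-exist} through to the subsequential limit using the a.s.\ convergence of distances around and across rational annuli that was set up in Section~\ref{sec-ssl}. The key point is that the events $E_k^\ep(u;C)$ are comparisons between $D_h^\ep(\text{around }\cdot)$ and $D_h^\ep(\text{across }\cdot)$ for annuli bounded by rational circles (once $K$ and hence $k$ range over integers and $u$ ranges over a dyadic lattice, all the circles $\bdy B_{2^{-k}}(u)$ are rational in the sense of Definition~\ref{def-rational}), so by~\eqref{eqn-ssl-around} and~\eqref{eqn-ssl-circle} the relevant quantities converge a.s.\ along $\mcl E$.

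First I would fix $C$ as in Lemma~\ref{lem-good-annulus-exist} (with the given $U$) — actually it is cleanest to take $C$ to be twice the constant there, to leave room for the strict-versus-non-strict inequality issue in the limit. Fix $K\in\BB N$. For each $\ep\in(0,1)$, let $G_K^\ep$ denote the event from Lemma~\ref{lem-good-annulus-exist}: for every $u\in(2^{-K-4}\BB Z^2)\cap U$ there is some $k\in[K/2,K]_{\BB Z}$ with $E_k^{\ep}(u;C/2)$. That lemma gives $\BB P[G_K^\ep]\geq 1-O_K(2^{-K})-o_\ep(1)$. Now I claim that on the limiting probability space, the event $\widetilde G_K$ that the conclusion~\eqref{eqn-ssl-good-annulus} holds for all $u\in(2^{-2K}\BB Z^2)\cap U$ (note $2^{-2K}\BB Z^2\subset 2^{-K-4}\BB Z^2$ for $K$ large, so this is a subcollection of lattice points, which is all we need) satisfies $\BB P[\widetilde G_K]\geq \limsup_{\mcl E\ni\ep\to 0}\BB P[G_K^\ep]\geq 1-O_K(2^{-K})$. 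To see this, observe that for each fixed $u$ and $k$, in the Skorokhod coupling of Section~\ref{sec-ssl} we have $\frk a_\ep^{-1}D_h^\ep(\text{around }A)\to \rng D_h(\text{around }A)$ and $\frk a_\ep^{-1}D_h^\ep(\bdy B_{2^{-k+1}}(u),\bdy B_{2^{-k}}(u))\to \rng D_h(\bdy B_{2^{-k+1}}(u),\bdy B_{2^{-k}}(u))$ a.s.\ along $\mcl E$, where $A=B_{2^{-k+1}}(u)\setminus B_{2^{-k}}(u)$. Hence if $E_k^\ep(u;C/2)$ holds for a sequence of $\ep\in\mcl E$ tending to zero, then passing to the limit gives $\rng D_h(\text{around }A)\leq (C/2)\rng D_h(\text{across }A)\leq C\,\rng D_h(\bdy B_{2^{-k+1}}(u),\bdy B_{2^{-k}}(u))$, i.e.\ \eqref{eqn-ssl-good-annulus}. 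Since there are only finitely many pairs $(u,k)$ in play for fixed $K$ and $U$, taking a common subsequence along which all the relevant distances converge, we conclude: whenever $G_K^\ep$ holds along a subsequence of $\mcl E$ converging to $0$, $\widetilde G_K$ holds. A standard argument (Fatou for events, using that $G_K^\ep$ occurs with probability bounded below uniformly in small $\ep$) then yields $\BB P[\widetilde G_K]\geq 1-O_K(2^{-K})$.

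To make the last step precise I would argue as follows: for each $\delta>0$, the set of $\ep\in\mcl E$ with $\BB P[G_K^\ep]\geq 1-O_K(2^{-K})-\delta$ contains all sufficiently small $\ep\in\mcl E$; by the a.s.\ convergence in the coupling and a union bound over the finitely many $(u,k)$, the event $\{G_K^\ep \text{ holds}\}$ is, up to a null set and for $\ep\in\mcl E$ small, contained in an $\ep$-neighbourhood (in the sense of the functional convergence) of $\widetilde G_K$; letting $\ep\to 0$ along $\mcl E$ and $\delta\to0$ gives the bound. The main obstacle, and the only genuinely delicate point, is handling the strict versus non-strict inequalities and the fact that $E_k^\ep(u;C)$ is an open/closed condition whose indicator need not be continuous under the convergence — this is precisely why one inflates $C$ by a factor of $2$, so that an approximate inequality with constant $C/2$ at finite $\ep$ passes to an honest inequality with constant $C$ in the limit with room to spare; the inequality $\rng D_h(\text{across }A)\leq \rng D_h(\bdy B_{2^{-k+1}}(u),\bdy B_{2^{-k}}(u))$ needed above is just the definition~\eqref{eqn-ssl-across}. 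Everything else is bookkeeping over the finite index set $[K/2,K]_{\BB Z}\times\big((2^{-2K}\BB Z^2)\cap U\big)$.
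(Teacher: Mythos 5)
Your limiting mechanism is exactly the paper's intended argument (the paper's proof of Lemma~\ref{lem-ssl-good-annulus} is literally ``pass to the subsequential limit in Lemma~\ref{lem-good-annulus-exist}''), and that part of your write-up is sound: for each fixed pair $(u,k)$ the quantities $\frk a_\ep^{-1}D_h^\ep(\text{around }A)$ and $\frk a_\ep^{-1}D_h^\ep(\bdy B_{2^{-k+1}}(u),\bdy B_{2^{-k}}(u))$ converge a.s.\ along $\mcl E$ by~\eqref{eqn-ssl-around} and~\eqref{eqn-ssl-circle} (all circles involved are rational), there are only finitely many pairs for fixed $K$, and the reverse-Fatou step $\BB P[\limsup_\ep G_K^\ep]\geq\limsup_\ep\BB P[G_K^\ep]$ is legitimate. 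In fact the factor-$2$ inflation of $C$ is unnecessary: if $a_\ep\leq C b_\ep$ along a subsequence and $a_\ep\to a$, $b_\ep\to b$, then $a\leq Cb$, so the non-strict inequality in $E_k^\ep(u;C)$ survives the limit as is (it is harmless to inflate, of course).

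The genuine gap is the lattice comparison, which you state backwards. Since $2^{-K-4}=2^{K-4}\cdot 2^{-2K}$, one has $2^{-K-4}\BB Z^2\subset 2^{-2K}\BB Z^2$ for $K\geq 4$: the lattice appearing in Lemma~\ref{lem-good-annulus-exist} is the \emph{coarser} one, so $(2^{-2K}\BB Z^2)\cap U$ is a strict superset (of cardinality $O_K(2^{4K})$ versus $O_K(2^{2K})$) of the set of centers covered by Lemma~\ref{lem-good-annulus-exist}. Consequently your argument as written only yields~\eqref{eqn-ssl-good-annulus} for $u\in(2^{-K-4}\BB Z^2)\cap U$, not for all $u\in(2^{-2K}\BB Z^2)\cap U$ as the lemma demands. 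To close this you would need a finer-lattice version of Lemma~\ref{lem-good-annulus-exist}: its proof does extend, but one must choose the parameter $p$ there closer to $1$ (hence a larger $C$, still depending only on $\xi$) so that the per-point failure probability coming from the near-independence across the roughly $K/2$ scales in $[K/2,K]_{\BB Z}$ is $O_K(2^{-cK})$ with $c>4$, beating a union bound over the $O_K(2^{4K})$ points of $(2^{-2K}\BB Z^2)\cap U$; the bound $O_K(2^{-3K})$ quoted in the paper's proof of that lemma is not sufficient for the finer lattice. (Merely noting that the later application in the proof of Lemma~\ref{lem-good-circles} only uses $u\in(2^{-K-4}\BB Z^2)$ would be a weakening of the statement, not a proof of it.)
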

\begin{proof}
This follows by passing to the (subsequential) limit in Lemma~\ref{lem-good-annulus-exist}. 
\end{proof}

\begin{proof}[Proof of Lemma~\ref{lem-good-circles}]
Fix a large $R>1$. It suffices to prove that the lemma statement holds a.s.\ for each $z\in B_R(0)$. 
By Lemma~\ref{lem-ssl-good-annulus} and the Borel-Cantelli lemma, it is a.s.\ the case that for each large enough $K \in\BB N$ and each $u \in (2^{-K-4} \BB Z^2) \cap B_R(0)$ there exists $k = k(u,K) \in [K/2,K]_{\BB Z}$ such that 
\eqb \label{eqn-rational-good-annulus}
\rng D_h\left(\text{around $B_{2^{-k+1}}(u) \setminus B_{2^{-k}}(u) $}\right)  \leq C  \rng D_h\left( \text{across $B_{2^{-k+1}}(u) \setminus B_{2^{-k}}(u) $}    \right) .
\eqe
For a given $z\in B_R(0)$ and $K\in\BB N$ large enough that the preceding statement holds, let $u_z^K \in (2^{-K-4} \BB Z^2) \cap B_R(0)$ be chosen so that $z\in B_{2^{-K}}(u_z^K)$ and let $\wt A_z^K := B_{2^{-k+1}}(u_z^K) \setminus B_{2^{-k}}(u_z^K) $. Then each $\wt A_z^K$ is an annulus surrounding $z$ which satisfies the bound~\eqref{eqn-good-circles} (with $\wt A_z^K$ in place of $A^m$) and the radii of the $\wt A_z^K$'s tends to zero as $K\rta\infty$. 
If we let $\{A^m\}_{m\in\BB N}$ be a sufficiently sparse subsequence of the $A_z^K$'s, then the $A^m$'s are disjoint and $A^m$ disconnects $A^{m+1}$ from $\infty$ for each $m\in\BB N$. Thus the lemma statement is true with $O^m$ equal to the inner boundary of $A^m$.
\end{proof}

\begin{figure}[t!]
 \begin{center}
\includegraphics[scale=1]{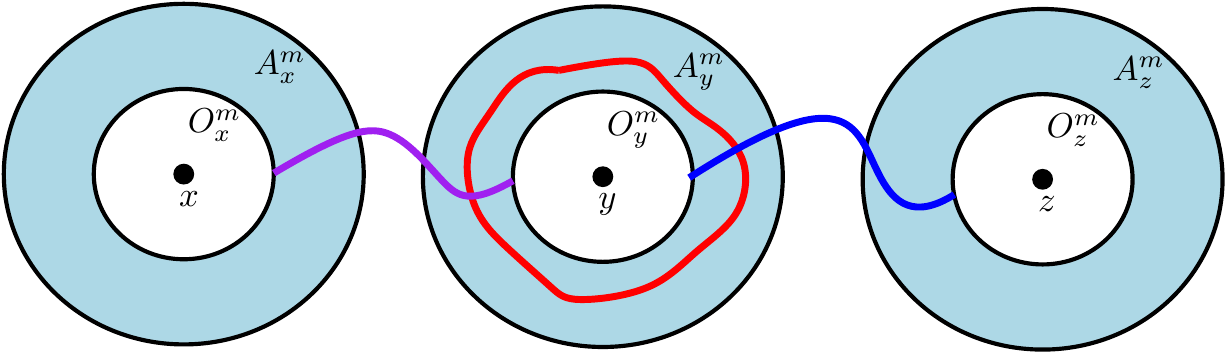}
\vspace{-0.01\textheight}
\caption{Illustration of the proof of Proposition~\ref{prop-triangle-inequality}. We use the short (by Lemma~\ref{lem-good-circles-conv}) red ``path" around $A_y^m$ to join together ``paths" from $O_x^m$ to $O_y^m$ and from $O_y^m$ to $O_z^m$ into a ``path" from $O_x^m$ to $O_z^m$. Note that the paths in the figure are meant to illustrate the intuition behind assertion~\eqref{item-ssl-around} rather than to represent literal paths, since we do not know that $\rng D_h$-distances are the same as $D_h$-distances or that $\rng D_h$ is an actual metric. 
}\label{fig-triangle}
\end{center}
\vspace{-1em}
\end{figure}

\begin{proof}[Proof of Proposition~\ref{prop-triangle-inequality}]
Since $D_h(w,w) =0$ for any $w\in\BB C$, we can assume without loss of generality that $x,y,z$ are distinct. 
We can also assume without loss of generality that $D_h(x,y)$ and $D_h(y,z)$ are both finite (otherwise,~\eqref{eqn-tri} holds trivially). 

Let $\{O_x^m\}_{m\in\BB N}$, $\{O_y^m\}_{m\in\BB N}$, and $\{O_z^m\}_{m\in\BB N}$ be sequences of rational circles nesting down to $x,y,z$, respectively, which satisfy the conditions of Lemma~\ref{lem-good-circles}. Also let $A_x^m , A_y^m , A_z^m$ be the corresponding annuli as in Lemma~\ref{lem-good-circles}. 
By possibly dropping a finite number of $m$-values, we can assume without loss of generality that $O_x^1$, $O_y^1$, and $O_z^1$ are disjoint. 
By assertion~\eqref{item-ssl-around} of Lemma~\ref{lem-ssl-across-around} applied with $O_1 = O_x^m$, $O_2 = O_z^m$, $O_1' = O_2' = O_y^m$, and $A  =A^m$, we have
\eqb \label{eqn-tri0}
\rng D_h(O_x^m , O_z^m ) \leq  \rng D_h(O_x^m , O_y^m ) +  \rng D_h(O_y^m , O_z^m )  + \rng D_h(\text{around $A_y^m$}) ; 
\eqe
see Figure~\ref{fig-triangle} for an illustration.  
By Lemma~\ref{lem-good-circles-conv} and our assumption that $D_h(x,y)  < \infty$, we have $\lim_{m\rta\infty} D_h(\text{around $A_y^m$}) = 0$. 
By the definition~\eqref{eqn-dist-def} of $D_h$, the triangle inequality now follows by sending $m\rta\infty$ in~\eqref{eqn-tri0}. 
Since $D_h$ is positive definite and symmetric by definition (see the discussion just after Lemma~\ref{lem-dist-lim}) it follows that $D_h$ is a metric. 
\end{proof}

\subsection{Consistency at rational points}
\label{sec-rational}

Recall that points in $\BB Q^2$ are considered as rational circles of radius 0. 
In particular, $\rng D_h(u,v)$ is defined for each $u,v\in\BB Q^2$. 
The goal of this subsection is to establish the following proposition, which implies assertion~\ref{item-rational} of Theorem~\ref{thm-lfpp-tight}. 

\begin{prop} \label{prop-rational-agree}
Almost surely, for each $u,v\in\BB Q^2$, we have $D_h(u,v) = \rng D_h(u,v)$. 
\end{prop}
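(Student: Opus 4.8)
\textbf{Proof proposal for Proposition~\ref{prop-rational-agree}.} The plan is to show that for fixed $u,v\in\BB Q^2$, the random variable $\rng D_h(u,v)$ obtained as the subsequential limit of $\frk a_\ep^{-1}D_h^\ep(u,v)$ coincides a.s.\ with $D_h(u,v)$, which by definition~\eqref{eqn-dist-def} is the limit of $\rng D_h(O_u^m,O_v^m)$ over circles nesting down to $u$ and $v$. Since the statement is for each fixed pair $u,v$ and $\BB Q^2$ is countable, it suffices to prove the a.s.\ equality for a single fixed pair. One inequality is easy: by assertion~\eqref{item-ssl-across} of Lemma~\ref{lem-ssl-across-around} applied with $O_0$ a small rational circle surrounding $u$, we get $\rng D_h(O_u^m,O_v^m)\le \rng D_h(u,v)$ for all $m$, hence $D_h(u,v)\le \rng D_h(u,v)$ after passing to the limit. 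So the crux is the reverse bound $D_h(u,v)\ge \rng D_h(u,v)$, equivalently $\rng D_h(O_u^m,O_v^m)\to$ something that is at least $\rng D_h(u,v)$.

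For the reverse inequality I would use the ``good circles'' machinery of Lemma~\ref{lem-good-circles} together with the approximate around/across structure of Lemma~\ref{lem-ssl-across-around}\eqref{item-ssl-around}. Choose sequences $\{O_u^m\}$, $\{O_v^m\}$ of rational circles nesting down to $u$ and $v$ satisfying the conclusions of Lemma~\ref{lem-good-circles}, with associated annuli $A_u^m$ (inner boundary $O_u^m$) and $A_v^m$. Apply assertion~\eqref{item-ssl-around} of Lemma~\ref{lem-ssl-across-around} with $O_1=u$, $O_2=v$ (thought of as radius-$0$ circles), $O_1'=O_u^m$, $O_2'=O_v^m$, and the annulus $A=A_u^m$ (which separates $u$ from $O_u^m$... more precisely one uses $A_u^m$ for the $u$-side and a second application for the $v$-side, or one combines the two): this gives
\eqb \label{eqn-rational-around}
\rng D_h(u,v) \le \rng D_h(u, O_u^m) + \rng D_h(v, O_v^m) + \rng D_h(O_u^m,O_v^m) + \rng D_h(\text{around } A_u^m) + \rng D_h(\text{around } A_v^m).
\eqe
Now $\rng D_h(u,O_u^m)$ is the ``distance'' from the point $u$ to a circle shrinking to $u$; I would argue this tends to $0$ because, by Lemma~\ref{lem-good-circles-conv} (using that $D_h(u,v)<\infty$ by Lemma~\ref{lem-finite-ae}, which holds for fixed $u,v$), $\rng D_h(\text{across }A_u^m)\to 0$ and $\rng D_h(\text{around }A_u^m)\to 0$, and $\rng D_h(u,O_u^m)$ is squeezed by a telescoping sum of across-distances of the nested annuli between $O_u^1$ and $O_u^m$ — actually more directly, $\rng D_h(u,O_u^m)\le \sum_{j\ge m}\rng D_h(\text{across }A_u^j)$ is the tail of the convergent series in~\eqref{eqn-good-circle-sum}, hence $\to 0$. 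The same holds for the $v$-terms. Passing to the limit $m\to\infty$ in~\eqref{eqn-rational-around}, using $\rng D_h(O_u^m,O_v^m)\to D_h(u,v)$ by~\eqref{eqn-dist-def}, yields $\rng D_h(u,v)\le D_h(u,v)$, completing the proof.

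The main obstacle I anticipate is justifying that $\rng D_h(u, O_u^m)\to 0$ rigorously, since $\rng D_h(u,\cdot)$ is not an actual metric distance and the point $u$ has radius $0$; I need to make sure Lemma~\ref{lem-ssl-across-around}\eqref{item-ssl-across} can be applied with a radius-$0$ circle at $u$ to get the telescoping lower bound $\rng D_h(u,w)\ge \sum_j \rng D_h(\text{across }A_u^j)$ for $w$ outside all the annuli, and that this is exactly the setting of~\eqref{eqn-good-circle-sum} in the proof of Lemma~\ref{lem-good-circles-conv}. A secondary point of care is the precise form of the application of assertion~\eqref{item-ssl-around}: one must check that $u$ and $O_u^m$ indeed lie in different connected components of $\BB C\setminus A_u^m$ and that $v,v'$ are handled by a separate annulus disjoint from $A_u^m$ (true for large $m$ since the annuli shrink), possibly requiring two successive applications of~\eqref{item-ssl-around} rather than one. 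These are all bookkeeping issues rather than genuine difficulties, so the proposition should follow cleanly once the nesting/disjointness configuration is set up correctly.
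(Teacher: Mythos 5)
Your overall architecture matches the paper's proof: the easy inequality via the monotonicity in assertion~\eqref{item-ssl-across}, and the hard inequality via two applications of assertion~\eqref{item-ssl-around} with good circles from Lemma~\ref{lem-good-circles}, with the error terms $\rng D_h(\text{around }A_u^m)$, $\rng D_h(\text{around }A_v^m)$ killed by Lemma~\ref{lem-good-circles-conv}. However, the step you yourself flag as the main obstacle — showing $\rng D_h(u,O_u^m)\rta 0$ — is where your argument genuinely breaks. The inequality you propose, $\rng D_h(u,O_u^m)\le\sum_{j\ge m}\rng D_h(\text{across }A_u^j)$, goes the wrong way: across-distances of nested separating annuli are \emph{lower} bounds for crossing (that is exactly the content of assertion~\eqref{item-ssl-across} and of~\eqref{eqn-good-circle-sum}), so the convergent series there bounds $D_h(u,w)$ from below; it cannot be used to \emph{upper}-bound the quantity $\rng D_h(u,O_u^m)$. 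To build a path from $u$ out to $O_u^m$ you would need, in addition to across-paths of the $A_u^j$, paths around each annulus, crossings of the gap regions between $\wh O_u^{j+1}$ and $O_u^j$, and a way of actually reaching the point $u$ itself; none of these is controlled by the across-sum, and since $\rng D_h(u,O_u^m)$ is only defined as a subsequential limit of $\frk a_\ep^{-1}D_h^\ep(u,O_u^m)$ (not as an infimum of lengths for a limiting length metric), any such concatenation would have to be carried out at the $\ep$-level uniformly in $\ep$. Note also that trying to extract the needed smallness from finiteness of $D_h(u,w)$ or $\rng D_h(u,w)$ alone is circular: the chain $\rng D_h(u,w)\ge\rng D_h(u,O_u^m)+\rng D_h(O_u^m,O_w^m)$ only yields $\limsup_m\rng D_h(u,O_u^m)\le\rng D_h(u,w)-D_h(u,w)$, which is what you are trying to prove is $\le 0$.

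The paper closes this gap with a separate quantitative input: Lemma~\ref{lem-rational-conv}, which says that a.s.\ $\rng D_h(u,O^m)\rta 0$ for every rational $u$ and every nesting sequence. Its proof does \emph{not} go through across-distances; it uses Lemma~\ref{lem-gff-pt-conv} (i.e.\ $\frk a_\ep^{-1}D_h^\ep(u,q)\rta 0$ in law as $\ep\rta 0$ and then $q\rta u$, which comes from the dyadic-scale upper bound of Lemma~\ref{lem-pt-dist} and the Gaussian tail of $\Phi_{0,k}(u)$), together with the monotonicity $\rng D_h(u,O^m)\le\rng D_h(u,q)$ for rational $q$ outside $O^m$ and the monotone decrease of $\rng D_h(u,O^m)$ in $m$ to upgrade convergence in probability to almost sure convergence. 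So your proof needs this prelimiting point-to-point estimate as an extra ingredient; with Lemma~\ref{lem-rational-conv} in hand, the rest of your argument (two applications of assertion~\eqref{item-ssl-around}, with the minor bookkeeping of using $\wh O_u^m$ versus $O_u^m$ so that the hypotheses on connected components of $\ol{\BB C\setminus A}$ hold) is the paper's proof.
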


The proof of Proposition~\ref{prop-rational-agree} is based on Lemma~\ref{lem-good-circles} together with the following lemma, which is a consequence of the explicit bounds for point-to-point distances from Lemma~\ref{lem-pt-dist}.
 
\begin{lem} \label{lem-gff-pt-conv}
For each fixed $z \in\BB C$, we have $\frk a_\ep^{-1} D_h^\ep(z,w) \rta 0$ in law as $\ep\rta 0$ and then $w\rta z$. 
\end{lem}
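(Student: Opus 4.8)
\textbf{Proof proposal for Lemma~\ref{lem-gff-pt-conv}.}
The plan is to use the explicit upper bound for point-to-point distances from Lemma~\ref{lem-pt-dist} together with a Borel--Cantelli / first-moment argument to control the sum over dyadic scales that appears there. Fix $z\in\BB C$; without loss of generality we may translate so that $z=0$, and by the scale invariance~\eqref{eqn-gff-scale} we may also restrict attention to $w$ in a fixed bounded neighborhood $V$ of $0$ with $\ol V\subset U$ for some bounded open $U$. For $n=\log_2\ep^{-1}$, combine Lemma~\ref{lem-gff-compare} (to pass from $D_h^\ep$ to $D_{0,n}$) with~\eqref{eqn-scale-compare} of Proposition~\ref{prop-gff-pt-tight} (to compare $\frk a_\ep$ with $\lambda_n$), so that it suffices to bound $\lambda_n^{-1} D_{0,n}(0,w;U)$ in law as $n\to\infty$ and then $w\to 0$. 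By Lemma~\ref{lem-pt-dist} applied with $z$ there equal to $0$, there are constants $c_0,c_1>0$ depending only on $\zeta,V,U,\xi$ such that with probability at least $1-c_0 e^{-c_1(\log T)^2/\log\log T}$,
\eqb \label{eqn-gff-pt-conv-sum}
\lambda_n^{-1} D_{0,n}(0,w;U) \leq T \sum_{j=\lceil \log_2|w|\rceil}^n 2^{-(\xi Q-\zeta)j}\left(e^{\xi\Phi_{0,j}(0)}+e^{\xi\Phi_{0,j}(w)}\right) ,
\eqe
where we fix $\zeta\in(0,\xi Q)$ once and for all.

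The key point is that the right-hand side of~\eqref{eqn-gff-pt-conv-sum} is a tail of a convergent series, with the tail starting at scale $\lceil\log_2|w|\rceil$, which tends to $-\infty$ as $w\to 0$. More precisely, each $\Phi_{0,j}(0)$ and $\Phi_{0,j}(w)$ is centered Gaussian with variance $j\log 2$, so $\BB E[e^{\xi\Phi_{0,j}(0)}]=2^{\xi^2 j/2}$, and hence
\eqb \label{eqn-gff-pt-conv-exp}
\BB E\left[\sum_{j=\lceil\log_2|w|\rceil}^\infty 2^{-(\xi Q-\zeta)j}\left(e^{\xi\Phi_{0,j}(0)}+e^{\xi\Phi_{0,j}(w)}\right)\right] = 2\sum_{j=\lceil\log_2|w|\rceil}^\infty 2^{-(\xi Q-\xi^2/2-\zeta)j} .
\eqe
If $\xi Q-\xi^2/2-\zeta>0$, the geometric series on the right converges and its tail, starting at $j=\lceil\log_2|w|\rceil$, is of order $|w|^{\xi Q-\xi^2/2-\zeta}\to 0$ as $w\to 0$. (Note $\xi Q-\xi^2/2>0$ always: by Proposition~\ref{prop-Q-wn}, $Q>0$, and in the regime $\xi>2Q$ we would instead split off the $j<0$ part trivially since $|w|<1$ means all relevant $j$ satisfy $j\le 0$ only for $|w|\ge 1$; in fact for $|w|$ small all scales $j\geq \lceil\log_2|w|\rceil$ are large and positive, so the relevant exponent governing the tail is the one in~\eqref{eqn-gff-pt-conv-exp}, and by shrinking $\zeta$ we may always assume $\xi Q-\xi^2/2-\zeta>0$ provided $\xi<2Q$; for $\xi\ge 2Q$ one instead uses the crude deterministic bound that only finitely many scales $j\in[\lceil\log_2|w|\rceil,0]$ contribute a bounded amount and the positive scales contribute a convergent series — I will organize the argument so that in all cases the expected value of the relevant sum tends to $0$ as $|w|\to 0$, possibly after further shrinking $\zeta$ or grouping scales.)

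Given this, the argument concludes as follows. By Markov's inequality applied to~\eqref{eqn-gff-pt-conv-exp}, for any $\delta>0$ the sum $S_w:=\sum_{j=\lceil\log_2|w|\rceil}^\infty 2^{-(\xi Q-\zeta)j}(e^{\xi\Phi_{0,j}(0)}+e^{\xi\Phi_{0,j}(w)})$ satisfies $\BB P[S_w>\delta]\le \delta^{-1}\BB E[S_w]\to 0$ as $w\to 0$, uniformly in $n$. Combining with~\eqref{eqn-gff-pt-conv-sum}: for any $\eta>0$, first choose $T$ large enough that $c_0 e^{-c_1(\log T)^2/\log\log T}<\eta/3$, then choose $\delta$ small so that $T\delta$ is as small as we like, then choose $|w|$ small enough (depending on $T,\delta$) that $\BB P[S_w>\delta]<\eta/3$; on the complement of these two events, $\lambda_n^{-1}D_{0,n}(0,w;U)\le T\delta$. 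Hence for $|w|$ small, $\BB P[\lambda_n^{-1}D_{0,n}(0,w;U)>T\delta]\le 2\eta/3$ uniformly in $n$, which gives $\frk a_\ep^{-1}D_h^\ep(z,w)\to 0$ in law as $\ep\to 0$ and then $w\to z$ after accounting for the $o_\ep(1)$ error in Lemma~\ref{lem-gff-compare} and the $\ep$-independent bi-Lipschitz comparison from Proposition~\ref{prop-gff-pt-tight}. The main obstacle is bookkeeping the case analysis on the sign of $\xi Q-\xi^2/2$ (equivalently whether $\xi<2Q$) and making sure the tail of the dyadic sum genuinely vanishes as $|w|\to0$ in all regimes; this is handled by choosing $\zeta$ small and, if necessary, separating the finitely many non-positive scales (which for $|w|<1$ contribute only finitely many bounded terms once $|w|$ is bounded below, but here $|w|\to0$ so all scales are large and positive and the clean estimate~\eqref{eqn-gff-pt-conv-exp} applies directly).
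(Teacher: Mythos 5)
Your overall route is the same as the paper's: apply Lemma~\ref{lem-pt-dist}, then transfer to $D_h^\ep$ via Lemma~\ref{lem-gff-compare} and~\eqref{eqn-scale-compare}. The gap is in how you control the dyadic sum. You bound it by its first moment, using $\BB E[e^{\xi\Phi_{0,j}(w)}]=2^{\xi^2 j/2}$, so your Markov argument requires $\xi Q-\xi^2/2-\zeta>0$, i.e.\ essentially $\xi<2Q$. When $\xi\ge 2Q$ (which does occur here, since $Q(\xi)\to 0$ as $\xi\to\infty$; compare the discussion after Lemma~\ref{lem-ef-max-no-F}, where exactly this dichotomy forces the truncation argument of Section~\ref{sec-ef-max}), the expected tail $\sum_{j\ge m}2^{(\xi^2/2-\xi Q+\zeta)j}$ is infinite for every $m$, so no shrinking of $\zeta$ or regrouping of scales can make ``the expected value of the relevant sum tend to $0$''. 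Your contingency plan for that case also misidentifies the problem: once $|z-w|$ is small, all indices in the sum are large and positive (the sum starts at $j\approx\log_2(1/|z-w|)\to+\infty$, as in Step 4 of the proof of Lemma~\ref{lem-pt-dist}; the statement's $\lceil\log_2|z-w|\rceil$ is a sign slip, and your remark that the starting scale ``tends to $-\infty$'' contradicts your own asymptotic $|w|^{\xi Q-\xi^2/2-\zeta}$), so there are no ``finitely many non-positive scales'' to separate off; the divergence of the first moment comes precisely from the large positive $j$.

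The fix is to abandon first moments and use pointwise Gaussian bounds, which is what the paper's one-line proof intends. Since $\Phi_{0,j}(w)$ is centered Gaussian with variance $j\log 2$, the Gaussian tail bound and Borel--Cantelli give that a.s.\ $e^{\xi\Phi_{0,j}(w)}\le 2^{\zeta j/2}$ for all but finitely many $j$, so for any $\xi>0$ and $\zeta\in(0,\xi Q)$ the series $\sum_j 2^{-(\xi Q-\zeta)j}\bigl(e^{\xi\Phi_{0,j}(z)}+e^{\xi\Phi_{0,j}(w)}\bigr)$ converges a.s.\ and its tails tend to $0$ a.s. For the $z$-terms this immediately handles the tail as $w\to z$; for the $w$-terms, the law of $j\mapsto\Phi_{0,j}(w)$ does not depend on $w$, so the law of the tail started at $j\approx\log_2(1/|z-w|)$ is the same as at $w=z$ and hence tends to $0$ in probability as $w\to z$, uniformly in $n$. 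Plugging this into~\eqref{eqn-pt-dist} in place of your Markov step, the rest of your argument (choice of $T$, then $|w|$ small, then the comparison lemmas) goes through verbatim and is valid for every $\xi>0$.
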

\begin{proof}
By Lemma~\ref{lem-pt-dist} and the fact that the random variables $\Phi_{0,k}(z)$ are Gaussian with variance $k \log 2$, it is easily seen that $\lambda_n^{-1} D_{0,n}(z,w) \rta 0$ in law as $n\rta\infty$ and then $w\rta z$. The lemma statement follows by combining this with Lemma~\ref{lem-gff-compare}. 
\end{proof}

As a consequence of Lemma~\ref{lem-gff-pt-conv}, we have the following. 

\begin{lem} \label{lem-rational-conv}
Almost surely, for each $u\in \BB Q^2$ and each sequence $\{O^m\}_{m\in\BB N}$ of rational circles nesting down to $u$, we have $\lim_{m\rta\infty} \rng D_h(u,O^m) = 0$. 
\end{lem}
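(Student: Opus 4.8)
\textbf{Proof proposal for Lemma~\ref{lem-rational-conv}.}

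The plan is to bound $\rng D_h(u,O^m)$ from above by the subsequential limit of $\frk a_\ep^{-1} D_h^\ep(u,O^m)$ and then use Lemma~\ref{lem-gff-pt-conv} to control this limit, together with the monotonicity built into the definition~\eqref{eqn-dist-def} of $D_h$. More precisely, fix $u\in\BB Q^2$ and a sequence $\{O^m\}_{m\in\BB N}$ of rational circles nesting down to $u$ (Definition~\ref{def-nest}); by the monotonicity property~\eqref{eqn-dist-lim-mono} of the approximating quantities $\rng D_h(\cdot,\cdot)$, the numbers $\rng D_h(u,O^m)$ are non-increasing in $m$ (up to finitely many initial terms), so it suffices to show that $\liminf_{m\rta\infty} \rng D_h(u,O^m) = 0$. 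Since $u\in\BB Q^2$ is itself a rational circle of radius $0$, the convergence~\eqref{eqn-ssl-circle} gives $\frk a_\ep^{-1} D_h^\ep(u,O^m) \rta \rng D_h(u,O^m)$ in law along $\mcl E$ for each fixed $m$; in fact, in the Skorokhod coupling fixed in Section~\ref{sec-ssl} this convergence holds a.s.

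First I would record that, for each $m$, if $w^m$ denotes the point of $O^m$ achieving $D_h^\ep(u,O^m) = D_h^\ep(u,w^m)$ — or more simply, since $O^m$ is a rational circle, pick a fixed rational point $w^m$ lying \emph{on} $O^m$ (possible because rational circles have rational centers and radii, and we may perturb $O^m$ slightly within its nesting properties so that it contains a rational point, or just note $D_h^\ep(u,O^m)\leq D_h^\ep(u,w^m)$ for any $w^m\in O^m$). Then $D_h^\ep(u,O^m) \leq D_h^\ep(u,w^m)$, hence $\rng D_h(u,O^m) \leq \lim_{\mcl E\ni\ep\rta 0} \frk a_\ep^{-1} D_h^\ep(u,w^m)$ whenever the latter limit exists; passing to the limit $m\rta\infty$, the radii of $O^m$ shrink to zero so $w^m\rta u$, and Lemma~\ref{lem-gff-pt-conv} says $\frk a_\ep^{-1} D_h^\ep(u,w)\rta 0$ in law as $\ep\rta 0$ and then $w\rta u$. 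A short diagonalization (choosing $w^m$ and then a subsequence of $\ep$-values along which $\frk a_\ep^{-1}D_h^\ep(u,w^m)$ is close to its liminf, exactly as in the proof of Lemma~\ref{lem-dist-sup}) converts this distributional convergence into the a.s.\ statement $\liminf_{m\rta\infty}\rng D_h(u,O^m) = 0$.

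Alternatively — and this is probably cleaner — I would invoke Lemma~\ref{lem-good-circles} and Lemma~\ref{lem-good-circles-conv}: choose a sequence $\{\wt O^m\}_{m\in\BB N}$ of rational circles nesting down to $u$ satisfying the conclusion of Lemma~\ref{lem-good-circles}, with associated annuli $\wt A^m$. Since $u\in\BB Q^2$ and $D_h(u,v)<\infty$ for Lebesgue-a.e.\ $v$ (or just for some fixed $v$, by Lemma~\ref{lem-finite-ae}), Lemma~\ref{lem-good-circles-conv} gives $\lim_{m\rta\infty}\rng D_h(\text{across }\wt A^m) = \lim_{m\rta\infty}\rng D_h(\text{around }\wt A^m) = 0$. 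Now assertion~\eqref{item-ssl-around} of Lemma~\ref{lem-ssl-across-around}, applied with the annulus $\wt A^m$, the inner circle $\wt O^m$ and a slightly larger circle, bounds $\rng D_h(u,\wt O^m)$ in terms of $\rng D_h(\text{across }\wt A^m)$ plus $\rng D_h(\text{around }\wt A^m)$, both of which tend to $0$. Hence $\lim_{m\rta\infty}\rng D_h(u,\wt O^m)=0$ for this special sequence; then the monotonicity argument used in the proof of Lemma~\ref{lem-dist-lim} (interleaving an arbitrary nesting sequence $\{O^m\}$ with $\{\wt O^m\}$ and using~\eqref{eqn-dist-lim-mono}) upgrades this to $\lim_{m\rta\infty}\rng D_h(u,O^m)=0$ for \emph{every} nesting sequence. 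Finally, a standard Borel–Cantelli / countable-union argument over the countably many $u\in\BB Q^2$ makes the null set uniform in $u$.

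The main obstacle is not any single estimate but the bookkeeping around the fact that $\rng D_h$ is \emph{a priori} only a collection of random variables indexed by pairs of rational circles, not an honest metric: one must be careful to only use the relations proven in Lemma~\ref{lem-ssl-across-around} and the monotonicity~\eqref{eqn-dist-lim-mono}, and to phrase the ``point-to-point distance goes to zero'' input (Lemma~\ref{lem-gff-pt-conv}) so that it survives passage to the subsequential limit. The second approach above sidesteps the diagonalization entirely by routing everything through the already-established annulus estimates, so I would present that one as the proof.
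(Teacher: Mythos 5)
Your second approach --- the one you say you would actually present --- has a genuine gap, and in fact cannot work. The quantities supplied by Lemma~\ref{lem-good-circles-conv} (the $\rng D_h$-distances across and around the annuli $\wt A^m$) and the finiteness $D_h(u,v)<\infty$ only involve circles surrounding $u$, never the point $u$ itself; the whole content of Lemma~\ref{lem-rational-conv} is to relate the point-to-circle random variables $\rng D_h(u,\cdot)$ (which come from the prelimit with the actual point $u$) to those circles. Assertion~\eqref{item-ssl-around} of Lemma~\ref{lem-ssl-across-around} cannot bridge this on its own: in any admissible application with $O_1=\{u\}$, the circle $O_1'$ must lie on the far side of the annulus $A$ from $u$, so the right-hand side contains the term $\rng D_h(u,O_1')$, and by assertion~\eqref{item-ssl-across} this term is at least $\rng D_h(u,C)$ for every circle $C$ surrounding $u$ and separated from $O_1'$ by $A$ --- in particular it dominates exactly the quantity you are trying to show is small. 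So there is no choice of $O_1',O_2',A$ for which assertion~\eqref{item-ssl-around} ``bounds $\rng D_h(u,\wt O^m)$ in terms of $\rng D_h(\text{across }\wt A^m)$ plus $\rng D_h(\text{around }\wt A^m)$''; the argument is circular, and some input controlling prelimiting distances from the point $u$ to nearby points (Lemma~\ref{lem-gff-pt-conv}, i.e.\ the point-to-point tightness of Section~\ref{sec-pt-tight}) is indispensable.

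Your first (discarded) approach is essentially the paper's proof and is the one to keep: the paper fixes $u\in\BB Q^2$, takes a rational point $q$ near $u$ not surrounded by $O^m$, uses assertion~\eqref{item-ssl-across} to get $\rng D_h(u,O^m)\leq \rng D_h(u,q)$, applies Lemma~\ref{lem-gff-pt-conv} to get $\rng D_h(u,q)\rta 0$ in law (hence in probability, since the limit is a constant) as $q\rta u$, and then uses the monotonicity of $m\mapsto \rng D_h(u,O^m)$ (again assertion~\eqref{item-ssl-across}) to upgrade convergence in probability to a.s.\ convergence. Your version with a rational point $w^m$ on $O^m$ and the prelimit inequality $D_h^\ep(u,O^m)\leq D_h^\ep(u,w^m)$ is fine, but the final step should not be phrased as a diagonalization over $\ep$-values \`a la Lemma~\ref{lem-dist-sup}: the convergence as $w\rta u$ in Lemma~\ref{lem-gff-pt-conv} is distributional, so what you actually get is $\rng D_h(u,w^m)\rta 0$ in probability, and it is the monotonicity in $m$ that converts this into the a.s.\ statement. (The interleaving argument you describe for passing from one good nesting sequence to an arbitrary one, and the countable union over $u\in\BB Q^2$, are correct and are what make the null set uniform; those pieces can be grafted onto the first approach as written.)
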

\begin{proof}
By countability it suffices to prove the lemma for a fixed $u\in\BB Q^2$. 
Let $q\in\BB Q^2 \setminus \{u\}$. Since $\rng D_h(u,q) = \lim_{\ep\rta 0} \frk a_\ep^{-1} D_h^\ep(u,q)$, it follows from Lemma~\ref{lem-gff-pt-conv} that $\rng D_h(u,q) \rta 0$ in law (hence also in probability) as $q\rta u$.  
If $q $ is not surrounded by $O_u^m$, then assertion~\eqref{item-ssl-across} of Lemma~\ref{lem-ssl-across-around} (applied with $O_1 = \{u\}$, $O_2 = O^m$, and $O_3 = \{q\}$) implies that $\rng D_h(u, O^m) \leq \rng D_h(u , q)$. 
Since the radius of $O^m$ tends to zero as $m\rta\infty$, we infer that $\rng D_h(u,O^m) \rta 0$ in probability as $m\rta\infty$. Since $\rng D_h(u , O^m)$ is decreasing in $m$ (assertion~\eqref{item-ssl-across} of Lemma~\ref{lem-ssl-across-around}), it follows that in fact $\rng D_h(u,O^m) \rta 0$ a.s.\ as $m\rta\infty$.  
\end{proof}

\begin{figure}[t!]
 \begin{center}
\includegraphics[scale=1]{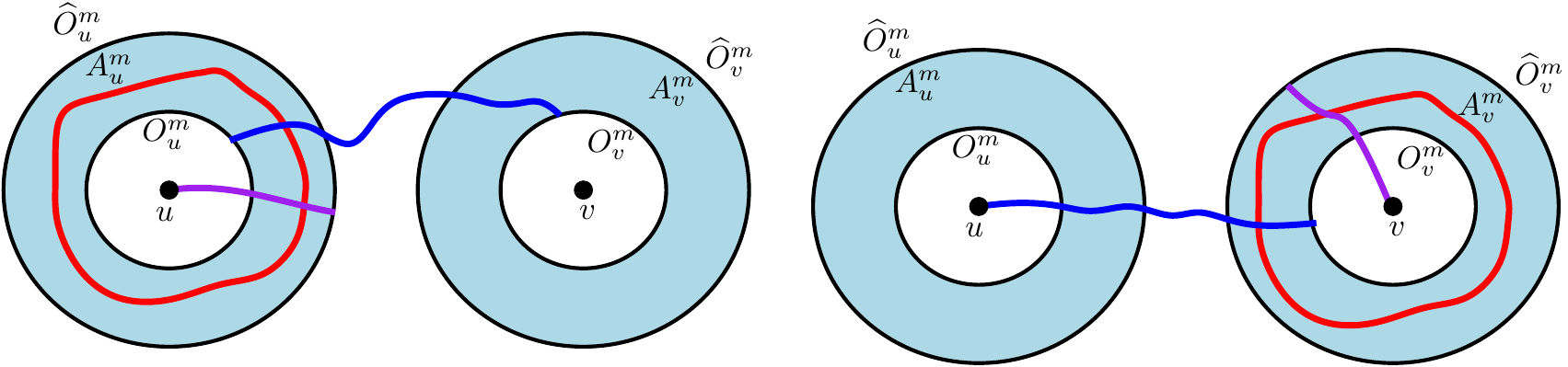}
\vspace{-0.01\textheight}
\caption{Illustration of the two applications of assertion~\eqref{item-ssl-around} in the proof of Proposition~\ref{prop-rational-agree}. In each case, the length of the purple and red ``paths" are each at most $\delta$. Note that the paths in the figure are meant to illustrate the intuition behind assertion~\eqref{item-ssl-around} rather than to represent literal paths, since we do not know that $\rng D_h$-distances are the same as $D_h$-distances or that $\rng D_h$ is an actual metric. 
}\label{fig-rational-agree}
\end{center}
\vspace{-1em}
\end{figure}

\begin{proof}[Proof of Proposition~\ref{prop-rational-agree}]
If $u=v$ then obviously $D_h(u,v) = \rng D_h(u,v) =0$. By countability it therefore suffices to prove the lemma for a fixed choice of distinct points $u,v\in\BB Q^2$ with $u\not= v$. 
Let $\{O_u^m\}_{m\in\BB N}$ and $\{O_v^m\}_{m\in\BB N}$ be sequences of rational circles nesting down to $u$ and $v$, respectively, which satisfy the conditions of Lemma~\ref{lem-good-circles}. Also let $\wh O_u^m$, $\wh O_v^m$ and $A_u^m , A_v^m $ be the corresponding outer circles and rational annuli as in Lemma~\ref{lem-good-circles}. 
Then $D_h(u,v) = \lim_{m\rta\infty} \rng D_h(O_u^m , O_v^m)$. 
It is obvious from assertion~\eqref{item-ssl-around} of Lemma~\ref{lem-ssl-across-around} that $  \rng D_h(O_u^m, O_v^m) \leq \rng D_h(u,v) $ for each $m\in\BB N$, whence $ D_h(u,v) \leq \rng D_h(u,v)$. We only need to prove that $ \rng D_h(u,v) \leq   D_h(u,v)$.

By Lemmas~\ref{lem-good-circles-conv} and~\ref{lem-rational-conv}, we find that for each $\delta>0$, it holds for large enough $m\in\BB N$ that
\eqb \label{eqn-rational-small}
 \rng D_h(u,\wh O_u^m) \leq \delta ,\quad 
 \rng D_h\left(\text{around $A_u^m$}\right) \leq \delta ,
\eqe
and the same is true with $v$ in place of $u$. 

See Figure~\ref{fig-rational-agree} for an illustration of the next steps of the proof. 
By assertion~\eqref{item-ssl-around} of Lemma~\ref{lem-ssl-across-around} (applied with $O_1  = \{u\}$, $O_1' =  \wh O_u^m $, $O_2 =   O_v^m$, $O_2' =  O_u^m$, and $A = A_u^m$) together with~\eqref{eqn-rational-small},  
\eqb \label{eqn-rational-around1}
\rng D_h(u,O_v^m) \leq   \rng D_h(O_u^m , O_v^m) +  \rng D_h(u,\wh O_u^m)   +   \rng D_h\left(\text{around $A_u^m$}\right) \leq  \rng D_h(O_u^m , O_v^m) + 2\delta .
\eqe 
By another application of assertion~\eqref{item-ssl-around} of Lemma~\ref{lem-ssl-across-around} (with $O_1 =  \{u\}$, $O_1' =  O_v^m $, $O_2 = \{v\}$, $O_2' = \wh O_v^m$, and $A = A_v^m$) together with~\eqref{eqn-rational-small},  
\eqb \label{eqn-rational-around2} 
 \rng D_h(u,v) \leq  \rng D_h(u , O_v^m) +  \rng D_h(v , \wh O_v^m)   +   \rng D_h\left(\text{around $A_v^m$}\right) \leq  \rng D_h(u , O_v^m) + 2\delta .
\eqe
Applying~\eqref{eqn-rational-around1} to bound the right side of~\eqref{eqn-rational-around2} gives $\rng D_h(u,v) \leq  D_h(O_u^m , O_v^m) + 4\delta$. Sending $m\rta\infty$ gives $\rng D_h(u,v) \leq D_h(u,v) + 4\delta$. Since $\delta>0$ is arbitrary this concludes the proof.
\end{proof}

\subsection{H\"older continuity and thick points}
\label{sec-holder}

In this subsection we will prove several quantitative properties of $D_h$ which are part of Theorems~\ref{thm-lfpp-tight} and~\ref{thm-ssl-properties}. 
We start with the following proposition, which is assertion~\ref{item-bounded} of Theorem~\ref{thm-ssl-properties}.  

\begin{prop} \label{prop-infty-dist-ssl}
Almost surely, for every compact set $K\subset\BB C$ we have $\lim_{R \rta\infty} D_h(K,\bdy B_R(0)) = \infty$. 
In particular, every $D_h$-bounded subset of $\BB C$ is also Euclidean bounded.
\end{prop}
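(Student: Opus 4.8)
The plan is to deduce this from Lemma~\ref{lem-infty-dist}, which gives the analogous statement for the LFPP metrics $D_h^\ep$ before passing to the limit, together with the lower semicontinuity properties of $D_h$ established in Section~\ref{sec-lower-semicont}. The first observation is that it suffices to treat the case $K = \ol{B_r(0)}$ for a fixed rational $r > 0$, since any compact set is contained in such a ball and enlarging $K$ only decreases $D_h(K,\bdy B_R(0))$; and by monotonicity in $R$ it is enough to show that $D_h(\bdy B_r(0), \bdy B_R(0)) \to \infty$ along integer (or rational) values of $R$.

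The core step is to transfer Lemma~\ref{lem-infty-dist} to the limiting metric. Fix rational $r < R$ and $T > 1$. Using the subsequential convergence set up in Section~\ref{sec-ssl} — specifically the joint convergence~\eqref{eqn-ssl-circle} of $\frk a_\ep^{-1} D_h^\ep(O_1,O_2)$ for rational circles, applied with $O_1 = \bdy B_r(0)$ and $O_2 = \bdy B_R(0)$ — we get that $\rng D_h(\bdy B_r(0),\bdy B_R(0))$ is the a.s.\ limit of $\frk a_\ep^{-1}D_h^\ep(\bdy B_r(0),\bdy B_R(0))$ along $\mcl E$. Lemma~\ref{lem-infty-dist} says $\lim_{R\to\infty}\lim_{\ep\to 0}\BB P[\frk a_\ep^{-1}D_h^\ep(\bdy B_r(0),\bdy B_R(0)) \geq T] = 1$, so by choosing $R$ large (depending on $T$) and using the convergence in law we obtain $\BB P[\rng D_h(\bdy B_r(0),\bdy B_R(0)) \geq T] \geq 1 - o_R(1)$ as $R \to \infty$. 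Hence a.s.\ $\rng D_h(\bdy B_r(0),\bdy B_R(0)) \to \infty$ as $R\to\infty$ through rationals — here I use monotonicity of $R\mapsto \rng D_h(\bdy B_r(0),\bdy B_R(0))$, which follows from assertion~\eqref{item-ssl-across} of Lemma~\ref{lem-ssl-across-around} (a rational circle of radius between $r$ and $R$ disconnects $\bdy B_r(0)$ from $\bdy B_R(0)$), so that a limit in probability along rationals upgrades to an a.s.\ limit.

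Finally I convert the statement about $\rng D_h$ into one about $D_h$. By assertion~\eqref{item-lim-across} of Lemma~\ref{lem-lim-across-around}, for any $z$ with $|z| < r$ and any $w$ with $|w| > R$ we have $D_h(z,w) \geq \rng D_h(\bdy B_r(0),\bdy B_R(0))$, since $\bdy B_r(0)$ disconnects $z$ from $\bdy B_R(0)$ and $\bdy B_R(0)$ disconnects $w$ from $\bdy B_r(0)$. Taking the infimum over $z \in \ol{B_r(0)}$ (more precisely over $z$ with $|z|\le r$; the boundary case $|z|=r$ is handled by inserting one more intermediate rational circle) and over $w$ with $|w| > R$ gives $D_h(\ol{B_r(0)},\bdy B_R(0)) \geq \rng D_h(\bdy B_{r'}(0),\bdy B_R(0))$ for any rational $r' > r$, which tends to $\infty$ as $R\to\infty$. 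Since a general compact $K$ is contained in some $\ol{B_r(0)}$ this proves $\lim_{R\to\infty} D_h(K,\bdy B_R(0)) = \infty$. The ``in particular" clause is then immediate: if $A \subset \BB C$ is $D_h$-bounded, say $D_h(z_0, \cdot) \leq M$ on $A$ for some fixed $z_0$, then choosing $R$ with $D_h(\{z_0\},\bdy B_R(0)) > M$ forces $A \subset B_R(0)$. The main obstacle is bookkeeping the disconnection hypotheses of Lemmas~\ref{lem-ssl-across-around} and~\ref{lem-lim-across-around} correctly and ensuring the monotonicity argument gives an a.s.\ (not merely in-probability) limit; the probabilistic content is entirely contained in Lemma~\ref{lem-infty-dist}, which we are free to invoke.
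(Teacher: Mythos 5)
Your proposal is correct and follows essentially the same route as the paper: Lemma~\ref{lem-infty-dist} plus the convergence~\eqref{eqn-ssl-circle} give that $\rng D_h(\bdy B_r(0),\bdy B_{R'}(0))\rta\infty$ (the paper leaves the in-probability-to-a.s.\ upgrade via the monotonicity from Lemma~\ref{lem-ssl-across-around}\eqref{item-ssl-across} implicit, which you spell out), and then assertion~\eqref{item-lim-across} of Lemma~\ref{lem-lim-across-around} transfers this to a lower bound on $D_h(K,\bdy B_R(0))$. One bookkeeping correction: the proposition concerns points $w\in\bdy B_R(0)$, not $|w|>R$, and a circle through $w$ disconnects nothing, so the outermost circle in your chain must have radius strictly between $r$ and $R$ — the paper takes $\bdy B_{R/2}(0)$, giving $D_h(K,\bdy B_R(0))\geq \rng D_h(\bdy B_r(0),\bdy B_{R/2}(0))$ — which is exactly the intermediate-circle fix you already invoke for the $z$-side boundary case, so the argument goes through after this substitution (and the same remark applies to the ``in particular'' step, where $D_h(z_0,w)\geq D_h(z_0,\bdy B_R(0))$ for $|w|>R$ should likewise be justified by an intermediate circle rather than taken for granted).
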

\begin{proof}
Choose a rational $r > 0$ such that $K \subset B_r(0)$. By Lemma~\ref{lem-infty-dist} and~\eqref{eqn-ssl-circle}, for any $T>1$,  
\eqb \label{eqn-infty-dist-ssl}
\lim_{\BB Q\ni R \rta\infty} \BB P\left[\rng D_h \left( \bdy B_r(0) , \bdy B_{R/2}(0) \right) \geq T \right] = 1.
\eqe
By assertion~\eqref{item-lim-across} of Lemma~\ref{lem-lim-across-around} (applied with $z\in K$, $w \in \bdy B_{ R}(0)$, $O_0 = \bdy B_r(0)$, and $O_2 = \bdy B_{R/2}(0)$), we have $D_h(K, \bdy B_R(0)) \geq  \rng D_h \left( \bdy B_r(0) , \bdy B_{R/2}(0) \right)$. 
The proposition statement therefore follows from~\eqref{eqn-infty-dist-ssl}. 
\end{proof}

The next proposition is assertion~\ref{item-holder} of Theorem~\ref{thm-ssl-properties}. 

\begin{prop} \label{prop-holder}
Almost surely, the identity map from $\BB C$, equipped with metric $D_h$ to $\BB C$, equipped with the Euclidean metric is locally H\"older continuous with any exponent less than $[\xi(Q+2)]^{-1}$. 
\end{prop}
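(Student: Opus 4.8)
It suffices to prove the reverse bound: for each $R>1$ and each exponent $s>\xi(Q+2)$, almost surely there is a random constant $c = c(R,s)>0$ with $D_h(z,w)\geq c|z-w|^s$ for all $z,w\in B_{R/2}(0)$. Indeed, this is equivalent to $|z-w|\leq c^{-1/s}D_h(z,w)^{1/s}$ on $B_{R/2}(0)$, i.e.\ the identity map from $(\BB C,D_h)$ to $(\BB C,|\cdot|)$ is locally H\"older of exponent $1/s$; letting $s$ run over a sequence decreasing to $\xi(Q+2)$ and $R$ over a sequence increasing to $\infty$, and intersecting the resulting a.s.\ events, gives H\"older continuity with every exponent in $(0,[\xi(Q+2)]^{-1})$.

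\textbf{Lower bound on distances across dyadic annuli.} Fix $\zeta>0$ small and $\beta>2\log 2$ close to $2\log 2$, chosen so that $s':=\xi Q + 2\zeta + \xi\beta/\log 2 < s$; this is possible since $s'\to \xi(Q+2)$ as $\zeta\to 0$ and $\beta\to 2\log 2$. For $k\in\BB N$ and $u\in(2^{-k}\BB Z^2)\cap B_R(0)$, let $A_{k,u}:=\ol{B_{2^{-k+1}}(u)\setminus B_{2^{-k}}(u)}$, which is a rational annulus. Applying Lemma~\ref{lem-perc-gff} with $r=2^{-k}$ and $T=2^{\zeta k}$, together with estimate~\eqref{eqn-gff-scale-constant1} of Proposition~\ref{prop-gff-scale-constant} (which gives $2^{-k}\frk a_{\ep 2^k}/\frk a_\ep \succeq 2^{-(\xi Q+\zeta)k}$) and the translation invariance of the law of $h$ modulo additive constant, yields, uniformly in small $\ep$,
\[
\BB P\left[\frk a_\ep^{-1}D_h^\ep\left(\text{across }A_{k,u}\right) < c_0\,2^{-(\xi Q+2\zeta)k}\,e^{\xi h^\ep_{2^{-k}}(u)}\right] \leq c_1 e^{-c_2\zeta^2 k^2} + o_\ep(1).
\]
Moreover $h_{2^{-k}}(u)$ is centered Gaussian with variance $k\log 2 + O(1)$, so $\BB P[h_{2^{-k}}(u)<-\beta k]\leq \exp\big(-\tfrac{\beta^2 k}{2\log 2}(1+o_k(1))\big)$. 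Taking a union bound over the $O_R(4^k)$ admissible $u$, both the probability that the first display fails for some $u$ and the probability that $\min_u h_{2^{-k}}(u)<-\beta k$ are bounded by $c_1 R^2 4^k e^{-c_2\zeta^2 k^2} + o_\ep(1)$ and $c R^2 4^k\exp\big(-\tfrac{\beta^2 k}{2\log 2}(1+o_k(1))\big)$ respectively, and both are summable over $k$ — the second precisely because $\beta>2\log 2$. Passing to the subsequential limit along $\mcl E$ (choosing $c_0$, hence the thresholds, to avoid the countably many atoms of the relevant laws, so that the indicator events converge a.s.\ under the Skorokhod coupling of Section~\ref{sec-ssl}) and applying Borel--Cantelli, we conclude that a.s.\ there is a random $K_0$ such that for all $k\geq K_0$ and all $u\in(2^{-k}\BB Z^2)\cap B_R(0)$,
\[
\rng D_h\left(\text{across }A_{k,u}\right) \;\geq\; c_0\,2^{-(\xi Q+2\zeta)k}e^{\xi h_{2^{-k}}(u)} \;\geq\; c_0\,2^{-s'k}.
\]

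\textbf{From annuli to points, and the main obstacle.} Given distinct $z,w\in B_{R/2}(0)$ with $|z-w|\leq 1$, let $k$ be the integer with $2^{-k}\leq |z-w|/4 < 2^{-k+1}$ and let $u$ be a nearest point of $2^{-k}\BB Z^2$ to $z$, so $|z-u|<2^{-k}$ and $|w-u|\geq |z-w|-|z-u| > 3\cdot 2^{-k} > 2^{-k+1}$; hence $A_{k,u}$ separates $z$ from $w$. By assertion~\eqref{item-lim-across} of Lemma~\ref{lem-lim-across-around} (with the two circles there equal to $\bdy B_{2^{-k}}(u)$ and $\bdy B_{2^{-k+1}}(u)$), $D_h(z,w)\geq \rng D_h(\text{across }A_{k,u})$. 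If $k\geq K_0$ this is $\geq c_0 2^{-s'k}\geq c_0 8^{-s'}|z-w|^{s'}\geq c_0 8^{-s'}|z-w|^s$; for the finitely many pairs requiring $k<K_0$ (and for $|z-w|>1$, handled at a single fixed scale) we instead use that $\rng D_h(\text{across }A_{k,u})>0$ a.s.\ by assertion~\eqref{item-ssl-pos} of Lemma~\ref{lem-ssl-across-around}, together with a covering argument, to see that the bound persists after shrinking the constant to a random positive value $c(R,s)$. The delicate point of the whole argument is the balance in the union bound: there are $\asymp 4^k$ dyadic annuli at scale $2^{-k}$, and this count must be absorbed by the Gaussian lower tail $\exp(-\beta^2 k/(2\log 2))$ of the circle averages $h_{2^{-k}}(u)$, which is possible exactly when $\beta>2\log 2$ — and this precise threshold is what produces the factor $2\xi$, hence the sharp exponent $\xi(Q+2)$ rather than a larger one, in the final estimate. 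A secondary technical point is transferring the tail bound of Lemma~\ref{lem-perc-gff} for $D_h^\ep$ to the subsequential limit $\rng D_h$ with constants that survive the union bound, which is where we use the joint a.s.\ convergence established in Section~\ref{sec-ssl}.
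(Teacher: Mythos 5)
Your proposal is correct and follows essentially the same route as the paper: a union bound over $\asymp 4^k$ dyadic annuli at scale $2^{-k}$ combining the annulus-crossing tail estimate (with $T = 2^{\zeta k}$) with a Gaussian bound on the circle averages at thickness threshold $2$, Borel--Cantelli, and then Lemma~\ref{lem-lim-across-around}\eqref{item-lim-across} applied to the annulus separating $z$ from $w$ at scale $\asymp|z-w|$. The only cosmetic differences are that you re-derive the limiting annulus estimate from Lemma~\ref{lem-perc-gff} and Proposition~\ref{prop-gff-scale-constant} by hand (where the simpler one-sided Fatou argument makes the atom-avoidance device unnecessary), whereas the paper cites the prepackaged Lemma~\ref{lem-perc-ssl} together with the subsequential constants $\frk c_{2^{-k}}$ from~\eqref{eqn-ssl-constant-lim}.
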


For the proof of Proposition~\ref{prop-holder} we need yet another variant of Proposition~\ref{prop-perc-estimate}. 

\begin{lem} \label{lem-perc-ssl}
Let $A\subset\BB C$ be a rational annulus and define the constants $\frk c_r$ for $r\in \BB Q\cap (0,\infty)$ as in~\eqref{eqn-ssl-constant}.  
There are constants $c_0,c_1 >0$ depending only on $A,\xi$ such that for each rational $r > 0$, each $q\in\BB Q^2$, and each $T > 3$, 
\eqb \label{eqn-perc-ssl-lower}
\BB P\left[ \rng D_h\left(\text{across $r A + q$}\right) < T^{-1} \frk c_r e^{\xi h_r(q)}  \right] \leq c_0 e^{-c_1 (\log T)^2}  
\eqe
and
\eqb \label{eqn-perc-ssl-upper}
\BB P\left[ \rng D_h\left(\text{around $r A + q$}\right)   > T \frk c_r e^{\xi h_r(q)} \right] \leq c_0 e^{-c_1 (\log T)^2 / \log\log T} .
\eqe 
\end{lem}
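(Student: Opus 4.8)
The plan is to deduce Lemma~\ref{lem-perc-ssl} by passing to the subsequential limit in the corresponding estimate for $D_h^\ep$, namely Lemma~\ref{lem-perc-gff}. First I would fix a rational annulus $A$, a rational $r > 0$, and a point $q \in \BB Q^2$. Choose rational circles $O_1,O_2$ (the inner and outer boundaries of $rA + q$, which are rational circles since $A$ is a rational annulus and $r,q$ are rational) so that $\rng D_h(\text{across } rA+q) = \rng D_h(O_1,O_2)$ by the definition~\eqref{eqn-ssl-across}, and let $K_1,K_2$ be disjoint compact connected non-singleton subsets of $rA+q$ which are Euclidean neighborhoods of $O_1$ and $O_2$ respectively. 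Actually, for the ``across'' bound I would directly apply Lemma~\ref{lem-perc-gff} with $K_1 = O_1$, $K_2 = O_2$ (the boundary circles themselves), $U$ a bounded open set containing $rA+q$, and with the scaling parameter in that lemma set equal to $r$; this gives, for $n = \log_2\ep^{-1}$,
\eqbn
\BB P\left[ D_h^\ep(r K_1, r K_2 ; r U) < T^{-1} r \frk a_{\ep/r} e^{\xi h_r(0)} \right] \leq c_0 e^{-c_1 (\log T)^2} + o_\ep(1) .
\eqen
Here one must be slightly careful: Lemma~\ref{lem-perc-gff} is stated for $rK_1, rK_2 \subset rU$ centered at the origin, so to handle a general center $q$ I would use the translation invariance of the law of $h$ modulo additive constant together with~\eqref{eqn-gff-scale}, exactly as in the proof of Lemma~\ref{lem-perc-gff} itself, to replace $h_r(0)$ by $h_r(q)$ and the annulus by $rA + q$.

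Next I would take the limit along $\mcl E$. By~\eqref{eqn-ssl-circle} we have $\frk a_\ep^{-1} D_h^\ep(O_1,O_2) \to \rng D_h(O_1,O_2)$, by~\eqref{eqn-ssl-h} we have $h_r^\ep(q) \to h_r(q)$ locally uniformly, and by~\eqref{eqn-ssl-constant} we have $r\frk a_{\ep/r}/\frk a_\ep \to \frk c_r$. Dividing the displayed event by $\frk a_\ep$ and using the portmanteau theorem (the event $\{\rng D_h(\text{across } rA+q) < T^{-1}\frk c_r e^{\xi h_r(q)}\}$ should be handled by taking a slightly smaller threshold $T' > T$ to get an open event, then letting $T' \downarrow T$, or equivalently one works with closed events in the other direction), the $o_\ep(1)$ term vanishes in the limit and one obtains~\eqref{eqn-perc-ssl-lower}. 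The ``around'' bound~\eqref{eqn-perc-ssl-upper} is obtained identically, applying the upper estimate~\eqref{eqn-perc-gff-upper} of Lemma~\ref{lem-perc-gff} to $D_h^\ep(\text{around } rA)$ (recall, as noted after Proposition~\ref{prop-perc-estimate}, that these estimates hold with distances across/around annuli in place of distances between sets), then passing to the subsequential limit using~\eqref{eqn-ssl-around}.

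The main obstacle, such as it is, is bookkeeping around the weak-convergence step: one needs to convert the finite-$\ep$ tail bounds into tail bounds for the limiting random variables despite the fact that the convergence~\eqref{eqn-ssl-circle} and~\eqref{eqn-ssl-around} is only convergence in law (upgraded to a.s.\ convergence via Skorokhod in the coupling of Section~\ref{sec-ssl}). The cleanest route is to use the Skorokhod coupling directly: in that coupling $\frk a_\ep^{-1} D_h^\ep(O_1,O_2) \to \rng D_h(O_1,O_2)$ and $h_r^\ep(q) \to h_r(q)$ hold a.s., so for any fixed $T' < T$ the event $\{\rng D_h(\text{across } rA+q) < (T')^{-1}\frk c_r e^{\xi h_r(q)}\}$ implies $\{\frk a_\ep^{-1}D_h^\ep(O_1,O_2) < T^{-1}\frk a_\ep^{-1} r\frk a_{\ep/r} e^{\xi h_r^\ep(q)}\}$ for all small enough $\ep \in \mcl E$; applying Fatou's lemma to the indicator and the finite-$\ep$ bound gives $\BB P[\rng D_h(\text{across } rA+q) < (T')^{-1}\frk c_r e^{\xi h_r(q)}] \leq c_0 e^{-c_1(\log T)^2}$, and then replacing $T'$ by $T$ (and adjusting constants) yields the claim. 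A minor additional point: one should check that the joint law of $(\rng D_h(\text{across } rA+q), h_r(q), \frk c_r)$ is the correct one — i.e.\ that the coupling of Section~\ref{sec-ssl} indeed couples these together — which is immediate since $h_r^\ep$ and all the relevant distance functionals are measurable with respect to $(h^\ep, D_h^\ep)$ and the convergence~\eqref{eqn-ssl-circle}--\eqref{eqn-ssl-h} is joint. Everything else is a direct transcription of the proof of Lemma~\ref{lem-perc-gff}.
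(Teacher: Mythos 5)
Your proposal is correct and follows essentially the same route as the paper: reduce to the centered case via translation invariance of the law of $h$ modulo additive constant, apply Lemma~\ref{lem-perc-gff}, and pass to the limit along $\mcl E$ using~\eqref{eqn-ssl-circle}, \eqref{eqn-ssl-around}, \eqref{eqn-ssl-h}, and~\eqref{eqn-ssl-constant}. The extra bookkeeping you supply for the limiting step (Skorokhod coupling, the $T'$ versus $T$ adjustment, Fatou) is exactly the detail the paper leaves implicit, and it is handled correctly.
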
 
\begin{proof}
We first note that by the translation invariance of the law of $h$, modulo additive constant, $e^{-\xi h_r(q)} \rng D_h\left(\text{across $r A + q$}\right)  \eqD  e^{-\xi h_r(0)} \rng D_h\left(\text{across $r A $}\right)  $, and the same is true for $\rng D_h(\text{around $r A + q$})$. Therefore, it suffices to prove the lemma in the case when $q = 0$. 
In this case, the lemma follows from Lemma~\ref{lem-perc-gff} upon sending $\ep\rta 0$ along $\mcl E$, using~\eqref{eqn-ssl-h} to deal with the convergence of circle averages, and using~\eqref{eqn-ssl-constant} to deal with the convergence of $r \frk a_{r/\ep} /\frk a_\ep$. 
\end{proof}

\begin{proof}[Proof of Proposition~\ref{prop-holder}]
Fix $R >1$. We will show that a.s.\ there is a random constant $c \in (0,1)$ such that
\eqb \label{eqn-holder}
D_h(z,w) \geq c |z-w|^{\xi (Q+2)  +\zeta} ,\quad\forall z,w\in B_R(0) .
\eqe
 
For $k \in\BB N$ and $u\in (2^{-k-4} \BB Z^2) \cap B_R(0)$, define the rational annulus 
\eqb
A_k(u) := B_{2^{-k }}(u) \setminus B_{2^{-k-1}}(u) . 
\eqe
By Lemma~\ref{lem-perc-ssl} (applied with $T = 2^{\zeta k}$ and $r = 2^{-k}$), there are constants $c_0,c_1 > 0$ depending only on $\xi$ and $\zeta$ such that for each such $k$ and $u$, 
\eqb
\BB P\left[ \rng D_h\left(\text{across $A_k(u)$}\right) <  2^{-\zeta k} \frk c_{2^{-k}}  e^{\xi h_{2^{-k}}(0)}  \right] \leq c_0 e^{-c_1 k^2} . 
\eqe
By taking a union bound over all $u\in (2^{-k-4} \BB Z^2) \cap B_R(0)$ and possibly adjusting $c_0,c_1$ to absorb a factor of $O_k(4^k)$, we get that for each $k\in\BB N$, it holds with probability at least $1-c_0 e^{-c_1 k^2}$ that
\eqb \label{eqn-holder-annuli}
\rng D_h\left(\text{across $A_k(u)$}\right) \geq  2^{-\zeta k} \frk c_{2^{-k}}  e^{\xi h_{2^{-k}}(0)} ,\quad\forall u \in (2^{-k-4} \BB Z^2) \cap B_R(0) .
\eqe

Each of the random variables $h_{2^{-k}}(u)$ is centered Gaussian with variance $k \log 2 + O_k(1)$. Therefore, the Gaussian tail bound and a union bound over all $u\in (2^{-k-2} \BB Z^2) \cap B_R(0)$ shows that with probability at least $1 - c_0 2^{-[ (2+\zeta)^2/2  - 2) k]}$, 
\eqb \label{eqn-holder-field}
|h_{2^{-k}}(u)|  \leq (2+\zeta) k \log 2  ,\quad\forall u \in (2^{-k-4} \BB Z^2) \cap B_R(0)  .
\eqe

By the Borel-Cantelli lemma, a.s.\ there exists $K\in\BB N$ such that~\eqref{eqn-holder-annuli} and~\eqref{eqn-holder-field} both hold for all $k\geq K$. 
Henceforth assume that this is the case. 
For distinct $z,w\in B_R(0)$ with $|z-w| \leq 2^{-K-2}$, choose $k = k(z,w) \geq K$ such that $2^{-k+1 } \leq |z-w| \leq 2^{-k+2}$. There is a $u \in (2^{-k-4} \BB Z^2) \cap B_R(0)$ such that $z$ and $w$ lie in different connected components of $\BB C\setminus A_k(u)$. By assertion~\eqref{item-lim-across} of Lemma~\ref{lem-lim-across-around}, followed by~\eqref{eqn-holder-annuli} and~\eqref{eqn-holder-field}, 
\eqb \label{eqn-holder-pt}
D_h(z,w) 
\geq \rng D_h\left(\text{across $A_k(u)$}\right)
\geq 2^{ -(2\xi + (1+\xi)\zeta) k } \frk c_{2^{-k}}  .
\eqe
Since $\frk c_{2^{-k}} = 2^{-\xi Q k  + o_k(k)}$ by~\eqref{eqn-ssl-constant-lim}, we obtain from~\eqref{eqn-holder-pt} that $D_h(z,w) \geq  2^{-[ \xi(Q+2) + (2+\xi)\zeta]k}$, where $c_0 > 0$ is a deterministic constant depending only on $\xi,\zeta$ (note that we have absorbed the $2^{o_k(k)}$ into an extra factor of $2^{\zeta k}$). By replacing $\zeta$ by $\zeta/(2+\xi)$ and recalling our choice of $k = k(z,w)$, we get that a.s.\ for each large enough $K \in\BB N$, 
\eqb
D_h(z,w) \geq c_0 |z-w|^{\xi(Q+2) + \zeta} ,\quad\forall z,w\in B_R(0) \quad \text{with $|z-w| \leq 2^{-K-1}$} .
\eqe
We now obtain~\eqref{eqn-holder} by replacing $c_0$ by a smaller, $K$-dependent constant $c$ to deal with the case when $|z-w| > 2^{-K-1}$. 
\end{proof}

Finally, we prove assertion~\ref{item-thick-pt} of Theorem~\ref{thm-ssl-properties}.

\begin{prop} \label{prop-thick-pt}
Let $\alpha  > Q$. Almost surely, for each $\alpha$-thick point $z$ of $h$, we have $D_h(z,w) = \infty$ for every $w\in\BB C\setminus z$. 
\end{prop}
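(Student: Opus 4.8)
\textbf{Proof proposal for Proposition~\ref{prop-thick-pt}.}

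The plan is to fix $\alpha > Q$ and show that for an $\alpha$-thick point $z$, the series of ``across'' distances over a nested sequence of dyadic annuli surrounding $z$ diverges, which by assertion~\eqref{item-lim-across} of Lemma~\ref{lem-lim-across-around} forces $D_h(z,w) = \infty$ for every $w \neq z$. By Lemma~\ref{lem-finite-ae} and the definition~\eqref{eqn-thick-pt} of thick points, it suffices to work on a fixed ball $B_R(0)$ and consider the countably many dyadic annuli $A_k(u) := B_{2^{-k}}(u) \setminus B_{2^{-k-1}}(u)$ for $u \in (2^{-k-4}\BB Z^2) \cap B_R(0)$. I would first choose a $\zeta > 0$ small enough that $\xi(\alpha - Q) - \zeta(1+\xi) > 0$ (possible since $\alpha > Q$), and also fix a small $\delta \in (0, \alpha - Q)$.

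The key steps, in order. \textbf{Step 1: lower bound on annulus-crossing distances.} Apply Lemma~\ref{lem-perc-ssl} with $r = 2^{-k}$, $T = 2^{\zeta k}$, and a union bound over all $O_k(4^k)$ centers $u$; using $\frk c_{2^{-k}} = 2^{-\xi Q k + o_k(k)}$ from~\eqref{eqn-ssl-constant-lim}, this gives a constant $c_1 > 0$ so that with probability $\geq 1 - c_0 e^{-c_1 k^2}$ one has, for all such $u$,
\eqb \label{eqn-thick-pt-lower}
\rng D_h\left(\text{across } A_k(u)\right) \geq 2^{-(\xi Q + 2\zeta)k} e^{\xi h_{2^{-k}}(u)} .
\eqe
By Borel--Cantelli, a.s.\ \eqref{eqn-thick-pt-lower} holds for all $k$ larger than some random $K_0$ and all relevant $u$. \textbf{Step 2: lower bound on the circle average near a thick point.} If $z$ is $\alpha$-thick, then $\liminf_{r \to 0} h_r(z)/\log r \geq \alpha$, so for $k$ large (say $k \geq K_1(z)$) we have $h_{2^{-k}}(z) \geq (\alpha - \delta) k \log 2$. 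I would then need to transfer this from the exact point $z$ to the nearby dyadic center $u_k^z \in (2^{-k-4}\BB Z^2)$ with $z \in A_k(u_k^z)$; since $h_{2^{-k}}(u) - h_{2^{-k}}(z)$ with $|u - z| \leq 2^{-k-3}$ is controlled — the standard modulus-of-continuity estimate for the circle average process (e.g.\ as in~\cite{hmp-thick-pts} or~\cite[Section 3.1]{shef-kpz}) gives that a.s.\ $\sup_{|u-z| \leq 2^{-k-3}, z \in B_R(0)} |h_{2^{-k}}(u) - h_{2^{-k}}(z)| = o(k)$ — this perturbation is absorbed by shrinking $\zeta$ slightly, so $h_{2^{-k}}(u_k^z) \geq (\alpha - 2\delta) k \log 2$ for $k$ large. \textbf{Step 3: combine and sum.} Plugging into~\eqref{eqn-thick-pt-lower}, for $k \geq \max\{K_0, K_1(z)\}$,
\eqb \label{eqn-thick-pt-combine}
\rng D_h\left(\text{across } A_k(u_k^z)\right) \geq 2^{[\xi(\alpha - 2\delta) - \xi Q - 2\zeta]k} = 2^{[\xi(\alpha - Q) - 2\xi\delta - 2\zeta]k},
\eqe
and choosing $\delta, \zeta$ small enough the exponent is strictly positive, so these quantities tend to $\infty$. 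Passing to a sufficiently sparse subsequence $k_1 < k_2 < \cdots$ so that the annuli $A_{k_j}(u_{k_j}^z)$ are pairwise disjoint and nested, assertion~\eqref{item-lim-across} of Lemma~\ref{lem-lim-across-around} gives, for any $w$ with $|z - w|$ bounded below and any $O_0, O_k$ appropriately chosen among these annulus boundaries, $D_h(z,w) \geq \sum_j \rng D_h(\text{across } A_{k_j}(u_{k_j}^z)) = \infty$. Since this holds for every $w \neq z$ (any fixed $w$ is eventually disconnected from $z$ by the small annuli), we conclude $D_h(z,w) = \infty$ for all $w \in \BB C \setminus \{z\}$; taking a union over $R \in \BB N$ finishes the proof.

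The main obstacle I expect is Step 2: the transfer of the thickness lower bound from the point $z$ to a \emph{deterministic} dyadic center $u_k^z$, uniformly over all thick points $z$ simultaneously. The cleanest route is to invoke the a.s.\ Hölder-type modulus of continuity of $(z,r) \mapsto h_r(z)$ away from $r = 0$ (at scale $r = 2^{-k}$, the oscillation of $h_{2^{-k}}$ over Euclidean distance $2^{-k}$ is a.s.\ $O(\sqrt{k})$, hence $o(k)$), which makes the perturbation negligible compared to the $\Theta(k)$ gain from thickness. One should be slightly careful that the exceptional null set in Step 1 (from Borel--Cantelli over dyadic annuli) and the exceptional null set in the modulus-of-continuity estimate are both independent of the choice of thick point $z$, so that the final statement holds a.s.\ simultaneously for \emph{all} $\alpha$-thick points; this is automatic since both estimates are uniform over the dyadic grid in $B_R(0)$. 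A minor additional point: one must ensure $\alpha < 2$ is not needed here — the statement is vacuous when there are no $\alpha$-thick points (i.e.\ $\alpha > 2$), and for $\alpha \in (Q, 2)$ the set of thick points is nonempty by~\cite{hmp-thick-pts}, so the proposition has content exactly when $\xi > \xi_{\op{crit}}$ (equivalently $Q < 2$).
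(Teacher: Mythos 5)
Your outline is the same as the paper's: lower-bound the $\rng D_h$-distance across dyadic annuli centered at grid points via Lemma~\ref{lem-perc-ssl} plus a union bound and Borel--Cantelli, transfer the thickness lower bound from $z$ to a nearby grid center, use $\frk c_{2^{-k}} = 2^{-\xi Q k + o_k(k)}$ from~\eqref{eqn-ssl-constant-lim} to get across-distances of order $2^{\xi(\alpha-Q)k + o_k(k)} \to \infty$, and conclude with assertion~\eqref{item-lim-across} of Lemma~\ref{lem-lim-across-around}. However, Step 2 as you justify it has a genuine gap: you take grid spacing $2^{-k-4}$, comparable to the circle radius, and claim the ``standard modulus-of-continuity estimate'' for circle averages gives $\sup_{|u-z|\le 2^{-k-3}} |h_{2^{-k}}(u)-h_{2^{-k}}(z)| = O(\sqrt k)$. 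The estimate you cite (\cite[Proposition 2.1]{hmp-thick-pts}, in the form $|h_r(z)-h_r(w)| \le C r^{-1/2}|z-w|^{(1-\zeta)/2}$ used in this paper) does \emph{not} give this at separation $\asymp 2^{-k}$: plugging in $r = 2^{-k}$, $|z-w| \asymp 2^{-k}$ yields only a bound of order $2^{\zeta k/2}$, which is not $o(k)$ and would swamp the linear-in-$k$ gain $\xi(\alpha-Q)k\log 2$ from thickness. The $O(\sqrt k)$ oscillation bound at matching scales is in fact true, but it is a separate Gaussian maximal estimate (Fernique/Borell--TIS in each $2^{-k}$-box, union bound over $\asymp 4^k$ boxes, then Borel--Cantelli), not a consequence of the H\"older modulus you invoke; as written, the step does not follow from what you cite.

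The cheapest repair is the one the paper uses: take the much finer grid $(2^{-2k}\BB Z^2)\cap B_R(0)$, so that $|z - u_k| \le 2^{-2k+1}$ and the cited H\"older estimate gives $|h_{2^{-k}}(z) - h_{2^{-k}}(u_k)| \le C\, 2^{k/2}(2^{-2k+1})^{(1-\zeta)/2} = o_k(1)$; the union bound over $O_k(4^{2k})$ centers is still harmless against the $e^{-c_1 k^2}$ (or $e^{-c_1 k^{4/3}}$) failure probabilities from Lemma~\ref{lem-perc-ssl}. Two smaller points: your requirement ``$u_k^z$ with $z\in A_k(u_k^z)$'' should read that $z$ lies in the \emph{inner disk} $B_{2^{-k-1}}(u_k^z)$, otherwise the annulus does not separate $z$ from $w$ and neither the divergent-sum argument nor the single-annulus argument applies; and note that you do not actually need the disjoint nested subsequence and the divergent sum --- since each individual across-distance tends to $\infty$ and each annulus eventually separates $z$ from any fixed $w\ne z$, a single application of assertion~\eqref{item-lim-across} per $k$ together with a $\limsup$ already gives $D_h(z,w)=\infty$, which is how the paper concludes.
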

\begin{proof}
Fix $R > 1$. We will prove that the condition in the lemma statement holds a.s.\ for each $\alpha$-thick point in $B_R(0)$. 
By Lemma~\ref{lem-perc-ssl} (applied with $T = 2^{k^{2/3} }$) and a union bound over all $u \in ( 2^{-2k} \BB Z^2) \cap B_R(0)$, there are constants $c_0,c_1 >0$ depending only on $R ,\zeta, \xi$ such that with probability at least $1-c_0 e^{-c_1 k^{4/3}}$, 
\eqb \label{eqn-thick-union}
\rng D_h\left( \text{across $B_{2^{-k}}(u) \setminus B_{2^{-k-1}}(u)$} \right) \geq 2^{-k^{2/3} } \frk c_{2^{-k}}  e^{\xi h_{2^{-k}}(0)} ,
\quad\forall u \in (2^{-2k} \BB Z^2) \cap B_R(0) .
\eqe
By the Borel-Cantelli lemma, a.s.~\eqref{eqn-thick-union} holds for all large enough $k\in\BB N$. 
By a basic continuity estimate for the circle average process (see, e.g.,~\cite[Proposition 2.1]{hmp-thick-pts}), a.s.\ there exists a random constant $C > 1$ such that for all $z,w\in B_R(0)$ and all $r \in (0,1)$, 
\eqb \label{eqn-thick-cont}
|h_r(z) - h_r(w)| \leq C r^{-1/2} |z-w|^{(1-\zeta)/2} .
\eqe

Henceforth assume that~\eqref{eqn-thick-union} holds for large enough $k\in\BB N$ and~\eqref{eqn-thick-cont} holds.
Let $z\in B_R(0)$ be an $\alpha$-thick point, so that $h_{2^{-k}}(z) \geq (\alpha + o_k(1)) k \log 2$ as $k\rta\infty$. 
For $k\in\BB N$, choose $u_k \in ( 2^{-2k} \BB Z^2) \cap B_R(0)$ such that $z\in B_{2^{-2k+1}}(u_k)$. By~\eqref{eqn-thick-cont},
\eqb
|h_{2^{-k}}(z) - h_{2^{-k}}(u_k)| \leq C 2^{-k/2} (2^{-2k+1})^{(1-\zeta)/2}  = o_k(1) ,\quad \text{as $k\rta\infty$}. 
\eqe
Therefore, $h_{2^{-k}}(u_k) \geq (\alpha + o_k(1)) k \log 2$. By~\eqref{eqn-thick-union},
\eqb \label{eqn-thick-lower}
\rng D_h\left( \text{across $B_{2^{-k}}(u_k) \setminus B_{2^{-k-1}}(u_k)$} \right)
\geq  \frk c_{2^{-k}} 2^{\xi \alpha k + o_k(k)} .
\eqe
By~\eqref{eqn-ssl-constant-lim}, $\frk c_{2^{-k}} = 2^{\xi Q k + o_k(k)}$, so the right side of~\eqref{eqn-thick-lower} is at least $2^{\xi (\alpha - Q) k + o_k(k)}$, which tends to $\infty$ as $k\rta\infty$. 
For any $w\in \BB C\setminus \{z\}$, the annulus $B_{2^{-k}}(u_k) \setminus B_{2^{-k-1}}(u_k)$ disconnects $z$ from $w$ for large enough $k$. Therefore, assertion~\eqref{item-lim-across} of Lemma~\ref{lem-lim-across-around} implies that 
\eqbn
D_h(z,w) \geq \limsup_{k\rta\infty} \rng D_h\left( \text{across $B_{2^{-k}}(u_k) \setminus B_{2^{-k-1}}(u_k)$} \right) = \infty .
\eqen 
\end{proof}

\subsection{Singular points, completeness, and geodesics}
\label{sec-complete}

We know from Proposition~\ref{prop-thick-pt} that $D_h$ takes on infinite values when $\xi > \xi_{\op{crit}}$. 
We now provide additional detail on which pairs of points can lie at infinite distance from others.  
As in Theorem~\ref{thm-ssl-properties}, we say that $z\in\BB C$ is a \emph{singular point} for $D_h$ if $D_h(z,w) = \infty$ for every $w\in\BB C\setminus \{z\}$. 

By Lemma~\ref{lem-finite-ae}, for a fixed $z\in\BB C$, a.s.\ $z$ is not a singular point for $D_h$. 
In particular, the set of singular points a.s.\ has Lebesgue measure zero. 
On the other hand, by Proposition~\ref{prop-thick-pt}, a.s.\ each $\alpha$-thick point of $h$ for $\alpha > Q$ is a singular point for $D_h$. 
In particular, if $Q < 2$ (equivalently, $\xi  > \xi_{\op{crit}}$) then a.s.\ the set of singular points is uncountable and dense.

We will now show that two points can be at infinite $D_h$-distance from each other only if at least one of them is a singular point. 
In particular, $D_h$ is a finite metric on $\BB C \setminus \{\text{singular points}\}$ (we already know that $D_h$ is a metric from Proposition~\ref{prop-triangle-inequality}).

\begin{lem} \label{lem-finite-dist}
Almost surely, the following is true. Suppose $z,z',w,w' \in\BB C$ such that $z\not=z'$, $w \not= w'$,  $D_h(z,z') <\infty$, and $D_h(w,w') < \infty$. Then $D_h(z,w) < \infty$.
In particular, if $z,w\in\BB C$ such that $D_h(z,w) = \infty$, then either $z$ or $w$ is a singular point for $D_h$. 
\end{lem}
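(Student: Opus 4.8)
\textbf{Proof proposal for Lemma~\ref{lem-finite-dist}.}

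The plan is to reduce the statement to finding a single rational annulus which crosses both line segments (in the space of ``paths'') from $z$ to $z'$ and from $w$ to $w'$, and then to invoke assertion~\eqref{item-lim-around} of Lemma~\ref{lem-lim-across-around} twice, once to compare $D_h(z,w)$ with $D_h(z,z')$ plus a distance around an annulus, and once more if needed to handle the second pair. Concretely, since $z \neq z'$ we may pick a rational annulus $A_z$ which does not contain any of $z,z',w,w'$, such that $z$ and $z'$ lie in different connected components of $\BB C \setminus A_z$; here we use that the rational circles are dense enough to separate any given finite configuration of points. Then assertion~\eqref{item-lim-around} of Lemma~\ref{lem-lim-across-around} (applied with the roles $z \mapsto z$, $w \mapsto w$, $z' \mapsto z'$, $w' \mapsto w'$, and annulus $A_z$) gives
\[
D_h(z,w) \leq D_h(z,z') + D_h(w,w') + \rng D_h(\text{around } A_z) .
\]
Since $D_h(z,z')$ and $D_h(w,w')$ are finite by hypothesis and $\rng D_h(\text{around } A_z) < \infty$ a.s.\ (every $\rng D_h(\text{around }A)$ is an a.s.-finite random variable, being the a.s.\ limit of the tight sequence $\frk a_\ep^{-1} D_h^\ep(\text{around }A)$ along $\mcl E$, see~\eqref{eqn-ssl-around} and Proposition~\ref{prop-gff-pt-tight}), the right-hand side is finite. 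Hence $D_h(z,w) < \infty$.

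The one subtlety is that Lemma~\ref{lem-lim-across-around}\eqref{item-lim-around} requires that $w$ and $w'$ lie in different connected components of $\BB C \setminus A$ in addition to $z,z'$; a single annulus $A_z$ built around $z$ need not separate $w$ from $w'$. To address this, I would first separate into cases. If there is a rational annulus simultaneously separating $z$ from $z'$ and $w$ from $w'$, we are done as above. Otherwise --- e.g.\ when the pair $\{w,w'\}$ is ``close together'' compared to the scale at which $z,z'$ are separated --- I would instead use a chaining argument: fix an auxiliary point, say a rational point $q \in \BB Q^2$ distinct from all of $z,z',w,w'$. A rational annulus $A_z$ separating $z$ from $\{z',q\}$ exists, so Lemma~\ref{lem-lim-across-around}\eqref{item-lim-around} applied with $(z,z',w,w') \rightsquigarrow (z, z', q, q')$ --- taking a further small rational annulus around $q$ --- or, more simply, the triangle inequality from Proposition~\ref{prop-triangle-inequality} together with two applications of the ``around'' bound, yields $D_h(z,q) \leq D_h(z,z') + \rng D_h(\text{around } A_z) + (\text{something around }q) < \infty$, and similarly $D_h(w,q) < \infty$; then $D_h(z,w) \leq D_h(z,q) + D_h(q,w) < \infty$ by the triangle inequality. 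The key point is only that for any fixed finite point set one can always find rational annuli that separate a chosen point from the rest and that have finite $\rng D_h$-distance around them.

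The second, weaker assertion follows immediately: if $z$ is not a singular point, there exists $z' \in \BB C \setminus \{z\}$ with $D_h(z,z') < \infty$; if $w$ is not a singular point, there exists $w' \in \BB C \setminus \{w\}$ with $D_h(w,w') < \infty$; then the first part gives $D_h(z,w) < \infty$, contradicting $D_h(z,w) = \infty$. So at least one of $z,w$ must be singular. I expect the main obstacle to be the bookkeeping in the case analysis above --- ensuring that in every configuration of the four (possibly non-distinct-in-pairs, but with $z\neq z'$ and $w \neq w'$) points one can produce the requisite separating rational annuli with the correct nesting so that Lemma~\ref{lem-lim-across-around}\eqref{item-lim-around} applies cleanly --- but this is a routine topological argument using the density of rational circles, not a substantive difficulty.
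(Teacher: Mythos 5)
Your overall route is genuinely different from the paper's. The paper never applies Lemma~\ref{lem-lim-across-around}\eqref{item-lim-around} here, precisely because of the single-annulus obstruction you identify: instead it works at the $\ep$ level, choosing approximating points via Lemma~\ref{lem-dist-sup}, concatenating actual $D_h^\ep$-paths (a near-geodesic from $z_\ep$ to $z_\ep'$, a loop around $A_z$, a crossing from $O_z$ to $O_w$, \dots), and passing to the limit by lower semicontinuity (Lemma~\ref{lem-dist-seq}), in two stages: first $D_h(z,O_w)<\infty$ where $O_w$ is the inner boundary of an annulus separating $w$ from $w'$, and then $D_h(z,w)<\infty$. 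Your alternative --- chain through an auxiliary rational point and use the triangle inequality (Proposition~\ref{prop-triangle-inequality}), invoking Lemma~\ref{lem-lim-across-around}\eqref{item-lim-around} once per pair with its own annulus --- is viable and in fact somewhat cleaner, since all the $\ep\rta 0$ work is already packaged in Lemma~\ref{lem-lim-across-around} and Proposition~\ref{prop-triangle-inequality}; moreover it makes your first ``case'' (a single annulus separating both pairs) superfluous.

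However, the key chaining step is not correct as written. To bound $D_h(z,q)$ via Lemma~\ref{lem-lim-across-around}\eqref{item-lim-around} with the quadruple $(z,z',q,q')$ and annulus $A_z$, you need $A_z$ to separate $q$ from $q'$, and the conclusion is $D_h(z,q)\le D_h(z,z')+D_h(q,q')+\rng D_h(\text{around }A_z)$; so you must exhibit a concrete partner $q'$ in the \emph{bounded} component of $\BB C\setminus A_z$ (on the side of $z$) and justify $D_h(q,q')<\infty$. A ``further small rational annulus around $q$'' does not meet the hypotheses --- such an annulus separates neither $z$ from $z'$ nor $q$ from a suitable $q'$ relative to $A_z$ --- and the unspecified term ``(something around $q$)'' has no counterpart in the lemma's conclusion. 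The fix is routine but has to be stated: take $q$ and $q'$ (and the analogous point $q''$ inside the hole of $A_w$) to be rational, with $q$ far away so the same $q$ works for both pairs; since the statement being proved is quantified over all $z,z',w,w'$, every probabilistic input must hold simultaneously, which is exactly why rational choices matter: by Lemma~\ref{lem-finite-ae} and countability, a.s.\ $D_h(q,q')<\infty$ for all pairs of rational points at once, and a.s.\ $\rng D_h(\text{around }A)<\infty$ for all rational annuli at once (the correct source for the latter is the tail bound of Lemma~\ref{lem-perc-ssl}, or the remark following Proposition~\ref{prop-perc-estimate}, rather than Proposition~\ref{prop-gff-pt-tight}, which concerns set-to-set distances). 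With these choices your two applications of Lemma~\ref{lem-lim-across-around}\eqref{item-lim-around} give $D_h(z,q)<\infty$ and $D_h(w,q)<\infty$, and the triangle inequality finishes the proof; the ``in particular'' statement then follows exactly as you say.
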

\begin{proof}
See Figure~\ref{fig-singular-pt} for an illustration. 
Let $A_z$ be a rational annulus such that $z$ (resp.\ $z'$) lies in the bounded (resp.\ unbounded) connected components of $\BB C\setminus A_z$ and let $O_z$ be the inner boundary of $A_z$.  
Similarly define $A_w$ and $O_w$. 

By Lemma~\ref{lem-dist-sup}, we can find a sequences of points $z_\ep, z_\ep'$ for $\ep\in\mcl E$ such that $z_\ep \rta z$, $z_\ep' \rta z'$, and $\frk a_\ep^{-1} D_h(z_\ep  ,z_\ep') \rta D_h(z,z')$. For small enough $\ep   \in\mcl E$, $z_\ep$ (resp.\ $z_\ep'$) lies in the bounded (resp.\ unbounded) connected components of $\BB C\setminus A_z$. 
Using that $D_h^\ep$ is a length metric, we can concatenate paths (see Figure~\ref{fig-singular-pt}) to get
\eqb \label{eqn-finite-dist0-ep}
D_h^\ep(z_\ep , O_w)
\leq D_h^\ep(z_\ep , z_\ep') + D_h^\ep(O_z , O_w)  + D_h^\ep(\text{around $A_z$}). 
\eqe
Dividing by $\frk a_\ep $ in~\eqref{eqn-finite-dist0-ep}, taking the liminf of both sides, and applying Lemma~\ref{lem-dist-seq} on the left gives
\eqb \label{eqn-finite-dist0}
D_h(z, O_w) 
\leq  D_h(z,z') + \rng D_h(O_z,O_w) + \rng D_h(\text{around $A_z$})
< \infty .
\eqe

We will now use a similar argument to prove an upper bound for $D_h(z,w)$ in terms of $D_h(z,O_w)$.  
To this end, let $x\in O_w$ be chosen so that $D_h(z,x) \leq D_h(z,O_w) + 1$. 
We use Lemma~\ref{lem-dist-sup} to choose sequences of points $w_\ep \rta w$, $w_\ep' \rta w'$, $z_\ep \rta z$, and $x_\ep \rta x$ such that
\eqbn
\lim_{\mcl E \ni \ep\rta 0} D_h(w_\ep , w_\ep') = D_h(w,w') \quad\text{and} \quad \lim_{\mcl E \ni \ep\rta 0} D_h(z_\ep , x_\ep) = D_h(z,w) \leq D_h(z,O_w) + \delta .
\eqen
Using that $D_h^\ep$ is a length metric, we can concatenate paths (see Figure~\ref{fig-singular-pt}) to get
\eqb \label{eqn-finite-dist1-ep}
D_h^\ep(z_\ep , w_\ep) \leq D_h^\ep(w_\ep , w_\ep')  + D_h^\ep(\text{around $A_w$}) + D_h^\ep(w_\ep,x_\ep) .
\eqe
Dividing by $\frk a_\ep $ in~\eqref{eqn-finite-dist1-ep}, taking the liminf of both sides, and applying Lemma~\ref{lem-dist-seq} on the left gives 
\eqb \label{eqn-finite-dist1}
D_h(z,w) 
\leq  D_h(w,w') + \rng D_h(\text{around $A_w$}) + D_h(z,O_w)  + 1,
\eqe
which is finite due to~\eqref{eqn-finite-dist0}. 

To get the last assertion of the lemma, we note that if neither $z$ nor $w$ is a singular point then there exists $z'\not=z$ and $w'\not=w$ such that $D_h(z,z') < \infty$ and $D_h(w,w') < \infty$, which implies that $D_h(z,w) < \infty$ by the first assertion. 
\end{proof}

\begin{figure}[t!]
 \begin{center}
\includegraphics[scale=1]{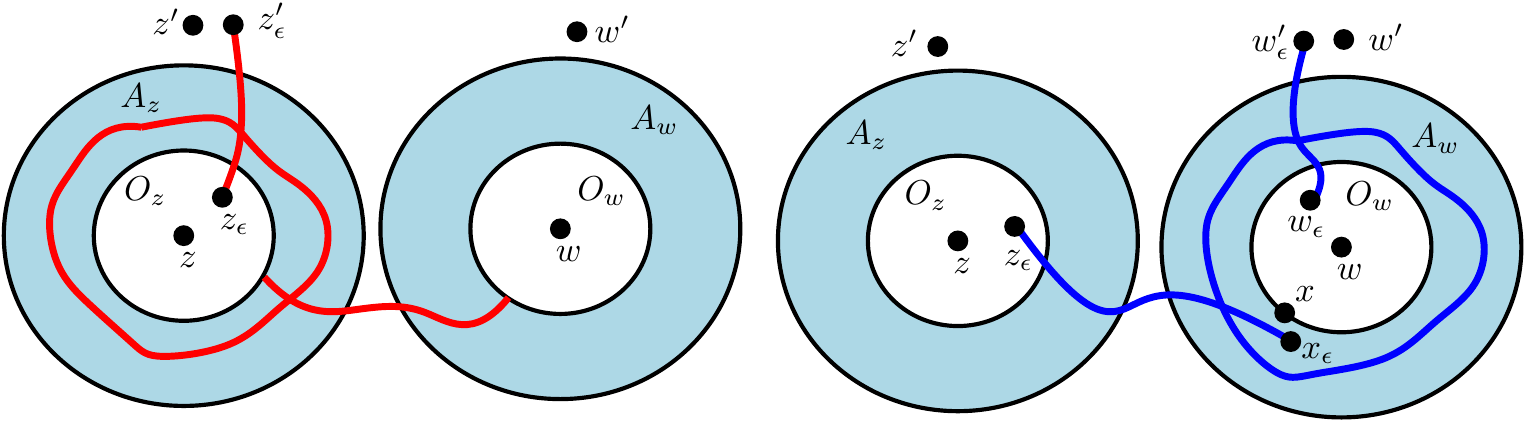}
\vspace{-0.01\textheight}
\caption{Illustration of the proof of Lemma~\ref{lem-finite-dist}. \textbf{Left:} The union of the red paths includes a path from $z_\ep$ to $O_w$, which leads to~\eqref{eqn-finite-dist0-ep}. \textbf{Right:} The union of the blue paths contains a path from $w_\ep$ to $z_\ep$, which leads to~\eqref{eqn-finite-dist1-ep}.  
}\label{fig-singular-pt}
\end{center}
\vspace{-1em}
\end{figure}

To complete the proofs of our main theorems, it remains to establish that $D_h$ is a complete geodesic metric on $\BB C\setminus \{\text{singular points}\}$. We start with completeness.

\begin{prop} \label{prop-complete}
Almost surely, every $D_h$-Cauchy sequence is convergent. In particular, the restriction of $D_h$ to $\BB C\setminus \{\text{singular points}\}$ is complete.
\end{prop}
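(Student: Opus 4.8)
\textbf{Proof proposal for Proposition~\ref{prop-complete}.}

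The plan is to show that any $D_h$-Cauchy sequence $\{x_n\}_{n\in\BB N}$ converges to some point of $\BB C$ with respect to $D_h$. First I would use Proposition~\ref{prop-infty-dist-ssl}: a $D_h$-Cauchy sequence is $D_h$-bounded, hence (by that proposition) it is contained in some Euclidean ball $B_R(0)$. So $\{x_n\}$ has a Euclidean subsequential limit $x\in\ol{B_R(0)}$; after passing to a subsequence we may assume $x_n\rta x$ in the Euclidean metric. The goal is then to show $D_h(x_n,x)\rta 0$, which combined with the Cauchy property gives convergence of the whole original sequence.

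The key step is to prove $D_h(x_n,x)\rta 0$. Here I would use the special nested circles from Lemma~\ref{lem-good-circles}: let $\{O^m\}_{m\in\BB N}$ be a sequence of rational circles nesting down to $x$ satisfying the conclusions of that lemma, with associated annuli $A^m$ between $O^m$ and its double $\wh O^m$. The essential observation is that, since $\{x_n\}$ is $D_h$-Cauchy and not every point is a singular point (Lemma~\ref{lem-finite-ae} / Lemma~\ref{lem-finite-dist}), the sequence cannot be ``trapped'' near a singular point; more precisely, I claim $x$ is not a singular point and $D_h(x_n,x)<\infty$ for large $n$. To see this: by the Cauchy property, $\sup_{m,n} D_h(x_m,x_n)<\infty$; if $x$ were singular then by Lemma~\ref{lem-finite-dist} every $x_n$ with $D_h(x_n,\cdot)$ finite to some other point would still have $D_h(x_n,x)=\infty$, and I would derive a contradiction from the fact that $x_n\rta x$ Euclideanly forces the circles $O^m$ to eventually separate $x_n$ from any fixed reference point, making $D_h$-distances blow up along the annuli $A^m$ by assertion~\eqref{item-lim-across} of Lemma~\ref{lem-lim-across-around} — but this would contradict boundedness of $D_h(x_1,x_n)$ once I pin down a non-singular reference point. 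Once we know $D_h(x_1,x)<\infty$, Lemma~\ref{lem-good-circles-conv} gives $\rng D_h(\text{across }A^m)\rta 0$ and $\rng D_h(\text{around }A^m)\rta 0$ as $m\rta\infty$.

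Now fix $\delta>0$ and choose $m$ with $\rng D_h(\text{around }A^m)\le\delta$ and $\rng D_h(\text{across }A^m)\le\delta$, and also $\rng D_h(x,\wh O^m)\le\delta$ (the latter being $D_h(x,\wh O^m)$, which we can bound using the argument in Lemma~\ref{lem-rational-conv}-style reasoning, or directly: since $D_h(x,w)<\infty$ for some $w$ outside $\wh O^1$, assertion~\eqref{item-lim-across} and the finiteness of the resulting sum force $D_h(x,\wh O^m)\rta 0$ — this needs a short argument, possibly isolating $x$ in a chain of annuli). For $n$ large enough that $x_n$ lies in the bounded component of $\BB C\setminus A^m$, apply assertion~\eqref{item-lim-around} of Lemma~\ref{lem-lim-across-around} with $z=x_n$, $z'=$ a point of $\wh O^m$ realizing (approximately) $D_h(x,\wh O^m)$ — actually more cleanly, apply it with the annulus $A^m$ and endpoints $x_n,x$ to get $D_h(x_n,x)\le D_h(x_n,\cdot)+D_h(x,\wh O^m)+\rng D_h(\text{around }A^m)$; the delicate point is that we need $x_n$ and $x$ on the same or controlled sides of $A^m$. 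Since $x_n\rta x$, for large $n$ both $x_n$ and $x$ are in the bounded component, so I would instead link $x_n$ to $O^m$ across $A^m$ directly (its $D_h$-distance to $O^m$ is at most the Euclidean-short path length, which by the Hölder estimate Proposition~\ref{prop-holder}'s proof inputs — or by Lemma~\ref{lem-pt-dist}/Lemma~\ref{lem-gff-pt-conv} — tends to $0$ as $|x_n-x|\rta 0$), obtaining $D_h(x_n,x)\le D_h(x_n,O^m)+\rng D_h(\text{around }A^m)+D_h(x,O^m)\le 2(\text{small})+\delta$. Letting first $n\rta\infty$ then $\delta\rta 0$ gives $D_h(x_n,x)\rta 0$. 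The main obstacle I anticipate is the bookkeeping in this last step: ensuring that ``$x_n$ is close to $x$ Euclideanly'' genuinely translates into ``$D_h(x_n,O^m)$ is small'' uniformly for $m$ in the relevant range, and handling the edge case where $x$ itself might a priori be singular — this is where the Cauchy hypothesis must be leveraged carefully via Lemma~\ref{lem-finite-dist} to rule that out. Once $D_h$-convergence of Cauchy sequences to a point of $\BB C$ is established, completeness of the restriction to $\BB C\setminus\{\text{singular points}\}$ follows because a $D_h$-Cauchy sequence of non-singular points converges to a point $x$ with $D_h(x_1,x)<\infty$, so $x$ is non-singular by Lemma~\ref{lem-finite-dist}.
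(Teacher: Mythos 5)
There are two genuine gaps in your argument, both in the key step ``$D_h(x_n,x)\rta 0$''. First, your claim that the Euclidean limit $x$ cannot be a singular point is not justified by the lemmas you cite: assertion~\eqref{item-lim-across} of Lemma~\ref{lem-lim-across-around} only gives \emph{lower} bounds of the form $D_h(x_1,x_n)\geq \sum_j \rng D_h(\text{across $A^j$})$, so the boundedness of $D_h(x_1,x_n)$ merely tells you that the annulus-crossing distances around $x$ are summable; nothing in the paper (and no obvious argument) converts summability of these crossing distances into non-singularity of $x$ or into finiteness of $D_h(x_n,x)$ — Lemma~\ref{lem-good-circles-conv} goes in the opposite direction (finite distance to some partner implies small crossings), and its converse is exactly what you would need. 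Second, and more seriously, your final chaining step bounds $D_h(x_n,O^m)$ by ``the Euclidean-short path length''. No such bound exists: Proposition~\ref{prop-holder} controls the Euclidean metric from above by a power of $D_h$, not the reverse, and in the supercritical phase the inverse map is \emph{not} continuous (Theorem~\ref{thm-ssl-properties}\ref{item-holder}), so Euclidean proximity gives no upper bound on $D_h$. Lemmas~\ref{lem-pt-dist} and~\ref{lem-gff-pt-conv} cannot rescue this either, since they are statements about fixed deterministic points (and about the approximating metrics $\frk a_\ep^{-1}D_h^\ep$), whereas $x$ and the $x_n$ are random points determined by the Cauchy sequence and could well sit arbitrarily close to singular points. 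The only available tool for bounding $D_h(x_n,\cdot)$ from above is the Cauchy hypothesis itself, fed through the ``around-annulus'' linking inequality (assertion~\eqref{item-lim-around} of Lemma~\ref{lem-lim-across-around}), not Euclidean length.

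For comparison, the paper's proof avoids all of this machinery: a $D_h$-Cauchy sequence is $D_h$-bounded, hence Euclidean-bounded by Proposition~\ref{prop-infty-dist-ssl}, and Euclidean-Cauchy by Proposition~\ref{prop-holder}, so it has a Euclidean limit $z$; then, given $\ep>0$ and $n_*$ with $D_h(z_n,z_m)\leq\ep$ for $n,m\geq n_*$, the lower semicontinuity of $D_h$ (Proposition~\ref{prop-lower-semicont}), applied to $(z_m,z_n)\rta(z,z_n)$ as $m\rta\infty$, gives directly
\begin{equation*}
D_h(z,z_n)\;\leq\;\liminf_{m\rta\infty} D_h(z_m,z_n)\;\leq\;\ep ,\qquad \forall n\geq n_* .
\end{equation*}
This single application of lower semicontinuity simultaneously repairs both of your gaps: it shows $z$ is at finite $D_h$-distance from the sequence (hence non-singular, unless the sequence is eventually equal to $z$) and it gives the quantitative bound $D_h(z,z_n)\rta 0$ with no need to compare $D_h$ with Euclidean lengths or to chain geodesic-like ``paths'' through the good annuli of Lemma~\ref{lem-good-circles}. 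If you want to keep your annulus-chaining route, you would still have to invoke lower semicontinuity (or Lemma~\ref{lem-dist-seq}) first, just to know $D_h(z,z_{n_0})<\infty$, after which the chaining via assertion~\eqref{item-lim-around} can be carried out — but at that point the short argument above already finishes the proof.
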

\begin{proof}
Let $\{z_n\}_{n\in\BB N}$ be a Cauchy sequence w.r.t.\ $D_h$. Then $\{z_n\}_{n\in\BB N}$ is $D_h$-bounded, so by Proposition~\ref{prop-infty-dist-ssl}, $\{z_n\}_{n\in\BB N}$ is contained in some Euclidean-compact subset of $\BB C$. 
By Proposition~\ref{prop-holder}, $\{z_n\}_{n\in\BB N}$ is also Cauchy w.r.t.\ the Euclidean metric, so there is a $z\in\BB C$ such that $|z_n-z| \rta 0$. 
We need to show that $D_h(z_n,z) \rta 0$.
To this end, let $\ep  > 0$. By the Cauchy condition, we can find $n_* = n_*(\ep) \in \BB N$ such that $D_h(z_n , z_m) \leq \ep$ for each $n,m\geq n_*$. 
By lower semicontinuity (Proposition~\ref{prop-lower-semicont}), for each $n\geq n_*$ we have $D_h(z,z_n) \leq \liminf_{m\rta\infty} D_h(z_m,z_n) \leq \ep$. 
\end{proof}

\begin{prop} \label{prop-length-metric}
Almost surely, the restriction of $ D_h$ to $\BB C\setminus \{\text{singular points}\}$ is a geodesic metric, i.e., for any $z,w\in\BB C$ with $D_h(z,w) < \infty$, there is a path from $z$ to $w$ of $D_h$-length exactly $D_h(z,w)$. 
\end{prop}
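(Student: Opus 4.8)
The plan is to realize a $D_h$-geodesic between two given points as a subsequential limit of near-geodesics for the approximating metrics $D_h^\ep$. Work on the probability-one event where the conclusions of Lemmas~\ref{lem-dist-sup} and~\ref{lem-dist-seq} and Propositions~\ref{prop-holder},~\ref{prop-complete},~\ref{prop-infty-dist-ssl} hold, and where in addition $\sup_{\BB Q\ni R>r}\rng D_h(\bdy B_r(0),\bdy B_R(0))=\infty$ for every rational $r>0$; this last statement holds a.s.\ by Lemma~\ref{lem-infty-dist}, the joint convergence~\eqref{eqn-ssl-circle}, and the monotonicity of $R\mapsto\rng D_h(\bdy B_r(0),\bdy B_R(0))$ from assertion~\eqref{item-ssl-across} of Lemma~\ref{lem-ssl-across-around} (this is exactly the intermediate step in the proof of Proposition~\ref{prop-infty-dist-ssl}). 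Fix $z,w\in\BB C$ with $D:=D_h(z,w)<\infty$; we may take $z\ne w$, so that $z,w$ are non-singular. First I would establish a confinement estimate: choose a rational $r>0$ with $z,w\in B_{r/2}(0)$ and then a rational $R>r$ with $\rng D_h(\bdy B_r(0),\bdy B_R(0))>D+3$. Since $\bdy B_r(0)$ and $\bdy B_R(0)$ are rational circles, \eqref{eqn-ssl-circle} gives $\frk a_\ep^{-1}D_h^\ep(\bdy B_r(0),\bdy B_R(0))\to\rng D_h(\bdy B_r(0),\bdy B_R(0))$ along $\mcl E$, so $\frk a_\ep^{-1}D_h^\ep(\bdy B_r(0),\bdy B_R(0))>D+2$ for all small $\ep\in\mcl E$.

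Next I would use Lemma~\ref{lem-dist-sup} to pick $z^\ep\to z$, $w^\ep\to w$ along $\mcl E$ with $\frk a_\ep^{-1}D_h^\ep(z^\ep,w^\ep)\to D$, and — using that $D_h^\ep$ is a length metric, being the length metric of the smooth conformal metric $e^{\xi h_\ep^*}(dx^2+dy^2)$ — a rectifiable path $P^\ep:[0,1]\to\BB C$ from $z^\ep$ to $w^\ep$ with $\frk a_\ep^{-1}\op{len}(P^\ep;D_h^\ep)\le\frk a_\ep^{-1}D_h^\ep(z^\ep,w^\ep)+\ep$, parametrized proportionally to $D_h^\ep$-length. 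Writing $\ell_\ep:=\frk a_\ep^{-1}\op{len}(P^\ep;D_h^\ep)$, one has $\ell_\ep\to D$ and $\frk a_\ep^{-1}D_h^\ep(P^\ep(s),P^\ep(t))\le\ell_\ep|s-t|$ for all $s,t$. Since $z^\ep,w^\ep\in B_r(0)$ for small $\ep$, if $P^\ep$ ever left $B_R(0)$ it would contain a subpath joining $\bdy B_r(0)$ to $\bdy B_R(0)$, hence of $D_h^\ep$-length at least $D_h^\ep(\bdy B_r(0),\bdy B_R(0))>(D+2)\frk a_\ep$, contradicting $\ell_\ep<D+2$; so $P^\ep([0,1])\subset\ol{B_R(0)}$ for all small $\ep\in\mcl E$. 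As $\ol{B_R(0)}$ is Euclidean-compact, a diagonal extraction over $Q_0:=\BB Q\cap[0,1]$ gives a sequence $\ep_j\to0$ in $\mcl E$ and points $P(t)\in\ol{B_R(0)}$, $t\in Q_0$, with $P^{\ep_j}(t)\to P(t)$; note $P(0)=z$, $P(1)=w$. Applying Lemma~\ref{lem-dist-seq} (whose proof is unchanged along a subsequence of $\mcl E$, since $\frk a_\ep^{-1}D_h^\ep(O_1,O_2)\to\rng D_h(O_1,O_2)$ along any subsequence) to the sequences $P^{\ep_j}(s)\to P(s)$, $P^{\ep_j}(t)\to P(t)$ shows $D_h(P(s),P(t))\le\liminf_j\frk a_{\ep_j}^{-1}D_h^{\ep_j}(P^{\ep_j}(s),P^{\ep_j}(t))\le D|s-t|$ for $s,t\in Q_0$. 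In particular $D_h(z,P(t))\le Dt<\infty$, so each $P(t)$, $t\in Q_0$, is non-singular; as $P|_{Q_0}$ is $D_h$-Lipschitz and $(\BB C\setminus\{\text{singular points}\},D_h)$ is complete (Proposition~\ref{prop-complete}), it extends uniquely to a $D_h$-Lipschitz, hence (Proposition~\ref{prop-holder}) Euclidean-continuous, path $P:[0,1]\to\BB C\setminus\{\text{singular points}\}$ from $z$ to $w$.

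The last step is to check $\op{len}(P;D_h)=D$. The inequality $\op{len}(P;D_h)\ge D_h(z,w)=D$ is immediate. For the reverse, fix a partition $0=t_0<\dots<t_k=1$ with $t_i\in Q_0$ and bound $\sum_i D_h(P(t_{i-1}),P(t_i))\le\sum_i\liminf_j\frk a_{\ep_j}^{-1}D_h^{\ep_j}(P^{\ep_j}(t_{i-1}),P^{\ep_j}(t_i))\le\liminf_j\sum_i\frk a_{\ep_j}^{-1}D_h^{\ep_j}(P^{\ep_j}(t_{i-1}),P^{\ep_j}(t_i))\le\liminf_j\ell_{\ep_j}=D$, using superadditivity of $\liminf$ and the fact that a partition sum of $D_h^{\ep_j}$-distances along $P^{\ep_j}$ is at most $\op{len}(P^{\ep_j};D_h^{\ep_j})$. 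Since $P$ is $D_h$-continuous it suffices to consider partitions with endpoints in $Q_0$ (a general partition sum is a limit of such), so $\op{len}(P;D_h)\le D$. Hence $P$ is a $D_h$-geodesic from $z$ to $w$, and it automatically avoids singular points since $D_h(z,P(t))\le\op{len}(P|_{[0,t]};D_h)<\infty$ for all $t$.

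The step I expect to be the main obstacle is the confinement: ruling out that near-optimal $D_h^\ep$-paths run off to infinity, carried out in the correct order relative to the random quantities (the radii $r,R$ and the threshold $\ep_0$ are chosen only after the random value $D_h(z,w)$ is fixed). This forces one to use the a.s.\ blow-up of $\rng D_h$-distances across large concentric annuli together with the a.s.\ convergence $\frk a_\ep^{-1}D_h^\ep(\bdy B_r(0),\bdy B_R(0))\to\rng D_h(\bdy B_r(0),\bdy B_R(0))$ from the Skorokhod coupling, rather than any uniform-in-$\ep$ a priori bound. It is worth noting this detour is unavoidable: since $D_h$-metric balls are not $D_h$-compact (Remark~\ref{remark-gh}), $(\BB C\setminus\{\text{singular points}\},D_h)$ is not locally compact, so Hopf--Rinow does not apply and one genuinely must pass to the limit of approximating geodesics. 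The remaining ingredients — existence of near-geodesics for the smooth metrics $D_h^\ep$, the diagonal compactness argument, and the semicontinuity bookkeeping in the length computation — are routine.
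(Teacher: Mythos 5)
Your proof is correct, but it takes a genuinely different route from the paper. The paper does not construct the limiting geodesic directly: it invokes the standard criterion that a complete metric space in which every pair of points admits a midpoint is geodesic (citing \cite[Theorem 2.4.16]{bbi-metric-geometry} together with Proposition~\ref{prop-complete}), and then produces a midpoint of $z,w$ as a subsequential limit of the $D_h^\ep$-geodesic midpoints of $z^\ep,w^\ep$ (with $z^\ep,w^\ep$ from Lemma~\ref{lem-dist-sup}), using Proposition~\ref{prop-infty-dist-ssl} only to keep that single point in a compact set and Lemma~\ref{lem-dist-seq} plus the triangle inequality to verify the midpoint property. You instead pass to the limit of whole near-geodesics: confinement of the paths via the a.s.\ blow-up of $\rng D_h$-distances across large concentric annuli (the intermediate step of Proposition~\ref{prop-infty-dist-ssl}, i.e.\ Lemma~\ref{lem-infty-dist} plus~\eqref{eqn-ssl-circle} and the monotonicity from Lemma~\ref{lem-ssl-across-around}), diagonal extraction over rational times, the Lipschitz bound $D_h(P(s),P(t))\le D|s-t|$ from Lemma~\ref{lem-dist-seq}, and Propositions~\ref{prop-complete} and~\ref{prop-holder} to extend to a continuous curve avoiding singular points, followed by the length bookkeeping. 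Your use of Lemma~\ref{lem-dist-seq} along a random subsequence is legitimate (its proof is insensitive to passing to subsequences, and the paper itself applies it along a subsequence $\mcl E'$ in its own argument), and your ordering of the random choices in the confinement step avoids any circularity. The trade-off: the paper's midpoint argument is shorter and needs compactness only for one point per pair, while your construction is more self-contained (no appeal to the BBI midpoint theorem) and yields slightly more, namely that the geodesic arises as a locally uniform-type limit of $\ep$-near-geodesics with their constant-speed parametrizations, at the cost of the extra confinement argument and the extension over irrational times; in fact, once you have the Lipschitz bound $D_h(P(s),P(t))\le D|s-t|$ for all $s,t$, the length estimate $\op{len}(P;D_h)\le D$ follows immediately, so your final partition/liminf computation could be shortened.
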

\begin{proof}
Recall that $D_h$ is a complete metric on $\BB C\setminus \{\text{singular points}\}$ (Proposition~\ref{prop-complete}). 
So, by~\cite[Theorem 2.4.16]{bbi-metric-geometry}, it suffices to show that for any non-singular points $z,w\in \BB C$, there exists a midpoint between $z$ and $w$, i.e., a point $x\in\BB C$ such that $D_h(z,x) = D_h(w,x) = \frac12 D_h(z,w)$. Note that Lemma~\ref{lem-finite-dist} implies that $D_h(z,w) < \infty$. 

To product such a midpoint, let $z^\ep \rta z$ and $w^\ep\rta w$ be sequences as in Lemma~\ref{lem-dist-sup}, so that
\eqb \label{eqn-length-lim}
D_h(z,w) = \lim_{\mcl E \ni \ep} \frk a_\ep^{-1} D_h^\ep (z^\ep,w^\ep) .
\eqe
Since $D_h^\ep$ is a smooth Riemannian distance function, it follows that $D_h^\ep$ is a geodesic metric.
Therefore, for each $z,w \in\BB C$ there is a point $x^\ep \in \BB C$ (i.e., the midpoint of the $D_h^\ep$-geodesic from $z^\ep$ to $w^\ep$) such that $D_h^\ep(z^\ep , x^\ep) = D_h^\ep(w^\ep , x^\ep) = \frac12 D_h^\ep(z^\ep,w^\ep)$. 
By Proposition~\ref{prop-infty-dist-ssl}, it holds with probability tending to 1 as $\ep\rta 0$ and then $R \rta\infty$ that $x^\ep \in B_R(0)$. 
Since $\ol{B_R(0)}$ is compact, we can a.s.\ find a subsequence $\mcl E'$ of $\mcl E$ and a point $x\in \BB C$ such that $x^\ep \rta x$. 
By Lemma~\ref{lem-dist-seq},
\eqbn
D_h(z,x) 
\leq \liminf_{\mcl E' \ni \ep\rta 0} \frk a_\ep^{-1} D_h^\ep(z^\ep, x^\ep) 
 = \frac12 \liminf_{\mcl E' \ni \ep\rta 0} \frk a_\ep^{-1} D_h^\ep(z^\ep , w^\ep ) 
 = \frac12 D_h(z,w) 
\eqen
where the last equality is by~\eqref{eqn-length-lim}. 
Similarly, $D_h(w,x) \leq \frac12 D_h(z,w)$. By combining this with the triangle inequality (Proposition~\ref{prop-triangle-inequality}), we get
\eqb
D_h(z,w) \leq D_h(z,x) + D_h(w,x) \leq \frac12 D_h(z,w) + \frac12 D_h(z,w) =  D_h(z,w) .
\eqe
Hence both inequalities must in fact be equalities, which is only possible if $D_h(w,x) = D_h(z,x) = \frac12 D_h(z,w)$. 
\end{proof}

\subsection{Proofs of main theorems} 
\label{sec-thm-proofs}

We have now proven all of the assertions of our main theorems. Here, we record exactly where each assertion was proven.

\begin{proof}[Proof of Theorem~\ref{thm-lfpp-tight}]
Assertion~\ref{item-tight} is proven in Proposition~\ref{prop-lower-semicont}. 
Assertion~\ref{item-metric} is proven in Proposition~\ref{prop-triangle-inequality}.
Assertion~\ref{item-rational} follows from Proposition~\ref{prop-rational-agree} and the fact that our original coupling was chosen so that $\frk a_\ep^{-1} D_h^\ep(u,v) \rta D_h(u,v)$ for each $u,v\in\BB Q^2$.
Assertion~\ref{item-constant} follows from~\eqref{eqn-ssl-constant} and~\eqref{eqn-ssl-constant-lim}. 
\end{proof}

\begin{proof}[Proof of Theorem~\ref{thm-ssl-properties}]
Assertion~\ref{item-finite-ae} is proven in Lemma~\ref{lem-finite-ae}. 
Assertion~\ref{item-bounded} is proven in Proposition~\ref{prop-infty-dist-ssl}. 
Assertion~\ref{item-holder} is proven in Proposition~\ref{prop-holder}. 
Assertion~\ref{item-singularity} follows from Lemma~\ref{lem-finite-dist} and Proposition~\ref{prop-complete}.
Assertion~\ref{item-geodesic} follows from Proposition~\ref{prop-length-metric}.
Assertion~\ref{item-thick-pt} is proven in Proposition~\ref{prop-thick-pt}. 
\end{proof}

\appendix

\section{Appendix: Gaussian estimates}
\label{sec-appendix} 

Here we record some elementary estimates for Gaussian random variables which are needed for our proofs.

\begin{lem} \label{lem-gaussian-var}
Let $\bd X = (X_1,\dots,X_n)$ be a centered Gaussian vector with $\max_{i\in [1,n]_{\BB Z}} \op{Var} X_i =\sigma^2$. 
Let $C>0$ and let $F : \BB R^n \rta \BB R$ be a function which is $C$-Lipschitz continuous w.r.t.\ the $L^\infty$ norm. 
We have
\eqb \label{eqn-gaussian-var}
\op{Var} F(\bd X) \preceq C^2 \sigma^2  
\eqe
with a universal implicit constant. 
\end{lem}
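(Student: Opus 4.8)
The plan is to combine the Gaussian Poincar\'e inequality with the elementary observation that a function which is Lipschitz with respect to $\|\cdot\|_\infty$ has gradient bounded in the $\ell^1$ norm. First I would reduce to the case where $F$ is smooth: mollifying $F$ by a smooth probability density $\rho_\delta$ does not increase its $L^\infty$-Lipschitz constant, the mollified functions $F_\delta$ still have at most linear growth with constants uniform in $\delta$ (so $F_\delta(\bd X), F(\bd X)\in L^2$ since Gaussians have all moments), and $F_\delta\to F$ pointwise; hence by dominated convergence $\op{Var} F_\delta(\bd X)\to \op{Var} F(\bd X)$, and it suffices to prove the bound for each $F_\delta$. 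So assume $F\in C^1$.

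Next, for $x\in\BB R^n$ and any $u$ with $\|u\|_\infty\leq 1$, the bound $|F(x+tu)-F(x)|\leq C t\|u\|_\infty$ and differentiation in $t$ give $|\nabla F(x)\cdot u|\leq C$; taking the supremum over such $u$ yields $\|\nabla F(x)\|_1 = \sum_{i}|\partial_i F(x)|\leq C$ for every $x$. Now let $\Sigma$ be the covariance matrix of $\bd X$. Writing $\bd X = \Sigma^{1/2}\bd Z$ for a standard Gaussian vector $\bd Z$ and applying the standard Gaussian Poincar\'e inequality $\op{Var} G(\bd Z)\leq \BB E|\nabla G(\bd Z)|^2$ to $G(\bd z) := F(\Sigma^{1/2}\bd z)$ gives
\[
\op{Var} F(\bd X) \leq \BB E\left[ \la \nabla F(\bd X) , \Sigma \nabla F(\bd X) \ra \right] .
\]
Since $\Sigma$ is positive semidefinite with diagonal entries $\Sigma_{ii} = \op{Var} X_i \leq \sigma^2$, we have $|\Sigma_{ij}|\leq \sqrt{\Sigma_{ii}\Sigma_{jj}}\leq \sigma^2$, so for any vector $v$,
\[
\la v , \Sigma v\ra = \sum_{i,j}\Sigma_{ij} v_i v_j \leq \sigma^2 \sum_{i,j} |v_i| |v_j| = \sigma^2 \|v\|_1^2 .
\]
Combining the last two displays with the pointwise bound $\|\nabla F(\bd X)\|_1\leq C$ gives $\op{Var} F(\bd X)\leq \sigma^2 \BB E[\|\nabla F(\bd X)\|_1^2] \leq C^2\sigma^2$, which is~\eqref{eqn-gaussian-var} (with implicit constant $1$).

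There is no serious obstacle here; the only points requiring a little care are the mollification step needed to handle a merely Lipschitz (hence only a.e.\ differentiable) $F$, and correctly invoking the Poincar\'e inequality in the possibly degenerate-covariance case, both of which are routine.
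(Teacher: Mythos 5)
Your proof is correct, but it takes a genuinely different route from the paper. The paper works directly with the (non-smooth) Lipschitz function and uses a Gaussian concentration inequality in the $L^\infty$ metric: letting $m$ be the median of $F(\bd X)$ and $B, B'$ the sublevel/superlevel sets at $m$ (each of probability at least $1/2$), it bounds the $L^\infty$-distance from $\bd X$ to each of $B$ and $B'$ via the cited concentration estimate, converts this into the tail bound $\BB P[|F(\bd X)-m| > C T] \leq c_0 \exp(-(T-c_0\sigma)^2/2\sigma^2)$, and integrates the tail to bound the variance — at the cost of an unspecified universal constant. You instead mollify to reduce to smooth $F$, observe that $C$-Lipschitz continuity with respect to $\|\cdot\|_\infty$ is equivalent to the pointwise $\ell^1$-gradient bound $\|\nabla F\|_1 \leq C$ (duality of $\ell^\infty$ and $\ell^1$), apply the Gaussian Poincar\'e inequality to $G(\bd z) = F(\Sigma^{1/2}\bd z)$, and control the resulting quadratic form by $\langle v,\Sigma v\rangle \leq \sigma^2\|v\|_1^2$ using $|\Sigma_{ij}| \leq \sigma^2$. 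Your argument yields the sharper explicit constant $1$ and handles the degenerate-covariance case transparently, while the paper's argument avoids mollification and any differentiability discussion by staying at the level of concentration of measure; both are complete and correct, and your mollification and duality steps are carried out with the right care (uniform Lipschitz constant under convolution, dominated convergence for the variances, and validity of Poincar\'e for the possibly singular $\Sigma$ via $\bd X = \Sigma^{1/2}\bd Z$).
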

\begin{proof}
Let $m$ be the median of $F(\bd X)$ and let $B := F^{-1}((-\infty,m])$ and $B' := F^{-1}([m,\infty))$. Then $\BB P[\bd X \in B] \geq 1/2$ and $\BB P[\bd X \in B'] \geq 1/2$. By a standard Gaussian concentration inequality (see, e.g.,~\cite[Lemma 2.1]{dzz-heat-kernel}), there is a universal constant $c_0   > 0$ such that for each $T \geq c_0 \sigma$, 
\eqb
\BB P\left[ \min_{\bd x\in B} |\bd X  - \bd x|_\infty  > T\right] \leq c_0 \exp\left( - \frac{(T-c_0 \sigma)^2}{2\sigma^2} \right) 
\eqe
and the same is true with $B'$ in place of $B$. Hence, with probability at least $1-2c_0 \exp\left( - \frac{(T-c_0 \sigma)^2}{2\sigma^2} \right) $, there exists $\bd x , \bd x' \in \BB R^n$ such that $F(\bd x) \leq m$, $F(\bd x') \geq m$, and $\max\{|\bd X - \bd x|_\infty , |\bd X - \bd x'|_\infty \} \leq T$. 
Since $F$ is $C$-Lipschitz, this means that 
\eqb
  F(\bd X) - m \leq F(\bd X) - F(\bd x) \leq C |\bd X - \bd x|_\infty \leq C T ,
\eqe
and similarly $F(\bd x) - m \geq - C T$. Hence for $T > c_0\sigma$, 
\eqb \label{eqn-gaussian-var-prob}
\BB P\left[ |F(\bd X) - m| > C T \right] \leq c_0 \exp\left( - \frac{(T-c_0 \sigma)^2}{2\sigma^2} \right)  .
\eqe

By substituting $S = C T$ in~\eqref{eqn-gaussian-var-prob}, we compute
\allb \label{eqn-gaussian-var-int}
\op{Var} F(\bd X) 
&\leq \BB E\left[ (F(\bd X) - m)^2 \right] \notag\\
&= 2 \int_0^\infty S \BB P\left[ |F(\bd X) - m| > S \right] \,dS \notag\\
&\leq C^2 c_0^2 \sigma^2 + 2 c_0 \int_{C c_0\sigma} S  \exp\left( - \frac{(S - C c_0 \sigma)^2}{2C^2 \sigma^2} \right) \,dS .
\alle
The last integral is equal to $2\pi C \sigma \BB E\left[ (Y + C c_0 \sigma) \BB 1_{(Y\geq 0)} \right]$ where $Y$ is a centered Gaussian random variable with variance $C^2 \sigma^2$. 
Hence this integral is bounded above by a universal constant times $C^2 \sigma^2$. 
Combining this bound with~\eqref{eqn-gaussian-var-int} now yields~\eqref{eqn-gaussian-var}.
\end{proof}

\begin{lem} \label{lem-gaussian-trunc}
Let $R > 1$ and let $X$ be a centered Gaussian random variable with variance $R$. Also let $\xi,\beta >0$. We have
\eqb \label{eqn-gaussian-trunc} 
\BB E\left[ e^{\xi X} \BB 1_{(X \leq \beta   R)} \right] = e^{(\xi (\xi\wedge\beta) - (\xi\wedge\beta)^2/2) R + o(R)} ,\quad \text{as $R \rta\infty$.}
\eqe
\end{lem}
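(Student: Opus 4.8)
The plan is a direct one‑variable computation followed by standard Gaussian tail asymptotics; I expect no real obstacle, so the only subtle point is keeping track of which error terms are genuinely $o(R)$. First I would write $X = \sqrt R\,Z$ with $Z$ a standard Gaussian, so that
\[
\BB E\!\left[ e^{\xi X}\BB 1_{(X\le \beta R)}\right]
= \frac{1}{\sqrt{2\pi}}\int_{-\infty}^{\beta\sqrt R} e^{\xi\sqrt R\, z - z^2/2}\,dz.
\]
Completing the square, $\xi\sqrt R\, z - \tfrac{z^2}{2} = \tfrac{\xi^2 R}{2} - \tfrac12(z-\xi\sqrt R)^2$, this becomes
\[
\BB E\!\left[ e^{\xi X}\BB 1_{(X\le \beta R)}\right] = e^{\xi^2 R/2}\,\BB P\!\left[ \mcl N(0,1) \le (\beta-\xi)\sqrt R\,\right].
\]

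Next I would split into the two regimes in the definition of $\xi\wedge\beta$. If $\xi\le\beta$ then $(\beta-\xi)\sqrt R\ge 0$, so the probability above lies in $[\tfrac12,1]$ and is thus $e^{o(R)}$; hence the left‑hand side is $e^{\xi^2R/2 + o(R)}$, which is the claim since $\xi(\xi\wedge\beta)-(\xi\wedge\beta)^2/2 = \xi^2/2$ when $\xi\wedge\beta=\xi$. If instead $\xi>\beta$, then $(\beta-\xi)\sqrt R\to-\infty$, and I would apply the elementary Mills‑ratio bounds $\frac{t}{1+t^2}\le \sqrt{2\pi}\,e^{t^2/2}\,\BB P[\mcl N(0,1)>t]\le \frac1t$ (valid for all $t>0$) with $t=(\xi-\beta)\sqrt R$. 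These give $\BB P[\mcl N(0,1)\le(\beta-\xi)\sqrt R] = e^{-(\xi-\beta)^2 R/2 + o(R)}$, the polynomial‑in‑$t$ prefactors and the constant $\sqrt{2\pi}$ contributing only $e^{O(\log R)} = e^{o(R)}$. Plugging this in gives
\[
\BB E\!\left[ e^{\xi X}\BB 1_{(X\le \beta R)}\right] = e^{[\xi^2-(\xi-\beta)^2]R/2 + o(R)} = e^{(\xi\beta-\beta^2/2)R + o(R)},
\]
which matches since $\xi(\xi\wedge\beta)-(\xi\wedge\beta)^2/2 = \xi\beta-\beta^2/2$ when $\xi\wedge\beta=\beta$.

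To present the two cases uniformly one can simply record the algebraic identity $\tfrac{\xi^2}{2} - \tfrac12\big((\xi-\beta)_+\big)^2 = \xi(\xi\wedge\beta) - \tfrac12(\xi\wedge\beta)^2$, checked in one line by considering $\xi\le\beta$ and $\xi>\beta$ separately. Throughout, $\xi$ and $\beta$ are fixed positive constants, so the implicit rate in the $o(R)$ is allowed to depend on them; this is the only point requiring care, and it is handled automatically by the bounds above.
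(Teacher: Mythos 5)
Your proof is correct, and it takes a genuinely different (and in fact sharper) route than the paper. The paper argues by a two-sided discretization: the lower bound comes from restricting $X$ to the window $[(\xi\wedge\beta)R-1,(\xi\wedge\beta)R]$, and the upper bound from partitioning $[0,\beta]$ into mesh-$\zeta$ intervals, bounding $\BB E[e^{\xi X}\BB 1_{(X\in[\alpha_{i-1}R,\alpha_i R])}]\le e^{(\xi\alpha_i-\alpha_{i-1}^2/2)R}$ on each piece, and then maximizing $\xi\alpha-\alpha^2/2$ over $\alpha\in[0,\beta]$ — essentially a discrete Laplace-method argument whose optimum at $\alpha=\xi\wedge\beta$ produces the exponent. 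You instead compute the truncated exponential moment exactly by completing the square, reducing the statement to the single quantity $e^{\xi^2R/2}\,\BB P[\mathcal N(0,1)\le(\beta-\xi)\sqrt R]$, and then dispose of the two regimes with the trivial bound $\BB P\in[\tfrac12,1]$ when $\xi\le\beta$ and with Mills-ratio bounds when $\xi>\beta$; the algebraic identity $\tfrac{\xi^2}{2}-\tfrac12((\xi-\beta)_+)^2=\xi(\xi\wedge\beta)-\tfrac12(\xi\wedge\beta)^2$ checks out in both cases. Your approach buys an exact identity and a stronger error term (the correction is $O(\log R)$, not merely $o(R)$), and it avoids the auxiliary parameter $\zeta$ and the limiting argument $\zeta\to0$; the paper's coarse-graining argument is slightly more robust in spirit, since it never uses the exact Gaussian density, only one-sided tail bounds on each slab, but for this lemma both are perfectly adequate.
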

\begin{proof} 
By the Gaussian tail bound,
\eqb
\BB E\left[ e^{\xi X} \BB 1_{(X \leq \beta   R)} \right]
\succeq e^{\xi(\xi\wedge \beta) R } \BB P\left[ X \in [ (\xi\wedge \beta) R - 1 ,  (\xi \wedge \beta) R ] \right]
\geq e^{(\xi (\xi\wedge\beta) - (\xi\wedge\beta)^2/2) R + o(R)} .
\eqe
This gives the lower bound in~\eqref{eqn-gaussian-trunc}.

To prove the upper bound, fix a small parameter $\zeta >0$. 
Let $0 = \alpha_0 < \alpha_1 < \dots < \alpha_N = \beta$ be a partition of $[0,\beta]$ with $\max_{i \in [1,N]_{\BB Z} } (\alpha_i - \alpha_{i-1}) \leq \zeta$. 
By the Gaussian tail bound, for each $i\in [1,N]_{\BB Z}$,
\eqb
\BB P\left[ X \in [\alpha_{i-1}  R , \alpha_i  R ] \right] \leq e^{-\alpha_{i-1}^2 R / 2} .  
\eqe
We can therefore compute 
\allb \label{eqn-gaussian-trunc-sum}
\BB E\left[ e^{\xi X} \BB 1_{(X \leq \beta   R)} \right]
&= \BB E\left[ e^{\xi X} \BB 1_{(X < 0)} \right] +  \sum_{i=1}^N \BB E\left[  e^{\xi X} \BB 1_{(X \in [\alpha_{i-1}  R , \alpha_i  R ])} \right] \notag\\
&\leq 1 +  \sum_{i=1}^N e^{(\xi \alpha_i - \alpha_{i-1}^2/2) R} \notag\\
&\preceq 1 + \max_{\alpha \in [0,\beta]} e^{(\xi \alpha - \alpha^2/2 + o_\zeta(1)) R} 
\alle
where in the last line we used that $\alpha_i - \alpha_{i-1} \leq \zeta$. Here, $o_\zeta(1)$ denotes a deterministic quantity which converges to 0 as $\zeta\rta0$ and depends only on $\xi,\beta$.
The maximum of $\xi \alpha - \alpha^2/2$ over all $\alpha \in [0,\beta]$ is attained at $\xi \wedge \beta$, where it equals $\xi (\xi\wedge \beta)  - (\xi\wedge \beta)^2/2$. 
Plugging this into~\eqref{eqn-gaussian-trunc-sum} and sending $\zeta\rta 0$ sufficiently slowly as $R\rta\infty$ now gives the upper bound in~\eqref{eqn-gaussian-trunc}.
\end{proof}

\bibliography{cibib}
%\bibliography{addbib}
%\bibliographystyle{alpha}
\bibliographystyle{hmralphaabbrv}

\end{document}